\providecommand{\U}[1]{\protect\rule{.1in}{.1in}}
\newtheorem{theorem}{Theorem}[section]
\newtheorem{case}{Case}
\newtheorem{claim}[theorem]{Claim}
\newtheorem{condition}[theorem]{Condition}
\newtheorem{corollary}[theorem]{Corollary}
\newtheorem{definition}[theorem]{Definition}
\newtheorem{example}[theorem]{Example}
\newtheorem{lemma}[theorem]{Lemma}
\newtheorem{notation}[theorem]{Notation}
\newtheorem{proposition}[theorem]{Proposition}
\newtheorem{remark}[theorem]{Remark}
\newenvironment{proof}[1][Proof]{\noindent\textbf{#1.} }{\ \rule{0.5em}{0.5em}}
 \newcommand{\vpcn}{\ensuremath{\text{VPC}(n)}}
  \newcommand{\C}{\ensuremath{\mathcal{C}}}
    \newcommand{\B}{\ensuremath{\mathcal{B}}}
    \DeclareMathOperator{\stab}{Stab}
    \newcommand{\cS}{\ensuremath{\mathcal{S}}}
    \newcommand{\cF}{\ensuremath{\mathcal{F}}}
 \newcommand{\df}[1]{\textbf{#1}}   
 \renewcommand{\|}{\mid}   
 \newcommand{\ccc}{cross connected component}   
 \newcommand{\comm}{Comm}
\begin{document}

\author{
Peter Scott\\Mathematics Department\\University of Michigan\\Ann Arbor, Michigan 48109, USA.\\email: pscott@umich.edu\\
\and Gadde Swarup
\\email: anandaswarupg@gmail.com}

\title{Adapted almost invariant sets and splittings of groups}
\maketitle

\begin{abstract}
We prove relative versions of many earlier results about almost invariant sets
and splittings of groups. In particular, we prove a relative version of the
algebraic torus theorem, and we prove the existence and uniqueness of relative
versions of algebraic regular neighbourhoods and of JSJ decompositions.

\end{abstract}
\date{}
\tableofcontents

\section*{Forward}
This paper was written mostly by Peter Scott before he sadly passed away in September 2023. Section 9 was the last section written by him. The initial drafts with Swarup date back to 2005--2007. After that, Peter Scott spent a considerable amount of time on this paper as he felt  that `adapted almost invariant sets' are a  useful idea. Section 16 and the `Further Remarks' in section 17 were written by Swarup after earlier discussions with Peter Scott. These connect the work here after section 15 with the work of Guiradel and Levitt \cite{Guirardel-Levitt2} and suggest
an alternate approach to later results in section 17. These are more suggestions rather than proofs and is a topic for further study. This paper greatly benefited from discussions with Vincent Guiradel who pointed out various errors in the earlier work of Scott and Swarup \cite{SS2} which helped shape the present paper. The technique  of  `putting things in a circle' of section 12.1 was suggested by him. The cone metric used in section 11 was also suggested by him.

\section{Introduction}

In \cite{Stallings1, Stallings2} Stallings proved that a finitely generated
group $G$ splits over a finite subgroup if and only if it has at least two
ends. In \cite{Swan} Swan generalised Stallings' result to all groups. One
ingredient of his arguments was a relative version of Stallings' result in
which one is given a subgroup $S$ of $G$ and looks for a splitting of $G$ over
a finite subgroup such that $S$ is conjugate into a vertex group of the
splitting. This is the first example of the type of result in which we will be
interested in this paper. In \cite{Swarup2}, Swarup generalised Swan's result
to the situation where $G$ has a finite family $\mathcal{S}=\{S_{1}%
,\ldots,S_{m}\}$ of subgroups and one looks for a splitting of $G$ over a
finite subgroup such that each $S_{i}$ is conjugate into a vertex group of the
splitting. In \cite{Muller}, Muller introduced the term "adapted to
$\mathcal{S}$" to describe such a splitting. In \cite{SSpdnold}, Scott and
Swarup introduced the idea of an almost invariant subset of a group being
adapted to a family of subgroups, and they gave a somewhat special relative
version of the algebraic torus theorem of Dunwoody and Swenson
\cite{D-Swenson}.

We recall that several authors have produced algebraic decompositions of
groups which are analogous to the JSJ decomposition of Haken $3$--manifolds
described in the work of Jaco and Shalen \cite{JS} and of Johannson \cite{JO}.
This field was initiated by Kropholler \cite{K} who studied analogous
decompositions for Poincar\'{e} duality groups of any dimension greater than
$2$. But the current interest in this kind of decomposition started with the
work of Sela \cite{S1} on one-ended torsion-free hyperbolic groups. His
results were generalised by Rips and Sela \cite{RS}, Bowditch \cite{B1, B3},
Dunwoody and Sageev \cite{D-Sageev}, Dunwoody and Swenson \cite{D-Swenson},
and Scott and Swarup \cite{SS2, SSpdnold, SSpdn}. In a series of papers
\cite{Guirardel-Levitt1, Guirardel-Levitt2, Guirardel-Levitt3,
Guirardel-Levitt4}, Guirardel and Levitt generalised such results even further.

However there are very few algebraic analogues of the relative versions of the
topological decomposition which were also described in \cite{JS} and
\cite{JO}. The main discussion of such analogues has been in
\cite{Guirardel-Levitt4}. To describe these relative JSJ decompositions, we
fix some notation. Let $M$ be a Haken $3$--manifold, and let $S$ be a compact
subsurface (not necessarily connected) of $\partial M$ such that both $S$ and
$\partial M-S$ are incompressible in $M$. Let $\Sigma$ denote the closure of
$\partial M-S$. One considers $\pi_{1}$--injective maps of tori and annuli
into $M$. For annuli, one assumes in addition that the boundary is mapped into
$S$. Such a map of a torus or annulus into $M$ is said to be \textit{essential
in }$(M,S)$ if it is not homotopic, as a map of pairs, into $S$ or into
$\Sigma$. Jaco and Shalen, and Johannson, defined what can reasonably be
called the JSJ decomposition of the pair $(M,S)$. One can think of it as a
kind of regular neighbourhood of all essential such maps. In most cases, an
embedded annulus or torus in $M$ which is essential in $(M,S)$ determines a
splitting of $\pi_{1}(M)$ over $\mathbb{Z}$ or $\mathbb{Z}\times\mathbb{Z}$,
as appropriate, which is clearly adapted to the family of subgroups of
$\pi_{1}(M)$ which consists of the fundamental groups of the components of
$\Sigma$. The standard JSJ decomposition is the special case where $S$ equals
$\partial M$ so that $\Sigma$ is empty.

The opposite extreme occurs when $S$ is empty. There are no annuli in
$(M,\varnothing)$, so that the JSJ decomposition of the pair $(M,\varnothing)$
is a kind of regular neighbourhood of all the essential tori in $M$. The
algebraic decomposition of $\pi_{1}(M)$ determined by the frontier of the JSJ
decomposition of $(M,\varnothing)$ has a direct generalisation to Poincar\'{e}
duality pairs, given by Kropholler \cite{K} and by Scott and Swarup
\cite{SSpdnold, SSpdn}. If $G$ is the fundamental group of a Haken
$3$--manifold $M$, then the decomposition of $G$ given by Fujiwara and
Papasoglu \cite{FujiwaraPapasoglu} is closely related to the JSJ decomposition
of $(M,\varnothing)$. These are the only algebraic generalisations of the JSJ
decomposition of $(M,\varnothing)$ which are in the literature, apart from the
discussion in \cite{Guirardel-Levitt4}.

If $S$ is non-empty and not equal to $\partial M$, the only algebraic
generalisations of the JSJ decomposition of $(M,S)$ have been by Guirardel and
Levitt in \cite{Guirardel-Levitt4}. Several different cases have been studied
topologically. The most important is the case where $\Sigma$ consists of a
finite collection of disjoint annuli, no two of which are parallel in
$\partial M$. This case has played an important role in the theory of
hyperbolic structures on $3$--manifolds with accidental parabolics, where the
components of $\Sigma$ correspond to the rank $1$ cusps associated to the
accidental parabolic elements of $\pi_{1}(M)$.

In this paper we develop the background theory needed to give full algebraic
analogues of the relative JSJ decompositions of a Haken $3$--manifold. We
obtain relative versions of the results of Scott and Swarup in \cite{SS2} on
algebraic regular neighbourhoods and JSJ decompositions of groups. In
addition, we are also able to generalise some of the results in \cite{SS2} in
the absolute case. An important part of this work is a full relative version
of the algebraic torus theorem of Dunwoody and Swenson \cite{D-Swenson}. A new
feature in this paper is that some of the groups we consider need not be
finitely generated, though they do need to be finitely generated relative to
some family of subgroups. As most of the published theory in this area treats
only the finitely generated case, we give a careful discussion of this. This
seems worth doing because a graph of groups decomposition of a finitely
generated group may have vertices whose associated groups are not finitely generated.

An important role in this paper will be played by the results of Scott and
Swarup in \cite{SS2}. Since that paper was published, the authors have
discovered some errors. They discuss and correct these in Appendix
\ref{correctionstomonster}. We should also note that in section
\ref{algregnbhds}, we give a slightly modified approach to developing the theory.

In section \ref{preliminaries} we discuss the basic theory of ends of groups
which need not be finitely generated. In section \ref{enclosing} we discuss
the definition of enclosing. This concept was defined in \cite{SS2}, but we
need to slightly reformulate the definition of enclosing in order to handle
the situation where groups need not be finitely generated. In section
\ref{cubings}, we recall Sageev's construction of a cubing in
\cite{Sageev-cubings}, and observe that his construction works even when the
groups involved are not finitely generated. In section
\ref{adaptedalmostinvariantsets}, we give the definitions and basic properties
of adapted almost invariant subsets of a group. We discuss how this theory
works in the case when the ambient group is not finitely generated. In section
\ref{groupsystems}, we introduce the idea of a group system of finite type and
of a relative form of the Cayley graph of a group. In section
\ref{extensionsandcrossing}, we discuss crossing of almost invariant sets and
show that the symmetry of crossing extends to some situations where the
ambient group is not finitely generated but satisfies some relative finite
generation condition. In section \ref{goodposition}, we discuss good position
and show that this theory too extends to situations where the ambient group
satisfies some relative finite generation condition. We do the same for very
good position.

In section \ref{algregnbhds}, we give a slightly modified approach to
developing the theory of algebraic regular neighbourhoods, which helps in the
extension of the ideas of \cite{SS2} to the setting of group systems. We also
show how the theory of very good position, which was introduced by Niblo,
Sageev, Scott and Swarup in \cite{NibloSageevScottSwarup}, can be used to
substantially simplify some of the work in \cite{SS2}. In section
\ref{coends}, we discuss the theory of coends and give the key definition
which will be used throughout the following sections. In section
\ref{crossingofa.i.setsoverVPCgroups}, we prove some important lemmas on the
symmetry of strong crossing in certain situations. These are the analogue for
adapted almost invariant sets of lemmas in chapters 7, 13 and 14 of
\cite{SS2}. In section \ref{CCC'swithstrongcrossing}, we consider a
cross-connected family of adapted almost invariant subsets in which all
crossing is strong, and show that this yields an action of a convergence group
on the circle. This was first done by Dunwoody and Swenson \cite{D-Swenson} in
the absolute case, but we need to discuss the entire argument in order to
clarify the situation in the adapted case. In section
\ref{accessibilityresults}, we discuss some accessibility results needed in
order to prove the existence of our JSJ decompositions. In section
\ref{CCC'swithweakcrossing}, we consider cross-connected families of adapted
almost invariant subsets in which all crossing is weak. In section
\ref{section:relativetorustheorem}, we give a proof of a relative version of
the algebraic torus theorem, which is far more general than the special case
proved in \cite{SSpdnold}.

In section \ref{multi-lengthdecompositions}, we discuss adapted versions of the
main decomposition results of Scott and Swarup in \cite{SS2} including a JSJ
decomposition result. As in \cite{SS2}, we state these results under the
assumption that the ambient group is finitely presented. The restriction to
finitely presented groups is natural because we use certain accessibility
results. However it is now standard that the accessibility results that we use
extend to almost finitely presented groups. Thus all the results of this paper
which are stated under the assumption that $G$ is finitely presented are valid
when $G$ is almost finitely presented. In particular, the results are valid
for Poincar\'{e} duality groups as these are known to be almost finitely
presented. These results for almost finitely presented groups will be used in
a subsequent paper \cite{SSpdn} on the analogues of JSJ-decompositions for
Poincar\'{e} duality pairs. In \cite{Guirardel-Levitt2}, Guirardel and Levitt
described a new approach to the one-stage decompositions of \cite{SS2}. In
\cite{Guirardel-Levitt4}, they showed how to extend their ideas to the adapted
case, and we use their approach. This gives more information on $V_{1}%
$--vertices, namely that they have finitely generated stabilizers.

In section \ref{multi-lengthdecompositions}, we discuss multi-length JSJ
decompositions analogous to those in chapters 13 and 14 of \cite{SS2}, and we
also discuss generalisations. It would be interesting to extend the approach
of Guirardel and Levitt in \cite{Guirardel-Levitt4} to these multi-length
decompositions. It is possible that their approach may suggest further
applications. So far, one interesting application we have of these
multi-length decompositions is to the canonical decomposition of Poincar\'{e}
Duality Groups first described in \cite{SSpdnold}, which was updated in
\cite{SSpdn}. In fact some of the ideas of this paper were first developed for
the applications in those papers.

Finally in section 17
we compare our new algebraic
decompositions with topological decompositions of $3$--manifolds. As in the
absolute case, there are very close similarities between the two theories, but
there are a few extra differences in the relative case.

\section{Preliminaries\label{preliminaries}}

As stated in the introduction, we will be considering groups which need not be
finitely generated. Thus we need to give a careful introduction to the theory
of ends of such groups.

Throughout this paper, groups are not assumed to be finitely generated unless
that is specifically stated.

The following small technical result will be used in this paper at several
points. It is due to Neumann \cite{Neumann}, but we reproduce his proof for completeness.

\begin{lemma}
\label{Neumannlemma}Let $G$ be a group with subgroups $S,H,H_{1},\ldots,H_{n}$.

\begin{enumerate}
\item If $S$ is contained in the union of finitely many cosets $Hg$ of $H$,
then $S\cap H$ has finite index in $S$.

\item If $S$ is contained in the union of finitely many cosets $H_{i}g_{i}$,
then there is $i$ such that $S\cap H_{i}$ has finite index in $S$.
\end{enumerate}
\end{lemma}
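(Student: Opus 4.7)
The plan is to prove (1) directly by a pigeonhole argument, then deduce (2) in two steps: first reduce it to an intrinsic statement about $S$, and then prove that statement by induction on $n$.

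For (1), suppose $S \subseteq \bigcup_{k=1}^{n} Hg_{k}$. If two elements $s_1, s_2 \in S$ both lie in a common coset $Hg_k$, then $s_1 s_2^{-1} \in H$; since also $s_1 s_2^{-1} \in S$, we have $s_1 s_2^{-1} \in S \cap H$, so $s_1$ and $s_2$ lie in the same right coset of $S \cap H$ inside $S$. Hence $S$ is the union of at most $n$ right cosets of $S \cap H$, giving $[S : S \cap H] \leq n$.

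For (2), I would first reduce to a self-contained statement about a single group covered by cosets of its own subgroups. Set $K_i := S \cap H_i$. By the same argument as in (1), each nonempty intersection $S \cap H_i g_i$ lies in a single right coset of $K_i$ in $S$. Hence $S$ itself is covered by finitely many right cosets of the subgroups $K_1, \ldots, K_n$ of $S$, and it suffices to prove the following: \emph{if a group $K$ is the union of finitely many right cosets of subgroups $K_1, \ldots, K_n$, then some $K_i$ has finite index in $K$.}

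I would prove this auxiliary statement by induction on $n$. The case $n=1$ is immediate from (1). For $n > 1$, write the covering as $K = \bigcup_{j=1}^{m} K_{i(j)} t_j$. If $K_n$ has finite index in $K$ we are done, so assume $K_n$ has infinite index. Then only finitely many right cosets of $K_n$ appear in the covering, so there exists $x \in K$ with $K_n x$ distinct from every $K_{i(j)} t_j$ for which $i(j) = n$. For each $k \in K_n$ the element $kx$ must lie in some $K_{i(j)} t_j$, and the choice of $x$ forces $i(j) \neq n$; hence $K_n \subseteq \bigcup_{i(j) \neq n} K_{i(j)} t_j x^{-1}$. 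Substituting this inclusion into each coset $K_n t_l$ in the original covering produces a covering of $K$ by finitely many right cosets of $K_1, \ldots, K_{n-1}$, and the induction hypothesis supplies the desired $K_i$ of finite index. The main subtlety is bookkeeping with left versus right cosets during the substitution step; once the convention is fixed, the pigeonhole in (1) and the induction in (2) are both routine.
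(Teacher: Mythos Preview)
Your proof is correct and follows essentially the same route as the paper's. Both arguments handle (1) by the pigeonhole observation that two elements of $S$ in the same coset $Hg_k$ differ by an element of $S\cap H$, and both prove (2) by first reducing to cosets of the intersections $S\cap H_i$ inside $S$ (the paper does this by arranging each $g_i\in S$, you do it explicitly) and then inducting on the number of subgroups via the same substitution trick: pick a coset of the last subgroup not already listed, observe it must be covered by cosets of the remaining subgroups, and translate to eliminate all cosets of that subgroup from the covering.
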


\begin{proof}
1) We have an inclusion $S\subset Hg_{1}\cup\ldots Hg_{n}$. Without loss of
generality, we can assume that $S\cap Hg_{j}$ is non-empty for each $j$. Now,
by replacing the $g_{j}$'s if necessary, we can arrange that each $g_{j}$ lies
in $S$. Thus we have $S\cap Hg_{j}=Sg_{j}\cap Hg_{j}=(S\cap H)g_{j}$, so that
the $g_{j}$'s are a collection of coset representatives for $S\cap H$ in $S$.
It follows that $S\cap H$ has finite index in $S$, as required.

2) This proof is by induction on the number $k$ of distinct subgroups of $G$
among the $H_{i}$'s. Part 1) of this lemma handles the case when $k=1$. As in
part 1) we can arrange that each $g_{i}$ lies in $S$, so that $S\cap
H_{i}g_{i}=(S\cap H_{i})g_{i}$.

Now suppose that $k\geq2$, that there are $k$ distinct subgroups of $G$ among
the $H_{i}$'s, and that the lemma holds whenever there are at most $k-1$
distinct subgroups. By renumbering the $H_{i}$'s we can suppose that there are
$k-1$ distinct subgroups of $G$ among $H_{1},\ldots,H_{m}$ and that
$H_{m+1},\ldots,H_{n}$ are all equal. We denote this common group by $H$. We
have an inclusion $S\subset H_{1}g_{1}\cup\ldots\cup H_{m}g_{m}\cup
Hg_{m+1}\cup\ldots\cup Hg_{n}$. If $S\subset Hg_{m+1}\cup\ldots\cup Hg_{n}$,
the result follows by part 1). Otherwise there is $g\in S-(Hg_{m+1}\cup
\ldots\cup Hg_{n})$. It follows that $(S\cap H)g$ is also disjoint from
$Hg_{m+1}\cup\ldots\cup Hg_{n}$, so that $(S\cap H)g$ must be contained in
$H_{1}g_{1}\cup\ldots\cup H_{m}g_{m}$. But then $(S\cap H)g_{i}=(S\cap
H)g(g^{-1}g_{i})$ must be contained in $H_{1}g_{1}(g^{-1}g_{i})\cup\ldots\cup
H_{m}g_{m}(g^{-1}g_{i})$, for $m+1\leq i\leq n$, so that $S$ is contained in
some finite union of cosets of $H_{1},\ldots,H_{m}$. As there are only $k-1$
distinct subgroups of $G$ among $H_{1},\ldots,H_{m}$, our induction hypothesis
implies the result.
\end{proof}

\subsection{Ends\label{ends}}

We start by recalling the definition of the number of ends of a group. Let $G$
be a group and let $E$ be a set on which $G$ acts on the right. Let $PE$
denote the power set of $E$, i.e. the set of all subsets of $E$. Under Boolean
addition (\textquotedblleft symmetric difference\textquotedblright) this is an
additive group of exponent $2$. Write $FE$ for the additive subgroup of finite
subsets. We refer to two sets $A$ and $B$ whose symmetric difference lies in
$FE$ as \textit{almost equal}, and write $A\overset{a}{=}B$. This amounts to
equality in the quotient group $PE/FE$. Now define
\[
QE=\{A\subset E:\forall g\in G,\;A\overset{a}{=}Ag\}.
\]
The action of $G$ on $PE$ by right translation preserves the subgroups
$QE\ $and $FE$, and $QE/FE$ is the subgroup of elements of $PE/FE$ fixed under
the induced action. Elements of $QE$ are said to be \textit{almost invariant}.
If we take $E$ to be $G$ with the action of $G$ being right multiplication,
then the number of ends of $G$ is
\[
e(G)=\dim_{\mathbb{Z}_{2}}\;(QG/FG).
\]

If $H$ is a subgroup of $G$, and we take $E$ to be the coset space
$H\backslash G$ of all cosets $Hg$, still with the action of $G$ being right
multiplication, then the number of ends of the pair $(G,H)$ is
\[
e(G,H)=\dim_{\mathbb{Z}_{2}}\;\left(  \frac{Q(H\backslash G)}{F(H\backslash
G)}\right)  .
\]

Next we recall the basic definitions for almost invariant sets.

\begin{definition}
Let $G$ be a group and $H$ a subgroup.

A subset of $G$ is $H$\textsl{--finite} if it is contained in a finite union
of cosets $Hg$.

Two subsets of $G$ are $H$\textsl{--almost equal} if their symmetric
difference is $H$--finite.

A subset $X$ of $G$ is $H$\textsl{--almost invariant} if $HX=X$ and, for all
$g\in G$, the sets $X$ and $Xg$ are $H$--almost equal.

For any subset $X$ of $G$, we denote the complement $G-X$ by $X^{\ast}$.

A $H$--almost invariant subset $X$ of $G$ is \textsl{nontrivial} if both $X$
and $X^{\ast}$ are $H$--infinite.
\end{definition}

\begin{remark}
If $H$ is trivial, then a $H$--almost invariant subset $X$ of $G$ is an
element of the set $QG$ defined above.

In general, if $X$ is a $H$--almost invariant subset of $G$, then the quotient
set $H\backslash X$ is an element of the set $Q(H\backslash G)$ defined above,
such that $H\backslash X\ $is an almost invariant subset of $H\backslash G$
under the action of $G$ by right translation.
\end{remark}

\subsection{Equivalence}

As we discuss later in this subsection, if a group $G$ splits over a subgroup
$H$, then there is a natural way to associate to this splitting a nontrivial
$H$--almost invariant subset of $G$ described by using the $G$--tree given by
the splitting. But this set is not uniquely determined. This leads naturally
to the idea of equivalence of almost invariant sets. Before defining
equivalence, we will need the following technical result.

\begin{lemma}
\label{equivalenta.i.setshavecommensurablestabiliser}Let $G$ be a group and
let\ $H$ and $K$ be subgroups. Let $X$ be a nontrivial $H$--almost invariant
subset of $G$, and let $Y$ be a $K$--almost invariant subset of $G$ such that
$X$ and $Y$ are $H$--almost equal. Then $H$ and $K$ are commensurable
subgroups of $G$.
\end{lemma}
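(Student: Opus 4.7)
The plan is to show that $L := H \cap K$ has finite index in both $H$ and $K$, which is the definition of commensurability. Write $D := X + Y$; by hypothesis $D$ is $H$-finite, say contained in $Z := Hg_1 \cup \cdots \cup Hg_n$. Since $Y = X + D$, right-translating by $g$ yields the key elementary identity
\[
Y + Yg \;=\; (X + Xg) + D + Dg.
\]
Each summand on the right is $H$-finite ($X + Xg$ by $H$-almost invariance of $X$, and $D, Dg$ by hypothesis), so $Y + Yg$ is $H$-finite too. Moreover, $KY = Y$ forces $Y + Yg$ to be left-$K$-invariant, hence a union of $K$-cosets contained in finitely many $H$-cosets.

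For the direction $[K:L] < \infty$, I would first observe that $Y$ and $Y^*$ must both be $H$-infinite: otherwise $X = Y + D$ (respectively $X^* = Y^* + D$) would be $H$-finite, contradicting nontriviality of $X$. In particular $Y$ is not right-$G$-invariant, so some $g_0 \in G$ gives $Y + Yg_0 \neq \emptyset$. Choosing any $K$-coset $Kw$ inside $Y + Yg_0$ produces a $K$-coset contained in finitely many $H$-cosets, and Lemma \ref{Neumannlemma}(1) yields $L$ of finite index in $K$.

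The harder direction is $[H:L] < \infty$, and this is where the main obstacle lies. The strategy is to find $g \in G$ and an $H$-coset $Hy \subseteq X + Xg$ that is not among the (at most $2n$) cosets $\{Hg_i, Hg_i g : 1 \le i \le n\}$. Such an $Hy$ is automatically disjoint from $D \cup Dg$, so the identity forces $Hy \subseteq Y + Yg$; $K$-finiteness of $Y + Yg$ then places $Hy$, and hence $H$ itself, in finitely many $K$-cosets, and Lemma \ref{Neumannlemma}(1) gives $[H:L] < \infty$.

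To show that such a $g$ and $Hy$ exist, I would argue by contradiction. Suppose for every $g \in G$ each $H$-coset of $X + Xg$ lay among $\{Hg_i, Hg_i g\}_i$, equivalently $X + Xg \subseteq Z \cup Zg$. Left-$H$-invariance of $X + Xg$ lets us unpack this as: for all $y, g$ with $y \notin Z$ and $yg^{-1} \notin Z$, the conditions $y \in X$ and $yg^{-1} \in X$ are equivalent. The crucial trick is to apply this with $g = z^{-1}y$ for arbitrary $y, z \in G \setminus Z$: then $yg^{-1} = z$, forcing $y \in X \iff z \in X$. Therefore $X \cap (G \setminus Z)$ is either all of $G \setminus Z$ or empty, so $X$ or $X^*$ is contained in the $H$-finite set $Z$ — contradicting nontriviality. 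The substitution $g = z^{-1}y$, which uses right-translation to compare arbitrary points outside the fixed exceptional $Z$, is the heart of the argument.
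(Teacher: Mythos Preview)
Your proof is correct. The overall shape matches the paper's: show $Y+Yg$ is $H$--finite for all $g$, then for each direction produce a full coset (of $K$, respectively $H$) inside some $Y+Yg$ and apply Lemma~\ref{Neumannlemma}. Your argument for $[K:L]<\infty$ is essentially identical to the paper's.

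For $[H:L]<\infty$ you reach the same endpoint (an $H$--coset inside the $K$--finite set $Y+Yg$) but by a more circuitous route. The paper avoids your contradiction argument entirely: since $X$ and $X^{\ast}$ each contain infinitely many $H$--cosets and $X+Y$ is $H$--finite, one can directly pick $Hw\subset Y$ and $Hz\subset Y^{\ast}$; then taking $g=w^{-1}z$ gives $Hz=Hwg\subset Yg$ and $Hz\subset Y^{\ast}$, so $Hz\subset Y+Yg$, which is $K$--finite by $K$--almost invariance of $Y$. This direct construction is shorter than your substitution trick $g=z^{-1}y$, though your argument has the minor virtue of working entirely with $X+Xg$ before passing to $Y$.
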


\begin{proof}
As $X$ and $Y$ are $H$--almost equal, it follows that, for all $g\in G$, the
sets $Xg$ and $Yg$ are also $H$--almost equal. As $X$ is $H$--almost
invariant, we know that $X$ and $Xg$ are $H$--almost equal. It follows that,
for all $g\in G$, the sets $Y\ $and $Yg$ are $H$--almost equal.

As $Y$ is $K$--almost invariant, we know that $KY=Y$. Thus each of $Y\ $and
$Y^{\ast}$ is a union of cosets $Kg$ of $K$ in $G$. As $X$ is assumed to be
nontrivial, each of $Y\ $and $Y^{\ast}$ must be non-empty. Let $Ku$ be a coset
of $K$ contained in $Y$, and let $Kv$ be a coset of $K$ contained in $Y^{\ast
}$. Thus $Y(u^{-1}v)-Y$ contains $Kv$. Recall from the first paragraph of this
proof that, for all $g\in G$, the sets $Y\ $and $Yg$ are $H$--almost equal.
Taking $g=u^{-1}v$ shows that $Kv$ must be $H$--finite. Hence $K$ itself is
$H$--finite, so that $K\subset Hg_{1}\cup\ldots\cup Hg_{n}$, for some
$g_{i}\in G$. Now part 1) of Lemma \ref{Neumannlemma} shows that $K\cap H$ has
finite index in $K$, so that $K$ is commensurable with a subgroup of $H$.

Next we consider $X$. As $X\ $is $H$--almost invariant, we know that it is a
union of cosets $Hg$ of $H$ in $G$, and as $X$ is assumed to be nontrivial, we
know that $X$ and $X^{\ast}$ must each be the union of infinitely many such
cosets. As $X$ and $Y$ are $H$--almost equal, it follows that $Y$ contains
some coset $Hw$ of $H$ in $G$. Similarly $Y^{\ast}$ must contain some coset
$Hz$. Hence $Y(w^{-1}z)-Y$ contains $Hz$, and as above this implies that $H$
is commensurable with a subgroup of $K$.

Now it follows that $H$ and $K$ must be commensurable subgroups of $G$, as required.
\end{proof}

If two subgroups $H$ and $K$ of $G\ $are commensurable, it is trivial that a
subset of $G$ is $H$--finite if and only if it is $K$--finite. Thus the
following result is immediate.

\begin{corollary}
\label{HaiiffKai}Let $G$ be a group and let\ $H$ and $K$ be subgroups. Let $X$
be a nontrivial $H$--almost invariant subset of $G$, and let $Y$ be a
nontrivial $K$--almost invariant subset of $G$. Then $X$ and $Y$ are
$H$--almost equal if and only if they are $K$--almost equal.
\end{corollary}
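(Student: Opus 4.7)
The plan is to deduce the corollary directly from Lemma \ref{equivalenta.i.setshavecommensurablestabiliser} together with the trivial observation recorded just before the statement, namely that commensurable subgroups of $G$ give rise to the same notion of ``finite union of cosets''. I would treat the two directions of the biconditional symmetrically.

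First, assume $X$ and $Y$ are $H$-almost equal. Since $X$ is a nontrivial $H$-almost invariant subset and $Y$ is a $K$-almost invariant subset, Lemma \ref{equivalenta.i.setshavecommensurablestabiliser} applies verbatim and tells me that $H$ and $K$ are commensurable. A set that is $H$-finite is contained in finitely many cosets of $H$, and hence in finitely many cosets of the finite-index subgroup $H\cap K$ of both $H$ and $K$, and so in finitely many cosets of $K$; the converse is the same. Thus the symmetric difference $X+Y$, being $H$-finite by hypothesis, is also $K$-finite, which is the $K$-almost equality.

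For the reverse direction, assume $X$ and $Y$ are $K$-almost equal. Here the hypotheses of the previous lemma are not quite in the stated form, so I would swap the roles of $(X,H)$ and $(Y,K)$: $Y$ is nontrivial $K$-almost invariant, $X$ is $H$-almost invariant, and the symmetric difference $Y+X$ is $K$-finite. The same lemma then yields that $K$ and $H$ are commensurable, and the same coset argument as above converts the $K$-finiteness of $X+Y$ into $H$-finiteness, giving the $H$-almost equality.

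The only subtlety worth checking is that the previous lemma really does apply in the reverse direction: it requires the ``first'' set to be nontrivial almost invariant, and since $Y$ is assumed nontrivial $K$-almost invariant, this is fine. After that, there is no obstacle; the proof is essentially a formality, and the ``hard work'' has already been done inside Lemma \ref{equivalenta.i.setshavecommensurablestabiliser}.
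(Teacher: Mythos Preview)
Your proof is correct and follows exactly the approach the paper takes: the paper records the observation that commensurable subgroups yield the same notion of finiteness and then states that the corollary is ``immediate'' from Lemma~\ref{equivalenta.i.setshavecommensurablestabiliser}. You have simply spelled out the details, including the role-swap needed for the reverse implication, which the paper leaves implicit.
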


Now we can define the idea of equivalence of almost invariant sets. Previous
discussions have always used the coboundary, but the coboundary is only a
useful idea in the setting of finitely generated groups.

\begin{definition}
Let $G$ be a group and let\ $H$ and $K$ be subgroups. Let $X$ be a nontrivial
$H$--almost invariant subset of $G$, and let $Y$ be a nontrivial $K$--almost
invariant subset of $G$. We will say that $X$ and $Y$ are \textsl{equivalent}
if they are $H$--almost equal, and will write $X\thicksim Y$.
\end{definition}

\begin{remark}
Corollary \ref{HaiiffKai} implies that this is an equivalence relation, which
justifies our terminology.
\end{remark}

We also note the following useful consequence of Lemma
\ref{equivalenta.i.setshavecommensurablestabiliser}.

\begin{corollary}
\label{H-aisetequalsKaisetimpliescommensurable}Let $G$ be a group and let\ $H$
and $K$ be subgroups. Let $X$ be a nontrivial $H$--almost invariant subset of
$G$, and suppose that $X$ is also $K$--almost invariant. Then $H$ and $K$ are
commensurable subgroups of $G$.
\end{corollary}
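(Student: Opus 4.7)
The plan is to apply Lemma \ref{equivalenta.i.setshavecommensurablestabiliser} directly with $Y = X$. The hypotheses of that lemma require a nontrivial $H$--almost invariant set $X$, a $K$--almost invariant set $Y$, and the condition that $X$ and $Y$ are $H$--almost equal; notice that $Y$ is \emph{not} required to be nontrivial.

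Under the hypotheses of the corollary, $X$ itself serves simultaneously as the nontrivial $H$--almost invariant set and, taking $Y := X$, as the $K$--almost invariant set. Since $X \triangle X = \varnothing$ is (trivially) $H$--finite, $X$ is $H$--almost equal to $Y = X$. All hypotheses of Lemma \ref{equivalenta.i.setshavecommensurablestabiliser} are therefore satisfied, and the conclusion that $H$ and $K$ are commensurable in $G$ is immediate.

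The only thing to verify is that this reduction is legitimate, i.e.\ that no nontriviality hypothesis on $Y$ was hidden in the statement of Lemma \ref{equivalenta.i.setshavecommensurablestabiliser}. A quick re-reading of that lemma confirms this: nontriviality is used there only on $X$ (to ensure that both $Y$ and $Y^{\ast}$ are nonempty via the almost-equality with $X$ and $X^{\ast}$, each of which must meet infinitely many cosets of $H$). In our application $Y = X$ is itself nontrivial, so there is no issue. There is no real obstacle here — the corollary is essentially a restatement of the lemma in the special case $Y = X$.
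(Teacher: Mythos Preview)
Your proposal is correct and is exactly the approach the paper takes: the corollary is stated there as an immediate consequence of Lemma~\ref{equivalenta.i.setshavecommensurablestabiliser}, applied with $Y=X$. Your extra verification that Lemma~\ref{equivalenta.i.setshavecommensurablestabiliser} does not require nontriviality of $Y$ is accurate but unnecessary here, since $Y=X$ is nontrivial by hypothesis anyway.
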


We next recall some ideas from chapter 2 of \cite{SS2}. Consider a group $G$
and a minimal $G$--tree $T$ on which $G$ acts without inversions. After fixing
a vertex $w$ of $T$, we define a $G$--equivariant map $\varphi:G\rightarrow
V(T)$ by the formula $\varphi(g)=gw$. An oriented edge $s$ of $T$ determines a
natural partition of $V(T)$ into two sets, namely the vertices of the two
subtrees obtained by removing the interior of $s$ from $T$. Let $Y_{s}$ denote
the collection of all the vertices of the subtree which contains the terminal
vertex of $s$, and let $Y_{s}^{\ast}$ denote the complementary collection of
vertices. We define the sets $Z_{s}=\varphi^{-1}(Y_{s})$ and $Z_{s}^{\ast
}=\varphi^{-1}(Y_{s}^{\ast})$. Lemma 2.10 of \cite{SS2} shows that, if $S$
denotes the stabiliser of $s$, then $Z_{s}$ is a $S$--almost invariant subset
of $G$, and its equivalence class is independent of the choice of basepoint
$w$. By definition, we know that $\varphi(Z_{s})\subset Y_{s}$. But if $X$ is
an almost invariant subset of $G$ which is equivalent to $Z_{s}$, then
$\varphi(X)$ need not be contained in $Y_{s}$. However we show below that
$\varphi(X)$ must be contained in a bounded neighbourhood of $Y_{s}$, where we
put the usual metric on $T$ in which each edge has length $1$. We will need
this fact later in this paper.

\begin{lemma}
\label{boundednbhdofY_s}Let $G$ be a group and let $T$ be a minimal $G$--tree
on which $G$ acts without inversions. Let $w$ be a vertex of $T$, and let $s$
be an oriented edge of $T$ with stabiliser $S$. If $X$ is a $S$-almost
invariant subset of $G$ which is equivalent to $Z_{s}$, then there is an
integer $M$ such that $\varphi(X)$ is contained in the $M$--neighbourhood of
$Y_{s}$.
\end{lemma}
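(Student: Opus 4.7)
The plan is to exploit two facts: (i) that $\varphi(Z_s)\subset Y_s$ by definition, so the only vertices of $\varphi(X)$ that can escape $Y_s$ come from the symmetric difference $X\triangle Z_s$; and (ii) that $Y_s$ is $S$-invariant because $S$ stabilises the edge $s$, so any single $S$-orbit in $V(T)$ lies at constant distance from $Y_s$.

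First I would write $X\subset Z_s\cup (X\triangle Z_s)$, so that
\[
\varphi(X)\subset \varphi(Z_s)\cup \varphi(X\triangle Z_s)\subset Y_s\cup\varphi(X\triangle Z_s).
\]
Since $X\sim Z_s$, Corollary \ref{HaiiffKai} tells us $X$ and $Z_s$ are $S$-almost equal (no commensurability gymnastics needed because both are $S$-almost invariant of the same type). Thus $X\triangle Z_s$ is $S$-finite, i.e.\ contained in a finite union of right cosets $Sg_1\cup\cdots\cup Sg_n$. It follows that $\varphi(X\triangle Z_s)\subset\bigcup_{i=1}^{n} S(g_i w)$.

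Next I would bound the distance from each $S$-orbit $S(g_i w)$ to $Y_s$. Because the action of $G$ on $T$ is by isometries, and $S$ fixes $s$ (and hence stabilises $Y_s$ setwise), we have $d(s\cdot g_i w,Y_s)=d(g_i w,Y_s)$ for every $s\in S$. Therefore every point of the orbit $S(g_i w)$ lies at distance exactly $d(g_i w,Y_s)$ from $Y_s$. Setting
\[
M=\max_{1\le i\le n} d(g_i w,Y_s),
\]
which is finite because there are only finitely many $g_i$'s, we conclude that $\varphi(X\triangle Z_s)$ lies in the $M$-neighbourhood of $Y_s$, and hence so does $\varphi(X)$.

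I do not foresee any real obstacle here: the only subtle point is justifying that equivalence really does imply $S$-almost equality (as opposed to only $H$-almost equality for some commensurable $H$), which is exactly what Corollary \ref{HaiiffKai} gives us. Once that is in hand, the argument is a one-line decomposition plus the observation that $S$-orbits in $V(T)$ have bounded distance from the $S$-invariant set $Y_s$.
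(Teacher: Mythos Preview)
Your proof is correct and follows essentially the same approach as the paper's: decompose $X$ as $Z_s$ together with an $S$--finite set, use $\varphi(Z_s)\subset Y_s$, and then observe that each coset $Sg_i$ maps under $\varphi$ to an $S$--orbit lying at a fixed distance from $Y_s$ (the paper phrases it as ``the same distance from $s$ as the vertex $\varphi(g)$''). The only cosmetic difference is that the paper does not bother to cite Corollary~\ref{HaiiffKai}: since both $X$ and $Z_s$ are assumed $S$--almost invariant, the definition of equivalence already gives $S$--almost equality directly.
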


\begin{proof}
Recall that we have the $G$--equivariant map $\varphi:G\rightarrow V(T)$ given
by the formula $\varphi(g)=gw$. As $X$ and $Z_{s}$ are equivalent, their
symmetric difference is $S$--finite. Thus $X$ is contained in the union of
$Z_{s}$ with a finite union of cosets $Sg$ of $S$. As $S$ fixes the edge $s$
of $T$, the image $\varphi(Sg)$ of the coset $Sg$ consists of vertices of $T$
all at the same distance from $s$ as the vertex $\varphi(g)$. As
$\varphi(Z_{s})$ is contained in $Y_{s}$, it follows that there is an integer
$M$ such that $\varphi(X)$ is contained in the $M$--neighbourhood of $Y_{s}$,
as required.
\end{proof}

\section{Enclosing\label{enclosing}}

Let $A$ be a nontrivial $H$--almost invariant subset of a group $G$ and let
$v$ be a vertex of a $G$--tree $T$. In \cite{SS2}, Scott and Swarup defined
what it means for $A$ to be enclosed by $v$. However we will need to slightly
modify this definition in order to allow us to handle the theory of adapted
almost invariant sets in section \ref{adaptedalmostinvariantsets}. Having
discussed this, we will then show that in the context of the work done in
\cite{SS2}, the new definition is equivalent to the old. More precisely, in
Lemma \ref{twodefnsofenclosing}, we show that the two definitions of enclosing
agree if $H$ is finitely generated, or if $A$ is associated to a splitting of
$G$ over $H$.

Before discussing the details, here is an explanation of the intuition behind
them for the special case of surface groups. We start with the basic
connection between topology and almost invariant sets. Consider a closed
orientable surface $M$ with base point $w$. Pick a finite generating set for
$G=\pi_{1}(M,w)$ and for each chosen generator pick a loop in $M\ $which
represents that generator. For simplicity we will assume that these loops are
simple and intersect each other only in $w$. Let $\widetilde{M}$ denote the
universal cover of $M$, and let $\widetilde{w}$ be a point of the pre-image of
$w$. We will take $\widetilde{w}$ to be the basepoint of $\widetilde{M}$. The
pre-image of all these loops in $M$ is a copy of the Cayley graph $\Gamma$ of
$G$ with respect to the chosen generating set and with basepoint at
$\widetilde{w}$. Now let $\lambda$ be an essential simple loop in $M$ which
does not meet $w$, and let $l$ be a component of the pre-image of $\lambda$ in
$\widetilde{M}$. Thus $l$ is an embedded line with infinite cyclic stabilizer
$H$. In particular $l$ cuts $\widetilde{M}$ into two pieces which we denote by
$Z\ $and $Z^{\ast}$. We associate to $Z$ the subset $X$ of $G$ given by
$X=\{g\in G:g\widetilde{w}\in Z\}$. As $HZ=Z$, we have $HX=X$. As the quotient
$H\backslash l$ is compact, it follows that the coboundary of $X$ in $\Gamma$
is $H$--finite. Thus $X$ is a $H$--almost invariant subset of $G$. As we
assumed the loop $\lambda$ does not meet $w$, the subset of $G$ associated to
$Z^{\ast}$ is equal to $X^{\ast}$, the complement of $X$ in $G$.

Now consider an incompressible subsurface $N$ of $M$. This means that the
boundary of $N$ is a finite family of disjoint $\pi_{1}$--injective circles
embedded in $M$. We will assume $\partial N$ does not meet $w$. Note that $N$
is not assumed to be connected. The algebraic idea of enclosing corresponds to
the topological idea of a connected subset of $M$ lying in the subsurface $N$.
More precisely, let $\alpha$ be a nontrivial element of $G=\pi_{1}(M)$ such
that there is a loop representing $\alpha$ which is contained in $N$. We will
show how to reformulate this idea in terms of almost invariant sets and
$G$--trees. Let $M_{\alpha}$ denote the based cover of $M$ with $\pi
_{1}(M_{\alpha})$ equal to the cyclic group $H$ generated by $\alpha$. As $M$
is closed and orientable, $M_{\alpha}$ is an annulus. In particular,
$e(G,H)=2$, so that, up to equivalence and complementation, there is a unique
nontrivial $H$--almost invariant subset $A$ of $G$. Pick a simple loop in
$M_{\alpha}$ which represents $\alpha$, and does not pass through any point
above $w$, and let $l$ denote the pre-image of this loop in $\widetilde{M}$.
Thus $l$ is a $H$--invariant line in $\widetilde{M}$, and the two sides of $l$
in $\widetilde{M}$ determine $H$--almost invariant subsets of $G$ equivalent
to $A$ and $A^{\ast}$.

The pre-image in $\widetilde{M}$ of $\partial N$, consists of disjoint lines
which divide $\widetilde{M}$ into copies of the universal cover of the
components of $M-\partial N$. The dual graph in $\widetilde{M}$ to this family
of lines is a $G$--tree $T$. As above each of these lines determines a
complementary pair of almost invariant subsets of $G$. Recall that we are
assuming that there is a loop representing $\alpha$ which is contained in $N$,
and so does not meet $\partial N$. If $l$ lies above such a loop, then $l$ is
disjoint from the pre-image of $\partial N$. In particular, if $m$ denotes a
component of this pre-image, and $Y\ $and $Y^{\ast}$ denote the associated
almost invariant subsets of $G$, then one of these sets must be contained in
$A$ or in $A^{\ast}$. In general, any loop representing $\alpha$ is homotopic
into $N$. If $l$ is the pre-image of such a loop, it may meet the pre-image of
$\partial N$, in which case one of $Y$ or $Y^{\ast}$ must be "almost"
contained in $A$ or in $A^{\ast}$. Note that $Y$ and $Y^{\ast}$ are associated
to the splitting of $G$ determined by the corresponding edge of $T$. This is
the intuition behind our definition of enclosing.

Choose a basepoint $w$ for $T$. For each edge $s$ of $T$, this determines the
subsets $Z_{s}$ and $Z_{s}^{\ast}$ of $G$ as described immediately before
Lemma \ref{boundednbhdofY_s}. In Definition 4.2 of \cite{SS2}, Scott and
Swarup said that the vertex $v$ encloses $A$, if for all edges $s$ of $T$
which are incident to $v$ and directed towards $v$, we have $A\cap Z_{s}%
^{\ast}$ or $A^{\ast}\cap Z_{s}^{\ast}$ is $H$--finite. They showed that this
definition did not depend on the choice of basepoint $w$. They also showed
that if $v$ encloses $A$, then it encloses any $H^{\prime}$--almost invariant
subset $A^{\prime}$ of $G$ which is equivalent to $A$.

As the definition of the sets $Z_{s}$ and $Z_{s}^{\ast}$ depends on the choice
of basepoint $w$, we will not use this notation in the rest of this section.
Instead we will refer to $\varphi^{-1}(Y_{s})$ and $\varphi^{-1}(Y_{s}^{\ast
})$ which makes the dependence on the base point somewhat clearer.

Now we begin discussing our new definition of enclosing.

\begin{definition}
\label{defnofstrictenclosing}Let $A$ be a nontrivial $H$--almost invariant
subset of a group $G$ and let $v$ and $w$ be vertices of a $G$--tree $T$. Let
$\varphi:G\rightarrow V(T)$ denote the function determined by choosing $w$ as
the basepoint of $T$, so that $\varphi(g)=gw$. We say that the vertex $v$
\textsl{strictly encloses} $A$ \textsl{with basepoint} $w$ if, for all edges
$s$ of $T$ which are incident to $v$ and directed towards $v$, we have
$\varphi^{-1}(Y_{s}^{\ast})\subset A$ or $\varphi^{-1}(Y_{s}^{\ast})\subset
A^{\ast}$.
\end{definition}

\begin{remark}
It follows immediately from this definition that if $t$ is any edge of $T$
which is directed towards $v$, and if $v$ strictly encloses $A$ with basepoint
$w$, then we must have $\varphi^{-1}(Y_{t}^{\ast})\subset A$ or $\varphi
^{-1}(Y_{t}^{\ast})\subset A^{\ast}$.
\end{remark}

This definition obviously depends crucially on the choice of basepoint $w$. We
will discuss what happens if the basepoint is changed. But first we need to
introduce some notation. If the vertex $v$ of $T$ strictly encloses $A$ with
basepoint $w$, we let $\Sigma_{v}(A)$ denote the collection of all vertices
$u$ of $T-\{v\}$ such that the path from $u$ to $v$ ends in an edge $s$ such
that when $s$ is directed towards $v$, we have $\varphi^{-1}(Y_{s}^{\ast
})\subset A$. Note that if $u$ lies in $\Sigma_{v}(A)$, and $t$ is any edge on
the path from $u$ to $v$ which is directed towards $v$, then we will also have
$\varphi^{-1}(Y_{t}^{\ast})\subset A$. We define $\Sigma_{v}(A^{\ast})$
similarly. Clearly $\Sigma_{v}(A)$ and $\Sigma_{v}(A^{\ast})$ partition the
vertices of $T-\{v\}$. As $H$ fixes $A$ and $v$, it also follows that
$\Sigma_{v}(A)$ and $\Sigma_{v}(A^{\ast})$ are each $H$--invariant, under the
left action of $H$.

\begin{lemma}
\label{strictenclosingdoesnotdependonbasepoint}Let $T$ be a minimal $G$--tree
and let $X$ be a nontrivial $H$--almost invariant subset of $G$. Let $v$, $x$
and $w$ be vertices of $T$. Suppose that the vertex $v$ strictly encloses $X$
with basepoint $x$. Then there is a $H$--almost invariant subset $W$ of $G$
such that $W$ is equivalent to $X$ and $v$ strictly encloses $W$ with
basepoint $w$.
\end{lemma}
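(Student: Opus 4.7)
The plan is to induct on the tree-distance $d_T(w,x)$, reducing everything to the case where $w$ and $x$ are joined by a single edge $e$. The base case $d_T(w,x)=0$ gives $W=X$, and the inductive step takes a neighbour $w'$ of $w$ on the geodesic to $x$, applies the hypothesis with basepoint $w'$ to produce some $W'\sim X$, then applies the adjacent case to $W'$ with basepoints $w'$ and $w$; transitivity of equivalence gives $W\sim X$.

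For the adjacent case, for each edge $s$ incident to $v$ and directed toward $v$, let $\epsilon(s)=1$ if $\varphi_x^{-1}(Y_s^*)\subset X$ and $\epsilon(s)=0$ otherwise (one of these holds by strict enclosing with basepoint $x$). Set
\[
W \;=\; \bigcup_{s:\ \epsilon(s)=1} \varphi_w^{-1}(Y_s^*) \;\cup\; \bigl(X\cap\varphi_w^{-1}(v)\bigr).
\]
The labelling $\epsilon$ is $H$-invariant since $H$ fixes $v$ and preserves $X$, so $W$ is $H$-invariant; by construction $v$ strictly encloses $W$ with basepoint $w$, so the substance is to show $W\sim X$.

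If $gw\neq v$ and $gx\neq v$, then the one-edge path $[gw,gx]=ge$ does not meet $v$, so $gw$ and $gx$ lie in a common $Y_s^*$, and $g$ lies in both or neither of $W,X$ according to $\epsilon(s)$. Hence $W\triangle X\subset\varphi_w^{-1}(v)\cup\varphi_x^{-1}(v)$, with the $\varphi_w^{-1}(v)$-part vanishing by the choice $W\cap\varphi_w^{-1}(v)=X\cap\varphi_w^{-1}(v)$. Writing $B:=\varphi_x^{-1}(v)$ and, for $g\in B$, letting $s(g)$ denote the edge at $v$ with non-$v$ endpoint $gw$ directed toward $v$, the task reduces to verifying that
\[
\bigl\{g\in B:\ \epsilon(s(g))=1\bigr\}\ \triangle\ (X\cap B)
\]
is $H$-finite.

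For this, pick $h_0\in G$ with $h_0x\in Y_{e,\to x}^*$, i.e.\ lying on the $w$-side of $e$; minimality of $T$ guarantees such an $h_0$, since otherwise the orbit $Gx$ would be contained in the proper subtree $Y_{e,\to x}$, and its convex span would be a proper $G$-invariant subtree, contradicting minimality. For $g\in B$, the vertex $g(h_0x)$ lies in $gY_{e,\to x}^*=Y_{s(g)}^*$, a set disjoint from $v$, so strict enclosing with basepoint $x$ yields $gh_0\in X$ iff $\epsilon(s(g))=1$; combined with $Xh_0^{-1}\overset{a,H}{=}X$, the set $\{g:\ g\in X\ \text{XOR}\ gh_0\in X\}$ is $H$-finite, delivering the claim. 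Hence $W\sim X$, and $W$ is $H$-almost invariant because it differs from the $H$-almost invariant $X$ by an $H$-finite set. The principal obstacle is this last $H$-finiteness bound on $B$; the key geometric input is the existence of $h_0$, which ultimately rests on minimality of the $G$-tree.
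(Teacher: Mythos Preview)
Your argument is correct, and your definition of $W$ coincides with the paper's (the paper writes it as $W=\psi^{-1}(\Sigma_v(X))\cup(X\cap\psi^{-1}(v))$, which is exactly your formula in terms of $\epsilon$). The approaches diverge in how $W\sim X$ is established. The paper does not induct on $d_T(w,x)$: it defines $W$ directly for arbitrary $w$ and then performs a three-way case analysis depending on whether $x$, $w$, and $v$ lie in common $G$-orbits. When $w=kx$ for some $k\in G$ (Cases 1--2), the paper shows $W$ is $H$-almost equal to $Xk^{-1}$ by a direct coset computation; when $x$ and $w$ lie in distinct orbits (Case 3), it picks a translate $z$ of $x$ with $w$ between $x$ and $z$ and sandwiches $X-W$ inside $X-Z$. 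Your method trades this orbit bookkeeping for an induction on distance plus a uniform single-edge argument: the choice of $h_0$ with $h_0x$ on the far side of $e$ (guaranteed by minimality) lets you reduce membership in $W$ on the problematic fibre $\varphi_x^{-1}(v)$ to membership of $gh_0$ in $X$, and then almost-invariance of $X$ finishes it. Your route avoids any mention of orbit types and is arguably cleaner; the paper's route is one-shot (no induction) but pays for that with the case split. Both rely on minimality at the same essential point --- to produce a translate of the basepoint on the ``other side'' of an edge.
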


\begin{proof}
Let $\varphi:G\rightarrow V(T)$ denote the map given by $\varphi(g)=gx$, and
define $\Sigma_{v}(X)$ as above using the map $\varphi$. As $v$ strictly
encloses $X$ with basepoint $x$, it follows that an element $g$ of $G$ such
that $gx\neq v$ lies in $X$ if and only if $gx$ lies in $\Sigma_{v}(X)$. Thus
$X=\varphi^{-1}(\Sigma_{v}(X))\cup(X\cap\varphi^{-1}(v))$.

Now let $\psi:G\rightarrow V(T)$ denote the map given by $\psi(g)=gw$. We
define a subset $W$ of $G$ by replacing $\varphi$ by $\psi$ in the above
formula for $X$. Thus $W=\psi^{-1}(\Sigma_{v}(X))\cup(X\cap\psi^{-1}(v))$. As
$H$ fixes $v$ and preserves $\Sigma_{v}(X)$, it follows that $HW=W$.

We claim that $W$ is $H$--almost equal to $X$, so that $W$ is $H$--almost
invariant and is equivalent to $X$. Now the definition of $W$ shows that for
all edges $s$ of $T$ which are incident to $v$ and directed towards $v$, we
have $\psi^{-1}(Y_{s}^{\ast})\subset W$ or $\psi^{-1}(Y_{s}^{\ast})\subset
W^{\ast}$. Thus $v$ strictly encloses $W$ with basepoint $w$, which will
complete the proof of the lemma.

Now we come to the proof of our claim. We need to consider various cases
depending on which, if any, of $x$, $w$ and $v$ lie in the same $G$--orbit as
each other.

Case 1: There is $k$ in $G$ such that $w=kx$, and $v$ is not in the $G$--orbit
of $x$ and $w$.

In this case $\varphi^{-1}(v)$ and $\psi^{-1}(v)$ are both empty. Thus we have
$X=\varphi^{-1}(\Sigma_{v}(X))$ and $W=\psi^{-1}(\Sigma_{v}(X))$. Let $L$
denote the stabiliser of $x$, so that $kLk^{-1}$ is the stabiliser of $w$.
Thus if $z=gw=gkx$, then $\varphi^{-1}(z)=gkL$ and $\psi^{-1}(z)=g(kLk^{-1})$.
Hence $\psi^{-1}(z)=\varphi^{-1}(z)k^{-1}$. As this holds for all $z$ in the
orbit of $x$ and $w$, it follows that $W=Xk^{-1}$, so that $W$ is $H$--almost
equal to $X$, as required.

Case 2: There are $a$ and $k$ in $G$ such that $w=kx$ and $v=aw=akx$.

Recall that $X=\varphi^{-1}(\Sigma_{v}(X))\cup(X\cap\varphi^{-1}(v))$ and
$W=\psi^{-1}(\Sigma_{v}(X))\cup(X\cap\psi^{-1}(v))$. For ease of reading we
let $P$, $Q$, $R$ and $S$ denote $\varphi^{-1}(\Sigma_{v}(X))$, $X\cap
\varphi^{-1}(v)$, $\psi^{-1}(\Sigma_{v}(X))$ and $X\cap\psi^{-1}(v)$
respectively, so that $X=P\cup Q$ and $W=R\cup S$. The argument in Case 1
applies to show that $R=Pk^{-1}$. Also $Q=X\cap akL$ and $S=X\cap a(kLk^{-1}%
)$. Now $X$ is $H$--almost equal to $Xk^{-1}$, so that $S$ is $H$--almost
equal to $Xk^{-1}\cap a(kLk^{-1})=(X\cap akL)k^{-1}=Qk^{-1}$. It follows that
$R\cup S$ is $H$--almost equal to $(P\cup Q)k^{-1}$, so that $W$ is
$H$--almost equal to $Xk^{-1}$. Thus $W$ is $H$--almost equal to $X$, as required.

Case 3: $x$ and $w$ lie in distinct $G$--orbits.

Let $z$ be a translate of $x$ such that $w$ lies between $x$ and $z$.
Considering $z$ as a new choice of basepoint, we define $\theta:G\rightarrow
V(T)$ by $\theta(g)=gz$, and define $Z=\theta^{-1}(\Sigma_{v}(X))\cup
(X\cap\theta^{-1}(v))$. As $z$ is a translate of $x$, we can apply Case 1 or
Case 2 to $X\ $and $Z$ to conclude that $Z$ is $H$--almost equal to $X$.

Next we will show that $X-W$ is $H$--finite by showing that $X-W\subset X-Z$.
To prove this, we note that if $g\in X-W$ then $gw$ cannot equal $v$, as then
$g$ would lie in $X\cap\psi^{-1}(v)$ which is contained in $W$. Thus if $g\in
X-W$, then $gx$ lies in $\Sigma_{v}(X)\cup\{v\}$ and $gw$ lies in $\Sigma
_{v}(X^{\ast})$. As $w$ lies between $x$ and $z$, it follows that $gz$ also
lies in $\Sigma_{v}(X^{\ast})$, so that $g\in X-Z$, proving that $X-W\subset
X-Z$, as required.

Similarly, by reversing the roles of $X\ $and $W$, we can show that $W-X$ is
$H$--finite by showing that $W-X\subset Z-X$. It follows that $X\ $and $W$ are
$H$--almost equal, which completes the proof of the lemma.
\end{proof}

Having proved this result we now make the following definition of enclosing.

\begin{definition}
\label{defnofenclosing}Let $A$ be a nontrivial $H$--almost invariant subset of
a group $G$ and let $v$ a vertex of a $G$--tree $T$. We say that the vertex
$v$ \textsl{encloses} $A$, if $A$ is equivalent to a $H^{\prime}$--almost
invariant subset $A^{\prime}$ of $G$ such that $A^{\prime}$ is strictly
enclosed by $v$ with some basepoint $w$.
\end{definition}

\begin{remark}
The result of Lemma \ref{strictenclosingdoesnotdependonbasepoint} shows that
if $v$ encloses $A$ according to this new definition, and we fix a basepoint
$u$ for $T$, then $A$ is equivalent to a $H^{\prime\prime}$--almost invariant
subset $A^{\prime\prime}$ of $G$ such that $A^{\prime\prime}$ is strictly
enclosed by $v$ with basepoint $u$.
\end{remark}

For future reference, we set out the connection between this definition and
the old one.

\begin{lemma}
\label{twodefnsofenclosing}Let $A$ be a nontrivial $H$--almost invariant
subset of a group $G$ and let $v$ a vertex of a $G$--tree $T$.

\begin{enumerate}
\item If $v$ encloses $A$ in the new sense (Definition \ref{defnofenclosing}),
then $v$ encloses $A$ in the old sense (Definition 4.2 of \cite{SS2}).

\item If $v$ encloses $A$ in the old sense, and $H$ is finitely generated,
then $v$ encloses $A$ in the new sense.

\item If $v$ encloses $A$ in the old sense, and $A$ is associated to a
splitting of $G$ over $H$, then $v$ encloses $A$ in the new sense.
\end{enumerate}
\end{lemma}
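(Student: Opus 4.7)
The plan is to handle the three parts in order. Part 1 is essentially a direct translation. Because the old enclosing is known to be basepoint-independent and preserved under equivalence (both recalled just before Definition \ref{defnofstrictenclosing}), it suffices to show that strict enclosing at $w$ implies old enclosing at $w$ for the representative $A'$. This is immediate: if $\varphi^{-1}(Y_s^*) \subset A'$ or $\varphi^{-1}(Y_s^*) \subset (A')^*$ for every edge $s$ at $v$ directed towards $v$, then one of $A' \cap \varphi^{-1}(Y_s^*)$ and $(A')^* \cap \varphi^{-1}(Y_s^*)$ is empty and hence trivially $H'$-finite (and $H$-finite by Lemma \ref{equivalenta.i.setshavecommensurablestabiliser}).

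For parts 2 and 3, the natural strategy is to build a canonical strict-enclosing representative. Fix a basepoint $w$, put $\varphi(g)=gw$, and let $E_v$ be the set of edges at $v$ directed towards $v$. Old enclosing says that for each $s \in E_v$ at least one of $A \cap \varphi^{-1}(Y_s^*)$ and $A^* \cap \varphi^{-1}(Y_s^*)$ is $H$-finite. Partition $E_v = E_v^+ \sqcup E_v^-$, with $s \in E_v^+$ iff $A^* \cap \varphi^{-1}(Y_s^*)$ is $H$-finite (breaking ties $H$-equivariantly, which is possible because $H$ fixes $v$ and the partition criterion is $H$-equivariant), and set
\[ A' = \bigl(A \cap \varphi^{-1}(v)\bigr) \;\cup\; \bigcup_{s \in E_v^+} \varphi^{-1}(Y_s^*). \]
Then $HA' = A'$, and by construction $v$ strictly encloses $A'$ at $w$. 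Thus it remains to show that $A \triangle A'$ is $H$-finite, from which both $A' \sim A$ and the $H$-almost invariance of $A'$ follow.

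A direct computation yields
\[ A \triangle A' = \bigsqcup_{s \in E_v^+}\bigl(A^* \cap \varphi^{-1}(Y_s^*)\bigr) \;\sqcup\; \bigsqcup_{s \in E_v^-}\bigl(A \cap \varphi^{-1}(Y_s^*)\bigr), \]
a disjoint union of $H$-finite pieces indexed by $E_v$. Since $A \triangle A'$ is $H$-invariant under the left action, $H$-finiteness reduces to showing that the set $F \subset E_v$ of edges with non-empty contribution consists of only finitely many $H$-orbits. This orbit-finiteness claim is the main obstacle; everything else is bookkeeping. For part 2, with $H$ finitely generated, fix a finite generating set of $H$ and build a Schreier-type graph of $H\backslash G$; the $H$-finiteness of the coboundary of $A$ there forces only finitely many $H$-orbits of edges $s \in E_v$ to produce coboundary edges in the branch of $T$ beyond $s$, so $F/H$ is finite. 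For part 3, $A$ is (equivalent to) $Z_e$ for an edge $e$ with stabiliser $H$ in the Bass-Serre tree $T_A$ of the splitting; Lemma \ref{boundednbhdofY_s} applied in $T_A$ shows $\varphi_A(A)$ lies in a bounded neighbourhood of $Y_e$, and any $s \in F$ forces the image in $T_A$ of the branch beyond $s$ to straddle $e$ at bounded distance, so the minimality of $T_A$ together with the fact that $H$ is exactly the edge stabiliser of $e$ confines $F$ to finitely many $H$-orbits.
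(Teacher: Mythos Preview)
Your Part 1 is correct and essentially identical to the paper's argument.

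For Parts 2 and 3, your overall strategy---constructing $A'$ by reassigning whole branches $\varphi^{-1}(Y_s^*)$ according to the partition $E_v = E_v^+ \sqcup E_v^-$, then showing $A \triangle A'$ is $H$-finite via orbit-finiteness of the set $F$ of contributing edges---is exactly what the results cited by the paper (Lemma~4.14 and Proposition~5.7 of \cite{SS2}) carry out. The paper does not reprove these; it cites them, noting in the appendix that the proof of Lemma~4.14 requires the basepoint choice $w = v$ to be valid. So your route is the same in spirit, but you are attempting to reconstruct the cited arguments, and the reconstructions have gaps.

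For Part 2, your ``Schreier-type graph of $H\backslash G$'' with ``$H$-finiteness of the coboundary'' is not well-defined here: $G$ is not assumed finitely generated in this lemma, so there is no Cayley graph with finite-valence vertices, and the usual coboundary characterisation of almost invariance is unavailable. You also work with an arbitrary basepoint $w$, whereas the corrected proof in \cite{SS2} specifically takes $w = v$; without that choice (or an explanation of why it is unnecessary), it is not clear how to bound the number of $H$-orbits of edges in $F$.

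For Part 3, you introduce the Bass--Serre tree $T_A$ of the splitting and assert that ``any $s \in F$ forces the image in $T_A$ of the branch beyond $s$ to straddle $e$ at bounded distance,'' but there is no map from $T$ to $T_A$ in play, and the phrase ``straddle $e$ at bounded distance'' is not defined. The argument in \cite{SS2} that the paper cites works differently: it uses directly that $A$ (being associated to a splitting) has nested translates, and deduces the sandwiching/enclosing from that nestedness rather than by transferring information between two trees.

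In summary: right target, right construction of $A'$, but the orbit-finiteness of $F$ is the whole difficulty and your sketches for it do not constitute proofs.
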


\begin{proof}
1) If $v$ encloses $A$ in the new sense (Definition \ref{defnofenclosing}),
then $A$ is equivalent to a $H^{\prime}$--almost invariant subset $A^{\prime}$
of $G$ such that $A^{\prime}$ is strictly enclosed by $v$ with some basepoint
$w$. It is trivial that $v$ encloses $A^{\prime}$ in the old sense (Definition
4.2 of \cite{SS2}), so it follows that $v$ encloses $A$ itself in the old sense.

2) Now suppose that $v$ encloses $A$ in the old sense, and that $H\ $is
finitely generated. We would like to apply Lemma 4.14 of \cite{SS2}.
Unfortunately, as pointed out by Scott and Swarup in the appendix, the proof
given in \cite{SS2} is incorrect. But that proof does show that if $H$ is
finitely generated and we fix the basepoint $w$ for $T$ to be equal to $v$,
then $A$ is equivalent to a $H^{\prime}$--almost invariant subset $A^{\prime}$
of $G$ such that, for each $s$, we have $Z_{s}^{\ast}$ contained in
$A^{\prime}$ or its complement. Thus $A^{\prime}$ is strictly enclosed by $v$
with base point $v$, so that $A$ is enclosed by $v$ in the new sense.

3) Finally suppose that $v$ encloses $A$ in the old sense, and that $A$ is
associated to a splitting of $G$ over $H$. In this case, the proof of
Proposition 5.7 of \cite{SS2} yields the same conclusion as in part 2). As
discussed in the appendix, Proposition 5.7 of \cite{SS2} is incorrect as
stated, and so is the first part of the proof. But here we are assuming that
$A$ is enclosed by $v$, so we only need to use the second, correct, part of
the proof. Thus again $A$ is enclosed by $v$ in the new sense.
\end{proof}

\section{Cubings\label{cubings}}

In this section, we recall some basic facts about the cubings constructed by
Sageev in \cite{Sageev-cubings}. We will need the following definition.

\begin{definition}
\label{defnofultrafilter} Let $E$ be a partially ordered set, equipped with an
involution $A\rightarrow A^{\ast}$ such that $A\neq A^{\ast}$, and if $A\leq
B$ then $B^{\ast}\leq A^{\ast}$. An \textsl{ultrafilter} $V$ on $E$ is a
subset of $E$ satisfying

\begin{enumerate}
\item For every $A\in E$, we have $A\in V$ or $A^{\ast}\in V$ but not both.

\item If $A\in V$ and $A\leq B$, then $B\in V$.
\end{enumerate}
\end{definition}

Consider a group $G$ with subgroups $H_{1},\ldots,H_{n}$. For $i=1,\ldots,n$,
let $X_{i}$ be a nontrivial $H_{i}$--almost invariant subset of $G$, and let
$E$ denote the set

$\{gX_{i},gX_{i}^{\ast}:g\in G,1\leq i\leq n\}$. In \cite{Sageev-cubings},
Sageev gave a construction of a cubing from the set $E$ equipped with the
partial order given by inclusion. The involution $A\rightarrow A^{\ast}$ on
$E$ is complementation in $G$. Note that Sageev did not assume that the
$H_{i}$'s were finitely generated, but he did assume that $G$ was finitely
generated. However this is not necessary. The finite generation of $G$ allowed
him to discuss the number of ends of $G$ in terms of the number of ends of its
Cayley graph, but the Cayley graph does not appear after the initial
discussion and is not needed for any of the results in \cite{Sageev-cubings}.
Thus his construction works with no restrictions on $G$ and the $H_{i}$'s.

Here is a summary of Sageev's construction. Let $\mathcal{K}^{(0)}$ denote the
collection of all ultrafilters on $E=\{gX_{i},gX_{i}^{\ast}:g\in G,1\leq i\leq
n\}$. Construct $\mathcal{K}^{(1)}$ by attaching an edge to two vertices
$V,V^{\prime}\in\mathcal{K}^{(0)}$ if and only if they differ by replacing a
single element by its complement, i.e. there exists $A\in V$ such that
$V^{\prime}=(V-\{A\})\cup\{A^{\ast}\}$. Note that the fact that $V$ and
$V^{\prime}$ are both ultrafilters implies that $A$ must be a minimal element
of $V$. Also if $A$ is a minimal element of $V$, then the set $V^{\prime
}=(V-\{A\})\cup\{A^{\ast}\}$ is an ultrafilter on $E$. Now attach
$2$--dimensional cubes to $\mathcal{K}^{(1)}$ to form $\mathcal{K}^{(2)}$, and
inductively attach $n$--cubes to $\mathcal{K}^{(n-1)}$ to form $\mathcal{K}%
^{(n)}$. All such cubes are attached by an isomorphism of their boundaries
and, for each $n\geq2$, one $n$--cube is attached to $\mathcal{K}^{(n-1)}$ for
each occurrence of the boundary of an $n$--cube appearing in $\mathcal{K}%
^{(n-1)}$. The complex $\mathcal{K}$ constructed in this way need not be
connected, but one special component can be picked out in the following way.

For each element $g$ of $G$, define the ultrafilter $V_{g}=\{A\in E:g\in A\}$.
These special vertices of $\mathcal{K}$ are called \textit{basic}. Sageev
shows that two basic vertices $V$ and $V^{\prime}$ of $\mathcal{K}$ differ on
only finitely many complementary pairs of elements of $E$, so that there exist
elements $A_{1},\ldots,A_{m}$ of $E$ which lie in $V$ such that $V^{\prime}$
can be obtained from $V$ by replacing each $A_{i}$ by $A_{i}^{\ast}$. By
re-ordering the $A_{i}$'s if needed, we can arrange that $A_{1}$ is a minimal
element of $V$. It follows that $V_{1}=(V-\{A_{1}\})\cup\{A_{1}^{\ast}\}$ is
also an ultrafilter on $E$, and so is joined to $V$ by an edge of
$\mathcal{K}$. By repeating this argument, we will find an edge path in
$\mathcal{K}$ of length $m$ which joins $V$ and $V^{\prime}$. It follows that
the basic vertices of $\mathcal{K}$ all lie in a single component $C$. As the
collection of all basic vertices is preserved by the action of $G$ on
$\mathcal{K}$, it follows that this action preserves $C$. Finally, Sageev
shows in \cite{Sageev-cubings} that $C$ is simply connected and $CAT(0)$ and
hence is a cubing.

If $\sigma$ is a $k$--dimensional cube in a cubing $C$, viewed as a standard
unit cube in $\mathbb{R}^{k}$ and $\hat{\sigma}$ denotes the barycentre of
$\sigma$, then a \textit{dual cube} in $\sigma$ is the intersection with
$\sigma$ of a $(k-1)$--dimensional plane running through $\hat{\sigma}$ and
parallel to one of the $(k-1)$--dimensional faces of $\sigma$. Given a cubing,
one may consider the equivalence relation on edges generated by the relation
which declares two edges to be equivalent if they are opposite sides of a
square in $C$. Now given an equivalence class of edges, the
\textit{hyperplane} associated to this equivalence class is the collection of
dual cubes whose vertices lie on edges in the equivalence class. In
\cite{Sageev-cubings}, Sageev shows that hyperplanes are totally geodesic
subspaces. Moreover, he shows that hyperplanes do not self-intersect (i.e. a
hyperplane meets a cube in a single dual cube) and that a hyperplane separates
a cubing into precisely two components, which we call the \textit{half-spaces}
associated to the hyperplane.

An important aspect of Sageev's construction is that one can recover the
elements of $E$ from the action of $G$ on the cubing $C$. Recall that an edge
$f$ of $C$ joins two vertices $V$ and $V^{\prime}$ if and only if there exists
$A\in V$ such that $V^{\prime}=(V-\{A\})\cup\{A^{\ast}\}$. If $f$ is oriented
towards $V^{\prime}$, we will say that $f$ \textit{exits} $A$. We let
$\mathcal{H}_{A}$ denote the hyperplane associated to the equivalence class of
$f$. This equivalence class consists of all those edges of $C$ which, when
suitably oriented, exit $A$. Now let $X$ denote an $H$--almost invariant
subset of $G$ which is an element of $E$. Let $k$ denote an element of $X$,
and let $l$ denote an element of $X^{\ast}$. Thus $X$ lies in the basic vertex
$V_{k}=\{A\in E:k\in A\}$, and $X^{\ast}$ lies in the basic vertex $V_{l}$.
Now any path joining $V_{k}$ to $V_{l}$ must contain an edge which exits $X$,
so we can define the hyperplane $\mathcal{H}_{X}$ as above. Let $\mathcal{H}%
_{X}^{+}$ denote the half-space determined by $\mathcal{H}_{X}$ and by the
condition that every vertex of $\mathcal{H}_{X}^{+}$ is an ultrafilter on $E$
which contains $X$. Clearly the basic vertex $V_{g}$ lies in $\mathcal{H}%
_{X}^{+}$ if and only if $g$ lies in $X$. As $gV_{e}=V_{g}$, it follows that
$X$ equals the subset $\{g\in G:gV_{e}\in\mathcal{H}_{X}^{+}\}$ of $G$. Thus
we have recovered $X$ from the action of $G$ on the cubing $C$.

For future reference we note the following points. By definition an edge of
$C$ exits a single element $A$ of $E$. Thus if $X$ and $Y$ are distinct (and
non-complementary) elements of $E$, the hyperplanes $\mathcal{H}_{X}$ and
$\mathcal{H}_{Y}$ have no dual cubes in common. Further if $X$ and $Y$ are
elements of $E$ such that $X$ is properly contained in $Y$, then the
hyperplanes $\mathcal{H}_{X}$ and $\mathcal{H}_{Y}$ must be disjoint. In
particular, the half-space $\mathcal{H}_{X}^{+}$ is properly contained in the
half-space $\mathcal{H}_{Y}^{+}$. To see that these hyperplanes must be
disjoint, note that otherwise there is a square in $C$ whose four vertices lie
in the four regions into which $\mathcal{H}_{X}$ and $\mathcal{H}_{Y}$
together separate $C$. In particular, there is a vertex $V$ in $\mathcal{H}%
_{X}^{+}\cap\mathcal{H}_{Y}^{+}$, and edges $f$ and $f^{\prime}$ with one end
at $V$ which exit $X$ and $Y$ respectively. But this implies that both $X$ and
$Y$ are minimal elements of the ultrafilter $V$, which contradicts the fact
that $X$ is properly contained in $Y$.

\section{Adapted almost invariant sets\label{adaptedalmostinvariantsets}}

In this section we discuss the idea of an almost invariant subset of a group
being adapted to a family of subgroups. This is the beginning of a relative
theory of almost invariant subsets of a group. The idea of a splitting being
adapted to some family of subgroups was introduced by M\"{u}ller in
\cite{Muller}, but for general almost invariant sets the idea of being adapted
seems to be new. Here is M\"{u}ller's definition.

\begin{definition}
\label{defnofadaptedsplitting}Let $K$ be a group with a splitting $\sigma$
over a subgroup $H$, and let $\mathcal{S}=\{S_{i}\}$ be a family of subgroups
of $K$. The splitting $\sigma$ of $K$ is \textsl{adapted to} $\mathcal{S}$, or
is $\mathcal{S}$--\textsl{adapted}, if each $S_{i}$ is conjugate into a vertex
group of $\sigma$.
\end{definition}

A natural example, which motivated this definition, occurs when $K$ is the
fundamental group of a compact orientable surface $M$ with boundary, and the
splitting $\sigma$ of $K$ is given by a simple closed curve in the interior of
$M$. Such a splitting is automatically adapted to the family $\mathcal{S}$ of
subgroups of $K$ which are the images of the fundamental groups of the
boundary components of $M$. Now consider an essential closed curve on $M$
which need not be simple. It still determines an almost invariant subset $X$
of $K$ in a natural way, and our definition of adaptedness for almost
invariant subsets of a group is chosen to ensure that $X$ is adapted to the
same family $\mathcal{S}$ of subgroups of $K$. Note that we first define the
term "strictly adapted" and then use that to define "adapted" very much as in
the definition of enclosing, Definition \ref{defnofenclosing}.

Here is our definition.

\begin{definition}
\label{defnofadapted} Let $G$ be a group and let\ $H$ and $S$ be subgroups.
Let $\mathcal{S}=\{S_{i}\}_{i\in I}$ be a family of subgroups of $G$, with
repetitions allowed.

A $H$--almost invariant subset $X$ of $G$ is \textsl{strictly adapted to the
subgroup} $S$ if, for all $g\in G$, the coset $gS$ is contained in $X$ or in
$X^{\ast}$.

A $H$--almost invariant subset $X$ of $G$ is \textsl{adapted to} $S$, or is
$S$--\textsl{adapted}, if it is equivalent to a $H^{\prime}$--almost invariant
subset $X^{\prime}$ of $G$ such that $X^{\prime}$ is strictly adapted to $S$.

A $H$--almost invariant subset $X$ of $G$ is \textsl{adapted to the family}
$\mathcal{S}$, or is $\mathcal{S}$--\textsl{adapted}, if it is adapted to each
$S_{i}$.
\end{definition}

\begin{remark}
If $X$ is $\mathcal{S}$--adapted, then so is any almost invariant subset of
$G$ which is equivalent to $X$. Clearly $G$ and the empty set are strictly
adapted to any subgroup of $G$. Thus trivial $H$--almost invariant subsets of
$G$ are adapted to any family of subgroups of $G$.

If $X$ is adapted to the family $\mathcal{S}$, and we replace each $S_{i}$ by
some conjugate, then $X$ is also adapted to the new family. For if $X$ is
strictly adapted to a subgroup $S$, and $k\in G$, then $Xk$ is $H$--almost
invariant and equivalent to $X$, and is strictly adapted to $k^{-1}Sk$.

Note that if $X$ is adapted to the family $\mathcal{S}$, then, for each $i$,
there is a $K_{i}$--almost invariant subset $X_{i}$ of $G$ which is equivalent
to $X$ and is strictly adapted to $S_{i}$, but the $X_{i}$'s may all be
different. In general, it is difficult for an almost invariant set to be
strictly adapted to more than one subgroup of $G$.

In most applications of these ideas, the family $\mathcal{S}$ will be finite.
\end{remark}

Finally we set out one more consequence of the above definition as a lemma for
future reference.

\begin{lemma}
\label{S-adaptedimpliesgSintersectXisH-finite}Let $G$ be a group and let\ $H$
and $S$ be subgroups. Let $X$ be a $H$--almost invariant subset of $G$ which
is adapted to $S$. Then, for each $g\in G$, one of $gS\cap X$ and $gS\cap
X^{\ast}$ is $H$--finite.
\end{lemma}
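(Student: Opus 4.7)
The plan is to unpack the two-step definition of adaptedness and reduce to the strictly adapted case, where the conclusion is essentially immediate, and then transfer back along the equivalence.

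First, since $X$ is adapted to $S$, by Definition \ref{defnofadapted} there exists a subgroup $H'$ and an $H'$-almost invariant subset $X'$ of $G$ such that $X'$ is strictly adapted to $S$ and $X \sim X'$. By the definition of equivalence, $X$ and $X'$ are $H$-almost equal, meaning the symmetric difference $D := X \triangle X'$ is $H$-finite. (Equivalently, one may invoke Corollary \ref{HaiiffKai} to pass between $H$-finite and $H'$-finite.) Next, because $X'$ is strictly adapted to $S$, for the given element $g \in G$ we have either $gS \subset X'$ or $gS \subset (X')^{\ast}$.

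Now I would carry out the two cases in parallel. Using the identity $gS \cap X = (gS \cap X') \,\triangle\, (gS \cap D)$, note that $gS \cap D \subset D$ is $H$-finite. In the first case ($gS \subset X'$), this gives $gS \cap X = gS \,\triangle\, (gS \cap D) = gS - (gS \cap D)$, and hence $gS \cap X^{\ast} = gS \cap D$, which is $H$-finite. In the second case ($gS \subset (X')^{\ast}$), the identity gives $gS \cap X = gS \cap D$ directly, which is again $H$-finite. In either case one of $gS \cap X$ and $gS \cap X^{\ast}$ is $H$-finite, which is the desired conclusion.

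There is no real obstacle here: the only mild subtlety is remembering that the equivalence is stated for the subgroup $H'$ attached to $X'$, not $H$, so one must either observe that $H$ and $H'$ are commensurable (Lemma \ref{equivalenta.i.setshavecommensurablestabiliser}) so that $H$-finiteness and $H'$-finiteness coincide, or simply appeal to Corollary \ref{HaiiffKai}. Once that is noted, the lemma is a one-line consequence of the case analysis on which side of $X'$ the coset $gS$ lies on.
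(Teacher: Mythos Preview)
Your proof is correct and is essentially the paper's argument spelled out in full: the paper simply notes that $X$ is $H$--almost equal to an $H'$--almost invariant $X'$ strictly adapted to $S$, so one of $gS\cap X'$ and $gS\cap (X')^{\ast}$ is empty, and then says ``the result follows.'' Your symmetric-difference computation is exactly the content of that last phrase, and your remark about the $H$ versus $H'$ subtlety is handled by the definition of equivalence itself (which is phrased as $H$--almost equality).
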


\begin{proof}
The above definition implies that $X$ is $H$--almost equal to a $H^{\prime}%
$--almost invariant subset $X^{\prime}$ of $G$ such that $X^{\prime}$ is
strictly adapted to $S$. Thus, for each $g\in G$, one of $gS\cap X^{\prime}$
and $gS\cap(X^{\prime})^{\ast}$ is empty. The result follows.
\end{proof}

Note that if $S$ is the trivial subgroup of $G$, then any almost invariant
subset of $G$ is automatically strictly adapted to $S$. It is not hard to
generalise this to finite subgroups $S$ of $G$. Our next result implies this
but is more general.

\begin{lemma}
\label{adaptedtoSimpliesadaptedtoS'commS}Let $G$ be a group with subgroups
$H$, $S$ and $S^{\prime}$, such that $S\ $and $S^{\prime}\ $are commensurable.
Let $X$ be a $H$--almost invariant subset of $G$. Then $X$ is $S$--adapted if
and only if it is $S^{\prime}$--adapted.
\end{lemma}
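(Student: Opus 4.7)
By the symmetry of the hypothesis in $S$ and $S'$, the plan is to prove only that $S$-adapted implies $S'$-adapted. Starting from a representative $X'$ equivalent to $X$ that is strictly adapted to $S$ (so $X'S = X'$), I would define the saturation
\[
X'' := X'S' = \{gs' : g \in X',\ s' \in S'\},
\]
and argue that $X''$ is the desired representative witnessing that $X$ is $S'$-adapted.

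Strict $S'$-adaptedness is built into the definition: since $X''S' = X'S'S' = X'S' = X''$, every coset $gS'$ is contained in $X''$ or in its complement. The core step is to verify that $X''$ is $H'$-almost equal to $X'$; this will then force $X''$ to be $H'$-almost invariant and equivalent to $X'$, hence to $X$. The key input is the commensurability of $S$ and $S'$. Setting $S_{0} = S \cap S'$, the index $[S' : S_{0}]$ is finite, so I can choose representatives with $S' = \bigsqcup_{k=1}^{n} S_{0} u_{k}$. For any $s' \in S'$, write $s' = s_{0} u_{k}$ with $s_{0} \in S_{0} \subset S$; since $X'$ is right $S$-invariant, $X' s' = X' s_{0} u_{k} = X' u_{k}$. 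Consequently
\[
X'' = X'S' = \bigcup_{k=1}^{n} X' u_{k},
\]
a \emph{finite} union of right translates of $X'$. Because $X'$ is $H'$-almost invariant, each $X' u_{k}$ is $H'$-almost equal to $X'$, and so $X'' \bigtriangleup X'$ is $H'$-finite.

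From this the rest is routine. The set $X''$ is $H'$-almost invariant, since for any $g \in G$ the symmetric difference $X'' \bigtriangleup X''g$ is controlled by $X'' \bigtriangleup X'$, $X' \bigtriangleup X'g$, and $X'g \bigtriangleup X''g$, all of which are $H'$-finite. It is also nontrivial whenever $X'$ is. Therefore $X \sim X' \sim X''$, and $X''$ is strictly adapted to $S'$, exhibiting $X$ as $S'$-adapted; the converse follows by interchanging the roles of $S$ and $S'$. I do not expect a serious obstacle: the single decisive observation is that the finite index $[S' : S \cap S'] < \infty$ collapses the a priori infinite union $X'S'$ to a finite one, after which the usual symmetric-difference bookkeeping takes over. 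The degenerate case of a trivial $X$ is handled automatically, since then one may take $X' \in \{G, \varnothing\}$, which is already strictly adapted to every subgroup.
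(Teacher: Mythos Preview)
Your proof is correct, and in fact a bit cleaner than the paper's. The paper first reduces to the case $S \subset S'$ of finite index, and then modifies $X$ coset by coset: for each $k$ with $kS \subset X$ but $kS' \not\subset X$ it adds $kS' \cap X^{\ast}$, and symmetrically removes points on the other side, afterwards checking that only finitely many double cosets $HgS$ are affected and that the added and removed pieces do not overlap. Your saturation $X'' = X'S'$ achieves the same end in one stroke: right $S'$-invariance is automatic, and the coset decomposition $S' = \bigsqcup_{k} (S\cap S')u_{k}$ immediately exhibits $X''$ as a finite union $\bigcup_{k} X'u_{k}$ of right translates of $X'$, each $H'$-almost equal to $X'$. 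This sidesteps the paper's disjointness verification entirely. Both arguments rest on the same idea---the finite index $[S':S\cap S']$ keeps the modification $H'$-finite---but your packaging is more direct.
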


\begin{proof}
It suffices to consider the case where $S$ is a subgroup of $S^{\prime}$ of
finite index, and $X$ is strictly adapted to $S$. We will construct a
$H$--almost invariant subset $Y$ of $K$ which is equivalent to $X$ such that
$Y$ is strictly adapted to $S^{\prime}$.

As $X$ is strictly adapted to $S$, we have $kS\subset X$ or $kS\subset
X^{\ast}$, for all $k$ in $G$. We can write $S^{\prime}$ as the disjoint union
$S\cup Sg_{1}\cup Sg_{2}\cup\ldots\cup Sg_{n}$, for some $n$. If $kS\subset
X$, then $kSg_{i}\subset Xg_{i}$. As $Xg_{i}$ is $H$--almost equal to $X$, it
follows that $(\cup Xg_{i})-X$ is a finite union of cosets $Hk_{j}$. Thus if
$kS\subset X$, the intersection $kS^{\prime}\cap X^{\ast}$ is $H$--finite.
Further, if $kS^{\prime}\not \subset X$, there must be $g_{i}$ and $k_{j}$
such that $kg_{i}\in Hk_{j}$, so that $k$ lies in one of the finitely many
cosets $Hk_{j}g_{i}^{-1}$. Hence the set of all $k$ such that $kS\subset X$
and $kS^{\prime}\not \subset X$ is contained in a finite union of double
cosets $HgS$.

Now we will construct the required set $Y$ by modifying $X$. For each $k$ such
that $kS\subset X$ and $kS^{\prime}\not \subset X$, we will add the
$H$--finite set $kS^{\prime}\cap X^{\ast}$ to $X$. Clearly the intersection
$kS^{\prime}\cap X^{\ast}$ is unchanged if we replace $k$ by any element of
the double coset $HkS$. Thus we will only add finitely many $H$--finite sets
to $X$, so in total we will only add a $H$--finite set to $X$. Similarly if
$kS\subset X^{\ast}$, then $kS^{\prime}\cap X$ is $H$--finite. We will
subtract this $H$--finite set from $X$, for each $k$ such that $kS\subset
X^{\ast}$ and $kS^{\prime}\not \subset X^{\ast}$, and this only removes a
$H$--finite set from $X$. Let $Y$ denote the result of these changes. Thus $Y$
is $H$--almost equal to $X$ and $HY=Y$, so that $Y$ is $H$--almost invariant
and equivalent to $X$. We need to observe that as distinct double cosets $HgS$
are disjoint, the sets we remove from $X$ do not overlap those we add. Thus
$Y$ has the property that $kS^{\prime}\subset Y$ or $kS^{\prime}\subset
Y^{\ast}$, for all $k$ in $K$, so that $Y$ is strictly adapted to $S^{\prime}%
$, as required.
\end{proof}

Next recall our discussion at the start of this section. Let $G$ be the
fundamental group of a compact orientable surface $M$ with boundary, and
consider an essential closed curve on $M$ which need not be simple. It
determines an almost invariant subset $X$ of $G$ in a natural way, and our
definition of adaptedness ensures that $X$ is adapted to the family
$\mathcal{S}$ of subgroups of $G$ which are the images of the fundamental
groups of the boundary components of $M$. Suppose we enlarge the surface $M$
to $\overline{M}$ by attaching surfaces to the boundary components. Trivially
any essential closed curve on $M$ gives rise to one on $\overline{M}$. From
the algebraic point of view, this means that the almost invariant subset $X$
of $G$ gives rise to an almost invariant subset $\overline{X}$ of
$\overline{G}=\pi_{1}(\overline{M})$. Clearly $\overline{X}\cap G=X$. Let
$\Gamma$ denote the dual graph to $\partial M$ in $\overline{M}$, and consider
the associated graph of groups structure on $\overline{G}$ with underlying
graph $\Gamma$. Let $V$ denote the vertex of $\Gamma$ which corresponds to
$M$. Then $\overline{X}$ is enclosed by $V$ (see Definition
\ref{defnofenclosing}).

Now let $\overline{G}$ be any group with subgroups $H$ and $G$, with $H$
contained in $G$, and let $\overline{X}$ be a $H$--almost invariant subset of
$\overline{G}$. Then $\overline{X}\cap G$ is a $H$--almost invariant subset
$X$ of $G$. In the preceding paragraph we gave an example of a converse
construction in the special case when $G=\pi_{1}(M)$ and $\overline{G}=\pi
_{1}(\overline{M})$. From the algebraic point of view, this example has three
special features. These are that $G$ is the group associated to a vertex $V$
of a graph of groups decomposition $\Gamma$ of $\overline{G}$, that
$\overline{X}$ is enclosed by $V$, and $X$ is adapted to the family of
subgroups of $G$ associated to the edges of $\Gamma$ which are incident to
$V$. In Lemma \ref{extensionsexist} we will show that these conditions are the
key to being able to make a reverse construction.

The following lemma clarifies the connections between these conditions.

\begin{lemma}
\label{XbarenclosedbyvimpliesthatXisadapted}Let $\overline{G}$ be a group with
a minimal graph of groups decomposition $\Gamma$. Let $T$ denote the universal
covering $\overline{G}$--tree of $\Gamma$, so that $\overline{G}$ acts on $T$
without inversions, and $T$ is a minimal $\overline{G}$--tree. Let $v$ denote
a vertex of $T$, and let $\mathcal{S}$ denote the family of stabilizers of
edges of $T$ incident to $v$. Suppose there is a subgroup $H$ of $\overline
{G}$ and a $H$--almost invariant subset $\overline{X}$ of $\overline{G}$ which
is enclosed by $v$. Let $X$ denote the intersection $\overline{X}\cap G$. Then
$X$ is a $H$--almost invariant subset of $G$ which is adapted to the family
$\mathcal{S}$.
\end{lemma}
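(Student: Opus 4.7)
The plan is to verify the two assertions in turn. I take $G$ to be the vertex stabilizer $\stab(v)$, as in the motivating discussion preceding the lemma, which gives in particular $H\subseteq G$. The first assertion, that $X=\overline{X}\cap G$ is $H$--almost invariant in $G$, is a short check: $HX\subseteq H\overline{X}\cap G=\overline{X}\cap G=X$, and for $g\in G$ the symmetric difference $X\triangle Xg=(\overline{X}\triangle\overline{X}g)\cap G$ is $H$--finite in $G$ because $\overline{X}\triangle\overline{X}g$ is $H$--finite in $\overline{G}$ and $H\subseteq G$. If $X$ is trivial there is nothing more to prove, so assume $X$ is nontrivial.

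For the main claim, fix $S\in\mathcal{S}$ and choose an edge $e$ of $T$ incident to $v$ with $\stab(e)=S$; let $w$ be the other endpoint of $e$, so $S\subseteq G\cap\stab(w)$ since the action is without inversions. Because $\overline{X}$ is enclosed by $v$, Definition~\ref{defnofenclosing} provides an $H'$--almost invariant subset $\overline{X}'$ of $\overline{G}$ equivalent to $\overline{X}$ and strictly enclosed by $v$ with some basepoint; by Lemma~\ref{equivalenta.i.setshavecommensurablestabiliser}, $H$ and $H'$ are commensurable. Applying Lemma~\ref{strictenclosingdoesnotdependonbasepoint} with new basepoint $w$ yields a further $H'$--almost invariant set $\widetilde{X}$ equivalent to $\overline{X}$ and strictly enclosed by $v$ with basepoint $w$.

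The key geometric observation concerns the basepoint map $\varphi\colon\overline{G}\to V(T)$, $\varphi(h)=hw$. For $g\in G$ the edge $ge$ is incident to $v$ with other endpoint $gw$, and when $ge$ is oriented towards $v$ the set $Y_{ge}^{*}$ contains $gw$. Since $S$ fixes $w$, $\varphi(gs)=gsw=gw$ for every $s\in S$, giving
\[
gS\subseteq\varphi^{-1}(gw)\subseteq\varphi^{-1}(Y_{ge}^{*}).
\]
Strict enclosure of $\widetilde{X}$ with basepoint $w$ now forces $\varphi^{-1}(Y_{ge}^{*})$ into either $\widetilde{X}$ or $\widetilde{X}^{*}$. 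Setting $Y=\widetilde{X}\cap G$, intersecting with $G$ gives $gS\subseteq Y$ or $gS\subseteq Y^{*}$; that is, $Y$ is strictly adapted to $S$.

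It remains to check that $Y$ witnesses $S$--adaptedness of $X$. Set $K=H\cap H'$; by commensurability $K$ has finite index in both $H$ and $H'$, and $K\subseteq G$. Then $KY=K\widetilde{X}\cap G=\widetilde{X}\cap G=Y$, and $Y\triangle Yg=(\widetilde{X}\triangle\widetilde{X}g)\cap G$ is $H'$--finite hence $K$--finite, so $Y$ is $K$--almost invariant in $G$. Also $X\triangle Y=(\overline{X}\triangle\widetilde{X})\cap G$ is $H$--finite, so $Y$ is $H$--almost equal to $X$ (and $Y$ is nontrivial since $X$ is), hence equivalent to $X$. By Definition~\ref{defnofadapted}, $X$ is $S$--adapted; since $S\in\mathcal{S}$ was arbitrary, $X$ is $\mathcal{S}$--adapted. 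The main subtlety is the commensurability bookkeeping, since changing basepoint in Lemma~\ref{strictenclosingdoesnotdependonbasepoint} may replace $H$ by a commensurable $H'$; the witness $Y$ is thus only naturally $K$--almost invariant for $K=H\cap H'$, but this suffices because $K$--finite and $H$--finite subsets of $G$ coincide.
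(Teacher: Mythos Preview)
Your proof is correct and follows essentially the same route as the paper: choose the other endpoint of the edge $e$ as basepoint, use strict enclosure with that basepoint to get a set $\widetilde{X}$ (the paper calls it $P$), observe that each coset $gS$ maps under $\varphi$ to the single vertex $gw$ lying in $Y_{ge}^{*}$, and conclude that $\widetilde{X}\cap G$ is strictly adapted to $S$ and equivalent to $X$. The paper's proof is slightly leaner in that it does not introduce $K=H\cap H'$; since Definition~\ref{defnofadapted} already allows the witnessing set to be almost invariant over any $H'$ commensurable with $H$, your additional commensurability bookkeeping is correct but unnecessary.
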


\begin{remark}
Note that it is possible that $\overline{X}$ is a nontrivial $H$--almost
invariant subset of $\overline{G}$, but that $X$ is a trivial $H$--almost
invariant subset of $G$. For example, this will occur if $\overline{X}$ is
associated to an edge of $T$ incident to $v$.
\end{remark}

\begin{proof}
As discussed at the end of section \ref{preliminaries}, after fixing a base
vertex $w$ of $T$, we can define a $\overline{G}$--equivariant map
$\varphi:\overline{G}\rightarrow V(T)$ by the formula $\varphi(g)=gw$. We can
now associate the almost invariant subset $Z_{s}$ of $\overline{G}$ to an
oriented edge $s$ of $T$, and if $S$ denotes the stabiliser of $s$, then
$Z_{s}$ is $S$--almost invariant.

As $\overline{X}$ is enclosed by $v$, it follows that $H$ is a subgroup of
$G$. Let $e$ be an edge of $T$ incident to $v$, and oriented towards $v$. Let
$S$ denote the stabiliser of $e$, let $u$ denote the other vertex of $e$, and
choose $u$ to be the basepoint of $T$. Recall from Definition
\ref{defnofenclosing} and Lemma \ref{strictenclosingdoesnotdependonbasepoint}
that as $\overline{X}$ is enclosed by $v$ then $\overline{X}$ is equivalent to
some $H^{\prime}$--almost invariant subset $P$ which is strictly enclosed by
$v$ with base point $u$. This means that for every edge $s$ of $T$ which is
incident to $v$ and oriented towards $v$, we have $Z_{s}^{\ast}\subset P$ or
$Z_{s}^{\ast}\subset P^{\ast}$. As the stabiliser $S$ of $e$ fixes $u$, it
follows that $S\subset\varphi^{-1}(u)\subset\varphi^{-1}(Y_{e}^{\ast}%
)=Z_{e}^{\ast}$, so that $S$ is contained in $P$ or in $P^{\ast}$. As $S$ also
fixes $v$, it is a subgroup of $G$, so that $S$ is contained in $P\cap G$ or
$P^{\ast}\cap G$. Let $Q$ and $Q^{\ast}$ denote $P\cap G$ and $P^{\ast}\cap G$
respectively. Now consider an element $g$ of $G$, so that $ge$ is also
incident to $v$. Then $gS\subset\varphi^{-1}(gu)\subset\varphi^{-1}%
(Y_{ge}^{\ast})=Z_{ge}^{\ast}$, so that $gS$ is also contained in $P$ or
$P^{\ast}$. As $gS$ is contained in $G$, it follows that $gS$ must be
contained in $Q$ or $Q^{\ast}$. Note that as $\overline{X}$ and $P$ are
equivalent almost invariant subsets of $\overline{G}$, it follows that $X$ and
$Q$ are equivalent almost invariant subsets of $G$. Thus we conclude that
$X=\overline{X}\cap G$ is equivalent to an almost invariant subset $Q$ of $G$
such that, for all $g\in G$, the coset $gS$ is contained in $Q$ or in
$Q^{\ast}$. It follows from Definition \ref{defnofadapted} that $X$ is adapted
to $S$. As the above discussion applies to the stabiliser of any edge of $T$
incident to $v$, we conclude that $X$ is adapted to the family of all such
stabilizers, as required.
\end{proof}

Now we formulate this in terms of the graph of groups $\Gamma$. Let $V$ denote
the image of $v$ in $\Gamma$, so that $V$ has associated group $G$. Then $X$
is adapted to the family $\mathcal{S}$ consisting of those subgroups of
$\overline{G}$ associated to the edges of $\Gamma$ which are incident to $V$.

In the next section, in Lemma \ref{extensionsexist}, we will describe a
natural converse construction, in which we start with a $H$--almost invariant
subset $X$ of $G$ which is adapted to $\mathcal{S}$, and produce a $H$--almost
invariant subset $\overline{X}$ of $\overline{G}$ which is enclosed by $V$,
such that $\overline{X}\cap G$ is equivalent to $X$. At that point we will
restrict the cardinality of $G$ in order to allow the use of graphical arguments.

For the rest of this section we develop the idea of the number of adapted ends
of a group or pair of groups, which will play a crucial role in later
sections. This number can be defined very much as in the case of non-adapted
ends (see section \ref{preliminaries}).

Let $G$ be a group acting on itself on the right. Let $PG$ denote the power
set of $G$. Under Boolean addition (\textquotedblleft symmetric
difference\textquotedblright) this is an additive group of exponent $2$. Write
$FG$ for the additive subgroup of finite subsets. Let $\mathcal{S}$ denote
some family of subgroups of $G$, and let $QG(\mathcal{S})$ denote the subset
of $PG$ consisting of all almost invariant subsets of $G$ which are adapted to
$\mathcal{S}$. It is easy to see that $QG(\mathcal{S})$ is closed under the
sum operation in $PG$, so that $QG(\mathcal{S})/FG$ is a subspace of $PG/FG$.
Thus we can define the number of $\mathcal{S}$--adapted ends of $G$ to be%

\[
e(G:\mathcal{S})=\dim_{\mathbb{Z}_{2}}\;(QG(\mathcal{S})/FG).
\]

Clearly $e(G:\mathcal{S})\leq e(G)$. We recall that $e(G)$ can only take the
values $0$, $1$, $2$ or $\infty$. Essentially the same proof as in
\cite{Scott-Wall:Topological} shows that the same holds for the invariant
$e(G:\mathcal{S})$, so long as $G$ is finitely generated. It is easy to give
examples where the inequality is strict. For example if $G$ is infinite and
one of the groups in the family $\mathcal{S}$ has finite index in $G$, then
$e(G:\mathcal{S})=1$, irrespective of the value of $e(G)$.

If $H$ is a subgroup of $G$, we consider the coset space $H\backslash G$ of
all cosets $Hg$, still with the action of $G$ being right multiplication. Let
$Q(H\backslash G)(\mathcal{S})$ denote the subset of $P(H\backslash G)$
consisting of all almost invariant subsets of $H\backslash G$ whose pre-image
in $G$ is $\mathcal{S}$--adapted. Again it is easy to see that $Q(H\backslash
G)(\mathcal{S})$ is a subspace of $P(H\backslash G)$. Thus we can define the
number of $\mathcal{S}$--adapted ends of the pair $(G,H)$ to be%

\[
e(G,H:\mathcal{S})=\dim_{\mathbb{Z}_{2}}\;\left(  \frac{Q(H\backslash
G)(\mathcal{S})}{F(H\backslash G)}\right)  .
\]

Clearly $e(G,H:\mathcal{S})\leq e(G,H)$. It is again easy to give examples
where this inequality is strict. For example if $H$ has infinite index in $G$
and one of the groups in the family $\mathcal{S}$ has finite index in $G$,
then $e(G,H:\mathcal{S})=1$, irrespective of the value of $e(G,H)$.

As a splitting of $K$ over $H$ has a naturally associated $H$--almost
invariant subset of $K$, we now have two different ideas of what it means for
a splitting of $K$ to be adapted to a family $\mathcal{S}$ of subgroups. The
following lemma connects these ideas.

\begin{lemma}
\label{splittingisadaptediffa.i.setis}Let $K$ be a group, let $\mathcal{S}%
=\{S_{i}\}$ be a family of subgroups of $K$, and let $\sigma$ be a splitting
of $K$ over a subgroup $H$. Then the splitting $\sigma$ is $\mathcal{S}%
$--adapted if and only if any $H$--almost invariant subset of $K$ associated
to $\sigma$ is $\mathcal{S}$--adapted.
\end{lemma}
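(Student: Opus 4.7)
The plan is to work with the Bass--Serre tree $T$ of the splitting $\sigma$. Fix an edge $s$ of $T$ stabilised by $H$ and a basepoint vertex $w$ of $T$, so that with $\varphi(g)=gw$ the set $Z_s=\varphi^{-1}(Y_s)$ is a representative of the equivalence class of $H$--almost invariant subsets of $K$ associated to $\sigma$ (see the discussion preceding Lemma~\ref{boundednbhdofY_s}). Since $\mathcal{S}$--adaptedness is a property of the equivalence class (by the remark following Definition~\ref{defnofadapted}), the claim reduces to showing that $\sigma$ is $\mathcal{S}$--adapted if and only if $Z_s$ is $\mathcal{S}$--adapted.

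For the forward direction, I will assume $\sigma$ is $\mathcal{S}$--adapted, so that each $S_i$ fixes a vertex $v_i$ of $T$. Take $v_i$ as a new basepoint, and let $Z_s^{(i)}=\varphi_i^{-1}(Y_s)$ with $\varphi_i(g)=gv_i$. By the basepoint independence of equivalence class, $Z_s^{(i)}$ is equivalent to $Z_s$; and since $S_iv_i=v_i$, each image $\varphi_i(gS_i)=\{gv_i\}$ is a single vertex lying in $Y_s$ or in $Y_s^{\ast}$. Hence $gS_i$ is contained entirely in $Z_s^{(i)}$ or in $(Z_s^{(i)})^{\ast}$, so $Z_s^{(i)}$ is strictly $S_i$--adapted, and therefore $Z_s$ is $S_i$--adapted. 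Running this for every $i$ yields $\mathcal{S}$--adaptedness of $Z_s$.

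For the backward direction I will assume $Z_s$ is $\mathcal{S}$--adapted, and for each fixed $i$ pick an $H'$--almost invariant set $X_i$ which is equivalent to $Z_s$ and strictly adapted to $S_i$. Strict adaptation forces $X_i$ to be a union of right cosets of $S_i$, so $X_ih=X_i$ exactly for every $h\in S_i$. Writing $D:=X_i\triangle Z_s$ and choosing $Hk_1\cup\cdots\cup Hk_m\supset D$, the identity $X_ih=X_i$ and distributivity of right translation over $\triangle$ give
\[
Z_s\triangle Z_sh = D\triangle Dh \subset \bigcup_{j=1}^{m}Hk_j\cup\bigcup_{j=1}^{m}Hk_jh,
\]
which is contained in at most $2m$ cosets of $H$. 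On the other hand $g\in Z_s\triangle Z_sh$ if and only if $gw$ and $gh^{-1}w$ lie on opposite sides of $s$, equivalently $g^{-1}s$ lies on the geodesic $[w,h^{-1}w]$ in $T$. For each edge $ks$ on this geodesic the set $\{g:g^{-1}s=ks\}$ is the single left coset $Hk^{-1}$, and distinct edges give distinct cosets because $H$ stabilises $s$. So $Z_s\triangle Z_sh$ is a disjoint union of exactly $d(w,hw)$ left cosets of $H$, forcing $d(w,hw)\le 2m$ for all $h\in S_i$. The orbit $S_i\cdot w$ is therefore bounded in $T$, and because $K$ acts without inversions, the convex hull of this orbit is a bounded subtree whose centre is a vertex of $T$ fixed by $S_i$. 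Hence $S_i$ is conjugate into a vertex group of $\sigma$.

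The main obstacle is the backward direction: the task is to convert the algebraic data (left $H$--almost invariance of $X_i$ combined with exact right $S_i$--invariance) into the geometric conclusion that $S_i$ has bounded orbit on $T$. The key point is that these two invariances combine to give a uniform $H$--finite bound on $Z_s\triangle Z_sh$ as $h$ varies over $S_i$, and the geometric reading of this symmetric difference as the set of edges of the geodesic $[w,h^{-1}w]$ converts this algebraic bound directly into the bound on $d(w,hw)$. After that, the classical fact that a group acting without inversions on a tree with bounded orbit fixes a vertex completes the argument.
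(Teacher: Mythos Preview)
Your proof is correct. The forward direction is essentially identical to the paper's: choose the fixed vertex as basepoint and observe that each $gS_i$ maps to a single vertex of $T$, hence lies entirely in $Z_s$ or $Z_s^{\ast}$.

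For the backward direction, your route differs from the paper's in an interesting way. The paper invokes Lemma~\ref{boundednbhdofY_s} to show that $\varphi(S_i)$ lies in the $M$--neighbourhood of $Y_e$ or of $Y_e^{\ast}$ for \emph{every} edge $e$ of $T$, and then derives a contradiction separately according to whether $S_i$ is finitely generated (using an axis of a hyperbolic element) or not (using an ascending union of finitely generated subgroups and the descending chain of fixed subtrees). You instead exploit the exact right $S_i$--invariance of $X_i$ to get $Z_s\triangle Z_sh = D\triangle Dh$ uniformly $H$--finite, and then read off the precise equality between the number of $H$--cosets in $Z_s\triangle Z_sh$ and the edge-length of $[w,h^{-1}w]$; this gives a bounded orbit directly, after which the Bruhat--Tits fixed-point theorem for bounded orbits on trees finishes the argument without any case distinction on $S_i$. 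Your argument is somewhat cleaner in that it avoids the f.g./non-f.g.\ split, at the cost of invoking the bounded-orbit fixed-point theorem as a black box (which the paper's case analysis is in effect re-proving in this situation). One small terminological slip: the sets $gS_i$ are \emph{left} cosets of $S_i$, not right cosets, though your conclusion $X_ih=X_i$ is unaffected.
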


\begin{proof}
Let $T$ be the $K$--tree corresponding to the splitting $\sigma$, and let $s$
be an edge of $T$ with stabiliser $H$.

Suppose that the splitting $\sigma$ is $\mathcal{S}$--adapted, so that each
$S_{i}$ has a conjugate $L_{i}$ which fixes a vertex of $s$. If $L_{i}$ fixes
the vertex $v$ of $s$, we choose $v$ as the basepoint of $T$, so that
$L_{i}\subset\varphi^{-1}(v)$. If we orient $s$ towards $v$, then the
$H$--almost invariant subset $Z_{s}$ of $K$ is associated to $\sigma$, and it
clearly contains $L_{i}$. Further for each $k\in K$, we have $kL_{i}%
\subset\varphi^{-1}(kv)$ which must be contained in $Z_{s}$ or $Z_{s}^{\ast}$.
Thus $Z_{s}$ is strictly adapted to $S_{i}$, and hence any $H$--almost
invariant subset of $K$ which is associated to $\sigma$ is adapted to $S_{i}$.
Similar arguments for each $i$ show that any $H$--almost invariant subset of
$K$ which is associated to $\sigma$ must be $\mathcal{S}$--adapted. This
completes the proof of one half of the lemma.

For the second half, we suppose that any $H$--almost invariant subset of $K$
which is associated to $\sigma$ is $\mathcal{S}$--adapted. We need to show
that each $S_{i}$ fixes some vertex of $T$.

Fix a basepoint $w$ of $T$, so that we have the $K$--equivariant map
$\varphi:K\rightarrow V(T)$ given by the formula $\varphi(g)=gw$. Next fix an
edge $s$ of $T$ with stabiliser $H$ such that the associated $H$--almost
invariant subset $Z_{s}$ of $G$ is associated to the splitting $\sigma$. Our
hypothesis implies that $Z_{s}$ is adapted to the family $\mathcal{S}$, so
that, for each $i$, the set $Z_{s}$ is equivalent to an almost invariant
subset $Z_{i}$ of $G$ which is strictly adapted to $S_{i}$. Now Lemma
\ref{boundednbhdofY_s} implies that there is an integer $M$ (depending on $i$)
such that $\varphi(Z_{i})$ is contained in the $M$--neighbourhood of $Y_{s}$,
and $\varphi(Z_{i}^{\ast})$ is contained in the $M$--neighbourhood of
$Y_{s}^{\ast}$. As $Z_{i}$ is strictly adapted to $S_{i}$, we know that, for
each $k$ in $K$, we have $kS_{i}$ contained in $Z_{i}$ or $Z_{i}^{\ast}$. It
follows that, for each $k$ in $K$, the image $\varphi(kS_{i})$ is contained in
the $M$--neighbourhood of $Y_{s}$ or of $Y_{s}^{\ast}$. After replacing $k$ by
$k^{-1}$, this is equivalent to the condition that, for each $k$ in $K$, the
image $\varphi(S_{i})$ is contained in the $M$--neighbourhood of $kY_{s}%
=Y_{ks}$ or of $kY_{s}^{\ast}=Y_{ks}^{\ast}$. As every edge of $T$ is a
translate of $s$, it follows that for every edge $e$ of $T$, the image
$\varphi(S_{i})$ is contained in the $M$--neighbourhood of $Y_{e}$ or of
$Y_{e}^{\ast}$.

First suppose that $S_{i}$ is finitely generated. If $S_{i}$ does not fix a
vertex of $T$, then some element $k$ of $S_{i}$ also does not fix a vertex and
so has an axis $l$ in $T$. Pick an edge $e$ of $l$, and consider the vertices
$\varphi(k^{n})=k^{n}w$, for all integers $n$. This collection of vertices
contains vertices of $T$ which are arbitrarily far from $e$ on each side of
$e$. This contradicts the fact that $\varphi(S_{i})$ is contained in the
$M$--neighbourhood of $Y_{e}$ or of $Y_{e}^{\ast}$, thus proving that $S_{i}$
must fix some vertex of $T$.

Now consider the general case when $S_{i}$ need not be finitely generated, and
suppose that $S_{i}$ does not fix a vertex of $T$. The preceding paragraph
shows that each finitely generated subgroup of $S_{i}$ fixes some vertex of
$T$. Express $S_{i}$ as an ascending union of finitely generated subgroups
$S_{i}^{n}$, $n\geq1$, and let $T_{n}$ denote the fixed subtree of $S_{i}^{n}%
$. We have $T_{n}\supset T_{n+1}$, for $n\geq1$. As $S_{i}$ itself does not
fix a vertex, the intersection of all the $T_{n}$'s must be empty. Let $d_{n}$
denote the distance in $T$ of $w$ from $T_{n}$. Then $d_{n}\rightarrow\infty$
as $n\rightarrow\infty$. By passing to a subsequence we can assume that
$d_{n}$ is never zero. Consider the path from $w$ to $T_{n}$ and let $e_{n}$
denote the edge of this path which meets $T_{n}$. As $S_{i}^{n}$ does not fix
$e_{n}$, there is an element $k_{n}$ of $S_{i}^{n}$ which does not fix $e_{n}%
$. It follows that $k_{n}w$ lies distance $d_{n}$ from $e_{n}$ on the $T_{n}%
$--side of $e_{n}$. Of course the vertex $w$ lies at distance $d_{n}-1$ from
$e_{n}$ on the other side of $e_{n}$. If we choose $n$ large enough to ensure
that $d_{n}>M+1$, this contradicts the fact that, for every edge $e$ of $T$,
the image $\varphi(S_{i})$ is contained in the $M$--neighbourhood of $Y_{e}$ or
of $Y_{e}^{\ast}$. This contradiction shows that $S_{i}$ must fix some vertex
of $T$. As this holds for each $i$, it follows that $\sigma$ is $\mathcal{S}%
$--adapted as required, which completes the proof of part 2).
\end{proof}

There is also a natural generalisation of Definition
\ref{defnofadaptedsplitting} to graphs of groups.

\begin{definition}
\label{defnofadaptedgraphofgroups}Let $K$ be a group with a graph of groups
structure $\Gamma$, and let $\mathcal{S}=\{S_{i}\}$ be a family of subgroups
of $K$. Then $\Gamma$ is \textsl{adapted to} $\mathcal{S}$, or is
$\mathcal{S}$--\textsl{adapted}, if each $S_{i}$ is conjugate into a vertex
group of $\Gamma$.
\end{definition}

The following result connects this new definition with Definition
\ref{defnofadaptedsplitting}.

\begin{lemma}
\label{adaptedgraphofgroups}Let $K$ be a group with a family $\mathcal{S}%
=\{S_{i}\}$ of subgroups. Let $\Gamma$ be a finite minimal graph of groups
structure for $K$. Then $\Gamma$ is $\mathcal{S}$--adapted if and only if each
edge splitting of $\Gamma$ is $\mathcal{S}$--adapted.
\end{lemma}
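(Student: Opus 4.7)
The forward direction is straightforward: if $\Gamma$ is $\mathcal{S}$-adapted, so that each $S_i$ is conjugate into some vertex group $G_v$ of $\Gamma$, then the edge splitting associated to an edge $e$ of $\Gamma$ is the graph of groups obtained by collapsing all edges of $\Gamma$ not in the orbit of $e$. Its vertex groups are fundamental groups of the collapsed subgraphs of groups, and each of them contains the corresponding vertex groups of $\Gamma$ (collapsing only amalgamates). Hence $S_i$ is conjugate into a vertex group of every edge splitting.

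For the converse, let $T$ denote the Bass--Serre tree of $\Gamma$, and for each edge $e$ of $\Gamma$ let $T_e$ denote the Bass--Serre tree of the edge splitting; equivalently, $T_e$ is the $K$--tree obtained from $T$ by collapsing every edge not in the $K$--orbit $K\tilde{e}$ of a lift of $e$. Assume that each edge splitting is $\mathcal{S}$-adapted, so each $S_i$ fixes a vertex of every $T_e$; I want to conclude that $S_i$ fixes a vertex of $T$. First I rule out hyperbolic elements: if some $g \in S_i$ had an axis $L$ in $T$, then $L$ contains edges in some orbit $K\tilde{e}_0$; these survive the collapse to $T_{e_0}$ while the rest of $L$ collapses, and the image of $L$ is still a line on which $g$ translates. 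Thus $g$ would be hyperbolic in $T_{e_0}$, contradicting the fact that $S_i$ fixes a vertex there. Hence every element of $S_i$ acts elliptically on $T$, and by the standard dichotomy for group actions on trees $S_i$ then either fixes a vertex of $T$ (as required) or fixes an end $\epsilon$ of $T$ without fixing any vertex.

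Assume the latter for contradiction. Pick a ray $r$ in $T$ converging to $\epsilon$; since $\Gamma$ is finite, some edge orbit $K\tilde{e}_0$ contains infinitely many edges of $r$, so the image of $r$ in $T_{e_0}$ is again a ray, defining an end $\bar{\epsilon}$ fixed by $S_i$. By hypothesis $S_i$ also fixes some vertex $v$ of $T_{e_0}$, and so the unique ray $\rho$ from $v$ to $\bar{\epsilon}$ in $T_{e_0}$ is preserved setwise by $S_i$; fixing the endpoint $v$ forces $\rho$ to be fixed pointwise. The collapse map $T \to T_{e_0}$ is a bijection on edges lying in the orbit $K\tilde{e}_0$, so each edge of $\rho$ lifts uniquely to an edge of $T$ in that orbit, and $S_i$ fixing an edge pointwise in $T_{e_0}$ forces it to fix the lifted edge pointwise in $T$. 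Therefore $S_i$ fixes an edge of $T$, hence its endpoints, contradicting the assumption that $S_i$ fixes no vertex of $T$. The main technical obstacle is precisely this end-fixing subcase; it is resolved using the finiteness of $\Gamma$ to force some edge orbit to accumulate along the fixed end.
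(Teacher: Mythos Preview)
Your proof is correct, and it takes a genuinely different route from the paper's argument. The paper proves the converse direction by induction on the number $m$ of edges of $\Gamma$: collapsing an edge $e$ yields a graph $\Gamma'$ with $m-1$ edges whose edge splittings are still $\mathcal{S}$-adapted, so by induction each $S_i$ fixes a vertex of the tree $T'$; its preimage in $T$ is a subtree preserved by $S_i$. Doing this for two different edges $e_1, e_2$ gives two $S_i$-invariant subtrees $T_1, T_2$ of $T$, each a component of the preimage of a single edge, so $T_1 \cap T_2$ is at most a vertex. If the intersection is a vertex it is fixed; if it is empty, $S_i$ fixes every vertex on the geodesic joining $T_1$ to $T_2$.

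Your argument instead works globally: you first force every element of $S_i$ to be elliptic on $T$ (by collapsing to an edge tree where a hypothetical axis survives), and then invoke the standard dichotomy that a group acting with all elements elliptic either fixes a vertex or a unique end. The end case is then ruled out by pushing the fixed end down to some $T_{e_0}$, where $S_i$ fixes both a vertex and the end, hence a ray pointwise, and lifting a fixed edge back to $T$ via the bijection on surviving edges.

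The trade-off: the paper's proof is more self-contained, needing only Helly-type reasoning about two explicit subtrees, while yours imports the vertex-or-end dichotomy as a black box (it is indeed standard, but you might cite it). On the other hand, your argument is induction-free and arguably more conceptual, and the hyperbolic-element step nicely isolates exactly how finiteness of $\Gamma$ enters. Both approaches use finiteness of $\Gamma$ in an essential way: the paper to make the induction terminate, you to find an edge orbit recurring along an axis or a ray.
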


\begin{proof}
Recall that if $e$ is an edge of a graph of groups structure $\Gamma$ for $K$,
then the splitting $\sigma_{e}$ of $K$ associated to $e$ is obtained by
collapsing to a point each component of the complement of the interior of $e$
in $\Gamma$. If $e$ separates $\Gamma$, the result is an interval and
$\sigma_{e}$ is an amalgamated free product. If $e$ fails to separate $\Gamma
$, the result is a loop and $\sigma_{e}$ is a HNN extension.

Now suppose that $\Gamma$ is $\mathcal{S}$--adapted. Thus each $S_{i}$ is
conjugate into a vertex group of $\Gamma$. It follows immediately that, for
each edge $e$ of $\Gamma$, each $S_{i}$ is conjugate into a vertex group of
$\sigma_{e}$, so that each splitting $\sigma_{e}$ is $\mathcal{S}$--adapted.

Conversely suppose that each edge splitting of $\Gamma$ is $\mathcal{S}%
$--adapted. We proceed by induction on the number $m$ of edges of $\Gamma$. If
$m=1$, the result is trivial. Now let $e_{1},\ldots,e_{m}$ denote the edges of
$\Gamma$, and consider the universal covering $K$--tree $T$ of $\Gamma$. Pick
an edge $e$ of $\Gamma$ and let $f$ denote an edge of $T$ in the pre-image of
$e$. Collapsing the edge $e$ yields a graph of groups decomposition
$\Gamma^{\prime}$ of $K$ with $m-1$ edges. The universal covering $K$--tree
$T^{\prime}$ of $\Gamma^{\prime}$ is obtained from $T$ by collapsing to a
point the edge $f$ and each of its translates. As the edge splittings of
$\Gamma^{\prime}$ are $\mathcal{S}$--adapted, it follows by induction that
each $S_{i}$ fixes a vertex $v_{i}^{\prime}$ of $T^{\prime}$. The pre-image in
$T$ of $v_{i}^{\prime}$ is a subtree, which must be preserved by $S_{i}$. This
subtree is a component of the pre-image in $T$ of $e$. Now we will apply this
argument twice with $e$ chosen to be $e_{1}$ and then to be $e_{2}$. We see
that $S_{i}$ preserves a subtree $T_{1}$ of $T$ which is a component of the
pre-image in $T$ of $e_{1}$, and it also preserves a subtree $T_{2}$ of $T$
which is a component of the pre-image in $T$ of $e_{2}$. As the intersection
$T_{1}\cap T_{2}$ contains no edges, it must be empty or a single vertex. If
it is not empty, then $T_{1}\cap T_{2}$ is a vertex of $T$ fixed by $S_{i}$.
If $T_{1}\cap T_{2}$ is empty, then $S_{i}$ must fix every vertex on the
geodesic path joining $T_{1}$ and $T_{2}$, so again $S_{i}$ fixes a vertex of
$T$. This completes the proof of the lemma.
\end{proof}

Next we need the following technical result.

\begin{lemma}
\label{finitelymanydoublecosets} Let $K$ be a group with subgroups $H$ and
$S$. Let $Y$ be a nontrivial $H$--almost invariant subset of $K$ which is
adapted to $S$. Then the set $\{k\in K:kS\not \subset Y,kS\not \subset
Y^{\ast}\}$ consists of the union of finitely many double cosets $HgS$.
\end{lemma}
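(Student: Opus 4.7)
The plan is to use the definition of adaptedness to reduce the problem to a finiteness statement about a symmetric difference. By Definition \ref{defnofadapted}, since $Y$ is adapted to $S$, there is a $H'$--almost invariant subset $Y'$ of $K$ such that $Y'$ is strictly adapted to $S$ and $Y$ is equivalent to $Y'$. By Lemma \ref{equivalenta.i.setshavecommensurablestabiliser}, $H$ and $H'$ are commensurable, so the symmetric difference $Y \triangle Y'$ is $H$--finite; write it as $Y \triangle Y' \subset Hk_1 \cup \ldots \cup Hk_n$ for some $k_1,\ldots,k_n \in K$.

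Next I would analyse what strict adaptedness of $Y'$ tells us about the exceptional set $E = \{k \in K : kS \not\subset Y,\ kS \not\subset Y^{\ast}\}$. For any such $k$, we have $kS \subset Y'$ or $kS \subset (Y')^{\ast}$ since $Y'$ is strictly adapted to $S$. In the first case, $kS \cap Y^{\ast}$ is contained in $Y' \cap Y^{\ast} \subset Y \triangle Y'$, and it is non-empty (because $kS \not\subset Y$); in the second case, $kS \cap Y$ is non-empty and contained in $Y \triangle Y'$. Either way, $kS$ meets some coset $Hk_i$, so there exist $h \in H$ and $s \in S$ with $ks = hk_i$, giving $k \in Hk_i S$. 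Thus $E \subset Hk_1 S \cup \ldots \cup Hk_n S$.

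Finally, I would observe that $E$ is itself a union of double cosets $HgS$, because the conditions $kS \subset Y$ and $kS \subset Y^{\ast}$ are each invariant under replacing $k$ by $hks$ with $h \in H$ and $s \in S$: indeed, $(hks)S = hkS$, and $hkS \subset Y$ iff $kS \subset h^{-1}Y = Y$ since $HY = Y$, and similarly for $Y^{\ast}$. Since $E$ is a union of double cosets contained in the finite union $\bigcup_{i=1}^n Hk_iS$ of double cosets, $E$ itself is a union of finitely many of those double cosets, completing the proof.

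The argument is essentially routine once the reduction to strict adaptedness is made; there is no real obstacle beyond correctly handling the fact that $Y$ and $Y'$ have (possibly) different stabilisers, which is exactly where Lemma \ref{equivalenta.i.setshavecommensurablestabiliser} is needed to ensure $H$--finiteness of the symmetric difference.
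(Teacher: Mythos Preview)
Your proof is correct and takes essentially the same approach as the paper's: both pass to a strictly adapted set $Y'$ (called $Z$ in the paper) equivalent to $Y$, use commensurability of $H$ and $H'$ to conclude the symmetric difference is $H$--finite, and then observe that any $k$ in the exceptional set forces $kS$ to meet this symmetric difference, placing $k$ in one of finitely many double cosets. The only cosmetic difference is that the paper works with $H'' = H\cap H'$ and treats $Z\setminus Y$ and $Y\setminus Z$ separately, whereas you work directly with $H$ and the full symmetric difference; these are equivalent since $H''$ has finite index in $H$.
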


\begin{remark}
Lemma 2.7 of \cite{Scott symmint} is a similar sounding result about two
almost invariant subsets of a group, but, unlike the present lemma, that
result requires that all the groups involved be finitely generated, whereas in
the present lemma none of the groups need be finitely generated.
\end{remark}

\begin{proof}
As $Y$ is adapted to $S$, there is a subgroup $H^{\prime}$ of $K$ and a
$H^{\prime}$--almost invariant subset $Z$ of $K$ such that $Y$ and $Z\ $are
equivalent and $Z$ is strictly adapted to $S$. Thus for each $k$ in $K$, we
have $kS$ is contained in $Z$ or in $Z^{\ast}$. Lemma
\ref{equivalenta.i.setshavecommensurablestabiliser} shows that $H$ and
$H^{\prime}$ must be commensurable. We let $H^{\prime\prime}$ denote $H\cap
H^{\prime}$. Thus $H^{\prime\prime}$ is of finite index in $H$ and in
$H^{\prime}$, and so $Y\ $and $Z$ are both $H^{\prime\prime}$--almost
invariant. Now suppose that $kS\subset Z$. Then $kS$ will also be contained in
$Y$ unless $kS$ meets $Z-Y$. As $Y\ $and $Z$ are equivalent, $Z-Y$ is equal to
the union of finitely many cosets $H^{\prime\prime}g$. Now the intersection of
$kS$ with $H^{\prime\prime}g$ is non-empty if and only if $k\in H^{\prime
\prime}gS$. It follows that $\{k\in K:kS\subset Z,kS\not \subset Y\}$ consists
of the union of finitely many double cosets $H^{\prime\prime}gS$. Similarly
$\{k\in K:kS\subset Z^{\ast},kS\not \subset Y^{\ast}\}$ also consists of the
union of finitely many double cosets $H^{\prime\prime}gS$. Hence $\{k\in
K:kS\not \subset Y,kS\not \subset Y^{\ast}\}$ is contained in the union of
finitely many double cosets $H^{\prime\prime}gS$. As $HY=Y$, the set $\{k\in
K:kS\not \subset Y,kS\not \subset Y^{\ast}\}$ certainly consists of some union
of double cosets $HgS$. As $H^{\prime\prime}$ is contained in $H$, it follows
that $\{k\in K:kS\not \subset Y,kS\not \subset Y^{\ast}\}$ consists of the
union of finitely many double cosets $HgS$, as required.
\end{proof}

If we restrict attention to the special case where $G$ is finitely generated,
we can formulate the above ideas in terms of the geometry of a Cayley graph
$\Gamma$ of $G$. As usual, we identify the vertex set of $\Gamma$ with $G$,
and use the path metric on $\Gamma$ in which each edge has length $1$. For any
subgraph $X$ of $\Gamma$, we let $N_{R}X$ denote the $R$--neighbourhood of $X$.

\begin{lemma}
\label{adaptediffR-nbhdswork}Let $G$ be a finitely generated group, and let
$\Gamma$ be a Cayley graph for $G$ with respect to some finite generating set
of $G$. Let $H$ and $S$ be subgroups of $G$, and let $X$ be a nontrivial
$H$--almost invariant subset of $G$.

Then $X$ is $S$--adapted if and only if there is $R$ such that, for all $g\in
G$, we have $gS\subset N_{R}X$ or $gS\subset N_{R}X^{\ast}$.
\end{lemma}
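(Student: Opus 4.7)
The plan is to prove each direction by relating the algebraic notion (being strictly adapted to $S$) to the geometric one (bounded neighbourhoods in $\Gamma$). The key geometric observation is that the left action of $H$ on $\Gamma$ is by isometries and preserves $X$ and $X^{\ast}$ setwise; combined with local finiteness of $\Gamma$, this implies that $N_{R}(T)$ is $H$--finite whenever $T\subset G$ is $H$--finite, since $N_{R}(Hg_{i})=H\cdot B_{R}(g_{i})$ and $B_{R}(g_{i})$ is finite. A second ingredient is that, writing $\Sigma$ for the chosen finite generating set, the vertex coboundary $\partial X=\bigcup_{a\in\Sigma}(X\cap X^{\ast}a^{-1})$ is $H$--finite, since each $X\cap X^{\ast}a^{-1}$ is contained in the $H$--finite set $X\triangle Xa^{-1}$.

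For the forward direction, choose $X^{\prime}$ strictly adapted to $S$ and equivalent to $X$, with $X\triangle X^{\prime}\subset Hg_{1}\cup\ldots\cup Hg_{n}$. Since $HX=X$, each $Hg_{i}$ lies entirely in $X$ or entirely in $X^{\ast}$. By isometric invariance of the $H$--action, $d(hg_{i},X)=d(g_{i},X)$ for all $h\in H$, and this distance is finite because $X$ is non-empty and $\Gamma$ is connected; the same holds for $X^{\ast}$. Let $R$ be the maximum of these finitely many distances. If $gS\subset X^{\prime}$, then for each $s\in S$ either $gs\in X\subset N_{R}X$, or $gs\in X^{\prime}\setminus X\subset(\bigcup Hg_{i})\cap X^{\ast}$, in which case $gs\in N_{R}X$ by the choice of $R$. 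Thus $gS\subset N_{R}X$; the case $gS\subset(X^{\prime})^{\ast}$ is symmetric.

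For the reverse direction, define $X^{\prime}:=\bigcup\{gS:gS\not\subset N_{R}X^{\ast}\}$; by hypothesis each such coset satisfies $gS\subset N_{R}X$. As $X^{\prime}$ is a union of left cosets of $S$, it is strictly adapted to $S$. A direct check gives $X\triangle X^{\prime}\subset(X\cap N_{R}X^{\ast})\cup(X^{\ast}\cap N_{R}X)$: an element of $X$ not in $X^{\prime}$ lies in a coset contained in $N_{R}X^{\ast}$, and symmetrically for $X^{\prime}\setminus X$. Any point of $X\cap N_{R}X^{\ast}$ is within distance $R$ of a vertex in $\partial X$, so by the opening observation the $R$--neighbourhood of the $H$--finite set $\partial X$ is $H$--finite; similarly for $X^{\ast}\cap N_{R}X$. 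Hence $X\triangle X^{\prime}$ is $H$--finite.

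It remains to verify that $X^{\prime}$ is $H$--almost invariant. Since $H$ acts isometrically on $\Gamma$ and preserves $X^{\ast}$, it preserves $N_{R}X^{\ast}$, so the defining condition on cosets is $H$--invariant under left multiplication, giving $HX^{\prime}=X^{\prime}$. For any $g\in G$,
\[
X^{\prime}\triangle X^{\prime}g=(X^{\prime}\triangle X)\triangle(X\triangle Xg)\triangle(X\triangle X^{\prime})g
\]
is a sum of three $H$--finite sets and hence $H$--finite. Thus $X^{\prime}$ is $H$--almost invariant, equivalent to $X$, and strictly adapted to $S$, so $X$ is $S$--adapted. The main technical obstacle is the $H$--finiteness of $X\cap N_{R}X^{\ast}$ and $X^{\ast}\cap N_{R}X$ in the reverse direction: it is precisely here that local finiteness of $\Gamma$ and the isometric left $H$--action combine to promote the $H$--finiteness of the coboundary through the $R$--thickening.
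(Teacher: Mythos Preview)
Your proof is correct. The forward direction matches the paper's argument essentially verbatim.

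For the reverse direction you take a genuinely different route from the paper. The paper passes to the quotient $H\backslash G$, observes that the images of distinct cosets $gS$ either coincide or are disjoint, argues that only finitely many such images meet both $H\backslash X$ and $H\backslash X^{\ast}$, and then modifies $H\backslash X$ coset-by-coset to produce the strictly adapted set. Your construction $X^{\prime}=\bigcup\{gS:gS\not\subset N_{R}X^{\ast}\}$ bypasses the quotient entirely and avoids any counting of bad cosets: you bound $X\triangle X^{\prime}$ directly by $(X\cap N_{R}X^{\ast})\cup(X^{\ast}\cap N_{R}X)\subset N_{R}(\partial X)$, which is $H$--finite by local finiteness and the isometric $H$--action. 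This is cleaner and more self-contained. The paper's approach, on the other hand, makes explicit that the ambiguity in the construction occurs precisely at cosets whose image in $H\backslash G$ is finite on both sides (see the remark following the lemma), a point your global definition obscures but which is not needed for the lemma itself.
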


\begin{remark}
This lemma does not assume that $H$ or $S$ is finitely generated.
\end{remark}

\begin{proof}
First suppose that $X$ is $S$--adapted. Definition \ref{defnofadapted} tells
us that $X$ is equivalent to a $H^{\prime}$--almost invariant subset
$X^{\prime}$ of $G$ such that $X^{\prime}$ is strictly adapted to $S$. This
means that, for all $g\in G$, we have $gS\subset X^{\prime}$ or $gS\subset
X^{\prime\ast}$. As $X$ and $X^{\prime}$ are equivalent, their symmetric
difference is $H$--finite. It follows that there is $R$ such that $X\subset
N_{R}X^{\prime}$ and $X^{\prime}\subset N_{R}X$. Hence if $gS\subset
X^{\prime}$, we have $gS\subset N_{R}X$, and if $gS\subset X^{\prime\ast}$, we
have $gS\subset N_{R}X^{\ast}$, which proves the lemma in one direction.

For the converse direction, we suppose that there is $R$ such that, for all
$g\in G$, we have $gS\subset N_{R}X$ or $gS\subset N_{R}X^{\ast}$. As $X$ is a
$H$--almost invariant subset of $G$, its quotient $H\backslash X$ is an almost
invariant subset of $H\backslash G$. Now we consider how the cosets $gS$ meet
$X$ and $X^{\ast}$. Note that distinct cosets $gS$ are disjoint, and are
permuted by the action of $G$ on itself by left multiplication. Thus the
images in $H\backslash G$ of any two of these cosets must coincide or be
disjoint. As the image in $H\backslash\Gamma$ of $\delta X$ is finite, it
follows that the image in $H\backslash\Gamma$ of $N_{R}(\delta X)$ is finite.
As we know that $gS$ is contained in $N_{R}X$ or $N_{R}X^{\ast}$, we know that
the image in $H\backslash G$ of either $gS\cap X$ or $gS\cap X^{\ast}$ is
finite, and there are only finitely many images in $H\backslash G$ of cosets
$gS$ which meet $H\backslash X$ and $H\backslash X^{\ast}$. Now we construct a
subset $W$ of $H\backslash G$ from $H\backslash X$ by altering $H\backslash X$
for each $g\in G$ such that $gS$ meets both $X$ and $X^{\ast}$. If $gS\cap X$
has finite image in $H\backslash G$, we remove that image from $H\backslash
X$. If $gS\cap X$ has infinite image in $H\backslash G$, then $gS\cap X^{\ast
}$ has finite image in $H\backslash G$, and we add that image to $H\backslash
X$. As only finitely many images in $H\backslash G$ of cosets $gS$ meet both
$H\backslash X$ and $H\backslash X^{\ast}$, we have altered $H\backslash X$
finitely many times by adding or subtracting a finite set. It follows that the
new set $W$ is almost equal to $H\backslash X$, and so is an almost invariant
subset of $H\backslash G$. Thus the pre-image in $G$ of $W$ is a $H$--almost
invariant subset $Z$ which is $H$--almost equal to $X$, so that $Z$ is
equivalent to $X$. Note that because the images in $H\backslash G$ of any two
of the cosets $gS$ must coincide or be disjoint, the finite sets which were
added to or subtracted from $H\backslash X$ are disjoint from each other. In
particular, it follows that for each $g\in G$, the image in $H\backslash G$ of
the coset $gS$ is entirely contained in $W$ or in $W^{\ast}$. It follows
immediately that for each $g\in G$, we have one of the inclusions $gS\subset
Z$ or $gS\subset Z^{\ast}$, so that $Z$ is strictly adapted to $S$. It follows
that $X$ is adapted to $S$, as required. This completes the proof of the lemma.
\end{proof}

\begin{remark}
\label{Zisnotunique}If the image in $H\backslash G$ of both $gS\cap X$ and
$gS\cap X^{\ast}$ is finite, then the above proof arranges that $gS\subset
Z^{\ast}$. It would be possible to change the construction of $Z$ to arrange
that instead $gS\subset Z$. Thus whenever this situation occurs there is a
certain amount of choice in our construction of $Z$.
\end{remark}

\begin{remark}
In section 2 of \cite{SSpdnold}, Scott and Swarup considered a finitely
generated group $G$ with Cayley graph $\Gamma$, with subgroups $H$ and $S$,
and a nontrivial $H$--almost invariant subset $X$ of $G$. They defined $X$ to
be adapted to $S$ if there is $R$ such that, for all $g\in G$, we have
$gS\subset N_{R}X$ or $gS\subset N_{R}X^{\ast}$. This definition cannot be
directly extended to the case when $G$ is not finitely generated, which is why
we chose to define adapted in this paper using Definition \ref{defnofadapted}.
But Lemma \ref{adaptediffR-nbhdswork} shows that the two definitions are
equivalent in the case when $G$ is finitely generated.
\end{remark}

If we restrict $S$ to be finitely generated as well as $G$, then we can obtain
a more useful criterion for an almost invariant set to be adapted to $S$.

We will need the following technical result.

\begin{lemma}
\label{Sf.g.impliesfinitenumberofdoublecosets}Let $G$ be a finitely generated
group with subgroups $S$ and $H$ such that $S$ is finitely generated. Let $X$
be a nontrivial $H$--almost invariant subset of $G$. Then the set $\{g\in
G:gS\not \subset X,gS\not \subset X^{\ast}\}$ consists of the union of
finitely many double cosets $HgS$.
\end{lemma}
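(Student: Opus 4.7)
The plan is to work in a Cayley graph $\Gamma$ of $G$ with respect to a finite generating set $T$ chosen to contain a finite generating set $T_S$ of $S$. With this choice each coset $gS$, viewed as the vertex set $gS$ together with all edges of $\Gamma$ with both endpoints in $gS$, is a connected subgraph of $\Gamma$ isomorphic to the Cayley graph of $S$ with respect to $T_S$. Because $G$ is finitely generated and $X$ is $H$-almost invariant, the coboundary $\delta X$ of $X$ in $\Gamma$ is $H$-finite, i.e.\ its image in $H\backslash\Gamma$ is a finite set of edges.

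The first key step is the translation: if $g\in G$ satisfies $gS\not\subset X$ and $gS\not\subset X^{\ast}$, then $gS$ contains vertices of both $X$ and $X^{\ast}$; by connectedness of $gS$ there must exist an edge $e$ lying entirely in $gS$ with one endpoint in $X$ and the other in $X^{\ast}$. Such an edge has the form $(g',g't)$ with $g'\in gS$ and $t\in T_S$, and it is an edge of $\delta X$. Next, I would verify that the set $E:=\{g\in G:gS\not\subset X,\; gS\not\subset X^{\ast}\}$ is stable under left multiplication by $H$ (because $HX=X$ forces $HX^{\ast}=X^{\ast}$) and under right multiplication by $S$ (trivially, since $gsS=gS$); hence $E$ is a union of double cosets $HgS$.

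The main step is the counting argument. For each double coset $HgS\subset E$, pick a representative $g$ and an edge $e(g)$ of $\delta X$ lying in $gS$ as above; this produces an element of $H\backslash\delta X$. The main obstacle to overcome is showing this assignment is injective on double cosets. I would argue as follows: if $HgS$ and $Hg'S$ yield edges $e$, $e'$ with $He=He'$, write $e'=he$ for some $h\in H$; since $e$ lies in $gS$, the edge $he$ lies in $hgS$, so $e'\in hgS\cap g'S$, which forces $hgS=g'S$ and hence $Hg'S=HgS$. Therefore the number of double cosets $HgS$ contained in $E$ is at most the cardinality of the finite set $H\backslash\delta X$, which is the required finiteness.

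I do not foresee any subsidiary technical obstacles, since the argument only uses the $H$-finiteness of $\delta X$ (equivalent to $H$-almost invariance for finitely generated $G$) together with the elementary observation that $S$ being finitely generated makes each coset $gS$ a connected subgraph in the chosen Cayley graph. The point where one must be slightly careful is the injectivity claim above, where the action of $H$ on $\Gamma$ must be tracked through the translation of the edge $e$ into $hgS$; no assumption of finite generation on $H$ or adaptedness of $X$ to $S$ is needed.
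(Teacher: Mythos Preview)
Your proof is correct and follows essentially the same approach as the paper's: both arguments use the connectedness of (a subgraph containing) $gS$ in a Cayley graph to force an edge of $\delta X$ to lie in $gS$, and then exploit the $H$--finiteness of $\delta X$ to bound the number of double cosets. Your choice of a generating set containing $T_S$ makes each $gS$ itself connected and lets you phrase the count as an injection into $H\backslash\delta X$, whereas the paper works with an arbitrary finite generating set, builds an auxiliary $S$--invariant connected subgraph $D\supset S$ via the quotient $S\backslash\Gamma$, and counts via finite fundamental domains $\Delta$ and $\Phi$; these are minor variations rather than different routes.
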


\begin{proof}
This result is very closely related to Lemma 2.30 of \cite{SS2}, and has a
similar proof, but does not follow from it as that was a result about two
almost invariant sets. Also we are not assuming that $H$ is finitely generated.

Let $\Gamma$ denote the Cayley graph of $G$ with respect to some finite
generating set for $G$. Note that if $gS\not \subset X$ and $gS\not \subset
X^{\ast}$, then any connected subcomplex of $\Gamma$ which contains $gS$ must
meet the coboundary of $X$. As $S$ is finitely generated, there is a finite
connected subgraph $C$ of $S\backslash\Gamma$ such that $C$ contains the
single point which is the image of $S$ and the natural map $\pi_{1}%
(C)\rightarrow S$ is onto. Thus the pre-image $D$ of $C$ in $\Gamma$ is
connected and contains $S$. Let $\Delta$ denote a finite subgraph of $D$ which
projects onto $C$, and let $\Phi$ denote a finite subgraph of $\delta X$ which
projects onto the quotient $H\backslash\delta X$. If $gD$ meets $\delta X$,
there must be elements $s$ and $h$ in $S$ and $H$ respectively such that
$gs\Delta$ meets $h\Phi$. Now $\{\gamma\in G:\gamma\Delta$ meets $\Phi\}$ is
finite, as $G$ acts freely on $\Gamma$. It follows that $\{g\in G:gD$ meets
$\delta X\}$ consists of a finite number of double cosets $HgS$, which proves
the required result.
\end{proof}

The following corollary is an easy consequence.

\begin{corollary}
\label{adaptedtoSiffH-finiteintersections}Let $G$ be a finitely generated
group with a subgroup $H$, and a nontrivial $H$--almost invariant subset $X$.
Let $S$ be a finitely generated subgroup of $G$. Then $X$ is adapted to $S$ if
and only if, for each $g\in G$, we have $gS\cap X$ or $gS\cap X^{\ast}$ is $H$--finite.
\end{corollary}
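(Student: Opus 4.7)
The forward direction is essentially Lemma \ref{S-adaptedimpliesgSintersectXisH-finite}, so the real content is the converse. Assume therefore that for each $g\in G$ at least one of $gS\cap X$ and $gS\cap X^{\ast}$ is $H$-finite; I aim to produce a $H$-almost invariant set $Z$ equivalent to $X$ and strictly adapted to $S$.

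The key input is Lemma \ref{Sf.g.impliesfinitenumberofdoublecosets}, which (using finite generation of both $G$ and $S$) tells me that the ``bad'' set
\[
B=\{g\in G:gS\not\subset X\text{ and }gS\not\subset X^{\ast}\}
\]
is contained in a finite union of double cosets $Hg_{1}S,\ldots,Hg_{k}S$, which I may take to be pairwise distinct. The property of $gS$ relative to $X$ depends only on the double coset $HgS$: if $g'=hg_{i}s$, then $g'S=hg_{i}S$ and $g'S\cap X=h(g_{i}S\cap X)$ since $HX=X$; in particular $gS\cap X$ is $H$-finite if and only if $Hg_{i}S\cap X=H(g_{i}S\cap X)$ is $H$-finite (and likewise for $X^{\ast}$). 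So for each $i$ at least one of $Hg_{i}S\cap X$, $Hg_{i}S\cap X^{\ast}$ is $H$-finite.

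Now I define $Z$ by flipping $X$ on finitely many $H$-finite pieces. For each $i$, let $A_{i}=Hg_{i}S\cap X$ if that intersection is $H$-finite and $A_{i}=\varnothing$ otherwise, and let $B_{i}=Hg_{i}S\cap X^{\ast}$ if that intersection is $H$-finite and $B_{i}=\varnothing$ otherwise. Set
\[
Z=\Bigl(X\setminus\bigcup_{i=1}^{k}A_{i}\Bigr)\cup\bigcup_{i=1}^{k}B_{i}.
\]
Each $A_{i}$ and $B_{i}$ is $H$-invariant and $H$-finite, so $Z$ is $H$-almost equal to $X$, hence $HZ=Z$ and $Z$ is a $H$-almost invariant subset of $G$ equivalent to $X$.

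It remains to verify that $gS\subset Z$ or $gS\subset Z^{\ast}$ for every $g\in G$. If $g\notin B$, then either $gS\subset X$ or $gS\subset X^{\ast}$, and since $g\notin\bigcup_{i}Hg_{i}S$ the coset $gS$ is disjoint from every $Hg_{i}S$ and hence from $\bigcup A_{i}\cup\bigcup B_{i}$, so $gS$ lies entirely in $Z$ or $Z^{\ast}$ accordingly. If $g\in B$, then $g\in Hg_{i}S$ for a unique $i$, and $gS\subset Hg_{i}S$ is disjoint from all other $Hg_{j}S$. If $Hg_{i}S\cap X$ is the $H$-finite one, then $A_{i}=Hg_{i}S\cap X$, $B_{i}=\varnothing$, and $gS\cap Z=\varnothing$; in the opposite case $gS\subset Z$. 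Thus $Z$ is strictly adapted to $S$, so $X$ is $S$-adapted by Definition \ref{defnofadapted}.

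The main obstacle, and the reason finite generation of $S$ enters, is controlling which $g$ can have $gS$ meeting both $X$ and $X^{\ast}$: without Lemma \ref{Sf.g.impliesfinitenumberofdoublecosets} the modification above could require altering infinitely many $H$-finite pieces and would no longer yield a set $H$-almost equal to $X$.
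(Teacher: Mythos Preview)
Your argument is essentially correct and takes a more direct route than the paper. For the converse the paper first shows that the $H$--finiteness hypothesis yields a uniform $R$ with $gS\subset N_{R}X$ or $gS\subset N_{R}X^{\ast}$ for all $g$ (using Lemma~\ref{Sf.g.impliesfinitenumberofdoublecosets} to upgrade a $g$--dependent bound to a uniform one), and then invokes Lemma~\ref{adaptediffR-nbhdswork}, whose proof contains a construction of $Z$ in $H\backslash G$ equivalent to yours. You bypass the Cayley--graph step entirely and build $Z$ directly from the finitely many bad double cosets, which is cleaner.

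There is one small slip. Your definitions of $A_{i}$ and $B_{i}$ allow both to be nonempty simultaneously, namely when $Hg_{i}S$ is itself $H$--finite (this happens for instance when $S$ is finite, or conjugate commensurable with a subgroup of $H$). In that case your verification breaks down: with $A_{i}=Hg_{i}S\cap X$ and $B_{i}=Hg_{i}S\cap X^{\ast}$ one gets $Z\cap Hg_{i}S=B_{i}=Hg_{i}S\cap X^{\ast}$, so for $g\in B\cap Hg_{i}S$ both $gS\cap Z=gS\cap X^{\ast}$ and $gS\cap Z^{\ast}=gS\cap X$ are nonempty. The fix is immediate: give priority to removal, i.e.\ set $B_{i}=Hg_{i}S\cap X^{\ast}$ only when $Hg_{i}S\cap X$ is \emph{not} $H$--finite; otherwise set $B_{i}=\varnothing$. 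Then your final paragraph is literally correct. The paper's construction inside the proof of Lemma~\ref{adaptediffR-nbhdswork} makes exactly this prioritisation; see also Remark~\ref{Zisnotunique}.
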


\begin{proof}
If $X$ is adapted to $S$, then Lemma \ref{adaptediffR-nbhdswork} tells us that
there is a finite generating set of $G$ with associated Cayley graph $\Gamma$,
and an integer $D$ such that, for each $g\in G$, we have $gS\subset N_{D}X$ or
$gS\subset N_{D}X^{\ast}$. As $\delta X$ is $H$--finite, it follows that
$N_{D}X-X\subset N_{D}\delta X$ is also $H$--finite. Thus if $gS\subset
N_{D}X$, it follows that $gS\cap X^{\ast}$ is $H$--finite, and similarly if
$gS\subset N_{D}X^{\ast}$, then $gS\cap X$ is $H$--finite.

Now suppose that, for each $g\in G$, we have $gS\cap X$ or $gS\cap X^{\ast}$
is $H$--finite, and choose a finite generating set of $G$ with associated
Cayley graph $\Gamma$. If $gS\cap X$ is $H$--finite, it follows that
$gS\subset N_{D}X^{\ast}$, for some $D$, and if $gS\cap X^{\ast}$ is
$H$--finite, it follows that $gS\subset N_{D}X$, for some $D$. We need to show
that there is a uniform bound $D$ for the sizes of these bounded
neighbourhoods. Now Lemma \ref{Sf.g.impliesfinitenumberofdoublecosets} tells
us that the set $\{g\in G:gS\not \subset X,gS\not \subset X^{\ast}\}$ consists
of the union of finitely many double cosets $HgS$. If $gS$ is contained in the
$d$--neighbourhood of $X$ or of $X^{\ast}$, the same holds for $hgS$, for
every $h$ in $H$, so that $HgS$ is contained in the $d$--neighbourhood of $X$
or of $X^{\ast}$. As we have only finitely many such double cosets, it follows
that there is a uniform bound for the sizes of the neighbourhoods of $X$ and
$X^{\ast}$ which contain the translates $gS$. Thus there is an integer $D$
such that, for each $g\in G$, we have $gS\subset N_{D}X$ or $gS\subset
N_{D}X^{\ast}$. Now Lemma \ref{adaptediffR-nbhdswork} tells us that $X$ is
adapted to $S$.
\end{proof}

A slightly surprising application of the above lemma is that if $S$ is a
Tarski monster, then one expects that any almost invariant subset of $G$ is
automatically adapted to $S$. Here is a specific example of such a result.

\begin{lemma}
Let $G$ be a finitely generated group with a subgroup $H$, and a nontrivial
$H$--almost invariant subset $X$. Let $S$ be a finitely generated subgroup of
$G$ such that every proper subgroup of $S$ is isomorphic to $Z_{p}$. Then $X$
is $S$--adapted.
\end{lemma}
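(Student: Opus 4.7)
The plan is to verify the criterion of Corollary \ref{adaptedtoSiffH-finiteintersections}: it suffices to show that for every $g\in G$, at least one of $gS\cap X$ and $gS\cap X^{\ast}$ is $H$-finite. I would fix $g\in G$ and set $K=S\cap g^{-1}Hg$. The easy case is $K=S$: then $gSg^{-1}\subseteq H$, so $gS\subseteq Hg$, a single $H$-coset, and both intersections are trivially $H$-finite. Otherwise $K$ is a proper subgroup of $S$, and by hypothesis $K$ is isomorphic to $Z_{p}$ or is trivial, hence finite in either case.

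In this substantive case I would descend to the coset space $K\backslash S$. For $s_{1},s_{2}\in S$ one checks that $Hgs_{1}=Hgs_{2}$ iff $s_{1}s_{2}^{-1}\in g^{-1}Hg\cap S=K$ iff $Ks_{1}=Ks_{2}$, so $s\mapsto Hgs$ gives an $S$-equivariant injection $K\backslash S\hookrightarrow H\backslash G$. Since $HX=X$, the subset $\bar{X}=\{Ks\in K\backslash S:gs\in X\}$ is well defined, and a short computation using the $H$-almost invariance of $X$ will show $\bar{X}\triangle(\bar{X}\cdot s')$ is finite in $K\backslash S$ for every $s'\in S$. Thus $\bar{X}$ is almost invariant under the right $S$-action. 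Because $K$ is finite, pulling back along $S\to K\backslash S$ turns $\bar{X}$ into an almost invariant subset of $S$ and preserves whether a set is finite or cofinite, so if $e(S)=1$ then $\bar{X}$ or its complement is finite in $K\backslash S$, which is exactly the desired $H$-finiteness of $gS\cap X$ or $gS\cap X^{\ast}$.

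The main obstacle is therefore to show $e(S)=1$. If $S$ is finite this is trivial (indeed $gS$ itself is finite), so assume $S$ is infinite. Then every nontrivial element of $S$ generates a $Z_{p}$, so $S$ has torsion and every proper subgroup of $S$ is finite. If $e(S)\geq 2$, Stallings' theorem would give $S$ acting minimally and without global fixed point on a tree with finite edge stabilizers; both vertex stabilizers would then have to be proper in $S$, hence finite, so $S$ would act on a tree with finite vertex and edge stabilizers and therefore be virtually free. But $S$ has no proper subgroup of finite index, so $S$ itself would have to be free, contradicting the presence of torsion. Hence $e(S)=1$ and Corollary \ref{adaptedtoSiffH-finiteintersections} completes the proof.
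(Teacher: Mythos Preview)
Your proof is correct and follows essentially the same approach as the paper's: both reduce to Corollary~\ref{adaptedtoSiffH-finiteintersections} by showing that for each $g\in G$ the set $gS\cap X$ (viewed inside $S$) yields an almost invariant subset of $S$ over the finite group $S\cap g^{-1}Hg$, and then appeal to Stallings' theorem to see that such a subset must be trivial. The paper phrases this more compactly by observing directly that $X\cap S$ is $(H\cap S)$--almost invariant in $S$ and that a torsion group cannot split over a finite subgroup, while you spell out the coset-space injection and take the slightly longer route through ``virtually free'' to reach the same contradiction; but the substance is the same.
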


\begin{remark}
In \cite{Olshanski}, Olshanski showed that infinite such groups $S$ exist for
every prime $p>10^{75}$.
\end{remark}

\begin{proof}
The point here is that any almost invariant subset of the group $S$ is
trivial. For suppose that $Y$ is a $K$--almost invariant subset of $S$, for
some subgroup $K$ of $S$. If $K$ is equal to $S$, then $Y\ $is automatically
trivial. If $K$ is trivial or finite cyclic, and $Y$ is nontrivial, it follows
that $S$ has more than one end. Now Stallings' theorem \cite{Stallings2}
implies that $S$ splits over a finite subgroup, contradicting the assumption
that $S$ is a torsion group. Thus in all cases $Y$ must be trivial.

Now the intersection $X\cap S$ is a $(H\cap S)$--almost invariant subset of
$S$, so that $X\cap S$ or $X^{\ast}\cap S$ is $(H\cap S)$--finite, and hence
is certainly $H$--finite. The same argument applies to show that, for each $g$
in $G$, one of $gX\cap S$ or $gX^{\ast}\cap S$ is $(H^{g}\cap S)$--finite, and
hence is certainly $H^{g}$--finite. It follows that, for each $g$ in $G$, one
of $X\cap gS$ or $X^{\ast}\cap gS$ is $H$--finite. Now Corollary
\ref{adaptedtoSiffH-finiteintersections} implies that $X$ is $S$--adapted, as required.
\end{proof}

We close this discussion of basic properties of adapted almost invariant sets
with the following result which connects these ideas with Sageev's cubing,
discussed in section \ref{cubings}.

\begin{lemma}
\label{cubinghasafixedvertex}Let $K$ be a group with subgroups $H$ and $S$,
and let $Y$ be a nontrivial $H$--almost invariant subset of $K$ which is
adapted to $S$. Let $C(Y)$ denote the cubing constructed by Sageev from the
set $E=\{gY,gY^{\ast}:g\in K\}$, as discussed in section \ref{cubings}. Thus
$K$ acts on $C(Y)$.

Suppose that $S$ is not conjugate commensurable with any subgroup of $H$. Then
there is a vertex of $C(Y)$ fixed by $S$.
\end{lemma}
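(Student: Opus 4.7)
The plan is to construct an explicit $S$-invariant ultrafilter $V$ on $E$ and then verify that $V$ lies in the main component $C(Y)$. I would define
\[
V \;=\; \{A \in E : S \cap A^{\ast} \text{ is } \stab(A)\text{-finite}\},
\]
where $\stab(A) = gHg^{-1}$ when $A \in \{gY, gY^{\ast}\}$. Intuitively, $A \in V$ iff $A$ contains ``almost all'' of $S$.

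The first step is to verify that $V$ is an ultrafilter. Each translate $gY$ (and $gY^{\ast}$) inherits $S$-adaptedness from $Y$ directly from Definition~\ref{defnofadapted}, so Lemma~\ref{S-adaptedimpliesgSintersectXisH-finite} applied to it at the identity shows that for every $A \in E$ at least one of $S \cap A$, $S \cap A^{\ast}$ is $\stab(A)$-finite. If both were, then $S$ itself would lie in finitely many cosets of $gHg^{-1}$, whence Lemma~\ref{Neumannlemma}(1) would make $S \cap gHg^{-1}$ of finite index in $S$, contradicting the assumption that $S$ is not conjugate commensurable with any subgroup of $H$. Hence exactly one of $A, A^{\ast}$ lies in $V$. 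For closure under supersets: if $A \in V$ and $A \subset B$, then $S \cap B^{\ast} \subset S \cap A^{\ast}$ is $\stab(A)$-finite; if $B \notin V$ then by the previous paragraph applied to $B$ the set $S \cap B$ would be $\stab(B)$-finite, so $S$ would lie in finitely many cosets of $\stab(A)$ and $\stab(B)$ combined, and Lemma~\ref{Neumannlemma}(2) would again violate the hypothesis. So $B \in V$. Finally, $V$ is $S$-invariant because for $s \in S$ one has $S \cap (sA)^{\ast} = s(S \cap A^{\ast})$, and left multiplication by $s$ bijects cosets of $\stab(A)$ onto cosets of $\stab(sA) = s\stab(A)s^{-1}$.

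The main obstacle is to show that $V$ actually lies in the component $C(Y)$, i.e.\ that $V$ is a vertex of Sageev's cubing and not merely some ultrafilter in the larger complex $\mathcal{K}$. Equivalently, I want $V \mathbin{\triangle} V_e$ to be finite. For this I would invoke Lemma~\ref{finitelymanydoublecosets}, which says that $\{k \in K : kS \not\subset Y,\ kS \not\subset Y^{\ast}\}$ is a union of finitely many double cosets $Hg_1 S, \ldots, Hg_n S$. For $A = gY$: if $g^{-1}S \subset Y$ or $g^{-1}S \subset Y^{\ast}$ then $V$ and $V_e$ agree on $A$ (both contain it, or both exclude it). Otherwise $g^{-1} \in Hg_iS$ for some $i$; for that $i$, precisely one of $g_iS \cap Y$, $g_iS \cap Y^{\ast}$ is $H$-finite, and this ``$H$-finite side'' determines $V$'s decision for every such $g$. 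A short check shows that $V$ disagrees with $V_e$ exactly when $g^{-1}$ lies in that $H$-finite side, and because an $H$-finite set is contained in finitely many left cosets $Hx_j$, the corresponding $g$'s occupy the finitely many cosets $x_j^{-1}H$, giving only finitely many distinct $A = gY$. Summing over the finitely many $i$ and treating $A = gY^{\ast}$ the same way yields that $V \mathbin{\triangle} V_e$ is finite, so $V \in C(Y)$. Being $S$-invariant, $V$ is the desired $S$-fixed vertex of $C(Y)$.
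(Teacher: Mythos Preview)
Your proof is correct and follows essentially the same route as the paper's: you define the same ultrafilter (the paper calls it $V(S)=\{U\in E: S<U\}$, using the shorthand $\Sigma<X$ for ``$\Sigma\cap X^{\ast}$ is $\stab(X)$--finite''), verify the ultrafilter axioms via Lemma~\ref{Neumannlemma}, and then use Lemma~\ref{finitelymanydoublecosets} to show $V$ differs from the basic vertex $V_e$ on only finitely many pairs. The paper's bookkeeping in the last step is organised slightly differently---it fixes a double coset representative $k_i$ and argues that $e$ lies in all but finitely many translates $sk_iY$ as $s$ ranges over $S$---but this is equivalent to your coset count; one small imprecision in your write-up is the phrase ``$g^{-1}$ lies in that $H$--finite side'': strictly, $g^{-1}$ lies in $H\cdot(g_iS\cap Y^{\ast})=Hg_iS\cap Y^{\ast}$, though since this set is still contained in the same finitely many cosets $Hx_j$ your conclusion stands.
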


\begin{proof}
For the purposes of this proof only, the following notation will be
convenient. Let $\Sigma$ be a subset of $K$, and let $X$ be a $L$--almost
invariant subset of $K$. We will write $\Sigma<X$ to mean that $\Sigma\cap
X^{\ast}$ is $L$--finite. Note that this is well defined. For if $X$ is also
$L^{\prime}$--almost invariant, then $L\ $and $L^{\prime}$ are commensurable,
by Corollary \ref{H-aisetequalsKaisetimpliescommensurable}, so that
$\Sigma\cap X^{\ast}$ is also $L^{\prime}$--finite. Intuitively, the
inequality $\Sigma<X$ means that $\Sigma$ is almost contained in $X$, but the
reader is warned that in general, the relation $<$ does not have good
properties. However we claim that this notation is $K$--equivariant in the
sense that if $k$ is an element of $K$ and if $\Sigma<X$, then we do have
$k\Sigma<kX$. For if $\Sigma<X$, then $\Sigma\cap X^{\ast}$ is $L$--finite, so
that $k\Sigma\cap kX^{\ast}=k(\Sigma\cap X^{\ast})$ is $kLk^{-1}$--finite. As
$kX$ is $kLk^{-1}$--almost invariant, it follows that $k\Sigma<kX$, as claimed.

Recall from Lemma \ref{S-adaptedimpliesgSintersectXisH-finite} that as $Y$ is
adapted to $S$, we know that, for each $k$ in $K$, we have $kS<Y$ or
$kS<Y^{\ast}$. Suppose first that both these inequalities hold for some $k$ in
$K$. This implies that $kS$ must be $H$--finite, so that $kSk^{-1}$ is also
$H$--finite. Now part 1) of Lemma \ref{Neumannlemma} implies that $kSk^{-1}$
is commensurable with a subgroup of $H$, so that $S$ must be conjugate
commensurable with a subgroup of $H$. Thus our assumption that $S$ is not
conjugate commensurable with any subgroup of $H$ implies that there is no
$k\in K$ such that $kS<Y$ and $kS<Y^{\ast}$.

Now we claim that $V(S)=\{U\in E:S<U\}$ is an ultrafilter on the set $E$
partially ordered by inclusion. The fact that there is no $k\in K$ such that
$kS<Y$ and $kS<Y^{\ast}$ implies that there is no $k\in K$ such that $S<kY$
and $S<kY^{\ast}$. Thus for every $U\in E$, exactly one of $S<U$ and
$S<U^{\ast}$ holds, so that $V(S)$ satisfies condition 1) of Definition
\ref{defnofultrafilter}. Now suppose that $S<A$ and $A\subset B$, where $A$
and $B$ are elements of $E$, so that $A$ is almost invariant over a conjugate
$H_{A}$ of $H$, and $B$ is almost invariant over a conjugate $H_{B}$ of $H$.
We claim that $S<B$, so that $B\in V(S)$. For suppose this claim is false.
Then we must have $S<A$ and $S<B^{\ast}$, so that $S\cap A^{\ast}$ is $H_{A}%
$--finite and $S\cap B$ is $H_{B}$--finite. As $A\subset B$, it follows that
$K=A^{\ast}\cup B$, so that $S$ is the union of a $H_{A}$--finite set with a
$H_{B}$--finite set. Now part 2) of Lemma \ref{Neumannlemma} implies that $S$
is commensurable with a subgroup of $H_{A}$ or of $H_{B}$, so that $S$ must be
conjugate commensurable with a subgroup of $H$. Thus our assumption that $S$
is not conjugate commensurable with any subgroup of $H$ implies that we must
have $S<B$, as claimed. It follows that $V(S)$ satisfies condition 2) of
Definition \ref{defnofultrafilter}. Hence $V(S)$ is an ultrafilter on $E$, as
claimed, and so is a vertex of $\mathcal{K}(Y)$. As the action of $S$ on $G$
preserves $S$, it must also fix the vertex $V(S)$.

Next we show that $V(S)$ is a vertex of the component $C(Y)$ of $\mathcal{K}%
(Y)$. Let $e$ denote the identity element of $K$, and recall that
$V_{e}=\{U\in E:e\in U\}$ is an ultrafilter on the set $E$ partially ordered
by inclusion, and is a basic vertex of $\mathcal{K}(Y)$ which lies in $C(Y)$.
In order to show that $V(S)$ lies in $C(Y)$, it suffices to show that $V_{e}$
can be joined to $V(S)$ by a path in $\mathcal{K}(Y)$. Equivalently it
suffices to show that $V_{e}$ differs from $V(S)$ on only finitely many pairs
$\{U,U^{\ast}\}$ of elements of $E$. Clearly $V(S)$ and $V_{e}$ coincide for
all $U\in E$ such that $S\subset U$. It remains to consider those elements $U$
of $E$ such that $S<U$ but $S\not \subset U$. Note that if $S<U$, we cannot
have $S<U^{\ast}$, and so we must have $S\not \subset U^{\ast}$. Thus the set
$\{g\in K:S<gY,S\not \subset gY\}$ is contained in the set $\{g\in
K:S\not \subset gY,S\not \subset gY^{\ast}\}$. Now Lemma
\ref{finitelymanydoublecosets} tells us that the set $\{k\in K:kS\not \subset
Y,kS\not \subset Y^{\ast}\}$ consists of the union of finitely many double
cosets $HgS$. Thus the set $\{g\in K:S\not \subset gY,S\not \subset gY^{\ast
}\}$ consists of the union of finitely many double cosets $SgH$. It follows
that the set $\{g\in K:S<gY,S\not \subset gY\}$ is contained in the union of
finitely many double cosets $Sk_{i}H$, $1\leq i\leq m$. For each $i$, we claim
that $e$ lies in all but finitely many of the translates $sk_{i}Y$ of $Y$,
where $s\in S$. Assuming this claim, it now follows that $V(S)$\ and $V_{e}$
differ on only finitely many pairs. Thus $V(S)$ is a vertex of $C(Y)$ which is
fixed by $S$, as required.

Now we prove the above claim. For each $i$, let $Y_{i}$ denote $k_{i}Y$, and
let $H_{i}$ denote $k_{i}Hk_{i}^{-1}$, so that $Y_{i}$ is a $H_{i}$--almost
invariant subset of $K$. As $S<Y_{i}$, the intersection $S\cap Y_{i}^{\ast}$
is $H_{i}$--finite. Denote this intersection by $W_{i}$. As $W_{i}$ is $H_{i}%
$--finite, there are elements $a_{1},\ldots,a_{n}$ of $K$ such that $W_{i}$ is
contained in the union $H_{i}a_{1}\cup\ldots\cup H_{i}a_{n}$. Now let $s$ be
an element of $S$, and suppose that $e$ does not lie in $sk_{i}Y$, so that $e$
must lie in $sk_{i}Y^{\ast}=sY_{i}^{\ast}$. As $e\in S$, we have $e\in S\cap
sY_{i}^{\ast}=sW_{i}$. Thus $s^{-1}$ lies in $W_{i}$, so that $s$ lies in the
union $a_{1}^{-1}H_{i}\cup\ldots\cup a_{n}^{-1}H_{i}$. Hence there is $j$ such
that $s$ lies in $a_{j}^{-1}H_{i}=a_{j}^{-1}k_{i}Hk_{i}^{-1}$, so that
$sk_{i}$ lies in $a_{j}^{-1}k_{i}H$. Thus the translates $sk_{i}Y$ of $Y$ such
that $S<sk_{i}Y$ but $e\notin sk_{i}Y$ are among the finitely many translates
$a_{j}^{-1}k_{i}Y$, for $1\leq j\leq n$. This completes the proof of the
claim, and hence completes the proof of the existence of a vertex of $Y$ fixed
by $S$.
\end{proof}

In the above lemma, it is easy to show that the vertex of $C(Y)$ fixed by $S$
is unique. If $S$ is conjugate commensurable with a subgroup of $H$, it must
still fix some point of $C(Y)$, but $S$ need not fix a vertex of $C(Y)$. The
following is an example of this phenomenon.

\begin{example}
\label{exampleofnofixedvertexinC(Y)}Let $K$ be a group with subgroups $H$ and
$S$, and let $Y$ be a nontrivial $H$--almost invariant subset of $K$ which is
adapted to $S$, and is in very good position. Let $C(Y)$ denote the cubing
constructed by Sageev from the set $E=\{gY,gY^{\ast}:g\in K\}$, as discussed
in section \ref{cubings}. Thus $K$ acts on $C(Y)$.

Suppose that there is $s$ in $S$ such that $sY=Y^{\ast}$. As $Y\ $is in very
good position, this implies that $s$ preserves the hyperplane $\Pi$ which is
associated to $Y$, and that $s$ interchanges the sides of $\Pi$. In particular
$s$ cannot fix any vertex of $C(Y)$. The simplest example where this occurs is
as follows. Let $G=\mathbb{Z}_{2}\ast\mathbb{Z}_{2}$, let $H$ be the trivial
subgroup of $G$, let $S$ denote one of the $\mathbb{Z}_{2}$ factors of $G$,
and let $s$ denote the nontrivial element of $S$. Then there is an almost
invariant subset $Y$ of $G$ associated to the given splitting of $G$ over $H$
such that $sY=Y^{\ast}$ and $Y$ is in very good position. Further any such
$Y\ $is automatically adapted to $S$. Note that in this example $C(Y)$ is a
line, and $s$ acts on $C(Y)$ by reflection.
\end{example}

Finally we note that the proof of Lemma \ref{cubinghasafixedvertex} applies
just as well to the case of several almost invariant sets. For future
reference we state the result.

\begin{lemma}
\label{cubingformanysetshasafixedpoint}Let $K$ be a group with subgroups $S$
and $H_{1},\ldots,H_{n}$, and, for $1\leq i\leq n$, let $Y_{i}$ be a
nontrivial $H_{i}$--almost invariant subset of $K$ which is adapted to $S$.
Let $C(Y_{1},\ldots,Y_{n})$ denote the cubing constructed by Sageev from the
set $E=\{gY_{i},gY_{i}^{\ast}:1\leq i\leq n,g\in K\}$, as discussed in section
\ref{cubings}. Thus $K$ acts on $C(Y_{1},\ldots,Y_{n})$.

Suppose that $S$ is not conjugate commensurable with any subgroup of any
$H_{i}$. Then there is a vertex of $C(Y_{1},\ldots,Y_{n})$ fixed by $S$.
\end{lemma}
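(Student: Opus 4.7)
The plan is to reproduce the proof of Lemma~\ref{cubinghasafixedvertex} essentially verbatim, with the single set $Y$ replaced by the family $\{Y_1,\ldots,Y_n\}$. The only places where the single-set hypothesis intervened are (i) the ``exactly one of $kS<Y,kS<Y^{*}$'' dichotomy and (ii) the ``only finitely many differences from $V_e$'' count, and both generalise painlessly thanks to Neumann's Lemma (Lemma~\ref{Neumannlemma}) and Lemma~\ref{finitelymanydoublecosets}, which were both stated with several subgroups in mind.

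First I would reintroduce the shorthand $\Sigma<X$ for ``$\Sigma\cap X^{*}$ is $L$--finite,'' where $X$ is $L$--almost invariant. By Corollary~\ref{H-aisetequalsKaisetimpliescommensurable} this is independent of $L$, and it is $K$--equivariant as before. Next, for each $U=gY_i^{\pm}\in E$, Lemma~\ref{S-adaptedimpliesgSintersectXisH-finite} applied to $Y_i$ and the $K$--equivariance of $<$ give $S<gY_i$ or $S<gY_i^{*}$. If both hold for some $g,i$, then $S$ is $gH_ig^{-1}$--finite, so part~1 of Lemma~\ref{Neumannlemma} makes $S$ commensurable with a subgroup of $gH_ig^{-1}$, contradicting the hypothesis. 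Thus exactly one holds, and condition~(1) of Definition~\ref{defnofultrafilter} is satisfied for $V(S)=\{U\in E:S<U\}$.

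For condition~(2), suppose $A\subset B$ in $E$ with $A$ a $H_A$--almost invariant and $B$ a $H_B$--almost invariant subset, where $H_A$ and $H_B$ are now conjugates of possibly \emph{different} subgroups $H_i$ and $H_j$. If $S<A$ but $S<B^{*}$, then $A\subset B$ forces $K=A^{*}\cup B$, so $S$ is a union of an $H_A$--finite and an $H_B$--finite set. Part~2 of Lemma~\ref{Neumannlemma} (stated precisely for this kind of multi-subgroup situation) then makes $S$ commensurable with a subgroup of $H_A$ or $H_B$, hence conjugate commensurable with a subgroup of $H_i$ or $H_j$, contradicting the hypothesis. Thus $V(S)$ is an ultrafilter, and since the action of $S$ preserves $S$ and respects $<$, $S$ fixes the vertex $V(S)$.

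Finally, to see that $V(S)$ lies in the component $C(Y_1,\ldots,Y_n)$ of the basic vertex $V_e$, I would show that $V(S)$ and $V_e$ differ on only finitely many complementary pairs. Applying Lemma~\ref{finitelymanydoublecosets} to each $Y_i$ separately gives, for each $i$, a finite collection of double cosets $Sk_{i,j}H_i$ covering $\{g\in K:S\not\subset gY_i,\;S\not\subset gY_i^{*}\}$; an argument identical to the last paragraph of the proof of Lemma~\ref{cubinghasafixedvertex} (counting, inside each double coset $Sk_{i,j}H_i$, the $s\in S$ with $e\notin sk_{i,j}Y_i$) shows only finitely many such translates fail to contain $e$. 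Summing over $i$ and $j$ gives the required finite bound, so $V(S)\in C(Y_1,\ldots,Y_n)$. The only conceptual step beyond the single-set case is the verification of condition~(2) when $A$ and $B$ belong to different orbits $KY_i$ and $KY_j$, and this is the step I would expect to present most carefully; fortunately it is exactly the situation for which Neumann's lemma was stated in its multi-subgroup form.
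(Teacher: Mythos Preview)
Your proposal is correct and is exactly what the paper intends: the paper does not give a separate proof but simply remarks that ``the proof of Lemma~\ref{cubinghasafixedvertex} applies just as well to the case of several almost invariant sets.'' You have correctly identified the two places where the generalisation requires a moment's thought---condition~(2) of the ultrafilter when $A$ and $B$ lie over conjugates of different $H_i$'s, and the finiteness count for $V(S)$ versus $V_e$---and observed that Neumann's Lemma part~2 and a sum over $i$ of applications of Lemma~\ref{finitelymanydoublecosets} handle these without change.
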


\section{Group systems and Cayley graphs\label{groupsystems}}

Up to this point of the paper, we have made no restrictions on the cardinality
of the groups we consider. It will now be convenient to introduce the
following terminology.

\begin{definition}
A \textsl{group system} $(G,\mathcal{S})$ consists of a group $G$ and a family
$\mathcal{S}$ of subgroups of $G$, with repetitions allowed.
\end{definition}

\begin{definition}
A group system is \textsl{of finite type} if $G$ is countable, $\mathcal{S}$
is finite, and $G$ is finitely generated relative to $\mathcal{S}$ (i.e. $G$
is generated by the union of all the subgroups in the family $\mathcal{S}$
together with some finite subset of $G$).
\end{definition}

For later use, we also define what it means for $G$ to be finitely presented
relative to $\mathcal{S}$.

\begin{definition}
\label{defnoff.p.relfamilyofsubgroups} Let $(G,\mathcal{S})$ be a group system
of finite type, and let $\mathcal{S}=\{S_{1},\ldots S_{n}\}$. Then $G$ is
\textsl{finitely presented relative to} $\mathcal{S}$, if there is a finite
relative generating set $\Omega$ such that the kernel of the natural
epimorphism $F(\Omega)\ast S_{1}\ast\ldots\ast S_{n}\rightarrow G$ is normally
generated by a finite set.
\end{definition}

For much of the rest of this paper, we will consider only group systems of
finite type. The reason for this is that although the group $G$ need not be
finitely generated, one can understand almost invariant subsets of $G$ which
are adapted to $\mathcal{S}$ in a kind of Cayley graph of the system. Note
that the countability condition on $G$ is not needed in this section, but will
frequently be used in later sections.

Recall that given a group $G$ and a subset $A$, a graph $\Gamma(G:A)$ can be
defined with vertex set $G$ and edges $(g,gg_{\alpha})$ which join $g$ to
$gg_{\alpha}$, for each $g_{\alpha}\in A$. Clearly $G$ acts freely on the left
on $\Gamma(G:A)$. Further this graph is connected if and only if $A$ generates
$G$. In this case $\Gamma(G:A)$ is called the Cayley graph of $G$ with respect
to $A$. Note that in this case, an alternative way of obtaining $\Gamma(G:A)$
is to start with a connected $1$--vertex graph $R$ whose edges are labelled by
the elements $g_{\alpha}$ of $A$. Thus $\pi_{1}(R)$ is the free group on $A$,
and there is a natural map $\varphi:\pi_{1}(R)\rightarrow G$ which sends the
generators of $\pi_{1}(R)$ to the $g_{\alpha}$'s. Now the Cayley graph
$\Gamma(G:A)$ is just the cover of $R$ corresponding to the kernel of
$\varphi$. If the generating set $A$ is finite, then $\Gamma(G:A)$ is locally
finite. In this case, a subset $X$ of $G$ such that $HX=X$ is $H$--almost
invariant if and only if the coboundary $\delta X$ of $X$ in $\Gamma(G:A)$ is
$H$--finite. (Here $\delta X$ denotes the set of edges of $\Gamma$ with
precisely one vertex in $X$.) This beautiful fact is due to Cohen \cite{Cohen}.

Now suppose that $(G,\mathcal{S})$ is a group system of finite type, and let
$A=\{g_{1},\ldots,g_{n}\}$ be a generating set for $G$ relative to
$\mathcal{S}$. Then we define the relative Cayley graph $\Gamma(G,\mathcal{S}%
)$ to be the union of $\Gamma(G:A)$, whose vertex set equals $G$, with a cone
on each coset $gS_{j}$ of each $S_{j}$ in $\mathcal{S}$. More precisely,
$\Gamma(G,\mathcal{S})$ has vertex set $G$ together with $\{gS_{j}:g\in
G,S_{j}\in\mathcal{S}\}$, and has edge set consisting of the edges of
$\Gamma(G:A)$ together with edges $(g,gS_{j})$ which join $g$ to $gS_{j}$, for
each $g\in G$ and each $S_{j}\in\mathcal{S}$. Vertices of the form $gS_{j}$
will be called cone points, and any edge which is incident to a cone point
will be called a cone edge. The union of all the cone edges incident to a
given cone point $gS_{j}$ is a cone on the subset $gS_{j}$ of $G$. The graph
$\Gamma(G,\mathcal{S})$ is connected because $A$ generates $G$ relative to
$\mathcal{S}$, and $G$ again acts on the left, but the action is no longer
free. Nor is $\Gamma(G,\mathcal{S})$ locally finite (unless each $S_{j}$ is
finite). However it is still possible to interpret almost invariant subsets of
$G$ in terms of a coboundary if they are adapted to $\mathcal{S}$. Let $X$
denote a subset of $G$ and, as usual, let $X^{\ast}$ denote its complement in
$G$. Thus $X$ and $X^{\ast}$ are sets of vertices in $\Gamma(G,\mathcal{S})$.
Note that if $X$ is a $H$--almost invariant subset of $G$, then its coboundary
$\delta X$ in $\Gamma(G,\mathcal{S})$ is most unlikely to be $H$--finite, so
we will enlarge $X$ to try to arrange this. A collection $\widehat{X}$ of
vertices of $\Gamma(G,\mathcal{S})$ is called an \textit{enlargement} of $X$
if $\widehat{X}\cap G=X$. Thus $\widehat{X}$ contains $X$, and $\widehat{X}-X$
consists only of cone points.

The following result is what we need.

\begin{lemma}
\label{characterizingdX}Let $(G,\mathcal{S})$ be a group system of finite
type, and let $\Gamma(G,\mathcal{S})$ be a relative Cayley graph. Let $H$ be a
subgroup of $G$, and let $X$ be a subset of $G$ such that $HX=X$. Then there
is an enlargement $\widehat{X}$ of $X$ such that $\delta\widehat{X}$ is
$H$--finite if and only if $X$ is $H$--almost invariant and $\mathcal{S}$--adapted.
\end{lemma}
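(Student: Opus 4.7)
The plan is to decompose $\delta\widehat{X}$ into the Cayley edges of $\Gamma(G:A)$ (those with both endpoints in $G$) and the cone edges, and treat each kind separately. Since $\widehat{X}\cap G=X$, the Cayley part of $\delta\widehat{X}$ equals the coboundary of $X$ in the locally finite subgraph $\Gamma(G:A)$, and for each $a\in A$ the type-$a$ edges there are in $H$-equivariant bijection with $X\Delta Xa^{-1}$. For a relative generator $s\in S_j$, the length-two path $k\to kS_j\to ks$ contains exactly one edge of $\delta\widehat{X}$ iff $k\in X\Delta Xs^{-1}$, giving an at most two-to-one $H$-equivariant map $X\Delta Xs^{-1}\to\delta\widehat{X}$. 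Together these observations show that $H$-finiteness of $\delta\widehat{X}$ implies $X\Delta Xg^{-1}$ is $H$-finite for each generator in the finite set $A\cup\bigcup S_j$, and telescoping over a word yields $H$-almost invariance of $X$; the same bijections, read backwards, show the Cayley portion of $\delta\widehat{X}$ is $H$-finite whenever $X$ is $H$-almost invariant. So once this framework is set up, the real content of the lemma is the relationship between cone edges and $\mathcal{S}$-adaptedness.

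For the forward implication, I fix $j$, set $S=S_j$, and let $B\subset G$ collect the $G$-endpoints of cone edges of type $S$ in $\delta\widehat{X}$. Because $H$ acts freely on $G$ and the $G$-endpoint map is $H$-equivariant, each $H$-orbit of cone edges projects bijectively onto one left coset $Hk$, so $HB$ is $H$-finite. At any cone point $p=gS$ the cone edges in $\delta\widehat{X}$ biject with $gS\cap X^{\ast}$ if $p\in\widehat{X}$ and with $gS\cap X$ otherwise, so the relevant intersection lies inside $HB$ and is $H$-finite. I then define $X'$ to be the union of those cosets $gS$ for which $gS\cap X^{\ast}$ is $H$-finite, breaking ties arbitrarily when both intersections are. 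Using $hgS\cap X^{\ast}=h(gS\cap X^{\ast})$ together with $HX=X$, this choice is $H$-equivariant on cosets, so $HX'=X'$; by construction $X'$ is strictly $S$-adapted; and $X\Delta X'\subset HB$ is $H$-finite, so (combined with $H$-almost invariance of $X$) $X'$ is itself $H$-almost invariant and equivalent to $X$. Repeating for each $S_j\in\mathcal{S}$ gives $\mathcal{S}$-adaptedness.

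For the reverse implication, Lemma \ref{S-adaptedimpliesgSintersectXisH-finite} supplies the dichotomy used to define $\widehat{X}$: place $p=gS_j$ in $\widehat{X}$ exactly when $gS_j\cap X^{\ast}$ is $H$-finite, breaking ties consistently. The same $H$-equivariance makes $\widehat{X}$ an $H$-invariant set, and the Cayley edges in $\delta\widehat{X}$ are handled by the first paragraph. For cone edges of type $S_j$, only cosets meeting both $X$ and $X^{\ast}$ contribute; Lemma \ref{finitelymanydoublecosets} shows these lie in finitely many double cosets $Hg_0S_j$, hence in finitely many $H$-orbits of cone points $Hp_0$. Over a fixed orbit $Hp_0$ with $p_0=g_0S_j$, the cone edges of $\delta\widehat{X}$ correspond to the $H$-finite smaller side of $p_0$; the key observation is that the intersection of any single left $H$-coset with the coset $p_0$ is either empty or a single coset of $\stab_H(p_0)=H\cap g_0S_jg_0^{-1}$, so the $H$-finite smaller side breaks into finitely many $\stab_H(p_0)$-orbits, which is exactly the count of $H$-orbits of cone edges contributed by $Hp_0$. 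Summing over the finitely many contributing orbits and over the finite family $\mathcal{S}$ yields that $\delta\widehat{X}$ is $H$-finite.

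I expect the main obstacle to be this final bookkeeping in the reverse direction, namely the translation between $H$-finiteness of a subset of $G$ and $H$-finitely many cone edges at a fixed cone-point orbit, since the two natural equivalence relations use $H$ and $\stab_H(p_0)$ respectively; the coset-meets-coset identification above, together with Lemma \ref{finitelymanydoublecosets}, is the central calculation reconciling the two counts.
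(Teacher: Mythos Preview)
Your argument is correct and follows essentially the same route as the paper: split $\delta\widehat{X}$ into Cayley edges and cone edges, handle the Cayley part via the symmetric differences $X\Delta Xa^{-1}$, and for the cone part use Lemma~\ref{finitelymanydoublecosets} to restrict to finitely many $H$-orbits of ``bad'' cone points, at each of which the smaller of $gS_j\cap X$ and $gS_j\cap X^{\ast}$ controls the contributed edges. Two small slips worth fixing: the generating set $A\cup\bigcup S_j$ is not finite (only $A$ is), though your telescoping argument never actually uses finiteness of the generating set; and ``breaking ties arbitrarily'' must be done $H$-equivariantly for $HX'=X'$ to hold---for instance, include a tied coset $gS$ in $X'$ exactly when the cone point $gS$ lies in $\widehat{X}$, which also makes the inclusion $X\Delta X'\subset B$ immediate.
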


\begin{proof}
First we assume that $X$ is $H$--almost invariant and $\mathcal{S}$--adapted,
and will construct an enlargement $\widehat{X}$ such that $\delta\widehat{X}$
is $H$--finite. We start by considering the set $E^{\prime}$ of all edges of
the form $(g,gg_{i})$ which join points of $X$ and $X^{\ast}$. As $X$ is
$H$--almost invariant, we know that the symmetric difference $X+Xg_{i}$ is
$H$--finite. It follows that the set of edges of the form $(g,gg_{i})$ which
join points of $X$ and $X^{\ast}$ is $H$--finite, and hence that $E^{\prime}$
is $H$--finite. Now we consider the cone points of $\Gamma(G,\mathcal{S})$,
and will decide whether or not to assign them to $\widehat{X}$. As $X$ is
$\mathcal{S}$--adapted, Lemma \ref{finitelymanydoublecosets} shows there are
only $H$--finitely many cosets $gS_{j}$ such that $gS_{j}\cap X$ and
$gS_{j}\cap X^{\ast}$ are both non-empty. If $gS_{j}\cap X$ is empty, we will
choose the cone point $gS_{j}$ not to lie in $\widehat{X}$. If $gS_{j}\cap
X^{\ast}$ is empty, we will choose the cone point $gS_{j}$ to lie in
$\widehat{X}$. In either case, this ensures that no cone edge incident to
$gS_{j}$ lies in $\delta\widehat{X}$. If $gS_{j}\cap X$ and $gS_{j}\cap
X^{\ast}$ are each non-empty, the fact that $X$ is $\mathcal{S}$--adapted
tells us that at least one of them is $H$--finite. If $gS_{j}\cap X$ is
$H$--finite, we will choose the cone point $gS_{j}$ not to lie in
$\widehat{X}$. If $gS_{j}\cap X$ is $H$--infinite, so that $gS_{j}\cap
X^{\ast}$ must be $H$--finite, we will choose the cone point $gS_{j}$ to lie
in $\widehat{X}$. In either case, this ensures only a $H$--finite number of
cone edges incident to $gS_{j}$ lie in $\delta\widehat{X}$. Thus
$\delta\widehat{X}$ consists of $E^{\prime}$ and a $H$--finite collection of
cone edges, so that $\delta\widehat{X}$ is $H$--finite, as required.

Now suppose that $\widehat{X}$ is an enlargement of $X$ such that
$\delta\widehat{X}$ is $H$--finite. We need to show that $X$ is $H$--almost
invariant and $\mathcal{S}$--adapted. Fix $g\in G$, and consider the symmetric
difference $X+Xg$. Observe that $x\in X+Xg$ if and only if exactly one of $x$
and $xg^{-1}$ lies in $X$. Now write $g^{-1}$ as a word $w$ in the $g_{i}$'s
and in elements of the $S_{j}$'s. Thus $w=a_{1}a_{2}\ldots a_{k}$, where each
$a_{k}$ equals some $g_{i}$ or its inverse, or some element of some $S_{j}$.
The word $w$ determines an edge path $\lambda=e_{1}\ldots e_{n}$ in
$\Gamma(G,\mathcal{S})$ from $e$ to $g^{-1}$ which contains one edge for each
$g_{i}$ in $w$ and contains two cone edges for each element of some $S_{j}$.
Thus the path $x\lambda$ joins $x$ to $xg^{-1}$. It follows that if $x\in
X+Xg$, then $x\lambda$ contains an edge of $\delta\widehat{X}$. Thus $xe_{i}$
lies in $\delta\widehat{X}$, for some $i$. As $G$ acts freely on the edges of
$\Gamma(G,\mathcal{S})$, and $\delta\widehat{X}$ is $H$--finite, the set of
all elements $x$ in $G$ such that $xe_{i}$ lies in $\delta\widehat{X}$ is also
$H$--finite, for each $i$. Hence $X+Xg$ is $H$--finite, for every $g$ in $G$,
so that $X$ is $H$--almost invariant.

Next fix a group $S$ in $\mathcal{S}$. We will show that $X$ is equivalent to
a $H$--almost invariant subset $Y$ of $G$ which is strictly adapted to $S$,
which will imply that $X$ itself is adapted to $S$. First, by replacing
$\widehat{X}$ by $H\widehat{X}$ if needed, we can arrange that $H\widehat{X}%
=\widehat{X}$, without losing the $H$--finiteness of $\delta\widehat{X}$. This
is because $H\widehat{X}$ is obtained from $\widehat{X}$ by adding cone
points, each of which contributes only a $H$--finite number of new edges to
the coboundary, and all but a $H$--finite number of these added cone points
contribute no new edges. Now consider all cone edges of $\Gamma(G,\mathcal{S}%
)$ which lie in $\delta\widehat{X}$. If a coset $gS$ is entirely contained in
$X$, and the cone point $gS$ does not lie in $\widehat{X}$, we add $gS$ to
$\widehat{X}$. This removes from $\delta\widehat{X}$ all cone edges which are
incident to the vertex $gS$, without changing $X$. If a coset $gS$ is entirely
contained in $X^{\ast}$, and the cone point $gS$ lies in $\widehat{X}$, we
remove $gS$ from $\widehat{X}$. This again removes from $\delta\widehat{X}$
all cone edges which are incident to the vertex $gS$, without changing $X$. If
a coset $gS$ meets both $X$ and $X^{\ast}$ and if $gS\cap X$ is $H$--finite,
we will remove $gS\cap X$ from $X$. In addition, if the cone point $gS$ lies
in $\widehat{X}$, we will remove it from $\widehat{X}$. This removes from
$\delta\widehat{X}$ all cone edges which are incident to the vertex $gS$. If a
coset $gS$ meets both $X$ and $X^{\ast}$ and if $gS\cap X$ is $H$--infinite,
then $gS\cap X^{\ast}$ must be $H$--finite. In this case, we add $gS\cap
X^{\ast}$ to $X$, and also add the cone point $gS$ to $\widehat{X}$ if needed.
This again removes from $\delta\widehat{X}$ all cone edges which are incident
to the vertex $gS$. Let $Y$ denote the subset of $G$ obtained from $X$ in this
way, and let $\widehat{Y}$ denote the vertex set obtained from $\widehat{X}$
in this way, so that $\widehat{Y}$ is an enlargement of $Y$. As $\delta
\widehat{X}$ is $H$--finite, $Y$ is $H$--almost equal to $X$. As
$H\widehat{X}=\widehat{X}$ and $HX=X$, we have $HY=Y$, so that $Y$ is a
$H$--almost invariant subset of $G$ which is equivalent to $X$. Finally, by
construction, $\delta\widehat{Y}$ contains no cone edges incident to any cone
point $gS$. It follows that for each coset $gS$ of $S$ in $G$, the
corresponding cone can meet only one of $Y$ and $Y^{\ast}$ in a non-empty set.
Thus $gS$ is contained in one of $Y$ and $Y^{\ast}$, so that $Y$ is strictly
adapted to $S$, as required.
\end{proof}

We summarize some basic facts about enlargements in the following remark.

\begin{remark}
\label{enlargements}Using the notation of Lemma \ref{characterizingdX},
suppose there is an enlargement $\widehat{X}$ of $X$ in $\Gamma(G,\mathcal{S}%
)$ such that $\delta\widehat{X}$ is $H$--finite.

\begin{enumerate}
\item By replacing $\widehat{X}$ by $H\widehat{X}$ if needed, we can arrange
that $H\widehat{X}=\widehat{X}$.

\item If $gS_{i}\cap X$ is $H$--infinite, then the cone point $gS_{i}$ must
lie in $\widehat{X}$.

\item If $gS_{i}\cap X^{\ast}$ is $H$--infinite, then the cone point $gS_{i}$
cannot lie in $\widehat{X}$.

\item If there is no $g$ in $G$ and $S_{i}$ in $\mathcal{S}$ such that
$gS_{i}$ is $H$--finite, then $X$ has only one enlargement $\widehat{X}$ such
that $\delta\widehat{X}$ is $H$--finite.
\end{enumerate}
\end{remark}

In case 4) of the above remark, we call this unique enlargement, the
\textit{canonical enlargement} of $X$ in $\Gamma(G,\mathcal{S})$.

Now we are ready to start the construction which we promised immediately
before Definition \ref{defnofadaptedsplitting}.

It will be convenient to introduce the following terminology.

\begin{definition}
Let $\overline{G}$ be a group with a minimal graph of groups decomposition
$\Gamma$, with a vertex $V$ whose associated group is $G$. Let $X$ be a
nontrivial $H$--almost invariant subset of $G$. An \textsl{extension} of $X$
to $\overline{G}$ is a $H$--almost invariant subset $\overline{X}$ of
$\overline{G}$ which is enclosed by $V$ such that $\overline{X}\cap G=X$.
\end{definition}

In the setting of this definition, let $\mathcal{S}$ denote the family of
subgroups of $G$ associated to the edges of $\Gamma$ which are incident to
$V$. Lemma \ref{XbarenclosedbyvimpliesthatXisadapted} implies that if $X$ has
an extension, then $X$ must be $\mathcal{S}$--adapted. We will use the
relative Cayley graph to prove a converse under the assumption that the group
system $(G,\mathcal{S})$ is of finite type.

\begin{lemma}
\label{extensionsexist}Let $\overline{G}$ be a group with a minimal graph of
groups decomposition $\Gamma$, with a vertex $V$ whose associated group is
$G$. Let $\mathcal{S}$ denote the family of subgroups of $G$ associated to the
edges of $\Gamma$ which are incident to $V$, and let $X$ be a nontrivial
$\mathcal{S}$--adapted $H$--almost invariant subset of $G$. If the group
system $(G,\mathcal{S})$ is of finite type, then $X$ has an extension
$\overline{X}$ to $\overline{G}$.
\end{lemma}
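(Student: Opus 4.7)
The plan is to use Lemma \ref{characterizingdX} to encode $X$ as an enlargement in the relative Cayley graph $\Gamma(G,\mathcal{S})$, and then promote this to an extension $\overline{X}$ over the Bass-Serre tree $T$ of $\Gamma$. Since $(G,\mathcal{S})$ is of finite type, Lemma \ref{characterizingdX} yields an enlargement $\widehat{X}$ of $X$ in $\Gamma(G,\mathcal{S})$ with $\delta\widehat{X}$ being $H$-finite; by Remark \ref{enlargements}(1) we may arrange $H\widehat{X}=\widehat{X}$. Let $v$ be the chosen lift of $V$ in $T$, so $\mathrm{Stab}_{\overline{G}}(v)=G$, and fix lifts $\bar{f}_j$ at $v$ of the edges $e_j$ of $\Gamma$ incident to $V$. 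The edges of $T$ incident to $v$ are the translates $g\bar{f}_j$ with $g\in G$, and they correspond bijectively to the cone points $gS_j$ of $\Gamma(G,\mathcal{S})$. Using $v$ as basepoint and letting $\varphi(g)=gv$, define $\overline{X}\subseteq\overline{G}$ by: for $g\in G$, $g\in\overline{X}$ iff $g\in X$; and for $g$ with $gv\neq v$, $g\in\overline{X}$ iff the cone point of the unique edge $s_g$ of $T$ incident to $v$ on the geodesic from $v$ to $gv$ lies in $\widehat{X}$.

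The easy properties are essentially tautological. One has $\overline{X}\cap G=X$ by construction, and $H\overline{X}=\overline{X}$ follows from $HX=X$, $Hv=v$, and $H\widehat{X}=\widehat{X}$. For every edge $s$ of $T$ incident to $v$ and oriented towards $v$, the set $\varphi^{-1}(Y_s^*)=\{g\in\overline{G}:s_g=s\}$ is, by construction, entirely contained in $\overline{X}$ or in $\overline{X}^*$ according to whether the cone point of $s$ lies in $\widehat{X}$. Hence $v$ strictly encloses $\overline{X}$ with basepoint $v$, so $v$ encloses $\overline{X}$ in the sense of Definition \ref{defnofenclosing}.

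The substance of the proof is showing $\overline{X}$ is $H$-almost invariant: for each $\gamma\in\overline{G}$, the symmetric difference $\overline{X}+\overline{X}\gamma$ is $H$-finite. Set $v'=\gamma^{-1}v$. If $\gamma\in G$, then $v'=v$ and $\overline{X}+\overline{X}\gamma$ reduces to $X+X\gamma$, which is $H$-finite. If $\gamma\notin G$, split $h\in\overline{G}$ according to whether $hv=v$ and whether $hv'=v$: (i) $hv=v$, $hv'\neq v$; (ii) $hv\neq v$, $hv'=v$; (iii) $hv\neq v$ and $hv'\neq v$. In (i), letting $g_1'\bar{f}_{j_1}$ be the first edge of the geodesic from $v$ to $v'$, the condition for $h$ to lie in $\overline{X}+\overline{X}\gamma$ is the XOR of "$h\in X$" and "$hg_1'S_{j_1}\in\widehat{X}$"; inserting the trivial XOR $(hg_1'\in\widehat{X})\oplus(hg_1'\in\widehat{X})$ and using $\widehat{X}\cap G=X$ rewrites this as the XOR of "$h\in X+X(g_1')^{-1}$" and "the cone edge $(hg_1',hg_1'S_{j_1})$ lies in $\delta\widehat{X}$". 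Both events hold for only $H$-finitely many $h$, by $H$-almost invariance of $X$, $H$-finiteness of $\delta\widehat{X}$, and the fact that $G$ acts freely on the edges of $\Gamma(G,\mathcal{S})$. Sub-case (ii) is symmetric, with the roles of $v$ and $v'$ reversed.

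The main obstacle is sub-case (iii). There the XOR condition on $h$ reduces to $\mu(hv)\oplus\mu(hv')$, where $\mu$ reads off cone-point statuses from the first edge at $v$ on the relevant geodesic; this XOR is nonzero only when $v$ lies on the geodesic from $hv$ to $hv'$. Since that geodesic is $h$ applied to the geodesic $v=u_0,u_1,\dots,u_L=v'$ in $T$, one must have $h^{-1}v=u_i$ for some internal $i\in\{1,\dots,L-1\}$, pinning $h$ to one of only $L-1$ right cosets $Gt_i^{-1}$ (with $t_iv=u_i$). Writing $h=gt_i^{-1}$ with $g\in G$ and unwinding definitions, the XOR becomes $(gg_i'S_{j_i'}\in\widehat{X})\oplus(gg_i''S_{j_i''}\in\widehat{X})$ for elements $g_i',g_i'',S_{j_i'},S_{j_i''}$ depending only on $\gamma$ and $t_i$. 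Connecting these two cone points by a fixed path of bounded length in $\Gamma(G,\mathcal{S})$ and $g$-translating shows that the XOR is nonzero only if the $g$-translated path contains an edge of $\delta\widehat{X}$; by freeness of the $G$-action on edges and $H$-finiteness of $\delta\widehat{X}$, this happens for only $H$-finitely many $g$ for each $i$. Summing over the finite set of internal vertices $u_i$ completes the proof.
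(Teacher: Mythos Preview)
Your argument is correct and arrives at the same extension $\overline{X}$ as the paper (labelling edges of $T$ at $v$ by cone-point membership in $\widehat{X}$, and propagating to components of $T\setminus\{v\}$), but your verification of $H$--almost invariance takes a different route. The paper builds an auxiliary connected graph $\overline{\Gamma}$ on which $\overline{G}$ acts freely (a covering of a graph-of-roses modelling $\Gamma$), identifies the relative Cayley graph as a quotient of a piece of $\overline{\Gamma}$, pulls $\delta\widehat{X}$ back to an $H$--finite edge set $E$ in $\overline{\Gamma}$, and then runs a single uniform path argument: any $x\in\overline{X}+\overline{X}g$ forces the translate $x\lambda$ of a fixed path $\lambda$ from $v$ to $gv$ to meet $E$. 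Your approach dispenses with $\overline{\Gamma}$ entirely and works directly in $T$ and $\Gamma(G,\mathcal{S})$, replacing the uniform path argument by the case split on $hv$ and $hv'$; in case~(iii) you exploit the tree geometry to localise $h$ to finitely many right $G$--cosets indexed by those internal vertices $u_i$ of the geodesic $[v,v']$ lying in the $\overline{G}$--orbit of $v$ (the others contribute nothing, since $h^{-1}v$ is always in that orbit), and finish with a short path argument inside $\Gamma(G,\mathcal{S})$. The paper's route is more uniform and packages the bookkeeping into the construction of $\overline{\Gamma}$; yours is more elementary and avoids the covering-space machinery at the cost of an explicit case analysis.
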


\begin{remark}
In this result, the countability assumption on $G$ is not needed, as noted
immediately after the definition of a group system of finite type.
\end{remark}

\begin{proof}
By collapsing all edges of $\Gamma$ which do not meet $V$, we can arrange that
each edge of $\Gamma$ meets $V$. For simplicity we will restrict to this case.
Now we will pick a generating set (not necessarily finite) for each vertex
group of $\Gamma$ other than $G$, and will pick a finite subset $A$ of $G$
which generates $G$ relative to $\mathcal{S}$. For each chosen generating set
including $A$, choose a rose, i.e. a $1$--vertex graph whose edges are
labelled by the generators. Now construct a connected graph $C$ from these
roses by joining the roses in pairs by a single edge corresponding to each
edge of $\Gamma$. This graph has a natural map to $\Gamma$ in which the
pre-image of each vertex is a rose, and the pre-image of each edge is a single
edge of $C$. Thus $\pi_{1}(C)$ is a free group and there is a natural
surjection $\varphi:\pi_{1}(C)\rightarrow\overline{G}$ which sends each
generator of $\pi_{1}(C)$ represented by a loop in a rose to the appropriate
element of the vertex group. The covering space of this graph corresponding to
the kernel of $\varphi$ is a connected graph $\overline{\Gamma}$ on which
$\overline{G}$ acts freely with quotient $C$. Let $T$ denote the $\overline
{G}$--tree associated with the graph of groups $\Gamma$. Then there is a
natural $\overline{G}$--equivariant map $p:\overline{\Gamma}\rightarrow T$.
The pre-image of each vertex of $T$ not above $V$ is a copy of the Cayley
graph of the vertex group. The pre-image of each vertex of $T$ above $V$ is a
copy of the graph $\Gamma(G:A)$. Recall that $\Gamma(G:A)$ will be
disconnected unless $A$ generates $G$. Finally the pre-image of each edge of
$T$ consists of disjoint edges corresponding to the elements of the edge
group. Now we consider the vertex $V_{0}$ of $T$ with stabilizer $G$, let
$\Gamma(G:A)$ denote its pre-image in $\overline{\Gamma}$, and let
$\overline{\Gamma(G:A)}$ denote the closure in $\overline{\Gamma}$ of the
pre-image of the open star of $V_{0}$ in $T$. This consists of $\Gamma(G:A)$
together with edges attached, such that each vertex $g$ of $\Gamma(G:A)$ has
one edge $(g,S)$ attached for each group $S$ in $\mathcal{S}$. Thus the
relative Cayley graph $\Gamma(G,\mathcal{S})$ is obtained from $\overline
{\Gamma(G:A)}$ by identifying the free endpoints of $(g,S)$ and $(h,S)$ if and
only if the cosets $gS$ and $hS$ coincide.

Now let $X$ be a $\mathcal{S}$--adapted $H$--almost invariant subset of $G$,
and let $\widehat{X}$ denote an enlargement of $X$ in $\Gamma(G,\mathcal{S})$
such that $\delta\widehat{X}$ is $H$--finite. The pre-image of $\delta
\widehat{X}$ in $\overline{\Gamma(G:A)}$ is a $H$--finite collection $E$ of
edges. For each cone point $gS_{j}$ in $\Gamma(G,\mathcal{S})$, the incident
cone edges have pre-image in $\overline{\Gamma(G:A)}$ which maps to a single
edge of $T$. If the cone point $gS_{j}$ lies in $\widehat{X}$, we label this
edge by $X$. Otherwise we label this edge by $X^{\ast}$. By repeating for all
cone points, we will label every edge of $T$ incident to $V_{0}$ in a
$G$--invariant way. We will colour all other edges of $T$ so that each
component of $T-V_{0}$ is monochrome. We now define a partition of
$\overline{G}$ into two sets $\overline{X}$ and its complement, such that
$\overline{X}\cap G=X$, by requiring that $g$ in $\overline{G}-G$ lies in
$\overline{X}$ if and only if $gV_{0}$ lies in a component of $T-V_{0}$ which
is labelled $X$. As the entire picture is $G$--invariant, it follows that
$H\overline{X}=\overline{X}$. It remains to show that $\overline{X}$ is
$H$--almost invariant. For then it will follow that $\overline{X}$ is an
extension of $X$ to $\overline{G}$, as required. Note that $\overline{\Gamma}$
is not the Cayley graph of $\overline{G}$, but we can use essentially the same
argument as in the proof of Lemma \ref{characterizingdX}.

Given $g\in\overline{G}$, we need to show that $\overline{X}g+\overline{X}$ is
$H$--finite. Let $v$ denote the vertex of $\overline{\Gamma}$ above $V_{0}$
which is the vertex of $\Gamma(G:A)$ identified with the identity element of
$G$. Let $\lambda=e_{1}\ldots e_{n}$ be a path in $\overline{\Gamma}$ which
joins $v$ to $gv$. Thus $x\lambda$ joins $xv$ to $xgv$, for any $x\in
\overline{G}$. Suppose that $x$ lies in $\overline{X}g+\overline{X}$. Thus one
of $x$ and $xg$ lies in $\overline{X}$ and the other does not. We will
consider various cases depending on whether $g$ and $x$ lie in $G$, and will
show that in all cases the path $x\lambda$ must contain some edge of $E$. Thus
$xe_{i}$ lies in $E$, for some $i$. As $\overline{G}$ acts freely on the edges
of $\overline{\Gamma}$, and $E$ is $H$--finite, the set of all elements $x$ in
$\overline{G}$ such that $xe_{i}$ lies in $E$ is also $H$--finite, for each
$i$. Hence $\overline{X}g+\overline{X}$ is $H$--finite, as required.

Now we consider the cases.

If $g$ and $x$ both lie in $G$, so does $xg$. Thus one of $x$ and $xg$ lies in
$X$ and the other does not. This implies that the path $x\lambda$ must contain
some edge of $E$.

If $g$ lies in $G$, but $x$ does not, then neither does $xg$. Hence one of
$xv$ and $xgv$ has image in a component of $T-V_{0}$ which is labelled $X$,
and the other has image in a component of $T-V_{0}$ which is labelled
$X^{\ast}$. Thus the path $x\lambda$ has image in $T$ which joins components
of $T-V_{0}$, one component labelled $X$ and the other labelled $X^{\ast}$. By
construction, $x\lambda$ must contain some edge of $E$.

If $g$ does not lie in $G$, then we cannot have both $x$ and $xg$ in $G$. Thus
either the path $x\lambda$ has image in $T$ which joins a component of
$T-V_{0}$ to $V_{0}$, or it has image in $T$ which joins components of
$T-V_{0}$, one component labelled $X$ and the other labelled $X^{\ast}$. In
either case, our construction implies that $x\lambda$ must contain some edge
of $E$.
\end{proof}

\begin{remark}
\label{extensioniscanonical}The above construction of an extension
$\overline{X}$ of $X$ depends on various choices. Firstly there is the choice
of the finite set $A$ which together with the $S_{j}$'s generates $G$.
Secondly there is the choice of an enlargement $\widehat{X}$ for $X$. The
extension is independent of the choice of $A$, but is clearly very dependent
on the choice of $\widehat{X}$. Thus there may be several extensions of $X$
which can be constructed in this way. However part 4) of Remark
\ref{enlargements} shows that this can only happen when there is $g$ in $G$,
and $S_{j}$ in $\mathcal{S}$, such that $gS_{j}$ is $H$--finite.
\end{remark}

\section{Extensions and Crossing\label{extensionsandcrossing}}

In this section, we discuss crossing of almost invariant subsets of a group
which need not be finitely generated. We recall the usual definition of
crossing. If $X$ and $Y$ are sets, the four intersection sets $X\cap Y$,
$X^{\ast}\cap Y$, $X\cap Y^{\ast}$, $X^{\ast}\cap Y^{\ast}$ are called the
\textit{corners} of the pair $(X,Y)$.

\begin{definition}
\label{defnofcrossing}Let $G$ be a group with subgroups $H\ $and $K$. Let $X$
be a $H$--almost invariant subset of $G$, and let $Y$ be a $K$--almost
invariant subset of $G$.

Then $X$ \textsl{crosses} $Y$ if all four corners of the pair $(X,Y)$ are $K$--infinite.
\end{definition}

Recall that if $G$ is finitely generated, but $H$ and $K$ need not be, then
this definition is symmetric for nontrivial almost invariant subsets. This is
Lemma 2.3 of \cite{Scott symmint}. Thus if $X$ and $Y\ $are each nontrivial,
then $X$ crosses $Y$ if and only $Y$ crosses $X$. The proof of this depends on
the coboundary, and shows that if a corner of the pair $(X,Y)$ is $K$--finite,
then it is also $H$--finite. For later use, we give a name to this property as follows.

\begin{definition}
\label{defnofcornersymmetric}If $F$ is a family of nontrivial almost invariant
subsets of a group $G$, we will say that crossing of elements of $F$ is
\emph{corner symmetric} if for any $H$-almost invariant set $X$ in $F$, and
$K$-almost invariant set $Y$ in $F$, a corner of the pair $(X,Y)$ is
$H$--finite if and only if it is $K$--finite.
\end{definition}

We have just seen that if crossing of elements of $F$ is corner symmetric,
then it is symmetric. We do not know if the converse is always true. Thus it
seems conceivable that there are situations where crossing of elements of $F$
could be symmetric but not corner symmetric.

The symmetry of crossing when $G$ is finitely generated allows us to make the
following very convenient definition.

\begin{definition}
\label{defnofsmallcorner} Let $G$ be a finitely generated group with subgroups
$H\ $and $K$. Let $X$ be a nontrivial $H$--almost invariant subset of $G$, and
let $Y$ be a nontrivial $K$--almost invariant subset of $G$. Then $X\cap Y$ is
\emph{small, if it} is $K$--finite.
\end{definition}

It seems unlikely that the symmetry of crossing can hold for arbitrary groups
$G$. But we will show that this symmetry holds when one has a group system
$(G,\mathcal{S})$ of finite type, and considers only almost invariant subsets
of $G$ which are $\mathcal{S}$--adapted. As each $S_{j}$ is countable, there
is a finitely generated group $G_{j}$ which contains $S_{j}$. We form a new
group $\overline{G}$ by amalgamating $G$ with $G_{j}$ over $S_{j}$, for each
value of $j$. Thus $\overline{G}$ is the fundamental group of a graph $\Gamma$
of groups which has one vertex $V$ with associated group $G$, and has edges
each with one end at $V$ and the other end at a vertex $V_{j}$ with associated
group $G_{j}$. The edge joining $V$ to $V_{j}$ has associated group $S_{j}$.
As $G$ is finitely generated relative to $\mathcal{S}$, it follows that
$\overline{G}$ is finitely generated. We will use this construction frequently
in what follows, and will refer to any group $\overline{G}$ constructed in
this way as a \textit{standard extension} of $G$.

The following result is what we need about the symmetry of crossing.

\begin{lemma}
\label{crossingissymmetric}Let $(G,\mathcal{S})$ be a group system of finite
type, and let $H$ and $K$ be subgroups of $G$. Let $X$ be a nontrivial
$H$--almost invariant subset of $G$, and let $Y$ be a nontrivial $K$--almost
invariant subset of $G$, and suppose that $X$ and $Y$ are $\mathcal{S}%
$--adapted. Then $X$ crosses $Y$ if and only if $Y$ crosses $X$.
\end{lemma}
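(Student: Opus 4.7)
The plan is to reduce to the finitely generated case by passing to the standard extension $\overline{G}$ constructed just before the lemma. Since $(G,\mathcal{S})$ is of finite type, $\overline{G}$ is finitely generated, and it is the fundamental group of a graph of groups $\Gamma$ with central vertex $V$ whose associated group is $G$. By Lemma \ref{extensionsexist}, the $\mathcal{S}$-adapted sets $X$ and $Y$ admit extensions $\overline{X}$ and $\overline{Y}$ to $\overline{G}$, which are $H$- and $K$-almost invariant respectively, enclosed by $V$, and satisfy $\overline{X}\cap G=X$ and $\overline{Y}\cap G=Y$. Since $X,Y$ are nontrivial, both $X\subset\overline{X}$ and $X^{\ast}\subset\overline{X}^{\ast}$ contain infinitely many cosets of $H$, so $\overline{X}$ is nontrivial as an almost invariant subset of $\overline{G}$, and similarly for $\overline{Y}$.

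Because $\overline{G}$ is finitely generated, Lemma 2.3 of \cite{Scott symmint} implies that a corner of $(\overline{X},\overline{Y})$ is $H$-finite if and only if it is $K$-finite, and in particular crossing is symmetric in $\overline{G}$. To transfer this symmetry back to $G$, it suffices to show that for each of the four corners, the corner of $(X,Y)$ in $G$ is $K$-finite (resp. $H$-finite) if and only if the corresponding corner of $(\overline{X},\overline{Y})$ in $\overline{G}$ is $K$-finite (resp. $H$-finite). One implication is immediate, since each corner of $(X,Y)$ is contained in the corresponding corner of $(\overline{X},\overline{Y})$: $K$-finiteness upstairs implies $K$-finiteness downstairs.

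The main obstacle is the reverse implication: assuming a corner of $(X,Y)$ is $K$-finite in $G$, we must show that the corresponding corner of $(\overline{X},\overline{Y})$ is $K$-finite in $\overline{G}$, i.e. that the extension adds only a $K$-finite set. To handle this, let $V_{0}$ be a lift of $V$ in the Bass--Serre tree $T$ of $\Gamma$. The elements $h\in\overline{X}\cap\overline{Y}$ lying outside $G$ are precisely those whose image $hV_{0}$ in $T$ lies in a component of $T-V_{0}$ labeled both $X$ by the enlargement $\widehat{X}$ and $Y$ by the enlargement $\widehat{Y}$ used in Lemma \ref{extensionsexist}. Such components correspond to cosets $gS_{j}$ with $gS_{j}\in\widehat{X}\cap\widehat{Y}$, and by the construction in Lemma \ref{characterizingdX} the intersection $gS_{j}\cap X\cap Y$ equals $gS_{j}$ minus an $H$-finite set and a $K$-finite set. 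If $X\cap Y$ is $K$-finite, each such $gS_{j}$ must itself be contained in a finite union of cosets of $H$ and $K$; Lemma \ref{Neumannlemma} combined with Lemma \ref{finitelymanydoublecosets} then bounds the extra $K$-coset contribution to the corner as $K$-finite. The analogous arguments, with the roles of $X,X^{\ast},Y,Y^{\ast}$ swapped, handle the other three corners, establishing the corner equivalence between $G$ and $\overline{G}$ and completing the proof.
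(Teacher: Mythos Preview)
Your overall strategy---pass to the finitely generated standard extension $\overline{G}$ and invoke corner symmetry there---is exactly the paper's. The gap is in your final paragraph, where you claim that if $X\cap Y$ is $K$--finite then the extra contribution to $\overline{X}\cap\overline{Y}$ is $K$--finite. This is false for generic extensions. A component of $T-V_{0}$ labelled both $X$ and $Y$ contributes an entire $Z_{e}^{\ast}$ to $\overline{X}\cap\overline{Y}$; this set contains translates of the vertex groups $G_{j}$ (or $A_{j}$) on that side, and these are typically not $K$--finite at all, no matter how small $gS_{j}$ itself is. Your application of Lemma~\ref{Neumannlemma} correctly shows that any doubly-labelled $gS_{j}$ is itself $H$--finite or $K$--finite, but it says nothing about the size of the subtree of $T$ beyond that edge, which is where the extra elements of $\overline{G}$ live.

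The paper resolves this not by bounding the extra contribution but by eliminating it. Assuming the corner $X\cap Y$ is $H$--finite, it chooses the enlargements $\widehat{X}$ and $\widehat{Y}$ asymmetrically---in particular, whenever $gS_{j}$ is $H$--finite, contained in $X$, and meets $Y$, the cone point is \emph{excluded} from $\widehat{X}$---so that no edge of $T$ incident to $V_{0}$ is labelled both $X$ and $Y$. A short case analysis (using exactly the Neumann argument you identified) verifies that this can be done while keeping $\delta\widehat{X}$ $H$--finite and $\delta\widehat{Y}$ $K$--finite. With these choices $\overline{X}\cap\overline{Y}=X\cap Y$ on the nose, which is $H$--finite by hypothesis, hence $K$--finite by symmetry in $\overline{G}$. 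The construction is deliberately tailored to the specific small corner and to the pair $(X,Y)$; the paper even remarks afterwards that it cannot be made to work for families of almost invariant sets simultaneously.
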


\begin{remark}
In this result, none of $G$, $H$ or $K$ is assumed to be finitely generated.
As in the case when $G\ $is finitely generated, the argument below shows that
crossing is symmetric by showing it is corner symmetric.
\end{remark}

\begin{proof}
We will show that if $Y$ does not cross $X$, then $X$ does not cross $Y$. By
reversing the roles of $X$ and $Y$, the same argument will show that if $X$
does not cross $Y$, then $Y$ does not cross $X$. Taken together, these results
prove the lemma.

Let $\Gamma(G,\mathcal{S})$ be a relative Cayley graph for the group system
$(G,\mathcal{S})$, and let $\overline{G}$ be a standard extension of $G$, as
described above. Thus $\overline{G}$ is finitely generated. If $\widehat{X}$
is an enlargement for $X$ in $\Gamma(G,\mathcal{S})$ such that $\delta
\widehat{X}$ is $H$--finite, then Lemma \ref{extensionsexist} shows how to
construct an extension $\overline{X}$ of $X$ to $\overline{G}$, and the same
applies to finding an extension $\overline{Y}$ of $Y$.

Now suppose that $Y$ does not cross $X$. Thus (at least) one corner of the
pair $(X,Y)$ is $H$--finite. Without loss of generality we can assume that
$X\cap Y$ is $H$--finite. Now we claim

\begin{claim}
There are extensions $\overline{X}$ and $\overline{Y}$ of $X$ and $Y$
respectively such that $\overline{X}\cap\overline{Y}$ is equal to $X\cap Y$,
and so is also $H$--finite.
\end{claim}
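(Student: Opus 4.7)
The plan is to construct the extensions $\overline{X}$ and $\overline{Y}$ via the procedure of Lemma \ref{extensionsexist}, choosing enlargements $\widehat{X}$ of $X$ and $\widehat{Y}$ of $Y$ in the relative Cayley graph $\Gamma(G,\mathcal{S})$ so that no cone point lies in both $\widehat{X}$ and $\widehat{Y}$. Recall that in that construction each edge of the Bass--Serre tree $T$ incident to the vertex $V_{0}$ with stabilizer $G$ corresponds to a cone point $gS_{j}$, and the label ($X$ or $X^{*}$) on the edge is determined by whether $gS_{j}$ lies in $\widehat{X}$; the label propagates monochromatically along the component of $T-V_{0}$ meeting that edge, and an element $g\in\overline{G}\setminus G$ lies in $\overline{X}$ exactly when $gV_{0}$ lies in a component labelled $X$. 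Consequently, if no cone point lies in both $\widehat{X}$ and $\widehat{Y}$, then no component of $T-V_{0}$ is labelled both $X$ for $\overline{X}$ and $Y$ for $\overline{Y}$, so $\overline{X}\cap\overline{Y}\cap(\overline{G}\setminus G)=\emptyset$, which together with $\overline{X}\cap\overline{Y}\cap G=X\cap Y$ gives the required equality.

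The main task is therefore to exhibit such a pair of enlargements. By Remark \ref{enlargements}, for any valid enlargement $\widehat{X}$ a cone point $gS_{j}$ must lie in $\widehat{X}$ when $gS_{j}\cap X$ is $H$-infinite, must lie outside $\widehat{X}$ when $gS_{j}\cap X^{*}$ is $H$-infinite, and is otherwise free (with $gS_{j}$ itself $H$-finite); the analogous dichotomy holds for $\widehat{Y}$ with $K$ in place of $H$. The only obstruction to making compatible choices arises at a cone point where both enlargements are forced in, i.e.\ where $gS_{j}\cap X$ is $H$-infinite while simultaneously $gS_{j}\cap Y$ is $K$-infinite.

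The critical step is to rule out this obstruction using the hypothesis that $X\cap Y$ is $H$-finite. Suppose such a cone point $gS_{j}$ exists. Since $X$ and $Y$ are $\mathcal{S}$-adapted, Lemma \ref{S-adaptedimpliesgSintersectXisH-finite} gives that $gS_{j}\cap X^{*}$ is $H$-finite and $gS_{j}\cap Y^{*}$ is $K$-finite. Writing
\[
gS_{j}=(gS_{j}\cap X\cap Y)\cup(gS_{j}\cap X\cap Y^{*})\cup(gS_{j}\cap X^{*}),
\]
the first piece is $H$-finite as a subset of the $H$-finite set $X\cap Y$, the middle piece is $K$-finite as a subset of $gS_{j}\cap Y^{*}$, and the last piece is $H$-finite. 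Hence $gS_{j}$ is covered by finitely many right cosets of $H$ together with finitely many right cosets of $K$, and right-translating by $g^{-1}$ shows that the subgroup $L=gS_{j}g^{-1}$ is likewise covered by finitely many cosets of $H$ and finitely many cosets of $K$. Lemma \ref{Neumannlemma}(2) applied to $L$ with the family $\{H,K\}$ then yields that either $L\cap H$ has finite index in $L$ or $L\cap K$ has finite index in $L$; in the former case $L$, and hence $gS_{j}=Lg$, is $H$-finite, contradicting $gS_{j}\cap X$ being $H$-infinite, and in the latter case $gS_{j}$ is $K$-finite, contradicting $gS_{j}\cap Y$ being $K$-infinite.

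With the obstruction ruled out, the enlargement choices can be made compatibly: at each cone point forced into $\widehat{X}$ the corresponding choice for $\widehat{Y}$ is either forced out or free, and in the free case we place it outside $\widehat{Y}$, with the symmetric rule for cones forced into $\widehat{Y}$, while cones free for both are placed outside both. The main remaining obstacle, which I expect to be the technical heart of the argument, is verifying that after these modifications the coboundaries $\delta\widehat{X}$ and $\delta\widehat{Y}$ remain $H$- and $K$-finite respectively; this is handled by combining Lemma \ref{finitelymanydoublecosets} (which controls the double cosets $HgS_{j}$ on which $gS_{j}$ meets both sides of $X$) with a further Neumann-type argument showing that cosets $gS_{j}$ contained in the $H$-finite set $X\cap Y$ contribute only finitely many $H$-orbits, so our modification of the canonical enlargement of Lemma \ref{characterizingdX} affects only finitely many $H$- (respectively $K$-) orbits of cone points.
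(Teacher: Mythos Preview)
Your approach is essentially the paper's: both reduce the claim to showing that no cone point of $\Gamma(G,\mathcal{S})$ receives the label $X$ for $\widehat{X}$ and the label $Y$ for $\widehat{Y}$ simultaneously, and both dispatch the genuinely hard case (where the cone is \emph{forced} into both) by exactly your Neumann-lemma argument showing $gS_j$ would then be $H$--finite or $K$--finite. That critical step is correct and is the heart of the proof.

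One correction is needed in your enlargement rules. The rule ``cones free for both are placed outside both'' is too aggressive for $\widehat{Y}$: if some $S_j$ is finite (or more generally whenever many $gS_j$ happen to be both $H$--finite and $K$--finite), this rule removes from $\widehat{Y}$ every such cone with $gS_j\subset Y$, and there can be $K$--infinitely many of these, destroying the $K$--finiteness of $\delta\widehat{Y}$. Your sketch of the finiteness check (``cosets $gS_j$ contained in the $H$--finite set $X\cap Y$ contribute only finitely many $H$--orbits'') is the right idea, but it only controls the cones that actually overlap under the standard enlargements, not all cones free for both. The fix, which is what the paper does, is asymmetric: leave $\widehat{Y}$ at its standard enlargement and modify only $\widehat{X}$, removing from it precisely those cones that are $H$--finite, contained in $X$, and meet $Y$. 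Your own obstruction argument shows every overlap cone is of this type (the cases $gS_j\cap X$ $H$--infinite with $gS_j\subset Y$, and forced-into-both, are both impossible since $X\cap Y$ is $H$--finite), and since each such cone meets $X\cap Y$ there are only $H$--finitely many of them, each contributing an $H$--finite set of cone edges to $\delta\widehat{X}$.
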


Assuming this, the symmetry of crossing of almost invariant subsets of the
finitely generated group $\overline{G}$ implies that $\overline{X}%
\cap\overline{Y}$ is $K$--finite. It follows that $X\cap Y$ is also
$K$--finite, so that $X$ does not cross $Y$, as required.

It remains to prove our claim. We will make careful choices of enlargements
$\widehat{X}$ and $\widehat{Y}$ for $X$ and $Y$ respectively.

First we choose the enlargement $\widehat{Y}$. Recall from Remark
\ref{enlargements} that if we want $\delta\widehat{Y}$ to be $K$--finite, the
only choices available occur when there is $g$ in $G\ $and $S_{j}$ in
$\mathcal{S}$ such that the coset $gS_{j}$ is $K$--finite.

If $gS_{j}$ is $K$--finite and contained in $Y$, we choose the cone point
$gS_{j}$ to lie in $\widehat{Y}$. Thus $\delta\widehat{Y}$ contains no cone
edge incident to $gS_{j}$.

If $gS_{j}$ is $K$--finite and contained in $Y^{\ast}$, we choose the cone
point $gS_{j}$ not to lie in $\widehat{Y}$. Again this ensures that
$\delta\widehat{Y}$ contains no cone edge incident to $gS_{j}$.

If $gS_{j}$ is $K$--finite and meets both $Y$ and $Y^{\ast}$, we choose the
cone point $gS_{j}$ not to lie in $\widehat{Y}$. As the number of such
$gS_{j}$ is itself $K$--finite, it follows that $\delta\widehat{Y}$ is
$K$--finite, as required.

Next we choose the enlargement $\widehat{X}$. As above, the only choices
available occur when there is $g$ in $G\ $and $S_{j}$ in $\mathcal{S}$ such
that the coset $gS_{j}$ is $H$--finite.

If $gS_{j}$ is $H$--finite and contained in $X^{\ast}$, we choose the cone
point $gS_{j}$ not to lie in $\widehat{X}$. Thus $\delta\widehat{X}$ contains
no cone edge incident to $gS_{j}$.

If $gS_{j}$ is $H$--finite and meets both $X$ and $X^{\ast}$, we choose the
cone point $gS_{j}$ not to lie in $\widehat{X}$. Such cones contribute only
$H$--finitely many edges to $\delta\widehat{X}$.

If $gS_{j}$ is $H$--finite, is contained in $X$, and does not meet $Y$, we
choose the cone point $gS_{j}$ to lie in $\widehat{X}$. Thus $\delta
\widehat{X}$ contains no cone edge incident to $gS_{j}$.

Finally, if $gS_{j}$ is $H$--finite, is contained in $X$, and meets $Y$, we
choose the cone point $gS_{j}$ not to lie in $\widehat{X}$. For each such
coset $gS_{j}$, all the cone edges incident to $gS_{j}$ lie in $\delta
\widehat{X}$. There are $H$--finitely many such cone edges. As $X\cap Y$ is
assumed to be $H$--finite, the number of such cosets is also $H$--finite, so
that in total such cosets contribute only $H$--finitely many cone edges to
$\delta\widehat{X}$. Hence $\delta\widehat{X}$ is $H$--finite, as required.

Now consider the construction of extensions given in the proof of Lemma
\ref{extensionsexist}. Our choice of the enlargement $\widehat{X}$ implies
that if $gS_{j}$ is $H$--finite and meets both $X$ and $X^{\ast}$, then the
corresponding edge of $T$ incident to $V_{0}$ will be labelled by $X^{\ast}$.
Similarly our choice of $\widehat{Y}$ implies that if $gS_{j}$ is $K$--finite
and meets both $Y$ and $Y^{\ast}$, then the corresponding edge of $T$ incident
to $V_{0}$ will be labelled by $Y^{\ast}$. The proof of Lemma
\ref{extensionsexist} shows that $\overline{X}\cap\overline{Y}$ is equal to
$X\cap Y$ if and only if, after labelling edges of the $\overline{G}$--tree
$T$ by $X$ or $X^{\ast}$ and by $Y$ or $Y^{\ast}$, no edge of $T$ incident to
$V_{0}$ is labelled by $X$ and by $Y$. We will suppose that an edge $e$ of $T$
is so labelled and will obtain a contradiction. Suppose that $e$ corresponds
to the cone with cone point $gS_{j}$.

Our choice of $\widehat{X}$ combined with the fact that $e$ is labelled $X$
means that one of the following two conditions must hold:

\begin{enumerate}
\item (1X) $gS_{j}$ is $H$--infinite, and $gS_{j}\cap X^{\ast}$ is
$H$--finite, or

\item (2X) $gS_{j}$ is $H$--finite and contained in $X$.
\end{enumerate}

Similarly our choice of $\widehat{Y}$ combined with the fact that $e$ is
labelled $Y$ mean that one of the following two conditions must hold:

\begin{enumerate}
\item (1Y) $gS_{j}$ is $K$--infinite, and $gS_{j}\cap Y^{\ast}$ is
$K$--finite, or

\item (2Y) $gS_{j}$ is $K$--finite and contained in $Y$.
\end{enumerate}

We now examine the four cases arising from these conditions. In all cases, we
have $gS_{j}\cap X^{\ast}$ is $H$--finite and $gS_{j}\cap Y^{\ast}$ is
$K$--finite. As $X\cap Y$ is $H$--finite, it is trivial that $gS_{j}\cap(X\cap
Y)$ is also $H$--finite. Hence $gS_{j}$ is the union of a $H$--finite set with
a $K$--finite set. Now Lemma \ref{Neumannlemma} implies that $gS_{j}$ must be
either $H$--finite or $K$--finite. Thus conditions (1X)\ and (1Y) cannot both hold.

Suppose that conditions (1X) and (2Y) hold. Condition (1X) tells us that
$gS_{j}$ is $H$--infinite, and $gS_{j}\cap X^{\ast}$ is $H$--finite, so that
$gS_{j}\cap X$ must be $H$--infinite. Condition (2Y) tells us that $gS_{j}$ is
contained in $Y$. But then $gS_{j}\cap X=gS_{j}\cap(X\cap Y)$ is
$H$--infinite, whereas $X\cap Y$ is $H$--finite. This contradiction shows that
this case cannot occur.

Suppose that conditions (1Y) and (2X) hold. Condition (1Y) implies that
$gS_{j}$ is $K$--infinite, and $gS_{j}\cap Y^{\ast}$ is $K$--finite. In
particular $gS_{j}$ must meet $Y$. Condition (2X) tells us that $gS_{j}$ is
$H$--finite and contained in $X$. But now our choice of $\widehat{X}$ means
that the cone point $gS_{j}$ does not lie in $\widehat{X}$, so that the edge
$e$ must be labeled by $X^{\ast}$. This contradiction shows that this case
cannot occur.

Finally suppose that conditions (2X) and (2Y) hold. This implies that $gS_{j}$
is $H$--finite and is contained in $X$ and in $Y$. Again our choice of
$\widehat{X}$ implies that $e$ is labelled by $X^{\ast}$. This contradiction
shows that this case cannot occur.

These contradictions complete the proof of the claim, and hence complete the
proof of the lemma.
\end{proof}

We remark that the construction in the above proof of Lemma
\ref{crossingissymmetric} is so special, that it would not be possible to
apply it when one has a family of several adapted almost invariant subsets of
$G$ and wishes to extend them all simultaneously. In particular if the family
is invariant under some group, we would like the family of extensions to be
invariant under the same group. In order to obtain such results, we will now
restrict attention to almost invariant sets over finitely generated groups.

Suppose again that $(G,\mathcal{S})$ is a group system of finite type. If a
group $S$ in $\mathcal{S}$ is finitely generated, then $G$ must be finitely
generated relative to the subfamily of $\mathcal{S}$ obtained by removing $S$.
In fact the same condition holds if $S$ is contained in a finitely generated
subgroup of $G$. Let $\mathcal{S}^{\infty}$ denote the subfamily of
$\mathcal{S}$ which consists of all those groups in $\mathcal{S}$ which are
not contained in a finitely generated subgroup of $G$, and let $\mathcal{S}%
^{\prime}$ denote any subfamily of $\mathcal{S}$ which contains $\mathcal{S}%
^{\infty}$. Then $G$ is finitely generated relative to $\mathcal{S}^{\prime}$.
(In particular, if the family $\mathcal{S}^{\infty}$ is empty, then $G$ is
already finitely generated.) We will want to consider finitely generated
standard extensions $\overline{G}$ of $G$, using families which contain
$\mathcal{S}^{\infty}$. For us the two most important examples of such
families will be $\mathcal{S}^{\infty}$ itself and the family $\mathcal{S}%
_{nonfg}$ of all non-finitely generated groups in $\mathcal{S}$. As such
extensions will be used often in what follows, we set out the following definition.

\begin{definition}
\label{defnofSinfinitystandardextension} If $(G,\mathcal{S})$ is a group
system of finite type, then $\mathcal{S}^{\infty}$ denotes the subfamily of
$\mathcal{S}$ which consists of all those groups in $\mathcal{S}$ which are
not contained in a finitely generated subgroup of $G$, and $\mathcal{S}%
_{nonfg}$ denotes the subfamily of all non-finitely generated groups in
$\mathcal{S}$. If $\mathcal{S}^{\prime}$ is any subfamily of $\mathcal{S}$
which contains $\mathcal{S}^{\infty}$, then a \emph{standard }$S^{\prime}%
$\emph{--extension of }$G$ is a finitely generated standard extension
$\overline{G}$ of $G$, defined using the family $\mathcal{S}^{\prime}$. Thus
$\overline{G}$ is constructed by amalgamating $G$ with some finitely generated
group $A_{i}$ over $S_{i}$, for each $S_{i}\in\mathcal{S}^{\prime}$.
\end{definition}

\begin{remark}
If $(G,\mathcal{S})$ is a group system of finite type, then so is
$(G,\mathcal{S}^{\prime})$.
\end{remark}

The reason why such standard extensions are important, is contained in the
following result.

\begin{lemma}
\label{uniqueenlargement}Let $(G,\mathcal{S})$ be a group system of finite
type. Let $H$ be a finitely generated subgroup of $G$, and let $X$ be a
nontrivial $\mathcal{S}$--adapted $H$--almost invariant subset of $G$. Let
$\mathcal{S}^{\prime}$ denote a subfamily of $\mathcal{S}$ which contains
$\mathcal{S}^{\infty}$, such that no $S_{i}$ in $\mathcal{S}^{\prime}$ is
conjugate commensurable with a subgroup of $H$. Then there is only one
enlargement $\widehat{X}$ for $X$ in $\Gamma(G,\mathcal{S}^{\prime})$ such
that $\delta\widehat{X}$ is $H$--finite.
\end{lemma}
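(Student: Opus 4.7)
The plan is to reduce the claim to part 4 of Remark \ref{enlargements}, which asserts uniqueness whenever no coset $gS_i$ (with $S_i$ in the relative family) is $H$--finite; the hypothesis on $\mathcal{S}'$ rules out exactly this phenomenon via Neumann's lemma. First I would observe that parts 2 and 3 of Remark \ref{enlargements} already determine whether the cone point $gS_i$ lies in $\widehat{X}$ as soon as either $gS_i\cap X$ or $gS_i\cap X^{\ast}$ is $H$--infinite. Since $X$ is $\mathcal{S}$--adapted and hence $\mathcal{S}'$--adapted, Lemma \ref{S-adaptedimpliesgSintersectXisH-finite} forces at least one of these intersections to be $H$--finite for every $g$ and every $S_i\in\mathcal{S}'$. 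Therefore any ambiguity in $\widehat{X}$ is confined to cone points $gS_i$ (with $S_i\in\mathcal{S}'$) at which both $gS_i\cap X$ and $gS_i\cap X^{\ast}$ are $H$--finite, equivalently at which the coset $gS_i$ itself is $H$--finite. Uniqueness therefore reduces to showing that no such coset exists.

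Suppose for contradiction that $gS_i$ is $H$--finite for some $g\in G$ and some $S_i\in\mathcal{S}'$. By definition, $gS_i\subset Hg_1\cup\ldots\cup Hg_n$ for some $g_1,\ldots,g_n\in G$. Left--multiplying by $g^{-1}$ gives
\[
S_i\subset (g^{-1}Hg)(g^{-1}g_1)\cup\ldots\cup(g^{-1}Hg)(g^{-1}g_n),
\]
so $S_i$ is contained in a finite union of right cosets of $g^{-1}Hg$. Part 1 of Lemma \ref{Neumannlemma} then yields that $S_i\cap g^{-1}Hg$ has finite index in $S_i$. Conjugating by $g$, we conclude that $gS_ig^{-1}\cap H$ has finite index in $gS_ig^{-1}$, so the conjugate $gS_ig^{-1}$ of $S_i$ is commensurable with a subgroup of $H$. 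This contradicts the hypothesis that no member of $\mathcal{S}'$ is conjugate commensurable with a subgroup of $H$. Hence no $gS_i$ with $S_i\in\mathcal{S}'$ is $H$--finite, and part 4 of Remark \ref{enlargements} gives the uniqueness of $\widehat{X}$.

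The argument is a clean application of Neumann's lemma and should not present a serious obstacle. The only delicate point to watch is matching the conclusion produced by Neumann---that \emph{some} conjugate of $S_i$ is commensurable with a subgroup of $H$---with the author's usage of the phrase ``conjugate commensurable with a subgroup of $H$''; the choice of sides in the inclusion of cosets $Hg_j$ versus $g_j H$ is what makes the conjugate appear on $S_i$ rather than on $H$.
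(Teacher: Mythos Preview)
Your argument is correct and follows essentially the same route as the paper: reduce to part~4 of Remark~\ref{enlargements} and then use Lemma~\ref{Neumannlemma} to show that an $H$--finite coset $gS_i$ would force a conjugate of $S_i$ to be commensurable with a subgroup of $H$. The paper's version is slightly more compressed---it right-multiplies by $g^{-1}$ to pass directly from $gS_i$ to $gS_ig^{-1}$ being $H$--finite---but the substance is identical.
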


\begin{remark}
If $\mathcal{S}^{\prime}$ is equal to $\mathcal{S}^{\infty}$, it is automatic
that no $S_{i}$ in $\mathcal{S}^{\prime}$ is conjugate commensurable with a
subgroup of $H$, as $H$ is finitely generated.

If $H$ is Noetherian (or slender), i.e. every subgroup is finitely generated,
and if $\mathcal{S}^{\prime}$ is equal to $\mathcal{S}_{nonfg}$, it is
automatic that no $S_{i}$ in $\mathcal{S}^{\prime}$ is conjugate commensurable
with a subgroup of $H$.

The most important examples of Noetherian groups in this paper are $VPC$ groups.
\end{remark}

\begin{proof}
Trivially $X$ is adapted to $\mathcal{S}^{\prime}$. As $(G,\mathcal{S}%
^{\prime})$ is also a group system of finite type, it makes sense to consider
enlargements of $X$ in the graph $\Gamma(G,\mathcal{S}^{\prime})$. Recall from
part 4) of Remark \ref{enlargements} that, if there is no $g$ in $G$ and
$S_{i}$ in $\mathcal{S}^{\prime}$ such that $gS_{i}$ is $H$--finite, then
there is only one enlargement $\widehat{X}$ for $X$ in $\Gamma(G,\mathcal{S}%
^{\prime})$ such that $\delta\widehat{X}$ is $H$--finite.

Suppose there is $g$ in $G$ and $S_{i}$ in $\mathcal{S}^{\prime}$ such that
$gS_{i}$ is $H$--finite. Then $gS_{i}g^{-1}$ is also $H$--finite. Hence Lemma
\ref{Neumannlemma} tells us that $gS_{i}g^{-1}$ is commensurable with a
subgroup of $H$. As this contradicts our hypothesis about groups in
$\mathcal{S}^{\prime}$, we conclude that for any $g$ in $G\ $and $S_{i}$ in
$\mathcal{S}^{\prime}$, the coset $gS_{i}$ cannot be $H$--finite, which
completes the proof of the lemma.
\end{proof}

Using the notation of the above lemma, if $\overline{G}$ is a standard
$\mathcal{S}^{\prime}$--extension of $G$, then Lemma \ref{extensionsexist}
combined with Remark \ref{extensioniscanonical} implies that $X$ has a
canonical extension $\overline{X}$ in $\overline{G}$. We will prove several
important properties of adapted almost invariant sets over finitely generated
groups by using such standard extensions of $G$ and canonical extensions of
almost invariant subsets of $G$. First we state the following basic lemma.

\begin{lemma}
\label{extensionshavesamesmallcorner}Let $(G,\mathcal{S})$ be a group system
of finite type. Let $H$ and $K$ be finitely generated subgroups of $G$, and
let $X$ and $Y$ be nontrivial $\mathcal{S}$--adapted almost invariant subsets
of $G$ over $H$ and $K$ respectively. Let $\mathcal{S}^{\prime}$ denote a
subfamily of $\mathcal{S}$ which contains $\mathcal{S}^{\infty}$, such that no
$S_{i}$ in $\mathcal{S}^{\prime}$ is conjugate commensurable with a subgroup
of $H$ or of $K$. Let $\overline{G}$ be a standard $\mathcal{S}^{\prime}%
$--extension of $G$.

If a corner $U$ of the pair $(X,Y)$ is $H$--finite, then the corresponding
corner of the pair $(\overline{X},\overline{Y})$ is equal to $U$ (and so is
also $H$--finite).
\end{lemma}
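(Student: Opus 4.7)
The plan is to handle one corner, say $U = X \cap Y$, and show $\overline{X} \cap \overline{Y} = X \cap Y$; the remaining three corners follow by the same argument after replacing $X$ or $Y$ by its complement. Since $\overline{X} \cap G = X$ and $\overline{Y} \cap G = Y$, the inclusion $X \cap Y \subset \overline{X} \cap \overline{Y}$ is automatic, so the real task is to rule out any element of $\overline{G} - G$ from $\overline{X} \cap \overline{Y}$.

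First I would use Lemma \ref{uniqueenlargement} to produce the unique enlargements $\widehat{X}$ and $\widehat{Y}$ of $X$ and $Y$ in the relative Cayley graph $\Gamma(G, \mathcal{S}')$ whose coboundaries are $H$-finite and $K$-finite respectively, and take $\overline{X}$, $\overline{Y}$ to be the associated extensions built by the recipe in the proof of Lemma \ref{extensionsexist}. Recall that this recipe labels each edge $e$ of the $\overline{G}$-tree $T$ that is incident to the vertex $V_0$ (the vertex with stabiliser $G$) by $X$ or $X^*$ according as the corresponding cone point $gS_j$ (with $S_j \in \mathcal{S}'$) does or does not lie in $\widehat{X}$, then propagates this label monochromatically to the whole component of $T - V_0$ determined by $e$; the $Y$-labels are assigned analogously. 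Since an element $g \in \overline{G} - G$ lies in $\overline{X}$ iff $gV_0$ sits in a component labelled $X$, and similarly for $\overline{Y}$, the task reduces to showing that no single edge $e$ of $T$ incident to $V_0$ carries both label $X$ and label $Y$.

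The main obstacle, and the technical heart of the argument, is this local analysis at a hypothetical edge $e$ carrying both labels, with associated cone point $gS_j$. The key observation is that $gS_j$ is neither $H$-finite nor $K$-finite: if $gS_j$ were $H$-finite, translating by $g^{-1}$ would express $S_j$ as a subset of finitely many cosets of $g^{-1}Hg$, and part 1 of Lemma \ref{Neumannlemma} would then force $S_j$ to be commensurable with a subgroup of $g^{-1}Hg$, contradicting the hypothesis that no $S_i \in \mathcal{S}'$ is conjugate commensurable with a subgroup of $H$; the $K$ case is symmetric. Given that $gS_j$ is $H$-infinite, inspection of the case analysis in the proof of Lemma \ref{characterizingdX} that defines $\widehat{X}$ shows that the label $X$ on $e$ forces $gS_j \cap X^*$ to be $H$-finite, and by symmetry the label $Y$ forces $gS_j \cap Y^*$ to be $K$-finite.

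Combining these with the hypothesis that $X \cap Y$ is $H$-finite gives
\[
gS_j \;\subset\; (gS_j \cap X \cap Y) \,\cup\, (gS_j \cap X^*) \,\cup\, (gS_j \cap Y^*),
\]
exhibiting $gS_j$ as the union of an $H$-finite set and a $K$-finite set. Translating by $g^{-1}$ and applying part 2 of Lemma \ref{Neumannlemma} to $S_j$ with the finitely many cosets of $g^{-1}Hg$ and $g^{-1}Kg$ that cover it, one concludes that $S_j$ is commensurable with a subgroup of $g^{-1}Hg$ or of $g^{-1}Kg$; that is, $S_j$ is conjugate commensurable with a subgroup of $H$ or of $K$, contradicting the standing hypothesis on $\mathcal{S}'$. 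Hence no edge of $T$ incident to $V_0$ bears both labels, so $\overline{X} \cap \overline{Y} = X \cap Y$, as required.
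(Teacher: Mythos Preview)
Your proof is correct and follows essentially the same route as the paper's: reduce to showing no edge of $T$ incident to $V_0$ carries both labels $X$ and $Y$, then derive from a double label that $gS_j$ is the union of an $H$--finite set and a $K$--finite set and invoke Lemma~\ref{Neumannlemma} for the contradiction. Your version is a bit more explicit in first establishing that $gS_j$ is neither $H$--finite nor $K$--finite before reading off that the label $X$ forces $gS_j\cap X^{\ast}$ to be $H$--finite, whereas the paper absorbs this into the appeal to Lemma~\ref{uniqueenlargement}; but the substance is identical.
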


\begin{proof}
As $\overline{G}$ is a standard $\mathcal{S}^{\prime}$--extension of $G$,
Lemma \ref{uniqueenlargement} tells us there is only one enlargement for each
of $X$ and $Y$. Now it follows from the proof of Lemma
\ref{crossingissymmetric} that if a corner $U$ of the pair $(X,Y)$ is
$H$--finite, then the corresponding corner of the pair $(\overline
{X},\overline{Y})$ is equal to $U$.

Of course the situation here is much simpler than that in Lemma
\ref{crossingissymmetric}, so we now give a simpler direct argument that if
the corner $X\cap Y$ is $H$--finite, then $\overline{X}\cap\overline{Y}$
equals $X\cap Y$. As in the proof of Lemma \ref{crossingissymmetric},
inequality can only occur if there is an edge $e$ incident to $V_{0}$ which is
labelled by $X$ and by $Y$. We will suppose such an edge $e$ exists and will
obtain a contradiction. Let $gS_{i}$ denote the coset corresponding to $e$.
The fact that $e$ is labelled by $X$ means that $gS_{i}\cap X^{\ast}$ is
$H$--finite. Similarly, the fact that $e$ is labelled by $Y$ means that
$gS_{i}\cap Y^{\ast}$ is $K$--finite. But $gS_{i}\cap(X\cap Y)$ is trivially
$H$--finite, so it follows that $gS_{i}$ is the union of a $H$--finite set and
a $K$--finite set. Hence the same holds for $gS_{i}g^{-1}$. Now Lemma
\ref{Neumannlemma} implies that $gS_{i}g^{-1}$ is commensurable with a
subgroup of $H$ or of $K$, which contradicts our hypotheses on $\mathcal{S}%
^{\prime}$. This contradiction completes the proof that $\overline{X}%
\cap\overline{Y}$ equals $X\cap Y$.
\end{proof}

The following basic property makes clear the importance of such standard extensions.

\begin{lemma}
\label{extensionsareequivalent}Let $(G,\mathcal{S})$ be a group system of
finite type. Let $H$ be a finitely generated subgroup of $G$, and let $X$ be a
nontrivial $\mathcal{S}$--adapted $H$--almost invariant subset of $G$. Let
$\mathcal{S}^{\prime}$ denote a subfamily of $\mathcal{S}$ which contains
$\mathcal{S}^{\infty}$, such that no $S_{i}$ in $\mathcal{S}^{\prime}$ is
conjugate commensurable with a subgroup of $H$. Let $\overline{G}$ be a
standard $\mathcal{S}^{\prime}$--extension of $G$.

If $X$ is equivalent to a $K$--almost invariant subset $Y$ of $G$, then their
canonical extensions $\overline{X}$ and $\overline{Y}$ are also equivalent.
\end{lemma}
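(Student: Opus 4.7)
The plan is to reduce the claim to the corner-preservation statement of Lemma \ref{extensionshavesamesmallcorner}. First I would unwind what the equivalence $X\sim Y$ provides. By definition, $X$ and $Y$ are $H$-almost equal, and Lemma \ref{equivalenta.i.setshavecommensurablestabiliser} then gives that $H$ and $K$ are commensurable. Since $H$ is finitely generated, so is $K$. Moreover the hypothesis that no $S_i$ in $\mathcal{S}'$ is conjugate commensurable with a subgroup of $H$ automatically transfers to $K$: any subgroup of $K$ contains the finite-index subgroup $H\cap K\cap\cdot$, so being conjugate commensurable with a subgroup of $K$ is the same as being conjugate commensurable with a subgroup of $H$. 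Finally, because $Y$ is equivalent to the $\mathcal{S}$-adapted set $X$, the first item of the remark after Definition \ref{defnofadapted} shows that $Y$ is $\mathcal{S}$-adapted as well. These observations put both $X$ and $Y$ inside the hypotheses of Lemma \ref{uniqueenlargement} (applied with the same $\mathcal{S}'$), so each admits a unique enlargement in $\Gamma(G,\mathcal{S}')$ with almost-invariant coboundary, and hence a canonical extension $\overline{X}$, resp.\ $\overline{Y}$, in $\overline{G}$ by Lemma \ref{extensionsexist} together with Remark \ref{extensioniscanonical}.

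Next I would exploit $X\sim Y$ concretely at the level of corners. The symmetric difference $X+Y=(X\cap Y^{\ast})\cup(X^{\ast}\cap Y)$ is $H$-finite, so both corners $X\cap Y^{\ast}$ and $X^{\ast}\cap Y$ of the pair $(X,Y)$ are $H$-finite. Now I apply Lemma \ref{extensionshavesamesmallcorner} to the pair $(X,Y)$: its hypotheses are satisfied for both $H$ and $K$ by the preceding paragraph. It tells us the corresponding small corners are preserved under extension, i.e.\ $\overline{X}\cap\overline{Y}^{\ast}=X\cap Y^{\ast}$ and $\overline{X}^{\ast}\cap\overline{Y}=X^{\ast}\cap Y$. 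Consequently
\[
\overline{X}+\overline{Y}=(\overline{X}\cap\overline{Y}^{\ast})\cup(\overline{X}^{\ast}\cap\overline{Y})=(X\cap Y^{\ast})\cup(X^{\ast}\cap Y)=X+Y,
\]
which is $H$-finite. This says exactly that $\overline{X}$ and $\overline{Y}$ are $H$-almost equal, and so equivalent in the sense of the definition following Corollary \ref{HaiiffKai}.

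The only genuinely non-routine step is the transfer of the $\mathcal{S}'$-hypothesis from $H$ to $K$, since Lemma \ref{extensionshavesamesmallcorner} is stated symmetrically in the two stabilizers. Once this is in place, the rest is essentially formal: the canonical extension construction is so rigid that corners which are already small survive it unchanged, and therefore any set-theoretic relation between $X$ and $Y$ that is determined by the smallness of corners (equivalence being the prototypical example) lifts verbatim to $\overline{X}$ and $\overline{Y}$.
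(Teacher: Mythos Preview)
Your proof is correct and follows exactly the paper's approach: both argue that equivalence of $X$ and $Y$ means the corners $X\cap Y^{\ast}$ and $X^{\ast}\cap Y$ are $H$-finite, and then invoke Lemma~\ref{extensionshavesamesmallcorner} to conclude the same for $\overline{X}$ and $\overline{Y}$. The paper's proof is a terse two-line application of that lemma, leaving implicit the verifications you spell out (that $K$ is finitely generated, that $Y$ is $\mathcal{S}$-adapted, and that the $\mathcal{S}'$-hypothesis transfers to $K$ via the commensurability of $H$ and $K$); your added care in checking these hypotheses is appropriate, since they are genuinely needed both for $\overline{Y}$ to be canonically defined and for Lemma~\ref{extensionshavesamesmallcorner} to apply.
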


\begin{proof}
As $X$ is equivalent to $Y$, we know that $X\cap Y^{\ast}$ and $X^{\ast}\cap
Y$ are both $H$--finite. Now Lemma \ref{extensionshavesamesmallcorner} tells
us that $\overline{X}\cap\overline{Y}^{\ast}$ and $\overline{X}^{\ast}%
\cap\overline{Y}$ are also both $H$--finite, so that $\overline{X}$ and
$\overline{Y}$ are equivalent, as required.
\end{proof}

Another basic property is that the canonical extensions of $X$ constructed
using such extensions of $G$ are automatically adapted to $\mathcal{S}$, but
in fact much more is true.

\begin{lemma}
\label{extensionsareadaptedtoAunionS}Let $(G,\mathcal{S})$ be a group system
of finite type. Let $H$ be a finitely generated subgroup of $G$, and let $X$
be a nontrivial $\mathcal{S}$--adapted $H$--almost invariant subset of $G$.
Let $\mathcal{S}^{\prime}$ denote a subfamily of $\mathcal{S}$ which contains
$\mathcal{S}^{\infty}$, such that no $S_{i}$ in $\mathcal{S}^{\prime}$ is
conjugate commensurable with a subgroup of $H$. Let $\overline{G}$ be a
standard $\mathcal{S}^{\prime}$--extension of $G$, and let $\mathcal{A}$
denote the family of all the $A_{j}$'s.

If $\overline{X}$ denotes the canonical extension of $X$ to $\overline{G}$,
then $\overline{X}$ is adapted to $\mathcal{A}\cup\mathcal{S}$.
\end{lemma}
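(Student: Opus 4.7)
The plan is to prove adaptedness to each $A_j\in\mathcal{A}$ and to each $S_i\in\mathcal{S}$ separately. In both cases, the starting point is that by the construction in Lemma \ref{extensionsexist} the canonical extension $\overline{X}$ is enclosed by the vertex $v_0$ of the Bass--Serre tree $T$ of $\overline{G}$ whose stabiliser is $G$. The main tool is the Remark after Definition \ref{defnofenclosing} (a consequence of Lemma \ref{strictenclosingdoesnotdependonbasepoint}): for any basepoint $w$ of $T$, there exists $\overline{X}''$ equivalent to $\overline{X}$ that is strictly enclosed by $v_0$ with basepoint $w$. The two cases are then handled by choosing different basepoints.

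For $A_j\in\mathcal{A}$ I would choose $w=v_j$, the vertex of $T$ adjacent to $v_0$ with stabiliser $A_j$, so that $\varphi(g)=gv_j$. The entire coset $gA_j$ is sent to the single vertex $gv_j$. Because $v_0$ and $v_j$ project to distinct vertices of the graph of groups $\Gamma$, they lie in distinct $\overline{G}$--orbits, so $gv_j\neq v_0$ for every $g\in\overline{G}$. Hence $gv_j\in Y_s^{\ast}$, where $s$ is the first edge of the geodesic from $v_0$ to $gv_j$ directed toward $v_0$. Strict enclosing then yields $\varphi^{-1}(Y_s^{\ast})\subset\overline{X}''$ or $\varphi^{-1}(Y_s^{\ast})\subset(\overline{X}'')^{\ast}$, forcing $gA_j\subset\overline{X}''$ or $gA_j\subset(\overline{X}'')^{\ast}$. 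Thus $\overline{X}''$ is strictly adapted to $A_j$, so $\overline{X}$ is adapted to $A_j$.

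For $S_i\in\mathcal{S}$ I would take $w=v_0$, so $\varphi(g)=gv_0$. The resulting $\overline{X}''$ from Lemma \ref{strictenclosingdoesnotdependonbasepoint} satisfies $\overline{X}''\cap G=\overline{X}\cap G=X$, while on $\overline{G}\setminus G$ it is determined by the labelling of components of $T\setminus\{v_0\}$. Since $X$ is $\mathcal{S}$--adapted, pick $X'\sim X$ in $G$ strictly adapted to $S_i$, and define $\overline{Y}=(\overline{X}''\setminus G)\cup X'$. Then $\overline{Y}+\overline{X}''=X+X'$ is $H$--finite, so $\overline{Y}\sim\overline{X}$. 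For $g\in G$ the coset $gS_i\subset G$ and $gS_i\cap\overline{Y}=gS_i\cap X'$, which is all of $gS_i$ or empty by strict adaptedness of $X'$. For $g\notin G$, each $s\in S_i\subset G$ fixes $v_0$, so $gsv_0=gv_0\neq v_0$ and $gs\notin G$; since every element of $gS_i$ has the same image $gv_0$ under $\varphi$, they all lie in the same component of $T\setminus\{v_0\}$ and are therefore either all in $\overline{X}''\setminus G=\overline{Y}\setminus G$ or all outside. Hence $\overline{Y}$ is strictly adapted to $S_i$, and $\overline{X}$ is adapted to $S_i$.

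The delicate step is the $S_i$ case: strict enclosing at $v_0$ with basepoint $v_0$ imposes no direct constraint on membership inside $G$, so we must install strict adaptedness there by hand by swapping in the equivalent $X'$, while simultaneously verifying that this modification preserves equivalence in $\overline{G}$ and does not disturb the enclosing behaviour on $\overline{G}\setminus G$. Once both parts are established, $\overline{X}$ is adapted to every member of $\mathcal{A}\cup\mathcal{S}$, completing the proof.
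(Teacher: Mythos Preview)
Your proof is correct, with one minor omission: in the $S_i$ case you should note that $\overline{Y}$ is $(H\cap H')$-almost invariant (where $H'$ stabilises $X'$), so that ``$\overline{Y}\sim\overline{X}$'' is a statement about almost invariant sets; this follows since $L=H\cap H'$ preserves both $X'$ and $\overline{X}\setminus G$, and $\overline{Y}g+\overline{Y}$ is $H$-finite via the triangle inequality through $\overline{X}$.

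Your route differs from the paper's in an interesting way. The paper argues uniformly: for each $S_i\in\mathcal{S}$ it takes $X_i\sim X$ strictly adapted to $S_i$, forms the canonical extension $\overline{X_i}$, shows $\overline{X_i}$ is strictly adapted to $S_i$ (and, when $S_i\in\mathcal{S}'$, also to $A_i$, using that $A_i$ fixes a vertex of $T$ outside the orbit of $V_0$), and then invokes the preceding Lemma~\ref{extensionsareequivalent} to conclude $\overline{X}\sim\overline{X_i}$. Your $S_i$ argument is essentially the same --- your $\overline{Y}$ is exactly the paper's $\overline{X_i}$, since the canonical enlargements of $X$ and $X'$ assign the same cone points --- but you compute the equivalence directly rather than appealing to Lemma~\ref{extensionsareequivalent}. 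Where your approach is genuinely different is the $A_j$ case: by moving the basepoint to $v_j$ you collapse each coset $gA_j$ to a single point $gv_j\neq v_0$, so strict enclosing immediately gives strict adaptedness. This is cleaner than the paper's argument, which keeps basepoint $V_0$ and must track how the neighbours of $gV_j$ sit relative to the labelled components of $T\setminus\{V_0\}$; on the other hand, the paper's approach has the virtue of producing a single witness $\overline{X_i}$ that works for both $S_i$ and $A_i$ simultaneously.
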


\begin{proof}
For each $S_{i}\in\mathcal{S}$, there is a $H_{i}$--almost invariant subset
$X_{i}$ of $G$ which is strictly adapted to $S_{i}$ and is equivalent to $X$.
We will show that the extension $\overline{X_{i}}$ of $X_{i}$ is also strictly
adapted to $S_{i}$, so that, for all $g$ in $\overline{G}$, we have $gS_{i}$
contained in $\overline{X_{i}}$ or its complement. If $g$ lies in $G$, this is
clear, as we know that $gS_{i}$ is contained in $X_{i}$ or its complement.
Otherwise $g$ acts on the tree $T$ so that $gV_{0}$ is some other vertex $W$,
and $g$ lies in $\overline{X_{i}}$ or its complement according to the color of
the subtree which contains $W$. As $gsV_{0}=W$, for all $s$ in $S_{i}$, it
follows that $gS_{i}$ itself is contained in $\overline{X_{i}}$ or its complement.

As $A_{i}$ fixes a vertex of $T$ not in the orbit of $V_{0}$, it follows that
for all $g$ in $\overline{G}$, we have $gA_{i}$ lies in $\overline{X_{i}}$ or
its complement, so that $\overline{X_{i}}$ is strictly adapted to $A_{i}$. The
preceding lemma tells us that $\overline{X}$ is equivalent to each
$\overline{X_{i}}$. It follows that $\overline{X}$ is adapted to each $A_{i}$
and each $S_{i}$, so that $\overline{X}$ is adapted to $\mathcal{A}%
\cup\mathcal{S}$, as required.
\end{proof}

We recall that two subsets $U\ $and $V$ of $G$ are said to be \textit{nested}
if one of the four corners of the pair $(U,V)$ is empty.

\begin{lemma}
\label{finitenumberofdoublecosets}Let $(G,\mathcal{S})$ be a group system of
finite type, and let $H$ and $K$ be finitely generated subgroups of $G$. Let
$X$ be a nontrivial $H$--almost invariant subset of $G$, and let $Y$ be a
nontrivial $K$--almost invariant subset of $G$, and suppose that $X$ and $Y$
are $\mathcal{S}$--adapted. Then $\{g\in G:gX$ and $Y$ are not nested$\}$ is a
finite union of double cosets $KgH$, $g\in G$.
\end{lemma}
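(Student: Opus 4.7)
The plan is to reduce the statement to its counterpart in a finitely generated group via a standard extension. I would take $\mathcal{S}^{\prime}=\mathcal{S}^{\infty}$ and let $\overline{G}$ be a standard $\mathcal{S}^{\infty}$--extension of $G$, which is finitely generated by construction. Since $H$ and $K$ are finitely generated, the remark after Lemma \ref{uniqueenlargement} ensures that no $S_{i}\in\mathcal{S}^{\infty}$ is conjugate commensurable with a subgroup of $H$ or of $K$. Thus Lemma \ref{uniqueenlargement} applies to $Y$ and to each translate $gX$ (which is $gHg^{-1}$--almost invariant and still $\mathcal{S}$--adapted), giving unique enlargements $\widehat{Y}$ and $\widehat{gX}$ in $\Gamma(G,\mathcal{S}^{\infty})$; uniqueness forces $\widehat{gX}=g\widehat{X}$, and because the construction in Lemma \ref{extensionsexist} is $G$--equivariant, the canonical extensions $\overline{X},\overline{Y}\subset\overline{G}$ satisfy $\overline{gX}=g\overline{X}$ for all $g\in G$.

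Next I would verify that for every $g\in G$, the sets $gX$ and $Y$ are nested in $G$ if and only if $g\overline{X}$ and $\overline{Y}$ are nested in $\overline{G}$. One direction is immediate: if a corner of $(g\overline{X},\overline{Y})$ is empty, then its intersection with $G$, which equals the corresponding corner of $(gX,Y)$ because $\overline{X}\cap G=X$ and $\overline{Y}\cap G=Y$, is also empty. For the converse, if some corner of $(gX,Y)$ is empty, then it is trivially $gHg^{-1}$--finite, so Lemma \ref{extensionshavesamesmallcorner}, applied to $gX$ and $Y$ over the finitely generated subgroups $gHg^{-1}$ and $K$ with subfamily $\mathcal{S}^{\infty}$, forces the corresponding corner of $(g\overline{X},\overline{Y})$ to equal it, and hence to be empty as well.

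The final step is to apply the finitely generated version of the lemma inside $\overline{G}$: for a finitely generated group $\overline{G}$ with finitely generated subgroups $H,K$ and nontrivial almost invariant sets $\overline{X},\overline{Y}$ over them, the set $\{\overline{g}\in\overline{G}:\overline{g}\overline{X}\text{ and }\overline{Y}\text{ are not nested}\}$ is a finite union of double cosets $K\overline{g}H$. This is a standard Cayley--graph coboundary argument in the spirit of Lemma \ref{Sf.g.impliesfinitenumberofdoublecosets}, but for pairs of almost invariant sets. Combining this with the nesting correspondence of the previous paragraph and the observation that $K\overline{g}H\cap G$ equals $KgH$ if $\overline{g}\in G$ and is empty otherwise, the intersection with $G$ is a finite union of double cosets $KgH$ in $G$, as required. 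The principal difficulty is ensuring the $G$--equivariance $\overline{gX}=g\overline{X}$ and the careful bookkeeping in Lemma \ref{extensionshavesamesmallcorner} needed to match empty corners across the two settings; once this machinery is in place, the finite generation of $\overline{G}$ does the rest.
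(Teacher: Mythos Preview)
Your proposal is correct and follows essentially the same route as the paper: pass to a standard $\mathcal{S}^{\infty}$--extension $\overline{G}$, use the canonical extensions $\overline{X},\overline{Y}$ and their $G$--equivariance, invoke the finitely generated case (the paper cites Lemma~1.11 of \cite{SS1}), and intersect back down to $G$. The one place you do more than necessary is in establishing the full biconditional for nestedness: the paper only uses the trivial direction ``$gX,Y$ not nested $\Rightarrow$ $g\overline{X},\overline{Y}$ not nested'' to get a containment, and then observes that the target set is already a union of double cosets $KgH$, so containment in finitely many such cosets is enough. Your appeal to Lemma~\ref{extensionshavesamesmallcorner} for the converse is correct but not needed here.
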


\begin{proof}
We consider a finitely generated standard $\mathcal{S}^{\infty}$--extension
$\overline{G}$ of $G$, and let $\overline{X}$ and $\overline{Y}$ denote the
canonical extensions of $X$ and $Y$ to almost invariant subsets of
$\overline{G}$. Then, for each $g$ in $G$, the canonical extension of $gX$
must be $g\overline{X}$. It is trivial that if $gX$ and $Y$ are not nested,
then their extensions $g\overline{X}$ and $\overline{Y}$ are also not nested.
As $\overline{G}$, $H\ $and $K$ are all finitely generated, Lemma 1.11 of
\cite{SS1} tells us that $\{g\in\overline{G}:g\overline{X}$ and $\overline{Y}$
are not nested$\}$ is a finite union of double cosets $KgH$, $g\in\overline
{G}$. Thus $\{g\in G:gX$ and $Y$ are not nested$\}$ is contained in a finite
union of double cosets $KgH$, $g\in\overline{G}$. As $H\ $and $K$ are
subgroups of $G$, such a double coset can meet $G$ if and only if $g$ lies in
$G$. It follows that $\{g\in G:gX$ and $Y$ are not nested$\}$ is contained in
a finite union of double cosets $KgH$, $g\in G$, and hence equals such a
union, as required.
\end{proof}

The following is an immediate consequence of Lemma
\ref{finitenumberofdoublecosets}.

\begin{corollary}
\label{finiteintersectionnumber}Let $(G,\mathcal{S})$ be a group system of
finite type, and let $H$ and $K$ be finitely generated subgroups of $G$. Let
$X$ be a nontrivial $H$--almost invariant subset of $G$, and let $Y$ be a
nontrivial $K$--almost invariant subset of $G$, and suppose that $X$ and $Y$
are $\mathcal{S}$--adapted. Then the intersection number $i(H\backslash
X,K\backslash Y)$ is finite.
\end{corollary}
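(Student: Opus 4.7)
The plan is to unwind the definition of the intersection number and then invoke Lemma \ref{finitenumberofdoublecosets} directly. Recall that for almost invariant sets $X$ over $H$ and $Y$ over $K$, the intersection number $i(H\backslash X, K\backslash Y)$ is defined to count, with appropriate conventions, the double cosets $KgH$ for which the translate $gX$ fails to be nested with $Y$ (equivalently, the number of $(H,K)$--orbits of crossings between translates of $X$ and translates of $Y$). Each such crossing double coset contributes one to the intersection number, and nested pairs contribute nothing.

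First I would verify that the notion of nestedness is invariant under the double coset action: if $gX$ and $Y$ are nested, then so are $(kgh)X$ and $Y$ for all $h \in H$ and $k \in K$, since $(kgh)X = kgX$ (as $hX = X$) and translating both sides of a containment by $k^{-1}$ preserves the containment using $k^{-1}Y = Y$. Thus the set $\{g \in G : gX \text{ and } Y \text{ are not nested}\}$ is genuinely a union of double cosets $KgH$.

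Next I would apply Lemma \ref{finitenumberofdoublecosets}, whose hypotheses are exactly those of the present corollary: $(G,\mathcal{S})$ of finite type, $H$ and $K$ finitely generated, and $X$, $Y$ nontrivial $\mathcal{S}$--adapted almost invariant subsets. The lemma asserts that this union consists of only finitely many double cosets. Since $i(H\backslash X, K\backslash Y)$ equals precisely the number of such double cosets, the intersection number is finite, which is the desired conclusion.

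The main (minor) obstacle here is notational rather than substantive: one must be careful that the intersection number as defined in earlier works (via the coboundary in a Cayley graph in the finitely generated case) agrees with the double-coset count of non-nested translates in the present relative setting. This compatibility is the content of Lemma \ref{finitenumberofdoublecosets} itself, which was proved by reducing to a standard $\mathcal{S}^\infty$--extension $\overline{G}$ where the classical finitely generated theory applies. So once the definitions are aligned, the corollary is indeed immediate.
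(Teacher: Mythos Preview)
Your proposal is correct and takes essentially the same approach as the paper, which simply declares the corollary an immediate consequence of Lemma \ref{finitenumberofdoublecosets}. One small terminological point: the intersection number counts double cosets $KgH$ for which $gX$ \emph{crosses} $Y$, which is a priori a smaller set than those for which $gX$ and $Y$ are not nested; but since crossing implies non-nested, Lemma \ref{finitenumberofdoublecosets} bounds the larger set and hence the smaller one, so your conclusion stands.
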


However, we need to refine the argument of Lemma
\ref{finitenumberofdoublecosets} for later applications. Notice that the proof
of Lemma \ref{finitenumberofdoublecosets} shows that $i(H\backslash
X,K\backslash Y)\leq i(H\backslash\overline{X},K\backslash\overline{Y})$, but
does not show that they are equal. As this equality will be a crucial fact in
what comes later, we prove the following result.

\begin{lemma}
\label{crossingofextensions}Let $(G,\mathcal{S})$ be a group system of finite
type, and let $H$ and $K$ be finitely generated subgroups of $G$. Let $X$ be a
nontrivial $H$--almost invariant subset of $G$, and let $Y$ be a nontrivial
$K$--almost invariant subset of $G$, and suppose that $X$ and $Y$ are
$\mathcal{S}$--adapted. Let $\mathcal{S}^{\prime}$ denote a subfamily of
$\mathcal{S}$ which contains $\mathcal{S}^{\infty}$, such that no $S_{i}$ in
$\mathcal{S}^{\prime}$ is conjugate commensurable with a subgroup of $H$ or of
$K$. Let $\overline{G}$ be a standard $\mathcal{S}^{\prime}$--extension of
$G$, and let $\overline{X}$ and $\overline{Y}$ denote the canonical extensions
of $X$ and $Y$ to $\overline{G}$.

Then the following statements hold.

\begin{enumerate}
\item If $g$ is an element of $G$, then $X$ and $gY$ cross if and only if
$\overline{X}$ and $g\overline{Y}$ also cross.

\item If $g$ is an element of $\overline{G}-G$, then $\overline{X}$ and
$g\overline{Y}$ do not cross.
\end{enumerate}
\end{lemma}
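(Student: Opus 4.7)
The strategy is to use the fact that the canonical extension $\overline{X}$ is strictly enclosed by the vertex $V_{0}\in T$ with stabiliser $G$ (with basepoint $V_{0}$), where $T$ is the Bass--Serre tree of the standard $\mathcal{S}^{\prime}$--extension $\overline{G}$, and that by equivariance $g\overline{Y}$ is the canonical extension of $gY$ and is strictly enclosed by $gV_{0}$. The hypothesis on $\mathcal{S}^{\prime}$ together with Lemma \ref{uniqueenlargement} makes these canonical extensions unique, so the labels assigned to edges of $T$ by the construction of Lemma \ref{extensionsexist} are unambiguous.

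Part (1) is immediate from the earlier lemmas. If $X$ and $gY$ do not cross, Lemma \ref{crossingissymmetric} produces an $H$--finite corner of $(X,gY)$, and Lemma \ref{extensionshavesamesmallcorner} applied to the pair $(X,gY)$ (the conjugate-commensurability hypothesis on $\mathcal{S}^{\prime}$ is invariant under replacing $K$ by $gKg^{-1}$) shows that the corresponding corner of $(\overline{X},g\overline{Y})$ is literally the same set, hence $H$--finite, so these extensions do not cross. Conversely, if $\overline{X}$ and $g\overline{Y}$ do not cross, symmetry of crossing over the finitely generated group $\overline{G}$ supplies an $H$--finite corner of $(\overline{X},g\overline{Y})$; intersecting with $G$ preserves $H$--finiteness (a coset $Hu$ with $u\in\overline{G}$ meets $G$ either trivially or in $Hu$ when $u\in G$) and recovers the corresponding corner of $(X,gY)$, which is therefore $H$--finite.

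For part (2) let $g\in\overline{G}-G$, so $V_{0}\neq gV_{0}$, and let $p$ be the geodesic in $T$ from $V_{0}$ to $gV_{0}$, with initial edge $e$ (at $V_{0}$) and terminal edge $f$ (at $gV_{0}$). By the construction of $\overline{X}$, the component of $T-V_{0}$ on the far side of $e$, which contains $gV_{0}$, is contained either in $\overline{X}$ or in $\overline{X}^{\ast}$; call the side it lies in $A$ and the opposite side $A^{\prime}$. Similarly, by equivariance, the component of $T-gV_{0}$ on the far side of $f$ (which contains $V_{0}$) is contained in a definite half of the partition $\{g\overline{Y},g\overline{Y}^{\ast}\}$; moreover its label agrees with the label, in the picture for $\overline{Y}$, of the edge $g^{-1}f$ seen at $V_{0}$. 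Call the side it lies in $B$ and the opposite side $B^{\prime}$. It suffices to show that the corner $A^{\prime}\cap B^{\prime}$ is empty, which forces $(\overline{X},g\overline{Y})$ not to cross.

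To prove the claim, let $h\in A^{\prime}\cap B^{\prime}$ and set $u=hV_{0}\in V(T)$. If $u=gV_{0}$, then $u$ lies in the far-of-$e$ component, so $h\in A$, contradicting $h\in A^{\prime}$. If $u=V_{0}$, then $g^{-1}h\notin G$ and $g^{-1}hV_{0}=g^{-1}V_{0}$ lies in the component of $T-V_{0}$ on the far side of $g^{-1}f$, whose label for $\overline{Y}$ coincides with the one defining $B$; hence $g^{-1}h$ lies in the half of $\{\overline{Y},\overline{Y}^{\ast}\}$ corresponding to $B$, so $h\in B$, a contradiction. In the remaining case $u\notin\{V_{0},gV_{0}\}$, the condition $h\in A^{\prime}$ forces $u$ to lie in a component of $T-V_{0}$ other than the one containing $gV_{0}$, so $V_{0}$ lies on the $T$--geodesic from $u$ to $gV_{0}$; symmetrically $h\in B^{\prime}$ forces $gV_{0}$ to lie on the geodesic from $u$ to $V_{0}$. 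Adding the two resulting distance identities yields $d_{T}(V_{0},gV_{0})=0$, contradicting $g\notin G$. The only real bookkeeping obstacle is the $u=V_{0}$ case, which requires tracing the equivariant identification between the label at $V_{0}$ for $\overline{Y}$ on the edge $g^{-1}f$ and the label at $gV_{0}$ for $g\overline{Y}$ on the edge $f$; once this identification is made explicit, the three-case analysis is routine.
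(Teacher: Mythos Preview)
Your proof is correct. Part (1) is essentially the paper's argument: you phrase the forward direction as the contrapositive (intersecting an $H$--finite corner of $(\overline{X},g\overline{Y})$ with $G$ gives an $H$--finite corner of $(X,gY)$), whereas the paper observes directly that $X\subset\overline{X}$ and $gY\subset g\overline{Y}$ force every corner of $(\overline{X},g\overline{Y})$ to contain the corresponding corner of $(X,gY)$, hence to be $H$--infinite when the latter is. These are logically interchangeable.

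For part (2) your argument is sound but considerably more elaborate than the paper's. The paper dispatches it in two lines: $\overline{X}$ is enclosed by $V_{0}$ and $g\overline{Y}$ is enclosed by $gV_{0}$; since $g\notin G$, these are distinct vertices of $T$, and almost invariant sets enclosed by distinct vertices of a $G$--tree never cross. Your three-case analysis of where $hV_{0}$ lies is in effect a hands-on verification of this standard enclosing fact, carried out from the explicit construction in Lemma~\ref{extensionsexist}. That works, but once you recognise that the construction makes $\overline{X}$ strictly enclosed by $V_{0}$, the non-crossing is immediate and the case $u=V_{0}$ bookkeeping you flag as delicate becomes unnecessary.
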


\begin{proof}
1) If $g$ is an element of $G$ such that $X\ $and $gY$ cross, then each of the
four corners of the pair $(X,gY)$ must be $H$--infinite (and also
$K$--infinite). As $\overline{X}$ contains $X$ and $g\overline{Y}$ contains
$gY$, it is immediate that each of the four corners of the pair $(\overline
{X},g\overline{Y})$ must be $H$--infinite. Thus $\overline{X}$ and
$g\overline{Y}$ do cross, as required.

If $X\ $and $gY$ do not cross, then one of the four corners of the pair
$(X,gY)$ must be $H$--finite (and also $K$--finite). As in the proof of Lemma
\ref{extensionsareequivalent}, it follows that the corresponding corner of the
pair $(\overline{X},g\overline{Y})$ is also $H$--finite, so that $\overline
{X}$ and $g\overline{Y}$ do not cross.

2) Recall that $\overline{G}$ acts on the tree $T$ and that the vertex $V_{0}$
has stabilizer $G$ and encloses $\overline{X}$ and $\overline{Y}$. Thus
$g\overline{Y}$ is enclosed by $gV_{0}$. If $g$ is an element of $\overline
{G}-G$, then $gV_{0}$ and $V_{0}$ are distinct vertices of $T$, so that
$\overline{X}$ and $g\overline{Y}$ cannot cross.
\end{proof}

The following is an immediate consequence of Lemma \ref{crossingofextensions}.

\begin{corollary}
\label{intersectionnumbersofextensionsareequal}Let $(G,\mathcal{S})$ be a
group system of finite type, and let $H$ and $K$ be finitely generated
subgroups of $G$. Let $X$ be a nontrivial $H$--almost invariant subset of $G$,
and let $Y$ be a nontrivial $K$--almost invariant subset of $G$, and suppose
that $X$ and $Y$ are $\mathcal{S}$--adapted. Let $\mathcal{S}^{\prime}$ denote
a subfamily of $\mathcal{S}$ which contains $\mathcal{S}^{\infty}$, such that
no $S_{i}$ in $\mathcal{S}^{\prime}$ is conjugate commensurable with a
subgroup of $H$ or of $K$. Let $\overline{G}$ be a standard $\mathcal{S}%
^{\prime}$--extension of $G$, and let $\overline{X}$ and $\overline{Y}$ denote
the canonical extensions of $X$ and $Y$ to $\overline{G}$. Then the
intersection number $i(H\backslash X,K\backslash Y)$ equals $i(H\backslash
\overline{X},K\backslash\overline{Y})$. Further if $X$ is equivalent to a
$H^{\prime}$--almost invariant set $X^{\prime}$, and $Y\ $is equivalent to a
$K^{\prime}$--almost invariant set $\allowbreak Y^{\prime}$, then
$i(H\backslash X,K\backslash Y)=i(H^{\prime}\backslash X^{\prime},K^{\prime
}\backslash Y^{\prime})$.
\end{corollary}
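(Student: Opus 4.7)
The plan is to deduce both assertions directly from Lemma \ref{crossingofextensions} together with Lemma \ref{extensionsareequivalent}, reducing everything to the already-established intersection-number theory in the finitely generated ambient group $\overline{G}$.

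For the first equality, recall that $i(H\backslash X, K\backslash Y)$ is, by definition, the number of double cosets $HgK$ with $g\in G$ such that $X$ crosses $gY$; and likewise $i(H\backslash\overline{X}, K\backslash\overline{Y})$ counts double cosets $Hg K$ with $g\in\overline{G}$ such that $\overline{X}$ crosses $g\overline{Y}$. I would first observe that because $H$ and $K$ are both contained in $G$, two elements of $G$ determine the same $HgK$ double coset in $G$ if and only if they determine the same one in $\overline{G}$, so the index sets embed. Next I would apply part 1 of Lemma \ref{crossingofextensions} to see that for each $g\in G$, crossing of $X$ with $gY$ is equivalent to crossing of $\overline{X}$ with $g\overline{Y}$. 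Finally, part 2 of Lemma \ref{crossingofextensions} rules out any new contributions from $g\in\overline{G}\setminus G$. These three observations together give a bijection between the double cosets being counted on each side.

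For the second equality, I would first check that the hypotheses of the canonical extension construction apply to $X'$ and $Y'$. Since $X\thicksim X'$, Lemma \ref{equivalenta.i.setshavecommensurablestabiliser} shows $H$ and $H'$ are commensurable, so $H\cap H'$ has finite index in $H$; because finite-index subgroups of finitely generated groups are finitely generated, $H\cap H'$ is finitely generated, and hence so is $H'$ (as it contains a finitely generated finite-index subgroup). The same argument handles $K'$. Thus the canonical extensions $\overline{X'}$ and $\overline{Y'}$ in $\overline{G}$ are defined, and Lemma \ref{extensionsareequivalent} gives $\overline{X}\thicksim\overline{X'}$ and $\overline{Y}\thicksim\overline{Y'}$. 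Since $\overline{G}$ is finitely generated, the known equivalence-invariance of intersection number from \cite{SS1} yields $i(H\backslash\overline{X}, K\backslash\overline{Y}) = i(H'\backslash\overline{X'}, K'\backslash\overline{Y'})$. Chaining this with two applications of the first equality gives $i(H\backslash X, K\backslash Y) = i(H'\backslash X', K'\backslash Y')$.

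The main subtlety, rather than an obstacle, is bookkeeping: one must be confident that no double coset $HgK$ in $\overline{G}$ with $g\notin G$ can secretly coincide with one represented by some $g'\in G$ in a way that would double-count, and one must confirm that $H'$ and $K'$ are finitely generated so that canonical extensions exist for $X'$ and $Y'$. Both of these are handled by the observations above (the double coset point is immediate because $H,K\subseteq G$, and the finite-generation point uses commensurability together with the standard fact on finite-index subgroups). Everything else is an appeal to the lemmas just proved or to the finitely generated case.
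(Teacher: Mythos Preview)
Your proposal is correct and is precisely the argument the paper has in mind when it says the corollary is ``an immediate consequence of Lemma \ref{crossingofextensions}'': the first equality is exactly the double-coset bookkeeping you describe using parts 1 and 2 of that lemma, and the second reduces via Lemma \ref{extensionsareequivalent} to equivalence-invariance of intersection number in the finitely generated group $\overline{G}$. The only point you might make explicit is that the hypothesis on $\mathcal{S}'$ (no $S_i$ conjugate commensurable with a subgroup of $H$ or $K$) automatically transfers to $H'$ and $K'$ by commensurability, so that the canonical extensions $\overline{X'}$, $\overline{Y'}$ are indeed defined; but this is immediate.
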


We will now give a couple of applications of this corollary to demonstrate
that $\mathcal{S}^{\infty}$--extensions are a powerful tool for reducing
questions about group systems of finite type to questions about finitely
generated groups. We will use this idea on several occasions in this paper. In
Theorem 2.8 of \cite{SS1}, Scott and Swarup proved that if $G$ is a finitely
generated group with a finitely generated subgroup $H$, and if $X$ is a
nontrivial $H$--almost invariant subset of $G$ such that the self-intersection
number $i(H\backslash X,H\backslash X)$ equals $0$, then $X$ is equivalent to
an almost invariant set $Y$ which is associated to a splitting. We can
generalise this result to the setting of group systems of finite type, as follows.

\begin{theorem}
\label{splittingsexist}Let $(G,\mathcal{S})$ be a group system of finite type,
and let $H$ be a finitely generated subgroup of $G$. Let $X$ be a nontrivial
$\mathcal{S}$--adapted $H$--almost invariant subset of $G$. If the
self-intersection number $i(H\backslash X,H\backslash X)$ equals $0$, then $G$
has a $\mathcal{S}$--adapted splitting over some subgroup $H^{\prime}$
commensurable with $H$. Further each of the $H^{\prime}$--almost invariant
sets associated to the splitting is equivalent to $X$.
\end{theorem}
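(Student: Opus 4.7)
The strategy is to use a finitely generated standard $\mathcal{S}^{\infty}$--extension $\overline{G}$ of $G$ to reduce to the absolute case already treated in \cite{SS1}. Since $H$ is finitely generated, no $S_{i}\in\mathcal{S}^{\infty}$ is conjugate commensurable with a subgroup of $H$, so Lemma \ref{uniqueenlargement} together with Lemma \ref{extensionsexist} yields a unique canonical extension $\overline{X}$ of $X$ to a nontrivial $H$--almost invariant subset of $\overline{G}$, enclosed by the vertex $V_{0}$ whose stabilizer is $G$. By Corollary \ref{intersectionnumbersofextensionsareequal} the self-intersection number is preserved, so $i(H\backslash\overline{X},H\backslash\overline{X})=0$.

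I then apply Theorem 2.8 of \cite{SS1} to the finitely generated group $\overline{G}$, the finitely generated subgroup $H$, and the set $\overline{X}$. This produces a splitting $\overline{\sigma}$ of $\overline{G}$ over a subgroup $H^{\prime}$ commensurable with $H$, together with an associated $H^{\prime}$--almost invariant set $\overline{Y}$ equivalent to $\overline{X}$. Lemma \ref{extensionsareadaptedtoAunionS} shows that $\overline{X}$ is adapted to $\mathcal{A}\cup\mathcal{S}$ (where $\mathcal{A}$ is the family of adjoined factors), hence so is $\overline{Y}$, and then Lemma \ref{splittingisadaptediffa.i.setis} implies that $\overline{\sigma}$ itself is $(\mathcal{A}\cup\mathcal{S})$--adapted.

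The core step is to restrict $\overline{\sigma}$ to $G$. Let $T^{\prime}$ denote the Bass--Serre tree of $\overline{\sigma}$. I claim $G$ cannot fix a vertex of $T^{\prime}$: if it did, then relative to that basepoint the sets $\overline{X}\cap G$ and $\overline{Y}\cap G$ would each be $H^{\prime}$--almost equal to $G$ or to $\emptyset$, forcing $X$ to be trivial as an $H$--almost invariant set, contrary to hypothesis. Hence $G$ acts on $T^{\prime}$ without a global fixed point, and I pass to the minimal $G$--invariant subtree $T_{G}^{\prime}$. The $G$--stabilizer of any edge $s$ of $T_{G}^{\prime}$ is $H^{\prime\prime}=G\cap\stab_{\overline{G}}(s)$; since $\stab_{\overline{G}}(s)$ is conjugate to $H^{\prime}$ and hence commensurable with $H\subset G$, the intersection $H\cap\stab_{\overline{G}}(s)$ has finite index in $\stab_{\overline{G}}(s)$ and is contained in $H^{\prime\prime}$, so $H^{\prime\prime}$ has finite index in $\stab_{\overline{G}}(s)$ and is commensurable with $H$. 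Each $S_{i}\in\mathcal{S}$ fixes some vertex of $T^{\prime}$, and as $S_{i}\subset G$ preserves $T_{G}^{\prime}$, the nearest-point projection of that vertex onto $T_{G}^{\prime}$ is also fixed by $S_{i}$, giving an $\mathcal{S}$--adapted splitting of $G$ over $H^{\prime\prime}$.

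The main obstacle is the final verification that the $H^{\prime\prime}$--almost invariant set $Y$ associated to the new $G$--splitting is equivalent to $X$. The natural candidate is $Y=\overline{Y}\cap G$ for a basepoint chosen in $T_{G}^{\prime}$, and the point to check is that the equivalence $\overline{X}\sim\overline{Y}$ in $\overline{G}$ descends to $X\sim Y$ in $G$: concretely, that the $H^{\prime}$--finite symmetric difference $\overline{X}+\overline{Y}$ meets $G$ in a set that is $H$--finite. This uses that $H^{\prime}\cap G$ is commensurable with $H\cap H^{\prime}$, itself a finite-index subgroup of $H$, so each coset $H^{\prime}g$ meeting $G$ restricts to a coset of $H^{\prime}\cap G$ and therefore to finitely many cosets of $H\cap H^{\prime}$ inside $G$.
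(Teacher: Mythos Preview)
Your overall strategy---pass to a standard $\mathcal{S}^{\infty}$--extension $\overline{G}$, extend $X$ canonically, invoke Theorem~2.8 of \cite{SS1}, then descend---is exactly the paper's approach, and your first two paragraphs match the paper closely. The divergence is in how you descend from $\overline{G}$ back to $G$.

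The paper's descent is much shorter than yours and avoids Bass--Serre trees entirely. Having obtained $\overline{Y}$ equivalent to $\overline{X}$ and associated to a splitting of $\overline{G}$ over $H''$, the paper simply sets $Y=\overline{Y}\cap G$ and $H'=H''\cap G$. Since the $\overline{G}$--translates of $\overline{Y}$ are nested, the $G$--translates of $Y$ are nested (intersection with $G$ preserves nesting), so $Y$ is itself associated to a splitting of $G$. Equivalence of $Y$ with $X$ is immediate because the symmetric difference $\overline{X}+\overline{Y}$ is $H$--finite (as $H$ and $H''$ are commensurable) and $H\subset G$, so intersecting a union of finitely many cosets $Hg_i$ with $G$ either kills or keeps each coset. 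Adaptedness of the splitting then follows from adaptedness of $Y$. Your final paragraph worries about exactly this equivalence step, but it really is a one-liner.

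Your route through the Bass--Serre tree $T'$ of $\overline{\sigma}$ is workable in spirit but contains a genuine gap. You write that for an edge $s$ of the minimal $G$--subtree, ``$\stab_{\overline{G}}(s)$ is conjugate to $H'$ and hence commensurable with $H\subset G$''. Conjugate in $\overline{G}$, yes---but conjugation by an arbitrary element of $\overline{G}$ does \emph{not} preserve the commensurability class of $H$. So $H\cap\stab_{\overline{G}}(s)$ need not have finite index in $\stab_{\overline{G}}(s)$, and your conclusion that $H''$ is commensurable with $H$ is unjustified for a general edge $s$ of $T_G'$. Moreover, $G\backslash T_G'$ may well have several edges, so you would still need to select one and control its stabilizer. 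All of this is bypassed by the paper's nestedness argument, which pins down the splitting group as $H''\cap G$ directly.
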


\begin{proof}
We consider a standard $\mathcal{S}^{\infty}$--extension $\overline{G}$ of
$G$, and consider the canonical extension $\overline{X}$ of $X$ to an almost
invariant subset of $\overline{G}$. Corollary
\ref{intersectionnumbersofextensionsareequal} tells us that $i(H\backslash
\overline{X},H\backslash\overline{X})=i(H\backslash X,H\backslash X)$ which
equals zero. As $\overline{G}$ is finitely generated, Theorem 2.8 of
\cite{SS1} tells us that $\overline{X}$ is equivalent to a $H^{\prime\prime}%
$--almost invariant set $\overline{Y}$ which is associated to a splitting of
$\overline{G}$ over $H^{\prime\prime}$. As $\overline{X}$ and $\overline{Y}$
are equivalent, $H^{\prime\prime}$ is commensurable with $H$. Let $Y$ denote
$\overline{Y}\cap G$ and let $H^{\prime}$ denote $H^{\prime\prime}\cap G$.
Then $Y$ is a $H^{\prime}$--almost invariant subset of $G$ which is equivalent
to $X$. As $\overline{Y}$ is associated to a splitting, its translates by
$\overline{G}$ are nested. It follows that the translates of $Y$ by $G$ are
also nested, so that $Y$ also is associated to a splitting of $G$. As $Y$ is
equivalent to $X$, it must be $\mathcal{S}$--adapted. It follows that the
splitting of $G$ to which $Y\ $is associated is also $\mathcal{S}$--adapted,
as required.
\end{proof}

In the same way, we can also generalise Theorem 2.5 of \cite{SS1} to the
setting of group systems of finite type, as follows.

\begin{theorem}
\label{splittingsarecompatible}Let $(G,\mathcal{S})$ be a group system of
finite type, such that $G$ has a finite number of $\mathcal{S}$--adapted
splittings over finitely generated subgroups, any two of which have
intersection number zero. Then this family of splittings is compatible up to
conjugacy. In particular, the almost invariant subsets of $G$ associated to
the splittings can be chosen so that the family of all translates is nested.
\end{theorem}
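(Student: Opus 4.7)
The plan is to mimic the proof of Theorem \ref{splittingsexist}: reduce to the finitely generated case by passing to a standard extension of $G$, apply the already-known finitely generated version of the result (Theorem 2.5 of \cite{SS1}), and then restrict. Let $\sigma_1, \ldots, \sigma_m$ be the given $\mathcal{S}$--adapted splittings of $G$, over finitely generated subgroups $H_1, \ldots, H_m$. By Lemma \ref{splittingisadaptediffa.i.setis}, each $\sigma_i$ has an associated nontrivial $H_i$--almost invariant subset $X_i$ of $G$ which is $\mathcal{S}$--adapted.

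First I would form a standard $\mathcal{S}^{\infty}$--extension $\overline{G}$ of $G$ in the sense of Definition \ref{defnofSinfinitystandardextension}. Because each $H_i$ is finitely generated, no $S\in\mathcal{S}^{\infty}$ is conjugate commensurable with a subgroup of $H_i$, so Lemma \ref{uniqueenlargement} provides a canonical extension $\overline{X}_i$ of $X_i$ to $\overline{G}$. Next, because $\sigma_i$ is $\mathcal{S}$--adapted, each $S_j\in\mathcal{S}^{\infty}$ is conjugate into some vertex group of $\sigma_i$; hence $\sigma_i$ admits a natural refinement to a splitting $\overline{\sigma}_i$ of $\overline{G}$ over $H_i$, obtained by attaching each amalgamation vertex $A_j$ to the appropriate vertex of the graph of groups of $\sigma_i$. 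A key verification is that the almost invariant set of $\overline{G}$ associated to the distinguished edge of $\overline{\sigma}_i$ is equivalent to $\overline{X}_i$; this will follow from the fact that it restricts to $X_i$ on $G$ together with the uniqueness of canonical extensions.

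With this reduction in hand, Corollary \ref{intersectionnumbersofextensionsareequal} gives $i(H_i\backslash\overline{X}_i,\,H_j\backslash\overline{X}_j)=i(H_i\backslash X_i,\,H_j\backslash X_j)=0$ for all $i\neq j$. Since $\overline{G}$ is finitely generated, I can now apply Theorem 2.5 of \cite{SS1} to the family $\{\overline{\sigma}_i\}$ of splittings of $\overline{G}$. This yields $H_i'$--almost invariant sets $\overline{Y}_i\sim\overline{X}_i$ (with $H_i'$ commensurable with $H_i$) such that the family of all $\overline{G}$--translates $\{\,g\overline{Y}_i : g\in\overline{G},\ 1\leq i\leq m\,\}$ is nested.

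Finally, I would restrict. Set $Y_i=\overline{Y}_i\cap G$. Since $\overline{Y}_i+\overline{X}_i$ is $H_i'$--finite, intersecting with $G$ shows $Y_i+X_i$ is $H_i'$--finite, so $Y_i$ is nontrivial, almost invariant over $H_i'$, and equivalent to $X_i$. For $g\in G$ we have $gY_i=g\overline{Y}_i\cap G$, so the $G$--translates $\{gY_i:g\in G,\ 1\leq i\leq m\}$ are intersections of a nested family of subsets of $\overline{G}$ with $G$, and hence are themselves nested. As in the proof of Theorem \ref{splittingsexist}, nestedness of all translates implies each $Y_i$ is associated to a splitting of $G$ over $H_i'$, so the original family $\{\sigma_i\}$ is compatible up to conjugacy. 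The main obstacle I anticipate is the bookkeeping in the second step: one has to verify carefully that the extended splitting $\overline{\sigma}_i$ of $\overline{G}$ has an associated almost invariant set equivalent to the canonical extension $\overline{X}_i$ produced by Lemma \ref{extensionsexist}, and that the equivalence class is independent of the choices made in attaching the amalgamation vertices.
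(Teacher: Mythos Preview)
Your overall strategy is the same as the paper's: pass to a standard $\mathcal{S}^{\infty}$--extension $\overline{G}$, invoke Theorem~2.5 of \cite{SS1} there, and intersect back with $G$. The one place you diverge is in showing that each $\overline{X}_i$ is associated to a splitting of $\overline{G}$. You propose to build the refined splitting $\overline{\sigma}_i$ of $\overline{G}$ by hand and then check that its associated almost invariant set agrees with the canonical extension $\overline{X}_i$; you correctly flag this bookkeeping as the delicate step. The paper bypasses this entirely: since $X_i$ comes from a splitting, its $G$--translates are nested, i.e.\ every pair has an empty corner; Lemma~\ref{extensionshavesamesmallcorner} then says the corresponding corner of the pair of canonical extensions is equal to that (empty) corner, so the $\overline{G}$--translates of $\overline{X}_i$ are nested too, which is precisely the statement that $\overline{X}_i$ is associated to a splitting of $\overline{G}$. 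This avoids having to construct $\overline{\sigma}_i$ or match it to $\overline{X}_i$ at all.

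Your route does work, and the verification you worry about is not hard (the extension in Lemma~\ref{extensionsexist} is built exactly so that its restriction to $G$ is $X_i$ and it is enclosed by $V_0$, which forces it to agree with the almost invariant set of the refined splitting up to equivalence). But the paper's use of Lemma~\ref{extensionshavesamesmallcorner} is both shorter and conceptually cleaner: it treats ``associated to a splitting'' purely as the nestedness condition on translates, and transfers that condition directly via the small-corner lemma rather than via an explicit tree construction.
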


\begin{proof}
For $1\leq i\leq n$, we let $X_{i}$ denote a $H_{i}$--almost invariant set
associated to the $i$--th given splitting. Now consider a standard
$\mathcal{S}^{\infty}$--extension $\overline{G}$ of $G$, and, for each $i$,
consider the canonical extension $\overline{X_{i}}$ of $X_{i}$ to an almost
invariant subset of $\overline{G}$. As $X_{i}$ is associated to a splitting of
$G$, the family of translates by $G$ of $X_{i}$ is nested. By Lemma
\ref{extensionshavesamesmallcorner}, the family of translates by $\overline
{G}$ of $\overline{X_{i}}$ is also nested, so that $\overline{X_{i}}$ is
associated to a splitting of $\overline{G}$. Corollary
\ref{intersectionnumbersofextensionsareequal} tells us that any pair of the
$\overline{X_{i}}$'s has intersection number zero. As $\overline{G}$ is
finitely generated, Theorem 2.5 of \cite{SS1} tells us that $\overline{X_{i}}$
is equivalent to an almost invariant set $\overline{Y_{i}}$, such that the
family of all translates by $\overline{G}$ of all of the $\overline{Y_{i}}$'s
is nested. Now let $Y_{i}$ denote the intersection $\overline{Y_{i}}\cap G$.
Then the family of all translates by $G$ of all of the $Y_{i}$'s is also
nested. As $\overline{X_{i}}$ and $\overline{Y_{i}}$ are equivalent, so are
$X_{i}$ and $Y_{i}$. Thus $Y_{i}$ is associated to the $i$--th given splitting
of $G$, proving the required result.
\end{proof}

For later use we will also need to define strong and weak crossing of almost
invariant sets in the setting of group systems of finite type. We recall the
definition given in section 3 of \cite{SS1} in the setting of a finitely
generated group $G$. For a subset $X$ of $G$, recall that $\delta X$ denotes
the coboundary of $X$ in the Cayley graph $\Gamma$ of $G$ with respect to some
finite generating set. Thus $\delta X$ is a collection of edges of $\Gamma$.
If $Y$ is a set of vertices of $\Gamma$, we use the notation $\delta X\cap Y$
to denote the collection of endpoints of edges of $\delta X$ which lie in $Y$.
Let $H$ and $K$ be subgroups of $G$, and let $X$ and $Y$ be nontrivial almost
invariant subsets of $G$ over $H$ and $K$ respectively. Then $X$ crosses $Y$
strongly if $\delta X\cap Y$ and $\delta X\cap Y^{\ast}$ are both
$K$--infinite. If $G$ is not finitely generated, then it no longer makes sense
to use $\delta X$ in any definitions. Instead we will give the following new definition.

\begin{definition}
\label{defnofstrongandweakcrossing}Let $(G,\mathcal{S})$ be a group system of
finite type, and let $H$ and $K$ be subgroups of $G$. Let $X$ be a nontrivial
$H$--almost invariant subset of $G$, and let $Y$ be a nontrivial $K$--almost
invariant subset of $G$, and suppose that $X$ and $Y$ are $\mathcal{S}%
$--adapted. Then $X$\textsl{ crosses }$Y$\textsl{ strongly} if $H\cap Y$ and
$H\cap Y^{\ast}$ are both $K$--infinite. And $X$\textsl{ crosses }$Y$\textsl{
weakly} if $X$ crosses $Y$, but $X\ $does not cross $Y$ strongly.
\end{definition}

If $G$ is finitely generated, which is automatic if the family $\mathcal{S}$
is empty, then this definition is equivalent to the one given in \cite{SS1}.
But it has the advantage of not requiring one to choose a generating set for
$G$. In particular, as discussed in \cite{SS1}, strong and weak crossing need
not be symmetric, i.e. if $X$ crosses $Y$ strongly, it need not be the case
that $Y$ crosses $X$ strongly. As in the case when $G$ is finitely generated,
strong crossing implies crossing.

\begin{lemma}
Let $(G,\mathcal{S})$ be a group system of finite type, and let $H$ and $K$ be
subgroups of $G$. Let $X$ be a nontrivial $H$--almost invariant subset of $G$,
and let $Y$ be a nontrivial $K$--almost invariant subset of $G$, and suppose
that $X$ and $Y$ are $\mathcal{S}$--adapted.

If $X$ crosses $Y$ strongly, then $X$ crosses $Y$.
\end{lemma}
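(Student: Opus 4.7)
The plan is to exploit the fact that the condition $HX=X$ forces $X$ to be a union of right cosets $Hg$, so that $H=He$ itself lies entirely in $X$ or entirely in $X^{\ast}$. The strong crossing hypothesis, being phrased purely in terms of $H$, then directly controls two of the four corners of $(X,Y)$, and a short translation argument using the $K$-almost invariance of $Y$ will handle the remaining two.

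Without loss of generality I would suppose $H\subset X$; the case $H\subset X^{\ast}$ is symmetric (note that $HX=X$ also forces $HX^{\ast}=X^{\ast}$, so the analogous statement about $X^{\ast}$ is available). The inclusions $X\cap Y\supset H\cap Y$ and $X\cap Y^{\ast}\supset H\cap Y^{\ast}$ immediately yield that two of the four corners are $K$-infinite. For the remaining corners $X^{\ast}\cap Y$ and $X^{\ast}\cap Y^{\ast}$, I would use the nontriviality of $X$ to pick any $g_{0}\in X^{\ast}$, whence the whole coset $Hg_{0}$ lies in $X^{\ast}$. The identity $Hg_{0}\cap Y=(H\cap Yg_{0}^{-1})g_{0}$, which follows from $hg_{0}\in Y\iff h\in Yg_{0}^{-1}$, reduces matters to the $K$-infiniteness of $H\cap Yg_{0}^{-1}$. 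Now $K$-almost invariance of $Y$ gives that $Y$ and $Yg_{0}^{-1}$ are $K$-almost equal, and intersection with the fixed set $H$ preserves $K$-almost equality (since $(H\cap A)+(H\cap B)=H\cap(A+B)$), so $H\cap Yg_{0}^{-1}$ is $K$-almost equal to $H\cap Y$ and hence $K$-infinite. Because right translation by $g_{0}$ preserves $K$-finiteness, $Hg_{0}\cap Y$ is $K$-infinite, and therefore so is $X^{\ast}\cap Y$. The same argument with $Y^{\ast}$ in place of $Y$, using that $Y^{\ast}$ is $K$-almost equal to $Y^{\ast}g_{0}^{-1}$ (which follows by complementation from the analogous statement for $Y$), handles $X^{\ast}\cap Y^{\ast}$.

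There is no serious obstacle; the argument is a routine chase through the definitions once the coset-theoretic observation is in hand. It is worth remarking that the $\mathcal{S}$-adaptedness of $X$ and $Y$ and the finite-type assumption on $(G,\mathcal{S})$ do not enter the proof: the implication is purely formal, relying only on the fact that $H$ lies on exactly one side of the partition $\{X,X^{\ast}\}$ and that $K$-almost invariance of $Y$ lets one transport the information from the coset $H$ to every coset $Hg$.
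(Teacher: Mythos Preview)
Your proof is correct and follows essentially the same approach as the paper: both arguments use that $HX=X$ makes $X$ a union of cosets $Hg$, and then transport the $K$-infiniteness of $H\cap Y$ and $H\cap Y^{\ast}$ to an arbitrary coset $Hg$ via the identity $Hg\cap Y=(H\cap Yg^{-1})g$ together with the $K$-almost invariance of $Y$. The only organizational difference is that the paper picks an arbitrary $x\in X$ to handle one corner and leaves the rest to symmetry, whereas you first normalize to $H\subset X$ so that two corners come for free; this is cosmetic. Your closing observation that the $\mathcal{S}$-adaptedness and finite-type hypotheses are unused is also borne out by the paper's argument.
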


\begin{proof}
As $X$ crosses $Y$ strongly, we know that $H\cap Y$ and $H\cap Y^{\ast}$ are
both $K$--infinite.

Let $x$ denote some element of $X$. As $HX=X$, it follows that $X$ contains
$Hx$. As $Y$ is $K$--almost invariant, $Yx^{-1}$ is $K$--almost equal to $Y$.
Hence $H\cap Yx^{-1}$ is $K$--almost equal to $H\cap Y$, and so is $K$--infinite.

Now $Hx\cap Y=(H\cap Yx^{-1})x$, so it follows that $Hx\cap Y$ is also
$K$--infinite. As $X$ contains $Hx$, this implies that $X\cap Y$ is $K$--infinite.

Similar arguments show that all the other corners of the pair $(X,Y)$ are
$K$--infinite, so that $X$ crosses $Y$, as claimed.
\end{proof}

We also note that strong crossing is preserved when taking extensions.

\begin{lemma}
Let $(G,\mathcal{S})$ be a group system of finite type, and let $H$ and $K$ be
subgroups of $G$. Let $X$ be a nontrivial $H$--almost invariant subset of $G$,
and let $Y$ be a nontrivial $K$--almost invariant subset of $G$, and suppose
that $X$ and $Y$ are $\mathcal{S}$--adapted. Let $\overline{G}$ be a standard
extension of $G$, and let $\overline{X}$ and $\overline{Y}$ denote extensions
of $X$ and $Y$ to $\overline{G}$. Then $X$ crosses $Y$ strongly if and only if
$\overline{X}$ crosses $\overline{Y}$ strongly.
\end{lemma}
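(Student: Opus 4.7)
The plan is to reduce the lemma to an elementary set-theoretic observation via Definition~\ref{defnofstrongandweakcrossing}. The key point is that for the group system $(G,\mathcal{S})$, strong crossing of $X$ and $Y$ is characterised by the conditions that $H\cap Y$ and $H\cap Y^{*}$ are both $K$--infinite, and since $\overline{G}$ is finitely generated (being a standard extension), the same definition applied to $(\overline{G},\varnothing)$ gives the analogous criterion using $H\cap \overline{Y}$ and $H\cap \overline{Y}^{*}$, equivalent to the coboundary formulation of \cite{SS1} by the remark following Definition~\ref{defnofstrongandweakcrossing}. This reformulation, which avoids any reference to a Cayley graph, is what makes the transfer between $G$ and $\overline{G}$ clean.

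First I would use the defining property $\overline{Y}\cap G = Y$ of an extension, together with the inclusion $H\subset G$, to conclude that
\[
H\cap \overline{Y} \;=\; H\cap \overline{Y}\cap G \;=\; H\cap Y,
\qquad
H\cap \overline{Y}^{*} \;=\; H\cap(\overline{G}-\overline{Y})\cap G \;=\; H\cap Y^{*}.
\]
Thus the two pairs of sets whose $K$--cardinality controls strong crossing on each side are literally identical subsets of~$G$.

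Next I would verify that $K$--finiteness of a subset $A\subset G$ is intrinsic, independent of whether one works inside $G$ or inside $\overline{G}$. One direction is immediate from $G\subset \overline{G}$. For the other, given a covering $A\subset Kg_{1}\cup\ldots\cup Kg_{n}$ with $g_{i}\in\overline{G}$, for each $i$ such that $Kg_{i}$ meets $A$ one can choose $a_{i}\in Kg_{i}\cap A\subset G$; since $K\subset G$, the coset $Ka_{i}=Kg_{i}$ already lies in $G$, giving a covering of $A$ by finitely many cosets of $K$ with representatives in~$G$.

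Combining these observations yields the equivalence in both directions. No essential obstacle is anticipated: the lemma is really just the statement that the $H$--based reformulation of strong crossing depends only on data living inside~$G$ and does not see the ambient enveloping group~$\overline{G}$, so the choice of extension $\overline{Y}$ (and indeed of~$\overline{X}$) is immaterial for this property.
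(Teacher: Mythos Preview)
Your proof is correct and follows essentially the same approach as the paper: both arguments rest on the identity $H\cap\overline{Y}=H\cap Y$ (and its complement version), which follows from $H\subset G$ and $\overline{Y}\cap G=Y$, and then apply Definition~\ref{defnofstrongandweakcrossing} directly. You are a bit more explicit than the paper in writing out the set-theoretic identities and in verifying that $K$--finiteness of a subset of $G$ is independent of the ambient group, but the underlying idea is identical.
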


\begin{proof}
If $X$ crosses $Y$ strongly, then $H\cap Y$ and $H\cap Y^{\ast}$ are both
$K$--infinite. It follows immediately that $H\cap\overline{Y}$ and
$H\cap\overline{Y}^{\ast}$ are both $K$--infinite, so that $\overline{X}$
crosses $\overline{Y}$ strongly.

Conversely, if $\overline{X}$ crosses $\overline{Y}$ strongly, then
$H\cap\overline{Y}$ and $H\cap\overline{Y}^{\ast}$ are both $K$--infinite. As
$H$ is contained in $G$, and $Y=\overline{Y}\cap G$, it follows that $H\cap Y$
and $H\cap Y^{\ast}$ are both $K$--infinite, so that $X$ crosses $Y$ strongly.
\end{proof}

\section{Good position and very good position\label{goodposition}}

In \cite{SS2}, Scott and Swarup introduced the terminology of good position
for a family of almost invariant subsets of a finitely generated group $G$,
but the idea can be traced back to Scott's paper \cite{Scott:torus}. Let
$\{H_{i}\}_{i\in I}$ be a family of subgroups of $G$, let $X_{i}$ be a $H_{i}%
$--almost invariant subset of $G$, and let $\mathcal{X}=\{X_{i}\}_{i\in I}$.
Finally let $E(\mathcal{X})$ denote $\{gX_{i},gX_{i}^{\ast}:g\in G,i\in I\}$.
Then the family $E(\mathcal{X})$ is in\textit{ good position} if whenever $U$
and $V$ are elements of $E(\mathcal{X})$, such that the pair $(U,V)\ $has two
small corners, then one of the corners is empty. (See Definition
\ref{defnofsmallcorner}.) The point of this condition is that it makes
possible the definition of a partial order of "almost inclusion" on
$E(\mathcal{X})$. If $U$ and $V$ are elements of $E(\mathcal{X})$, we would
like to say that $U\leq V$ if $U\cap V^{\ast}$ is small, i.e. that $U$ is
almost contained in $V$. Clearly there is a problem if the pair $(U,V)$ has
two small corners. But if $E(\mathcal{X})$ is in good position, and $U$ and
$V$ are elements of $E(\mathcal{X})$, we can define $U\leq V$ if $U\cap
V^{\ast}$ is either empty or the only small corner of the pair $(U,V)$. Lemma
1.14 of \cite{SS1} shows that the relation $\leq$ is a partial order on
$E(\mathcal{X})$. Note that the proof of Lemma 1.14 is valid even if some
$H_{i}$'s are not finitely generated. But it is crucial that $G$ be finitely generated.

As in the proof of Theorem \ref{splittingsexist}, we can use a standard
$\mathcal{S}^{\infty}$--extension $\overline{G}$ of $G$ to generalise Lemma
1.14 of \cite{SS1} to the setting of group systems of finite type. The result
is the following.

\begin{lemma}
\label{partialorderforsystems}Let $(G,\mathcal{S})$ be a group system of
finite type, let $\{H_{i}\}_{i\in I}$ be a family of subgroups of $G$, let
$X_{i}$ be a $\mathcal{S}$--adapted $H_{i}$--almost invariant subset of $G$,
and let $\mathcal{X}=\{X_{i}\}_{i\in I}$. Let $E(\mathcal{X})$ denote
$\{gX_{i},gX_{i}^{\ast}:g\in G,i\in I\}$, and suppose that $E(\mathcal{X})$ is
in good position. Define the relation $\leq$ on $E(\mathcal{X})$ by $U\leq V$
if $U\cap V^{\ast}$ is either empty or the only small corner of the pair
$(U,V)$. Then $\leq$ is a partial order on $E(\mathcal{X})$.
\end{lemma}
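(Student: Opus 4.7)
The plan is to adapt directly the proof of Lemma~1.14 of~\cite{SS1} to the present setting, using in place of the standard symmetry of crossing (which held there because $G$ was finitely generated) the symmetry result for adapted almost invariant sets given by Lemma~\ref{crossingissymmetric}. The argument for that lemma in fact establishes the stronger \emph{corner symmetry} property: a corner of a pair $(U,V)$ with $U,V \in E(\mathcal{X})$ is $H$--finite (where $H$ stabilizes $U$) if and only if it is $K$--finite (where $K$ stabilizes $V$). Consequently the notion of a ``small'' corner is unambiguously defined on $E(\mathcal{X})$, which is what is needed to make sense of the relation $\leq$ and to import the bookkeeping of~\cite{SS1}. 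An alternative route would be to pass to a standard $\mathcal{S}^{\infty}$--extension $\overline{G}$ and apply Lemma~1.14 of~\cite{SS1} there, but this would require extra hypotheses (e.g.\ finite generation of the $H_i$) to ensure canonical extensions exist and corners match via Lemma~\ref{extensionshavesamesmallcorner}; the direct adaptation avoids these.

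Reflexivity is immediate since $U \cap U^{\ast} = \varnothing$. For antisymmetry, suppose $U \leq V$ and $V \leq U$. If $U \cap V^{\ast}$ were the unique small corner of the pair $(U,V)$, then $V \cap U^{\ast}$ would not be small, contradicting $V \leq U$; hence $U \cap V^{\ast} = \varnothing$, and symmetrically $V \cap U^{\ast} = \varnothing$, so that $U = V$.

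The bulk of the work, and the main obstacle, is transitivity. Given $U \leq V$ and $V \leq W$, one first writes
\[
U \cap W^{\ast} \;=\; (U \cap V \cap W^{\ast}) \,\cup\, (U \cap V^{\ast} \cap W^{\ast}),
\]
and observes that each summand lies in one of the small sets $V \cap W^{\ast}$ or $U \cap V^{\ast}$, so $U \cap W^{\ast}$ is small. The delicate point is to rule out a \emph{second} small corner of the pair $(U,W)$, because only then does $U \leq W$ follow from the definition. Here the good position hypothesis enters: if both $U \cap W^{\ast}$ and another corner of $(U,W)$ were small, good position forces one of the two to be empty, and a short case analysis, tracking which corners of the three pairs $(U,V)$, $(V,W)$, $(U,W)$ are empty versus merely small, shows that it must be $U \cap W^{\ast}$ that is empty in the degenerate case, so that $U \leq W$ in either case. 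This combinatorial case analysis is the heart of the proof and proceeds verbatim as in~\cite{SS1}, since every step uses only corner symmetry of crossing, which is supplied by Lemma~\ref{crossingissymmetric}.
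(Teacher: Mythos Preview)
The paper takes precisely the route you set aside: it passes to a standard $\mathcal{S}^{\infty}$--extension $\overline{G}$, uses Lemma~\ref{extensionshavesamesmallcorner} to check that a small corner of a pair $(U,V)$ in $E(\mathcal{X})$ coincides with the corresponding corner of $(\overline{U},\overline{V})$, deduces that $\overline{E}=\{\overline{U}:U\in E(\mathcal{X})\}$ is in good position with the same relation $\leq$, and then invokes Lemma~1.14 of~\cite{SS1} in the finitely generated group $\overline{G}$. You are right that this route, as written, needs canonical extensions and hence finitely generated $H_i$; that hypothesis is absent from the statement but holds in every application the paper makes of the lemma.

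Your direct route is cleaner in spirit, but the transitivity step has a gap as you have sketched it. From $U\cap W^{*}=(U\cap V\cap W^{*})\cup(U\cap V^{*}\cap W^{*})$, the first piece lies in the $(V,W)$--small corner and is $H_V$--finite (and $H_W$--finite), while the second lies in the $(U,V)$--small corner and is $H_U$--finite (and $H_V$--finite). Their union is therefore $H_V$--finite; but to declare it \emph{small for the pair} $(U,W)$ you need it to be $H_U$--finite or $H_W$--finite, and corner symmetry for $(U,W)$ only says those two conditions are equivalent --- it does not derive either one from $H_V$--finiteness. The paper explicitly remarks, just before the lemma, that the proof of Lemma~1.14 in~\cite{SS1} works with non--finitely generated $H_i$ but that ``it is crucial that $G$ be finitely generated'', which strongly suggests the argument in~\cite{SS1} uses more than abstract corner symmetry --- presumably a Cayley-graph step --- and this is exactly what the passage to the finitely generated $\overline{G}$ supplies. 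You should either locate that step in~\cite{SS1} and verify it really does reduce to corner symmetry, or close this gap directly.
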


\begin{proof}
We consider a standard $\mathcal{S}^{\infty}$--extension $\overline{G}$ of
$G$, and consider the canonical extension $\overline{U}$ of $U$, for each
$U\in E(\mathcal{X})$. Let $U\ $and $V$ denote elements of $E(\mathcal{X})$.

If a corner of the pair $(U,V)$ is not small, it is trivial that the
corresponding corner of the pair $(\overline{U},\overline{V})$ is also not small.

If a corner $C$ of the pair $(U,V)$ is small, Lemma
\ref{extensionshavesamesmallcorner} tells us that the corresponding corner of
the pair $(\overline{U},\overline{V})$ is equal to $C$, and so also small.

It follows that, as $E(\mathcal{X})$ is in good position, the family
$\overline{E}=\{\overline{U}:U\in E(\mathcal{X})\}$ is also in good position.
Further the given relation $\leq$ on $E(\mathcal{X})$ is equal to the relation
$\leq$ on $\overline{E}$. As $\overline{G}$ is finitely generated, Lemma 1.14
of \cite{SS1} tells us that this last relation is a partial order on
$\overline{E}$, so that the given relation $\leq$ on $E(\mathcal{X})$ is also
a partial order, as required.
\end{proof}

In section 2 of \cite{NibloSageevScottSwarup}, the authors give a good
discussion of this partial order. In Lemma 2.10 of that paper, the authors
showed that if $G$ is a finitely generated group, if $H_{i}$ is a finitely
generated subgroup, for $1\leq i\leq n$, and if $X_{i}$ is a nontrivial
$H_{i}$--almost invariant subset of $G$, for $1\leq i\leq n$, then each
$X_{i}$ is equivalent to $Y_{i}$ such that the family $E(\mathcal{Y}%
)=\{gY_{i},gY_{i}^{\ast}:g\in G,i\in I\}$ is in good position. Thus the
partial order $\leq$ can be defined on $E(\mathcal{Y})$. The crucial step is
when $n=1$. The result in this case follows easily from Proposition 2.14 of
\cite{SS1}. If $n>1$, we can now assume that, for each $i$, the family of all
translates of $X_{i}$ and $X_{i}^{\ast}$ is in good position. If
$E(\mathcal{X})$ is not in good position, it must contain distinct elements
$U$ and $V$ such that the pair $(U,V)$ has two small (and non-empty) corners,
and $U\ $and $V$ are translates of distinct $X_{i}$'s. Thus there are distinct
$i$ and $j$ such that $X_{j}$ is equivalent to a translate of $X_{i}$ or to
$X_{i}^{\ast}$. In this case we simply, replace $X_{j}$ by that translate. By
repeating, we will eventually arrange that the new $E(\mathcal{X})$ is in good
position, as required. It may seem that we took the easy way out in this last
step, but in Example 2.11 of \cite{NibloSageevScottSwarup}, the authors showed
that in some cases, this is the only way to obtain a family in good position.

The partial order obtained in the preceding paragraph appears to depend on the
choices of the $Y_{i}$'s. In Lemma 2.8 of \cite{NibloSageevScottSwarup} the
authors discussed the uniqueness of this partial order in the case $n=1$. They
showed that in most cases, if $X_{1}$ is equivalent to $Y_{1}$, and is also
equivalent to $Z_{1}$ such that the families $E(\mathcal{Y})$ and
$E(\mathcal{Z})$ are each in good position, then there is an equivariant order
preserving bijection between $E(\mathcal{Y})$ and $E(\mathcal{Z})$. This will
be discussed in more detail in the proof of Lemma
\ref{algregnbhddependsonlyonequivalenceclasses}.

It will be helpful for the arguments of sections \ref{algregnbhds} and
\ref{CCC'swithstrongcrossing} to have a slight refinement of good position.
Recall that in order to prove Lemma 2.10 of \cite{NibloSageevScottSwarup}, the
crucial step was to prove it for a single $H$--almost invariant subset $X$ of
$G$. We consider the subgroups $\mathcal{K}=\{g\in G:gX$ is equivalent to $X$
or $X^{\ast}\}$ and $\mathcal{K}_{0}=\{g\in G:gX$ is equivalent to $X\}$. It
is trivial that $H\subset\mathcal{K}_{0}$, and that $\mathcal{K}_{0}$ is a
subgroup of $\mathcal{K}$ of index at most $2$. In Lemma 2.10 of \cite{SS1},
it is proved that $\mathcal{K}\subset Comm_{G}(H)$. The proof of Proposition
2.14 of \cite{SS1} has two cases, depending on whether $H$ has finite or
infinite index in $Comm_{G}(H)$. In the first case, $H$ must have finite index
in $\mathcal{K}$. Part 1) of Lemma 2.3 of \cite{NibloSageevScottSwarup} states
that $X$ is equivalent to an almost invariant set $W$ over $\mathcal{K}_{0}$
such that $W$ is in good position. Our refinement of this result is the following.

We will be interested in the case when $H$ has finite index in $Comm_{G}(H)$.
In this situation, we will say that $H\ $has \textit{small commensuriser in
}$G$, as in \cite{SS2}.

\begin{lemma}
\label{refinedgoodposition}Let $G$ be a finitely generated group with a
finitely generated subgroup $H$, and let $X$ be a nontrivial $H$--almost
invariant subset of $G$. If $H$ has small commensuriser in $G$, then $X$ is
equivalent to an almost invariant set $W$ over $\mathcal{K}_{0}$, such that if
$U$ and $V$ are equivalent elements of $E(W)=\{gW,gW^{\ast}:g\in G\}$, then
$U\ $and $V$ are equal. If there is $g\in G$ such that $gX$ is equivalent to
$X^{\ast}$, then $W$ is invertible.
\end{lemma}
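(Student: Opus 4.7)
The plan is to start with the almost invariant set $W_{0}$ provided by Lemma 2.3(1) of \cite{NibloSageevScottSwarup}, which supplies $W_{0}\sim X$ with $\mathcal{K}_{0}W_{0}=W_{0}$ and $E(W_{0})$ in good position, and then to refine $W_{0}$ to a set $W$ having the desired equivalence-implies-equality property. My crucial preliminary observation is that the left-multiplication stabiliser of any $\mathcal{K}_{0}$--almost invariant set $W\sim X$ equals $\mathcal{K}_{0}$ on the nose: the inclusion $\mathcal{K}_{0}\subseteq\stab(W)$ is by definition, and conversely $gW=W$ forces $gW\sim W\sim X$, so (since left translation preserves equivalence) $gX\sim X$, giving $g\in\mathcal{K}_{0}$. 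This at once handles the two ``same-sign'' cases in $E(W)$: for $g,g'\in G$ we have $gW\sim g'W \Longleftrightarrow g^{-1}g'\in\mathcal{K}_{0}\Longleftrightarrow gW=g'W$, and likewise for $W^{\ast}$. So the only case that must be engineered is the ``opposite-sign'' one, where $gW\sim g'W^{\ast}$ with $g^{-1}g'\in\mathcal{K}\setminus\mathcal{K}_{0}$, and the set $W$ must satisfy $gW=g'W^{\ast}$. If $\mathcal{K}=\mathcal{K}_{0}$ I simply take $W=W_{0}$; otherwise I need $\sigma W=W^{\ast}$ for some (and, using that $\mathcal{K}_{0}$ has index $2$ in $\mathcal{K}$, then every) $\sigma\in\mathcal{K}\setminus\mathcal{K}_{0}$, which is precisely the invertibility asserted by the last sentence.

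The next step is to fix such a $\sigma$ and to examine the induced action on the right-coset space $\mathcal{K}_{0}\backslash G$. Because $[\mathcal{K}:\mathcal{K}_{0}]=2$ the subgroup $\mathcal{K}_{0}$ is normal in $\mathcal{K}$, so $\sigma\mathcal{K}_{0}=\mathcal{K}_{0}\sigma$ and left multiplication by $\sigma$ descends to the well-defined map $\mathcal{K}_{0}g\mapsto\mathcal{K}_{0}\sigma g$ on $\mathcal{K}_{0}\backslash G$. The pivotal point is that this action has \emph{no} fixed cosets: $\mathcal{K}_{0}\sigma g=\mathcal{K}_{0}g$ would force $\sigma\in\mathcal{K}_{0}$, contrary to the choice of $\sigma$. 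Since $\sigma^{2}\in\mathcal{K}_{0}$ acts trivially on $\mathcal{K}_{0}\backslash G$, the map $\sigma$ is a fixed-point-free involution there, and $\mathcal{K}_{0}\backslash G$ decomposes into unordered pairs $\{c,\sigma c\}$.

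Writing $\bar{W}_{0}\subset\mathcal{K}_{0}\backslash G$ for the image of $W_{0}$ under the quotient map $\phi\colon G\to\mathcal{K}_{0}\backslash G$, the assumption $\sigma W_{0}\sim W_{0}^{\ast}$ becomes finiteness of $\sigma\bar{W}_{0}\triangle\bar{W}_{0}^{\ast}$, so only finitely many pairs $\{c,\sigma c\}$ are ``bad'' in the sense that both members lie in $\bar{W}_{0}$ or both in its complement. I define $\bar{W}$ to agree with $\bar{W}_{0}$ on every good pair and to retain exactly one element (either choice) from each bad pair, and set $W:=\phi^{-1}(\bar{W})$. By construction $W$ is a union of right cosets of $\mathcal{K}_{0}$, so $\mathcal{K}_{0}W=W$; it differs from $W_{0}$ on only a finite union of right cosets of $\mathcal{K}_{0}$; and $\sigma W=W^{\ast}$. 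Because $H$ has small commensuriser and $\mathcal{K}_{0}\subseteq\mathrm{Comm}_{G}(H)$, the groups $H$ and $\mathcal{K}_{0}$ are commensurable, so $\mathcal{K}_{0}$--finite coincides with $H$--finite and $W\sim W_{0}\sim X$. The remaining verifications --- that $W\sim Wg$ for all $g$ (inherited from $W_{0}$ via the equivalence $W\sim W_{0}$), that $W$ is nontrivial (since $W\sim X$), and that the opposite-sign equivalence in $E(W)$ really forces equality (immediate from $\sigma W=W^{\ast}$ together with $\mathcal{K}_{0}W=W$) --- are all straightforward.

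The step I expect to be the main obstacle is the verification that $\sigma$ acts freely on $\mathcal{K}_{0}\backslash G$, since without this the pair-by-pair modification producing $W$ would be forced to assign contradictory values on any fixed coset. This freeness is a direct consequence of the index-$2$ normality of $\mathcal{K}_{0}$ in $\mathcal{K}$; note that the analogous $\sigma$--action on the left-coset space $G/\mathcal{K}_{0}$ \emph{does} in general have fixed cosets (namely those $g\mathcal{K}_{0}$ with $\sigma\in g\mathcal{K}_{0}g^{-1}$), which is why it is essential to phrase the construction using right cosets, as is in fact dictated naturally by the hypothesis $\mathcal{K}_{0}W_{0}=W_{0}$.
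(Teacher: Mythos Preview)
Your proof is correct and follows essentially the same approach as the paper's: start from the $\mathcal{K}_{0}$--almost invariant set in good position supplied by Lemma~2.3(1) of \cite{NibloSageevScottSwarup}, observe that $\mathcal{K}_{0}$ is exactly the stabiliser, and in the case $\mathcal{K}\neq\mathcal{K}_{0}$ use the free involution of $\sigma\in\mathcal{K}\setminus\mathcal{K}_{0}$ on $\mathcal{K}_{0}\backslash G$ to adjust finitely many cosets and force $\sigma W=W^{\ast}$. The one small difference is that the paper first invokes good position to arrange that the corner $X\cap kX$ is empty, so that the only bad pairs are those with both elements outside $\bar{W}_{0}$; you instead treat both kinds of bad pair (both in, both out) symmetrically, which is a mild streamlining but not a different idea.
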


\begin{remark}
It is automatic that $W$ is in good position.
\end{remark}

\begin{proof}
By applying part 1) of Lemma 2.3 of \cite{NibloSageevScottSwarup}, we can
arrange that $X$ is an almost invariant set over $\mathcal{K}_{0}$, and is in
good position.

If $\mathcal{K}=\mathcal{K}_{0}$, we can simply take $W$ equal to $X$. Note
that for any element $k$ of $\mathcal{K}_{0}$, we have $kX=X$ and $kX^{\ast
}=X^{\ast}$. If $X$ and $Y$ are equivalent elements of $E(\mathcal{X})$, then
$Y$ is equal to $kX$ or to $kX^{\ast}$, for some $k$ in $\mathcal{K}%
=\mathcal{K}_{0}$. It follows immediately that $X$ and $Y$ are equal, which
proves the lemma in this case.

If $\mathcal{K}$ and $\mathcal{K}_{0}$ are not equal, we let $k$ be an element
of $\mathcal{K}-\mathcal{K}_{0}$, so that $kX$ is equivalent to $X^{\ast}$.
But it may be that $kX\ $and $X^{\ast}$ are not equal. In this case we will
alter $X$ as follows. As $X$ is in good position, and $kX$ is equivalent to
$X^{\ast}$, one of the corners $X\cap kX$ and $X^{\ast}\cap kX^{\ast}$ is
empty. By replacing $X$ by $X^{\ast}$ if needed, we can assume that the corner
$X\cap kX$ is empty. As $\mathcal{K}_{0}$ is normal in $\mathcal{K}$, there is
a natural action of $k$ as an involution on the quotient set $\mathcal{K}%
_{0}\backslash G$. Let $P$ denote $\mathcal{K}_{0}\backslash X$, so that the
corner $P\cap kP$ is empty, hence $P$ and $kP$ are disjoint. As $kX$ is
equivalent to $X^{\ast}$, we have $kP$ is almost equal to $P^{\ast}$. We
consider the action of $k$ on the finite set $\mathcal{K}_{0}\backslash
G-(P\cup kP)$. The action of $k$ on $\mathcal{K}_{0}\backslash G$ is free, so
this finite set consists of pairs of points interchanged by $k$. For each such
pair, we add exactly one point of the pair to $P$, to obtain a new subset $Q$
of $\mathcal{K}_{0}\backslash G$ so that $Q$ is almost equal to $P$, and
$kQ=Q^{\ast}$. Now let $W$ denote the pre-image in $G$ of $Q$. Then $W$ is an
almost invariant subset of $G$ over $\mathcal{K}_{0}$ which is equivalent to
$X$, such that $kW=W^{\ast}$. Combined with the first part of the proof, this
implies that if $U$ and $V$ are equivalent elements of $E(W)$, then $U\ $and
$V$ are equal, which completes the proof of the lemma.
\end{proof}

As in the proof of Theorem \ref{splittingsexist}, we can use a standard
$\mathcal{S}^{\infty}$--extension $\overline{G}$ of $G$ to generalise Lemma
\ref{refinedgoodposition} to the setting of group systems of finite type. The
result is the following.

\begin{lemma}
Let $(G,\mathcal{S})$ be a group system of finite type, let $H$ be a finitely
generated subgroup of $G$, and let $X$ be a nontrivial $\mathcal{S}$--adapted
$H$--almost invariant subset of $G$. If $H$ has small commensuriser in $G$,
then $X$ is equivalent to an almost invariant set $Y$ such that if $U$ and $V$
are equivalent elements of $E(Y)=\{gY,gY^{\ast}:g\in G\}$, then $U\ $and $V$
are equal.
\end{lemma}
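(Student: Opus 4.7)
The plan is to carry out in $G$ itself the construction used in the proof of Lemma \ref{refinedgoodposition}; the extension strategy that proved Theorem \ref{splittingsexist} and Lemma \ref{partialorderforsystems} would require first checking that $H$ has small commensuriser in a standard $\mathcal{S}^{\infty}$-extension $\overline{G}$, which is not immediate from the hypotheses when elements of $\mathcal{S}^{\infty}$ are not finitely generated, so a direct approach is cleaner. Define $\mathcal{K}_0 = \{g \in G : gX \sim X\}$ and $\mathcal{K} = \{g \in G : gX \sim X \text{ or } X^{\ast}\}$. By Lemma \ref{equivalenta.i.setshavecommensurablestabiliser}, $\mathcal{K} \subset Comm_G(H)$, so the small-commensuriser hypothesis yields $[\mathcal{K}_0 : H] \leq [\mathcal{K} : H] < \infty$.

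Form $Y_1 := \mathcal{K}_0 X$. Since $HX = X$ and $[\mathcal{K}_0 : H] < \infty$, this is a finite union $\bigcup_i k_i X$ over coset representatives, and each summand is equivalent to $X$; hence $Y_1 \sim X$ (so $Y_1$ is $\mathcal{S}$-adapted), $\mathcal{K}_0 Y_1 = Y_1$, and $Y_1 g$ is $H$-almost equal to $Y_1$ for every $g \in G$. If $\mathcal{K} = \mathcal{K}_0$, take $Y := Y_1$ and the conclusion is immediate. Otherwise $[\mathcal{K} : \mathcal{K}_0] = 2$, $\mathcal{K}_0 \triangleleft \mathcal{K}$, and any $g_0 \in \mathcal{K} \setminus \mathcal{K}_0$ satisfies $g_0 Y_1 \sim Y_1^{\ast}$. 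Set $A = Y_1 \cap g_0 Y_1$ and $B = Y_1^{\ast} \cap g_0 Y_1^{\ast}$: both are $H$-finite, and because $g_0^2 \in \mathcal{K}_0$ and $\mathcal{K}_0 \triangleleft \mathcal{K}$, both are invariant under all of $\mathcal{K}$. The induced involution of $g_0$ on the finite set of $\mathcal{K}_0$-orbits in $A$ has no fixed orbit, since a fixed orbit $\mathcal{K}_0 a$ would force $g_0 a \in \mathcal{K}_0 a$ and hence $g_0 \in \mathcal{K}_0$, contradicting the choice of $g_0$. So $A$ splits $\mathcal{K}_0$-equivariantly as $A_1 \sqcup A_2$ with $g_0 A_1 = A_2$, and similarly $B = B_1 \sqcup B_2$. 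Define $Y := (Y_1 \setminus A_2) \cup B_1$; it remains $\mathcal{K}_0$-invariant and equivalent to $X$, and a short symmetric-difference calculation using $g_0(A_2 \cup B_1) = A_1 \cup B_2$ together with $g_0 Y_1 + Y_1^{\ast} = A \sqcup B$ gives $g_0 Y = Y^{\ast}$ exactly.

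To finish, any equivalent pair $U, V \in E(Y)$ can be reduced by a left translate to the case $V \in \{Y, Y^{\ast}\}$ and $U = gY$ or $gY^{\ast}$; equivalence forces $g \in \mathcal{K}_0$ (in which case $gY = Y$ and $gY^{\ast} = Y^{\ast}$) or $g \in \mathcal{K} \setminus \mathcal{K}_0$ (in which case $g = g_0 k$ for some $k \in \mathcal{K}_0$ and $gY = Y^{\ast}$, $gY^{\ast} = Y$), so $U = V$ in every case. The main delicate step is the no-fixed-orbit property of $g_0$ acting on the $\mathcal{K}_0$-orbits in $A$ (and in $B$), since this is what makes the equivariant splitting $A = A_1 \sqcup A_2$ possible; once this is in hand, the modification $(Y_1 \setminus A_2) \cup B_1$ and the subsequent verification of $g_0 Y = Y^{\ast}$ are routine manipulations of set operations under the $\mathcal{K}$-action.
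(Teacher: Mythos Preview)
Your proof is correct and complete; the construction of $Y_1=\mathcal{K}_0 X$, the splitting of $A$ and $B$ via the free involution of $g_0$ on $\mathcal{K}_0$--orbits, and the verification that $g_0Y=Y^{\ast}$ all go through exactly as you say.

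Your route differs from the paper's. The paper's stated approach is to pass to a standard $\mathcal{S}^{\infty}$--extension $\overline{G}$ and invoke Lemma~\ref{refinedgoodposition} there, then intersect back with $G$. You instead rerun the proof of Lemma~\ref{refinedgoodposition} directly inside $G$, observing that nothing in that argument actually uses finite generation of the ambient group. Your concern about the extension approach is legitimate: to apply Lemma~\ref{refinedgoodposition} in $\overline{G}$ one must know that $H$ has small commensuriser in $\overline{G}$, equivalently that $Comm_{\overline{G}}(H)\subset G$. This is provable via the Bass--Serre tree (an element $g\notin G$ commensurating $H$ would force a finite-index subgroup of $H$ into an edge stabiliser, and one can then argue with enclosing that $g\overline{X}$ cannot be equivalent to $\overline{X}^{(\ast)}$), but the paper's one-line sketch does not supply this, and your direct approach avoids the issue entirely. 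A further minor simplification in your version: you build the $\mathcal{K}_0$--invariant representative $Y_1$ by hand as $\mathcal{K}_0 X$, whereas Lemma~\ref{refinedgoodposition} appeals to an external result from \cite{NibloSageevScottSwarup} to first put $X$ in good position over $\mathcal{K}_0$; your more elementary step suffices because good position is not needed as an intermediate property here.
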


\begin{remark}
It is again automatic that $Y$ is in good position.
\end{remark}

As in the absolute case, it is easy to extend this to finite families of
almost invariant subsets of $G$. The result we obtain is the following.

\begin{lemma}
\label{refinedgoodpositioningroupsystems}Let $(G,\mathcal{S})$ be a group
system of finite type, let $H_{i}$ be a finitely generated subgroup of $G$,
for $1\leq i\leq n$, and let $X_{i}$ be a nontrivial $\mathcal{S}$--adapted
$H_{i}$--almost invariant subset of $G$, for $1\leq i\leq n$. Let
$\mathcal{X}$ denote the family $\{X_{i}\}_{i=1}^{n}$, and let $E(\mathcal{X}%
)$ denote $\{gX_{i},gX_{i}^{\ast}:g\in G,1\leq i\leq n\}$. Then each $X_{i}$
is equivalent to $Y_{i}$ such that the following conditions hold:

\begin{enumerate}
\item The $Y_{i}$'s are in good position.

\item If $\mathcal{Y}=\{Y_{i}\}_{i=1}^{n}$, and $U$ and $V$ are equivalent

elements of $E(\mathcal{Y})$, then either $U\ $and $V$ are equal, or $U$ is a
translate of $V$ or $V^{\ast}$ such that the stabilizer of $U$ has large commensuriser.
\end{enumerate}
\end{lemma}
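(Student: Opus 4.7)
The plan is to adapt the single-set refined good position result (the lemma immediately preceding) to finite families via an iterative merging argument, mirroring the extension of Lemma~2.10 of \cite{NibloSageevScottSwarup} from a single set to a finite family in the absolute setting.

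First I will apply the appropriate single-set result to each $X_i$ individually. If the stabiliser $H_i$ has small commensuriser in $G$, the preceding lemma yields $Y_i$ equivalent to $X_i$ such that equivalent elements of $E(Y_i):=\{gY_i, gY_i^{\ast}: g \in G\}$ are equal. If $H_i$ has large commensuriser, I will instead invoke the basic single-set good-position statement for group systems, obtained by applying Lemma~2.10 of \cite{NibloSageevScottSwarup} to the canonical extension $\overline{X_i}$ in a standard $\mathcal{S}^{\infty}$-extension $\overline{G}$ and pulling back via Lemma~\ref{extensionshavesamesmallcorner} (exactly as in the proof of Lemma~\ref{partialorderforsystems}). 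In either case $E(Y_i)$ is in good position.

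Next I will handle inter-index incompatibilities. If $E(\mathcal{Y})$ fails to be in good position, the violation must involve elements from two distinct orbits by Step~1: there are distinct $i, j$ and elements $U \in E(Y_i)$, $V \in E(Y_j)$ whose pair has two non-empty small corners. A short case analysis on which two of the four corners can simultaneously be small, using that $U, V$ are nontrivial (and hence $U, V, U^{\ast}, V^{\ast}$ are infinite), shows that the two small corners must be the ``symmetric difference'' pair $\{U\cap V^{\ast}, U^{\ast}\cap V\}$ or the ``agreement'' pair $\{U\cap V, U^{\ast}\cap V^{\ast}\}$; in either case $U$ is equivalent to $V$ or to $V^{\ast}$. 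Consequently $Y_j$ is equivalent to a $G$-translate of $Y_i$ or of $Y_i^{\ast}$, and I replace $Y_j$ by that translate. After this replacement $E(Y_j)=E(Y_i)$, so the two orbits merge; and since each merge strictly reduces the number of distinct $G$-orbits among $\{Y_i\}$, the process terminates after at most $n-1$ steps.

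Finally, to verify condition~(2), let $U, V$ be equivalent elements of the resulting $E(\mathcal{Y})$. By the good position established above, they cannot lie in distinct remaining orbits (that would reintroduce a two-small-corner violation), so both are translates of the same representative $Y_i$ (up to complementation). If the stabiliser of $Y_i$ has small commensuriser in $G$, Step~1 forces $U = V$; otherwise the stabiliser of $U$ is a conjugate of that of $Y_i$ and hence commensurable with it, and since having small commensuriser is a commensurability invariant, it also has large commensuriser, satisfying the second clause of~(2). The main obstacle will be the bookkeeping in the merging step: one must check that replacing $Y_j$ by a translate of $Y_i$ preserves both the per-orbit refinement from Step~1 and the good position of pairs involving other indices $k \neq i,j$. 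The former is automatic because the commensuriser class of $Y_j$'s stabiliser is unchanged, and the latter holds because the underlying set $E(Y_k) \cup E(Y_i)$ is unaffected by the choice of representative for the merged orbit.
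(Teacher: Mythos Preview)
Your overall strategy matches the paper's intended argument: apply the single-set refined good position result orbit-by-orbit, then merge orbits that interfere. The paper gives no detailed proof here, only the remark that ``as in the absolute case, it is easy to extend this to finite families,'' so you have correctly reconstructed the outline.

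There is, however, a gap in your verification of condition~(2). You claim that equivalent $U,V$ in the final $E(\mathcal{Y})$ cannot lie in distinct orbits because ``that would reintroduce a two-small-corner violation.'' But good position does not forbid a pair with two small corners; it only requires that when two corners are small, one of them is empty. If $U$ and $V$ are equivalent then $U\cap V^{\ast}$ and $U^{\ast}\cap V$ are both small, and good position merely forces $U\subset V$ or $V\subset U$, not $U=V$ or same orbit. A concrete instance: take $Y_1$ and $Y_2$ both associated to the same splitting, with $Y_1\subsetneq Y_2$ and $Y_2\setminus Y_1$ $H$--finite and nonempty. All translates are nested, so $E(\{Y_1,Y_2\})$ is already in good position and your Step~2 performs no merge; yet $Y_1\sim Y_2$ lie in distinct orbits, violating~(2).

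The fix is to change the trigger for merging in Step~2. Rather than merging only when good position fails, merge whenever there exist distinct indices $i,j$ with $E(Y_i)\neq E(Y_j)$ and $Y_j$ equivalent to some element of $E(Y_i)$; replace $Y_j$ by that element. This still terminates (the number of distinct orbits strictly decreases), still yields good position (any remaining cross-orbit pair with two small nonempty corners would in particular be equivalent, hence already merged), and now condition~(2) follows immediately: equivalent elements of $E(\mathcal{Y})$ must lie in a common orbit $E(Y_i)$, whereupon your Step~1 analysis (small versus large commensuriser of $H_i$) applies.
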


\begin{remark}
Although the restriction that the $H_{i}$'s be finitely generated is needed in
general, there is one situation where good position is easy to achieve even if
some $H_{i}$ is not finitely generated. One needs to add the condition that if
$H_{i}$ is not finitely generated, then $X_{i}$ is associated to a splitting
of $G$ over $H_{i}$. For then $X_{i}$ is equivalent to $Y_{i}$ whose
translates are nested, and so it is trivial that $Y_{i}$ is in good position.
\end{remark}

In \cite{NibloSageevScottSwarup}, the authors introduced the idea of very good
position, which is a much stronger condition than good position. The family
$\mathcal{X}=\{X_{i}\}_{i\in I}$ is in \textit{very good position} if whenever
$U$ and $V$ are elements of $E(\mathcal{X})$, then either the pair
$(U,V)\ $has no small corner or one of the corners is empty. They showed that
if the family $\mathcal{X}$ is finite, and $G$ and each $H_{i}$ is finitely
generated, it is possible to replace each $X_{i}$ by an equivalent set $Y_{i}$
such that the $Y_{i}$'s are in very good position. Again $\mathcal{S}^{\infty
}$--extensions allow us to generalise this to the setting of group systems of
finite type. The result is the following.

\begin{lemma}
\label{cangetvgpinsystems}Let $(G,\mathcal{S})$ be a group system of finite
type, let $H_{i}$ be a finitely generated subgroup of $G$, for $1\leq i\leq
n$, and let $X_{i}$ be a nontrivial $\mathcal{S}$--adapted $H_{i}$--almost
invariant subset of $G$, for $1\leq i\leq n$. Let $\mathcal{X}$ denote the
family $\{X_{i}\}_{i=1}^{n}$, and let $E(\mathcal{X})$ denote $\{gX_{i}%
,gX_{i}^{\ast}:g\in G,1\leq i\leq n\}$. Then each $X_{i}$ is equivalent to
$Y_{i}$ such that the following conditions hold:

\begin{enumerate}
\item The $Y_{i}$'s are in very good position.

\item If $\mathcal{Y}=\{Y_{i}\}_{i=1}^{n}$, and $U$ and $V$ are equivalent
elements of $E(\mathcal{Y})$, then either $U\ $and $V$ are equal, or $U$ is a
translate of $V$ or $V^{\ast}$ such that the stabilizer of $U$ has large commensuriser.
\end{enumerate}
\end{lemma}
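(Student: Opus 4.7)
The plan is to deduce this from the already-established finitely generated case in \cite{NibloSageevScottSwarup} by passing to a standard $\mathcal{S}^{\infty}$--extension $\overline{G}$ of $G$, exactly as in the proofs of Theorems~\ref{splittingsexist} and~\ref{splittingsarecompatible}. Since each $H_i$ is finitely generated, no $S_j \in \mathcal{S}^{\infty}$ is conjugate commensurable with a subgroup of any $H_i$, so Lemma~\ref{uniqueenlargement} applies and each $X_i$ admits a canonical extension $\overline{X_i}$ to a nontrivial $H_i$--almost invariant subset of the finitely generated group $\overline{G}$.

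First I would apply the very good position result of Niblo, Sageev, Scott and Swarup from \cite{NibloSageevScottSwarup}, together with the refinement in Lemma~\ref{refinedgoodpositioningroupsystems}, to the finite family $\{\overline{X_i}\}_{i=1}^n$ over $\overline{G}$. This yields $H_i$--almost invariant subsets $\overline{Y_i}$ of $\overline{G}$, each equivalent to $\overline{X_i}$, such that the family $E(\overline{\mathcal{Y}}) = \{g\overline{Y_i}, g\overline{Y_i}^{\,\ast} : g \in \overline{G}, 1\le i \le n\}$ is in very good position and satisfies condition~(2) in terms of $\overline{G}$.

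Next I would set $Y_i := \overline{Y_i} \cap G$. Since $\overline{X_i}$ and $\overline{Y_i}$ are equivalent over $\overline{G}$, restricting to $G$ shows $X_i$ and $Y_i$ are equivalent over $G$; in particular each $Y_i$ is nontrivial and $\mathcal{S}$--adapted (Lemma~\ref{XbarenclosedbyvimpliesthatXisadapted} applied to $V_0$). Moreover, because $\overline{Y_i}$ is enclosed by the vertex $V_0$ of the Bass--Serre tree $T$ whose stabiliser is $G$, the canonical extension of $Y_i$ back up to $\overline{G}$ is precisely $\overline{Y_i}$ (use Remark~\ref{extensioniscanonical} together with Lemma~\ref{uniqueenlargement}).

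Now I would transfer very good position from $\overline{G}$ down to $G$. For any $g \in G$ and indices $i,j$, Lemma~\ref{extensionshavesamesmallcorner} tells us that a small corner of the pair $(Y_i, gY_j)$ coincides with the corresponding small corner of $(\overline{Y_i}, g\overline{Y_j})$, and a non-small corner downstairs yields a non-small corner upstairs by the argument of Lemma~\ref{crossingofextensions}. Hence if the pair $(\overline{Y_i}, g\overline{Y_j})$ has either no small corner or an empty corner, the same holds for $(Y_i, gY_j)$. For $g \in \overline{G}-G$, part~(2) of Lemma~\ref{crossingofextensions} shows the extensions do not cross at all, so no new obstruction to very good position is introduced on the $G$--level. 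This gives condition~(1). Condition~(2) follows similarly: by Lemma~\ref{extensionsareequivalent}, equivalent elements of $E(\mathcal{Y})$ extend to equivalent elements of $E(\overline{\mathcal{Y}})$, and the dichotomy from the upstairs statement then translates to the downstairs one, using that commensurisers of finitely generated subgroups behave consistently between $G$ and $\overline{G}$.

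The main obstacle will be the passage from very good position upstairs to very good position downstairs, where one must ensure that intersections of corners with $G$ do not accidentally create new small but non-empty corners; this is precisely what Lemma~\ref{extensionshavesamesmallcorner} is designed to rule out, so the argument goes through provided one is careful to set up the hypothesis on $\mathcal{S}^{\prime} = \mathcal{S}^{\infty}$ so that all the $H_i$'s satisfy the non-commensurability assumption simultaneously.
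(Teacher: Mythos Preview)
Your approach for condition~(1) is essentially the paper's: pass to a standard $\mathcal{S}^{\infty}$--extension $\overline{G}$, apply Lemma~4.1 of \cite{NibloSageevScottSwarup} upstairs, and restrict. That part is fine (modulo the minor point that $\overline{Y_i}$ need only be \emph{equivalent} to the canonical extension of $Y_i$, not equal to it; but crossing and emptiness of corners depend only on equivalence classes, so this does not affect the argument).

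The gap is in condition~(2). You assert that applying \cite{NibloSageevScottSwarup} ``together with'' Lemma~\ref{refinedgoodpositioningroupsystems} to the $\overline{X_i}$'s yields $\overline{Y_i}$'s that are in very good position \emph{and} satisfy condition~(2) in $\overline{G}$. But neither cited result gives this: Lemma~\ref{refinedgoodpositioningroupsystems} only produces sets in \emph{good} position with the equivalence property, while Lemma~4.1 of \cite{NibloSageevScottSwarup} produces very good position with no control on equivalent translates. Replacing the former sets by the latter can destroy the equivalence property, so condition~(2) upstairs is not free; it requires exactly the argument you have omitted. The paper's order is different and this matters: it first applies Lemma~\ref{refinedgoodpositioningroupsystems} to the $X_i$'s \emph{in $G$}, then extends, applies very good position upstairs, restricts to get the $Y_i$'s, and only then verifies condition~(2) by comparing each equivalent pair $U,V\in E(\mathcal{Y})$ back to the pre-arranged $X_i$'s (forcing $gX_i=kX_j$ when $i\neq j$), together with a direct nesting argument in the small-commensuriser case showing that the stabiliser of $Y_i$ coincides with $\mathcal{K}_0$ and hence with that of $X_i$. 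Your final remark that ``commensurisers behave consistently between $G$ and $\overline{G}$'' does not recover this: a priori $\mathrm{Comm}_{\overline{G}}(K_i)$ can be strictly larger than $\mathrm{Comm}_G(K_i)$, so ``large commensuriser upstairs'' does not obviously descend.
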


\begin{proof}
By applying Lemma \ref{refinedgoodpositioningroupsystems}, we can arrange that
the $X_{i}$'s satisfy conclusions 1) and 2) of that lemma.

Now consider a finitely generated standard $\mathcal{S}^{\infty}$--extension
$\overline{G}$ of $G$. We have a canonical extension $\overline{U}$ for each
element $U$ of $E(\mathcal{X})$ and these extensions are equivariant under the
action of $G$. Lemma 4.1 of \cite{NibloSageevScottSwarup} tells us that there
are subgroups $\overline{K_{i}}$ of $\overline{G}$ and $\overline{K_{i}}%
$--almost invariant subsets $\overline{Y_{i}}$ of $\overline{G}$ such that
$\overline{Y_{i}}$ is equivalent to $\overline{X_{i}}$, and the $\overline
{Y_{i}}$'s are in very good position in $\overline{G}$. If we let $Y_{i}$
denote the intersection $\overline{Y_{i}}\cap G$, and let $K_{i}$ denote the
intersection $\overline{K_{i}}\cap G$, then $Y_{i}$ is a $K_{i}$--almost
invariant subset of $G$ which is equivalent to $X_{i}$. Let $\mathcal{Y}%
=\{Y_{i}\}_{i=1}^{n}$, and let $E(\mathcal{Y})$ denote $\{gY_{i},gY_{i}^{\ast
}:g\in G,1\leq i\leq n\}$. Let $U$ and $V$ be elements of $E(\mathcal{Y})$,
and let $\overline{U}$ and $\overline{V}$ denote their canonical extensions to
$\overline{G}$. As $\overline{U}$ and $\overline{V}$ are in very good position
in $\overline{G}$, either the pair $(\overline{U},\overline{V})\ $has no small
corner or one of the corners is empty. In the first case, $\overline{U}$ and
$\overline{V}$ cross, so Lemma \ref{crossingofextensions} tells us that
$U\ $and $V$ also cross and hence the pair $(U,V)\ $has no small corner. In
the second case it is trivial that the corresponding corner of the pair
$(U,V)$ is also empty. As this holds for any $U$ and $V$ in $E(\mathcal{Y})$,
we have shown that the $Y_{i}$'s are in very good position. Thus the $Y_{i}$'s
satisfy conclusion 1) of Lemma \ref{cangetvgpinsystems}.

Next we show that the $Y_{i}$'s also satisfy conclusion 2) of Lemma
\ref{cangetvgpinsystems}.

Let $U$ and $V$ be equivalent elements of $E(\mathcal{Y})$ such that $U$ is
not a translate of $V$ or of $V^{\ast}$. Without loss of generality, we can
suppose that $U=gY_{i}$ and $V=kY_{j}$, with $i\neq j$. Thus $U$ is equivalent
to $gX_{i}$, and $V$ is equivalent to $kX_{j}$, so that $gX_{i}$ is equivalent
to $kX_{j}$. As the $X_{i}$'s satisfy conclusion 2) of Lemma
\ref{refinedgoodpositioningroupsystems}, it follows that $gX_{i}$ and $kX_{j}$
are equal, so that $U\ $and $V$ must also be equal. Hence the $Y_{i}$'s
satisfy conclusion 2) of Lemma \ref{cangetvgpinsystems}.

It remains to consider the case when $U$ is a translate of $V$ or of $V^{\ast
}$. Thus $U$ and $V$ are each translates of some $Y_{i}$ or $Y_{i}^{\ast}$.
Without loss of generality, we can suppose that $U$ is a translate of $Y_{i}$
and that $V$ is equal to $Y_{i}$ or to $Y_{i}^{\ast}$.

Suppose that $H_{i}$ has small commensuriser in $G$, and that there is $k$ in
$G$ such that $kY_{i}$ is equivalent to $Y_{i}$. The set $\mathcal{K}=\{g\in
G:gY_{i}$ is equivalent to $Y_{i}$ or $Y_{i}^{\ast}\}$ is contained in
$Comm_{G}(K_{i})=Comm_{G}(H_{i})$, and contains $k$. As we are assuming that
$H_{i}$ has small commensuriser in $G$, it follows that $\mathcal{K}$ contains
$K_{i}$ with finite index. Hence there is some positive power $k^{n}$ of $k$
such that $k^{n}$ lies in $K_{i}$, so that $k^{n}Y_{i}=Y_{i}$. As $kY_{i}$ is
equivalent to $Y_{i}$ and the $Y_{i}$'s are in very good position, we must
have $kY_{i}\subset Y_{i}$ or $Y_{i}\subset kY_{i}$. In the first case the
inclusions $Y_{i}=k^{n}Y_{i}\subset k^{n-1}Y_{i}\subset\ldots\subset$
$kY_{i}\subset Y_{i}$ show that $kY_{i}=Y_{i}$, and a similar argument applies
if $Y_{i}\subset kY_{i}$. Thus $K_{i}$ is equal to $\mathcal{K}_{0}=\{g\in
G:gY_{i}$ is equivalent to $Y_{i}\}$. As the $X_{i}$'s are also in very good
position, the same arguments show that $H_{i}$ is also equal to $\mathcal{K}%
_{0}$, so the natural map from $E(X_{i})$ to $E(Y_{i})$ which sends $gX_{i}$
to $gY_{i}$ is a bijection. It follows that $U\ $and $V$ must be equal in this case.

This completes the proof of Lemma \ref{cangetvgpinsystems}.
\end{proof}

Recently Lassonde \cite{Lassonde} extended Lemma 4.1 of
\cite{NibloSageevScottSwarup} to cover the case where some of the almost
invariant sets correspond to splittings over groups which need not be finitely
generated. Her result requires a sandwiching condition, as does the corrected
definition of an algebraic regular neighbourhood discussed in subsection
\ref{invertibleaisets}. However she uses a slightly more general version of
the condition which we state here.

\begin{definition}
\label{defnofsandwiching}Let $\mathcal{X}=\{X_{j}:j\in J\}$ be a family of
almost invariant subsets of a group $G$, which are in good position, and let
$\leq$ denote the partial order of almost inclusion on $E(\mathcal{X})$, the
collection of all translates of the $X_{j}$'s and their complements. We let
$E(X_{k})$ denote the collection of all translates of $X_{k}$ and of
$X_{k}^{\ast}$.

\begin{enumerate}
\item $X_{j}$ is \textsl{sandwiched by} $X_{k}$, if either $X_{j}$ crosses
every element of $E(X_{k})$, or there exist $U\ $and $V$ in $E(X_{k})$ such
that $U\leq X_{j}\leq V$.

\item The family $\mathcal{X}$ \textsl{satisfies sandwiching} if for each $j$
and $k$ in $J$, with $j\neq k$, we have $X_{j}$ is sandwiched by $X_{k}$.
\end{enumerate}
\end{definition}

\begin{remark}
\label{sandwichingholdsifallisfg}If $X_{j}$ and $X_{k}$ are almost invariant
subsets of a finitely generated group $G$ over finitely generated subgroups,
then it is always the case that $X_{j}$ is sandwiched by $X_{k}$, and $X_{k}$
is sandwiched by $X_{j}$. This was proved on page 72 of \cite{SS2} as part of
the proof of Proposition 5.7.
\end{remark}

Now we have the following generalization of the first part of Lemma
\ref{cangetvgpinsystems}, which is proved in a similar way.

\begin{lemma}
\label{canobtainverygoodposition}Let $(G,\mathcal{S})$ be a group system of
finite type and, for $1\leq i\leq n$, let $X_{i}$ be a nontrivial
$\mathcal{S}$--adapted $H_{i}$--almost invariant subset of $G$. Suppose
further that the collection $\mathcal{X}$ of all the $X_{i}$'s is in good
position and satisfies sandwiching, and that if $H_{i}$ is not finitely
generated, then $X_{i}$ is associated to a splitting of $G$ over $H_{i}$.

Let $E(\mathcal{X})$ denote the family of all translates by $G$ of the $X_{i}%
$'s and their complements. Then each $X_{i}$ is equivalent to a $K_{i}%
$--almost invariant subset $Y_{i}$ of $G$ such that the $Y_{i}$'s are in very
good position, i.e. the set $E(\mathcal{Y})$ of all translates by $G$ of all
the $Y_{i}$'s and their complements is in very good position.
\end{lemma}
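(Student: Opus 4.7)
The plan is to imitate the proof of Lemma~\ref{cangetvgpinsystems}, but to replace the appeal to Lemma~4.1 of \cite{NibloSageevScottSwarup} by Lassonde's extension \cite{Lassonde}, which allows some stabilizers to be non-finitely generated provided the corresponding almost invariant set is associated to a splitting and the family satisfies sandwiching. First I would pass to a standard $\mathcal{S}^{\infty}$--extension $\overline{G}$ of $G$, which is finitely generated. For each $i$ with $H_i$ finitely generated, I take $\overline{X_i}$ to be the canonical extension of $X_i$ to $\overline{G}$ supplied by Lemma~\ref{uniqueenlargement} and Lemma~\ref{extensionsexist}. For each $i$ with $H_i$ not finitely generated, the hypothesis gives a splitting $\sigma_i$ of $G$ over $H_i$ with associated almost invariant set equivalent to $X_i$; since $X_i$ is $\mathcal{S}$--adapted, Lemma~\ref{splittingisadaptediffa.i.setis} makes $\sigma_i$ itself $\mathcal{S}$--adapted, so each $S_j \in \mathcal{S}^{\infty}$ is conjugate into a vertex group of $\sigma_i$ and the amalgamations used to form $\overline{G}$ from $G$ combine with $\sigma_i$ to produce a splitting $\overline{\sigma_i}$ of $\overline{G}$ over $H_i$; I take $\overline{X_i}$ to be an $H_i$--almost invariant subset of $\overline{G}$ associated to $\overline{\sigma_i}$. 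In either case $\overline{X_i}$ is enclosed by the vertex $V_0$ of the Bass--Serre tree of $\overline{G}$ whose stabilizer is $G$, and $\overline{X_i} \cap G$ is equivalent to $X_i$.

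Next I would verify that $\overline{\mathcal{X}} = \{\overline{X_i}\}$ satisfies the hypotheses of Lassonde's lemma in $\overline{G}$. Good position transfers from $\mathcal{X}$ via the argument of Lemma~\ref{partialorderforsystems}: by Lemma~\ref{extensionshavesamesmallcorner}, a corner of $(\overline{X_i}, \overline{X_j})$ is small if and only if the corresponding corner of $(X_i, X_j)$ is small (and equal as sets in that case), so good position for $\mathcal{X}$ forces good position for $\overline{\mathcal{X}}$. For sandwiching between $\overline{X_j}$ and $\overline{X_k}$, consider translates $g\overline{X_k}$: when $g \in G$, Lemma~\ref{crossingofextensions}(1) transfers crossing both ways, and the interval condition $U \leq X_j \leq V$ witnessing sandwiching of $X_j$ by $X_k$ in $G$ extends to a corresponding condition in $\overline{G}$ by the small-corner comparison of Lemma~\ref{extensionshavesamesmallcorner}; when $g \in \overline{G} - G$, Lemma~\ref{crossingofextensions}(2) precludes crossing, and since $g\overline{X_k}$ is enclosed by $gV_0 \neq V_0$ while $\overline{X_j}$ is enclosed by $V_0$, one produces $U \leq \overline{X_j} \leq V$ directly from translates of $\overline{X_k}$ lying on opposite sides of the edge separating $V_0$ from $gV_0$. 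The remaining hypothesis of Lassonde, that every non-finitely generated stabilizer corresponds to a splitting, holds by construction.

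Finally, Lassonde's lemma supplies $\overline{K_i}$--almost invariant subsets $\overline{Y_i}$ of $\overline{G}$ equivalent to $\overline{X_i}$ with $\overline{\mathcal{Y}} = \{\overline{Y_i}\}$ in very good position. Setting $Y_i = \overline{Y_i} \cap G$ and $K_i = \overline{K_i} \cap G$, exactly as in the proof of Lemma~\ref{cangetvgpinsystems}, the set $Y_i$ is a $K_i$--almost invariant subset of $G$ equivalent to $X_i$ by Lemma~\ref{extensionsareequivalent} in the finitely generated case, and directly from the splitting otherwise. Very good position of $\mathcal{Y}$ in $G$ then follows from the transfer argument of Lemma~\ref{cangetvgpinsystems}: for any $g \in G$, the pair $(Y_i, gY_j)$ has an empty corner or no small corner precisely when the corresponding assertion holds for $(\overline{Y_i}, g\overline{Y_j})$, by Lemmas~\ref{crossingofextensions} and~\ref{extensionshavesamesmallcorner}. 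The principal obstacle will be the simultaneous handling of finitely and non-finitely generated stabilizers when verifying sandwiching of $\overline{\mathcal{X}}$, which requires blending the canonical extension formalism with the tree-theoretic geometry of the standard extension; this is precisely why the sandwiching hypothesis is built into the statement rather than being derivable automatically as in Remark~\ref{sandwichingholdsifallisfg}.
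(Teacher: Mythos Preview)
Your proposal is correct and follows essentially the same route as the paper: pass to a finitely generated standard $\mathcal{S}^{\infty}$--extension $\overline{G}$, extend the $X_i$'s, invoke Lassonde's result there, and intersect back with $G$. The paper's proof is terser and organized slightly differently: it speaks of ``canonical extensions'' for all $X_i$ without separating out the non-finitely generated case, then first puts the extensions $\overline{X_1},\ldots,\overline{X_k}$ with finitely generated stabilizers into very good position via Lemma~\ref{cangetvgpinsystems} before applying Lassonde's Corollary~11.4 to the full family. You omit this preliminary step and instead are more explicit than the paper about two points the paper leaves tacit---how to form $\overline{X_i}$ when $H_i$ is not finitely generated (via the induced splitting of $\overline{G}$), and why sandwiching transfers to $\overline{\mathcal{X}}$ in $\overline{G}$.
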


\begin{remark}
The condition of good position for the $X_{i}$'s is needed in order to be able
to define sandwiching. But the discussion of good position above shows that
this condition is easy to achieve so long as, for each $i$, either $H_{i}$ is
finitely generated, or $X_{i}$ is associated to a splitting.
\end{remark}

\begin{proof}
Consider a finitely generated standard $\mathcal{S}^{\infty}$--extension
$\overline{G}$ of $G$. As before, we have a canonical extension $\overline{U}$
for each element $U$ of $E(\mathcal{X})$ and these extensions are equivariant
under the action of $G$. We let $\overline{E}(\mathcal{X})$ denote the family
of all these extensions. Lemma \ref{extensionshavesamesmallcorner} tells us
that if $U$ and $V$ are sets in $E(\mathcal{X})$ with a small corner $C$, then
the corresponding corner of the pair $(\overline{U},\overline{V})$ is equal to
$C$. In particular, if a corner of the pair $(U,V)$ is empty, then the
corresponding corner of the pair $(\overline{U},\overline{V})$ is also empty.
This immediately implies that the $\overline{X_{i}}$'s are also in good
position. By re-indexing the $X_{i}$'s if needed, we can arrange that there is
an integer $k$ such that $H_{i}$ is finitely generated whenever $i\leq k$, and
$H_{i}$ is not finitely generated whenever $i>k$. By the first part of Lemma
\ref{cangetvgpinsystems}, after replacing each $\overline{X_{i}}$, $1\leq
i\leq k$, by an equivalent almost invariant set, we can arrange that
$\overline{X_{1}},\ldots,\overline{X_{k}}$ are in very good position. Now
Corollary 11.4 of Lassonde's paper \cite{Lassonde} tells us that by replacing
each $\overline{X_{i}}$, $1\leq i\leq n$, by an equivalent almost invariant
set $\overline{Y_{i}}$, we can arrange that $\overline{E}(\mathcal{X})$ is in
very good position. It follows that each $X_{i}$ is equivalent to the almost
invariant subset $Y_{i}=\overline{Y_{i}}\cap G$ of $G$ and that the $Y_{i}$'s
are in very good position, as required.
\end{proof}

The following relative version of Lemma \ref{canobtainverygoodposition} will
also be useful.

\begin{lemma}
\label{canobtainvgprelalreadyvgpforsystems}Let $(G,\mathcal{S})$ be a group
system of finite type and, for $1\leq i\leq n$, let $X_{i}$ be a nontrivial
$\mathcal{S}$--adapted $H_{i}$--almost invariant subset of $G$. Suppose
further that the collection of all the $X_{i}$'s is in good position and
satisfies sandwiching, and that if $H_{i}$ is not finitely generated, then
$X_{i}$ is associated to a splitting of $G$ over $H_{i}$.

If the family $X_{1},\ldots,X_{n-1}$ is in very good position, then $X_{n}$ is
equivalent to an almost invariant subset $Y_{n}$ of $G$ such that the family
$X_{1},\ldots,X_{n-1},Y_{n}$ is in very good position.
\end{lemma}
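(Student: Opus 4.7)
My plan is to mimic the proof of Lemma~\ref{canobtainverygoodposition} and reduce to the finitely generated setting via a standard $\mathcal{S}^{\infty}$-extension. I would first take a finitely generated standard $\mathcal{S}^{\infty}$-extension $\overline{G}$ of $G$ and the canonical extensions $\overline{X_1}, \ldots, \overline{X_n}$, using (as in the proof of Lemma~\ref{canobtainverygoodposition}) the extension coming from the associated splitting for any $X_i$ whose stabilizer $H_i$ fails to be finitely generated. By Lemmas~\ref{extensionshavesamesmallcorner} and~\ref{crossingofextensions}, the extensions inherit good position and sandwiching from the $X_i$'s, once one notes that any translate $g\overline{X_i}$ with $g \in \overline{G} - G$ is enclosed by a vertex $gV_0 \neq V_0$ of the Bass--Serre tree $T$ of the defining amalgamation, and hence is nested with any set enclosed by a different vertex.

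The key intermediate step is to show that $\overline{X_1}, \ldots, \overline{X_{n-1}}$ are already in very good position in $\overline{G}$. For a pair $(U, V)$ of translates drawn from this subfamily there are two cases. If both translating elements lie in $G$, then very good position of $X_1, \ldots, X_{n-1}$ in $G$ together with Lemmas~\ref{extensionshavesamesmallcorner} and~\ref{crossingofextensions} produces the required dichotomy (no small corner, or one corner empty). Otherwise at least one translating element lies outside $G$, the corresponding enclosing vertices in $T$ are distinct, and $U$ and $V$ are nested along the geodesic between those vertices, so an empty corner exists automatically.

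At this stage I would invoke the finitely generated form of the present statement applied to $\overline{G}$: replace $\overline{X_n}$ by an equivalent almost invariant set $\overline{Y_n}$ so that $\overline{X_1}, \ldots, \overline{X_{n-1}}, \overline{Y_n}$ is in very good position in $\overline{G}$, leaving the other entries unchanged. This is a relative form of the very good position result of \cite{NibloSageevScottSwarup}, as extended by Lassonde \cite{Lassonde}. Setting $Y_n = \overline{Y_n} \cap G$ produces an almost invariant subset of $G$ whose symmetric difference with $X_n$ is contained in $\overline{X_n} \triangle \overline{Y_n}$ and so is $H_n$-finite, so $Y_n$ is equivalent to $X_n$. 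To confirm that $X_1, \ldots, X_{n-1}, Y_n$ is in very good position in $G$, it suffices to check the mixed pairs $(gX_i, g'Y_n)$ and $(gY_n, g'Y_n)$ with $g, g' \in G$; Lemma~\ref{extensionsareequivalent} shows the canonical extension of $Y_n$ is equivalent to $\overline{Y_n}$, so both the crossing statements (via Lemma~\ref{crossingofextensions}) and the empty-corner statements (via Lemma~\ref{extensionshavesamesmallcorner}) transfer between the two groups.

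The main obstacle is confirming that the finitely generated relative result of \cite{NibloSageevScottSwarup}--\cite{Lassonde}, which alters only the last almost invariant set while preserving those already in very good position, is actually available in the form I would use. The constructions in those papers are inductive, so one expects this to follow from a careful reading, but it must be stated explicitly. A secondary technical point is the transfer of sandwiching to $\overline{G}$ for translations by elements of $\overline{G} - G$; this again reduces to the tree structure but requires a brief case analysis of how the partial order $\leq$ relates translates enclosed by different vertices of $T$.
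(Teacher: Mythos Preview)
Your proposal is correct and matches the paper's approach exactly: the paper reduces Lemma~\ref{canobtainvgprelalreadyvgpforsystems} to the finitely generated case via a standard $\mathcal{S}^{\infty}$--extension, and then states and proves that finitely generated relative result separately as Lemma~\ref{canobtainvgprelalreadyvgp}. What you flag as the ``main obstacle'' is precisely the content of that next lemma, which the paper establishes using the cubings $L_n$ and $C_{n-1}$ from \cite{Lassonde} and \cite{NibloSageevScottSwarup} together with Lassonde's extension of basic ultrafilters; your secondary technical points about transferring very good position and sandwiching to $\overline{G}$ are handled in the paper by the same earlier lemmas you cite.
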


As for the previous lemmas, one can use a finitely generated standard
$\mathcal{S}^{\infty}$--extension of $G$, to reduce the proof of Lemma
\ref{canobtainvgprelalreadyvgp} to the proof of the following relative version
of Corollary 11.4 of \cite{Lassonde}.

\begin{lemma}
\label{canobtainvgprelalreadyvgp}Let $G$ be a finitely generated group and,
for $1\leq i\leq n$, let $X_{i}$ be a nontrivial $H_{i}$--almost invariant
subset of $G$. Suppose further that the collection of all the $X_{i}$'s is in
good position and satisfies sandwiching, and that if $H_{i}$ is not finitely
generated, then $X_{i}$ is associated to a splitting of $G$ over $H_{i}$.

If the family $X_{1},\ldots,X_{n-1}$ is in very good position, then $X_{n}$ is
equivalent to an almost invariant subset $Y_{n}$ of $G$ such that the family
$X_{1},\ldots,X_{n-1},Y_{n}$ is in very good position.
\end{lemma}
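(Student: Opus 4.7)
My plan is to adapt the strategy of Corollary 11.4 of \cite{Lassonde} to the relative setting in which $X_1,\ldots,X_{n-1}$ are fixed and only $X_n$ is modified. Since $X_1,\ldots,X_{n-1}$ are already in very good position, every pair of translates of these sets or their complements already has all small corners empty. Consequently, the only pairs in $E(\mathcal{X})$ that can violate very good position are those in which at least one member is a translate of $X_n$ or $X_n^\ast$, so it suffices to produce $Y_n$ equivalent to $X_n$ such that every pair of the form $(gY_n,X_i)$ (for $1\le i\le n-1$, $g\in G$) and every pair $(gY_n,hY_n)$ has empty small corners.

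The first step is to enumerate the bad pairs and show that, up to the obvious equivariance, they are finite in number. For $1\le i\le n-1$ with $H_i$ finitely generated, Lemma 1.11 of \cite{SS1} implies that the set of $g\in G$ for which $(gX_n,X_i)$ is not nested lies in a finite union of double cosets $H_igH_n$; when $H_i$ is not finitely generated, $X_i$ is associated to a splitting of $G$ over $H_i$, its translates are nested, and an analogous finiteness statement follows from the Bass-Serre tree structure. Self-crossings of $X_n$ with its translates are controlled in the same way, by finitely many $H_n$-$H_n$ double cosets. Good position guarantees that each bad pair has at most one small nonempty corner $C$, and eliminating $C$ requires either adding $C$ to or removing $C$ from the appropriate translate of $X_n$.

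The second step is to perform all these modifications simultaneously and equivariantly on $X_n$ itself. For each bad double coset $H_igH_n$ (or $H_ngH_n$), I would translate the required adjustment back from $gX_n$ to $X_n$, producing a signed $H_n$-finite subset $D_g$ of $G$. Let $F$ denote the union of the $H_n$-translates of these $D_g$'s, taken over a finite set of double coset representatives. Then $F$ is $H_n$-invariant and $H_n$-finite, so $Y_n=X_n\triangle F$ is $H_n$-almost invariant and equivalent to $X_n$, and by construction every originally bad pair involving a translate of $X_n$ now has its previously small nonempty corner made empty.

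The hard part will be verifying consistency: the signed modifications $D_g$ for different bad double cosets must combine into a single coherent $F$ without creating any new small nonempty corners, and in the case that $H_n$ is not finitely generated (so $X_n$ is associated to a splitting) the nestedness of translates of $Y_n$ must be preserved. Here the sandwiching hypothesis is essential, exactly as in \cite{Lassonde}: for each bad pair $(gX_n,V)$, sandwiching either forces $gX_n$ to cross every element of $E(V)$ (so no modification at $g$ is needed) or supplies elements $U_1\le gX_n\le U_2$ of $E(V)$, localizing the required adjustment within a well-defined region of $G$. Because $X_1,\ldots,X_{n-1}$ are already in very good position, the partial order of almost inclusion on their translates is well-behaved, and the localizing regions arising from distinct bad pairs nest compatibly. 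Showing that these localized modifications patch into a globally consistent, equivariant $F$ is the technical core of the argument and is the place where Lassonde's sandwiching machinery carries the proof; once it is in hand, the remaining verification that $X_1,\ldots,X_{n-1},Y_n$ is in very good position is immediate from the construction.
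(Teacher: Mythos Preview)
Your approach is genuinely different from the paper's, and the point you flag as ``the hard part'' is not established. You propose to modify $X_n$ directly by a symmetric difference with an $H_n$--finite set $F$ assembled from finitely many corner adjustments. The difficulty is exactly the consistency step you defer: fixing the small corner of one pair $(gX_n,X_i)$ alters $X_n$ globally, which in turn changes every other pair $(hY_n,X_j)$ and, worse, every self-pair $(gY_n,hY_n)$ where both sides move at once. You assert that sandwiching and ``Lassonde's machinery'' resolve the potential conflicts, but Lassonde's Corollary 11.4 is not proved by direct corner surgery; it is proved via the cubed complex $\Lambda$ built from the partial order of almost inclusion on $E$, and the very good position sets are read off from hyperplanes in the component $L$ of $\Lambda$. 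So the appeal to Lassonde does not supply the missing consistency argument for your surgery scheme, and without it the construction of a single coherent $F$ is a gap.

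The paper's proof avoids this problem entirely by working inside cubings. It compares the Sageev cubing $C_m$ built from $(E_m,\subset)$ with the Lassonde cubing $L_m$ built from $(E_m,\leq)$, for $m=n-1,n$, and observes that there is a natural $G$--equivariant projection $L_n\to L_{n-1}$. Since $X_1,\ldots,X_{n-1}$ are already in very good position, one has $L_{n-1}=C_{n-1}$. The hyperplanes of $L_n$ determine, for any choice of base vertex $w$, sets $Y_1,\ldots,Y_n$ in very good position with $Y_i$ equivalent to $X_i$. The decisive step is to choose $w$ to lie over the basic vertex $V_e\in C_{n-1}=L_{n-1}$; Lassonde's Theorem 8.11 guarantees such a lift exists. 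With this basepoint, the hyperplane description forces $Y_i=X_i$ for $i\leq n-1$, and one sets $Y_n$ to be the remaining set. Thus the paper never has to reconcile competing local modifications: the global structure of the cubing encodes very good position automatically, and the only work is the basepoint choice that pins down the first $n-1$ sets exactly.
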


\begin{proof}
For the first part of our argument, we will use only the fact that the family
$X_{1},\ldots,X_{n}$ is in good position. For each $m\leq n$, let $E_{m}$
denote the set of all translates of the $X_{i}$'s and their complements, for
$1\leq i\leq m$. As the $X_{i}$'s are in good position, there are two partial
orders on $E_{m}$. One is the partial order of inclusion, the other is the
partial order $\leq$ of almost inclusion. Let $K_{m}$ denote the cubed complex
constructed by Sageev from the partially ordered set $(E_{m},\subset)$. The
vertices of $K_{m}$ are the ultrafilters of $(E_{m},\subset)$. Let $C_{m}$
denote the component of $K_{m}$ which contains all the basic vertices of
$K_{m}$. Sageev showed that $C_{m}$ is $CAT(0)$. Also let $\Lambda_{m}$ denote
the cubed complex constructed in \cite{Lassonde} from the partially ordered
set $(E_{m},\leq)$. The vertices of $\Lambda_{m}$ are the ultrafilters of
$(E_{m},\leq)$. There is a natural embedding of $\Lambda_{m}$ in $K_{m}$, and
we let $L_{m}$ denote the component of $\Lambda_{m}$ which equals $C_{m}%
\cap\Lambda_{m}$.

We now compare the cubings $C_{n-1}$ and $C_{n}$. Let $V$ be an ultrafilter on
$(E_{n},\subset)$. Then $V\cap E_{n-1}$ is clearly an ultrafilter on
$(E_{n-1},\subset)$. This defines a map $\varphi$ from the vertices of $K_{n}$
to those of $K_{n-1}$. Now consider an edge $e$ of $K_{n}$, which joins
vertices $V$ and $V^{\prime}$. There is an element $W$ of $E_{n}$ such that
$V^{\prime}=V-\{W\}+\{W^{\ast}\}$. If $W$ is a translate of $X_{n}$ or of
$X_{n}^{\ast}$, then $\varphi(V)=\varphi(V^{\prime})$ and we define
$\varphi(e)$ to equal the vertex $\varphi(V)$. If $W$ is not a translate of
$X_{n}$ or of $X_{n}^{\ast}$, then $\varphi(V)$ and $\varphi(V^{\prime})$ are
distinct and $\varphi(V^{\prime})=\varphi(V)-\{W\}+\{W^{\ast}\}$. Thus
$\varphi(V)$ and $\varphi(V^{\prime})$ are joined by an edge $f$. We define
$\varphi(e)$ to equal $f$. Having extended $\varphi$ to the $1$--skeleton of
$K_{n}$, it is now trivial to extend $\varphi$ to a map $K_{n}\rightarrow
K_{n-1}$. This map sends $C_{n}$ to $C_{n-1}$, as it sends basic vertices of
$K_{n}$ to basic vertices of $K_{n-1}$. Clearly the induced map is $G$--equivariant.

As $L_{m}$ is naturally contained in $C_{m}$, for each $m$, the map
$C_{n}\rightarrow C_{n-1}$ induces a map $L_{n}\rightarrow L_{n-1}$ which is
also $G$--equivariant.

So far our discussion has just used the assumption that the $X_{i}$'s are in
good position. As the family $X_{1},\ldots,X_{n-1}$ is assumed to be in very
good position, it follows that $L_{n-1}=C_{n-1}$.

Now we consider the cubing $L_{n}$. As in \cite{NibloSageevScottSwarup}, there
are hyperplanes $\mathcal{H}_{1},\ldots,\mathcal{H}_{n}$ in $L_{n}$ which,
after choosing a base vertex $w$ of $L_{n}$, determine almost invariant sets
$Y_{1},\ldots,Y_{n}$ in $G$ such that $Y_{i}$ is equivalent to $X_{i}$, and
the $Y_{i}$'s are in very good position. The construction is to pick one side
$\mathcal{H}_{i}^{+}$ of $\mathcal{H}_{i}$, and let $Y_{i}=\{g\in
G:gw\in\mathcal{H}_{i}^{+}\}$. The $Y_{i}$'s are in very good position for any
choice of $w$, and $Y_{i}$ is equivalent to $X_{i}$ or $X_{i}^{\ast}$. By
replacing $\mathcal{H}_{i}^{+}$ by the other side of $\mathcal{H}_{i}$ if
needed, we can arrange that $Y_{i}$ is equivalent to $X_{i}$, for each $i$. We
claim that we can choose $w$ to arrange that $Y_{i}=X_{i}$, for $1\leq i\leq
n-1$.

Recall that $\mathcal{H}_{i}$ is defined as dual to the set of all edges of
$L_{n}$ which exit $X_{i}$. Thus if $i<n$, there is a corresponding hyperplane
$\mathcal{K}_{i}$ in $L_{n-1}$ which must also determine $Y_{i}$ in the same
way. But $L_{n-1}=C_{n-1}$, and in $C_{n-1}$ we can choose our base vertex to
be the basic vertex $V_{e}$ determined by the identity element $e$ of $G$.
Then $\{g\in G:gV_{e}\in\mathcal{K}_{i}^{+}\}$ equals $X_{i}$ or $X_{i}^{\ast
}$. By replacing $\mathcal{K}_{i}^{+}$ by the other side of $\mathcal{K}_{i}$
if needed, we can arrange that $\{g\in G:gV_{e}\in\mathcal{K}_{i}^{+}\}=X_{i}%
$, for each $i<n$. In the proof of Theorem 8.11 of \cite{Lassonde}, Lassonde
showed how a basic ultrafilter in $(E_{n-1},\leq)$ can be extended to an
ultrafilter in $(E_{n},\leq)$. It follows that there is a vertex $w$ of
$L_{n}$ which maps to the vertex $V_{e}$ of $L_{n-1}$. Then $\{g\in
G:gw\in\mathcal{H}_{i}^{+}\}=X_{i}$, for each $i<n$. Thus, for this choice of
$w$, we have $Y_{i}=X_{i}$, for $1\leq i\leq n-1$, as claimed.
\end{proof}

An immediate and surprising consequence of the preceding lemma is that one can
put infinite families of almost invariant sets in very good position. For
simplicity, we state this in the case when $G\ $is finitely generated, but the
natural analogue holds if $(G,\mathcal{S})$ is a group system of finite type,
and we consider only almost invariant subsets of $G\ $which are $\mathcal{S}$--adapted.

\begin{corollary}
Let $G$ be a finitely generated group with finitely generated subgroups
$H_{i}$, $i\geq1$. For each $i\geq1$, let $Y_{i}$ be a nontrivial $H_{i}%
$--almost invariant subset of $G$. Then $Y_{i}$ is equivalent to an almost
invariant subset $Z_{i}$ of $G$ such that the family of all the $Z_{i}$'s,
$i\geq1$, is in very good position.
\end{corollary}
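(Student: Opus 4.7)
The plan is to proceed by induction on $n$, constructing $Z_1, Z_2, \ldots$ one at a time so that the finite family $\{Z_1, \ldots, Z_n\}$ is in very good position at every stage. The key observation is that very good position is a pairwise condition on $E(\mathcal{Z}) = \{gZ_i, gZ_i^{\ast} : g \in G,\ i \geq 1\}$, and every such pair involves only two of the $Z_i$'s, so it lies in $E(\{Z_1,\ldots,Z_n\})$ for some finite $n$. Thus maintaining very good position through the induction automatically gives very good position for the whole infinite family.

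The base case is immediate: apply Lemma 4.1 of \cite{NibloSageevScottSwarup} to $Y_1$ alone to get an equivalent $Z_1$ in very good position.

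For the inductive step, assume $Z_1, \ldots, Z_{n-1}$ have been constructed in very good position. I would first use Proposition 2.14 of \cite{SS1} on $Y_n$ to replace it by an equivalent $Y_n'$ whose own family of translates and complements is in good position. Now split into cases. \emph{Case 1:} some translate $gY_n'$ is equivalent to some $hZ_i^{(\ast)}$ with $i<n$. Then set $Z_n := g^{-1}hZ_i^{(\ast)}$; this is equivalent to $Y_n'$, hence to $Y_n$, and $Z_n$ is already an element of $E(\{Z_1,\ldots,Z_{n-1}\})$, so $E(\{Z_1,\ldots,Z_n\}) = E(\{Z_1,\ldots,Z_{n-1}\})$, which is in very good position by hypothesis. \emph{Case 2:} no translate of $Y_n'$ is equivalent to any element of $E(\{Z_1,\ldots,Z_{n-1}\})$. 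Then any pair from the combined family which could violate good position lies entirely within $E(\{Z_1,\ldots,Z_{n-1}\})$ or entirely within $E(\{Y_n'\})$, and both of these are already in good position, so the combined family $\{Z_1,\ldots,Z_{n-1},Y_n'\}$ is in good position. Since $G$ and every stabilizer is finitely generated, sandwiching holds automatically by Remark \ref{sandwichingholdsifallisfg}. The hypotheses of Lemma \ref{canobtainvgprelalreadyvgp} are therefore met, and applying it yields $Z_n$ equivalent to $Y_n'$ (and hence to $Y_n$) such that $\{Z_1,\ldots,Z_{n-1},Z_n\}$ is in very good position, completing the induction.

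The main obstacle is verifying good position of the combined family before invoking Lemma \ref{canobtainvgprelalreadyvgp}: inter-family equivalences between translates of $Y_n$ and previously placed $Z_i$'s could create pairs with two small non-empty corners. The case split isolates exactly this situation and defuses it by recycling an existing translate as $Z_n$, so that no actual modification of the earlier $Z_i$'s is ever needed.
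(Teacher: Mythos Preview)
Your proof is correct and follows exactly the route the paper intends: the corollary is stated as an immediate consequence of Lemma~\ref{canobtainvgprelalreadyvgp}, and the natural way to read this is precisely the induction you carry out. Your case split to secure good position of the combined family before invoking the lemma is the standard manoeuvre (it is the same trick described in the paper's discussion following Lemma~\ref{partialorderforsystems}), and the observation that very good position is a pairwise condition, so the infinite family inherits it from the finite stages, is the point that makes the corollary work.
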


Let $H$ denote a finitely generated subgroup of a finitely generated group
$G$, and let $\Gamma(G)$ denote the Cayley graph of $G$ with respect to some
finite generating set. If $X$ is a $H$--almost invariant subset of $G$, then
$H\backslash X$ is an almost invariant subset of $H\backslash G$, and hence is
a vertex set of $H\backslash\Gamma(G)\ $with finite coboundary. As
$H\backslash\Gamma(G)$ has only countably many finite collections of edges, it
follows that $G$ has only countably many $H$--almost invariant subsets. Now a
finitely generated group $G$ has only countably many finitely generated
subgroups. Combining this information tells us that $G$ has only countably
many almost invariant subsets which are over finitely generated subgroups.
Thus we have the following consequence.

\begin{corollary}
Let $G$ be a finitely generated group. For each equivalence class of
nontrivial almost invariant subsets of $G$ which are over finitely generated
subgroups of $G$ one can choose a representative in such a way that the entire
family of chosen sets is in very good position.
\end{corollary}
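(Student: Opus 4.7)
The plan is to reduce the statement to the preceding corollary by verifying that the collection of equivalence classes in question is countable, and then applying the countable very-good-position result to a choice of representatives.

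First I would establish countability of the family of equivalence classes. Since $G$ is finitely generated, it has only countably many finitely generated subgroups (each specified by a finite tuple drawn from $G$). For each finitely generated subgroup $H$ of $G$, fix a finite generating set of $G$ and the associated Cayley graph $\Gamma(G)$. If $X$ is an $H$--almost invariant subset of $G$, then $H\backslash X$ is a vertex set in the locally finite graph $H\backslash\Gamma(G)$ whose coboundary is finite, and $H\backslash X$ is determined up to complementation by this finite edge set. Since $H\backslash\Gamma(G)$ has only countably many finite sets of edges, there are only countably many $H$--almost invariant subsets of $G$. Summing over the countably many finitely generated subgroups $H$ yields only countably many almost invariant subsets of $G$ over finitely generated subgroups, and hence only countably many equivalence classes of such sets.

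Next, for each such equivalence class choose one representative. This gives a (finite or) countably infinite family $\{Y_i\}_{i\geq 1}$ of nontrivial almost invariant subsets of $G$ over finitely generated subgroups, one from each equivalence class. By the preceding corollary, each $Y_i$ is equivalent to an almost invariant set $Z_i$ such that the entire family $\{Z_i\}_{i\geq 1}$ is in very good position. Since $Z_i \thicksim Y_i$, the set $Z_i$ lies in the same equivalence class as $Y_i$, so $\{Z_i\}$ is a system of representatives of the given equivalence classes, and this system is in very good position.

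There is essentially no obstacle once countability is verified; the substance of the argument is packaged into the preceding corollary (which in turn rests on Lemma \ref{canobtainvgprelalreadyvgp} and its inductive/limit use). The only point to be careful about is that sandwiching is automatic in this finitely generated setting by Remark \ref{sandwichingholdsifallisfg}, so the hypothesis of the preceding corollary is satisfied by the chosen representatives without further work.
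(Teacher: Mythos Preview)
Your proposal is correct and follows essentially the same approach as the paper: the paper establishes countability of almost invariant subsets over finitely generated subgroups in the paragraph immediately preceding this corollary (via the finite-coboundary argument in $H\backslash\Gamma(G)$), and then the corollary is stated as an immediate consequence of the preceding countable-family corollary. One minor imprecision: $H\backslash X$ is not determined \emph{up to complementation} by its coboundary (removing a finite edge set may yield more than two components), but there are only finitely many vertex sets with a given finite coboundary, so countability still follows.
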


\section{Algebraic regular neighbourhoods\label{algregnbhds}}

The theory of algebraic regular neighbourhoods of families of almost invariant
subsets of a group was first developed by Scott and Swarup in \cite{SS2}. In
this section, we will give a slightly modified approach to developing the
theory, which helps in the extension of the ideas of \cite{SS2} to the setting
of group systems. The main difference is that the original construction in
\cite{SS2} becomes the new definition, and the original definition in
\cite{SS2} becomes a statement of the basic properties of algebraic regular
neighbourhoods. To simplify things for the reader, we will start at the
beginning, and use results from \cite{SS2} as little as possible. We think
this gives a clearer and simpler approach to the theory of algebraic regular
neighbourhoods. Later in the section, we will use very good position to
simplify some of the original proofs of the basic results. In particular, we
state Proposition \ref{verygoodpositioncubingalsoyiledsalgregnbhd} which shows
how to construct algebraic regular neighbourhoods using very good position.

Before we go any further, we need to discuss the idea of a pretree. The
vertices of a tree possess a natural idea of betweenness. The idea of a
pretree formalises this. A \textit{pretree} consists of a set $P$ together
with a ternary relation on $P$ denoted $xyz$ which one should think of as
meaning that $y$ is strictly between $x$ and $z$, so that $y$ is distinct from
$x$ and $y$. The relation should satisfy the following four axioms:

\begin{itemize}
\item (T0) If $xyz$, then $x\neq z$.

\item (T1) $xyz$ implies $zyx$.

\item (T2) $xyz$ implies not $xzy$.

\item (T3) If $xyz$ and $w\neq y$, then $xyw$ or $wyz$.
\end{itemize}

A pretree is said to be \textit{discrete}, if, for any pair $x$ and $z$ of
elements of $P$, the set $\{y\in P:xyz\}$ is finite. Clearly, the vertex set
of any simplicial tree forms a discrete pretree with the induced idea of
betweenness. It is a standard result that a discrete pretree $P$ can be
embedded in a natural way into the vertex set of a simplicial tree $T$ so that
the notion of betweenness is preserved. We briefly describe the construction
of $T$ following Bowditch's papers \cite{B1} and \cite{B3}. For the proofs,
see section 2 of \cite{B1} and section 2 of \cite{B3}. These are discussed in
more detail in \cite{AN} and \cite{B2}. For any pretree $P$, we say that two
distinct elements of $P$ are \textit{adjacent} if there are no elements of $P$
between them. We define a \textit{star} in $P$ to be a maximal subset of $P$
which consists of mutually adjacent elements. (This means that any pair of
elements of a star are adjacent. It also means that any star has at least $2$
elements.) We now enlarge the set $P$ by adding in all the stars of $P$ to
obtain a new set $V$. One can define a pretree structure on $V$ which induces
the original pretree structure on $P$. In the pretree structure on $V$, a star
is adjacent in $V$ to each element of $P$ that it contains, and no two
elements of $P$ are adjacent in $V$. Next we give $V$ the discrete topology
and add edges to $V$ to obtain a graph $T$ with $V$ as its vertex set. For
each pair of adjacent elements of $V$, as just defined, we simply add an edge
which joins them. If $P$ is discrete, then it can be shown that $T$ is a tree
with vertex set $V$. It follows easily from this construction that if a group
$G$ acts on the original pretree $P$, this action extends naturally to an
action of $G$ on the simplicial tree $T$. Moreover, $G$ will act without
inversions on $T$. This will then give a graph of groups decomposition for
$G$, though this decomposition would be trivial if $G$ fixed a vertex of $T$.
The tree $T$ is clearly bipartite with vertex set $V(T)$ expressed as the
disjoint union of $V_{0}(T)$ and $V_{1}(T)$, where $V_{0}(T)$ equals $P$ and
$V_{1}(T)$ equals the collection of stars in $P$. This is why $G$ acts without
inversions on $T$. Note that each $V_{1}$--vertex of $T$ has valence at least
$2$, because a star in $P$ always has at least $2$ elements.

Note also that if we start with a tree $T^{\prime}$, let $P$ denote its vertex
set with the induced idea of betweenness and then construct the tree $T$ as
above, then $T$ is not the same as $T^{\prime}$. In fact, $T$ is obtained from
$T^{\prime}$ by subdividing every edge into two edges.

In order to introduce our ideas on algebraic regular neighbourhoods, we
consider a closed orientable surface $M$ with a finite family $\{C_{\lambda
}\}_{\lambda\in\Lambda}$ of immersed circles, and let $N$ denote a regular
neighbourhood of the union of the $C_{\lambda}$'s. Note that $N$ need not be
connected. For simplicity, we assume that no component of the closure of $M-N$
is a disc, so that $\partial N$ consists of finitely many simple closed curves
in $M$, each not bounding a disc in $M$. Denote $\pi_{1}(M)$ by $G$.

We can associate to this setup the graph of groups decomposition $\Gamma$ of
$G$ determined by $\partial N$. The vertices of $\Gamma$ correspond to
components of $N$ and of $M-N$, the edges of $\Gamma$ correspond to the
components of $\partial N$, and each edge of $\Gamma$ joins a vertex of one
type to a vertex of the other type. Thus $\Gamma$ is naturally a bipartite
graph. We will denote the collection of vertices of $\Gamma$ which correspond
to components of $N$ by $V_{0}(\Gamma)$, or simply $V_{0}$ if the context is
clear. The remaining vertices will be denoted by $V_{1}(\Gamma)$ or simply
$V_{1}$. If we consider the pre-image $\widetilde{N}$ of $N$ in the universal
cover $\widetilde{M}$ of $M$, then $\partial\widetilde{N}$ is a family of
disjoint lines, and the dual graph to $\partial\widetilde{N}$ in
$\widetilde{M}$ is a tree $T$ on which $G$ acts with quotient $\Gamma$. The
vertex and edge groups of $\Gamma$ are simply the vertex and edge stabilisers
for the action of $G$ on $T$. Again $T$ is naturally bipartite with some
vertices corresponding to components of $\widetilde{N}$ and some vertices
corresponding to components of $\widetilde{M}-\widetilde{N}$.

For simplicity, suppose that each $C_{\lambda}$ lifts to an embedded line
$S_{\lambda}$ in $\widetilde{M}$. Let $\Sigma$ denote the collection of all
the translates of all the $S_{\lambda}$'s, and let $\left\vert \Sigma
\right\vert $ denote the union of all the elements of $\Sigma$. Thus
$\left\vert \Sigma\right\vert $ is the complete pre-image in $\widetilde{M}$
of the union of the images of all the $C_{\lambda}$'s and $\widetilde{N}$ is a
regular neighbourhood of $\left\vert \Sigma\right\vert $. The fact that $N$ is
a regular neighbourhood of the $C_{\lambda}$'s implies that the inclusion of
the union of the $C_{\lambda}$'s into $N$ induces a bijection between
components, and that the inclusion of $\left\vert \Sigma\right\vert $ into
$\widetilde{N}$ also induces a bijection between components. Thus the $V_{0}%
$--vertices of $T$ correspond to the components of $\left\vert \Sigma
\right\vert $, and the $V_{1}$--vertices of $T$ correspond to the components
of $\widetilde{M}-\widetilde{N}$. If two elements $S$ and $S^{\prime}$ of
$\Sigma$ belong to the same component of $\left\vert \Sigma\right\vert $,
there must be a finite chain $S=S_{0},S_{1},...,S_{n}=S^{\prime}$ of elements
of $\Sigma$ such that $S_{i}$ intersects $S_{i+1}$, for each $i$. Thus the
elements of $\Sigma$ which form a component of $\left\vert \Sigma\right\vert $
are an equivalence class of the equivalence relation on $\Sigma$ generated by
saying that two elements of $\Sigma$ are related if they intersect.

The simplest example is when the family of immersed circles in $M$ consists of
a single simple closed curve $C$. In this case the regular neighbourhood $N$
of $C$ is an annulus, and $\Gamma$ has a single $V_{0}$--vertex $v$
corresponding to $N$, and two edges, each incident to $v$, corresponding to
the boundary components of $N$. The other end of each edge is a $V_{1}%
$--vertex. If $C$ separates $M$, there are two $V_{1}$--vertices corresponding
to the components of $M-N$. Otherwise $M-N$ is connected, and $\Gamma$ has a
single $V_{1}$--vertex, so that $\Gamma$ is a loop.

The above is what we want to encode in the algebraic setting, except that as
we will be dealing with almost invariant sets, we will consider crossing
rather than intersection. This corresponds to ignoring "inessential"
intersections in the topological theory. Thus given a group $G$ and a family
$\mathcal{X}$ of almost invariant subsets of $G$, we want to construct a
pretree out of "cross-connected components" of almost invariant sets, then use
the star construction to define a $G$--tree $T$, and finally define the
algebraic regular neighbourhood of $\mathcal{X}$ in $G$ to be the graph of
groups structure $\Gamma=G\backslash T$ of $G$. The details of our
construction are not the same as in \cite{SS2}. We have modified them to
remove any direct reference to the idea of good position. This uses ideas from
\cite{Guirardel-Levitt4}, but their ideas also need modifying to work in the
more general setting of this paper. The reason for removing good position from
our discussion is that this is a stronger condition than needed, as shown by
the need to introduce the idea of good enough position on page 46
of \cite{SS2}. But as in the original development of the theory of algebraic
regular neighbourhoods, a crucial role will be played by Theorems 2.5 and 2.8
of \cite{SS1} and their generalizations to the setting of group systems,
Theorems \ref{splittingsexist} and \ref{splittingsarecompatible}.

For later reference, we make the following definition.

\begin{definition}
\label{defnofE(X)andF(X)}Let $G$ be a group with a family of subgroups
$\{H_{\lambda}\}_{\lambda\in\Lambda}$. For each $\lambda\in\Lambda$, let
$X_{\lambda}$ denote a nontrivial $H_{\lambda}$--almost invariant subset of
$G$, and let $\mathcal{X}$ denote the family $\{X_{\lambda}\}_{\lambda
\in\Lambda}$. We enlarge $\mathcal{X}$ to a $G$--invariant family, by defining
$E(\mathcal{X})$ to be the family $\{gX_{\lambda},gX_{\lambda}^{\ast}:g\in
G,\lambda\in\Lambda\}$. We further enlarge $E(\mathcal{X})$ as follows. If $X$
is an element of $E(\mathcal{X})$ which crosses some other element of
$E(\mathcal{X})$, then we add to $E(\mathcal{X})$ every almost invariant
subset of $G$ equivalent to $X$. The resulting family is denoted by
$F(\mathcal{X})$.
\end{definition}

\begin{remark}
We do not assume that the family $\Lambda$ is finite, nor that $G$ or the
$H_{\lambda}$'s are finitely generated. We will explain below that when
constructing the family $F(\mathcal{X})$, it is important not to add elements
equivalent to $X$ unless $X$ crosses some other element of $E(\mathcal{X})$.
\end{remark}

We will outline how to define the algebraic regular neighbourhood of the
family $\mathcal{X}$ in $G$, but whether this is possible depends on
$\mathcal{X}$ and $G$. We suspect that there can be no theory of algebraic
regular neighbourhoods without some strong finiteness conditions. Our basic
definition is also a construction, and it requires some key conditions to be
satisfied. After giving our definition, it is simple to extend it to handle
some situations where these conditions may not be satisfied.

For our basic definition, we will insist on the following two conditions. Note
that if $G$ is finitely generated, the first condition always holds.

\begin{itemize}
\item Crossing of almost invariant sets in $F(\mathcal{X})$ is corner
symmetric. (See Definition \ref{defnofcornersymmetric}.)
\end{itemize}

This means that if $U$ is a $H$--almost invariant element of $F(\mathcal{X})$,
and $V$ is a $K$--almost invariant element of $F(\mathcal{X})$, then a corner
of the pair $(U,V)$ is $H$--finite if and only if that same corner is
$K$--finite. (Recall from Definition \ref{defnofsmallcorner} that such a
corner is called small.) This implies that crossing of almost invariant sets
in $F(\mathcal{X})$ is symmetric, i.e. if $U\ $and $V$ are elements of
$F(\mathcal{X})$ such that $U$ crosses $V$, then also $V$ crosses $U$. Given
this symmetry, we can consider the collection $P$ of equivalence classes of
the almost invariant sets in $F(\mathcal{X})$ under the relation generated by
crossing and complementation. These equivalence classes are called
cross-connected components (CCCs). We want $P$ to constitute the $V_{0}%
$--vertices of a bipartite $G$--tree $T$.

Note that enlarging $E(\mathcal{X})$ to $F(\mathcal{X})$ in the way prescribed
in Definition \ref{defnofE(X)andF(X)} did not change the number of CCCs.
A\ special role is played by elements of $F(\mathcal{X})$ which cross no
element of $F(\mathcal{X})$. We call such elements \emph{isolated in}
$F(\mathcal{X})$, or simply \emph{isolated} if the context is clear. If $X$ is
isolated in $F(\mathcal{X})$, then the CCC which contains $X$ consists solely
of $X$ and $X^{\ast}$. We call such a CCC \emph{isolated}. If $X$ is an
isolated element of $E(\mathcal{X})$, adding elements to $E(\mathcal{X})$
which are equivalent to $X$ would certainly change the number of CCCs. Thus we
do not add such elements when forming $F(\mathcal{X})$. Further we need the
following condition on isolated elements in $F(\mathcal{X})$. This condition
replaces the idea of good position which was used in \cite{SS2}.

\begin{itemize}
\item The family of all isolated elements of $F(\mathcal{X})$ is nested.
\end{itemize}

Next we need a couple of technical results about almost invariant sets in
$F(\mathcal{X})$. By assumption, crossing of such sets is corner symmetric.
Recall the partial order $\leq$ defined at the start of section
\ref{goodposition}. In order to ensure that $\leq$ was a partial order we
needed to assume that our almost invariant sets were in good position. In our
situation, this need not be the case, so our relation $\leq$ may not be a
partial order. Given elements $U\ $and $V$ of $F(\mathcal{X})$, we will define
$U\leq V$ to mean that the corner $U\cap V^{\ast}$ is small, as in Definition
\ref{defnofsmallcorner}. Note that if $U\leq V$, there is $U^{\prime}$
equivalent to $U$ such that $U^{\prime}\subset V$, and there is $V^{\prime}$
equivalent to $V$ sch that $U\subset V^{\prime}$. We obtain $U^{\prime}$ from
$U$ by removing the corner $U\cap V^{\ast}$, and obtain $V^{\prime}$ from $V$
by adding the corner $U\cap V^{\ast}$. It follows that $\leq$ is transitive.
For if $U\leq V\leq W$, there is $U^{\prime}$ equivalent to $U$, and
$W^{\prime}$ equivalent to $W$ such that $U^{\prime}\subset V\subset
W^{\prime}$. Thus $U^{\prime}\subset W^{\prime}$, so that we must have $U\leq
W$, as required. The following result is Lemma 3.5 of \cite{SS2}.

\begin{lemma}
\label{Lemma3.5ofmomster}If $U$, $V$ and $Z$ are elements of $F(\mathcal{X})$
such that $U\leq V$, and $Z$ crosses $U$ but not $V$, then either $Z\leq V$ or
$Z^{\ast}\leq V$.
\end{lemma}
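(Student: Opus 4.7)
The plan is to split into cases according to which corner of the pair $(Z,V)$ is small, using the hypothesis that $Z$ does not cross $V$. Of the four corners $Z \cap V^*$, $Z^* \cap V^*$, $Z \cap V$, $Z^* \cap V$, small-ness of the first two gives $Z \leq V$ and $Z^* \leq V$ respectively, which is precisely the desired conclusion. It therefore remains to derive a contradiction from the other two possibilities.

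Suppose first that $Z^* \cap V$ is small. This is exactly $V \leq Z$, and combining with the hypothesis $U \leq V$, transitivity of $\leq$ (established in the paragraph immediately preceding the lemma) yields $U \leq Z$, i.e., $U \cap Z^*$ is small. But this contradicts the hypothesis that $Z$ crosses $U$, since crossing requires every corner of $(Z,U)$ to be non-small. Suppose next that $Z \cap V$ is small, i.e., $Z \leq V^*$. The key observation is that $U \leq V$ is equivalent to $V^* \leq U^*$, since both assertions say the set $U \cap V^*$ is small. Chaining gives $Z \leq V^* \leq U^*$, and transitivity of $\leq$ yields $Z \leq U^*$, that is, $Z \cap U$ is small. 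Again this contradicts the hypothesis that $Z$ crosses $U$.

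The entire argument thus reduces to transitivity of $\leq$ together with the elementary contravariance $A \leq B \Longleftrightarrow B^* \leq A^*$. Transitivity in this general setting (where $F(\mathcal{X})$ is not assumed to be in good position) is the substantive input; it is justified by passing to equivalent representatives $U' \subset V \subset W'$ and using the assumed corner symmetry of crossing on $F(\mathcal{X})$ to ensure that "small" has a coherent meaning across pairs involving different stabilizers. Once that is in hand, the present lemma is a short book-keeping argument, and no further obstacle arises.
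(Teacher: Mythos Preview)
Your proof is correct and essentially the same as the paper's, just packaged differently. The paper argues directly that $Z\cap V$ and $Z^{\ast}\cap V$ cannot be small (since $U\leq V$ and $Z$ crosses $U$ force these corners to contain a non-small set), leaving only the two desired possibilities; you instead assume each of these two corners is small and derive a contradiction via transitivity of $\leq$ and the contravariance $A\leq B \Leftrightarrow B^{\ast}\leq A^{\ast}$. The underlying content is identical.
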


\begin{proof}
As $Z$ crosses $U$, none of the four corners $Z^{(\symbol{94})}\cap U^{(\ast
)}$ is small. Since $U\leq V$, it follows that $Z^{\symbol{94}}\cap V$ and
$Z\cap V$ are also not small. As $Z$ does not cross $V$, either
$Z^{\symbol{94}}\cap V^{\ast}$ or $Z\cap V^{\ast}$ is small. Thus either
$Z\leq V$ or $Z^{\ast}\leq V$, as required.
\end{proof}

We will also need the following consequence.

\begin{corollary}
\label{corollarytoL3.5ofmonster}If $u$ and $v$ are distinct CCCs of
$F(\mathcal{X})$, if $U$ and $V$ are elements of $u$ and $v$ respectively such
that $U\leq V$, and if $Z$ lies in $u$, then either $Z\leq V$ or $Z^{\ast}\leq
V$.
\end{corollary}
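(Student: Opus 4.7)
The plan is to induct on the length of a ``crossing-complementation chain'' joining $U$ to $Z$ inside the CCC $u$. By the very definition of CCCs, the relation generating them is the symmetric-transitive closure of the relation ``crosses or is complementary to''. Hence, since $U$ and $Z$ both lie in $u$, there exists a finite sequence
\[
U = W_0, W_1, \ldots, W_n = Z
\]
of elements of $F(\mathcal{X})$ such that, for each $i$, either $W_{i+1}$ crosses $W_i$ or $W_{i+1} = W_i^{\ast}$. I will prove by induction on $i$ that $W_i \leq V$ or $W_i^{\ast} \leq V$. The base case $i=0$ is given by hypothesis.

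For the inductive step, suppose $W_i \leq V$ or $W_i^{\ast} \leq V$. If $W_{i+1} = W_i^{\ast}$, the two alternatives simply swap and we are done. So assume $W_{i+1}$ crosses $W_i$. The key observation is that, since $u$ and $v$ are \emph{distinct} CCCs and $W_{i+1} \in u$, $V \in v$, the set $W_{i+1}$ cannot cross $V$ --- for if it did, $W_{i+1}$ and $V$ would lie in a common CCC, forcing $u = v$. Now apply Lemma \ref{Lemma3.5ofmomster} with $W_i$ (respectively $W_i^{\ast}$) playing the role of $U$, with $V$ unchanged, and with $W_{i+1}$ playing the role of $Z$: the hypotheses are that $W_i \leq V$ (respectively $W_i^{\ast} \leq V$), that $W_{i+1}$ crosses $W_i$ (respectively $W_i^{\ast}$), and that $W_{i+1}$ does not cross $V$. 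Note that crossing is preserved under complementation, since the four corners of $(W_{i+1}, W_i)$ and $(W_{i+1}, W_i^{\ast})$ are the same four sets, merely relabelled; so $W_{i+1}$ crosses $W_i^{\ast}$ whenever it crosses $W_i$. Lemma \ref{Lemma3.5ofmomster} then yields $W_{i+1} \leq V$ or $W_{i+1}^{\ast} \leq V$, completing the inductive step. Taking $i = n$ gives the conclusion for $Z = W_n$.

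The only point that requires any genuine care is the assertion that $W_{i+1}$ does not cross $V$, which relies on the symmetry of crossing among elements of $F(\mathcal{X})$ --- this is guaranteed by the standing assumption that crossing in $F(\mathcal{X})$ is corner symmetric, which makes the CCC equivalence relation well-behaved. Apart from that, the argument is a straightforward propagation of Lemma \ref{Lemma3.5ofmomster} along the chain witnessing that $U$ and $Z$ lie in the same CCC.
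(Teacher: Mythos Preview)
Your proof is correct and follows essentially the same strategy as the paper: propagate Lemma~\ref{Lemma3.5ofmomster} along a chain inside the CCC $u$, using that nothing in $u$ can cross $V$ since $u\neq v$. The only difference is cosmetic: the paper singles out the isolated case (where $Z\in\{U,U^{\ast}\}$) and then uses a pure crossing chain $Z_1,\ldots,Z_n$ with $Z_1$ crossing $U$, whereas you fold the isolated case into the induction by allowing complementation steps in your chain $W_0,\ldots,W_n$.
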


\begin{proof}
First suppose that $u$ is isolated. Then $Z$ must be equal to $U$ or to
$U^{\ast}$, so the result is immediate.

Otherwise, $u$ is not isolated, so there is a sequence $Z_{1},Z_{2}%
,\ldots,Z_{n}=Z$ of elements of $u$ such that $Z_{1}$ crosses $U$, and $Z_{i}$
crosses $Z_{i-1}$, for $2\leq i\leq n$. Note that as $u$ and $v$ are distinct
CCCs, each $Z_{i}$ cannot cross $V$. Now the proof is by induction on $n$. If
$n=1$, this is just Lemma \ref{Lemma3.5ofmomster}. If $n>1$, we prove the
induction step by applying Lemma \ref{Lemma3.5ofmomster} with $U$ replaced by
$Z_{n-1}$ or $Z_{n-1}^{\ast}$, as appropriate.
\end{proof}

Now we define a relation of (strict) betweenness on the collection $P$ of all
CCCs of $F(\mathcal{X})$, by defining $y$ to be \textit{between} $x$ and $z$
if $y$ is distinct from $x$ and $z$, and there are elements $X$, $Y$ and $Z$
of $x$, $y$ and $z$ respectively, such that $X\subset Y\subset Z$.

\begin{lemma}
\label{Pisapretree}The set $P$ with this relation of betweenness is a pretree.
\end{lemma}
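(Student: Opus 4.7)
The plan is to verify the four pretree axioms in turn, using Lemma \ref{Lemma3.5ofmomster} and Corollary \ref{corollarytoL3.5ofmonster} as the main tools, together with the standing hypotheses that crossing of elements of $F(\mathcal{X})$ is corner symmetric and that isolated elements of $F(\mathcal{X})$ form a nested family. A key auxiliary fact I would use repeatedly is that crossing is preserved under equivalence of almost invariant sets: if $A$ and $B$ are equivalent and $A$ crosses $C$, then $B$ crosses $C$, so equivalent sets lying in a non-isolated CCC must lie in the same CCC.

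Axiom (T1) is immediate from closure of CCCs under complementation: if $X \subset Y \subset Z$ witnesses $xyz$, then $Z^{\ast} \subset Y^{\ast} \subset X^{\ast}$ witnesses $zyx$. For (T0), suppose $xyz$ is witnessed by $X \subset Y \subset Z$ and assume for contradiction $x = z$. Applying Corollary \ref{corollarytoL3.5ofmonster} with $U = X$, $V = Y$ (so $u = x \neq y = v$) and $W = Z \in x$ gives either $Z \leq Y$ or $Z^{\ast} \leq Y$. The latter forces $Z^{\ast}$ small, since $Y \subset Z$ implies $Z^{\ast} \subset Y^{\ast}$ and hence $Z^{\ast} \cap Y^{\ast} = Z^{\ast}$, contradicting nontriviality of $Z$. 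The former, combined with $Y \leq Z$, shows $Y \triangle Z$ is small, so $Y$ and $Z$ are equivalent; the auxiliary fact then forces $y = x$ whenever at least one of $y, z$ is non-isolated, contradicting $y \neq x$. The residual case where both CCCs are isolated uses the description $x = \{X, X^{\ast}\}$, and both $Z = X$ and $Z = X^{\ast}$ are ruled out by distinctness of CCCs or nontriviality. Axiom (T2) follows by the same strategy: from $xyz$ via $X_1 \subset Y_1 \subset Z_1$ and $xzy$ via $X_2 \subset Z_2 \subset Y_2$, apply Corollary \ref{corollarytoL3.5ofmonster} twice (to $Y_1 \leq Z_1$ with $W = Y_2 \in y$, and to $Z_2 \leq Y_2$ with $W = Z_1 \in z$); three of the four resulting combinations force $Y_2$ or $Z_1$ to be trivial, and the last forces $Y_2$ and $Z_1$ to be equivalent, which is impossible exactly as in (T0).

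Axiom (T3) is the deepest, and its final technical step is the main obstacle I anticipate. Given $xyz$ via $X \subset Y \subset Z$ and $w \neq y$, the cases $w = x$ and $w = z$ reduce to the hypothesis itself, so assume $w$ is distinct from $x, y, z$. Pick $W \in w$. Since $w \neq y$, $W$ cannot cross $Y$ (else they would lie in the same CCC), so by corner symmetry one of the corners $\{W \cap Y, W \cap Y^{\ast}, W^{\ast} \cap Y, W^{\ast} \cap Y^{\ast}\}$ is small. Replacing $W$ by $W^{\ast} \in w$ if necessary reduces to two alternatives: $W \leq Y$ or $Y \leq W$. In the first case, transitivity of $\leq$ with $Y \leq Z$ gives $W \leq Z$, and one passes from almost-inclusion to genuine inclusion by absorbing the small corner $W \cap Y^{\ast}$ into $Y$ (and keeping $Z$ fixed) to produce equivalent representatives $W', Y'$ with $W' \subseteq Y' \subseteq Z$, witnessing $wyz$. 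In the second case, transitivity with $X \leq Y$ gives $X \leq W$, and a symmetric adjustment produces $X' \subseteq Y' \subseteq W$, witnessing $xyw$. The subtle point is ensuring that the $H$-invariance of the modified representatives is preserved and that the new sets remain inside $F(\mathcal{X})$ and in the correct CCC. This is automatic when the CCC is non-isolated, since by construction $F(\mathcal{X})$ contains every almost invariant set equivalent to a crossing element, and equivalence preserves crossings; the remaining case, where one of the CCCs is isolated, is exactly where the nesting hypothesis on isolated elements enters, forcing the small corner to be empty and so making any modification unnecessary.
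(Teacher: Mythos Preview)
Your overall strategy matches the paper's, but there are two concrete gaps.

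\textbf{Axiom (T2).} Your four-case analysis does not close. From the two applications of Corollary~\ref{corollarytoL3.5ofmonster} you get the dichotomies $Y_2\leq Z_1$ or $Y_2^{\ast}\leq Z_1$, and $Z_1\leq Y_2$ or $Z_1^{\ast}\leq Y_2$. In your case~(d), namely $Y_2^{\ast}\leq Z_1$ together with $Z_1^{\ast}\leq Y_2$, these two statements are \emph{the same} condition (both say $Y_2^{\ast}\cap Z_1^{\ast}$ is small), so you have extracted only one piece of information, not two. Nothing you have written rules out this case: $Y_2$ and $Z_1$ do not cross, and it is perfectly consistent that $Y_2^{\ast}\cap Z_1^{\ast}$ is the unique small corner while $Y_2\cap Z_1^{\ast}$ and $Y_2^{\ast}\cap Z_1$ are both large. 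The paper avoids this by applying the corollary differently: to $X\subset Y$ with the element $U\in x$ from the \emph{second} chain, and to $Y\subset Z$ with $W\in z$ from the second chain, obtaining $U^{(\ast)}\leq Y\leq W^{(\ast)}$; the known genuine inclusion $U\subset W$ then eliminates three of the four sign choices and forces $U\leq Y\leq W$. A symmetric run with the chains swapped gives $X\leq W\leq Y$, hence $Y\sim W$. Your choice of which inclusions to feed into the corollary loses exactly the leverage that $U\subset W$ provides.

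\textbf{Axiom (T3).} Your final sentence oversimplifies the isolated case. The nesting hypothesis applies only when \emph{both} elements being compared are isolated, so it does not by itself handle, say, $W$ isolated and $Z$ isolated with $Y$ non-isolated (you cannot modify $W$ or $Z$, and nesting of $(W,Z)$ still allows $Z\subset W$, which must be excluded by a separate equivalence argument), nor the several mixed cases where exactly one of the sets you wish to adjust is isolated. The paper treats these cases individually; in particular it compares $W$ first with $X$ rather than with $Y$, which lets it exploit the genuine inclusion $X\subset Y\subset Z$ and keeps the case analysis under control. Your route via a direct comparison with $Y$ can be made to work, but it needs the same kind of case-by-case treatment, not a one-line appeal to nesting.
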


\begin{proof}
We need to verify that $P$ satisfies the four axioms (T0)--(T3) for a pretree,
which we give below.

\begin{itemize}
\item (T0) If $xyz$, then $x\neq z$.

\item (T1) $xyz$ implies $zyx$.

\item (T2) $xyz$ implies not $xzy$.

\item (T3) If $xyz$ and $w\neq y$, then $xyw$ or $wyz$.
\end{itemize}

To prove that T(0) holds, suppose that $xyz$ and $x=z$. Thus there are
elements $X$, $Y$ and $Z$ of $x$, $y$ and $z$ respectively, such that
$X\subset Y\subset Z$. If $Z=X$, then also $Y=X$, contradicting the assumption
that $x$ and $y$ are distinct. Also $Z$ cannot equal $X^{\ast}$. Thus the CCC
$x$ is not isolated. As $X\leq Y\ $and $Z\in x$, Corollary
\ref{corollarytoL3.5ofmonster} tells us that either $Z\leq Y$ or $Z^{\ast}\leq
Y$. As $Y\subset Z$, it follows that we must have $Z\leq Y$, so that $Y$ and
$Z\ $are equivalent. As $x$ is not isolated, that implies that $Y\ $also lies
in $x$, again contradicting the assumption that $x$ and $y$ are distinct.

Note that (T0) implies that if $xyz$, then $x$, $y$ and $z$ are all distinct.

T(1) is clear as $X\subset Y\subset Z$ implies that $Z^{\ast}\subset Y^{\ast
}\subset X^{\ast}$.

To prove that (T2) holds, suppose that $xyz$ and $xzy$ both hold. Thus there
are elements $X,U$ of $x$, $Y,V$ of $y$, and $Z,W$ of $z$ such that $X\subset
Y\subset Z$ and $U\subset W\subset V$.

We start by handling the special cases when some of $X$, $Y$ and $Z$ are equivalent.

Suppose first that all three are equivalent. Now equivalent elements of
$F(\mathcal{X})$ cannot lie in distinct CCCs unless they are isolated. Thus
each of $x$, $y$ and $z$ is isolated. In particular, it follows that
$U=X^{(\ast)}$, $V=Y^{(\ast)}$ and $W=Z^{(\ast)}$, where $X^{(\ast)}$ denotes
one of $X$ or $X^{\ast}$. Now the inclusions $X\subset Y$ and $U\subset V$
imply that $U=X$ and $V=Y$. Similarly we must have $W=Z$. Now the inclusions
$Y\subset Z$ and $W\subset V$ imply that $Y=Z$, so that $y=z$, contradicting
the hypothesis that $y$ and $z$ are distinct CCCs. Similarly the assumption
that $U$, $V\ $and $W$ are equivalent also yields a contradiction.

Next suppose that $X$ and $Y$ are equivalent, so that $x$ and $y$ are both
isolated, and we must have $U=X^{(\ast)}$ and $V=Y^{(\ast)}$. The inclusions
$X\subset Y$ and $U\subset V$ imply that $U=X$ and $V=Y$. In particular,
$U$\ and $V$\ are equivalent, so the inclusions $U\subset W\subset V$ imply
that $U$, $V$ and $W$ are all equivalent, which yields a contradiction by the
preceding paragraph.

Similar arguments yield a contradiction if any two of $X$, $Y\ $and $Z$ are
equivalent, or if any two of $U$, $V$ and $W$ are equivalent. So from now on
we will assume that no two of $X$, $Y\ $and $Z$ are equivalent, and that no
two of $U$, $V\ $and $W$ are equivalent.

Now we will use the relation $\leq$ on elements of $F(\mathcal{X})$, discussed
above, which is given by $U\leq V$ if the corner $U\cap V^{\ast}$ is small.
(See Definition \ref{defnofsmallcorner}.)

As $X\subset Y$ and $U\in x$, Corollary \ref{corollarytoL3.5ofmonster} tells
us that $U^{(\ast)}\leq Y$, where $U^{(\ast)}$ denotes $U\ $or $U^{\ast}$.
Similarly, as $Y\subset Z$ and $W\in z$, we have $Y\leq W^{(\ast)}$. As our
relation $\leq$ is transitive, we have one of the four relations $U^{(\ast
)}\leq W^{(\ast)}$. As $U\subset W$, it is not possible to have $U\leq
W^{\ast}$, or $U^{\ast}\leq W$. If $U^{\ast}\leq W^{\ast}$, then $U$ and $W$
must be equivalent, which contradicts our assumption.

We conclude that we must have $U\leq W$, so that $U\leq Y\leq W$.

Now we make a similar argument interchanging the roles of $X$, $Y\ $and $Z$
with those of $U$, $V$ and $W$. We conclude that $X\leq W\leq Y$.

As $Y\leq W$, and $W\leq Y$, it follows that $W\ $and $Y$ are equivalent,
contradicting our assumption.

To prove that (T3) holds, let $X$, $Y$ and $Z$ be elements of $x$, $y$ and $z$
respectively such that $X\subset Y\subset Z$, and let $W$ be an element of
$w$. The required result is trivial if $w$ is equal to $x$ or to $y$, so we
can suppose that $w$ is distinct from all of $x$, $y$ and $z$. In particular,
$W$ does not cross any of $X$, $Y$ and $Z$. Thus the pair $(W,X)$ has a small
corner. After replacing each of $X$ and $W$ by its complement if needed, we
can arrange that $W\leq X$ or $X\leq W$.

If $W\leq X$, we have $W\leq Y\subset Z$. If $W$ is not isolated, there is
$W^{\prime}$ equivalent to $W$ such that $W^{\prime}\subset Y\subset Z$, so
that $wyz$, as required. If $W$ is isolated, but $Y\ $and $Z\ $are not, then
$Y\ $and $Z\ $are equivalent to $Y^{\prime}$ and $Z^{\prime}$ such that
$W\subset Y^{\prime}\subset Z^{\prime}$, and again we have $wyz$. If $W$ and
$Y$ are both isolated, they must be nested by hypothesis, so again we have
$wyz$. Finally, consider the case when $W$ and $Z$ are isolated, but $Y$ is
not isolated. As $W\ $and $Z$ are isolated, we must have $W\subset Z$. For
otherwise, we would have $W\leq Y\subset Z\subset W$, which would imply that
$W$, $Y\ $and $Z$ are all equivalent. In particular, $Y$ would be isolated, a
contradiction. Now adding the corner $W\cap Y^{\ast}$ to $Y$ yields
$Y^{\prime}$ in the CCC $y$ which is equivalent to $Y$ such that $W\subset
Y^{\prime}$. As $W$ and $Y$ are each contained in $Z$, it follows that we also
have $Y^{\prime}\subset Z$, so that we have $wyz$, as required.

If $X\leq W$, we compare $W\ $and $Y$. Again one corner of the pair $(W,Y)$
must be small.

If $W\leq Y$, then as in the case when $W\leq X$, we have $wyz$.

If $Y\leq W$, we have $X\subset Y\leq W$. Thus $W^{\ast}\leq Y^{\ast}\subset
X^{\ast}$. Now we use the argument in the case $W\leq X$ but applied to
$W^{\ast}$, $Y^{\ast}$ and $X^{\ast}$ instead of to $W$, $Y$ and $Z$, to show
that $wyx$, and so $xyw$, as required.

The relation $W\leq Y^{\ast}$ is not possible, as $X\subset Y$ and $X\leq W$.

Finally the relation $Y^{\ast}\leq W$ implies that $W^{\ast}\leq Y$, so that
$W^{\ast}\leq Y\subset Z$. Now we use the argument in the case $W\leq X$ but
applied with $W^{\ast}$ in place of $W$, to show that $wyz$, as required.
\end{proof}

If $P$ is a discrete pretree, then the star construction yields a bipartite
$G$--tree $T$, and we want to define the quotient $G\backslash T$ to be the
algebraic regular neighbourhood of $\mathcal{X}$ in $G$. However there is an
exceptional case, when this definition needs slight modification. The reason
for this modification is to ensure that the resulting graph of groups is
unaltered if we replace each $X_{\lambda}$ by an equivalent almost invariant
subset of $G$.

To see the difficulty, let $M$ be a closed and non-orientable surface, but not
the projective plane, and let $C$ be a simple closed curve on $M$ which bounds
a Moebius band $B$. As above, a regular neighbourhood of $C$ yields a graph of
groups structure $\Gamma$ on $G=\pi_{1}(M)$ which has a single $V_{0}%
$--vertex, labeled $H=\pi_{1}(C)$ and two $V_{1}$--vertices, one of which is
labeled by $K=\pi_{1}(B)$. But $C$ is homotopic to a double cover of a
$1$--sided curve $D$, whose regular neighbourhood is $B$. If we use the
regular neighbourhood of $D$ to construct the graph of groups $\Gamma$, then
$\Gamma$ will have two vertices, and one edge. The edge group is $H$ and the
$V_{0}$--vertex of $\Gamma$ is labeled by $K$. To translate this into the
algebraic setting, let $k$ denote a generator of $K$. Then $C$ determines a
$H$--almost invariant subset $X$ of $G$ such that $kX$ is equivalent to
$X^{\ast}$, and $D$ determines a $H$--almost invariant subset $Y$ of $G$ such
that $kY=Y^{\ast}$. Further $X$ and $Y$ are equivalent. The problem is that if
we make the above construction using $X$, assuming that $kX$ is not equal to
$X^{\ast}$, we obtain a graph of groups structure for $G$ with two edges and
two $V_{1}$--vertices, whereas, if we use $Y$, we obtain a graph of groups
structure for $G$ with one edge and one $V_{1}$--vertex. The ``correct" graph
of groups structure is the one with two edges. This can easily be obtained
from the ``incorrect" one by inserting a $V_{0}$--vertex in the centre of the
edge and changing the $V_{0}$--vertex at one end of that edge to be a $V_{1}%
$--vertex. Clearly the same problem (and solution) will occur if we have a
family of many curves on $M$ which includes $C$ such that none of the other
curves cross $C$. In \cite{SS2}, the authors had hoped to avoid this problem
by insisting that isolated elements be non-invertible. However, in Example
\ref{examplewheregoodpositionimpliesinvertible}, we give an example to show
that this cannot always be done.

Now we make the following formal definitions.

\begin{definition}
\label{defnofinvertible}An almost invariant subset $X$ of a group $G$ is
\emph{invertible} if there is $k$ in $G$ such that $kX=X^{\ast}$.
\end{definition}

\begin{definition}
\label{defnofARN-ready}Let $G$ be a group with a family of subgroups
$\{H_{\lambda}\}_{\lambda\in\Lambda}$. For each $\lambda\in\Lambda$, let
$X_{\lambda}$ denote a nontrivial $H_{\lambda}$--almost invariant subset of
$G$, and let $\mathcal{X}$ denote the family $\{X_{\lambda}\}_{\lambda
\in\Lambda}$. Also let $F(\mathcal{X})$ denote the family defined above.

The family $\mathcal{X}$ is \emph{regular neighbourhood ready} ( or
\emph{RN--ready}) if the following conditions hold:

\begin{enumerate}
\item Crossing of almost invariant sets in $F(\mathcal{X})$ is corner
symmetric. (See Definition \ref{defnofcornersymmetric}.)

\item The family of all isolated elements of $F(\mathcal{X})$ is nested.
\end{enumerate}
\end{definition}

\begin{definition}\label{defn9.8}
\label{defnofalgregnbhdifRN-ready}Let $G$ be a group with a family of
subgroups $\{H_{\lambda}\}_{\lambda\in\Lambda}$. For each $\lambda\in\Lambda$,
let $X_{\lambda}$ denote a nontrivial $H_{\lambda}$--almost invariant subset
of $G$, and let $\mathcal{X}$ denote the family $\{X_{\lambda}\}_{\lambda
\in\Lambda}$. Also let $F(\mathcal{X})$ denote the family defined above.
Suppose that $\mathcal{X}$ is RN--ready, so that the CCCs of $F(\mathcal{X})$
form a pretree $P$.

If $P$ is discrete, the star construction yields a bipartite $G$--tree $T$.
\emph{The algebraic regular neighbourhood }$\Gamma\mathcal{(X}:G)$\emph{ of
the family }$\mathcal{X}$\emph{ in} $G$, is the quotient graph of groups
$G\backslash T$, unless some $X_{\lambda}$'s are isolated in $F(\mathcal{X})$
and invertible. In that case, we construct $\Gamma(\mathcal{X}:G)$ from
$G\backslash T$ by inserting a $V_{0}$--vertex in the centre of each of the
edges of $G\backslash T$ whose edge splittings are given by an isolated and
invertible $X_{\lambda}$, and changing the adjacent $V_{0}$--vertex to a
$V_{1}$--vertex.
\end{definition}

\begin{remark}
\label{remarkGuirardelexample}Even if $G$ is finitely generated and $\Lambda$
is finite, it is possible that $P$ is not discrete. In Example
\ref{incompatiblebutintersectionnozero}, due to Guirardel, $G$ is the free
group of rank $2$ with two almost invariant subsets $Z_{s}$ and $Z_{t}$, each
associated to a splitting of $G\ $over a non-finitely generated subgroup. Let
$\mathcal{Z}$ denote the family $\{Z_{s},Z_{t}\}$. In this example, the family
$\mathcal{Z}$ is in RN--ready, but the pretree determined by $F(\mathcal{Z})$
is not discrete.
\end{remark}

\begin{definition}
\label{defnofalgregnbhd}Let $G$ be a group with a family of subgroups
$\{H_{\lambda}\}_{\lambda\in\Lambda}$. For each $\lambda\in\Lambda$, let
$X_{\lambda}$ denote a nontrivial $H_{\lambda}$--almost invariant subset of
$G$, and let $\mathcal{X}$ denote the family $\{X_{\lambda}\}_{\lambda
\in\Lambda}$. Suppose that each $X_{\lambda}$ is equivalent to an almost
invariant subset $Y_{\lambda}$ of $G$ such that the family $\mathcal{Y=}%
\{Y_{\lambda}\}_{\lambda\in\Lambda}$ is RN--ready. If the algebraic regular
neighbourhood $\Gamma(\mathcal{Y}:G)$ exists, then the algebraic regular
neighbourhood $\Gamma(\mathcal{X}:G)$ is defined to be $\Gamma(\mathcal{Y}:G)$.
\end{definition}

The following lemma shows that this last definition is independent of the
choices of the $Y_{\lambda}$'s. Thus the algebraic regular neighbourhood of a
family $\{X_{\lambda}\}$ depends only on the equivalence classes of the
$X_{\lambda}$'s.

\begin{lemma}
\label{algregnbhddependsonlyonequivalenceclasses}Let $G$ be a group with a
family of subgroups $\{H_{\lambda}\}_{\lambda\in\Lambda}$. For each
$\lambda\in\Lambda$, let $X_{\lambda}$ and $Y_{\lambda}$ denote equivalent
nontrivial $H_{\lambda}$--almost invariant subsets of $G$. Let $\mathcal{X}$
and $\mathcal{Y}$ denote $\{X_{\lambda}:\lambda\in\Lambda\}$ and
$\{Y_{\lambda}:\lambda\in\Lambda\}$ respectively. Let $F(\mathcal{X})$ and
$F(\mathcal{Y})$ be as defined in Definition \ref{defnofE(X)andF(X)}, and
suppose that each of $\mathcal{X}$ and $\mathcal{Y}$ is RN--ready. Further
suppose that if $U$ and $V\ $are equivalent isolated elements of
$F(\mathcal{X})$, then $V$ is a translate of $U$.

If the pretree $P$ determined by $F(\mathcal{X})$ is discrete, then the
pretree $Q$ determined by $F(\mathcal{Y})$ is also discrete, and the algebraic
regular neighbourhoods $\Gamma(\mathcal{X}:G)$ and $\Gamma(\mathcal{Y}:G)$ are isomorphic.
\end{lemma}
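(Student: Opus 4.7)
The plan is to construct a $G$--equivariant bijection $\Phi : F(\mathcal{X}) \to F(\mathcal{Y})$ which preserves crossing, preserves the partition into isolated and non-isolated elements, and respects betweenness at the level of cross-connected components. Given such a $\Phi$, it descends to a $G$--equivariant isomorphism of pretrees $P \to Q$, so discreteness of $P$ forces discreteness of $Q$, and the star construction then produces isomorphic bipartite $G$--trees. The hypothesis that equivalent isolated elements of $F(\mathcal{X})$ differ by a translate will ensure that the modification involving invertible isolated splittings in Definition~\ref{defnofalgregnbhdifRN-ready} is carried out at corresponding edges of the two trees, yielding $\Gamma(\mathcal{X}:G) \cong \Gamma(\mathcal{Y}:G)$.

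First I would build $\Phi$ on $E(\mathcal{X})$ via the obvious rule $gX_{\lambda} \mapsto gY_{\lambda}$ and $gX_{\lambda}^{\ast} \mapsto gY_{\lambda}^{\ast}$. Because $X_{\lambda}$ and $Y_{\lambda}$ are equivalent, any coincidence $gX_{\lambda} = g'X_{\lambda'}$ means $g^{-1}g'$ carries the equivalence class of $X_{\lambda}$ to that of $X_{\lambda'}$, which by Corollary~\ref{HaiiffKai} and Lemma~\ref{equivalenta.i.setshavecommensurablestabiliser} forces $gY_{\lambda} = g'Y_{\lambda'}$, so $\Phi$ is well defined and $G$--equivariant. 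To extend $\Phi$ to all of $F(\mathcal{X})$, observe that a non-isolated element $Z$ of $F(\mathcal{X})$ is by construction equivalent to some translate $gX_{\lambda}$ that crosses another element; since equivalent almost invariant sets have the same crossing behaviour (each corner of $(Z,V)$ differs from the corresponding corner of $(gX_{\lambda},V)$ by a small set), the full equivalence class of $gX_{\lambda}$ lies inside a single CCC of $F(\mathcal{X})$, and is sent naturally to the corresponding equivalence class of $gY_{\lambda}$ inside a single CCC of $F(\mathcal{Y})$. Combined with the isolated case handled directly, this produces a $G$--equivariant bijection of CCCs that preserves the property of being isolated.

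The key step is to show $\Phi$ preserves betweenness. Suppose $x,y,z$ are distinct CCCs of $F(\mathcal{X})$ with representatives satisfying $X \subset Y \subset Z$. I would apply Corollary~\ref{corollarytoL3.5ofmonster} together with the relation $\leq$ to translate these strict inclusions into small-corner conditions, and then show that the corresponding elements $X',Y',Z'$ in the target CCCs of $F(\mathcal{Y})$ satisfy $X' \leq Y' \leq Z'$; for non-isolated CCCs one may freely replace each representative within its equivalence class and absorb or delete the small corners (which are unions of finitely many cosets) to upgrade the almost inclusions to genuine inclusions. For isolated CCCs, where no such flexibility is available, the hypothesis that equivalent isolated elements of $F(\mathcal{X})$ differ only by a translate, together with the RN--readiness condition that all isolated elements are nested, forces the representatives on the $\mathcal{Y}$--side to be literal translates whose nesting mirrors that on the $\mathcal{X}$--side.

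The main obstacle I expect is precisely this betweenness step when the middle CCC $y$ is isolated, since then one has no room to modify the representative $Y'$ within its equivalence class, and the desired inclusion chain must come directly from the nestedness and the translate hypothesis. Once betweenness is verified, the induced map $P \to Q$ is an isomorphism of pretrees, the associated bipartite $G$--trees are $G$--isomorphic, and the correspondence carries isolated invertible edge-splittings of $\Gamma(\mathcal{X}:G)$ bijectively to those of $\Gamma(\mathcal{Y}:G)$, so the subdivision prescribed in Definition~\ref{defnofalgregnbhdifRN-ready} produces isomorphic graphs of groups, completing the proof.
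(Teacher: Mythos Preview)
Your overall strategy is right and coincides with the paper's: build a $G$--equivariant bijection of CCCs, check it preserves betweenness, and conclude. But there are two genuine gaps.

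First, your well-definedness claim for $\Phi$ on $E(\mathcal{X})$ is not justified by the lemmas you cite. From $gX_{\lambda}=g'X_{\lambda'}$ you only get that $gY_{\lambda}$ and $g'Y_{\lambda'}$ are \emph{equivalent}; Corollary~\ref{HaiiffKai} and Lemma~\ref{equivalenta.i.setshavecommensurablestabiliser} yield commensurability of stabilisers, not equality of the sets. The paper handles this correctly for isolated CCCs by proving that $\operatorname{Stab}(X_{\lambda})=\operatorname{Stab}(Y_{\lambda})$: if $gX_{\lambda}=X_{\lambda}$ then $gY_{\lambda}$ is equivalent to $Y_{\lambda}$, and since isolated elements of $F(\mathcal{Y})$ are nested (this is where RN--readiness is used) one has $gY_{\lambda}\subset Y_{\lambda}$ or the reverse; commensurability then gives $g^{n}Y_{\lambda}=Y_{\lambda}$ for some $n$, forcing $gY_{\lambda}=Y_{\lambda}$. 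You should make this argument explicit rather than appealing to lemmas that do not deliver it.

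Second, you are working harder than necessary on the non-isolated part. The paper observes that a non-isolated CCC $x$ of $F(\mathcal{X})$ is \emph{literally unchanged} under $X_{\lambda}\leadsto Y_{\lambda}$: by the very definition of $F$, once $gX_{\lambda}$ crosses something, its entire equivalence class (hence $gY_{\lambda}$) already lies in $F(\mathcal{X})$, so $x$ is a CCC of $F(\mathcal{Y})$ as well, and $\varphi$ is the identity on non-isolated CCCs. This removes any need to extend $\Phi$ element-by-element on the non-isolated part and cleans up the betweenness argument: when a CCC in a triple $xyz$ is non-isolated, $\varphi$ does nothing to it.

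Finally, note that $\varphi$ need not be a pretree isomorphism when some $X_{\lambda}$ is isolated and invertible while the corresponding $Y_{\lambda}$ is not (or vice versa): the pretrees $P$ and $Q$ can genuinely differ here. The paper resolves this not by fixing $\varphi$ but by observing that the edge-subdivision step in Definition~\ref{defnofalgregnbhdifRN-ready} compensates exactly for this discrepancy, so that the resulting graphs of groups are isomorphic even though the intermediate trees $T_{\mathcal{X}}$ and $T_{\mathcal{Y}}$ may not be. Your final paragraph gestures at this but should make the distinction explicit.
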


\begin{remark}
The restriction on equivalent isolated elements of $F(\mathcal{X})$ is to
avoid the following situation. Suppose that the family $\mathcal{X}$ consists
of just two equivalent isolated elements $X_{1}$ and $X_{2}$, and that the
family $\mathcal{Y}$ consists of the single element $Y_{1}$ which is equal to
$X_{1}$. Thus the families $\mathcal{X}$ and $\mathcal{Y}$ satisfy all the
hypotheses of the lemma, 
but the result is plainly false.
\end{remark}

\begin{proof}
This is proved by an indirect argument in \cite{SS2}, but we give here a
direct proof which uses some ideas of \cite{NibloSageevScottSwarup}.

We will see that, in most cases, the correspondence between $X_{\lambda}$ and
$Y_{\lambda}$ induces a $G$--equivariant bijection between $P$ and $Q$, which
preserves betweenness. This immediately induces an isomorphism of pretrees $P$
and $Q$. Thus if $P$ is discrete, so is $Q$, and the resulting $G$--trees
$T_{\mathcal{X}}$ and $T_{\mathcal{Y}}$ are $G$--equivariantly isomorphic,
implying that $\Gamma(\mathcal{X}:G)$ and $\Gamma(\mathcal{Y}:G)$ are isomorphic.

Consider a non-isolated CCC $x$ of $F(\mathcal{X})$. Replacing each
$gX_{\lambda}$ by the equivalent set $gY_{\lambda}$ leaves $x$ unchanged, so
that the correspondence between $X_{\lambda}$ and $Y_{\lambda}$ induces the
identity map on the family of non-isolated CCCs of $F(\mathcal{X})$. For
isolated CCCs, the situation is a little different. First we need to know that
if $X_{\lambda}$ is isolated, then the map sending $gX_{\lambda}$ to
$gY_{\lambda}$ is well defined. We need to know that if $gX_{\lambda
}=X_{\lambda}$, then $gY_{\lambda}=Y_{\lambda}$. This is the case, because of
our assumption that isolated elements of $F(\mathcal{X})$ and of
$F(\mathcal{Y})$ are nested. Let $K\ $and $L$ denote the stabilizers of
$X_{\lambda}$ and $Y_{\lambda}$ respectively. Note that $K$ and $L$ must be
commensurable. Now suppose that $gX_{\lambda}=X_{\lambda}$. Then $gY_{\lambda
}$ is equivalent to $Y_{\lambda}$. The nestedness assumption implies that
$gY_{\lambda}=Y_{\lambda}$, or $gY_{\lambda}\subset Y_{\lambda}$ or
$Y_{\lambda}\subset gY_{\lambda}$. As $K\ $and $L$ are commensurable, there
must be $n$ such that $g^{n}Y_{\lambda}=Y_{\lambda}$. It follows that
$gY_{\lambda}=Y_{\lambda}$, so that $K\subset L$. Reversing the roles of
$X_{\lambda}$ and $Y_{\lambda}$ shows that also $L\subset K$, so that $L$ and
$K$ are equal, as required.

Hence if no $X_{\lambda}$ or $Y_{\lambda}$ is isolated and invertible, it is
immediate that we have a well defined $G$--equivariant bijection $\varphi$
from $P$ to $Q$.

Suppose that $\varphi$ does not preserve betweenness. Thus there are CCCs $x$,
$y$ and $z$ of $F(\mathcal{X})$ such that $xyz$, and that $(\varphi x)(\varphi
z)(\varphi y)$. Thus there are elements $X$, $Y$ and $Z$ of $x$, $y$ and $z$
respectively such that $X\subset Y\subset Z$, and there are elements $U$, $V$
and $W$ of $\varphi x$, $\varphi y$ and $\varphi z$ respectively such that
$U\subset W\subset V$. We will argue much as in the proof of axiom (T2) in
Lemma \ref{Pisapretree}. Suppose that no two of $X$, $Y$ and $Z$ are
equivalent, and that no two of $U$, $V$ and $W$ are equivalent. Recall that
either $\varphi x=x$, or $x$ is isolated. In the first case, as $X\subset Y$
and $U\in x$, we apply Corollary \ref{corollarytoL3.5ofmonster} which tells us
that $U^{(\ast)}\leq Y$. In the second case, when $x$ is isolated, then $U$
must be equivalent to $X$ or to $X^{\ast}$. As $X\subset Y$, it follows
immediately that $U^{(\ast)}\leq Y$, in this case also. Similarly, as
$Y\subset Z$ and $W\in\varphi z$, we have $Y\leq W^{(\ast)}$. As our relation
$\leq$ is transitive, we have one of the four relations $U^{(\ast)}\leq
W^{(\ast)}$. As $U\subset W$, it is not possible to have $U\leq W^{\ast}$, or
$U^{\ast}\leq W$. If $U^{\ast}\leq W^{\ast}$, then $U$ and $W$ must be
equivalent, which contradicts our assumption.

We conclude that we must have $U\leq W$, so that $U\leq Y\leq W$.

Now we make a similar argument interchanging the roles of $X$, $Y\ $and $Z$
with those of $U$, $V$ and $W$. We conclude that $X\leq W\leq Y$. As $Y\leq
W$, and $W\leq Y$, it follows that $W\ $and $Y$ are equivalent, again
contradicting our assumption.

We conclude that $\varphi$ preserves betweenness, except possibly in the
situation where two or more of $X$, $Y\ $and $Z$ are equivalent. Recall that
if two of $X$, $Y\ $and $Z$ are equivalent, then the corresponding CCCs are
isolated. Further, if $X$ and $Z$ are equivalent, the fact that $X\subset
Y\subset Z$ implies that all three of $X$, $Y\ $and $Z$ are equivalent. Thus
$\varphi$ preserves betweenness except possibly in the cases when $X$ and $Y$
are equivalent, or $Y$ and $Z\ $are equivalent, or all three are equivalent.

At this point, we use the hypothesis that if $U$ and $V\ $are equivalent
isolated elements of $F(\mathcal{X})$, then $V$ is a translate of $U$.
It follows that if the pretree $P$ determined by $F(\mathcal{X})$ is discrete,
then the pretree $Q$ determined by $F(\mathcal{Y})$ is also discrete. Thus we
can construct the corresponding trees $T_{\mathcal{X}}$ and $T_{\mathcal{Y}}$,
and we can consider the quotient graphs of groups $G\backslash T_{\mathcal{X}%
}$ and $G\backslash T_{\mathcal{Y}}$. The bijection $\varphi$ between $P$ and
$Q$ shows that these graphs of groups are isomorphic, although this
isomorphism may not be induced by $\varphi$, so that the algebraic regular
neighbourhoods $\Gamma(\mathcal{X}:G)$ and $\Gamma(\mathcal{Y}:G)$ are isomorphic.

It follows that, if no $X_{\lambda}$ or $Y_{\lambda}$ is isolated and
invertible, then $\Gamma(\mathcal{X}:G)$ and $\Gamma(\mathcal{Y}:G)$ are
isomorphic, as claimed. If $X_{\lambda}$ is isolated and invertible, so there
is $g$ in $G$ such that $gX_{\lambda}=X_{\lambda}^{\ast}$, and if the
corresponding set $Y_{\lambda}$ satisfies $gY_{\lambda}=Y_{\lambda}^{\ast}$,
then the same proof as above applies to show that $\Gamma(\mathcal{X}:G)$ and
$\Gamma(\mathcal{Y}:G)$ are isomorphic.

Finally if some $X_{\lambda}$ is isolated in $E(\mathcal{X})$ and invertible,
and if the corresponding $Y_{\lambda}$ is not invertible, then the pretrees
$P$ and $Q$ are non-isomorphic, and so are the corresponding $G$--trees
$T_{\mathcal{X}}$ and $T_{\mathcal{Y}}$, and their quotients by $G$. In this
case, for each such $\lambda$, we subdivide the edge of $G\backslash
T_{\mathcal{X}}$ associated to $X_{\lambda}$ as in Definition
\ref{defnofalgregnbhdifRN-ready}. Similarly if some $Y_{\lambda}$ is isolated
in $E(\mathcal{Y})$ and invertible, and if the corresponding $X_{\lambda}$ is
not invertible, for each such $\lambda$, we subdivide the edge of $G\backslash
T_{\mathcal{Y}}$ associated to $Y_{\lambda}$ as in Definition
\ref{defnofalgregnbhdifRN-ready}. The resulting graphs of groups structures
are isomorphic.
\end{proof}

It remains to describe some cases where we know that $\mathcal{X}$ has an
algebraic regular neighbourhood. Part 1) of the following theorem was proved
in \cite{SS2}.

\begin{theorem}
\label{algregnbhdexists}Let $G$ be a group, and let $H_{1},\ldots,H_{n}$ be
finitely generated subgroups of $G$. For $i=1,\ldots,n$, let $X_{i}$ be a
nontrivial $H_{i}$--almost invariant subset of $G$, and let $\mathcal{X}$
denote the family $\{X_{i}\}_{1\leq i\leq n}$.

\begin{enumerate}
\item If $G$ is finitely generated, then the algebraic regular neighbourhood
$\Gamma(\mathcal{X}:G)$ exists.

\item If $(G,\mathcal{S})$ is a group system of finite type, and each $X_{i}$
is $\mathcal{S}$--adapted, then the algebraic regular neighbourhood
$\Gamma(\mathcal{X}:G)$ exists.
\end{enumerate}
\end{theorem}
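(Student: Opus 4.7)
The plan is to treat both parts in parallel by first putting $\mathcal{X}$ in very good position, then verifying the two conditions of Definition \ref{defnofARN-ready}, and finally establishing discreteness of the resulting pretree. Part 1 is essentially the existence theorem of \cite{SS2} reformulated in the new language, so the substantive work is Part 2, which I reduce to Part 1 via a standard extension.

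First I would replace each $X_i$ by an equivalent $Y_i$ so that $\mathcal{Y} = \{Y_i\}_{1 \le i \le n}$ is in very good position. In Part 1 this is Lemma 4.1 of \cite{NibloSageevScottSwarup}, and in Part 2 it is Lemma \ref{cangetvgpinsystems}. Corner symmetry of crossing in $F(\mathcal{Y})$ then holds by the Cohen coboundary argument (Lemma 2.3 of \cite{Scott symmint}) in Part 1 and by the proof of Lemma \ref{crossingissymmetric} in Part 2. Nestedness of isolated elements of $F(\mathcal{Y})$ is automatic in very good position: any two elements of $E(\mathcal{Y})$ that do not cross must have an empty corner, so any two isolated elements, which by definition cross nothing in $F(\mathcal{Y})$, are nested. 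Hence $\mathcal{Y}$ satisfies both conditions of Definition \ref{defnofARN-ready}, and is RN-ready.

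It remains to prove that the pretree $P$ of CCCs of $F(\mathcal{Y})$, which exists by Lemma \ref{Pisapretree}, is discrete. For Part 1 this is the discreteness argument of \cite{SS2}. For Part 2 I would pass to a finitely generated standard $\mathcal{S}^\infty$-extension $\overline{G}$ of $G$ (Definition \ref{defnofSinfinitystandardextension}); because each $H_i$ is finitely generated, no group in $\mathcal{S}^\infty$ can be conjugate commensurable with a subgroup of any $H_i$, so Lemma \ref{uniqueenlargement} together with Lemma \ref{extensionsexist} yields a canonical extension $\overline{Y_i}$ of each $Y_i$, a nontrivial $H_i$-almost invariant subset of $\overline{G}$. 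By Lemma \ref{crossingofextensions}, for $g \in G$ the sets $Y_i$ and $gY_j$ cross in $G$ if and only if $\overline{Y_i}$ and $g\overline{Y_j}$ cross in $\overline{G}$, and for $g \in \overline{G} \setminus G$ no such crossing can occur. Hence sending a CCC of $F(\mathcal{Y})$ to the unique CCC of $F(\overline{\mathcal{Y}})$ containing the extensions of its elements is a $G$-equivariant injection of pretrees; applying Part 1 to $\overline{\mathcal{Y}}$ in $\overline{G}$ yields discreteness of the ambient pretree $\overline{P}$, whence discreteness of $P$. Once $P$ is discrete, Definition \ref{defnofalgregnbhdifRN-ready} applies and produces $\Gamma(\mathcal{Y}:G)$, and Lemma \ref{algregnbhddependsonlyonequivalenceclasses} identifies this with $\Gamma(\mathcal{X}:G)$.

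The main obstacle is showing that the injection of pretrees $P \hookrightarrow \overline{P}$ is genuinely betweenness-preserving, so that no intermediate CCC between two $G$-supported CCCs can slip in only after passing to extensions. This requires verifying that if $Y_1 \subset Y_2 \subset Y_3$ is a chain in $F(\mathcal{Y})$ exhibiting betweenness in $P$, then the extensions satisfy $\overline{Y_1} \subset \overline{Y_2} \subset \overline{Y_3}$, which will follow from the uniqueness of canonical enlargements together with Lemma \ref{extensionshavesamesmallcorner} (corresponding corners of $(\overline{Y_i},\overline{Y_j})$ are equal to those of $(Y_i, Y_j)$ when small) and Lemma \ref{extensionsareequivalent} (equivalent sets have equivalent canonical extensions).
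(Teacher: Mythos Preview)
Your approach is correct but takes a different route from the paper. The paper does \emph{not} put the whole family in very good position; it observes that corner symmetry is automatic (Lemma~2.3 of \cite{Scott symmint} in Part~1, Lemma~\ref{crossingissymmetric} in Part~2), so the only obstruction to RN-readiness is nestedness of the isolated elements, and it fixes only those. Each isolated $X_i$ has self-intersection number zero, so Theorem~2.8 of \cite{SS1} (Part~1) or Theorem~\ref{splittingsexist} (Part~2) replaces it by a set associated to a splitting, and then Theorem~2.5 of \cite{SS1} or Theorem~\ref{splittingsarecompatible} makes these replacements simultaneously nested. Non-isolated $X_i$'s are left untouched. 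Discreteness of the pretree is not argued separately; it is absorbed into the ambient theory via the finiteness of intersection numbers (Lemma~\ref{finitenumberofdoublecosets} and its predecessors in \cite{SS1,SS2}).

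Your route via very good position is heavier but more uniform, and it anticipates Proposition~\ref{verygoodpositioncubingalsoyiledsalgregnbhd}. What it buys you is that nestedness of isolated elements becomes a tautology and the cubing picture is immediately available; what the paper's approach buys is that one avoids invoking the full strength of \cite{NibloSageevScottSwarup} and Lemma~\ref{cangetvgpinsystems} when the minimal tools of \cite{SS1} suffice. Your explicit reduction of discreteness in Part~2 to Part~1 via canonical extensions is sound and in the spirit of Lemmas~\ref{partialorderforsystems} and \ref{cangetvgpinsystems}: the betweenness step you flag is routine, since $X\subset Y$ is an empty-corner condition and Lemma~\ref{extensionshavesamesmallcorner} says empty corners extend to empty corners, giving $\overline{X}\subset\overline{Y}\subset\overline{Z}$ directly. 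The one point you should make explicit is that $\overline{\mathcal{Y}}$ is itself RN-ready in $\overline{G}$, so that the pretree whose discreteness Part~1 delivers is the one you want; for $g\in\overline{G}\setminus G$ the required nestedness of isolated translates follows from each $\overline{Y_i}$ being enclosed by the vertex $V_0$ of the extension tree.
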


\begin{remark}
Note that part 1) is the special case of part 2) when the family $\mathcal{S}$
is empty.

Using Lassonde's results in \cite{Lassonde}, one can extend parts 1) and 2) of
the above existence result to allow some $H_{i}$'s to be non-finitely
generated so long as the corresponding $X_{i}$ is associated to a splitting,
and the family $\mathcal{X}$ satisfies sandwiching (see Definition
\ref{defnofsandwiching} and Remark \ref{sandwichingholdsifallisfg}). Note also
that this sandwiching condition on the family $\mathcal{X}$ cannot simply be
omitted, as shown by Example \ref{incompatiblebutintersectionnozero}, and
Remark \ref{remarkGuirardelexample}.
\end{remark}

\begin{proof}
As usual, we define $E(\mathcal{X})$ and $F(\mathcal{X})$ as in Definition
\ref{defnofE(X)andF(X)}.

1) As $G$ is finitely generated, Lemma 2.3 of \cite{Scott symmint} tells us
that crossing of almost invariant sets in $F(\mathcal{X})$ is corner
symmetric. If $\mathcal{X}$ is not RN-ready, we need to show that each $X_{i}$
is equivalent to $Y_{i}$ such that the family $\mathcal{Y=}\{Y_{i}\}_{1\leq
i\leq n}$ is RN-ready. Rather than changing every $X_{i}$ to arrange that the
family $\mathcal{Y}$ is in good position, we need only concern ourselves with
the isolated $X_{i}$'s. Note that this lemma depends crucially on the finite
generation of the $H_{i}$'s. Now we can apply results from \cite{SS1}. For
each isolated element $X$ of $E(\mathcal{X})$, the fact that $X$ crosses none
of its translates means we can apply Theorem 2.8 of \cite{SS1} which tells us
that $X$ is equivalent to $Y\ $such that the translates of $Y$ are nested.
Further Theorem 2.5 of \cite{SS1} shows that these new sets can be chosen so
that the family of all translates of all of them is nested. After replacing
each isolated $X_{i}$ in this way, the new family $\mathcal{Y=}\{Y_{i}%
\}_{1\leq i\leq n}$ is RN-ready, as required. Thus $\Gamma(\mathcal{Y}:G)$,
and hence $\Gamma(\mathcal{X}:G)$, exists.

2) As $(G,\mathcal{S})$ is a group system of finite type, and each $X_{i}$ is
$\mathcal{S}$--adapted, Lemma \ref{crossingissymmetric} tells us that crossing
of almost invariant sets in $E(\mathcal{X})$ is corner symmetric. If the
family $\mathcal{X}$ is not RN-ready, we again replace the isolated elements
of the family $E(\mathcal{X})$, as in part 1). This time, we apply Theorems
\ref{splittingsexist} and \ref{splittingsarecompatible}, which are the
generalisations of Theorem 2.8 and 2.5 of \cite{SS1} to the setting of group
systems. As in part 1), after replacing each isolated $X_{i}$ in this way, the
new family $\mathcal{Y=}\{Y_{i}\}_{1\leq i\leq n}$ is RN-ready, as required.
Thus $\Gamma(\mathcal{Y}:G)$, and hence $\Gamma(\mathcal{X}:G)$, exists.
\end{proof}

\begin{remark}
When forming the algebraic regular neighbourhood of a family $\mathcal{X}$ of
almost invariant subsets $X_{\lambda}$, $\lambda\in\Lambda$, of a group $G$,
we allow the possibility of distinct equivalent isolated almost invariant
sets. For example, if $X_{\lambda}$ is not invertible, but is equivalent to an
invertible almost invariant set $Y_{\lambda}$, there is $k$ in $G$ such that
$kY_{\lambda}=Y_{\lambda}^{\ast}$, and $kX_{\lambda}$ is equivalent to
$X_{\lambda}^{\ast}$, but is not equal to $X_{\lambda}^{\ast}$. It is also
possible that there are infinitely many equivalent, but distinct, elements of
$E(\mathcal{X})$.The simplest example of this occurs when the family
$\mathcal{X}$ consists of a single $H$--almost invariant set $X$, and $X$ is
associated to a HNN splitting of $G$ of the form $H\ast_{H}$, with both
inclusions being isomorphisms. Thus $H$ is normal in $G$ with infinite cyclic
quotient. If $k$ denotes an element of $G$ which projects to a generator of
$G/H$, then $k^{r}X$ is equivalent to $X$, but cannot be equal to $X$, for
each non-zero integer $r$. In fact, after replacing $k$ by its inverse if
needed, we have $%
{\displaystyle\bigcup\nolimits_{r\in\mathbb{Z}}}
k^{r}X=G$, and $%
{\displaystyle\bigcap\nolimits_{r\in\mathbb{Z}}}
k^{r}X$ is empty. For a topological example, let $C$ be a simple closed curve
on the $2$--torus. For a detailed discussion of all this, see the proof of
Lemma 2.7 of \cite{NibloSageevScottSwarup}.
\end{remark}

As already discussed, this idea of an algebraic regular neighbourhood is
analogous to the idea of a topological regular neighbourhood, but a slight
modification of the definition is also useful.

We will say that a bipartite graph of groups $\Gamma$ is \textit{reduced} if
whenever $\Gamma$ has an isolated vertex $v$ with two distinct adjacent
vertices, then neither of these adjacent vertices is isolated. If $\Gamma$ is
a finite bipartite graph of groups which is not reduced, we can reduce it as
follows. If $v$ and $w$ are adjacent isolated vertices of $\Gamma$, each with
two distinct adjacent vertices, then we collapse the three distinct edges
incident to at least one of $v$ and $w$ to a single edge. Repeating this
process will eventually yield a reduced bipartite graph of groups, which is
clearly unique.

\begin{definition}
If $\Gamma$ is the algebraic regular neighbourhood of a family of almost
invariant subsets of a group $G$, and if $\Gamma$ is finite, then \emph{the
reduced algebraic regular neighbourhood} of the same family of almost invariant
subsets of $G$ is the result of the above reduction process applied to
$\Gamma$.
\end{definition}

To explain the need for this definition, consider the following example. We
return to the example of closed curves $C_{\lambda}$ on a closed orientable
surface $M$. Let $F$ be a compact subsurface of $M$, such that no component of
$\partial F$ bounds a disc in $M$, and $F$ is not an annulus. Let the family
$\{C_{\lambda}\}_{\lambda\in\Lambda}$ consist of closed curves in $M$ which
are homotopic into $F$. Once this family is large enough, we expect that $F$
will be a component of the regular neighbourhood $N$ of the family. But if the
family includes a boundary component $S$ of $F$, there will be an additional
annulus component of $N$. In the purely algebraic setting, let $G\ $denote
$\pi_{1}(M)$, let $\{X_{\lambda}\}_{\lambda\in\Lambda}$ be the family of all
almost invariant subsets of $G$ over infinite cyclic subgroups of $\pi_{1}%
(F)$, and let $S_{1},\ldots,S_{k}$ denote the components of $\partial F$. Then
the algebraic regular neighbourhood $\Gamma$ of the $X_{\lambda}$'s in $G$ has
one $V_{0}$--vertex which carries $\pi_{1}(F)$, and $k$ additional isolated
$V_{0}$--vertices which carry the groups $\pi_{1}(S_{i})$, $1\leq i\leq k$.
The reduced version $\Gamma_{red}$ of $\Gamma$ has only one $V_{0}$--vertex,
which carries $\pi_{1}(F)$, so the extra isolated $V_{0}$--vertices have been
removed. In this example, $\Gamma_{red}$ has the property that each
$X_{\lambda}$ is enclosed by a unique $V_{0}$--vertex, but this need not
always be the case. For example, suppose that the subsurface $F$ of $M$ has
two components $F_{1}$ and $F_{2}$, neither of which is an annulus, and that a
component $S_{1}$ of $\partial F_{1}$ is parallel to a component $S_{2}$ of
$\partial F_{2}$. Let $\{X_{\lambda}\}_{\lambda\in\Lambda}$ be the family of
all almost invariant subsets of $G$ over infinite cyclic subgroups of $\pi
_{1}(F_{i})$, $i=1,2$. Then the reduced algebraic regular neighbourhood
$\Gamma$ of the $X_{\lambda}$'s in $G$ has two $V_{0}$--vertices which carry
$\pi_{1}(F_{1})$ and $\pi_{1}(F_{2})$. Thus the almost invariant subset $X$
corresponding to $S_{1}$ and $S_{2}$ is enclosed by both $V_{0}$--vertices.
Note that there is an isolated $V_{1}$--vertex which also encloses $X$.

At this point, we have defined reduced and unreduced algebraic regular
neighbourhoods, and shown they exist in certain cases, but have not discussed
the properties of our construction. We can substantially simplify some of the
arguments given in \cite{SS2} by using very good position, and we will now
explain how this works. This is described in Lassonde's paper \cite{Lassonde},
but we repeat it for completeness.

Let $G$ be a finitely generated group, let $H_{1},\ldots,H_{n}$ be finitely
generated subgroups of $G$, and, for $i=1,\ldots,n$, let $X_{i}$ be a
nontrivial $H_{i}$--almost invariant subset of $G$. Recall that in
\cite{NibloSageevScottSwarup}, the authors showed that we can replace each
$X_{i}$ by an equivalent set $Y_{i}$ such that the $Y_{i}$'s are in very good
position. In section \ref{goodposition}, this result was extended to the
setting of group systems of finite type.

Suppose that $Y_{1},\ldots,Y_{n}$ are in very good position, and let
$\mathcal{Y}$ denote the family $\{Y_{1},\ldots,Y_{n}\}$. Now we consider the
cubing $C=C(\mathcal{Y})$ constructed by Sageev in \cite{Sageev-cubings} from
the set $E=E(\mathcal{Y})\ $partially ordered by inclusion. Each element of
$E$ determines a hyperplane in $C$, and nested elements of $E$, which are not
equal or complementary, determine disjoint hyperplanes. As $Y_{1},\ldots
,Y_{n}$ are in very good position, non-nested elements of $E$ cross, so we see
that two distinct hyperplanes of $C$ fail to be disjoint if and only if the
corresponding elements of $E$ cross. In \cite{Niblo}, Niblo considered the
space $W$ obtained from $C$ by removing all separating vertices in \ $C$, and
then replacing each component of the result by its closure. Each component of
$W$ is itself a cubing, and Niblo showed that in each component the
hyperplanes are cross-connected. Thus the components of $W$ correspond
precisely to the CCCs of $\overline{E}$. Recall that these CCCs form the
$V_{0}$--vertices of a bipartite $G$--tree $T$ whose quotient by $G$ yields
the algebraic regular neighbourhood $\Gamma(\mathcal{Y}:G)$. Instead of using
pretrees to construct $T$, Niblo gives a natural way to directly construct $T$
from the cubing $C$. The $V_{0}$--vertices of $T$ are simply the components of
$W$, and the $V_{1}$--vertices of $T$ are the separating vertices of $C$. An
edge joins a component $w$ of $W$ to a separating vertex $v$ of $C$ if and
only if $v$ lies in $w$. If some $Y_{i}$ is isolated and invertible, we again
subdivide the corresponding edge of $G\backslash T$ in order to construct
$\Gamma(\mathcal{Y}:G)$. It is easy to see that this construction gives rise
to the same tree $T\ $as the construction in Definition \ref{defnofalgregnbhd}%
. Thus the cubing $C$ encodes all the information in $\Gamma(\mathcal{Y}:G)$
and a great deal more information about how all the elements of $E$ cross. For
later reference, we set this out as a proposition.

\begin{proposition}
\label{verygoodpositioncubingalsoyiledsalgregnbhd}Let $G$ be a finitely
generated group, let $H_{1},\ldots,H_{n}$ be finitely generated subgroups of
$G$, and, for $i=1,\ldots,n$, let $Y_{i}$ be a nontrivial $H_{i}$--almost
invariant subset of $G$. Suppose that $Y_{1},\ldots,Y_{n}$ are in very good
position, let $\mathcal{Y}$ denote the family $\{Y_{1},\ldots,Y_{n}\}$, let
$E(\mathcal{Y})=\{gY_{i},gY_{i}^{\ast}:g\in G,1\leq i\leq n\}$, and let $C$
denote the cubing constructed by Sageev in \cite{Sageev-cubings} from the set
$E(\mathcal{Y})\ $partially ordered by inclusion. Let $W$ denote the space
obtained from $C$ by removing all separating vertices, and then replacing each
component of the result by its closure. Let $T$ be the bipartite $G$--tree
whose $V_{0}$--vertices are the components of $W$, and whose $V_{1}$--vertices
are the separating vertices of $C$. An edge of $T$ joins a component $w$ of
$W$ to a separating vertex $v$ of $C$ if and only if $v$ lies in $w$.

Then\emph{ }the\emph{ algebraic regular neighbourhood }$\Gamma\mathcal{(Y}%
:G)$\emph{ of the family }$\mathcal{Y}$\emph{ in} $G$, is equal to the
quotient graph of groups $G\backslash T$, unless some $Y_{i}$'s are isolated
in $E(\mathcal{Y})$ and invertible. In that case, $\Gamma(\mathcal{Y}:G)$ is
obtained from $G\backslash T$ by inserting a $V_{0}$--vertex in the centre of
each of the edges of $G\backslash T$ whose edge splittings are given by an
isolated and invertible $Y_{i}$, and changing the adjacent $V_{0}$--vertex to
a $V_{1}$--vertex.
\end{proposition}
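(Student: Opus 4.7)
The plan is to verify that the bipartite $G$--tree built directly from the cubing $C$ via Niblo's recipe coincides canonically with the $G$--tree produced by the pretree star construction in Definition \ref{defn9.8}. Since $Y_1,\ldots,Y_n$ are in very good position over finitely generated $H_i$'s inside a finitely generated $G$, part 1) of Theorem \ref{algregnbhdexists} and the analysis preceding the statement already guarantee that $\mathcal{Y}$ is RN--ready and that $\Gamma(\mathcal{Y}:G)$ exists via the pretree approach; the task is a compatibility comparison.

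First I would record the dictionary between the cubing and the combinatorics of $E(\mathcal{Y})$. By Sageev's construction as reviewed in Section \ref{cubings}, each unordered pair $\{U,U^{\ast}\}$ with $U\in E(\mathcal{Y})$ corresponds to a unique hyperplane $\mathcal{H}_U$ in $C$, with half-spaces $\mathcal{H}_U^{+}$ recovering $U$ via the basic vertex $V_e$. The key geometric fact, used already before Proposition \ref{verygoodpositioncubingalsoyiledsalgregnbhd}, is that two distinct, non-complementary hyperplanes $\mathcal{H}_U$ and $\mathcal{H}_V$ share a dual cube if and only if $U$ and $V$ are not nested; and under very good position, non-nested means crossing. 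Thus the graph on hyperplanes with edges recording ``share a dual cube'' is exactly the crossing graph on $E(\mathcal{Y})$ modulo complementation.

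Next I would translate Niblo's decomposition of $C$ into pretree language. A vertex $v\in C$ is separating precisely when removing it disconnects $C$, so components of $W$ correspond bijectively to equivalence classes of hyperplanes under cross-connectedness, that is, to CCCs of $E(\mathcal{Y})$; since $F(\mathcal{Y})$ differs from $E(\mathcal{Y})$ only by adding sets equivalent to non-isolated elements, the CCCs of $F(\mathcal{Y})$ are the same. This identifies the $V_0$--vertices of Niblo's tree $T$ with the $V_0$--vertices of the pretree tree. For the $V_1$--vertices, I would show that a separating vertex $v$ of $C$ corresponds to a star in $P$: its link in $C$ splits into several sides, each side determines a maximal collection of elements of $E(\mathcal{Y})$ containing $v$ (resp.\ not containing $v$) that are pairwise nested and adjacent in the pretree, which is the combinatorial content of being a star. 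The edge relation in $T$—a component $w$ of $W$ is incident to a separating vertex $v\in w$—precisely encodes the pretree adjacency between a CCC and a star containing a representative of it, which is exactly the adjacency in the star construction.

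Finally, I would note that everything is natural for the $G$--action on $E(\mathcal{Y})$ (and hence on $C$), so $T$ is $G$--equivariantly isomorphic to the bipartite tree produced from $P$ by the star construction. Passing to quotients gives $G\backslash T=\Gamma(\mathcal{Y}:G)$ in the non-exceptional case. The main obstacle is the invertible-isolated case: if some $Y_i$ is isolated in $E(\mathcal{Y})$ and invertible, then in $C$ the hyperplane associated to $\{Y_i,Y_i^{\ast}\}$ has an element of $G$ interchanging its sides, which under Niblo's recipe produces a loop at a $V_0$--vertex rather than the extra bipartite structure required by Definition \ref{defnofalgregnbhdifRN-ready}. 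The fix is exactly the subdivision described in the statement: insert a new $V_0$--vertex at the midpoint of each such edge of $G\backslash T$ and relabel, exactly matching the ad hoc modification in Definition \ref{defnofalgregnbhdifRN-ready}. This step is where one must be careful about equivariance, and it is the only place where the cubing and pretree constructions do not agree tautologically; once handled, the identification is complete.
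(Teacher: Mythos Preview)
Your approach is essentially the same as the paper's. The paper does not give a formal proof after the proposition; the preceding paragraph sketches exactly the correspondence you describe (hyperplanes cross iff the associated elements of $E(\mathcal{Y})$ cross, so components of $W$ biject with CCCs, whence Niblo's tree agrees with the pretree tree), cites Niblo for the fact that hyperplanes in each component of $W$ are cross-connected, and then simply asserts ``It is easy to see that this construction gives rise to the same tree $T$ as the construction in Definition \ref{defnofalgregnbhd}.'' Your write-up supplies more of the verification (the star/separating-vertex matching, the edge relation) than the paper does.

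One small inaccuracy: in the isolated invertible case you say Niblo's recipe ``produces a loop at a $V_0$--vertex.'' It does not produce a loop. The element $k$ with $kY_i=Y_i^{\ast}$ fixes the $V_0$--vertex (the component of $W$ containing the edge dual to $\mathcal{H}_{Y_i}$) and swaps the two adjacent $V_1$--vertices, so in $G\backslash T$ one obtains a single edge from a $V_0$--vertex to a single $V_1$--vertex, exactly as in the paper's Klein-bottle discussion before Definition \ref{defnofinvertible}. The prescribed subdivision then fixes this, matching Definition \ref{defnofalgregnbhdifRN-ready}. This is only a descriptive slip and does not affect your argument.
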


Now the following results summarise the key properties of $\Gamma
(\mathcal{X}:G)$, when it exists. For simplicity, and to help the reader
follow the history, we continue to treat separately the cases of group systems
$(G,\mathcal{S})$ of finite type, and the special case when the family
$\mathcal{S}$ is empty and so $G$ is finitely generated. Note that the
properties listed in the next result formed the original definition of
$\Gamma(\mathcal{X}:G)$, Definition 6.1 of \cite{SS2}, after correcting that
definition as in part 2 of the theorem below.

\begin{theorem}
\label{keypropertiesofalgregnbhd}Let $G$ be a finitely generated group with a
family of finitely generated subgroups $\{H_{\lambda}\}_{\lambda\in\Lambda}$.
For each $\lambda\in\Lambda$, let $X_{\lambda}$ denote a nontrivial
$H_{\lambda}$--almost invariant subset of $G$, and let $\mathcal{X}$ denote
$\{X_{\lambda}:\lambda\in\Lambda\}$. Let $E(\mathcal{X})$ denote
$\{gX_{\lambda},gX_{\lambda}^{\ast}:g\in G,\lambda\in\Lambda\}$. If the
algebraic regular neighbourhood $\Gamma(\mathcal{X}:G)$ of $\mathcal{X}$ in $G$
exists, then $\Gamma(\mathcal{X}:G)$ is the unique bipartite graph of groups
structure $\Gamma$ for $G$ such that the following conditions hold:

\begin{enumerate}
\item Each $X_{\lambda}$ is enclosed by some $V_{0}$--vertex of $\Gamma$, and
each $V_{0}$--vertex of $\Gamma$ encloses some $X_{\lambda}$.

\item If $X$ is a $H$--almost invariant subset of $G$ which crosses no element
of $E(\mathcal{X})$, if $X$ is associated to a splitting of $G$ over $H$, and
if $X$ is sandwiched by $E(\mathcal{X})$, then $X$ is enclosed by some $V_{1}%
$--vertex of $\Gamma$.

\item $\Gamma$ is minimal.

\item There is a bijection $f$ from the $G$--orbits of isolated elements of
$\overline{E}(\mathcal{X})$ to the isolated $V_{0}$--vertices of $\Gamma$,
such that $f(\overline{X})$ encloses $X$.
\end{enumerate}
\end{theorem}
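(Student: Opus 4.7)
The plan is to proceed in two stages: first, verify that the algebraic regular neighbourhood $\Gamma(\mathcal{X}:G)$ constructed in Definition \ref{defnofalgregnbhd} satisfies properties (1)--(4); second, show that any bipartite graph of groups structure for $G$ satisfying (1)--(4) must coincide with it. By Lemma \ref{algregnbhddependsonlyonequivalenceclasses}, I may replace each $X_\lambda$ by an equivalent $Y_\lambda$ so that the finite subfamilies of $\mathcal{Y}$ are in very good position (Lemma \ref{canobtainverygoodposition}), which allows me to work with Sageev's cubing $C=C(\mathcal{Y})$ as in Proposition \ref{verygoodpositioncubingalsoyiledsalgregnbhd}. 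In this picture, $V_0$-vertices of the covering tree $T$ are the components of the space $W$ obtained by removing the separating vertices from $C$, and $V_1$-vertices are these separating vertices (with the invertible-isolated subdivision handled as in Definition \ref{defn9.8}).

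For property (1), each $Y_\lambda$ determines a hyperplane which lies in exactly one component of $W$, giving a unique enclosing $V_0$-vertex; conversely, each component of $W$ contains a hyperplane by construction, and that hyperplane is dual to some translate of some $Y_\lambda$, so the corresponding $V_0$-vertex encloses $Y_\lambda$ itself (or a translate, which suffices). Property (4) is essentially bookkeeping: isolated elements of $\overline{E}(\mathcal{X})$ correspond precisely to hyperplanes whose equivalence class under crossing contains only themselves and their complements, and these are exactly the sources of isolated $V_0$-vertices—invertible ones via the explicit subdivision step in Definition \ref{defnofalgregnbhdifRN-ready}, non-invertible ones directly. Property (3) then follows because every $V_0$-vertex carries some enclosed $X_\lambda$ (by (1)) and every $V_1$-vertex is either a separating vertex between two CCCs or a subdivision vertex, so no edge can be collapsed without contradicting the bijection in (4) or the pretree structure of Lemma \ref{Pisapretree}.

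The main obstacle is property (2). Let $X$ be a nontrivial $H$-almost invariant set associated to a splitting over $H$, which crosses no element of $E(\mathcal{X})$ and is sandwiched by $E(\mathcal{X})$. Since $X$ does not cross any element of $E(\mathcal{X})$, for every $U\in E(\mathcal{X})$ one corner of $(U,X)$ is small, so after choosing signs one has $U\le X$ or $X\le U$. The sandwiching hypothesis prevents the pathological configurations of Example \ref{incompatiblebutintersectionnozero} and Remark \ref{remarkGuirardelexample}, and I would use it to define an ultrafilter $V(X)$ on $E(\mathcal{X})$, partially ordered by inclusion, by declaring $V(X)=\{U\in E(\mathcal{X}): X\le U\}$ (after fixing a sign for $X$). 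The key verification is that $V(X)$ gives a vertex of $C$ in Sageev's cubing, and in fact a separating vertex: because $X$ is associated to a splitting, its translates are nested, so $V(gX)$ varies compatibly with the $G$-action and in particular no hyperplane of $C$ passes through $V(X)$. Hence $V(X)$ is a $V_1$-vertex of $T$ which encloses $X$.

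For uniqueness, suppose $\Gamma'$ is another bipartite graph of groups structure for $G$ satisfying (1)--(4), with covering tree $T'$. Property (1) forces a $G$-equivariant bijection between $V_0$-orbits of $T'$ and CCCs of $F(\mathcal{Y})$ (equivalently, $V_0$-orbits of $T$): the $V_0$-vertex of $T'$ enclosing the elements of a given CCC must be unique, since two $V_0$-vertices enclosing the same almost invariant set would violate minimality. Property (2) identifies the $V_1$-vertices of $T'$ with the separating-vertex data of $C$ (equivalently, with edges in the pretree construction), property (4) handles the isolated invertible case, and property (3) rules out extra collapsible edges. Thus the $G$-equivariant bijection of vertex orbits extends to a $G$-equivariant isomorphism $T\to T'$, completing the proof.
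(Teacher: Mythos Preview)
Your overall architecture is reasonable and in the spirit of Proposition~\ref{verygoodpositioncubingalsoyiledsalgregnbhd}, but the paper takes a different and much shorter route: it simply cites Lemma~5.1, Proposition~5.2, Proposition~5.7, and Theorem~6.7 of \cite{SS2} (with the sandwiching correction discussed in the appendix) for conditions (1)--(3) and for uniqueness, and reads (4) directly from Definition~\ref{defnofalgregnbhd}. Your attempt to rebuild the argument through the cubing is more self-contained, but it has a genuine gap in the treatment of condition~(2).

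The problem is your claim that $V(X)=\{U\in E(\mathcal{X}): X\le U\}$ is a \emph{separating} vertex of $C$. First, the justification ``no hyperplane of $C$ passes through $V(X)$'' is not a meaningful criterion: hyperplanes in a cube complex never pass through vertices, so this does not distinguish cut vertices from others. What you actually need is that removing $V(X)$ disconnects $C$, i.e.\ that there exist hyperplanes on both sides of $V(X)$ which cannot be connected through a chain of crossing hyperplanes avoiding $V(X)$. This is exactly where the sandwiching hypothesis does work (it guarantees elements of $E(\mathcal{X})$ strictly below and strictly above $X$), but you have not used it in that way, and the link to ``$X$ is associated to a splitting, so its translates are nested'' does not supply the missing step. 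Second, you have not checked that $V(X)$ is even a well-defined ultrafilter (when the pair $(X,U)$ has two small corners you must say which of $U,U^\ast$ goes in), nor that it lies in the distinguished component $C$ rather than some other component of the Sageev complex. Finally, even granting that $V(X)$ is a $V_1$--vertex of $T$, you still owe the verification that $X$ is \emph{enclosed} by it in the sense of Definition~\ref{defnofenclosing}; this is the content of the second half of the (corrected) proof of Proposition~5.7 in \cite{SS2}, and it is not automatic from the ultrafilter description.

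Your uniqueness sketch is also too thin. The assertion that ``two $V_0$--vertices enclosing the same almost invariant set would violate minimality'' is false in general: an $X_\lambda$ associated to an edge splitting of $\Gamma'$ is enclosed by both adjacent vertices. The actual uniqueness proof (Theorem~6.7 of \cite{SS2}) builds an explicit equivariant map between the two trees by using condition~(2) to push edge splittings of one tree into $V_1$--vertices of the other, and condition~(4) to pin down the isolated $V_0$--vertices; your outline gestures at this but does not carry it out.
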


\begin{remark}
In part 2), it is not assumed that $H$ is finitely generated.

As $G$ is finitely generated, and $\Gamma$ is minimal, it follows that
$\Gamma$ is finite.
\end{remark}

\begin{proof}
We start by showing that $\Gamma(\mathcal{X}:G)$ satisfies conditions 1)-4).
Condition 4) is clear from Definition \ref{defnofalgregnbhd}. Conditions 1)-2)
were proved in Lemma 5.1 and Proposition 5.7 of \cite{SS2}, using good
position. Finally condition 3) is just Proposition 5.2 of \cite{SS2}.

Conversely, if $\Gamma$ is a bipartite graph of groups structure for $G$ such
that conditions 1)-4) hold, the uniqueness result in Theorem 6.7 of \cite{SS2}
shows that $\Gamma$ must equal $\Gamma(\mathcal{X}:G)$.
\end{proof}

\begin{remark}
\label{XnotcrossinganyXiisenclosedbysomeV1-vertex}In condition 2), if $H$ is
assumed to be finitely generated, the argument of Proposition 5.7 of
\cite{SS2} applies to any $H$--almost invariant subset $X$ of $G$ which
crosses no element of $E(\mathcal{X})$, whether or not it is associated to a
splitting. The conclusion is that such $X$ must be enclosed by some $V_{1}%
$--vertex, which was first proved in Proposition 5.7 of \cite{SS2}.
\end{remark}

Before proving the corresponding result in the setting of group systems of
finite type, we need the following technical lemma and corollary.

\begin{lemma}
\label{UcannotbeK-adapted}Let $G$ be a finitely generated group with a
finitely generated subgroup $H$, and let $U$ be a nontrivial $H$--almost
invariant subset of $G$. Let $k$ be an element of $G$, and let $K$ be the
cyclic subgroup generated by $k$. If $kU<U$, then $U\ $cannot be
$K$--adapted.{}
\end{lemma}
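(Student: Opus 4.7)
The plan is to argue by contradiction. Suppose $U$ is $K$-adapted. By Definition \ref{defnofadapted}, we may replace $U$ by an equivalent $H^{\prime}$-almost invariant set $U^{\prime}$ (with $H^{\prime}$ commensurable with $H$) that is strictly $K$-adapted, so $U^{\prime}$ is a disjoint union of right cosets $gK$, equivalently $U^{\prime}K=U^{\prime}$. The hypothesis $kU<U$ then transfers to $U^{\prime}$: using that $U+U^{\prime}$ is $H$-finite, a short symmetric-difference calculation (as in the proofs of Lemmas \ref{extensionshavesamesmallcorner} and \ref{extensionsareequivalent}) shows that the corner $kU^{\prime}\cap(U^{\prime})^{\ast}$ is finite over $H\cap H^{\prime}$, which is of finite index in $H$, so this corner is $H$-finite.

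The crucial observation is that since left multiplication by $k$ commutes with the right $K$-action, $kU^{\prime}$ is again a disjoint union of right cosets $gK$, and hence so is $kU^{\prime}\cap(U^{\prime})^{\ast}$. I would first handle the main case in which $K$ is not conjugate commensurable with any subgroup of $H$. Under this assumption, Lemma \ref{Neumannlemma} forces every right coset $gK$ to be $H$-infinite, so an $H$-finite union of such cosets must be empty. Therefore $kU^{\prime}\subseteq U^{\prime}$ as an actual containment, and interpreting the inequality $kU<U$ strictly (so that $U\not\sim kU$, i.e.\ $U\setminus kU$ is $H$-infinite, which is the only non-vacuous case) upgrades this to a proper inclusion $kU^{\prime}\subsetneq U^{\prime}$.

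Now I would invoke Lemma \ref{cubinghasafixedvertex}: since $U$ is $K$-adapted and $K$ is not conjugate commensurable with a subgroup of $H$, the Sageev cubing $C(U)$ built from $E=\{gU,gU^{\ast}:g\in G\}$ admits a vertex fixed by $K$, and in particular by $k$. On the other hand, by the discussion of hyperplanes in Sageev's construction in Section \ref{cubings}, the proper inclusion $kU^{\prime}\subsetneq U^{\prime}$ means that the hyperplanes $\mathcal{H}_{U^{\prime}}$ and $k\mathcal{H}_{U^{\prime}}=\mathcal{H}_{kU^{\prime}}$ are distinct and \emph{disjoint} in $C(U)$. A standard $\operatorname{CAT}(0)$ argument then forces $k$ to act as a hyperbolic isometry of $C(U)$ with translation length at least one, which is incompatible with the existence of a $k$-fixed vertex; this yields the desired contradiction.

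The main obstacle is the remaining case where $K$ \emph{is} conjugate commensurable with a subgroup of $H$, where Lemma \ref{cubinghasafixedvertex} does not directly apply. When $K$ is finite, iterating $kU<U$ around $k^{|K|}U=U$ collapses the chain to $kU\sim U$, making the hypothesis vacuous in its strict form; so one may assume $K$ is infinite. In that subcase, some nontrivial power $k^{m}$ lies in a conjugate $gHg^{-1}$, and the plan is to reduce to the main case by restricting to the finite-index subgroup $K_{0}=\langle k^{m}\rangle\leq K$ and conjugating by $g$, then reapplying the right-coset-saturation argument above to $k^{m}U^{\prime}$ and $U^{\prime}$ with Neumann's Lemma \ref{Neumannlemma} controlling the residual $H$-finiteness. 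This finite-index reduction is the delicate step and is where the most care is needed.
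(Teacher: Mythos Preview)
Your approach is quite different from the paper's, and your second case has a real gap.

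The paper's argument is short and uniform, with no case split. Working in the Cayley graph of $G$, Lemma~2.31 of \cite{SS2} says that $\{g\in G: gU\leq U\}$ lies in a bounded neighbourhood of $U$; since $k^{n}U\leq U$ for all $n\geq 1$, the set $K_{+}=\{k^{n}:n\geq 1\}$ lies in such a neighbourhood. If $K_{+}$ were also contained in a bounded neighbourhood of $\delta U$ it would be $H$-finite, forcing some positive power of $k$ into $H$ and hence $k^{n}U=U$, contradicting $k^{n}U<U$. Therefore $K_{+}$, and hence $K\cap U$, is $H$-infinite. The symmetric argument with $k^{-1}$ and $U^{\ast}$ shows $K\cap U^{\ast}$ is $H$-infinite, and now Lemma~\ref{S-adaptedimpliesgSintersectXisH-finite} (with $g=e$) rules out $K$-adaptedness.

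Your main case is essentially fine, though the cubing should be $C(U')$ rather than $C(U)$: the hyperplane $\mathcal{H}_{U'}$ lives in the cubing built from translates of $U'$, not of $U$. With that correction, Lemma~\ref{cubinghasafixedvertex} applies to $C(U')$ since $U'$ is strictly $K$-adapted, and the nested-hyperplane argument does force $k$ to act without fixed vertex. But your second case is not a proof. When $K$ is conjugate commensurable with a subgroup of $H$, Lemma~\ref{cubinghasafixedvertex} is unavailable, and your saturation argument also collapses: precisely in this case some cosets $gK$ \emph{are} $H$-finite, so an $H$-finite set that is a union of such cosets need not be empty, and you cannot promote $kU'\leq U'$ to an inclusion. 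Your proposed reduction to $K_{0}=\langle k^{m}\rangle$ does not repair this: knowing $k^{m}\in gHg^{-1}$ gives $k^{m}(gU)=gU$, not $k^{m}U=U$, and conjugating by $g$ replaces $U$ by $g^{-1}U$, which scrambles the hypothesis $kU<U$. The paper's Cayley-graph argument simply never meets this obstacle.

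A smaller point: your dismissal of the possibility $kU\sim U$ as ``vacuous'' is not licensed by the statement, which (per the Remark) defines $kU<U$ as $kU\leq U$ together with $kU\neq U$, allowing $kU\sim U$.
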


\begin{remark}
The notation $kU<U$ means that $kU\leq U$ and $kU\neq U$.
\end{remark}

\begin{proof}
We will work in the Cayley graph $\Gamma$ of $G$ with respect to some finite
generating set. As $k^{n}U<U$, for all $n\geq1$, it follows that $K$ is
infinite cyclic. We let $K_{+}$ denote $\{k^{n}:n\geq1\}$, and let $K_{-}$
denote $\{k^{n}:n\leq-1\}$.

Lemma 2.31 of \cite{SS2} tells us that $(g\in G:gU\leq U\}$ is contained in a
bounded neighbourhood of $U$ in $\Gamma$. It follows that $K_{+}$ is contained
in a bounded neighbourhood of $U$. Further $K_{+}$ cannot be contained in a
bounded neighbourhood of $\delta U$. For this would imply that $K_{+}$ is
$H$--finite, and hence that there is $n\geq1$ such that $k^{n}\in H$, which is
impossible as $k^{n}U<U$ but $hU=U$, for all $h\in H$. We conclude that
$K_{+}$ is $H$--infinite, and that $K\cap U$ is also $H$--infinite.

Now the fact that $kU<U$ implies that $k^{-1}U^{\ast}<U^{\ast}$. Thus we can
apply the above arguments with $K_{-}$ replacing $K_{+}$ and $U^{\ast}$
replacing $U$. We conclude that $K\cap U^{\ast}$ is also $H$--infinite.

As $K\cap U$ and $K\cap U^{\ast}$ are both $H$--infinite, it follows that
$U\ $cannot be $K$--adapted, as required.
\end{proof}

\begin{corollary}
\label{Ucannot beK-adaptedinsystem}Let $(G,\mathcal{S})$ be a group system of
finite type, let $H$ be a finitely generated subgroup of $G$, and let $U$
denote a nontrivial $\mathcal{S}$--adapted $H$--almost invariant subset of
$G$. Let $k$ be an element of $G$, and let $K$ be the cyclic subgroup
generated by $k$. If $kU<U$, then $U\ $cannot be $K$--adapted.
\end{corollary}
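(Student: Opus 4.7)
The plan is to reduce this to Lemma \ref{UcannotbeK-adapted} by passing to a finitely generated standard $\mathcal{S}^{\infty}$--extension of $G$ and working with the canonical extension of $U$. Since $H$ is finitely generated, no $S_i$ in $\mathcal{S}^{\infty}$ can be conjugate commensurable with a subgroup of $H$ (as $H$-finiteness of a coset would force finite generation by part 1 of Lemma \ref{Neumannlemma}). Hence the hypotheses of Lemma \ref{uniqueenlargement} are satisfied, and Lemmas \ref{extensionsexist} together with Remark \ref{extensioniscanonical} provide a finitely generated standard extension $\overline{G}$ together with a canonical extension $\overline{U}$, an $H$-almost invariant subset of $\overline{G}$ with $\overline{U}\cap G=U$. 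Nontriviality of $\overline{U}$ is immediate, since $\overline{U}\cap G=U$ and $\overline{U}^{\ast}\cap G=U^{\ast}$ are both $H$-infinite.

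Next I would establish that $k\overline{U}<\overline{U}$ in $\overline{G}$. By the $G$-equivariance of the canonical-enlargement construction (or directly from uniqueness in Lemma \ref{uniqueenlargement} applied to $kU$), the translate $k\overline{U}$ is the canonical extension of $kU$. Since $kU<U$ means $kU\cap U^{\ast}$ is $H$-finite, Lemma \ref{extensionshavesamesmallcorner}, applied to the pair of canonical extensions $k\overline{U}$ and $\overline{U}$, tells us that the corresponding corner $k\overline{U}\cap\overline{U}^{\ast}$ equals $kU\cap U^{\ast}$ and is therefore $H$-finite. Moreover $k\overline{U}\neq\overline{U}$, because intersecting with $G$ would otherwise yield $kU=U$, contradicting the strict inequality $kU<U$. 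Thus $k\overline{U}<\overline{U}$ in $\overline{G}$.

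Now I would apply Lemma \ref{UcannotbeK-adapted} to $\overline{G}$, which is finitely generated, with the finitely generated subgroup $H$ and the nontrivial $H$-almost invariant subset $\overline{U}$. Its proof yields the stronger intermediate conclusion that $K\cap\overline{U}$ and $K\cap\overline{U}^{\ast}$ are both $H$-infinite. Because $K$ is a subgroup of $G$, we have $K\cap\overline{U}=K\cap(\overline{U}\cap G)=K\cap U$ and similarly $K\cap\overline{U}^{\ast}=K\cap U^{\ast}$. Therefore both $K\cap U$ and $K\cap U^{\ast}$ are $H$-infinite in $G$. By Lemma \ref{S-adaptedimpliesgSintersectXisH-finite} (with $g=e$ and $S=K$), this rules out $U$ being $K$-adapted, which is the desired conclusion.

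The only step requiring any care is verifying that $k\overline{U}$ is the canonical extension of $kU$, so that Lemma \ref{extensionshavesamesmallcorner} applies verbatim to transfer the $H$-finiteness of the corner from $G$ to $\overline{G}$; everything else is bookkeeping. No obstacle is really serious here, because the tools for moving between $G$ and finitely generated $\mathcal{S}^{\infty}$-extensions have already been developed, and the role of the finitely generated hypothesis in Lemma \ref{UcannotbeK-adapted} is precisely what is restored by passing to $\overline{G}$.
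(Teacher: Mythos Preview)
Your proposal is correct and follows exactly the same route as the paper: pass to a finitely generated standard $\mathcal{S}^{\infty}$--extension $\overline{G}$, take the canonical extension $\overline{U}$, deduce $k\overline{U}<\overline{U}$, invoke the intermediate conclusion of Lemma~\ref{UcannotbeK-adapted} that $K\cap\overline{U}$ and $K\cap\overline{U}^{\ast}$ are both $H$--infinite, and then intersect with $G$. You have simply supplied more justification than the paper does (in particular, your use of Lemma~\ref{extensionshavesamesmallcorner} to transfer the small corner and your check that $k\overline{U}\neq\overline{U}$ make explicit what the paper compresses into ``As $kU<U$, it follows that $k\overline{U}<\overline{U}$'').
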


\begin{proof}
We consider a $S^{\infty}$--extension $\overline{G}$ of $G$, and the unique
extension $\overline{U}$ of $U$. As $kU<U$, it follows that $k\overline
{U}<\overline{U}$. Now the preceding lemma tells us that $K\cap\overline{U}$
and $K\cap\overline{U}^{\ast}$ are both $H$--infinite. As $K\subset G$, it
follows that $K\cap U$ and $K\cap U^{\ast}$ are both $H$--infinite, so that
$U\ $cannot be $K$--adapted, as required.
\end{proof}

Our next result summarises the key properties of $\Gamma(\mathcal{X}:G)$, when
it exists, in the setting of group systems of finite type.

\begin{theorem}
\label{keypropertiesofalgregnbhdofpair}Let $(G,\mathcal{S})$ be a group system
of finite type, and let $\{H_{\lambda}\}_{\lambda\in\Lambda}$ be a family of
finitely generated subgroups of $G$. For each $\lambda\in\Lambda$, let
$X_{\lambda}$ denote a nontrivial $\mathcal{S}$--adapted $H_{\lambda}$--almost
invariant subset of $G$, and let $\mathcal{X}$ denote $\{X_{\lambda}%
:\lambda\in\Lambda\}$. Let $E(\mathcal{X})$ denote $\{gX_{\lambda}%
,gX_{\lambda}^{\ast}:g\in G,\lambda\in\Lambda\}$. If the algebraic regular
neighbourhood $\Gamma(\mathcal{X}:G)$ of $\mathcal{X}$ in $G$ exists, then
$\Gamma(\mathcal{X}:G)$ is the unique bipartite graph of groups structure
$\Gamma$ for $G$ such that the following conditions hold:

\begin{enumerate}
\item $\Gamma$ is $\mathcal{S}$--adapted.

\item Each $X_{\lambda}$ is enclosed by some $V_{0}$--vertex of $\Gamma$, and
each $V_{0}$--vertex of $\Gamma$ encloses some $X_{\lambda}$.

\item If $X$ is a $\mathcal{S}$--adapted $H$--almost invariant subset of $G$
which crosses no element of $E(\mathcal{X})$, if $X$ is associated to a
splitting of $G$ over $H$, and if $X$ is sandwiched by $E(\mathcal{X})$, then
$X$ is enclosed by some $V_{1}$--vertex of $\Gamma$.

\item $\Gamma$ is minimal.

\item There is a bijection $f$ from the $G$--orbits of isolated elements of
$\overline{E}(\mathcal{X})$ to the isolated $V_{0}$--vertices of $\Gamma$,
such that $f(\overline{X})$ encloses $X$.
\end{enumerate}
\end{theorem}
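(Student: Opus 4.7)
The plan is to mirror the proof of the absolute version, Theorem \ref{keypropertiesofalgregnbhd}, treating the $\mathcal{S}$--adaptedness condition 1) as the only genuinely new feature and reducing conditions 2)--5) to the finitely generated case via a standard $\mathcal{S}^{\infty}$--extension $\overline{G}$ of $G$. The argument splits into two halves: first, that $\Gamma(\mathcal{X}:G)$ satisfies the five conditions; second, that any bipartite graph of groups satisfying them must coincide with $\Gamma(\mathcal{X}:G)$.

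For the first half, I would use Lemma \ref{uniqueenlargement} and Lemma \ref{extensionsexist} to produce canonical extensions $\overline{X_{\lambda}}$ of each $X_{\lambda}$ to $\overline{G}$, and Lemma \ref{crossingofextensions} to show that the crossing pattern, and hence the pretree of cross--connected components, transports faithfully under extension. The $G$--tree $T$ built from the CCCs of $F(\mathcal{X})$ then embeds $G$--equivariantly as the $G$--minimal subtree of the analogous $\overline{G}$--tree $\overline{T}$. Conditions 3), 4) and 5) then follow from the construction together with the corresponding assertions of Theorem \ref{keypropertiesofalgregnbhd} applied to $\overline{G}$. For condition 2), Lemma \ref{extensionsexist} gives the enclosing of each $X_{\lambda}$ at a $V_{0}$--vertex (in the sense of Definition \ref{defnofenclosing}), while Lemma \ref{XbarenclosedbyvimpliesthatXisadapted}, together with the sandwiching version of Proposition 5.7 of \cite{SS2}, handles the converse statement about $V_{1}$--vertices.

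Condition 1) is the genuinely new ingredient. By Lemma \ref{adaptedgraphofgroups}, it suffices to show that every edge splitting of $\Gamma(\mathcal{X}:G)$ is $\mathcal{S}$--adapted, and by Lemma \ref{splittingisadaptediffa.i.setis} this reduces to showing that an $H$--almost invariant set $Y_{e}$ associated to each edge $e$ of $T$ is itself $\mathcal{S}$--adapted. Such a $Y_{e}$ is determined by the bipartition of the vertex set of $T$ given by $e$, so it is equivalent to a suitable Boolean combination of finitely many translates of the $X_{\lambda}$'s lying on one side of $e$; since every element of $F(\mathcal{X})$ is $\mathcal{S}$--adapted by hypothesis and $\mathcal{S}$--adaptedness is preserved under equivalence, the set $Y_{e}$ is $\mathcal{S}$--adapted as required. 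Equivalently, one can argue that each $S_{i}$ must fix a vertex of $T$: if not, its axis would have to cross some $Y_{e}$, forcing some element of $F(\mathcal{X})$ to fail $S_{i}$--adaptedness, contradicting the hypothesis.

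For uniqueness, I would follow the argument of Theorem 6.7 of \cite{SS2}. Any bipartite graph of groups $\Gamma$ satisfying conditions 1)--5) has edge almost invariant sets that are compatible with the CCC--plus--star structure on $F(\mathcal{X})$; this pins down the underlying bipartite $G$--tree up to equivariant isomorphism, and condition 5) fixes the isolated vertex data. The main obstacle I expect is in condition 1): the edge a.i. sets of $\Gamma(\mathcal{X}:G)$ are not themselves elements of $E(\mathcal{X})$, so verifying that the particular combinations arising from the star construction genuinely preserve $\mathcal{S}$--adaptedness requires careful bookkeeping. The machinery of section \ref{extensionsandcrossing}, combined with Lemma \ref{adaptedgraphofgroups} and Lemma \ref{splittingisadaptediffa.i.setis}, is what does the heavy lifting.
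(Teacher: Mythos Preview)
Your overall shape is right --- verify conditions 1)--5) for $\Gamma(\mathcal{X}:G)$, then invoke the uniqueness argument of Theorem 6.7 of \cite{SS2} --- but your treatment of condition 1) has a real gap, and your approach to 2)--3) differs from the paper's in a way worth noting.

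For condition 1), your first proposed route is unsound. The edge almost invariant sets $Y_e$ of the regular neighbourhood are \emph{not} in general equivalent to Boolean combinations of finitely many translates of the $X_\lambda$'s; they arise from the star construction on the pretree of CCCs, and there is no reason a star should be ``finitely generated'' in this sense. You acknowledge this difficulty at the end of your proposal but do not resolve it. Your alternative axis argument is the right idea, but the step ``forcing some element of $F(\mathcal{X})$ to fail $S_i$--adaptedness'' needs a genuine lemma. The paper develops precisely this tool just before the theorem: Lemma \ref{UcannotbeK-adapted} and Corollary \ref{Ucannot beK-adaptedinsystem} show that if $kU<U$ for $U\in E(\mathcal{X})$, then $U$ cannot be adapted to the cyclic group $\langle k\rangle$. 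The paper's proof of 1) then runs as follows: if $S$ fails to fix a vertex of $T$, take a hyperbolic element $k\in S$ with axis $l$, pick a $V_0$--vertex $Z$ on $l$, and use the pretree betweenness relation among $k^{-1}A,A,kA$ (via Lemma 3.5 of \cite{SS2}) to deduce $k^{-1}U<U$ for some $U$ in the CCC $A$. Corollary \ref{Ucannot beK-adaptedinsystem} then contradicts the assumed $\mathcal{S}$--adaptedness of $U$. The non-finitely-generated case of $S$ is handled by the ascending-subgroup argument from the end of Lemma \ref{splittingisadaptediffa.i.setis}. Without this lemma your axis argument is incomplete.

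For conditions 2) and 3), the paper does not transport the whole pretree to $\overline{G}$ as you propose. It works directly in $G$, following the proofs of Lemma 5.1 and Proposition 5.7 of \cite{SS2}, and passes to a $\mathcal{S}^\infty$--extension $\overline{G}$ only at the single technical step where Lemma 2.31 of \cite{SS2} is invoked (that lemma needs a Cayley--graph argument and hence finite generation). Your wholesale-transport approach could probably be made to work using Lemma \ref{crossingofextensions}, but you would need to argue carefully that the $G$--tree from the CCCs of $F(\mathcal{X})$ really is the $G$--minimal subtree of the $\overline{G}$--tree from the CCCs of $F(\overline{\mathcal{X}})$; this is not automatic. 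Conditions 4) and 5) and the uniqueness half are handled as you say.
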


\begin{remark}
\label{remarkongroupsystems}In part 3), it is not assumed that $H$ is finitely generated.

Recall from Definition \ref{defnofadaptedgraphofgroups} that a graph of groups
structure $\Gamma$ for $G$ is $\mathcal{S}$--adapted if each $S_{i}$ in
$\mathcal{S}$ is conjugate into a vertex group of $\Gamma$.

Note that Theorem \ref{keypropertiesofalgregnbhd} is the special case of
Theorem \ref{keypropertiesofalgregnbhdofpair} obtained when the family
$\mathcal{S}$ is empty.

As $(G,\mathcal{S})$ is a group system of finite type, and $\Gamma
(\mathcal{X}:G)$ is $\mathcal{S}$--adapted, it follows that $G$ is finitely
generated relative to some finite collection of vertex groups of $\Gamma$. As
$\Gamma$ is minimal, this implies that $\Gamma$ must be finite.
\end{remark}

\begin{proof}
We start by showing that the algebraic regular neighbourhood $\Gamma
(\mathcal{X}:G)$ satisfies conditions 1)-5).

1) Let $T$ be the universal covering $G$--tree of $\Gamma$, and let $S$ denote
one of the groups in $\mathcal{S}$. We need to show that $S$ fixes some vertex
of $T$. In order to prove this result, we will follow the ideas in the proof
of Lemma \ref{splittingisadaptediffa.i.setis}. First suppose that $S$ is
finitely generated. If $S$ does not fix a vertex of $T$, then some element $k$
of $S$ also does not fix a vertex and so has an axis $l$ in $T$. Let $Z$ be a
$V_{0}$--vertex of $l$, and consider the vertices $k^{-1}Z$ and $kZ$. These
vertices correspond to points $k^{-1}A$, $A$ and $kA$ of the pretree $P$ used
to define $T$. As $Z$ is between $k^{-1}Z$ and $kZ$ on the axis $l$, it
follows that $A$ is between $k^{-1}A$ and $kA$ in the pretree $P$. Hence there
are almost invariant sets $V$, $U$ and $W$ in $k^{-1}A$, $A$ and $kA$
respectively such that $V<U<W$. Thus $kV$ and $kU$ are almost invariant sets
in $A$ and $kA$ respectively such that $kV<kU$. Now Lemma 3.5 of \cite{SS2}
and the fact that $U<W$ in $kA$ implies that for any almost invariant set $X$
in $kA$, we have $U<X$ or $U<X^{\ast}$. In particular $U<kU$ or $U<kU^{\ast}$.
Similarly the fact that $kV<kU$ implies that for any almost invariant set $Y$
in $A$, we have $Y<kU$ or $Y^{\ast}<kU$. In particular $U<kU$ or $U^{\ast}%
<kU$. It follows that we must have $U<kU$, so that $k^{-1}U<U$. Now we apply
Corollary \ref{Ucannot beK-adaptedinsystem} with $k^{-1}$ in place of $k$ to
tell us that $U\ $cannot be $K$--adapted, and hence cannot be $S$--adapted,
which is a contradiction. Thus $S$ must fix a vertex of $T$.

In the general case when $S$ need not be finitely generated, we can apply the
argument at the end of the proof of Lemma \ref{splittingisadaptediffa.i.setis}.

We conclude that $\Gamma(\mathcal{X}:G)$ is $\mathcal{S}$--adapted, which
completes the proof of part 1).

2) As for part 1) of Theorem \ref{keypropertiesofalgregnbhd}, we wish to apply
the proof of Lemma 5.1 of \cite{SS2}. There is no difficulty till near the end
of that proof where Lemma 2.31 of \cite{SS2} is quoted. To deal with this
difficulty we proceed as follows. As in the proof of Lemma 2.31 of \cite{SS2},
let $U$ be an element of $E(\mathcal{X})$ for which we have shown that
$gU^{(\ast)}\leq U$, for every $g\in Z_{s}^{\ast}$. Now consider a
$\mathcal{S}^{\infty}$ extension $\overline{G}$ of $G$, and let $\overline{U}$
be the unique extension of $U$ to $\overline{G}$. As the splitting associated
to $Z_{s}$ is $\mathcal{S}$--adapted, there is an associated splitting of
$\overline{G}$ refining the decomposition of $\overline{G}$ as a
$\mathcal{S}^{\infty}$ extension $\overline{G}$ of $G$. We let $\overline
{Z_{s}}$ denote an associated almost invariant subset of $\overline{G}$. Now
we apply Lemma 2.31 of \cite{SS2} to tell us that $\overline{Z_{s}}^{\ast}$
lies in a bounded neighbourhood of $U$ in the Cayley graph of $\overline{G}$.
Thus $\overline{U}^{\ast}\cap\overline{Z_{s}}^{\ast}$ is small, and hence
$U\cap Z_{s}^{\ast}$ is small. Now the rest of the proof of Lemma 5.1 of
\cite{SS2} applies to complete the proof of part 2).

3) We proceed as in the proof of part 2) but using the proof of Proposition
5.7 of \cite{SS2} instead of Lemma 5.1 of \cite{SS2}.

4) This follows from the argument of Proposition 5.2 of \cite{SS2}.

5) This is clear from Definition \ref{defnofalgregnbhd}.

This completes the proof that $\Gamma(\mathcal{X}:G)$ satisfies conditions
1)-5) of the theorem.

Conversely, suppose that $\Gamma$ is a bipartite graph of groups structure for
$G$ such that conditions 1)-5) hold. We note that the proof of the uniqueness
result in Theorem 6.7 of \cite{SS2} applies equally well to the setting of
group systems of finite type. From Remark \ref{remarkongroupsystems}, we know
that any such algebraic regular neighbourhood is finite. The crucial point to
note is that as $\Gamma$ is $\mathcal{S}$--adapted, Lemma
\ref{adaptedgraphofgroups} shows that each edge splitting of $\Gamma$ is
$\mathcal{S}$--adapted. Now condition 3) of the theorem leads to the
uniqueness result as in the proof of Theorem 6.7 of \cite{SS2}.
\end{proof}

\begin{remark}
In the proof of condition 2), if $H$ is assumed to be finitely generated, the
argument applies to any $\mathcal{S}$--adapted $H$--almost invariant subset
$X$ of $G$ which crosses no element of $E$, whether or not it is associated to
a splitting. The conclusion is that such $X$ must be enclosed by some $V_{1}%
$--vertex, as in the case when $\mathcal{S}$ is empty and $G$ is finitely generated.
\end{remark}

The conclusion in the above theorem that $\Gamma(X:G)$ is $\mathcal{S}%
$--adapted, implies that each $S$ in $\mathcal{S}$ must be conjugate into some
vertex group of $\Gamma(X:G)$. It seems natural to suppose that as each
$X_{i}$ is adapted to $S$, then $S$ must be conjugate into a $V_{1}$--vertex
group, but this is not the case. To see this consider the example of the
topological JSJ decomposition of the pair $(M,\varnothing)$ discussed in the
introduction. The almost invariant subsets of $\pi_{1}(M)$ which correspond to
essential tori in $(M,\varnothing)$ are adapted to the fundamental group of
each boundary component of $M$. Thus the graph of groups decomposition
$\Gamma$ of $\pi_{1}(M)$ determined by this JSJ decomposition is also adapted
to the fundamental group of each boundary component of $M$. Now suppose that
$M$ has a torus boundary component $T$ and that $V(M)$ has a component $W$
which contains $T$. Then $\pi_{1}(T)$ is conjugate into a $V_{0}$--vertex
group, namely $\pi_{1}(W)$, and is not conjugate into any other vertex group.
In particular, $\pi_{1}(T)$ is not conjugate into a $V_{1}$--vertex group,
although $\Gamma$ is adapted to $\pi_{1}(T)$.

Finally the analogous results on reduced algebraic regular neighbourhoods
follow easily from the above.

\begin{theorem}
\label{keypropertiesofreducedalgregnbhd} Let $G$ be a finitely generated group
with a family of finitely generated subgroups $\{H_{\lambda}\}_{\lambda
\in\Lambda}$. For each $\lambda\in\Lambda$, let $X_{\lambda}$ denote a
nontrivial $H_{\lambda}$--almost invariant subset of $G$, and let
$\mathcal{X}$ denote $\{X_{\lambda}:\lambda\in\Lambda\}$. Let $E(\mathcal{X})$
denote $\{gX_{\lambda},gX_{\lambda}^{\ast}:g\in G,\lambda\in\Lambda\}$. Assume
that the algebraic regular neighbourhood $\Gamma(\mathcal{X}:G)$ of
$\mathcal{X}$ in $G$ exists. Let $\Gamma_{red}(\mathcal{X}:G)$ denote the
reduced algebraic regular neighbourhood of $\mathcal{X}$ in $G$. Then
$\Gamma_{red}(\mathcal{X}:G)$ is the unique bipartite graph of groups
structure $\Gamma$ for $G$ such that the following conditions hold:

\begin{enumerate}
\item Each $X_{\lambda}$ is enclosed by some $V_{0}$--vertex of $\Gamma$, and
each $V_{0}$--vertex of $\Gamma$ encloses some $X_{\lambda}$.

\item If $\sigma$ is a splitting of $G$ over a subgroup $H$ (which need not be
finitely generated), and if $X$ is a $H$--almost invariant subset of $G$
associated to $\sigma$ such that $X$ crosses no element of $E(\mathcal{X})$,
and if $X$ is sandwiched by $E(\mathcal{X})$, then $\sigma$ is enclosed by
some $V_{1}$--vertex of $\Gamma$.

\item $\Gamma$ is minimal, and reduced bipartite.
\end{enumerate}
\end{theorem}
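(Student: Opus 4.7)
The plan is to derive this characterization from Theorem \ref{keypropertiesofalgregnbhd} via the reduction process, showing both that $\Gamma_{red}(\mathcal{X}:G)$ satisfies the three conditions, and that any $\Gamma$ satisfying them can be ``unreduced'' to match $\Gamma(\mathcal{X}:G)$.

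For existence, start with the unreduced algebraic regular neighbourhood $\Gamma(\mathcal{X}:G)$, which exists by hypothesis and satisfies conditions (1)--(4) of Theorem \ref{keypropertiesofalgregnbhd}. Apply the reduction process described just before Theorem \ref{keypropertiesofreducedalgregnbhd} to obtain $\Gamma_{red}(\mathcal{X}:G)$; it is finite, since $\Gamma(\mathcal{X}:G)$ is finite, and it is reduced bipartite by construction, giving condition (3). Minimality is preserved: each collapse in the reduction process identifies two adjacent isolated vertices whose corresponding edges give the same splitting of $G$, so no essential splitting is lost. For condition (1), a $V_0$-vertex of $\Gamma(\mathcal{X}:G)$ that gets identified with an adjacent isolated $V_1$-vertex in reduction may swap its type label, but the resulting vertex of $\Gamma_{red}(\mathcal{X}:G)$ still encloses whichever $X_\lambda$ was originally enclosed; and conversely every $V_0$-vertex of $\Gamma_{red}$ either is a $V_0$-vertex inherited from $\Gamma(\mathcal{X}:G)$ or is the result of such a collapse, and in either case encloses some $X_\lambda$.

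For condition (2), let $\sigma$ be a splitting of $G$ over $H$ with an associated $H$-almost invariant subset $X$ of $G$ that crosses no element of $E(\mathcal{X})$ and is sandwiched by $E(\mathcal{X})$. By condition (2) of Theorem \ref{keypropertiesofalgregnbhd} (applied to an element of $E(\mathcal{X})$-equivalence class of $X$ arising from $\sigma$), $X$ is enclosed by some $V_1$-vertex $u$ of $\Gamma(\mathcal{X}:G)$. During reduction, either $u$ survives unchanged in $\Gamma_{red}(\mathcal{X}:G)$ (still enclosing $X$ and hence $\sigma$), or $u$ is identified with an adjacent isolated $V_0$-vertex $v$; in the latter case, the resulting vertex can be labelled as a $V_1$-vertex of $\Gamma_{red}$ (recall the reduction collapses edges between adjacent isolated $V_0$- and $V_1$-vertices, and the choice of which vertex survives determines the type), and again $\sigma$ is enclosed by this $V_1$-vertex. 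Since enclosing depends only on equivalence classes of almost invariant sets, this suffices.

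For uniqueness, suppose $\Gamma$ is a bipartite graph of groups structure for $G$ satisfying (1)--(3). The plan is to construct from $\Gamma$ a graph of groups $\Gamma^{\sharp}$ that satisfies conditions (1)--(4) of Theorem \ref{keypropertiesofalgregnbhd}: for each $G$-orbit of isolated elements of $E(\mathcal{X})$ whose associated splitting is enclosed only by a $V_1$-vertex of $\Gamma$ (and not already by an isolated $V_0$-vertex), subdivide the corresponding edge of $\Gamma$ by inserting an isolated $V_0$-vertex enclosing that element; for each isolated invertible $X_\lambda$ whose splitting meets a non-isolated $V_0$-vertex of $\Gamma$, perform the inverse of the subdivision described in Definition \ref{defn9.8}. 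The resulting $\Gamma^{\sharp}$ satisfies conditions (1), (2), (3) and (4) of Theorem \ref{keypropertiesofalgregnbhd}, so by that theorem $\Gamma^{\sharp} = \Gamma(\mathcal{X}:G)$; since $\Gamma$ is reduced, reducing $\Gamma^{\sharp}$ recovers $\Gamma$, whence $\Gamma = \Gamma_{red}(\mathcal{X}:G)$.

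The main obstacle is the careful bookkeeping of invertible isolated elements and the associated subdivided edges in Definition \ref{defn9.8}: one must verify that the reduction and unreduction processes really are inverse to one another at the level of enclosing data, and that the condition ``$X$ is sandwiched by $E(\mathcal{X})$'' in condition (2) of the reduced statement transports correctly across these modifications. In particular, the fact that an isolated invertible $X_\lambda$ gives rise to two edges in $\Gamma(\mathcal{X}:G)$ (joined at the isolated $V_0$-vertex) but only a single edge in $\Gamma_{red}(\mathcal{X}:G)$ must be reconciled with the enclosing condition (1) so that both halves of the characterization yield the same $V_0$/$V_1$-vertex labelling.
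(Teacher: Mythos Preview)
Your approach is essentially the same as the paper's. The paper does not give a detailed proof of this theorem; it simply remarks (in the paragraph following the statement of the companion Theorem \ref{keypropertiesofreducedalgregnbhdofapair}) that these results are identical to those for the unreduced regular neighbourhood in Theorem \ref{keypropertiesofalgregnbhd}, except that condition (4) has been removed and the condition that $\Gamma$ is reduced bipartite has been added to condition (3). Your strategy of deducing the reduced characterization from the unreduced one via the reduction/unreduction process is exactly the intended argument, and the bookkeeping concerns you flag (particularly around invertible isolated elements and the preservation of enclosing data under reduction) are the only points requiring care.
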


In the setting of group systems, we have the following result.

\begin{theorem}
\label{keypropertiesofreducedalgregnbhdofapair}Let $(G,\mathcal{S})$ be a
group system of finite type, and let $\{H_{\lambda}\}_{\lambda\in\Lambda}$ be
a family of finitely generated subgroups of $G$. For each $\lambda\in\Lambda$,
let $X_{\lambda}$ denote a nontrivial $\mathcal{S}$--adapted $H_{\lambda}%
$--almost invariant subset of $G$, and let $\mathcal{X}$ denote $\{X_{\lambda
}:\lambda\in\Lambda\}$. Let $E(\mathcal{X})$ denote $\{gX_{\lambda
},gX_{\lambda}^{\ast}:g\in G,\lambda\in\Lambda\}$. Assume that the algebraic
regular neighbourhood $\Gamma(\mathcal{X}:G)$ of $\mathcal{X}$ in $G$ exists.
Let $\Gamma_{red}(\mathcal{X}:G)$ denote the reduced algebraic regular
neighbourhood of $\mathcal{X}$ in $G$. Then $\Gamma_{red}(\mathcal{X}:G)$ is
the unique $\mathcal{S}$--adapted bipartite graph of groups structure $\Gamma$
for $G$ such that the following conditions hold:

\begin{enumerate}
\item Each $X_{\lambda}$ is enclosed by some $V_{0}$--vertex of $\Gamma$, and
each $V_{0}$--vertex of $\Gamma$ encloses some $X_{\lambda}$.

\item If $\sigma$ is a $\mathcal{S}$--adapted splitting of $G$ over a subgroup
$H$ (which need not be finitely generated), and if $X$ is a $H$--almost
invariant subset of $G$ associated to $\sigma$ such that $X$ crosses no
element of $E(\mathcal{X})$, and if $X$ is sandwiched by $E(\mathcal{X})$,
then $\sigma$ is enclosed by some $V_{1}$--vertex of $\Gamma$.

\item $\Gamma$ is minimal and reduced bipartite.
\end{enumerate}
\end{theorem}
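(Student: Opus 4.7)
The plan is to deduce Theorem \ref{keypropertiesofreducedalgregnbhdofapair} from the already-established existence and uniqueness result for the unreduced algebraic regular neighbourhood, Theorem \ref{keypropertiesofalgregnbhdofpair}, together with a careful analysis of the bipartite reduction process. There are two things to check: that $\Gamma_{red}(\mathcal{X}:G)$ does satisfy the three listed conditions, and that any $\mathcal{S}$--adapted bipartite graph of groups for $G$ satisfying conditions 1)--3) is isomorphic to $\Gamma_{red}(\mathcal{X}:G)$.

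For the existence direction, I would start with $\Gamma = \Gamma(\mathcal{X}:G)$, which by hypothesis exists and enjoys all the properties of Theorem \ref{keypropertiesofalgregnbhdofpair}. The reduction that produces $\Gamma_{red}(\mathcal{X}:G)$ proceeds by repeatedly collapsing a triple consisting of an isolated $V_0$--vertex $v$ with two distinct adjacent isolated $V_0$--vertices to a single edge. I would verify that each such collapse preserves each of conditions 1)--3): $\mathcal{S}$--adaptedness is preserved because collapsing edges can only enlarge vertex groups by amalgamation, so any subgroup in $\mathcal{S}$ that was conjugate into a vertex group of $\Gamma$ is still conjugate into a vertex group after collapse; condition 1) is preserved because any $X_\lambda$ enclosed by a $V_0$--vertex that gets absorbed is, by condition 2) of Theorem \ref{keypropertiesofalgregnbhdofpair} applied to $\Gamma$, still enclosed by the resulting merged $V_0$--vertex; condition 2) of the present theorem is preserved via condition 3) of Theorem \ref{keypropertiesofalgregnbhdofpair}, since the enclosing $V_1$--vertex is not touched by the collapse. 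Minimality follows from that of $\Gamma(\mathcal{X}:G)$ (condition 4) combined with the fact that reduction never increases the number of orbits of edges.

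For the uniqueness direction, let $\Gamma'$ be any $\mathcal{S}$--adapted bipartite graph of groups for $G$ satisfying 1)--3), and let $T'$ denote its universal covering $G$--tree. I would follow the blueprint of the proof of Theorem 6.7 of \cite{SS2}, which goes by constructing an equivariant bijection between the $V_0$--vertices of $T'$ and the non-isolated $V_0$--vertices of the tree $T$ associated to $\Gamma(\mathcal{X}:G)$, extended by inserting subdivided edges coming from isolated invertible elements as prescribed in Definition \ref{defn9.8}. Condition 1) of the theorem is used to produce the map in one direction (send a $V_0$--vertex $v$ of $T'$ to the CCC containing some $X_\lambda$ enclosed by $v$), and condition 2) is used to produce the inverse direction (every edge splitting of $\Gamma_{red}(\mathcal{X}:G)$ is by construction sandwiched by $E(\mathcal{X})$, so it is enclosed by some $V_1$--vertex of $\Gamma'$). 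Lemma \ref{adaptedgraphofgroups} is the essential input that guarantees each edge splitting of $\Gamma_{red}(\mathcal{X}:G)$ is $\mathcal{S}$--adapted, which is needed to invoke condition 2) for $\Gamma'$. The argument that the resulting correspondence is a $G$--equivariant isomorphism of bipartite trees then proceeds exactly as in \cite{SS2}, with isolated invertible sets handled by the subdivision convention.

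The main obstacle I anticipate is keeping careful track of the isolated $V_0$--vertices in the reduction. In $\Gamma(\mathcal{X}:G)$, part 5) of Theorem \ref{keypropertiesofalgregnbhdofpair} provides a bijection between orbits of isolated CCCs and isolated $V_0$--vertices; after reduction, this bijection no longer holds on the nose, since some isolated $V_0$--vertices are absorbed. One must check that the absorbed isolated $V_0$--vertices correspond exactly to those isolated elements of $E(\mathcal{X})$ whose associated splittings can be realized as edge splittings of some non-isolated $V_0$--vertex's neighbourhood, so that no enclosing information is lost. A subtler point in the uniqueness argument is the handling of the \emph{invertible} isolated elements: by Definition \ref{defn9.8}, these correspond to edges of $\Gamma_{red}(\mathcal{X}:G)$ with a subdivided middle $V_0$--vertex, and any competitor $\Gamma'$ satisfying conditions 1)--3) must also realize these as edges with the same subdivision pattern. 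This is forced by condition 2) together with the observation that the splitting associated to such an invertible isolated set is, as an unoriented splitting, an HNN extension whose bipartite refinement has precisely this shape.
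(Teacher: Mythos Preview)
Your approach is essentially the paper's: the paper does not give a detailed proof here, simply noting that Theorems \ref{keypropertiesofreducedalgregnbhd} and \ref{keypropertiesofreducedalgregnbhdofapair} are identical to the unreduced versions (Theorems \ref{keypropertiesofalgregnbhd} and \ref{keypropertiesofalgregnbhdofpair}) except that condition 4) has been removed and ``reduced bipartite'' has been added to condition 3), so the result follows easily from those.

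One concrete correction to your description of the reduction step: in a bipartite graph a $V_0$--vertex is adjacent only to $V_1$--vertices, so ``an isolated $V_0$--vertex $v$ with two distinct adjacent isolated $V_0$--vertices'' is impossible. The paper's reduction collapses a pair of \emph{adjacent} isolated vertices $v$ and $w$ (necessarily one of each type), each having two distinct neighbours, by collapsing the three edges incident to $v$ or $w$ to a single edge. With this corrected picture your verification of conditions 1)--3) goes through: an isolated $X_\lambda$ enclosed by the absorbed $V_0$--vertex $v$ is associated to the edge splitting of the new single edge and is therefore enclosed by the surviving $V_0$--vertex at its endpoint; and any $V_1$--vertex enclosing a splitting $\sigma$ as in condition 2) either survives or is absorbed into a larger $V_1$--vertex which still encloses $\sigma$.
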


Note that these results are identical to those for an unreduced regular
neighbourhood in Theorems \ref{keypropertiesofalgregnbhd} and
\ref{keypropertiesofalgregnbhdofpair} except that in each case, condition 4)
has been removed, and the fact that $\Gamma$ is reduced bipartite has been
added to condition 3).

\section{Coends\label{coends}}

Up to this point everything we have discussed involved the number of ends
$e(G,H)$ of a pair $(G,H)$ of groups, or the number $e(G,H:\mathcal{S})$ of
such ends which are adapted to a family $\mathcal{S}$ of subgroups of $G$. In
this section, we discuss some terminology which will be very useful later in
this paper.

\begin{definition}
\label{defnofendsof[H]}Let $(G,\mathcal{S})$ be a group system, let $H$ be a
subgroup of $G$, and let $[H]$ denote the family of all subgroups of $G$ which
are commensurable with $H$. Then \textsl{the number of ends of }$[H]$\textsl{
in the system }$(G,\mathcal{S})$ is

$\sup\{e(G,K:\mathcal{S}):K\in\lbrack H]\}$, and will be denoted
$e(G,[H]:\mathcal{S})$.
\end{definition}

In the rest of this section, we will discuss how this definition relates to
the concept of coends. This discussion is included for completeness and is not
needed to understand the results in this paper.

The concept of coends played an important role in \cite{SS2}. From the point
of view of that paper, a key property of coends is contained in the following result.

\begin{lemma}
\label{propertyofcoends}Let $G$ be a finitely generated group, and let $H$ be
a finitely generated subgroup of $G$. Further suppose that for any subgroup
$K$ of $H$ which has infinite index in $H$, the number of ends $e(G,K)$ equals
$1$. Then the number of coends of $H$ in $G$, denoted $\widetilde{e}(G,H)$,
equals $\sup\{e(G,K):K\subset H,\left\vert H:K\right\vert <\infty\}$. In
particular, if $\widetilde{e}(G,H)$ is finite, there is a subgroup $K$ of $H$
which has finite index in $H$ such that $\widetilde{e}(G,H)=e(G,K)$.
\end{lemma}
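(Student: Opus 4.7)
The plan is to use the algebraic description of coends together with the hypothesis to match them up with ends over finite-index subgroups. Recall that $\widetilde{e}(G,H)$ is defined in terms of the $\mathbb{Z}_2$-dimension of a quotient of $H$-almost invariant subsets of $G$, and the key feature of coends (as opposed to $e(G,H)$) is that they are sensitive to how $H$ itself sits inside $G$, not just to the coset space $H\backslash G$. My strategy has three parts: an easy inequality, a harder inequality that uses the hypothesis crucially, and a short finiteness argument.

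First I would establish the lower bound $\sup\{e(G,K) : K \subset H,\ [H:K]<\infty\} \leq \widetilde{e}(G,H)$. If $K \subset H$ has finite index, then any $K$-almost invariant subset $X$ of $G$ with $HX = X$ (which one can always arrange by replacing $X$ by $HX$ or by a symmetric difference of its $H$-translates, using that $[H:K]$ is finite) descends to an element of the coend quotient, and this comparison is injective on equivalence classes contributing to $e(G,K)$. The point is that the $H$-translates of $X$ are all $K$-almost equal to $X$ because $[H:K]$ is finite, so passing to the $H$-orbit sum does not destroy nontrivial classes. This gives the easy half.

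The hard direction is $\widetilde{e}(G,H) \leq \sup\{e(G,K) : K \subset H,\ [H:K]<\infty\}$, and this is where I expect the main obstacle. Given any class in the coend quotient, represented by an almost invariant set $X$ adapted in the appropriate sense to $H$, I need to produce a finite-index subgroup $K \subset H$ over which $X$ (or an equivalent set) is $K$-almost invariant with $e(G,K)$ detecting the class. Let $K = \{h \in H : hX \text{ is } H\text{-almost equal to } X\}$, which is a subgroup of $H$ containing the stabilizer of $X$. The claim is that $K$ must have finite index in $H$: if not, then $K$ has infinite index in $H$, and the hypothesis $e(G,K)=1$ for any infinite-index subgroup of $H$ forces $X$ to be $K$-trivial, hence trivial as a coend, a contradiction. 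This stabilizer-through-commensurator argument (using Lemma \ref{Neumannlemma} and Lemma \ref{equivalenta.i.setshavecommensurablestabiliser} to control how $K$ sits inside the commensurator of the stabilizer of $X$) is the heart of the proof and the step I expect to require the most care: one must be careful that ``$K$ infinite index in $H$'' genuinely translates into ``$X$ defines an element of $Q(K\backslash G)/F(K\backslash G)$ which must be trivial by hypothesis.''

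Finally, for the ``in particular'' statement: the ends function is monotone in the sense that $K \subset K' \subset H$ with both of finite index in $H$ implies $e(G,K) \geq e(G,K')$, since $K\backslash G$ is a finite cover of $K'\backslash G$. Thus the supremum is taken over a directed system (finite-index subgroups ordered by reverse inclusion) of non-decreasing non-negative integers. If $\widetilde{e}(G,H)$ is finite, the supremum is a finite integer and hence attained by some specific finite-index subgroup $K \subset H$, giving $\widetilde{e}(G,H) = e(G,K)$ as required.
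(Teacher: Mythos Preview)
Your lower bound is essentially correct: a $K$--almost invariant set for finite-index $K\subset H$ has $K$--finite (hence $H$--finite) symmetric differences with its right translates, so it contributes to the coend quotient, and $K$--finiteness and $H$--finiteness coincide when $[H:K]<\infty$, giving the injection you want.

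The hard direction, however, has a genuine gap. You take a coend representative $X$ and set $K=\{h\in H: hX\text{ is }H\text{--almost equal to }X\}$, then argue that if $[H:K]=\infty$ the hypothesis $e(G,K)=1$ forces $X$ to be trivial. But to invoke $e(G,K)=1$ you need $X$ (or something equivalent to it) to be an honest $K$--almost invariant set, meaning $KX=X$ and $X+Xg$ is $K$--finite for every $g$. The coend definition only gives that $X+Xg$ is $H$--finite, and when $K$ has infinite index in $H$ this does \emph{not} imply $K$--finiteness. Your definition of $K$ controls left $H$--translates of $X$ up to $H$--finite error, but says nothing about making the right coboundaries $K$--finite. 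So the step ``$e(G,K)=1$ forces $X$ to be $K$--trivial'' is unjustified; you have not produced any $K$--almost invariant set to which the hypothesis applies.

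The paper does not prove this lemma directly but proves the adapted analogue (Corollary~\ref{propertyofadaptedcoends}) via Lemma~\ref{coendimpliesend}, and the method there is what your argument is missing. One works in the Cayley graph: the $H$--finite coboundary $\delta X$ is enclosed in a connected, $H$--invariant, $H$--finite subcomplex $L$, and one looks at the components of the complement of $L$. Each such component $Y'$ has coboundary contained in $L$; if $K\subset H$ is the stabiliser of $Y'$, then the vertex set $W$ of $Y'$ satisfies $KW=W$ and $\delta W$ is $K$--finite (because it sits inside the $H$--finite set $L$ and is $K$--invariant), so $W$ is a genuine $K$--almost invariant set. Now the hypothesis forces $K$ to have finite index in $H$ whenever $W$ is nontrivial, and one reassembles $X$ (up to $H$--finite error) from finitely many such components. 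The geometric step of passing to components is exactly what manufactures a $K$--almost invariant set with $K$--finite coboundary out of a coend representative with only $H$--finite coboundary.
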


\begin{remark}
If $K^{\prime}$ is a subgroup of $K$ of finite index, then $e(G,K^{\prime
})\geq e(G,K)$. Thus $\sup\{e(G,K):K\subset H,\left\vert H:K\right\vert
<\infty\}=\sup\{e(G,K):K\in\lbrack H]\}$.
\end{remark}

In Corollary \ref{propertyofadaptedcoends}, at the end of this section, we
will prove the adapted analogue of this result which will connect the
invariant $e(G,[H]:\mathcal{S})$ with coends.

In \cite{SS2}, the only use made of coends was in a setting where the
hypotheses of Lemma \ref{propertyofcoends} held. Further all the relevant
arguments of that paper used the invariant $\sup\{e(G,K):K\subset H,\left\vert
H:K\right\vert <\infty\}$ directly rather than $\widetilde{e}(G,H)$. In this
paper, we follow a similar path, but using the invariant $e(G,[H]:\mathcal{S}%
)$ defined above, which is the adapted analogue of the invariant
$\sup\{e(G,K):K\subset H,\left\vert H:K\right\vert <\infty\}$.

The concept of the number of coends of $H$ in $G$ was introduced independently
by Kropholler and Roller \cite{KR}, by Bowditch \cite{B3} and by Geoghegan
\cite{Geoghegan}. Kropholler and Roller called this the number of relative
ends of the pair $(G,H)$, and used the notation $\widetilde{e}(G,H)$. Bowditch
talked of the number of coends of the pair, and Geoghegan used the term
`filtered coends'. Bowditch and Geoghegan made their definitions on the
assumption that $G$ was finitely generated, while Kropholler and Roller made
no such assumption. In this section we will use the definition and notation of
Kropholler and Roller, but will use Bowditch's terminology of coends.

We recall our notation from sections \ref{ends} and
\ref{adaptedalmostinvariantsets}. Let $G$ be a group and let $E$ be a set on
which $G$ acts on the right. Let $PE$ denote the power set of $E$, and let
$FE$ denote the family of all finite subsets of $E$. Two sets $A$ and $B$
whose symmetric difference lies in $FE$ are almost equal, and we write
$A\overset{a}{=}B$. This amounts to equality in the quotient group $PE/FE$.
Now define
\[
QE=\{A\subset E:\forall g\in G,\;A\overset{a}{=}Ag\}.
\]
The action of $G$ on $PE$ by right translation preserves the subgroups
$QE\ $and $FE$, and $QE/FE$ is the subgroup of elements of $PE/FE$ fixed under
the induced action. Elements of $QE$ are said to be \textit{almost invariant}.

If $H$ is a subgroup of $G$, and we take $E$ to be the coset space
$H\backslash G$ of all cosets $Hg$, with the action of $G$ being right
multiplication, then the number of ends of the pair $(G,H)$ is
\[
e(G,H)=\dim_{\mathbb{Z}_{2}}\;\left(  \frac{Q(H\backslash G)}{F(H\backslash
G)}\right)  .
\]

If $(G,\mathcal{S})$ is a group system, we let $Q(H\backslash G)(\mathcal{S})$
denote the subset of $P(H\backslash G)$ consisting of all almost invariant
subsets of $H\backslash G$ whose pre-image in $G$ is $\mathcal{S}$--adapted.
Again it is easy to see that $Q(H\backslash G)(\mathcal{S})$ is a subspace of
$P(H\backslash G)$. Thus we can define the number of $\mathcal{S}$--adapted
ends of the pair $(G,H)$ to be%

\[
e(G,H:\mathcal{S})=\dim_{\mathbb{Z}_{2}}\;\left(  \frac{Q(H\backslash
G)(\mathcal{S})}{F(H\backslash G)}\right)  .
\]

Now there is a natural bijection between almost invariant subsets of
$H\backslash G$ and $H$--almost invariant subsets of $G$, in which finite
subsets of $H\backslash G$ correspond to $H$--finite subsets of $G$. Thus
$e(G,H)$ can also be viewed as the dimension of the space of $H$--almost
invariant subsets of $G$ modulo $H$--finite subsets. Similarly
$e(G,H:\mathcal{S})$ can also be viewed as the dimension of the space of
$\mathcal{S}$--adapted $H$--almost invariant subsets of $G$ modulo $H$--finite subsets.

In \cite{KR}, Kropholler and Roller considered subsets $X$ of $G$ such that
$X$ and $Xg$ are $H$--almost equal for all $g$ in $G$. But they did not assume
that $X$ is invariant under the left action of $H$. Thus $X$ need not be
$H$--almost invariant. The number of coends of the pair $(G,H)$, denoted
$\widetilde{e}(G,H)$, is then the dimension of the space of such subsets of
$G$ modulo $H$--finite subsets. More formally, let $F_{H}(G)$ denote the
family of all $H$--finite subsets of $G$. Then Kropholler and Roller defined%
\[
\widetilde{e}(G,H)=\dim_{\mathbb{Z}_{2}}\;\left(  \frac{PG}{F_{H}(G)}\right)
^{G},
\]
where the exponent $G$ denotes the subspace of points fixed by the right
action of $G$. Note that the discussion in the preceding paragraph makes it
clear that $e(G,H)\leq\widetilde{e}(G,H)$. We will refer to $\widetilde{e}%
(G,H)$ as the number of coends of the pair $(G,H)$, or as the number of coends
of $H$ in $G$.

Now we will define the adapted analogue of $\widetilde{e}(G,H)$. This extends
Definition \ref{defnofadapted} to the present setting.

\begin{definition}
\label{defnofadaptedforcoends}Let $(G,\mathcal{S})$ be a group system, let $H$
be a subgroup of $G$, and let $S$ be a group in $\mathcal{S}$.

A subset $X$ of $G$ is \textsl{strictly adapted to the subgroup} $S$ if, for
all $g\in G$, the coset $gS$ is contained in $X$ or in $X^{\ast}$.

A subset $X$ of $G$ such that $X$ and $Xg$ are $H$--almost equal for all $g$
in $G$ is \textsl{adapted to} $S$ if it is $H$--almost equal to a subset
$X^{\prime}$ of $G$ such that $X^{\prime}$ is strictly adapted to $S$.

A subset $X$ of $G$ such that $X$ and $Xg$ are $H$--almost equal for all $g$
in $G$ is \textsl{adapted to the family} $\mathcal{S}$ if it is adapted to
each $S$ in $\mathcal{S}$.
\end{definition}

Let $Q_{H}(G,\mathcal{S})$ denote the collection of all subsets $X$ of $G$
such that $X$ and $Xg$ are $H$--almost equal for all $g$ in $G$, and $X$ is
adapted to the family $\mathcal{S}$. Let $F_{H}(G)$ denote the family of all
$H$--finite subsets of $G$. We define the number of coends of the pair $(G,H)$
adapted to $\mathcal{S}$ to be%
\[
\bar{e}(G,H:\mathcal{S})=\dim_{\mathbb{Z}_{2}}\;\left(  \frac
{Q_{H}(G,\mathcal{S})}{F_{H}(G)}\right)  .
\]

As in the non-adapted case, it is easy to see that $e(G,H:\mathcal{S}%
)\leq\bar{e}(G,H:\mathcal{S})$.

The connection between $e(G,H)$ and $\bar{e}(G,H)$ is discussed in
\cite{KR} and also at the end of chapter 2 of \cite{SS2}. We will extend the
discussion of \cite{SS2} to the adapted case, and will prove the appropriate
generalization of Lemma \ref{propertyofcoends}. First we generalise Lemma 2.40
of \cite{SS2}.

\begin{lemma}
\label{coendimpliessomeend}Let $(G,\mathcal{S})$ be a group system of finite
type, and let $H$ be a finitely generated subgroup of infinite index in $G$.
Then $\bar{e}(G,H:\mathcal{S})\geq2$ if and only if there is a subgroup
$K$ of $H$ such that $e(G,K:\mathcal{S})\geq2$.
\end{lemma}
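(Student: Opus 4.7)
The plan is to prove the two directions separately. The direction $(\Leftarrow)$ reduces to an application of Lemma~\ref{Neumannlemma}, and the direction $(\Rightarrow)$ proceeds via a component argument in the relative Cayley graph $\Gamma(G,\mathcal{S})$, in the spirit of Lemma 2.40 of \cite{SS2}.

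For $(\Leftarrow)$, let $K\subseteq H$ with $e(G,K:\mathcal{S})\geq 2$, and take a nontrivial $K$-almost invariant $\mathcal{S}$-adapted subset $X$ of $G$. Since $K\subseteq H$, every $K$-finite set is $H$-finite, so $X\triangle Xg$ is $H$-finite for each $g\in G$ and $X\in Q_H(G,\mathcal{S})$. I need to show $X$ and $X^\ast$ are both $H$-infinite. If $X\subseteq Hg_1\cup\cdots\cup Hg_n$, then for each $\gamma\in G$, $X\cap X\gamma$ is $K$-almost equal to $X$ (since $X\gamma$ is), hence $K$-infinite, hence non-empty; any element of $X\cap X\gamma$ forces $\gamma\in\bigcup_{i,j} g_j^{-1}Hg_i$, so $G$ equals this finite union of right cosets of conjugates of $H$. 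Lemma~\ref{Neumannlemma}(2) then forces some conjugate of $H$ to have finite index in $G$, contradicting that $H$ has infinite index in $G$. Applying the same argument to $X^\ast$ gives $X^\ast$ $H$-infinite, so $X$ witnesses $\bar{e}(G,H:\mathcal{S})\geq 2$.

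For $(\Rightarrow)$, start with $X\in Q_H(G,\mathcal{S})$ nontrivial. Following the method of Lemma~\ref{characterizingdX}, enlarge $X$ to a vertex set $\widehat X$ of $\Gamma(G,\mathcal{S})$ by placing each cone point $gS_i$ on the $H$-cofinite side of $gS_i\cap X$; adapting the double-coset count of Lemma~\ref{finitelymanydoublecosets} to elements of $Q_H(G,\mathcal{S})$ shows $\delta\widehat X$ is $H$-finite in $\Gamma(G,\mathcal{S})$. Pass to the $H$-invariant set $E:=H\cdot\delta\widehat X$, which is still $H$-finite. Since $H$ has infinite index in $G$, the quotient $H\backslash\Gamma(G,\mathcal{S})$ is infinite; the image $\overline E$ of $E$ there is finite, and because $X,X^\ast$ are both $H$-infinite, one identifies two infinite components $D_+,D_-$ of $H\backslash\Gamma(G,\mathcal{S})\setminus\overline E$ whose lifts in $\Gamma(G,\mathcal{S})$ respectively lie in $\widehat X$ and $\widehat X^\ast$. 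Pick an infinite lift $\widetilde C$ of $D_+$, let $K:=\mathrm{Stab}_H(\widetilde C)\subseteq H$, and set $Y:=\widetilde C\cap G$.

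To finish, the coboundary of the enlargement $\widehat Y:=\widetilde C$ lies inside $E$, and a stabilizer argument (an $H$-translate $he$ of an edge $e\in E$ has an endpoint in $\widetilde C$ precisely when $h$ lies in one of finitely many $K$-cosets) shows $\delta\widehat Y$ is $K$-finite. Lemma~\ref{characterizingdX} then yields that $Y$ is $K$-almost invariant and $\mathcal{S}$-adapted. The projection $K\backslash\widetilde C$ surjects onto the infinite component $D_+$, so $Y$ is $K$-infinite; since an infinite lift of $D_-$ contributes $G$-vertices to $Y^\ast$, the complement $Y^\ast$ is also $K$-infinite. Hence $Y$ is a nontrivial $K$-almost invariant $\mathcal{S}$-adapted subset of $G$, giving $e(G,K:\mathcal{S})\geq 2$. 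The main obstacle will be justifying the existence of the two infinite components $D_\pm$: because $\widehat X$ need not be $H$-invariant, the straightforward trick of projecting $\widehat X$ to $H\backslash\Gamma(G,\mathcal{S})$ is unavailable, and one must track vertex counts on each side of $E$ directly, using that both $X$ and $X^\ast$ contain vertices from infinitely many $H$-orbits while $\overline E$ is finite.
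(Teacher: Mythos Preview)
Your $(\Leftarrow)$ argument is correct and essentially matches the paper: where the paper invokes Lemma~2.13 of \cite{SS2} to conclude that $X$ and $X^\ast$ are $H$--infinite, you unpack that lemma directly via Lemma~\ref{Neumannlemma}.

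For $(\Rightarrow)$ you have the right overall strategy (enlarge $X$ to $\widehat X$ with $H$--finite coboundary in $\Gamma(G,\mathcal{S})$, then run a component argument), and your coboundary/stabiliser computation showing $\delta\widehat Y$ is $K$--finite is fine. But the framing through the quotient and the pair $D_+,D_-$ is where you create the very obstacle you flag at the end. Because $\widehat X$ is not $H$--invariant, different lifts of the \emph{same} downstairs component can lie on different sides of $\widehat X$, so the phrase ``whose lifts \ldots\ respectively lie in $\widehat X$ and $\widehat X^\ast$'' does not make sense, and there is no clean reason why two such infinite components $D_\pm$ must exist.

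The paper sidesteps this entirely by working upstairs throughout. It thickens $\delta\widehat X$ to a connected $H$--invariant $H$--finite subcomplex $L$ and considers components of the closure of $\Gamma(G,\mathcal{S})-L$; these form an $H$--finite family, and one component $Y'$ has $W=Y'\cap G$ which is $H$--infinite. The key simplification is that you never need a \emph{second} component: since $L\supseteq\delta\widehat X$, the single component $Y'$ lies in $\widehat X$ or in $\widehat X^\ast$, hence $W\subseteq X$ or $W\subseteq X^\ast$, and therefore $W^\ast\supseteq X^\ast$ or $W^\ast\supseteq X$. Either way $W^\ast$ is $H$--infinite, hence $K$--infinite, and you are done. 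Dropping the quotient and the search for $D_-$ removes the difficulty completely; your edge-set $E$ in place of the subcomplex $L$ would work just as well once you argue upstairs only.
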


\begin{remark}
The subgroup $K$ may have infinite index in $H$.
\end{remark}

\begin{proof}
If there is a subgroup $K$ of $H$ such that $e(G,K:\mathcal{S})\geq2$, then
there is a nontrivial $K$--almost invariant subset $X$ of $G$ which is adapted
to $\mathcal{S}$. Thus $X$ and $X^{\ast}$ are each $K$--infinite, and $X$ and
$Xg$ are $K$--almost equal for all $g$ in $G$. Thus it is trivial that $X$ and
$Xg$ are $H$--almost equal for all $g$ in $G$. As $H$ has infinite index in
$G$, Lemma 2.13 of \cite{SS2} implies that $X$ and $X^{\ast}$ are each
$H$--infinite. It follows that $\bar{e}(G,H:\mathcal{S})\geq2$ as required.

Now we suppose that $\bar{e}(G,H:\mathcal{S})\geq2$. For this proof we
will consider the relative Cayley graph $\Gamma(G,\mathcal{S})$ discussed in
section \ref{groupsystems}. Our assumption implies there is a subset $X$ of
$G$ such that $X$ and $Xg$ are $H$--almost equal for all $g$ in $G$, neither
$X$ nor $X^{\ast}$ is $H$--finite, and $X$ is adapted to $\mathcal{S}$. We are
not assuming that $hX=X$, for all $h\in H$, so that $X$ need not be
$H$--almost invariant. Let $S$ denote a group in $\mathcal{S}$, so that $X$ is
$H$--almost equal to a subset of $G\ $which is strictly adapted to $S$. Now
the proof of Lemma \ref{finitelymanydoublecosets} shows that there are only
$H$--finitely many cosets $gS$ such that $gS\cap X$ and $gS\cap X^{\ast}$ are
both non-empty. Hence we can apply the first paragraph of the proof of Lemma
\ref{characterizingdX} to show there is an enlargement $\widehat{X}$ of $X$
such that $\delta\widehat{X}$ is $H$--finite. Note that Lemmas
\ref{finitelymanydoublecosets} and \ref{characterizingdX} have the hypothesis
that $X$ is $H$--almost invariant, so that $HX=X$, but this is not used in the
parts of their proofs which we are applying.

Thus there is a finite collection $F$ of edges of $\Gamma(G,\mathcal{S})$ such
that $\delta\widehat{X}\subset HF$. Let $R$ be a finite generating set for
$H$. Then $RF$ is also a finite union of edges of $\Gamma(G,\mathcal{S})$ and
so is contained in some finite connected subcomplex $Z$ of $\Gamma
(G,\mathcal{S})$. Let $L$ denote $HZ$. Using the fact that $R$ generates $H$,
it is easy to check that $L$ is connected. As $Z$ contains $F$, it is trivial
that $L=HZ\supset HF\supset\delta\widehat{X}$. Thus $L$ is a connected,
$H$--finite and $H$--invariant subcomplex of $\Gamma(G,\mathcal{S})$ which
contains $\delta\widehat{X}$.

Now let $Y$ denote the closure of the complement $\Gamma(G,\mathcal{S})-L$, so
that $Y$ is a subcomplex of $\Gamma(G,\mathcal{S})$ and $HY=Y$. As $L$ is
$H$--finite, the components of $Y$ also form a $H$--finite family. As $G$ is
$H$--infinite, there must be a component $Y^{\prime}$ of $Y$ such that the set
$W$ of vertices of $Y^{\prime}$ which lie in $G$ is also $H$--infinite. Let
$K$ denote the subgroup of $H$ which stabilises $Y^{\prime}$, and hence
stabilizes $W$. Let $\widehat{W}$ denote the set of all vertices of
$Y^{\prime}$, so that $\widehat{W}$ is an enlargement of $W$, and
$\delta\widehat{W}$ is contained in $L$. Thus $\delta\widehat{W}$ is
$H$--finite and $K$--invariant, and so must be $K$--finite. Also $K$ must
equal the subgroup of $H$ which stabilises $\delta\widehat{W}$ or have index
$2$ in this subgroup.

Now each component of $Y$ lies in $\widehat{X}$ or its complement. It follows
that $W$ is contained in $X$ or in $X^{\ast}$, so that $W^{\ast}$ must be
$H$--infinite. It follows that $W$ is a nontrivial $K$--almost invariant
subset of $G$, so that $e(G,K:\mathcal{S})\geq2$, as required.
\end{proof}

In the special situation we consider in this paper, we can prove more.

\begin{lemma}
\label{coendimpliesend}Let $(G,\mathcal{S})$ be a group system of finite type,
and let $H$ be a finitely generated subgroup of $G$. Suppose that for any
subgroup $K$ of $H$ which has infinite index in $H$, the number of ends
$e(G,K:\mathcal{S})$ equals $1$. Let $X$ be a subset of $G$ such that $X$ and
$X^{\ast}$ are each $H$--infinite, $X$ and $Xg$ are $H$--almost equal for all
$g$ in $G$, and $X$ is adapted to the family $\mathcal{S}$. (Thus
$\widetilde{e}(G,H:\mathcal{S})\geq2$.)

Then there is a subgroup $H^{\prime}$ of $H$ which has finite index in $H$,
and $X$ is $H$--almost equal to a nontrivial $H^{\prime}$--almost invariant
subset $X^{\prime}$ of $G$.
\end{lemma}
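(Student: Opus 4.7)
The plan is to refine the argument of Lemma \ref{coendimpliessomeend} so that, rather than picking a single orbit of components, one handles all of them and reassembles an $H'$-almost invariant set $X'$ from the pieces. As there, the first move is to build the setup in the relative Cayley graph $\Gamma(G,\mathcal{S})$: obtain an enlargement $\widehat{X}$ of $X$ with $\delta\widehat{X}$ being $H$-finite (from Lemma \ref{characterizingdX}, whose first paragraph does not require $HX=X$); choose a finite generating set $R$ of $H$ and a finite connected subcomplex $Z$ containing a finite set $F$ of edges with $\delta\widehat{X} \subset HF$ and closed enough under $R$ to make $L := HZ$ connected and $H$-invariant. Let $Y$ denote the closure of $\Gamma(G,\mathcal{S}) - L$; each component of $Y$ is contained in $\widehat{X}$ or $\widehat{X}^{\ast}$.

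The components of $Y$ form an $H$-finite family, occupying finitely many $H$-orbits $O_1,\ldots,O_n$. For each $i$, pick a representative $Y'_i$ with stabilizer $K_i \le H$, and set $W_i := Y'_i \cap G$ with enlargement $\widehat{W}_i := V(Y'_i)$. Then $\delta\widehat{W}_i \subset L$ is $K_i$-invariant; using that $G$ acts freely on the edges of $\Gamma(G,\mathcal{S})$ and that $he \in \delta\widehat{W}_i$ forces $hY'_i = Y'_i$, i.e.\ $h \in K_i$, the intersection of each $H$-orbit of an edge with $\delta\widehat{W}_i$ is a single $K_i$-orbit. Since $\delta\widehat{W}_i \subset L$ is $H$-finite, this shows $\delta\widehat{W}_i$ is $K_i$-finite, so by Lemma \ref{characterizingdX} each $W_i$ is a $K_i$-almost invariant, $\mathcal{S}$-adapted subset of $G$.

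The key step is to apply the hypothesis. For each index $i$ with $[H:K_i] = \infty$, the assumption gives $e(G,K_i : \mathcal{S}) = 1$, so $W_i$ or $W_i^{\ast}$ is $K_i$-finite. The set $W_i^{\ast}$ cannot be $K_i$-finite: $L$ contains a full $H$-coset $Hz$ (for any $G$-vertex $z \in Z$), which is a union of infinitely many $K_i$-cosets when $[H:K_i] = \infty$, and $Hz \subset W_i^{\ast}$. Hence $W_i$ itself is $K_i$-finite, and so $HW_i$ is $H$-finite (a finite union of $K_i$-cosets is a finite union of $H$-cosets). Writing $I_{\mathrm{inf}} := \{ i : [H:K_i] = \infty \}$, the combined contribution to $G$ of all components in $\bigcup_{i \in I_{\mathrm{inf}}} O_i$ is therefore $H$-finite.

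To assemble $H'$ and $X'$, let $H'$ be the kernel of the $H$-action on the finite collection of components lying in $\bigcup_{i \notin I_{\mathrm{inf}}} O_i$; it is a normal subgroup of $H$ of finite index, fixing each such component setwise. Define $X'$ to be the disjoint union of the $G$-vertex sets of those components $C$ that are contained in $\widehat{X}$ and lie in an orbit $O_i$ with $[H:K_i] < \infty$. Then $H'X' = X'$ by construction, and $X \triangle X'$ is contained in $(X \cap L)$ together with the $G$-vertices of the orbits $O_i$, $i \in I_{\mathrm{inf}}$, hence is $H$-finite. Since $[H:H'] < \infty$, $H$-finite implies $H'$-finite, so the hypothesis that $X \triangle Xg$ is $H$-finite for every $g$ yields that $X' \triangle X'g$ is $H'$-finite for every $g \in G$. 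Thus $X'$ is $H'$-almost invariant, and it is nontrivial because $X$ and $X^{\ast}$ are $H$-infinite (hence $H'$-infinite) and differ from $X'$ and $(X')^{\ast}$ by $H$-finite sets. The main obstacle will be the verification in the third paragraph that $W_i$ (and not $W_i^{\ast}$) is the $K_i$-finite corner: this rests on the careful observation that $L$ contains entire $H$-cosets and that such cosets shatter into infinitely many $K_i$-cosets precisely when $[H:K_i] = \infty$.
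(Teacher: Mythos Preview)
Your approach is essentially the same as the paper's: set up the relative Cayley graph, build $L$ and the components of $Y$, use the hypothesis to show that components with infinite-index stabilizer contribute only $H$-finitely many $G$-vertices, and take $X'$ to be the union of the $G$-vertex sets of the remaining components inside $\widehat{X}$. The paper phrases the dichotomy as ``$W$ is $H$-infinite'' versus ``$W$ is $H$-finite'' rather than by orbit stabilizers, but the content is the same.

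There is a slip precisely at the point you flagged. The claim $Hz \subset W_i^{\ast}$ is not correct as stated: since $Y$ is the \emph{closure} of $\Gamma(G,\mathcal{S}) - L$, a boundary vertex of $L$ can belong to a component of $Y$, so some $hz$ may lie in $Y'_i$ and hence in $W_i$. (If $z \in Y'_j$ and $Y'_j$ lies in the $H$-orbit of $Y'_i$, then $Hz \cap W_i$ is a single $K_i$-coset.) Your conclusion survives, since $Hz \setminus W_i$ still misses at most one $K_i$-coset of $Hz$ and $[H:K_i]=\infty$; but the cleaner route is the one already used in Lemma~\ref{coendimpliessomeend}: because $\delta\widehat{X} \subset L$, each component of $Y$ lies entirely in $\widehat{X}$ or in its complement, so $W_i \subset X$ or $W_i \subset X^{\ast}$, and hence $W_i^{\ast}$ contains one of $X$, $X^{\ast}$, which is $H$-infinite and therefore $K_i$-infinite. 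A similar caveat applies to your assertion that $he \in \delta\widehat{W}_i$ forces $h \in K_i$: the other endpoint of $e$ may also lie in some component of $Y$, giving a second possible $K_i$-coset for $h$. Either way $He \cap \delta\widehat{W}_i$ is at most two $K_i$-orbits, so $\delta\widehat{W}_i$ is still $K_i$-finite as you need.
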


\begin{proof}
As in the proof of Lemma \ref{coendimpliessomeend}, we will consider the
relative Cayley graph $\Gamma(G,\mathcal{S})$, and we will use the notation in
that proof. Thus $\widehat{X}$ is an enlargement of $X$ such that
$\delta\widehat{X}$ is $H$--finite, $Y$ denotes the closure of the complement
$\Gamma(G,\mathcal{S})-L$, and the components of $Y$ form a $H$--finite
family. Further we showed that if $Y^{\prime}$ is a component of $Y$ such that
the set $W$ of vertices of $Y^{\prime}$ which lie in $G$ is $H$--infinite,
then there is $K\subset H$ such that $W$ is a nontrivial $K$--almost invariant
subset of $G$, so that $e(G,K:\mathcal{S})\geq2$. Now the special assumptions
of this lemma imply that $K$ must be of finite index in $H$. It follows that
there can only be finitely many such components $Y^{\prime}$ of $Y$, and all
other components of $Y$ intersect $G$ in a $H$--finite set. We let $X^{\prime
}$ denote the union of all the sets $W$ obtained in this manner which are
contained in $X$. Thus $X^{\prime}$ is a finite union of such sets, so there
is a subgroup $H^{\prime}$ of finite index in $H$ such that $X^{\prime}$ is a
nontrivial $H^{\prime}$--almost invariant subset of $G$. Further $X^{\prime}$
is $H$--almost equal to $X$, which completes the proof of the lemma.
\end{proof}

Now the following generalisation of Lemma \ref{propertyofcoends} is immediate.

\begin{corollary}
\label{propertyofadaptedcoends}Let $(G,\mathcal{S})$ be a group system of
finite type, and let $H$ be a finitely generated subgroup of $G$. Suppose that
for any subgroup $K$ of $H$ which has infinite index in $H$, the number of
ends $e(G,K:\mathcal{S})$ equals $1$. Then the number of coends of the pair
$(G,H)$ adapted to $\mathcal{S}$, denoted $\widetilde{e}(G,H:\mathcal{S})$,
equals $\sup\{e(G,K:\mathcal{S}):K\in\lbrack H]\}$. In particular, if
$\widetilde{e}(G,H:\mathcal{S})$ is finite, there is a subgroup $K$ of $H$
which has finite index in $H$ such that

$\widetilde{e}(G,H:\mathcal{S})=e(G,K:\mathcal{S})$.
\end{corollary}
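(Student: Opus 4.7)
The plan is to establish the equality of $\widetilde{e}(G,H:\mathcal{S})$ with $\sup\{e(G,K:\mathcal{S}):K\in[H]\}$ by proving two inequalities, the second of which is essentially a direct packaging of Lemma \ref{coendimpliesend}. The last sentence of the corollary then falls out as an immediate consequence of the witness produced along the way.

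First, I would establish the easy direction $\widetilde{e}(G,H:\mathcal{S})\geq\sup\{e(G,K:\mathcal{S}):K\in[H]\}$. Fix any $K\in[H]$. Since $H$ and $K$ are commensurable, the discussion immediately before Corollary \ref{HaiiffKai} notes that $H$-finiteness and $K$-finiteness coincide. Any nontrivial $\mathcal{S}$-adapted $K$-almost invariant subset $Y$ of $G$ trivially satisfies $Y\overset{a}{=}Yg$ in the $H$-finite sense, so it determines a class in $Q_{H}(G,\mathcal{S})/F_{H}(G)$; moreover, $K$-linear independence modulo $K$-finite sets lifts to linear independence modulo $F_{H}(G)$. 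Taking dimensions gives $e(G,K:\mathcal{S})\leq \widetilde{e}(G,H:\mathcal{S})$ for each such $K$.

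For the reverse inequality $\widetilde{e}(G,H:\mathcal{S})\leq\sup\{e(G,K:\mathcal{S}):K\in[H]\}$, assume first that $\widetilde{e}(G,H:\mathcal{S})\geq n$ is finite, and pick representatives $X_{1},\ldots,X_{n}$ of linearly independent classes in $Q_{H}(G,\mathcal{S})/F_{H}(G)$. Each $X_{i}$ is adapted to $\mathcal{S}$, satisfies $X_{i}\overset{a}{=}X_{i}g$ over $H$, and, by the nontriviality of its class, has both $X_{i}$ and $X_{i}^{\ast}$ which are $H$-infinite. Hence Lemma \ref{coendimpliesend} applies and produces a finite-index subgroup $H_{i}^{\prime}\subset H$ and a nontrivial $H_{i}^{\prime}$-almost invariant subset $X_{i}^{\prime}$ of $G$ which is $H$-almost equal to $X_{i}$. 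Setting $H^{\prime}=\bigcap_{i=1}^{n}H_{i}^{\prime}$, which still has finite index in $H$, each $X_{i}^{\prime}$ is $H^{\prime}$-almost invariant, remains $\mathcal{S}$-adapted (as it is equivalent to $X_i$ by Lemma \ref{extensionsareequivalent}-type arguments, or directly by its $H$-almost equality with $X_i$), and the $X_{i}^{\prime}$ stay linearly independent modulo $F(H^{\prime}\backslash G)$ because finite-index passage preserves both almost-equality and linear independence. Thus $e(G,H^{\prime}:\mathcal{S})\geq n$ with $H^{\prime}\in[H]$.

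Finally I would handle the infinite case and extract the last sentence of the corollary. If $\widetilde{e}(G,H:\mathcal{S})=\infty$, the previous paragraph applied to arbitrarily large $n$ shows the sup is at least $n$ for every $n$, and hence is also infinite. If $\widetilde{e}(G,H:\mathcal{S})=n$ is finite, the construction above produces a single finite-index subgroup $H^{\prime}\subset H$ with $e(G,H^{\prime}:\mathcal{S})\geq n$, while the first inequality gives $e(G,H^{\prime}:\mathcal{S})\leq \widetilde{e}(G,H:\mathcal{S})=n$, so equality holds and the sup is attained at $H^{\prime}$. There is no significant obstacle here since all the real content is in Lemma \ref{coendimpliesend}; the only minor point requiring a little care is the cleanup step that a single $H^{\prime}$ can be chosen to work simultaneously for a finite collection of classes, which is handled by intersecting finitely many finite-index subgroups.
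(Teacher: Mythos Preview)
Your argument is correct and is exactly the natural unpacking of what the paper records as ``immediate'' from Lemma~\ref{coendimpliesend}: one inequality is soft (commensurable groups give the same notion of finiteness), and the other comes from applying Lemma~\ref{coendimpliesend} to finitely many independent classes and intersecting the resulting finite-index subgroups. One small correction: your appeal to ``Lemma~\ref{extensionsareequivalent}-type arguments'' is misplaced, since that lemma concerns extensions to $\overline{G}$ and is irrelevant here; the $\mathcal{S}$-adaptedness of each $X_i'$ is actually built into the construction in the proof of Lemma~\ref{coendimpliesend} (each piece $W$ there has an enlargement with $K$-finite coboundary, so Lemma~\ref{characterizingdX} gives adaptedness directly), and your backup justification via $H$-almost equality with $X_i$ also works once one runs the double-coset cleanup as in the second half of Lemma~\ref{characterizingdX}.
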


\section{Crossing of sets over $VPC$
groups\label{crossingofa.i.setsoverVPCgroups}}

In this section, we will generalise in two different ways some of the results
of chapter 7 of \cite{SS2}, and then give some important applications. In
chapter 7 of \cite{SS2}, the authors considered a finitely generated,
one-ended group $G$, and almost invariant subsets of $G$ over $VPC1$
subgroups. In this section, the first generalization is to consider group
systems of finite type. The second generalization is to consider almost
invariant sets over $VPCn$ groups, for any $n\geq1$.

As we will frequently need to refer to groups which are $VPCk$, for some
$k<n$, it will be convenient to introduce the notation that such a group is
$VPC(<n)$.

In this paper, we will generalise these results to the setting of a group
system $(G,\mathcal{S})$ of finite type. An important tool is our use of a
relative Cayley graph for $(G,\mathcal{S})$ rather than a Cayley graph for
$G$. As relative Cayley graphs are almost always not locally finite, even when
$G$ is finitely generated, this requires some care. The subgroups $H$ and $K$
will (most of the time) be assumed to be $VPCn$, and the almost invariant
subsets $X$ and $Y\ $will be assumed to be $\mathcal{S}$--adapted.

Recall from section \ref{groupsystems} that if $(G,\mathcal{S})$ is a group
system of finite type, then $G$ is generated by a finite subset relative to
$\mathcal{S}$. Let $A$ be such a finite set, and let $\Gamma(G:A)$ denote the
graph with vertex set $G$ and edges $(g,ga)$ which join $g$ to $ga$, for each
$a\in A$. Note that $\Gamma(G:A)$ is not connected unless $A$ generates $G$. A
relative Cayley graph $\Gamma(G,\mathcal{S})$ can be constructed from
$\Gamma(G:A)$ by attaching a cone on each coset $gS_{j}$ of each $S_{j}$ in
$\mathcal{S}$. Thus $\Gamma(G,\mathcal{S})$ is locally finite at all non-cone
points. Note that $\Gamma(G,\mathcal{S})$ is connected.

As usual we identify $G$ with the set of vertices of $\Gamma(G:A)$, and we
will think of a subset $X$ of $G$ as a subset of the vertex set of
$\Gamma(G:A)$, and also as a subset of the vertex set of $\Gamma
(G,\mathcal{S})$. Recall that, for a subset $X$ of the vertex set of a graph
$\Gamma$, the \textit{coboundary} $\delta X$ of $X$ in $\Gamma$ consists of
the collection of edges of $\Gamma$ with exactly one vertex in $X$. It will
sometimes be convenient not to use the coboundary of $X$ in a graph, but
instead to use the following closely related idea.

\begin{definition}
\label{defnofboundary} For a subset $X$ of the vertex set of a graph $\Gamma$,
the \emph{boundary} of $X$ in $\Gamma$ consists of those points of $X$ which
are endpoints of an edge of $\delta X$. We denote the boundary of $X$ by
$\partial X$.
\end{definition}

\begin{remark}
It is convenient that $\partial X$ is a subset of $X$. Thus, for example, if
$X$ and $Y$ are any vertex sets of $\Gamma$, we have $\partial(X\cap
Y)=(\partial X\cap Y)\cup(X\cap\partial Y)$. But note that if $X^{\ast}$
denotes the vertices of $\Gamma$ not in $X$, then $\partial X$ is not equal to
$\partial X^{\ast}$ (unless both are empty), whereas $\delta X$ and $\delta
X^{\ast}$ are always equal.
\end{remark}

We will also use the following additional definition.

\begin{definition}
\label{defnofcoboundaryinE}If $X\subset E$, and $E$ is a subset of the vertex
set of a graph $\Gamma$, the \emph{coboundary of }$X$\emph{ in }$E$ consists
of those edges of $\delta X$ whose endpoints both lie in $E$. This is denoted
by $\delta_{E}X$. The \emph{boundary} of $X$ in $E$ consists of those points
of $X$ which are endpoints of an edge of $\delta_{E}X$. We denote the boundary
of $X$ in $E$ by $\partial_{E}X$.
\end{definition}

We will need some combinatorial results. The first, rather trivial, such
result will be used many times in this paper.

\begin{lemma}
\label{L-finiteintersection}Let $G$ be a group with subgroups $H$ and $K$, and
let $L$ denote $H\cap K$. Let $Y$ be a $K$--almost invariant subset of $G$.
Then the following hold:

\begin{enumerate}
\item $Hg\cap Y$ is $L$--almost equal to $(H\cap Y)g$, for every $g\in G$.

\item $Hg\cap Y$ is $L$--finite if and only if $H\cap Y$ is $L$--finite, for
every $g\in G$.
\end{enumerate}
\end{lemma}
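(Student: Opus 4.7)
The plan is to deduce (2) from (1), and to prove (1) by computing the symmetric difference $(Hg\cap Y)+((H\cap Y)g)$ explicitly and covering it by finitely many right cosets of $L$. For (2), note that right translation by $g$ is a bijection on $G$ that maps $L$-cosets to $L$-cosets, so a set $A\subset G$ is $L$-finite if and only if $Ag$ is $L$-finite. Combined with (1), this gives $Hg\cap Y$ is $L$-finite iff $(H\cap Y)g$ is $L$-finite iff $H\cap Y$ is $L$-finite.

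For (1), I would first observe that $(H\cap Y)g=Hg\cap Yg$, and use distributivity of intersection over symmetric difference to compute
\[
(Hg\cap Y)+((H\cap Y)g)=(Hg\cap Y)+(Hg\cap Yg)=Hg\cap(Y+Yg).
\]
Since $Y$ is $K$-almost invariant, the symmetric difference $Y+Yg$ is $K$-finite, so there exist $g_1,\ldots,g_n\in G$ with $Y+Yg\subset Kg_1\cup\cdots\cup Kg_n$. Hence
\[
Hg\cap(Y+Yg)\subset\bigcup_{i=1}^n(Hg\cap Kg_i),
\]
and it suffices to show that each intersection $Hg\cap Kg_i$ is $L$-finite (indeed, a single right coset of $L$ when non-empty).

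The main step is therefore the following coset computation. Write
\[
Hg\cap Kg_i=\{hg:h\in H,\ hg\in Kg_i\}=\{hg:h\in H\cap Kg_ig^{-1}\}.
\]
If this set is non-empty, pick $h_0\in H\cap Kg_ig^{-1}$; then $Kg_ig^{-1}=Kh_0$, so
\[
H\cap Kg_ig^{-1}=H\cap Kh_0=(H\cap K)h_0=Lh_0,
\]
and therefore $Hg\cap Kg_i=Lh_0g$, a single right coset of $L$. Consequently $Hg\cap(Y+Yg)$ is contained in finitely many right cosets of $L$, hence is $L$-finite, establishing (1).

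There is no real obstacle; the only point requiring a moment of care is the reduction $H\cap Kg_ig^{-1}=Lh_0$, which is a routine right-coset identity and relies only on the fact that $L=H\cap K$. Everything else is formal manipulation of symmetric differences and the definitions of $L$-finite and $K$-almost invariant.
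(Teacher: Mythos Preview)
Your proof is correct and follows essentially the same route as the paper. The paper's argument is terser: it writes $Hg\cap Y=(H\cap Yg^{-1})g$, notes that $H\cap Yg^{-1}$ is $L$--almost equal to $H\cap Y$ because $Yg^{-1}$ is $K$--almost equal to $Y$, and then translates by $g$; the coset computation showing that a $K$--finite set intersected with $H$ (or $Hg$) is $L$--finite is left implicit, whereas you spell it out.
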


\begin{proof}
Let $g$ be an element of $G$. As $Y$ is $K$--almost invariant, $Yg^{-1}$ is
$K$--almost equal to $Y$. Hence $H\cap Yg^{-1}$ is $L$--almost equal to $H\cap
Y$.

Now $Hg\cap Y=(H\cap Yg^{-1})g$ is $L$--almost equal to $(H\cap Y)g$. The
lemma follows.
\end{proof}

Our next result is standard in the non-adapted case.

\begin{lemma}
\label{intersectionisL-a.i.}Let $(G,\mathcal{S})$ be a group system of finite
type, let $H$ and $K$ be subgroups of $G$, and let $X$ and $Y\ $be nontrivial
$\mathcal{S}$--adapted almost invariant subsets of $G$ over $H$ and $K$
respectively. Let $L$ denote the intersection $H\cap K$. Suppose that $H\cap
Y$ and $X\cap K$ are $L$--finite. Then $X\cap Y$ is a $\mathcal{S}$--adapted
almost invariant subset of $G$ over $L$.
\end{lemma}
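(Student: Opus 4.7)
The plan is to verify three things: (i) $L(X\cap Y)=X\cap Y$, (ii) $(X\cap Y)+(X\cap Y)g$ is $L$-finite for every $g\in G$, and (iii) $X\cap Y$ is adapted to each $S\in\mathcal{S}$. Condition (i) is immediate because $L\subset H$ and $L\subset K$ give $L(X\cap Y)\subset LX\cap LY=X\cap Y$, with the reverse inclusion from $e\in L$.

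For (ii), I would use the standard four-corner decomposition
\[
(X\cap Y)+(Xg\cap Yg)\subset\bigl(Y\cap(X+Xg)\bigr)\cup\bigl(Yg\cap(X+Xg)\bigr)\cup\bigl(X\cap(Y+Yg)\bigr)\cup\bigl(Xg\cap(Y+Yg)\bigr),
\]
and show each piece is $L$-finite. For the first piece, $X+Xg$ is $H$-finite, so is contained in $\bigcup_i Hh_i$; then $Y\cap Hh_i$ is $L$-almost equal to $(Y\cap H)h_i$ by Lemma \ref{L-finiteintersection}, which is $L$-finite because $H\cap Y$ is by hypothesis. The second piece is handled identically after writing $Yg\cap Hh_i=(Y\cap Hh_ig^{-1})g$. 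The last two pieces are handled symmetrically using the hypothesis that $X\cap K$ is $L$-finite.

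For (iii) I would proceed one $S\in\mathcal{S}$ at a time. Using adaptedness of $X$ and $Y$, pick representatives $X'$ and $Y'$ strictly adapted to $S$ with $X'$ equivalent to $X$ and $Y'$ equivalent to $Y$. Then $X'$ is $H_X$-almost invariant for some $H_X$ commensurable with $H$ (by Lemma \ref{equivalenta.i.setshavecommensurablestabiliser}), and similarly $Y'$ is $K_Y$-almost invariant with $K_Y$ commensurable with $K$. A short index computation shows that $L_{XY}:=H_X\cap K_Y$ is commensurable with $L$: the subgroup $(H\cap H_X)\cap(K\cap K_Y)$ is easily checked to have finite index in both $L$ and $L_{XY}$. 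Next, I would argue that $H_X\cap Y'$ is $L$-finite by decomposing $H_X$ into finitely many $(H\cap H_X)$-cosets, applying Lemma \ref{L-finiteintersection}, and exploiting the $K$-almost-equality of $Y$ and $Y'$ to reduce to the hypothesis that $H\cap Y$ is $L$-finite; similarly $X'\cap K_Y$ is $L$-finite. Applying step (ii) with $(H,K,L)$ replaced by $(H_X,K_Y,L_{XY})$ then gives that $X'\cap Y'$ is $L_{XY}$-almost invariant, and it is tautologically strictly adapted to $S$ since each coset $gS$ lies entirely in one corner of the pair $(X',Y')$.

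To conclude, a further four-corner computation bounding $(X\cap Y)+(X'\cap Y')$ in terms of $X+X'$ (which is $H$-finite) and $Y+Y'$ (which is $K$-finite), together with the two $L$-finiteness hypotheses, shows $X\cap Y$ is $L$-almost equal to $X'\cap Y'$. By Definition \ref{defnofadapted} this means $X\cap Y$ is adapted to $S$, and running the argument over all $S\in\mathcal{S}$ yields $\mathcal{S}$-adaptedness. The main technical obstacle is the bookkeeping between the different but commensurable subgroups $L$, $L_{XY}$, $H\cap H_X$, and so on; each conversion between their notions of finiteness is ultimately powered by the commensurability of $L$ with $L_{XY}$, which reduces to the finite-index calculation sketched above.
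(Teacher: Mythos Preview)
Your argument is correct, but it takes a genuinely different route from the paper.

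The paper's proof works entirely inside a relative Cayley graph $\Gamma(G,\mathcal{S})$ and exploits Lemma~\ref{characterizingdX}, which says that a left-$L$-invariant subset of $G$ is simultaneously $L$--almost invariant \emph{and} $\mathcal{S}$--adapted if and only if it admits an enlargement with $L$--finite coboundary. So the paper simply takes enlargements $\widehat{X}$ and $\widehat{Y}$ with $H$-- and $K$--finite coboundaries, observes that $\widehat{X}\cap\widehat{Y}$ is an enlargement of $X\cap Y$, and checks that $\delta(\widehat{X}\cap\widehat{Y})$ is $L$--finite using Lemma~\ref{L-finiteintersection}. A single appeal to Lemma~\ref{characterizingdX} then gives both conclusions at once.

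Your approach avoids the relative Cayley graph entirely and proves almost invariance and $\mathcal{S}$--adaptedness separately. Part~(ii) is the same four-corner idea that underlies the coboundary computation, just carried out set-theoretically. The real divergence is part~(iii): rather than getting adaptedness for free from the coboundary criterion, you build a strictly $S$--adapted witness $X'\cap Y'$ for each $S$ individually and prove it is $L$--almost equal to $X\cap Y$. This forces you to track the commensurable subgroups $H_X$, $K_Y$, $L_{XY}$ and convert between their notions of finiteness --- exactly the bookkeeping you flag at the end. The paper's machinery was developed precisely to absorb that bookkeeping into the single statement of Lemma~\ref{characterizingdX}. Your approach buys independence from that machinery at the cost of length; the paper's buys brevity at the cost of having set up the relative Cayley graph in Section~\ref{groupsystems}.
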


\begin{remark}
The assumptions that $H\cap Y$ and $X\cap K$ are $L$--finite imply that $X$
does not cross $Y$ strongly, and that $Y$ does not cross $X$ strongly. But
note that this result does not assume that $X$ and $Y$ cross at all.
\end{remark}

\begin{proof}
We will work in a relative Cayley graph $\Gamma(G,\mathcal{S})$ of the given
group system. As $X$ is a $\mathcal{S}$--adapted $H$--almost invariant subset
of $G$, Lemma \ref{characterizingdX} tells us that there is an enlargement
$\widehat{X}$ of $X$ whose coboundary $\delta\widehat{X}$ is $H$--finite.
Recall that this means that $\widehat{X}$ consists of $X$ together with some
cone points of $\Gamma(G,\mathcal{S})$. Similarly, as $Y$ is a $\mathcal{S}%
$--adapted $K$--almost invariant subset of $G$, there is an enlargement
$\widehat{Y}$ of $Y$ whose coboundary $\delta\widehat{Y}$ is $K$--finite.

As $H\cap\widehat{Y}=H\cap Y$ is $L$--finite, and $\delta\widehat{X}$ is
$H$--finite, Lemma \ref{L-finiteintersection} implies that the collection of
edges of $\delta\widehat{X}$ with one end in $\widehat{X}\cap\widehat{Y}$ is
also $L$--finite. Similarly the collection of edges of $\delta\widehat{Y}$
with one end in $\widehat{X}\cap\widehat{Y}$ must be $L$--finite. Now the
coboundary $\delta(\widehat{X}\cap\widehat{Y})$ consists of the union of these
two collections of edges, and so $\delta(\widehat{X}\cap\widehat{Y})$ must be
$L$--finite. As $\widehat{X}\cap\widehat{Y}$ is an enlargement of $X\cap Y$,
and $L(X\cap Y)=(X\cap Y)$, Lemma \ref{characterizingdX} tells us that $X\cap
Y$ is $L$--almost invariant and $\mathcal{S}$--adapted. This completes the
proof of the lemma.
\end{proof}

Next we state some basic facts about $VPC$ groups which will be needed. Our
statement incorporates parts of Theorems 9, 10 and 13 of \cite{Houghton}.

\begin{lemma}
\label{Houghtonlemma}(Houghton) Let $G$ be a $VPCn$ group and let $H$ be a
$VPCk$ subgroup, for some $k<n$. Then the following statements hold:

\begin{enumerate}
\item $e(G,H)$ equals $1$ or $2$.

\item If $e(G,H)=2$, then $k=n-1$, and $G$ has a finite index subgroup $J$
which contains $H$ as a normal subgroup, such that $J/H$ is infinite cyclic.

\item If $e(G,H)=1$, and $k=n-1$, then $G$ has a finite index subgroup $J$,
and $H$ has an index $2$ subgroup $M$, such that $J$ contains $M$ as a normal
subgroup, and $J/M$ is infinite cyclic.
\end{enumerate}
\end{lemma}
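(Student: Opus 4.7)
The plan is to adapt Houghton's proof from \cite{Houghton} (Theorems 9, 10, 13), which is the cited source. The overall strategy is induction on the Hirsch length $n$, after first reducing to the torsion-free polycyclic case. To carry out this reduction, I would pass to a torsion-free polycyclic subgroup $G_{0}\leq G$ of finite index (which exists for every VPC group and has the same Hirsch length $n$) and replace $H$ by $H_{0}=H\cap G_{0}$. One checks that $H_{0}$ is torsion-free polycyclic of the same Hirsch length $k$ as $H$, that $e(G,H)$ and $e(G_{0},H_{0})$ are both finite simultaneously and take values in $\{1,2\}$ simultaneously, and that a finite-index subgroup $J_{0}\leq G_{0}$ witnessing conclusion (2) or (3) for $(G_{0},H_{0})$ can be enlarged to $J=J_{0}H\leq G$ witnessing the corresponding conclusion for $(G,H)$.

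With $G$ torsion-free polycyclic, I would pick a short exact sequence $1\to N\to G\overset{\pi}{\to}\mathbb{Z}\to 1$ with $N$ torsion-free polycyclic of Hirsch length $n-1$; such $N$ exists from any Hirsch polycyclic series. The base case $n=1$ is immediate: $G\cong\mathbb{Z}$, $H$ is trivial, $e(G,H)=2$, and $J=G$ works. For the inductive step, I would split into the two cases $\pi(H)=\{0\}$ (i.e.\ $H\subseteq N$) and $\pi(H)\neq\{0\}$ (i.e.\ $\pi(H)$ is a finite-index subgroup of $\mathbb{Z}$). In the latter case, set $H'=H\cap N$, which has Hirsch length $k-1$ inside $N$ of Hirsch length $n-1$; the coset space $H\backslash G$ is quasi-isometric to $H'\backslash N$ (the cyclic direction $\pi(H)$ is absorbed), so applying the inductive hypothesis to $(N,H')$ transfers all three conclusions.

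In the case $H\subseteq N$, I split on whether $k=n-1$ or $k<n-1$. If $k=n-1$, then $H$ has finite index in $N$, so $H\backslash N$ is finite and $H\backslash G$ is quasi-isometric to $\mathbb{Z}$; thus $e(G,H)=2$, and one constructs $J$ as the preimage under $\pi$ of a finite-index subgroup of $\mathbb{Z}$ chosen so that a generator $t$ of $J/N$ conjugates $H$ into itself, giving $H\triangleleft J$ with $J/H$ infinite cyclic (this realises conclusion (2)). For the case where $e(G,H)=1$ and $k=n-1$, conclusion (3) is extracted by examining the $\mathbb{Z}=G/N$-action on the two ends of $H\backslash N$: if it fixes them we land in conclusion (2), whereas if it swaps them, the index-$2$ stabiliser $M$ of an end inside $H$ admits an extension $J$ with $J/M$ infinite cyclic.

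The main obstacle is the remaining subcase $H\subseteq N$ with $k<n-1$, where one must show $e(G,H)=1$. Here the fibration $H\backslash N\to H\backslash G\to\mathbb{Z}$ has a base with two ends and a fibre that by induction has one or two ends, yet the total space must be shown to have only one end. The model example $\mathbb{Z}\subset\mathbb{Z}^{2}\subset\mathbb{Z}^{3}$ already exhibits the phenomenon: the fibre $\mathbb{Z}$ has two ends, but the total space $\mathbb{Z}^{2}$ has one end, because the two ``ends at $\pm\infty$ of $\mathbb{Z}$'' get connected through large loops in the fibre direction. The technical heart is to formalise this for general polycyclic-by-cyclic extensions, showing that any finite set in $H\backslash G$ has a connected complement at infinity. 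This is the deepest input from Houghton's paper and relies on the specific large-scale geometry of solvable-by-cyclic coset spaces; no shortcut seems available without invoking it.
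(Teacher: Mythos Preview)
The paper does not prove this lemma at all: it is stated as a citation of Houghton's results (Theorems 9, 10 and 13 of \cite{Houghton}), with no proof environment following the statement. So there is no ``paper's own proof'' to compare your attempt against; your sketch is an attempt to reconstruct the cited argument, which the paper simply invokes as a black box.

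That said, your sketch has a genuine muddle in the treatment of conclusion (3). In the case $H\subseteq N$ with $k=n-1$ you correctly argue that $H$ has finite index in $N$, so $H\backslash G$ is two-ended and $e(G,H)=2$; this case therefore cannot produce $e(G,H)=1$. Yet in the very next sentence you place the argument for conclusion (3) in this same case, speaking of ``the $\mathbb{Z}=G/N$-action on the two ends of $H\backslash N$''---but $H\backslash N$ is finite here and has no ends. The phrase ``the index-$2$ stabiliser $M$ of an end inside $H$'' is also garbled: $H$ does not act on the relevant end space, and $M$ is supposed to be a subgroup of $H$ of index $2$, not a stabiliser. Conclusion (3) must instead arise from the other branch ($\pi(H)\neq\{0\}$), where the inductive hypothesis applied to $(N,H')$ can yield $e(N,H')=1$ with $H'$ of Hirsch length $n-2$ in $N$; the index-$2$ subgroup $M$ then comes from the structure of $H'$ inside $N$ furnished by induction, and one must check carefully how this lifts back to $(G,H)$. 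Your transfer claim ``applying the inductive hypothesis to $(N,H')$ transfers all three conclusions'' hides exactly this work, and the quasi-isometry assertion $H\backslash G\simeq H'\backslash N$ needs justification (it is not literally a quasi-isometry in general; what one needs is an equality of end counts, which requires an argument).
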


\begin{remark}
The reader should note that case 3) of the above lemma can occur. A simple
example occurs when $G$ is the fundamental group of the Klein bottle $K$, and
$H$ is the fundamental group of a Moebius band embedded in $K$.
\end{remark}

Now we can prove the following result, which is closely connected with
Proposition 7.3 of \cite{SS2}.

\begin{lemma}
\label{weak-weakcrossing}Let $(G,\mathcal{S})$ be a group system of finite
type, such that $e(G:\mathcal{S})=1$. Let $H\ $and $K$ be $VPC(n+1)$ subgroups
of $G$, and let $L$ denote $H\cap K$. Let $X\ $and $Y$ be nontrivial
$\mathcal{S}$--adapted almost invariant subsets of $G$ over $H$ and $K$
respectively. Suppose that, for any subgroup $M$ of $H$ or of $K$ with
infinite index, $e(G,M:\mathcal{S})=1$. Suppose further that $X$ crosses $Y$
weakly and $Y$ crosses $X$ weakly. Then $H$ and $K$ must be commensurable, and
$e(G,[H]:\mathcal{S})\geq4$.
\end{lemma}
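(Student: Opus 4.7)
The plan is to use the weak-crossing hypotheses together with Lemma~\ref{intersectionisL-a.i.} to produce a nontrivial $\mathcal{S}$-adapted $L$-almost invariant corner of $(X,Y)$; this will force $L$ to have finite index in both $H$ and $K$ by the hypothesis on infinite-index subgroups, yielding commensurability. Once commensurability holds, all four corners of $(X,Y)$ are $\mathcal{S}$-adapted $L$-almost invariant, and since they partition $G$ into four pairwise disjoint $L$-infinite pieces they are automatically $\mathbb{Z}_{2}$-linearly independent modulo $L$-finite, giving $e(G,L:\mathcal{S})\ge 4$.

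\textbf{Step 1 (small corners are $L$-finite).} Weak crossing of $X$ over $Y$ says that one of $H\cap Y$ or $H\cap Y^{\ast}$ is $K$-finite, and weak crossing of $Y$ over $X$ says that one of $K\cap X$ or $K\cap X^{\ast}$ is $H$-finite. After replacing $X$ by $X^{\ast}$ and $Y$ by $Y^{\ast}$ as needed (these operations preserve the weak-crossing conditions) I may assume $H\cap Y$ is $K$-finite and $K\cap X$ is $H$-finite. A subset $Z\subset H$ which is $K$-finite is automatically $L$-finite: writing $Z\subset\bigcup_{i=1}^{n}Kg_{i}$ and choosing $h_{i}\in H\cap Kg_{i}$ when nonempty, one has $H\cap Kg_{i}=Lh_{i}$, so $Z$ lies in finitely many $L$-cosets; symmetrically for subsets of $K$. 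Hence $H\cap Y$ and $K\cap X$ are both $L$-finite, and Lemma~\ref{intersectionisL-a.i.} gives that $X\cap Y$ is an $\mathcal{S}$-adapted $L$-almost invariant subset of $G$.

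\textbf{Step 2 (commensurability).} Each of the four corners of $(X,Y)$ is $K$-infinite by the crossing hypothesis; since $L\subset K$ implies every $L$-finite set is $K$-finite, $K$-infinite implies $L$-infinite. Thus $X\cap Y$ and $X^{\ast}\cap Y^{\ast}\subset(X\cap Y)^{\ast}$ are both $L$-infinite, making $X\cap Y$ a nontrivial $L$-almost invariant set and giving $e(G,L:\mathcal{S})\ge 2$. Note that $H$ has infinite index in $G$, since otherwise $X$ would itself be a nontrivial $\mathcal{S}$-adapted almost invariant subset of $G$, contradicting $e(G:\mathcal{S})=1$; hence $L\subset H$ also has infinite index in $G$. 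The hypothesis that every subgroup $M$ of $H$ of infinite index satisfies $e(G,M:\mathcal{S})=1$ therefore forces $L$ to have finite index in $H$; symmetrically $L$ has finite index in $K$, so $H$ and $K$ are commensurable and $L\in[H]$.

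\textbf{Step 3 (four independent corners).} Now $H$ and $K$ are themselves $L$-finite, so $H\cap Y^{(\ast)}$ and $K\cap X^{(\ast)}$ are $L$-finite for every choice of sign; Lemma~\ref{intersectionisL-a.i.} therefore applies to every pair and shows that all four corners $A=X\cap Y$, $B=X\cap Y^{\ast}$, $C=X^{\ast}\cap Y$, $D=X^{\ast}\cap Y^{\ast}$ are $\mathcal{S}$-adapted $L$-almost invariant. They are pairwise disjoint and each is $L$-infinite, so any nonzero $\mathbb{Z}_{2}$-combination of $A,B,C,D$ is the disjoint union of a nonempty subcollection of $\{A,B,C,D\}$ and hence contains an $L$-infinite corner; it cannot be $L$-finite. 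Therefore $[A]$, $[B]$, $[C]$, $[D]$ are linearly independent in $Q(L\backslash G)(\mathcal{S})/F(L\backslash G)$, giving $e(G,L:\mathcal{S})\ge 4$ and hence $e(G,[H]:\mathcal{S})\ge 4$. The main technical obstacle is the coset-replacement bookkeeping in Step~1 that converts the $K$- and $H$-finiteness of the weak-crossing corners into $L$-finiteness, together with the verification that the complementary replacements of $X$ and $Y$ preserve both weak-crossing conditions simultaneously.
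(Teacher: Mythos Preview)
Your proof is correct and follows essentially the same route as the paper's: normalize by complementation so that $H\cap Y$ and $K\cap X$ are $L$-finite, apply Lemma~\ref{intersectionisL-a.i.} to get a nontrivial $L$-almost invariant corner forcing $[H:L],[K:L]<\infty$ via the hypothesis on infinite-index subgroups, then observe that all four corners are nontrivial $\mathcal{S}$-adapted $L$-almost invariant and pairwise disjoint. The aside in Step~2 about $H$ having infinite index in $G$ is unnecessary (the hypothesis on subgroups $M\subset H$ of infinite index applies directly to $L$ once $e(G,L:\mathcal{S})\ge 2$), but it does no harm.
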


\begin{proof}
As $X$ crosses $Y$ weakly, Definition \ref{defnofstrongandweakcrossing} tells
us that one of $H\cap Y$ or $H\cap Y^{\ast}$ is $K$--finite. By replacing $Y$
by its complement if needed, we can assume that $H\cap Y$ is $K$--finite. As
$H$ is certainly $H$--finite, we see that $H\cap Y$ must be $L$--finite.
Similarly, as $Y$ crosses $X$ weakly, we can assume that $X\cap K$ is
$L$--finite. Now Lemma \ref{intersectionisL-a.i.} shows that $X\cap Y$ is a
$\mathcal{S}$--adapted almost invariant subset of $G$ over $L$. As $X\ $and
$Y$ are assumed to cross, each corner of the pair is $H$--infinite, and hence
certainly $L$--infinite. It follows that $X\cap Y$ and its complement are both
$L$--infinite, so that $X\cap Y$ is a nontrivial $\mathcal{S}$--adapted almost
invariant subset of $G$ over $L$. Thus $e(G,L:\mathcal{S})\geq2$. Now the
hypotheses of the lemma imply that $L$ must have finite index in $H$ and in
$K$. Hence $H$ and $K$ are commensurable, as required.

As $L$ has finite index in $H$ and in $K$, each of $X$ and $Y$ is $L$--almost
invariant. Further each of $H\cap Y$ and $H\cap Y^{\ast}$ is $K$--finite, and
each of $X\cap K$ and $X^{\ast}\cap K$ is $H$--finite. Thus Lemma
\ref{intersectionisL-a.i.} implies that each of the four corners of the pair
$(X,Y)$ is $\mathcal{S}$--adapted and almost invariant over $L$. As $X$ and
$Y$ cross, each of these corners must be $L$--infinite. Thus $G\ $has four
disjoint nontrivial $\mathcal{S}$--adapted almost invariant subsets over $L$.
It follows that $e(G,L:\mathcal{S})\geq4$, so that we must have
$e(G,[H]:\mathcal{S})\geq4$, as required.
\end{proof}

\subsection{Adapted ends of subsets of $G$}

In this and the next section, it will be often be important to replace an
almost invariant subset $X$ of a group $G$, regarded as a subset of a relative
Cayley graph $\Gamma(G,\mathcal{S})$, by a connected subgraph which also has
connected "boundary". If $Z$ is a vertex set of a graph $\Gamma$, we will say
that $Z$ is \textit{connected in }$\Gamma$, if the maximal subgraph of
$\Gamma$ with vertex set equal to $Z$ is connected. We will denote this
maximal subgraph of $\Gamma$ by $\Gamma\lbrack Z]$. If the context leaves no
doubt we may simply say that $Z$ is connected. For a set $X$ of vertices of
$\Gamma$, we let $N_{R}X$ denote the collection of vertices of $\Gamma$ with
distance at most $R$ from $X$, i.e. $N_{R}X$ consists of those vertices of
$\Gamma$ which can be joined to a point of $X$ by an edge path with at most
$R$ edges. If $G\ $is finitely generated, so that a Cayley graph for $G$ with
respect to a finite generating set is locally finite, we have the following
result which guides our thinking.

\begin{lemma}
\label{R-nbhdisconnected}Let $G$ be a finitely generated group with Cayley
graph $\Gamma$, let $H$ be a finitely generated subgroup of $G$, and let $X$
be a $H$--almost invariant subset of $G$. Then there is $R$ such that
$N_{R}\partial X$ and $N_{R}X$ are connected.
\end{lemma}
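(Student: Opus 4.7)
My plan is to handle trivial cases separately ($X=\emptyset$ gives $N_R X=\emptyset$ connected, and $X=G$ gives $N_R X=\Gamma$ connected, since $G$ finitely generated makes $\Gamma$ connected; in both cases $\partial X=\emptyset$), and then assume $X$ and $X^{\ast}$ are both nonempty. The key inputs are Cohen's theorem, which gives that $\delta X$ (and hence $\partial X$, a subset of its endpoints) is $H$--finite, together with the observation that every connected component of $\Gamma[X]$ must meet $\partial X$: a component not meeting $\partial X$ would have no edges of $\Gamma$ leaving it, so by connectedness of $\Gamma$ it would equal $\Gamma$, forcing $X=G$.

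Next I would bound the number of $H$--orbits of components of $\Gamma[X]$. Passing to the quotient $H\backslash\Gamma$, which is connected (since $\Gamma$ is) and locally finite (since $\Gamma$ is and $H$ acts freely), the image $H\backslash\delta X$ is a finite set of edges. Removing finitely many edges from a connected locally finite graph yields only finitely many components, so $H\backslash\Gamma[X]$ has finitely many components, i.e.\ $\Gamma[X]$ has finitely many $H$--orbits of components. Pick representatives $C_1,\ldots,C_n$ and base points $q_i\in\partial X\cap C_i$. Let $\gamma_i$ be an edge path in $\Gamma$ from $q_1$ to $q_i$ and set $M=\max_i\mathrm{length}(\gamma_i)$. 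Next fix a finite generating set $s_1,\ldots,s_k$ of $H$ and set $D=\max_j|s_j|_G$, the word length in the chosen generators of $G$. Put $R=\max(M,D)$.

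Finally I would verify connectedness. For $N_R\partial X$: any $p\in\partial X$ equals $hq_i$ for some $h\in H$ and some $i$, and the translated path $h\gamma_i$ connects $hq_1$ to $p$; both endpoints lie in $\partial X$ and the path has length at most $M$, so every intermediate vertex lies within $M\le R$ of $\partial X$. To connect two points $hq_1, h'q_1$ of the orbit $Hq_1$, write $h^{-1}h'=s_{j_1}\cdots s_{j_\ell}$ and chain steps $h s_{j_1}\cdots s_{j_{m-1}}q_1\to hs_{j_1}\cdots s_{j_m}q_1$; consecutive points lie in $\partial X$ at $\Gamma$--distance $\le D$, and any geodesic between them has all vertices within $D\le R$ of $\partial X$. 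Concatenation yields a path in $N_R\partial X$ between any two points of $\partial X$, proving $N_R\partial X$ is connected. For $N_RX$: any $x\in X$ lies in a component $hC_i$ of $\Gamma[X]$, hence can be joined to $hq_i\in\partial X$ by a path entirely in $hC_i\subset X\subset N_RX$; combining this with the previous argument (which runs inside $N_R\partial X\subset N_RX$) connects any two points of $X$ inside $N_RX$.

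The argument involves no real obstacle; the only minor subtlety is to insist on choosing the orbit base points $q_i$ inside $\partial X$ (not merely in $X$), so that the inter-orbit connecting paths land in $N_R\partial X$ rather than only in $N_RX$, yielding both conclusions with a single value of $R$.
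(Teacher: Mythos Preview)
Your argument for $N_R X$ is fine, but the argument for $N_R\partial X$ has a genuine gap. You choose one basepoint $q_i$ per $H$--orbit of components of $\Gamma[X]$ and then assert that ``any $p\in\partial X$ equals $hq_i$ for some $h\in H$ and some $i$''. This is false: a single component $C_i$ may contain several points of $\partial X$ lying in distinct $H$--orbits, so $\bigcup_i Hq_i$ is typically a proper subset of $\partial X$. (For a tiny example, take $G=\mathbb{Z}$, $H$ trivial, $X=\{0,1\}$: then $\Gamma[X]$ is connected, $n=1$, but $\partial X=\{0,1\}$ has two points.) Your chain argument therefore only connects the special points $hq_i$ inside $N_R\partial X$; a point $p\in\partial X$ outside this set is joined to $hq_i$ only by a path in $X$, which need not stay in $N_R\partial X$.

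The fix is to drop the component analysis for $\partial X$ and instead use a finite set $F$ with $\partial X=HF$, which exists since $\partial X$ is $H$--finite. This is exactly the paper's approach: take $R$ to be the diameter of $F\cup TF$ (with $T$ a finite generating set for $H$); then any two points $hf,h'f'\in\partial X$ are joined by a chain $hf,\,hs_{j_1}f,\,\ldots,\,h'f,\,h'f'$ with consecutive points in $\partial X$ at distance at most $R$, so $N_R\partial X$ is connected. Your component observation (each component of $\Gamma[X]$ meets $\partial X$) is still exactly what is needed to deduce $N_RX=X\cup N_R\partial X$ is connected, and the paper uses it implicitly in its one-line conclusion. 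In short: your detour through $H$--orbits of components is unnecessary for $\partial X$ and leads you to an incorrect claim; the simpler $HF$ argument handles $\partial X$ directly.
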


\begin{remark}
It is obvious that if $N_{R}\partial X$ and $N_{R}X$ are connected, then
$N_{U}\partial X$ and $N_{U}X$ are also connected, for all $U\geq R$. It
follows that the set of $R$ such that $N_{R}\partial X$ and $N_{R}X$ are
connected consists of all integers $\geq R_{0}$, for some $R_{0}$.
\end{remark}

\begin{proof}
As $X$ is $H$--almost invariant, $\partial X$ is $H$--finite, so $\partial X$
is equal to $HF$ for some finite set $F$. Let $T$ be a finite generating set
for $H$, and let $R$ denote the diameter in $\Gamma$ of $F\cup TF$. Then
$N_{R}\partial X$ is connected. Now $N_{R}X=X\cup N_{R}\partial X$. Since
$\Gamma$ and $N_{R}\partial X$ are connected, it follows that so is $N_{R}X$,
as required.
\end{proof}

Note that, as $\partial X\ $is $H$--finite and $\Gamma$ is locally finite,
$N_{R}\partial X$ is also $H$--finite. Thus $N_{R}X$ is $H$--almost equal to
$X$ and so is equivalent to $X$. We will think of $N_{R}\partial X$ as being a
kind of boundary of $N_{R}X$. Thus we can replace $X$ by an equivalent
$H$--almost invariant set $N_{R}X$ which is connected and has connected
``boundary". Further this ``boundary" is $H$--finite, and clearly contains the
true boundary $\partial(N_{R}X)$ of $N_{R}X$.

If $(G,\mathcal{S})$ is a group system of finite type, and $X$ is
$\mathcal{S}$--adapted, we would like to make a similar construction in a
relative Cayley graph $\Gamma(G,\mathcal{S})$, but this is difficult as
$\Gamma(G,\mathcal{S})$ is not usually locally finite, even if $G\ $is
finitely generated. At first sight, it might seem natural to consider the
$R$--neighbourhood of $X$, or of an enlargement in $\Gamma(G,\mathcal{S})$,
but this is most unlikely to have $H$--finite coboundary. The proof of Lemma
\ref{R-nbhdisconnected} depended on the fact that a Cayley graph of a finitely
generated group is locally finite. As we are now considering graphs which need
not be locally finite, we will take a somewhat different point of view.

We start with an idea from the theory of relatively hyperbolic groups. Recall
that if $G$ is finitely generated, a choice of finite generating set
determines a Cayley graph $\Gamma$ for $G$ which in turn determines the word
metric on $G$, in which the distance from $e$ to $g$ equals the least number
of edges in any path in $\Gamma$ from $e$ to $g$. If $G$ is not finitely
generated, and $\Gamma(G,\mathcal{S})$ is a relative Cayley graph for the
group system $(G,\mathcal{S})$, then $G$ is the vertex set of the disconnected
subgraph $\Gamma(G:A)$ of $\Gamma(G,\mathcal{S})$. We will define a metric on
$G$ using paths in $\Gamma(G,\mathcal{S})$, which we will call an angle metric
on $G$.

\begin{definition}
\label{defnofanglemetric} Let $(G,\mathcal{S})$ be a group system of finite
type, let $A$ be a finite subset of $G$ which generates $G$ relative to
$\mathcal{S}$, and let $\Gamma(G,\mathcal{S})$ be the relative Cayley graph
for this system determined by $A$.

\begin{enumerate}
\item For each $S_{i}$ in $\mathcal{S}$, pick a countable generating set, and
assign length $n$ to the $n$--th generator of $S_{i}$. This determines a word
metric $d_{i}$ on $S_{i}$.

\item Let $e$ and $e^{\prime}$ be a pair of adjacent cone edges of
$\Gamma(G,\mathcal{S})$ meeting at a cone point $gS_{i}$, so the other
endpoints of $e$ and $e^{\prime}$ are elements $u$ and $v$ of $gS_{i}$. Then
the element $u^{-1}v$ of $G$ lies in $S_{i}$. We define \emph{the angle
between }$e$\emph{ and }$e^{\prime}$ to be the length of $u^{-1}v$ in the word
metric $d_{i}$ on $S_{i}$.

\item Let $\gamma$ be an edge path in $\Gamma(G,\mathcal{S})$ joining points
of $G$. The \emph{angle of }$\gamma$ is the sum of all the angles between
adjacent edges of $\gamma$ which meet in a cone point.

\item The \emph{angle length of }$\gamma$ is the sum of the angle of $\gamma$
with the number of edges of $\gamma$ which lie in $\Gamma(G:A)$.

\item Let $g$ and $h$ be elements of $G$. Their \emph{angle distance}, denoted
by $d_{\sphericalangle}(g,h)$, is the least angle length of any edge path in
$\Gamma(G,\mathcal{S})$ which joins $g$ and $h$.
\end{enumerate}
\end{definition}

\begin{remark}
It is clear that $d_{\sphericalangle}$ is a metric on $G$. Further
$d_{\sphericalangle}$ is invariant under the left action of $G$ on
$\Gamma(G,\mathcal{S})$.
This concept of cone angle has been used in the context of relative hyperbolic groups, see, for example, `Symbolic dynamics and relatively hyperbolic groups', Groups Geom. Dyn.~2, No.~2, 165--184 (2008).
\end{remark}

For a subset $V$ of $G$, we use the angle metric on $G$ to define the
$R$--neighbourhood of $V$ in $G$, which we again denote by $N_{R}V$. This is
the collection of elements of $G$ with angle distance at most $R$ from $V$,
i.e. $N_{R}V$ consists of those elements of $G$ which can be joined to a point
of $V$ by an edge path in $\Gamma(G,\mathcal{S})$ with angle length at most
$R$.

A crucial fact here is the obvious one that the number of elements of the
$S_{i}$'s with word length at most $R$ is finite. The following lemma uses
this to show that the angle metric $d_{\sphericalangle}$ on $G$ is proper,
i.e. metric balls are finite.

\begin{lemma}
\label{anglemetricisproper}Using the notation in Definition
\ref{defnofanglemetric}, for any element $g$ of $G$, and any integer $R$, the
$R$--neighbourhood $N_{R}(g)$ is finite.
\end{lemma}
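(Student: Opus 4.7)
The plan is to argue by direct counting, exploiting the fact that the choice ``the $n$-th generator of $S_i$ has length $n$'' is precisely designed to force balls in $(S_i, d_i)$ to be finite even though $S_i$ need not be finitely generated. Since $d_\sphericalangle$ is left $G$-invariant, it suffices to show $N_R(e)$ is finite.

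The key combinatorial observation is the following. Suppose $h \in N_R(e)$ and $\gamma$ is an edge path in $\Gamma(G,\mathcal{S})$ from $e$ to $h$ of angle length $\le R$. Reading $\gamma$ from $e$ to $h$, decompose it into a sequence of ``steps'': each step is either (a) a single edge of $\Gamma(G:A)$, contributing a factor of some element of $A \cup A^{-1}$, or (b) a maximal run of two consecutive cone edges through a cone point $uS_i$, with $u \in G$ the entry vertex and $us \in gS_i$ the exit vertex, contributing the factor $s \in S_i$. (Longer runs at a single cone point can be collapsed to a pair, since the angle is given by a metric and the triangle inequality only reduces angle length.) If the path makes $k$ steps of type (a) and $m$ steps of type (b) with recorded elements $s_1, \dots, s_m$ in the respective $S_i$'s, then the angle length equals $k + \sum_{j=1}^{m} d_{i_j}(e, s_j) \le R$. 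Because $u \ne us$ (distinct endpoints of the two cone edges) forces $s_j \ne e$ and hence $d_{i_j}(e, s_j) \ge 1$, we deduce $k \le R$ and $m \le R$, and each $d_{i_j}(e,s_j) \le R$.

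Thus $h$ is a product of at most $2R$ factors, each drawn from the finite set $\bigl(A \cup A^{-1}\bigr) \cup \bigcup_{i} B_R^{(i)}$, where $B_R^{(i)} := \{s \in S_i : d_i(e,s) \le R\}$. It remains to verify that each $B_R^{(i)}$ is finite. A word in the chosen countable generating set of $S_i$ of total $d_i$-length at most $R$ can only use generators of individual length at most $R$, hence only the first $R$ generators (and their inverses); and among words of length $\le R$ in such a finite alphabet of size $\le 2R$, there are only finitely many. Since $\mathcal{S}$ is finite and we take only finitely many $B_R^{(i)}$, the combined alphabet is finite, and there are only finitely many words of length $\le 2R$ in it. This bounds $|N_R(e)|$ and completes the proof.

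The proof is almost entirely bookkeeping; the only step requiring care is the decomposition of $\gamma$ into the form above, to ensure we do not over-count contributions when $\gamma$ reuses the same cone point or immediately backtracks. No step poses a genuine obstacle, so I expect the main task to be writing the path-decomposition cleanly rather than surmounting a mathematical difficulty.
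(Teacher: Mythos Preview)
Your proof is correct and follows essentially the same approach as the paper: decompose the path into $\Gamma(G:A)$ edges and pairs of consecutive cone edges, bound the number of such steps by $R$, and observe that each step contributes a factor from a finite alphabet (using that the $d_i$-ball of radius $R$ in $S_i$ involves only the first $R$ generators). The paper's version is terser and does not spell out the backtracking issue you flag, but the argument is the same; your one minor slip is asserting $u \neq us$ without first reducing the path, which you yourself note and which is trivially handled by discarding backtracking pairs (they contribute the identity factor).
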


\begin{proof}
Recall that $N_{R}(g)$ consists of those elements of $G$ which can be joined
to $g$ by an edge path in $\Gamma(G,\mathcal{S})$ with angle length at most
$R$. Let $h$ be a point of $N_{R}(g)$. Thus there is an edge path $\gamma$ in
$\Gamma(G,\mathcal{S})$ with angle length at most $R$ which joins $g$ to $h$.
We write $\gamma$ as the concatenation of paths $e_{1},\ldots,e_{k}$ where
each $e_{i}$ is either an edge of $\Gamma(G:A)$ or is a pair of consecutive
cone edges. This determines an expression of $hg^{-1}$ as a word $w=a_{1}%
a_{2}\ldots a_{k}$, where each $a_{i}$ is either an element of $A$ or its
inverse, or lies in some $S_{j}$. As $\gamma$ has angle length at most $R$,
the number $k$ of the $a_{i}$'s is at most $R$. Further if $a_{i}$ lies in
$S_{j}$, then $a_{i}$ lies in the finite set of elements of $S_{j}$ of length
at most $R$. The set of all such words is finite, which immediately implies
that $N_{R}(g)$ is finite, as required.
\end{proof}

We will use the following notation when $(G,\mathcal{S})$ is a group system of
finite type, and $A$ is a given finite subset of $G$ which generates $G$
relative to $\mathcal{S}$. Recall that $G$ is the vertex set of $\Gamma(G:A)$,
which is in turn contained in the relative Cayley graph $\Gamma(G,\mathcal{S}%
)$.

\begin{notation}
\label{notation}We will denote $\Gamma(G:A)$ by $\Gamma$, and will denote
$\Gamma(G,\mathcal{S})$ by $\widehat{\Gamma}$. Let $D^{R}$ denote the elements
of $G$ which lie at angle distance at most $R$ from the identity element
$1_{G}$ of $G$, so that $D^{R}=N_{R}(1_{G})$, and let $A^{R}$ denote the union
$A\cup D^{R}$. Note that Lemma \ref{anglemetricisproper} implies that $D^{R}$,
and hence $A^{R}$, is finite. Let $\Gamma^{R}$ denote the graph $\Gamma
(G:A^{R})$, and let $\widehat{\Gamma}^{R}$ denote the relative Cayley graph of
$(G,\mathcal{S})$ constructed from the graph $\Gamma^{R}$. Thus
$\widehat{\Gamma}^{R}$ can be constructed from $\widehat{\Gamma}$ by simply
adding non-cone edges which join $g$ to $gd$, for every $g$ in $G$,$\ $and $d$
in $D^{R}$. In particular, note that $\widehat{\Gamma}^{R}$ and
$\widehat{\Gamma}$ have the same cone edges, for all $R$.

If $Z$ is a vertex set in $\widehat{\Gamma}=\Gamma(G,\mathcal{S})$, we will
use $\delta Z$ to denote the coboundary of $Z$ in $\widehat{\Gamma}$, and will
use $\partial Z$ to denote the boundary of $Z$ in $\widehat{\Gamma}$. Also
$\delta^{R}Z$ denotes the coboundary of $Z$ in $\widehat{\Gamma}^{R}$, and
$\partial^{R}Z$ denotes the boundary of $Z$ in $\widehat{\Gamma}^{R}$.
\end{notation}

If $X$ is a $\mathcal{S}$--adapted $H$--almost invariant subset of $G$, Lemma
\ref{characterizingdX} tells us that there is an enlargement $Y$ of $X$ such
that $\delta Y$ is $H$--finite. (Recall that an enlargement $Y\ $of $X$ is a
vertex set in $\widehat{\Gamma}=\Gamma(G,\mathcal{S})$ such that $Y\cap
\Gamma=X$.) In general, such an enlargement is not unique, but it will be
convenient to make a specific choice. In addition, it would be technically
very convenient if $\partial Y$ were contained in $G$, so that $\partial Y$
contained no cone points. This need not be the case, but can be arranged by
enlarging $Y$ further. For future reference, we set out our choices in the
following definition.

\begin{definition}
\label{defnofstandardaugmentation}Let $X$ be a $\mathcal{S}$--adapted
$H$--almost invariant subset of $G$. Let $Y$ be the enlargement of $X$ with
the property that the cone point of $\widehat{\Gamma}=\Gamma(G,\mathcal{S})$
labeled $gS_{i}$ belongs to $Y$ if and only if the intersection $gS_{i}\cap X$
is non-empty, and $gS_{i}\cap X^{\ast}$ is $H$--finite. Then \emph{the
standard augmentation }$\widehat{X}$\emph{ of }$X$ is obtained from $Y$ by
adding each point of $G$ which lies in a cone edge of $\delta Y$.
\end{definition}

The following lemma lists the properties of $\widehat{X}$.

\begin{lemma}
\label{propertiesofXhat}Let $X$ be a $\mathcal{S}$--adapted $H$--almost
invariant subset of $G$, and let the enlargement $Y$ of $X$, and the standard
augmentation $\widehat{X}$ of $X$, be defined as in the preceding definition.
Then the following conditions all hold:

\begin{enumerate}
\item $Y$ is $H$--invariant and $\delta Y$ is $H$--finite, and $Y$ is the
maximal such enlargement of $X$ with the property that every cone point in $Y$
is joined to $X$ by a cone edge.

\item $\widehat{X}$ is a $H$--invariant vertex set of $\widehat{\Gamma}$, and
$\delta\widehat{X}$ is $H$--finite. Also $\partial\widehat{X}$ is contained in
$G$.

\item $\widehat{X}\cap G$ is a $\mathcal{S}$--adapted $H$--almost invariant
subset of $G$ and is equivalent to $X$.
\end{enumerate}
\end{lemma}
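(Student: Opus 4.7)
The plan is to verify the three claims in sequence. The arguments for (1) closely mirror the proof of Lemma~\ref{characterizingdX}, while (2) and (3) then follow by direct bookkeeping once $Y$ is understood. The most delicate step is the $H$-finiteness of $\delta Y$ in (1), which requires handling cone and non-cone contributions separately and exploiting the $\mathcal{S}$-adapted condition via Lemma~\ref{S-adaptedimpliesgSintersectXisH-finite} together with Lemma~\ref{finitelymanydoublecosets}; the maximality clause is the other point that needs real justification.

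For (1), the $H$-invariance of $Y$ is immediate because the defining condition on cone points ($gS_i\cap X$ non-empty and $gS_i\cap X^{\ast}$ being $H$-finite) is preserved under replacing $g$ by $hg$ with $h\in H$. To show $\delta Y$ is $H$-finite, I split it into non-cone and cone edges. The non-cone edges of $\delta Y$ form a subset of $\delta_{\Gamma}X$, which is $H$-finite because $X$ is $H$-almost invariant and $A$ is finite. A cone edge in $\delta Y$ must be incident to a cone point $gS_i$ whose coset meets both $X$ and $X^{\ast}$, for otherwise every cone edge at $gS_i$ would be interior to $Y$ or to its complement; by Lemma~\ref{finitelymanydoublecosets} such cosets form only $H$-finitely many double cosets. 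For each such coset: if $gS_i\in Y$ then only the cone edges going to $gS_i\cap X^{\ast}$ lie in $\delta Y$, and that set is $H$-finite by the defining condition; if $gS_i\notin Y$ then the $\mathcal{S}$-adapted condition and Lemma~\ref{S-adaptedimpliesgSintersectXisH-finite} force $gS_i\cap X$ to be $H$-finite, and only the cone edges to $gS_i\cap X$ contribute. Summing these $H$-finite contributions gives $\delta Y$ $H$-finite. For the maximality of $Y$, suppose $Y'\supsetneq Y$ were an $H$-invariant enlargement with $\delta Y'$ still $H$-finite in which every cone point is joined to $X$ by a cone edge. A cone point $gS_i\in Y'\setminus Y$ would satisfy $gS_i\cap X\neq\varnothing$ (from the joining condition) but $gS_i\cap X^{\ast}$ not $H$-finite (since $gS_i\notin Y$); then the cone edges from $gS_i$ to $gS_i\cap X^{\ast}$ form a non-$H$-finite subcollection of $\delta Y'$, contradicting $H$-finiteness of $\delta Y'$.

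For (2), let $P$ denote the set of $G$-endpoints of cone edges of $\delta Y$, so $\widehat{X}=Y\cup P$. Since $\delta Y$ is $H$-invariant and $H$-finite, so is $P$; hence $\widehat{X}$ is $H$-invariant. To see $\delta\widehat{X}$ is $H$-finite, the non-cone edges of $\delta\widehat{X}$ lie in $\delta_{\Gamma}(X\cup P)$, which is $H$-finite because $\delta_{\Gamma}X$ is, and because $P$ contributes only $H$-finitely many further edges ($P$ is $H$-finite and $A$ is finite). The cone edges of $\delta\widehat{X}$ split according to whether the cone point is in $Y$. If the cone point lies in $Y$ (hence in $\widehat{X}$) and the edge is in $\delta\widehat{X}$, then the $G$-endpoint lies outside $\widehat{X}$, in particular in $X^{\ast}$; but then the edge already lay in $\delta Y$, so by construction its $G$-endpoint was added to $P\subset\widehat{X}$, a contradiction. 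If the cone point is not in $Y$, then the $G$-endpoint must lie in $\widehat{X}\cap G=X\cup P$; the $X$-contribution lies in $\delta Y$ and is $H$-finite, while the $P$-contribution is $H$-finite because $P$ is $H$-finite and each vertex of $G$ is incident to only finitely many cone edges (one per member of $\mathcal{S}$). For the boundary claim, a cone point in $\partial\widehat{X}$ would lie in $Y$ and carry an incident cone edge in $\delta\widehat{X}$ whose $G$-endpoint is outside $\widehat{X}$, but the preceding argument rules this out. So $\partial\widehat{X}\subset G$.

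Claim (3) is then immediate: $\widehat{X}\cap G=X\cup P$ with $P\subset X^{\ast}$ a $H$-finite set, so $\widehat{X}\cap G$ is $H$-almost equal to $X$. It therefore inherits $H$-almost invariance and $\mathcal{S}$-adaptedness from $X$, and is equivalent to $X$ by the definition of equivalence.
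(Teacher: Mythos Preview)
Your proof is correct and follows the same approach as the paper's. You give considerably more detail---splitting the cone-edge analysis into cases, explicitly invoking Lemma~\ref{S-adaptedimpliesgSintersectXisH-finite} where the paper leaves this implicit, and spelling out the maximality argument that the paper dismisses as ``easy to see''---but the underlying structure is identical. One minor point: your set $P$ as defined (all $G$-endpoints of cone edges of $\delta Y$) includes some points already in $X$, so in part~(3) you should either write $\widehat{X}\cap G = X\cup P$ without claiming $P\subset X^{\ast}$, or note that only $P\cap X^{\ast}$ is actually new; either way the conclusion is unaffected since the added set is $H$-finite.
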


\begin{proof}
1) It is easy to see that $Y$ is $H$--invariant. As $X$ is $\mathcal{S}%
$--adapted, Lemma \ref{finitelymanydoublecosets} shows there are only
$H$--finitely many cosets $gS_{j}$ such that $gS_{j}\cap X$ and $gS_{j}\cap
X^{\ast}$ are both non-empty. Now it follows that $\delta Y$ is $H$--finite.
It is also easy to see that $Y$ is the maximal such enlargement of $X$ with
the property that every cone point in $Y$ is joined to $X$ by a cone edge.

2) As $Y$ is $H$--invariant, it follows that $\widehat{X}$ is also
$H$--invariant. As $\delta Y$ is $H$--invariant and $H$--finite, it follows
that $\widehat{X}$ is obtained from $Y$ by adding a subset of $G$ which is
also $H$--invariant and $H$--finite. As each element of $G$ is a vertex of
finite valence in $\widehat{\Gamma}$, we see that $\delta\widehat{X}$ is
$H$--finite. Further, our construction of $\widehat{X}$ implies that
$\partial\widehat{X}$ contains no cone points, so that $\partial\widehat{X}$
is contained in $G$.

3) As $\widehat{X}\cap G$ is obtained from $X$ by adding a $H$--invariant and
$H$--finite set, it follows that $\widehat{X}\cap G$ is itself a $H$--almost
invariant subset of $G$ and is equivalent to $X$. Thus $\widehat{X}\cap G$ is
also $\mathcal{S}$--adapted, as required.
\end{proof}

In what follows we will use the standard augmentation $\widehat{X}$ of $X$,
and the facts that $\partial\widehat{X}$ is $H$--finite and contained in $G$.
Instead of enlarging the vertex set $\widehat{X}$ to become connected in the
graph $\widehat{\Gamma}=\Gamma(G,\mathcal{S})$, as discussed at the start of
this section, we will fix $\widehat{X}$ and add edges to the graph to make
$\widehat{X}$ connected in the new graph.

\begin{remark}
\label{d^RZiscontainedinG}If $Z$ is a vertex set of the graph $\widehat{\Gamma
}=\Gamma(G,\mathcal{S})$ such that $\partial Z\subset G$, then $\partial
^{R}Z\subset G$, for every $R$. This is because $\widehat{\Gamma}^{R}$ is
obtained from $\widehat{\Gamma}$ by adding non-cone edges.
\end{remark}

The key technical result we need is the following.

\begin{lemma}
\label{H-finitesetinGbecomesconnectedinGL}Using the notation introduced above,
let $H$ be a finitely generated subgroup of $G$, and let $V$ be a
$H$--invariant and $H$--finite subset of $G$. Then there is $N$ such that $V$
is connected in $\Gamma^{N}$, i.e. the subgraph $\Gamma^{N}[V]$ of $\Gamma
^{N}$ is connected.
\end{lemma}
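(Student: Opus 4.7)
The plan is to reduce connectivity of $V$ in $\Gamma^N$ to the existence, for large enough $N$, of edges in $\Gamma^N$ realising two sorts of jumps: sideways jumps $hf \to hf'$ within a single coset $hF$, and generator-of-$H$ jumps $hf_0 \to htf_0$ at a fixed coset representative $f_0$. Since $V$ is $H$-invariant and $H$-finite, write $V = HF$ for some finite $F \subset G$. Since $H$ is finitely generated, fix a finite symmetric generating set $T$ for $H$, and fix an arbitrary $f_0 \in F$.

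Edges of $\Gamma^N = \Gamma(G:A^N)$ are pairs $(g, ga)$ with $a \in A^N = A \cup D^N$, so $g, g' \in G$ span an edge of $\Gamma^N$ exactly when $g^{-1}g' \in A^N$. I therefore introduce the two finite subsets of $G$
\[
B_1 = \{\, f^{-1}f' : f, f' \in F\,\}, \qquad B_2 = \{\, f_0^{-1} t f_0 : t \in T\,\},
\]
noting that $hf \cdot (f^{-1}f') = hf'$ and $hf_0 \cdot (f_0^{-1} t f_0) = (ht) f_0$. So any $N$ with $B_1 \cup B_2 \subset A^N$ will allow both kinds of jumps within $HF$ as single edges of $\Gamma^N$.

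The key remaining point is that such an $N$ exists. Every element of $G$ is a vertex of the connected graph $\widehat{\Gamma} = \Gamma(G,\mathcal{S})$, and any edge path in $\widehat{\Gamma}$ joining two elements of $G$ has finite angle length (every cone angle is finite because each generator of each $S_i$ is assigned a finite length). Hence every element of $G$ has finite angle distance from $1_G$, so lies in $D^R$ for some $R$. Since $B_1 \cup B_2$ is finite, there is a single $N$ with $B_1 \cup B_2 \subset D^N \subset A^N$.

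With this $N$, given any two points $h_1 f_1, h_2 f_2 \in V$, write $h_1^{-1} h_2 = t_1 \cdots t_k$ as a word in $T$, and consider the sequence
\[
h_1 f_1,\ h_1 f_0,\ h_1 t_1 f_0,\ h_1 t_1 t_2 f_0,\ \ldots,\ h_2 f_0,\ h_2 f_2.
\]
Every term lies in $HF = V$, and each consecutive pair differs by right multiplication by an element of $B_1 \cup B_2 \subset A^N$, so each consecutive pair is joined by an edge of $\Gamma^N[V]$. This exhibits a path in $\Gamma^N[V]$ between the two chosen points, so $V$ is connected in $\Gamma^N$. The only step requiring genuine care is the finiteness of angle distances, which is where Lemma \ref{anglemetricisproper} (properness of the angle metric) is used implicitly to ensure that the finite sets $B_1, B_2$ really do fit inside some $D^N$.
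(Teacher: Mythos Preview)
Your proof is correct and follows essentially the same approach as the paper: write $V=HF$ with $F$ finite, take a finite generating set $T$ for $H$, observe that a fixed finite collection of group elements (for you $B_1\cup B_2$, for the paper all pairwise differences within $F\cup TF$) lies in $D^N$ for some $N$, and then use left $H$--translation to connect all of $HF$ inside $\Gamma^N[V]$. Your version is slightly more economical and more explicit about the final path, but the idea is the same.

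One small correction to your commentary: the fact that $B_1\cup B_2\subset D^N$ for some $N$ is \emph{not} a consequence of properness (Lemma~\ref{anglemetricisproper}), which says balls are finite. What you actually need, and what you correctly argue in the preceding sentences, is that every element of $G$ has finite angle distance from $1_G$; this follows from the connectivity of $\widehat{\Gamma}$ and the finiteness of each individual angle. So the proof is fine, but the closing reference to Lemma~\ref{anglemetricisproper} is misplaced.
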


\begin{remark}
$\Gamma$ and $\Gamma^{N}$ themselves cannot be connected unless $G$ is
finitely generated.

If $V$ is connected in $\Gamma^{N}$, then $V$ is connected in $\Gamma^{R}$,
for any $R\geq N$. This is simply because $\Gamma^{R}$ contains $\Gamma^{N}$,
so that $\Gamma^{R}[V]$ contains $\Gamma^{N}[V]$.
\end{remark}

\begin{proof}
As $V$ is $H$--invariant and $H$--finite, we can write $V=HF$, for some finite
subset $F$ of $G$. Now let $T$ be a finite generating set for $H$, and
consider the finite subset $F\cup TF$ of $G$. For each pair of points of
$F\cup TF$, we choose a path in $\widehat{\Gamma}=\Gamma(G,\mathcal{S})$ which
joins them. Let $N$ denote the maximum angle length of all these paths. Then
$F\cup TF$ is connected in $\Gamma^{N}$. As $V=HF$, it follows that $V$ is
also connected in $\Gamma^{N}$, as required.
\end{proof}

Now we can prove the following analogue of Lemma \ref{R-nbhdisconnected}.

\begin{lemma}
\label{AlmostinvariantsetbecomesconnectedinGL}Using the notation introduced
above, let $H$ be a finitely generated subgroup of $G$, and let $X$ be a
$H$--almost invariant $\mathcal{S}$--adapted subset of $G$, with standard
augmentation $\widehat{X}$ in $\widehat{\Gamma}$. Then there is $N$ such that
$\partial^{N}\widehat{X}$ is connected in $\Gamma^{N}$, and $\widehat{X}$ is
connected in $\widehat{\Gamma}^{N}$, i.e. the graphs $\Gamma^{N}[\partial
^{N}\widehat{X}]$ and $\widehat{\Gamma}^{N}[\widehat{X}]$ are each connected.
Further, $\partial^{R}\widehat{X}$ is connected in $\Gamma^{R}$, and
$\widehat{X}$ is connected in $\widehat{\Gamma}^{R}$, for any $R\geq N$.
\end{lemma}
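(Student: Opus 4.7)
My plan is to take $N$ to be the integer produced by Lemma~\ref{H-finitesetinGbecomesconnectedinGL} when applied to the set $V=\partial\widehat{X}$, which is $H$-invariant, $H$-finite, and contained in $G$ by Lemma~\ref{propertiesofXhat}. With this $N$, the set $\partial\widehat{X}$ is connected in $\Gamma^{N}$, and hence in $\Gamma^{R}$ for every $R\ge N$, since $\Gamma^{N}\subset\Gamma^{R}$. I will verify that this single $N$ delivers both conclusions simultaneously.

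For the connectedness of $\widehat{X}$ in $\widehat{\Gamma}^{R}$, given $g,g'\in\widehat{X}$ I would join them by any edge path $\gamma$ in the connected graph $\widehat{\Gamma}$. Decompose $\gamma$ into maximal subpaths that lie alternately in $\widehat{X}$ and in $\widehat{X}^{*}$. Each transition from a $\widehat{X}$-subpath to a $\widehat{X}^{*}$-subpath traverses an edge of $\delta\widehat{X}$, so the last vertex on the $\widehat{X}$-side of any such transition lies in $\partial\widehat{X}$, and the first vertex on the $\widehat{X}$-side of the following re-entry lies in $\partial\widehat{X}$ as well; by Lemma~\ref{propertiesofXhat} these vertices all lie in $G$. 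Whenever $\gamma$ makes an excursion into $\widehat{X}^{*}$, I would replace the excursion by a path in $\Gamma^{R}[\partial\widehat{X}]$ joining the two flanking boundary vertices, which exists because $\partial\widehat{X}$ is connected in $\Gamma^{R}$. Since $\partial\widehat{X}\subset\widehat{X}$, this replacement path sits inside $\widehat{\Gamma}^{R}[\widehat{X}]$, and the concatenation yields an edge path from $g$ to $g'$ in $\widehat{\Gamma}^{R}[\widehat{X}]$.

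The second conclusion, that $\partial^{R}\widehat{X}$ is connected in $\Gamma^{R}$, is the more delicate one and is the main obstacle, because the boundary grows on passage to $\widehat{\Gamma}^{R}$: the inclusion $\delta\widehat{X}\subset\delta^{R}\widehat{X}$ gives $\partial\widehat{X}\subset\partial^{R}\widehat{X}$, typically strictly. By Remark~\ref{d^RZiscontainedinG} we still have $\partial^{R}\widehat{X}\subset G$, so it makes sense as a vertex set of $\Gamma^{R}$, and it suffices to join each extra vertex $p\in\partial^{R}\widehat{X}\setminus\partial\widehat{X}$ to $\partial\widehat{X}$ by a single edge of $\Gamma^{R}[\partial^{R}\widehat{X}]$. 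Since the cone edges of $\widehat{\Gamma}^{R}$ coincide with those of $\widehat{\Gamma}$ and $p\notin\partial\widehat{X}$, any edge of $\delta^{R}\widehat{X}$ incident to $p$ must be a new non-cone edge $(p,pd)$ with $d\in D^{R}$ and $pd\notin\widehat{X}$. I would then choose a path in $\widehat{\Gamma}$ from $p$ to $pd$ of angle length at most $R$; this path starts in $\widehat{X}$ and ends outside $\widehat{X}$, so it must cross some edge of $\delta\widehat{X}$, and the last vertex $q$ of the path lying in $\widehat{X}$ before the first such crossing belongs to $\partial\widehat{X}\subset G$. The sub-path from $p$ to $q$ has angle length at most $R$ (since all contributions to angle length are non-negative), giving $d_{\sphericalangle}(p,q)\le R$, so $p^{-1}q\in D^{R}\subset A^{R}$ and $(p,q)$ is an edge of $\Gamma^{R}$; its endpoints both lie in $\partial^{R}\widehat{X}$, as required. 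The decisive point that makes this argument work, despite $\widehat{\Gamma}$ failing to be locally finite (which blocks any direct analogue of Lemma~\ref{R-nbhdisconnected}), is that the angle metric converts short paths in $\widehat{\Gamma}$ into single edges of $\Gamma^{R}$, thereby letting us absorb the extra boundary vertices into the already-connected $\partial\widehat{X}$.
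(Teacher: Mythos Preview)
Your proof is correct and follows essentially the same approach as the paper: apply Lemma~\ref{H-finitesetinGbecomesconnectedinGL} to $\partial\widehat{X}$ to get $N$, deduce connectedness of $\widehat{X}$ in $\widehat{\Gamma}^{N}$ from connectedness of $\partial\widehat{X}$, and then handle the extra points of $\partial^{R}\widehat{X}\setminus\partial\widehat{X}$ by finding a short angle-metric path to $\partial\widehat{X}$ and replacing it by a single $\Gamma^{R}$-edge. The paper phrases the $\widehat{X}$-connectedness step as ``each point of $\widehat{X}$ can be joined to $\partial\widehat{X}$ by a path in $\widehat{\Gamma}$ staying in $\widehat{X}$'' rather than your excursion-replacement formulation, but the content is identical.
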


\begin{remark}
It follows that the set of $R$ such that $\partial^{R}\widehat{X}$ is
connected in $\Gamma^{R}$, and $\widehat{X}$ is connected in $\widehat{\Gamma
}^{R}$, consists of all integers $\geq R_{0}$, for some $R_{0}$.
\end{remark}

\begin{proof}
Recall that $\delta\widehat{X}$ is $H$--invariant and $H$--finite. Further the
same applies to $\delta^{R}\widehat{X}$, for all values of $R$, as $\delta
^{R}\widehat{X}$ is the coboundary of $\widehat{X}$ in the relative Cayley
graph $\widehat{\Gamma}^{R}$. Also recall that our construction of the
standard augmentation $\widehat{X}$ ensures that $\partial\widehat{X}\subset
G$. Now Remark \ref{d^RZiscontainedinG} tells us that $\partial^{R}%
\widehat{X}\subset G$, for all $R$.

As $\partial\widehat{X}$ is a $H$--invariant and $H$--finite subset of $G$,
Lemma \ref{H-finitesetinGbecomesconnectedinGL} implies there is $N$ such that
$\partial\widehat{X}$ is connected in $\Gamma^{N}$, i.e. $\Gamma^{N}%
[\partial\widehat{X}]$ is connected.

As $\widehat{\Gamma}$ is connected, any point of $\widehat{X}$ can be joined
to a point of $\partial\widehat{X}$, by a path in $\widehat{\Gamma}$ all of
whose vertices lie in $\widehat{X}$. As $\partial\widehat{X}$ is connected in
$\Gamma^{N}$, and hence in $\widehat{\Gamma}^{N}$, it follows that
$\widehat{X}$ itself is connected in $\widehat{\Gamma}^{N}$, as required.

Next we claim that $\partial^{N}\widehat{X}$ is connected in $\Gamma^{N}$,
i.e. that $\Gamma^{N}[\partial^{N}\widehat{X}]$ is connected. Note that
$\partial^{N}\widehat{X}$ contains $\partial\widehat{X}$, and consider a point
$x$ of $\partial^{N}\widehat{X}-\partial\widehat{X}$. Then $x$ lies in
$\widehat{X}\cap G$ and is joined to some point of $G-\widehat{X}$ by a path
$\lambda$ in $\widehat{\Gamma}$ of angle length at most $N$. Such a path must
meet $\partial\widehat{X}$. Hence $\lambda$ has a subpath $\mu$ joining $x$ to
a point $y$ of $\partial\widehat{X}$ such that all vertices of $\mu$ lie in
$\widehat{X}$. As $x$ and $y$ lie in $G$, it follows that $\mu$ also has angle
length at most $N$, so that $x$ is joined to $y$ by an edge of $\Gamma^{N}$.
This implies that $\partial^{N}\widehat{X}$ is connected in $\Gamma^{N}$, as claimed.

To complete the proof of the lemma, we need to show that $\partial
^{R}\widehat{X}$ is connected in $\Gamma^{R}$, and $\widehat{X}$ is connected
in $\widehat{\Gamma}^{R}$, for any $R\geq N$. As $\widehat{\Gamma}^{N}$ is
contained in $\widehat{\Gamma}^{R}$, it is trivial that $\widehat{X}$ is
connected in $\widehat{\Gamma}^{R}$. Now the argument in the preceding
paragraph shows that $\partial^{R}\widehat{X}$ is connected in $\Gamma^{R}$.
This completes the proof of the lemma.
\end{proof}

Armed with these ideas, we can now proceed to consider the number of a certain
type of ends of certain vertex sets in $\Gamma(G,\mathcal{S})$.

Let $G,\mathcal{S})$ be a group system of finite type, let $\Gamma
(G,\mathcal{S})$ be a relative Cayley graph, and let $\widehat{G}$ denote the
vertex set of $\Gamma(G,\mathcal{S})$. Let $K$ be a subgroup of $G$, and let
$Z$ be a $K$--invariant subset of $\widehat{G}$. If $A$ is a subset of $Z$, we
let $\delta_{Z}A$ denote the coboundary of $A$ in $Z$, when $Z$ is regarded as
a vertex set of $\widehat{\Gamma}=\Gamma(G,\mathcal{S})$. Recall from
Definition \ref{defnofcoboundaryinE} that the coboundary of $A$ in $Z$
consists of those edges of $\delta A$ whose endpoints both lie in $Z$. We also
let $\delta_{Z}^{R}A$ denote the coboundary of $A$ in $Z$, when $Z$ is
regarded as a vertex set of $\widehat{\Gamma}^{R}$. Let $\mathcal{B}_{Z}$
denote the set%
\[
\{A\subset Z:A\text{ is }K^{\prime}\text{--invariant, for some }K^{\prime}%
\in\lbrack K]\text{, and }\delta_{Z}^{R}A\text{ is }K\text{--finite, for every
}R\}.
\]
It is easy to see that $\mathcal{B}_{Z}$ is a Boolean algebra. For if $A$ and
$B$ are elements of $\mathcal{B}_{Z}$, then so is each corner of the pair
$(A,B)$. It follows that $A\cap B$ and $A+B$ each lie in $\mathcal{B}_{Z}$.
Now we make the following definition.

\begin{definition}
\label{defnofnumberofS-adaptedK-ends}Let $(G,\mathcal{S})$ be a group system
of finite type, let $\Gamma(G,\mathcal{S})$ be a relative Cayley graph, let
$K$ be a subgroup of $G$, and let $Z$ be a $K$--invariant subset of
$\widehat{G}$. The \emph{number of }$S$\emph{--adapted }$K$\emph{--ends of}
$Z$ is the dimension of the quotient of $\mathcal{B}_{Z}$ by the subalgebra of
$K$--finite sets.
\end{definition}

\begin{remark}
The above definition is clearly unchanged if we replace $Z$ by a $K$--almost
equal set. The terminology $\mathcal{S}$--adapted $K$--end is used because, in
some sense, this definition counts the number of $\mathcal{S}$--adapted ends
of the quotient $K\backslash Z$.

We will use this definition in the situation where $Z=\widehat{X}$ is the
standard augmentation of a $\mathcal{S}$--adapted $H$--almost invariant subset
$X$ of $G$, where $H$ contains $K$, as in Lemma
\ref{aisethasoneS-adaptedK-end}. This lemma plays a key role in section
\ref{CCC'swithstrongcrossing}.
\end{remark}

We will only be interested in situations where we can prove that the number of
$\mathcal{S}$--adapted $K$--ends is equal to $1$. A simple example of such an
argument is given in Lemma \ref{numberofS-adaptedK-endsofGis1} below. Recall
from Definition \ref{defnofendsof[H]} that

$e(G,[K]:\mathcal{S})=\sup\{e(G,K^{\prime}:\mathcal{S}):K^{\prime}\in\lbrack
K]\}$. It is not difficult to improve the proof of Lemma
\ref{numberofS-adaptedK-endsofGis1} to show that, in general, the number of
$\mathcal{S}$--adapted $K$--ends of $\widehat{G}$ is equal to
$e(G,[K]:\mathcal{S})$, but we have no need for such a result.

\begin{lemma}
\label{numberofS-adaptedK-endsofGis1}Let $(G,\mathcal{S})$ be a group system
of finite type, and let $K$ be a subgroup of $G$ such that
$e(G,[K]:\mathcal{S})=1$. Let $\Gamma(G,\mathcal{S})$ be any relative Cayley
graph of $(G,\mathcal{S})$.Then the number of $\mathcal{S}$--adapted $K$--ends
of $\widehat{G}$ is equal to $1$.
\end{lemma}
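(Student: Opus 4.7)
The plan is to analyze any $A\in\mathcal{B}_{\widehat{G}}$ via its trace $X=A\cap G$ on $G$, to use the hypothesis to force $X$ to be trivial as an almost invariant set, and then to control the cone-point part of $A$ by an orbit count against $\delta A$. The assumption $e(G,[K]:\mathcal{S})=1$ forces $K$ to have infinite index in $G$, since otherwise $e(G,K':\mathcal{S})=0$ for every $K'\in[K]$. Hence $\widehat{G}$ is not $K$-finite and represents a nontrivial class in the quotient defining the invariant, so the dimension in question is at least $1$. The substance of the proof is to show that every $A\in\mathcal{B}_{\widehat{G}}$ has either $A$ or its complement $K$-finite, which gives the upper bound.

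Fix $A\in\mathcal{B}_{\widehat{G}}$, which is $K'$-invariant for some $K'\in[K]$, and set $X=A\cap G$. Then $K'X=X$, and the $R=0$ case of the defining condition on $\mathcal{B}_{\widehat{G}}$ gives that $\delta A$ is $K'$-finite, since $K$ and $K'$ are commensurable. Viewing $A$ as an enlargement of $X$ in $\Gamma(G,\mathcal{S})$, Lemma \ref{characterizingdX} shows that $X$ is a $K'$-almost invariant $\mathcal{S}$-adapted subset of $G$. If both $X$ and $G\setminus X$ were $K'$-infinite, then $X$ would be a nontrivial such set, giving $e(G,K':\mathcal{S})\geq 2$ and contradicting the hypothesis. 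After possibly replacing $A$ by its complement, we may therefore assume that $X$ is $K'$-finite, and the remaining task is to show that $A$ itself is $K'$-finite.

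Write $A=X\cup A_{\mathrm{cone}}$, where $A_{\mathrm{cone}}$ is the set of cone vertices of $\Gamma(G,\mathcal{S})$ lying in $A$; it suffices to show that $A_{\mathrm{cone}}$ lies in only finitely many $K'$-orbits. Since $X$ is $K'$-finite, fix $f_1,\ldots,f_n\in G$ with $X\subset K'f_1\cup\cdots\cup K'f_n$. Consider a $K'$-orbit of a cone $gS_i$ in $A_{\mathrm{cone}}$. In the first case $gS_i\subset X$; then $g\in K'f_j$ for some $j$, so $gS_i$ lies in the $K'$-orbit of $f_jS_i$, and there are at most $n|\mathcal{S}|$ such orbits. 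Otherwise $gS_i$ meets $G\setminus X$, and choosing any $g'$ in that intersection yields a cone edge $(gS_i,g')\in\delta A$. Distinct $K'$-orbits of such cones give rise to distinct $K'$-orbits of these edges, because the $K'$-action preserves both the index $i$ and the $K'$-orbit of the cone endpoint of the edge. Hence these orbits inject into the $K'$-orbits of edges of $\delta A$, which are finite in number. Combining the two cases, $A_{\mathrm{cone}}$ has only finitely many $K'$-orbits, so $A$ is $K'$-finite, and hence $K$-finite.

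The main (indeed only) obstacle is the orbit-count in the third paragraph, which rests on the observation that a $K'$-orbit of a cone edge determines the $K'$-orbit of its cone endpoint. It is worth noting that only the $R=0$ instance of the defining condition on $\mathcal{B}_{\widehat{G}}$ is actually used here; the finer control at larger $R$ is not needed for this particular lemma, although it will presumably be needed for subsequent applications.
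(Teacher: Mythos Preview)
Your proof is correct and follows the same approach as the paper: intersect $A$ with $G$, apply Lemma~\ref{characterizingdX} to get a $K'$--almost invariant $\mathcal{S}$--adapted set, and use the hypothesis $e(G,[K]:\mathcal{S})=1$ to force triviality. The paper's proof simply asserts that if $A\cap G$ is $K$--almost equal to $G$ or $\emptyset$ then $A$ is $K$--almost equal to $\widehat{G}$ or $\emptyset$; your third paragraph supplies the justification for this step by an orbit count of cone points against $\delta A$, which is a genuine detail the paper omits. Your observation that only the minimal case of the $\delta^R$ condition is needed is also accurate.
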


\begin{proof}
Recall that $\widehat{G}$ is the entire vertex set of the connected graph
$\Gamma(G,\mathcal{S})$, so is trivially $K$--invariant and connected in
$\Gamma(G,\mathcal{S})$. As $G$, and hence $\widehat{G}$, must be
$K$--infinite, it follows that the number of $\mathcal{S}$--adapted $K$--ends
of $\widehat{G}$ is at least $1$. Let $A$ be an element of $\mathcal{B}%
_{\widehat{G}}$. In particular, this implies that $A$ is a $K^{\prime}%
$--invariant subset of $\widehat{G}$, for some $K^{\prime}\in\lbrack K]$, and
that $\delta_{\widehat{G}}A=\delta A$ is $K$--finite. By Lemma
\ref{characterizingdX}, this implies that $A\cap G$ is a $K^{\prime}$--almost
invariant subset of $G$ which is also $\mathcal{S}$-adapted. As
$e(G,[K]:\mathcal{S})=1$, it follows that $A\cap G$ must be $K$--almost equal
to $G$ or the empty set, so that $A$ must be $K$--almost equal to
$\widehat{G}$ or the empty set. It follows that $\widehat{G}$ has exactly one
$\mathcal{S}$--adapted $K$--end, as required.
\end{proof}

Next we need the following technical result.

\begin{lemma}
\label{HintersectconjugateofSisK-finite}Let $(G,\mathcal{S})$ be a group
system of finite type, let $H$ be a $VPC(n+1)$ subgroup of $G$, and let $X$ be
a nontrivial $\mathcal{S}$--adapted $H$--almost invariant subset of $G$.
Suppose there is a nontrivial $\mathcal{S}$--adapted $H^{\prime}$--almost
invariant subset $Y$ of $G$ such that $X$ crosses $Y$ strongly, and let $K$
denote $H\cap H^{\prime}$. Then $K$ is $VPCn$, and $e(H,K)=2$.

Further, for each $g\in G$ and $S\in\mathcal{S}$, the intersection $H\cap
gSg^{-1}$ is $K$--finite.
\end{lemma}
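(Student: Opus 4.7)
The plan is to prove the two claims in turn, using $Z := Y\cap H$ as a $K$-almost invariant subset of $H$.

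For the first claim I would first verify that $Z$ is a nontrivial $K$-almost invariant subset of $H$. Since $K\subset H'$ we have $KZ=Z$, and for $h\in H$ the set $Zh=Yh\cap H$ differs from $Z$ only in $(Y+Yh)\cap H$. Because $Y+Yh$ is $H'$-finite and each non-empty intersection $H'k\cap H$ is a single $K$-coset of $H$ (namely $Kk_0$ for any $k_0\in H'k\cap H$), this symmetric difference is $K$-finite. The same coset bijection turns the strong-crossing assertion that $H\cap Y$ and $H\cap Y^*$ are both $H'$-infinite into the statement that $Z$ and $Z^*$ are both $K$-infinite, giving $e(H,K)\geq 2$. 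Moreover $K$ must have infinite index in $H$: otherwise $H$ itself would be $K$-finite and could not contain the $K$-infinite set $Z$. Applying Lemma~\ref{Houghtonlemma} to the $VPC(n+1)$ group $H$ and its subgroup $K$ then forces $e(H,K)=2$ and $K$ to be $VPCn$, and provides a finite-index subgroup $J$ of $H$ containing $K$ as a normal subgroup with $J/K\cong\mathbb{Z}$.

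For the second claim, set $L:=H\cap gSg^{-1}$, so that $Lg=Hg\cap gS$. Since $Y$ is $\mathcal{S}$-adapted, Lemma~\ref{S-adaptedimpliesgSintersectXisH-finite} gives that one of $gS\cap Y$ and $gS\cap Y^*$ is $H'$-finite. If both are $H'$-finite then $gS$ itself is $H'$-finite, so $Lg\subset gS$ is $H'$-finite; Lemma~\ref{Neumannlemma}(1) then forces $L\cap H'$ to have finite index in $L$, and since $L\subset H$ gives $L\cap H'=L\cap K$, this settles the claim in this case. Otherwise, after replacing $Y$ by $Y^*$ if needed, I assume $gS\cap Y^*$ is $H'$-finite. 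Then $Lg\cap Y^*\subset gS\cap Y^*$ is $H'$-finite, and translating by $g^{-1}$ together with the $H'$-almost equality of $Y^*g^{-1}$ and $Y^*$ shows that $L\cap Y^*$ is $H'$-finite; hence $L\cap Z^*=L\cap Y^*$ is $K$-finite in $H$ by the coset bijection above.

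The final step is to project via $\pi\colon J\twoheadrightarrow J/K\cong\mathbb{Z}$. A short check shows $Z\cap J$ is a nontrivial $K$-almost invariant subset of $J$ (the $K$-infiniteness of both $Z\cap J$ and $Z^*\cap J$ follows from that of $Z$ and $Z^*$ since $J$ has finite index in $H$). Hence $\pi(Z\cap J)$ is a nontrivial almost invariant subset of $\mathbb{Z}$, and therefore agrees with a half-line $\{n\geq n_0\}$ or $\{n\leq n_0\}$ up to a finite set. Since $L\cap Z^*\cap J$ is $K$-finite, $\pi(L\cap J\cap Z^*)$ is finite, and therefore $\pi(L\cap J)\subset\pi(L\cap J\cap Z)\cup\pi(L\cap J\cap Z^*)$ is contained in a half-line plus a finite set, i.e.\ bounded on one side. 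But $\pi(L\cap J)$ is a subgroup of $\mathbb{Z}$, so it must be $\{0\}$. Thus $L\cap J\subset K$; since $L\cap J$ has finite index in $L$, so does $L\cap K\supset L\cap J$, and Lemma~\ref{Neumannlemma}(1) yields that $L$ is $K$-finite. The main obstacle is not conceptual but bookkeeping: carefully tracking finiteness conditions across $H$, $H'$, and $K$ via the coset identification, and safely dispatching the exceptional case where $gS$ is itself $H'$-finite.
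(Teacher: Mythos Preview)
Your proposal is correct and follows essentially the same approach as the paper. Both arguments use strong crossing to exhibit $H\cap Y$ as a nontrivial $K$--almost invariant subset of $H$, invoke Houghton's lemma to get $K$ being $VPCn$ with $e(H,K)=2$ and a finite-index subgroup $J$ with $J/K\cong\mathbb{Z}$, and then combine the half-line shape of the image of $H\cap Y$ in $J/K$ with the $\mathcal{S}$--adaptedness of $Y$ to bound $H\cap gSg^{-1}$. The only stylistic difference is that the paper argues the second claim by contradiction (assuming $H\cap gSg^{-1}$ is $K$--infinite and producing $H'$--infinite intersections $gS\cap Y$ and $gS\cap Y^{\ast}$), whereas you run the contrapositive directly by projecting to $\mathbb{Z}$; the content is the same. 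One minor remark: your final appeal to Lemma~\ref{Neumannlemma}(1) is not really needed, since once $L\cap K$ has finite index in $L$ the $K$--finiteness of $L$ is immediate from coset representatives.
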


\begin{remark}
\label{HintersectgSg-1}Note that $gSg^{-1}$ is the stabilizer of the coset
$gS$ of $S$ in $G$. Thus $H\cap gSg^{-1}$ is the set of elements of $H$ which
fix the cone point $gS$ of a relative Cayley graph $\Gamma(G,\mathcal{S})$ of
$(G,\mathcal{S})$.
\end{remark}

\begin{proof}
As $X$ crosses $Y$ strongly, we know that $H\cap Y$ and $H\cap Y^{\ast}$ are
both $H^{\prime}$--infinite. Now $H\cap Y$ and $H\cap Y^{\ast}$ are each
$K$--almost invariant subsets of $H$, and as they are $H^{\prime}$--infinite,
they are certainly $K$--infinite. Hence $H\cap Y$ and $H\cap Y^{\ast}$ are
nontrivial $K$--almost invariant subsets of $H$, so that $e(H,K)\geq2$. As $H$
is $VPC(n+1)$, Lemma \ref{Houghtonlemma} tells us that $K$ must be $VPCn$, and
that $e(H,K)=2$, as required.

Lemma \ref{Houghtonlemma} also tells us that $H$ has a finite index subgroup
$J$ which contains $K$ as a normal subgroup, such that $J/K$ is infinite
cyclic. Thus by replacing $H$ by the finite index subgroup $J$, which we
continue to denote by $H$, we can assume that the intersection $K=H\cap
H^{\prime}$ is normal in $H$, and that the quotient $H/K$ is infinite cyclic.

Now suppose there is $g\in G$ and $S\in\mathcal{S}$, such that the
intersection $H\cap gSg^{-1}$ is $K$--infinite. This implies that $H\cap
gSg^{-1}$ has infinite cyclic image in $H/K$. In particular, as $X$ crosses
$Y$ strongly, it follows that $gSg^{-1}\cap Y$ and $gSg^{-1}\cap Y^{\ast}$ are
both $H^{\prime}$--infinite. As $Y$ is $H^{\prime}$--almost invariant, we know
that $Yg$ is $H^{\prime}$--almost equal to $Y$. It follows that $gS\cap Y$ and
$gS\cap Y^{\ast}$ are both $H^{\prime}$--infinite, contradicting the
assumption that $Y$ is $\mathcal{S}$--adapted, by Lemma
\ref{S-adaptedimpliesgSintersectXisH-finite}. This contradiction completes the
proof that the intersection $H\cap gSg^{-1}$ must be $K$--finite.
\end{proof}

Now we start on the key technical results of this paper. The next lemma is a
generalization to our setting of Lemma 2.11 of \cite{D-Swenson}.

\begin{lemma}
\label{aisethasoneS-adaptedK-end}Let $(G,\mathcal{S})$ be a group system of
finite type with $e(G:\mathcal{S})=1$. Let $H$ be a $VPC(n+1)$ subgroup of
$G$, and let $X$ be a nontrivial $\mathcal{S}$--adapted $H$--almost invariant
subset of $G$. Suppose there is a nontrivial $\mathcal{S}$--adapted
$H^{\prime}$--almost invariant subset $Y$ of $G$ such that $X$ crosses $Y$
strongly, and let $K$ denote $H\cap H^{\prime}$. Suppose that
$e(G,[K]:\mathcal{S})=1$, and let $\widehat{\Gamma}=\Gamma(G,\mathcal{S})$ be
a relative Cayley graph of $(G,\mathcal{S})$. Let $\widehat{X}$ denote the
standard augmentation of $X$. Then

\begin{enumerate}
\item If $\partial^{R}\widehat{X}$ is connected in $\Gamma^{R}$, so that the
subgraph $\Gamma^{R}[\partial^{R}\widehat{X}]$ of $\Gamma^{R}$ is connected,
the quotient graph $K\backslash\Gamma^{R}[\partial^{R}\widehat{X}]$ has two ends.

\item the number of $\mathcal{S}$--adapted $K$--ends of $\widehat{X}$ is equal
to $1$.
\end{enumerate}
\end{lemma}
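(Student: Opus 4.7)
The plan is to exploit the $\mathbb{Z}$-structure on the $H$-action modulo $K$ provided by Lemma \ref{HintersectconjugateofSisK-finite}: that lemma tells us $K$ is $VPCn$ with $e(H,K)=2$, and after replacing $H$ by a finite-index subgroup (which preserves every hypothesis and conclusion), I may assume $K$ is normal in $H$ with $H/K$ infinite cyclic. The same lemma also gives that every cone-point stabilizer $H\cap gSg^{-1}$ is $K$-finite, which will control how cone edges of $\widehat{\Gamma}$ interact with the $K$-action. A preliminary step is to verify that $\partial^R\widehat{X}$ is $H$-invariant and $H$-finite for every $R$: the invariance is clear from Lemma \ref{propertiesofXhat}, and the finiteness follows because $\widehat{\Gamma}^R$ is obtained from $\widehat{\Gamma}$ by adjoining only the finitely many non-cone edge-families $(g,gd)$ for $d\in D^R$, each of which contributes an $H$-finite subset to $\delta^R\widehat{X}$ since $\widehat{X}d^{-1}$ and $\widehat{X}$ are $H$-almost equal; combined with the fact that $\partial^R\widehat{X}\subset G$ and $G$-vertices have finite valence in $\Gamma^R$, this gives $\partial^R\widehat{X}$ is $H$-finite.

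For part (1), I obtain the upper bound of two ends as follows. Assuming $\Gamma^R[\partial^R\widehat{X}]$ is connected, it carries an action of $H$ that is free (because $G$ acts freely on itself by left multiplication and $\partial^R\widehat{X}\subset G$) and cocompact (since $\partial^R\widehat{X}$ is $H$-finite). Quotienting by the normal subgroup $K$ yields a connected graph with a free cocompact action of $H/K\cong\mathbb{Z}$, which has at most two ends. For the lower bound, I use the strong crossing of $X$ by $Y$: the sets $H\cap Y$ and $H\cap Y^*$ are both $H'$-infinite, hence $K$-infinite, and both are $K$-almost invariant in $H$ because $K\subset H'$. Under the isomorphism $K\backslash H\cong\mathbb{Z}$, their $K$-quotients are complementary almost invariant subsets of $\mathbb{Z}$, and $e(\mathbb{Z})=2$ forces each to be, up to finite symmetric difference, a one-sided half-line. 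These give two distinct directions in the quotient graph; the cut separating the corresponding halves of $K\backslash\partial^R\widehat{X}$ sits inside the vertex set of $\partial^R\widehat{X}\cap\delta^R\widehat{Y}$, which is $K$-finite because it is simultaneously $H$-finite and $H'$-finite and $K=H\cap H'$, using Lemma \ref{L-finiteintersection} together with Lemma \ref{Neumannlemma}.

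For part (2), I argue by contradiction. Suppose $\widehat{X}$ has at least two $\mathcal{S}$-adapted $K$-ends, so there exists $A\subset\widehat{X}$ that is $K'$-invariant for some $K'\in[K]$, with $\delta_{\widehat{X}}^R A$ being $K$-finite for every $R$, and with both $A$ and $\widehat{X}-A$ being $K$-infinite. The idea is to extend $A$ to a subset $B\subset\widehat{G}$ whose coboundary in some $\widehat{\Gamma}^R$ is $K$-finite, so that $B\cap G$ is a nontrivial $\mathcal{S}$-adapted $K'$-almost invariant subset of $G$ by Lemma \ref{characterizingdX}, contradicting $e(G,[K]:\mathcal{S})=1$. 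The main obstacle will be carrying out this extension: the naive candidates $B=A$ or $B=A\cup(\widehat{G}-\widehat{X})$ have the $H$-finite set $\delta\widehat{X}$ contributing to their coboundary, which is a priori not $K$-finite because $K$ has infinite index in $H$. Part (1) is precisely what enables the correct extension: since $K\backslash\Gamma^R[\partial^R\widehat{X}]$ has exactly two ends, after a $K$-finite modification $A\cap\partial^R\widehat{X}$ must either miss both ends, fill both, or match exactly one of them; in each case I extend $A$ across $\widehat{G}-\widehat{X}$ by adjoining the components of $\widehat{G}-\widehat{X}$ attached to the chosen end(s), producing a $B$ whose coboundary in $\widehat{\Gamma}^R$ is $K$-finite. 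The $K$-finiteness of every cone-point stabilizer $H\cap gSg^{-1}$ from Lemma \ref{HintersectconjugateofSisK-finite} is what ensures that the cone-edge contribution to $\delta^R B$ is $K$-finite even when $B$ straddles cone points of $\widehat{\Gamma}$.
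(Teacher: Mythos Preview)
Your treatment of part (1) is correct but heavier than needed. Once you know $H$ acts freely and cocompactly on the connected, locally finite graph $\Gamma^{R}[\partial^{R}\widehat{X}]$, the number of ends of the $K$--quotient is exactly $e(H,K)$, which equals $2$ by Lemma~\ref{HintersectconjugateofSisK-finite}. Your separate lower-bound argument via $H\cap Y$ and $H\cap Y^{*}$ is superfluous.

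Part (2) has a genuine gap in the case where $A\cap\partial^{R}\widehat{X}$ matches \emph{exactly one} of the two ends. This is in fact the only case that arises when $\widehat{X}$ has exactly two $\mathcal{S}$--adapted $K$--ends: if $A\cap\partial^{R}\widehat{X}$ were $K$--finite (``miss both''), the argument you sketch correctly gives $\delta^{R}A$ is $K$--finite in $\widehat{\Gamma}^{R}$, whence $A$ is $K$--finite by $e(G,[K]:\mathcal{S})=1$, contradicting the choice of $A$; so ``miss both'' and, symmetrically, ``fill both'' do not occur for a genuine two-end partition. In the ``match one'' case, however, your extension $B$ cannot be built. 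For $\delta^{R}B$ to omit all but $K$--finitely many edges of $\delta^{R}\widehat{X}$, the set $B\cap(\widehat{G}-\widehat{X})$ must contain almost every vertex adjacent to $A\cap\partial^{R}\widehat{X}$ while avoiding almost every vertex adjacent to $(\widehat{X}-A)\cap\partial^{R}\widehat{X}$; but $\widehat{G}-\widehat{X}$ is (for large $R$) a single connected piece linking both ends, so the edges internal to it would contribute a $K$--infinite coboundary. Put differently: if such a $B$ existed, then $B\cap G$ would be a nontrivial $\mathcal{S}$--adapted almost invariant set over a group in $[K]$, contradicting $e(G,[K]:\mathcal{S})=1$ --- so assuming you can build $B$ is assuming what you want to prove.

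The paper handles the two-end case by an entirely different mechanism, exploiting the $H$--action on $\widehat{X}$ rather than the hypothesis $e(G,[K]:\mathcal{S})=1$. One takes a $K'$--finite connected subgraph $\Delta$ of $\widehat{\Gamma}^{N}[\widehat{X}]$ containing $\delta^{N}_{\widehat{X}}Y$, uses the $K$--finiteness of cone-point stabilizers (Lemma~\ref{HintersectconjugateofSisK-finite}) to find $h\in H$ with $h\Delta\cap\Delta=\emptyset$ and $h$ not swapping the two ends, and deduces $hE\subset E$ for $E=\widehat{\Gamma}^{N}[Y]\cup\Delta$. Since the two $K$--ends force the vertex sets of $E$ and $hE$ to be $K$--almost equal, while $\bigcap_{k\ge 0}h^{k}E=\emptyset$ by a distance argument, one gets that the vertex set of $E$, and hence $Y$, is $H$--finite. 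The same applies to $\widehat{X}-Y$, so $\widehat{X}$ is $H$--finite, contradicting the nontriviality of $X$. The contradiction here is with $X$ being nontrivial, not with $e(G,[K]:\mathcal{S})=1$, and this shift is exactly what your outline is missing.
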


\begin{proof}
Lemma \ref{AlmostinvariantsetbecomesconnectedinGL} tells us there is $N$ such
that $\partial^{N}\widehat{X}$ is connected in $\Gamma^{N}$, and $\widehat{X}$
is connected in $\widehat{\Gamma}^{N}$. Recall that $\Gamma^{N}$ is locally
finite, so the same applies to any subgraph.

1) Suppose that $\partial^{R}\widehat{X}$ is connected in $\Gamma^{R}$, so
that the subgraph $\Gamma^{R}[\partial^{R}\widehat{X}]$ of $\Gamma^{R}$ is
connected. Recall that $\partial^{R}\widehat{X}$ is $H$--invariant and
$H$--finite. Thus $\Gamma^{R}[\partial^{R}\widehat{X}]$ is a $H$--invariant
and $H$--finite connected subgraph of $\Gamma^{R}$. Further $\Gamma
^{R}[\partial^{R}\widehat{X}]$ is locally finite. As $H$ acts freely on this
connected graph, it follows that the number of ends of the quotient graph
$K\backslash\Gamma^{R}[\partial^{R}\widehat{X}]$ is equal to $e(H,K)$. Now
Lemma \ref{HintersectconjugateofSisK-finite} implies that $e(H,K)=2$,
completing the proof of part 1).

2) Now we use the fact that there is $N$ such that $\partial^{N}\widehat{X}$
is connected in $\Gamma^{N}$, and $\widehat{X}$ is connected in
$\widehat{\Gamma}^{N}$. For notational simplicity, we let $Z$ denote
$\widehat{X}$, let $dZ$ denote $\partial^{N}Z$, let $W$ denote
$\widehat{\Gamma}^{N}[Z]$, and let $dW$ denote $\Gamma^{N}[dZ]$. Thus $W$ is a
connected subgraph of $\widehat{\Gamma}^{N}$, and $dW$ is a connected subgraph
of $\Gamma^{N}$. Further, part 1) tells us that the quotient graph
$K\backslash dW$ has two ends. As $Z$ is $K$--invariant and $K$--infinite, it
is immediate that $Z$ has at least one $\mathcal{S}$--adapted $K$--end.

Recall that $\mathcal{B}_{Z}$ is a Boolean algebra. It follows that if $Z$ has
more than two $\mathcal{S}$--adapted $K$--ends, we can partition $Z$ into
three $K$--infinite elements $Y_{1}$, $Y_{2}$ and $Y_{3}$ of $\mathcal{B}_{Z}%
$. Thus $\delta_{Z}^{R}Y_{i}$ is $K$--finite, for each $i$ and every $R$. Let
$K^{\prime}\in\lbrack K]$ such that $Z$ and each $Y_{i}$ are $K^{\prime}%
$--invariant. For each $i$, it is trivial that $\delta_{dZ}^{N}(Y_{i}\cap dZ)$
is contained in $\delta_{Z}^{N}Y_{i}$, and so is $K$--finite. Now part 1)
implies that some $Y_{i}\cap dZ$ is $K$--finite. We can assume that $i=3$.

Now $\delta^{N}Y_{3}=\delta_{Z}^{N}Y_{3}\cup(\delta^{N}Y_{3}\cap\delta
^{N}Z)\subset\delta_{Z}^{N}Y_{3}\cup\delta^{N}(Y_{3}\cap dZ)$. As each point
of $G$ is a vertex of finite valence in $\widehat{\Gamma}^{N}$, and $Y_{3}\cap
dZ$ is a $K$--finite subset of $G$, it follows that $\delta^{N}(Y_{3}\cap dZ)$
is $K$--finite. As $\delta_{Z}^{N}Y_{3}$ is $K$--finite, it follows that
$\delta^{N}Y_{3}$ is also $K$--finite. Now Lemma \ref{characterizingdX} tells
us that $Y_{3}\cap G$ is a $\mathcal{S}$--adapted $K^{\prime}$--almost
invariant subset of $G$. As $e(G,[K]:S)=1$, it follows that $Y_{3}\cap G$ must
be $K$--finite (as its complement in $G$ clearly cannot be $K$--finite).Thus
$Y_{3}$ itself is $K$--finite, which contradicts our assumption that each
$Y_{i}$ is $K$--infinite. This contradiction shows that the number of
$\mathcal{S}$--adapted $K$--ends of $Z$ is at most $2$.

We suppose now that the number of $\mathcal{S}$--adapted $K$--ends of $Z$
equals $2$, and will obtain a contradiction. We can partition $Z$ into
$K$--infinite elements $Y$ and $Z-Y$ of $\mathcal{B}_{Z}$. Again let
$K^{\prime}\in\lbrack K]$ such that $Z$ and $Y$ are $K^{\prime}$--invariant.
By replacing $H$ by a subgroup of finite index we can assume that $K^{\prime}$
is normal in $H$ with infinite cyclic quotient. Note that the argument in the
preceding paragraph implies that $Y\cap dZ$ and $(Z-Y)\cap dZ$ must each be
$K$--infinite. Also the coboundaries $\delta_{dZ}^{N}(Y\cap dZ)$ and
$\delta_{dZ}^{N}((Z-Y)\cap dZ)$ are each $K$--finite. It follows that in the
quotient $K^{\prime}\backslash dW$, the images of $Y\cap dZ$ and $(Z-Y)\cap
dZ$ each contain the vertex set of one end of $K^{\prime}\backslash dW$.

Now let $\overline{Y}$ denote the subgraph $\widehat{\Gamma}^{N}[Y]$ of
$\widehat{\Gamma}^{N}$. As $Y\subset Z$, we know that $\overline{Y}$ is a
subgraph of $W=\widehat{\Gamma}^{N}[Z]$. As $K^{\prime}$ is a finitely
generated group which acts on the connected graph $W=\widehat{\Gamma}^{N}[Z]$,
and $\delta_{Z}^{N}Y$ is $K$--finite, there is a $K^{\prime}$--invariant
$K^{\prime}$--finite connected subgraph $\Delta$ of $W$ which contains
$\delta_{Z}^{N}Y$. As $W$ is connected, it follows that $\overline{Y}%
\cup\Delta$ is also connected. Now Lemma
\ref{HintersectconjugateofSisK-finite} and Remark \ref{HintersectgSg-1}
together imply that the set of $h$ in $H$ which stabilize a given cone point
of $Z$ is $K$--finite. As $\Delta$ is $K^{\prime}$--finite, and $H$ is
$K$--infinite, it follows that there is $h\in H$ such that $h\Delta$ is
disjoint from $\Delta$. As $K^{\prime}\backslash H$ is infinite cyclic, we can
arrange that $h$ does not interchange its two ends, by replacing $h$ by
$h^{2}$ if needed. Thus we can assume that $h$ does not interchange the two
ends of $K^{\prime}\backslash dW$. As $\Delta$ is connected, it follows that
we must have $h(\overline{Y}\cup\Delta)\subset\overline{Y}\cup\Delta$ or
$\overline{Y}\cup\Delta\subset h(\overline{Y}\cup\Delta)$. By replacing $h$ by
its inverse if needed, we can assume that $h(\overline{Y}\cup\Delta
)\subset\overline{Y}\cup\Delta$. As we are assuming that the number of
$\mathcal{S}$--adapted $K$--ends of $Z$ equals $2$, it follows that the vertex
sets of $h(\overline{Y}\cup\Delta)$ and $\overline{Y}\cup\Delta$ are
$K$--almost equal. We let $E$ denote $\overline{Y}\cup\Delta$, so that
$hE\subset E$.

Next we claim that the intersection $\cap_{k\geq0}h^{k}E$ is empty. Any vertex
$v$ of $E$ can be joined by a path in $E$ to a point of $\Delta$. The length
of the shortest such path will be called the distance of $v$ from $\Delta$. As
$hE\subset E$ and $h\Delta$ is disjoint from $\Delta$, it follows that $hE$ is
also disjoint from $\Delta$. Thus any vertex of $hE$ has distance at least $1$
from $\Delta$. Hence any vertex of $h^{n}E$ has distance at least $1$ from
$h^{n-1}\Delta$. It follows that any vertex of $h^{k}E$ has distance at least
$k$ from $\Delta$. Hence no vertex of $E$ can lie in $\cap_{k\geq0}h^{k}E$, so
that this intersection is empty as claimed.

Recall that $hE\subset E$, and that their vertex sets are $K$--almost equal.
We let $V$ denote the vertex set of $E-hE$, so that $V$ is $K$--finite. The
fact that $\cap_{k\geq0}h^{k}E$ is empty implies that $\cup_{k\geq0}h^{k}V$
equals the vertex set of $E$. As $V$ is $K$--finite, this implies that the
vertex set of $E$ is $H$--finite. As $E$ equals $\overline{Y}\cup\Delta$, this
implies that $Y$ is $H$--finite. A similar argument shows that $Z-Y$ is
$H$--finite. But this implies that $Z$ itself is $H$--finite, which
contradicts the fact that $Z\cap G=X$ is a nontrivial $H$--almost invariant
subset of $G$. This contradiction completes the proof that the number of
$S$--adapted $K$--ends of $Z$ must equal $1$.
\end{proof}

\subsection{Applications}

An easy consequence of Lemma \ref{aisethasoneS-adaptedK-end} is the following,
which is closely related to Proposition 7.2 of \cite{SS2}.

\begin{lemma}
\label{strongcrossingissymmetric}Let $(G,\mathcal{S})$ be a group system of
finite type with $e(G:\mathcal{S})=1$. Let $H$ and $H^{\prime}$ be subgroups
of $G$, such that $H$ is $VPC(n+1)$, and let $X\ $and $Y$ be nontrivial
$\mathcal{S}$--adapted almost invariant subsets of $G$ over $H$ and
$H^{\prime}$ respectively. Suppose that, for any subgroup $M$ of $H$ with
infinite index, $e(G,M:\mathcal{S})=1$. If $X$ crosses $Y$ strongly, then $Y$
crosses $X$ strongly.
\end{lemma}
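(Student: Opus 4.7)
I plan to combine the strong-crossing hypothesis with Lemma~\ref{aisethasoneS-adaptedK-end} to show that $\widehat{X}$ has a unique $\mathcal{S}$--adapted $K$--end, and then exhibit a second such end from any failure of symmetry. Set $K=H\cap H'$. Lemma~\ref{HintersectconjugateofSisK-finite} gives that $K$ is $VPCn$ and $e(H,K)=2$, so $K$ has infinite index in $H$. For any $K'\in[K]$, the subgroup $L=K\cap K'$ has finite index in $K'$ and lies in $H$ with infinite index, so the hypothesis yields $e(G,L:\mathcal{S})=1$. Since $L$--finite and $K'$--finite sets coincide, $e(G,K':\mathcal{S})\le e(G,L:\mathcal{S})=1$; combined with $e(G,K':\mathcal{S})\ge 1$ (as $K'$ has infinite index in $G$), this gives $e(G,[K]:\mathcal{S})=1$. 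Hence Lemma~\ref{aisethasoneS-adaptedK-end} applies and the standard augmentation $\widehat{X}$ of $X$ has exactly one $\mathcal{S}$--adapted $K$--end. Suppose for contradiction that $Y$ does not cross $X$ strongly. Replacing $X$ by $X^{*}$ if necessary (an operation which preserves both $X$ crossing $Y$ strongly and the one-ended conclusion, since the stabilizer $H$ and all hypotheses on $H$ are unchanged), we may assume $H'\cap X$ is $H$--finite, equivalently $K$--finite by Lemma~\ref{Neumannlemma}(1).

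The key step is to construct an element $A$ of $\mathcal{B}_{\widehat{X}}$ partitioning $\widehat{X}$ into two $K$--infinite pieces. Let $\widehat{Y}$ be the standard augmentation of $Y$ in the relative Cayley graph $\widehat{\Gamma}=\Gamma(G,\mathcal{S})$, and set $A=\widehat{X}\cap\widehat{Y}$. Since $\widehat{X}$ is $H$--invariant and $\widehat{Y}$ is $H'$--invariant by Lemma~\ref{propertiesofXhat}, $A$ is $K$--invariant. The coboundary $\delta_{\widehat{X}}^{R}A$ consists exactly of the edges of $\delta^{R}\widehat{Y}$ with both endpoints in $\widehat{X}$. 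By Lemma~\ref{characterizingdX} applied in $\widehat{\Gamma}^{R}$, the set $\delta^{R}\widehat{Y}$ is $H'$--finite, say $\delta^{R}\widehat{Y}=H'E^{R}$ for a finite edge set $E^{R}$. Every edge of $\widehat{\Gamma}^{R}$ has at least one endpoint $u$ in $G$, and for such $u$ we have $\{h'\in H':h'u\in\widehat{X}\}=H'\cap X_{1}u^{-1}$, where $X_{1}:=\widehat{X}\cap G$ is $H$--almost equal to $X$ by Lemma~\ref{propertiesofXhat}(3). Since $X_{1}u^{-1}$ is $H$--almost equal to $X_{1}$, hence to $X$, Lemma~\ref{L-finiteintersection}(1) shows that $H'\cap X_{1}u^{-1}$ is $K$--almost equal to $H'\cap X$, and so is $K$--finite. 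For each $e\in E^{R}$ the set of $h'\in H'$ for which $h'e$ has both endpoints in $\widehat{X}$ is contained in such a $K$--finite set, and since $E^{R}$ is finite we conclude that $\delta_{\widehat{X}}^{R}A$ is $K$--finite, so $A\in\mathcal{B}_{\widehat{X}}$.

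Finally, the strong crossing of $Y$ by $X$ forces $H\cap Y$ and $H\cap Y^{*}$ to be $K$--infinite. For any $x\in X$, $Hx\subset X$ and $Hx\cap Y=(H\cap Yx^{-1})x$; as $Y$ is $H'$--almost invariant, $Yx^{-1}$ is $H'$--almost equal to $Y$, and Lemma~\ref{L-finiteintersection}(1) gives that $H\cap Yx^{-1}$ is $K$--almost equal to $H\cap Y$, hence $K$--infinite. Therefore $X\cap Y$ is $K$--infinite, and symmetrically so is $X\cap Y^{*}$. Since $Y\subset\widehat{Y}$ and $\widehat{Y}\cap G$ is $H'$--almost (hence $K$--almost) equal to $Y$, the set $A\supset X\cap Y$ is $K$--infinite, and $\widehat{X}\setminus A\supset X\cap(G\setminus\widehat{Y})$, which is $K$--almost equal to $X\cap Y^{*}$, is also $K$--infinite. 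This exhibits a second $\mathcal{S}$--adapted $K$--end of $\widehat{X}$, contradicting Lemma~\ref{aisethasoneS-adaptedK-end}. The hard part is the combinatorial bound on $\delta_{\widehat{X}}^{R}A$; once one exploits the fact that every edge of $\widehat{\Gamma}^{R}$ has an endpoint in $G$ and tracks this endpoint through the $H'$--orbit decomposition of $\delta^{R}\widehat{Y}$, the rest follows from routine almost-invariance manipulations.
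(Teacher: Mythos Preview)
Your argument is correct and follows essentially the same route as the paper's proof: both set $K=H\cap H'$, invoke Lemma~\ref{aisethasoneS-adaptedK-end} to get that $\widehat{X}$ has a single $\mathcal{S}$--adapted $K$--end, and then derive a contradiction by showing that $A=\widehat{X}\cap\widehat{Y}$ lies in $\mathcal{B}_{\widehat{X}}$ with both $A$ and $\widehat{X}\setminus A$ being $K$--infinite. Your coboundary estimate is spelled out in more detail than the paper's (which appeals to Lemma~\ref{L-finiteintersection} and the finite valence of non-cone vertices more tersely), but the substance is identical.
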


\begin{remark}
We do not assume that $H^{\prime}$ is $VPC(n+1)$. In fact we do not even
assume that $H^{\prime}$ is finitely generated.
\end{remark}

\begin{proof}
Let $K$ denote the intersection $H\cap H^{\prime}$. As $X$ crosses $Y$
strongly, Lemma \ref{HintersectconjugateofSisK-finite} tells us that $K$ must
be $VPCn$, and that $e(H,K)=2$. Thus $H$ has a finite index subgroup $J$ which
contains $K$ as a normal subgroup, such that $J/K$ is infinite cyclic. Thus by
replacing $H$ by the finite index subgroup $J$, which we continue to denote by
$H$, we can assume that the intersection $K=H\cap H^{\prime}$ is normal in
$H$, and that the quotient $H/K$ is infinite cyclic.

As $K$ has infinite index in $H$, we know that $e(G,[K]:\mathcal{S})=1$. Thus
we can apply Lemma \ref{aisethasoneS-adaptedK-end}. Let $\Gamma(G,\mathcal{S}%
)$ be a relative Cayley graph of $(G,\mathcal{S})$. Lemma
\ref{aisethasoneS-adaptedK-end}\ tells us that the number of $\mathcal{S}%
$--adapted $K$--ends of the standard augmentation $\widehat{X}$ of $X$ is
equal to $1$. As in the proof of that lemma we denote $\widehat{X}$ by $Z$. We
also let $\widehat{Y}$ denote the standard augmentation of $Y$, and consider
the subset $\widehat{Y}\cap Z$ of $Z$. As $X$ crosses $Y$ strongly, $Y\cap H$
and $Y^{\ast}\cap H$ are each $K$--infinite. It follows that $Y\cap Z$, and
hence $\widehat{Y}\cap Z$ is $K$--infinite, and so is $\widehat{Y}^{\ast}\cap
Z$, its complement in $Z$. Also $\widehat{Y}\cap Z$ and $\widehat{Y}^{\ast
}\cap Z$ are $K$--invariant.

Now suppose that the lemma is false so that $Y$ crosses $X$ weakly. As before,
we can assume that $X\cap H^{\prime}$ is $K$--finite. Thus Lemma
\ref{L-finiteintersection} tells us that $X\cap H^{\prime}g$ is $K$--finite,
for every $g\in G$, and the same holds for $Z\cap H^{\prime}g$. Now
$\partial^{R}\widehat{Y}$ is a $H^{\prime}$--finite subset of $G$, for every
$R$, and each vertex of $G$ has fixed finite valence in $\Gamma(G,\mathcal{S}%
)$. It follows that the coboundary $\delta_{Z}^{R}(\widehat{Y}\cap Z)$ is
$K$--finite, for every $R$. This implies that $\widehat{Y}\cap Z$ and
$\widehat{Y}^{\ast}\cap Z$ each lie in the Boolean algebra $\mathcal{B}_{Z}$,
which in turn implies that the number of $\mathcal{S}$--adapted $K$--ends of
$Z=\widehat{X}$ is at least $2$. As we know that the number of $\mathcal{S}%
$--adapted $K$--ends of $\widehat{X}$ is equal to $1$, this is a
contradiction, which completes the proof of the lemma.
\end{proof}

The following result adds a little to the previous lemma in the case when both
$H$ and $H^{\prime}$ are $VPC(n+1)$.

\begin{lemma}
\label{XcrossesYstronglyimpliese=2}Let $(G,\mathcal{S})$ be a group system of
finite type with $e(G:\mathcal{S})=1$. Let $H\ $and $H^{\prime}$ be $VPC(n+1)$
subgroups of $G$, and let $X\ $and $Y$ be nontrivial $\mathcal{S}$--adapted
almost invariant subsets of $G$ over $H$ and $H^{\prime}$ respectively.
Suppose that, for any subgroup $M$ of $H$ of infinite index,
$e(G,M:\mathcal{S})=1$. If $X$ crosses $Y$ strongly, then $e(G,[H]:\mathcal{S}%
)=2$.
\end{lemma}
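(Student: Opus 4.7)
The plan is to bootstrap from the strong-crossing hypothesis to a structural picture of $\widehat{X}$, and then count ends. First I would apply Lemma \ref{strongcrossingissymmetric} to conclude that $Y$ crosses $X$ strongly as well, so that Lemma \ref{HintersectconjugateofSisK-finite} applies symmetrically. Set $K=H\cap H'$; the lemma gives that $K$ is $VPCn$ and $e(H,K)=2$. Since $e(G,[H]:\mathcal{S})$ depends only on the commensurability class of $H$, I may replace $H$ by a finite index subgroup to arrange $K\triangleleft H$ with $H/K\cong\mathbb{Z}$; pick $t\in H$ projecting to a generator. The lower bound $e(G,[H]:\mathcal{S})\geq 2$ is immediate, since $X$ itself is a nontrivial $\mathcal{S}$-adapted $H$-almost invariant set, so $e(G,H:\mathcal{S})\geq 2$.

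Next I would establish $e(G,[K]:\mathcal{S})=1$, which is needed to feed Lemma \ref{aisethasoneS-adaptedK-end} into the argument. The hypothesis gives $e(G,M:\mathcal{S})=1$ for every subgroup $M\subset H$ of infinite index; for an arbitrary $K'\in[K]$, the intersection $K\cap K'$ has finite index in both, hence lies in $H$ with infinite index, so $e(G,K\cap K':\mathcal{S})=1$, and passing to a finite index subgroup preserves this, giving $e(G,K':\mathcal{S})=1$. Now Lemma \ref{aisethasoneS-adaptedK-end} tells us that the standard augmentation $\widehat{X}$ has exactly one $\mathcal{S}$-adapted $K$-end, and that $K\backslash\Gamma^{R}[\partial^{R}\widehat{X}]$ has exactly two ends for all sufficiently large $R$. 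Applying this to $X^{\ast}$ gives the same conclusions for $\widehat{X^{\ast}}$.

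For the upper bound I need to show that for every $H''\in[H]$, any nontrivial $\mathcal{S}$-adapted $H''$-almost invariant subset $Z$ is $H''$-almost equal to one of $\emptyset, X, X^{\ast}, G$. Replacing $H$ and $H''$ by a common finite-index subgroup, I reduce to the case $H''=H$. I would then analyze $Z\cap\widehat{X}$ together with the $t$-translation action on the $2$-ended graph $K\backslash\Gamma^{R}[\partial^{R}\widehat{X}]$. Since the $\mathbb{Z}$-action permutes the two ends of this graph, after passing to an index-$2$ subgroup of $H$ if necessary I may assume $t$ fixes each end. The fact that $\widehat{X}$ has a unique $\mathcal{S}$-adapted $K$-end then forces $Z\cap\widehat{X}$ to be $K$-almost equal to $\widehat{X}$ or to be $K$-finite; the same analysis on $\widehat{X^{\ast}}$ gives four combinations, exhausting the equivalence classes of nontrivial $\mathcal{S}$-adapted $H$-almost invariant sets and yielding $e(G,H:\mathcal{S})\leq 2$.

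The main obstacle is exactly this last step: the coboundary of $Z$ is only $H$-finite, not $K$-finite, so $Z\cap\widehat{X}$ does not a priori sit in the Boolean algebra $\mathcal{B}_{\widehat{X}}$ used to count $K$-ends, and one cannot directly invoke Lemma \ref{aisethasoneS-adaptedK-end} to collapse it. I expect to get around this by arguing that the $t$-invariance of $Z$ (up to finite power), combined with the two-ended structure of $K\backslash\Gamma^{R}[\partial^{R}\widehat{X}]$ and the single $\mathcal{S}$-adapted $K$-end of $\widehat{X}$, forces one of $Z\cap\widehat{X}$ or $Z^{\ast}\cap\widehat{X}$ to be contained in a bounded $\Gamma^{R}$-neighbourhood of $\partial^{R}\widehat{X}$; this neighbourhood is $H$-finite, which then upgrades to $K$-finite-after-projection in the relevant quotient, completing the end count.
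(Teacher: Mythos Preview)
Your setup through the lower bound and the verification that $e(G,[K]:\mathcal{S})=1$ is fine, but the final step has a real gap and your proposed workaround is too vague to be convincing. The difficulty you name is exactly the problem: an $H$--almost invariant set $Z$ has $H$--finite, not $K$--finite, coboundary, so $Z\cap\widehat{X}$ does not lie in $\mathcal{B}_{\widehat{X}}$ and Lemma~\ref{aisethasoneS-adaptedK-end} gives no direct control over it. Your sketch invoking $t$--invariance and the two-ended structure of $K\backslash\Gamma^{R}[\partial^{R}\widehat{X}]$ does not explain why $Z\cap\widehat{X}$ or its complement should be trapped near $\partial^{R}\widehat{X}$; the $t$--action on that two-ended quotient moves points along both ends simultaneously, so $t$--invariant subsets of the boundary are either empty or hit both ends, which does not obviously collapse anything.

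The paper's argument avoids this entirely by intersecting with $H'$ rather than with $\widehat{X}$. Assume $e(G,[H]:\mathcal{S})\geq 3$ and take three disjoint nontrivial $\mathcal{S}$--adapted almost invariant sets $X_{1},X_{2},X_{3}$ over a common $H''$ of finite index in $H$. Since $H''\cap Y$ and $H''\cap Y^{\ast}$ are $H'$--infinite, each $X_{i}$ crosses $Y$ strongly. Now the three disjoint sets $X_{i}\cap H'$ are $K'$--almost invariant subsets of the $VPC(n+1)$ group $H'$, where $K'=H''\cap H'$; but Lemma~\ref{Houghtonlemma} gives $e(H',K')=2$, so one of them is $K'$--finite, meaning $Y$ crosses that $X_{i}$ weakly. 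This contradicts Lemma~\ref{strongcrossingissymmetric} applied to $X_{i}$ and $Y$. The point is that intersecting with $H'$ lands you in the classical end theory of a $VPC$ group, where Houghton's bound applies directly, rather than in the more delicate $K$--end theory of $\widehat{X}$ where your $H$--invariant $Z$ does not fit.
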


\begin{proof}
As in the preceding lemma, we let $K$ denote the intersection $H\cap
H^{\prime}$. That lemma tells us that $K$ must be $VPCn$, and that $e(H,K)$
and $e(H^{\prime},K)$ must each be equal to $2$. We will not need the fact
that $e(H,K)=2$.

Note that $e(G,[H]:\mathcal{S})\geq e(G,H:\mathcal{S})\geq2$. Now we will
suppose that $e(G,[H]:\mathcal{S})\geq3$, and will obtain a contradiction. The
assumption that $e(G,[H]:\mathcal{S})\geq3$ implies that there are three
disjoint nontrivial $\mathcal{S}$--adapted almost invariant subsets $X_{1}$,
$X_{2}$ and $X_{3}$ of $G$ each over a subgroup $H^{\prime\prime}$ of finite
index in $H$. As $X$ crosses $Y$ strongly, each $X_{i}$ must also cross $Y$
strongly. Now we consider the disjoint intersections $X_{i}\cap H^{\prime}$.
Each such intersection is a $K^{\prime}$--almost invariant subset of
$H^{\prime}$, where $K^{\prime}=H^{\prime\prime}\cap H^{\prime}$. As
$e(H^{\prime},K)=2$, it follows that $e(H^{\prime},K^{\prime})\geq2$. As
$H^{\prime}$ is $VPC$, Lemma \ref{Houghtonlemma} implies that $e(H^{\prime
},K^{\prime})=2$. Thus at least one of the sets $X_{i}\cap H^{\prime}$ must be
$K^{\prime}$--finite, so that $Y$ crosses that $X_{i}$ weakly. But this
contradicts Lemma \ref{strongcrossingissymmetric}. This contradiction shows
that we must have $e(G,[H]:\mathcal{S})=2$, as required.
\end{proof}

Putting the preceding results together yields the following result which is a
generalisation of Proposition 7.5 of \cite{SS2}, and is the main result of
this section.

\begin{theorem}
\label{crossinginCCCsisallweakorallstrong}Let $(G,\mathcal{S})$ be a group
system of finite type with $e(G:\mathcal{S})=1$. Let $n$ be a non-negative
integer, and suppose that, for any $VPC(\leq n)$ subgroup $M$ of $G$, we have
$e(G,M:\mathcal{S})=1$. Let $\{X_{\lambda}\}_{\lambda\in\Lambda}$ be a family

of nontrivial $\mathcal{S}$--adapted almost invariant subsets of $G$, such
that each $X_{\lambda}$ is over a $VPC(n+1)$ subgroup. Let $E$ denote the set
of all translates of the $X_{\lambda}$'s and their complements, and consider
the collection of cross-connected components (CCCs) of $E$ as in the
construction of regular neighbourhoods in section \ref{algregnbhds}. Then the
following statements hold:

\begin{enumerate}
\item The crossings in a non-isolated CCC of $E$ are either all strong or are
all weak.

\item In a non-isolated CCC $\Phi$ with all crossings weak, the stabilisers of
the elements of $E$ which lie in $\Phi$ are all commensurable. If $H$ denotes
such a stabiliser, then $e(G,[H]:\mathcal{S})\geq4$.

\item In a non-isolated CCC $\Phi$ with all crossings strong, if $H$ is the
stabiliser of an element of $E$ which lies in $\Phi$, then
$e(G,[H]:\mathcal{S})=2$.
\end{enumerate}
\end{theorem}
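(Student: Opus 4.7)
The plan is to reduce all three parts to three already-established lemmas: Lemma \ref{strongcrossingissymmetric} (symmetry of strong crossing), Lemma \ref{XcrossesYstronglyimpliese=2} (strong crossing forces $e(G,[H]:\mathcal{S})=2$), and Lemma \ref{weak-weakcrossing} (mutual weak crossing forces $e(G,[H]:\mathcal{S})\geq 4$). The hypotheses that $e(G:\mathcal{S})=1$ and that $e(G,M:\mathcal{S})=1$ for every $VPC(\leq n)$ subgroup $M$ of $G$ are precisely what is needed to apply these lemmas to elements of the CCCs, since stabilizers of elements of $E$ are $VPC(n+1)$ groups (as they are conjugates of the stabilizers of the $X_{\lambda}$'s) and their subgroups of infinite index are $VPC(\leq n)$.

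For part 1, I first want to note that crossing is symmetric by Lemma \ref{crossingissymmetric}, strong crossing is symmetric by Lemma \ref{strongcrossingissymmetric}, and therefore weak crossing is symmetric as well. Moreover, both types of crossing are invariant under complementation and translation: the four corners of $(X,Y)$ and $(X^{\ast},Y)$ are the same up to relabeling, and strong crossing is tested using $H\cap Y$ and $H\cap Y^{\ast}$, which are unchanged when $X$ is replaced by $X^{\ast}$, while translating both sets simultaneously is obviously a bijection. Thus within a CCC $\Phi$ the type of each crossing is well-defined. Now suppose for contradiction that $\Phi$ contains both a strong crossing and a weak crossing. Since $\Phi$ is cross-connected, a path of crossings and complementations links the two crossings, and by picking the first edge in that path where the type changes I obtain elements $A,B,C\in\Phi$ such that $A$ crosses $B$ strongly and $B$ crosses $C$ weakly. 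Applying Lemma \ref{XcrossesYstronglyimpliese=2} to $(A,B)$ gives $e(G,[H_B]:\mathcal{S})=2$ (where $H_B$ is the stabilizer of $B$), while applying Lemma \ref{weak-weakcrossing} to the pair $(B,C)$ (which weakly cross each other by symmetry of weak crossing) gives $e(G,[H_B]:\mathcal{S})\geq 4$, a contradiction.

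For part 2, assume $\Phi$ has all weak crossings, and let $X,Y\in\Phi$ with stabilizers $H,K$. Choose a crossing chain $X=Z_0,Z_1,\ldots,Z_m=Y$ connecting $X$ to $Y$ in $\Phi$ (complementations along the way do not affect stabilizers, and can be collapsed out). Each consecutive pair weakly crosses, so Lemma \ref{weak-weakcrossing} yields that $\mathrm{stab}(Z_i)$ and $\mathrm{stab}(Z_{i+1})$ are commensurable and that $e(G,[\mathrm{stab}(Z_i)]:\mathcal{S})\geq 4$. Transitivity of commensurability gives that $H$ and $K$ are commensurable, and the $e$-inequality holds for $[H]$ as required. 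For part 3, assume $\Phi$ has all strong crossings and let $X\in\Phi$ with stabilizer $H$. Since $\Phi$ is non-isolated, some element of $\Phi$ crosses something; cross-connectivity then provides some $Y\in\Phi$ which $X$ crosses (otherwise $X$ and $X^{\ast}$ would form a connected component of the crossing graph of $\Phi$). This crossing is strong by hypothesis, and Lemma \ref{XcrossesYstronglyimpliese=2} applies to give $e(G,[H]:\mathcal{S})=2$.

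The main obstacle is not any single analytic step, since the heavy lifting is done by the three lemmas cited. Rather it is the combinatorial bookkeeping of part 1: verifying that one can locate a common element $B$ appearing as one endpoint of both a strong and a weak crossing in $\Phi$, and correctly handling the fact that a CCC is defined modulo complementation as well as crossing, so that the ``crossing graph'' one works with must be constructed with care. Once the invariance of crossing type under complementation and translation is observed, the chain argument collapses the complementations and the contradiction in part 1 falls out immediately.
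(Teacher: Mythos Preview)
Your proposal is correct and follows essentially the same route as the paper: both proofs reduce everything to Lemmas \ref{strongcrossingissymmetric}, \ref{weak-weakcrossing}, and \ref{XcrossesYstronglyimpliese=2}, and both derive part 1 from the incompatibility $e(G,[H_B]:\mathcal{S})=2$ versus $e(G,[H_B]:\mathcal{S})\geq 4$ at a single element $B$. The only cosmetic difference is that the paper argues locally---once an element $X$ crosses some $Y$ weakly, the resulting bound $e(G,[H_X]:\mathcal{S})\geq 4$ together with Lemma \ref{XcrossesYstronglyimpliese=2} forbids $X$ from crossing anything strongly, and the type then propagates along the CCC---whereas you locate a change-point $B$ along a chain and derive the contradiction there; these are the same argument packaged differently.
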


\begin{remark}
If $X$ is an element of $E$ with stabiliser $H$, and if

$e(G,[H]:\mathcal{S})=3$, this result shows that $X$ crosses no element of
$E$. Thus $X$ must be an isolated element of $E$.
\end{remark}

\begin{proof}
Let $X$ be an element of $E$, and let $\Phi$ denote the CCC of $E$ which
contains $X$. If $\Phi$ is not isolated, then $X$ crosses some element $Y$ of
$E$. Let $H$ and $K$ denote the stabilisers of $X$ and $Y$ respectively.

If $X$ crosses $Y$ weakly, then Lemma \ref{strongcrossingissymmetric} shows
that $Y$ also crosses $X$ weakly. Hence Lemma \ref{weak-weakcrossing} shows
that $H$ and $K$ must be commensurable, and that $e(G,[H]:\mathcal{S})\geq4$
and $e(G,[K]:\mathcal{S})\geq4$. Now Lemma \ref{XcrossesYstronglyimpliese=2}
implies that neither $X$ nor $Y$ can cross any element of $E$ strongly. It
follows that all crossings in $\Phi$ must be weak, and that the stabilisers of
elements of $E$ which lie in $\Phi$ are all commensurable.

If $X$ crosses $Y$ strongly, then Lemma \ref{strongcrossingissymmetric} shows
that $Y$ also crosses $X$ strongly, and then Lemma
\ref{XcrossesYstronglyimpliese=2} shows that $e(G,[H]:\mathcal{S})=2$ and
$e(G,[K]:\mathcal{S})=2$. Now Lemma \ref{weak-weakcrossing} implies that
neither $X$ nor $Y$ can cross any element of $E$ weakly. It follows that all
crossings in $\Phi$ must be strong.
\end{proof}

Recall that Lemma \ref{strongcrossingissymmetric} applied to almost invariant
sets which need not both be over $VPC(n+1)$ subgroups of $G$. Thus part of the
preceding theorem can also be generalised to this setting. This is the result
we obtain.

\begin{lemma}
Let $(G,\mathcal{S})$ be a group system of finite type with $e(G:\mathcal{S}%
)=1$. Let $H\ $and $H^{\prime}$ be $VPC(n+1)$ subgroups of $G$, and let
$X\ $and $Y$ be nontrivial $\mathcal{S}$--adapted almost invariant subsets of
$G$ over $H$ and $H^{\prime}$ respectively. Suppose that, for any subgroup $M$
of $H$ of infinite index, $e(G,M:\mathcal{S})=1$, and that $X$ crosses $Y$
strongly. If $Z$ is a nontrivial $\mathcal{S}$--adapted almost invariant
subset of $G$ over some subgroup $\Sigma$, and if $X\ $and $Z$ cross, then $Z$
must cross $X$ strongly.
\end{lemma}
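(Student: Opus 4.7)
The plan is to argue by contradiction, following the template of Lemma \ref{weak-weakcrossing} but using the strong crossing of $X$ with $Y$ to get extra leverage. Suppose $Z$ crosses $X$ weakly. Because $H$ is $VPC(n+1)$ and every infinite index subgroup $M$ of $H$ has $e(G,M:\mathcal{S})=1$, the contrapositive of Lemma \ref{strongcrossingissymmetric}, applied with $H'$ replaced by $\Sigma$ and $Y$ by $Z$, rules out $X$ crossing $Z$ strongly; so $X$ crosses $Z$ weakly as well. After replacing $Z$ by $Z^{*}$ if necessary we may assume that $\Sigma\cap X$ is $H$--finite and $H\cap Z$ is $\Sigma$--finite.

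Set $L=H\cap\Sigma$. The elementary observation that an $H$--finite subset of $\Sigma$ is automatically $L$--finite (and symmetrically for subsets of $H$ that are $\Sigma$--finite) shows that $\Sigma\cap X$ and $H\cap Z$ are both $L$--finite, so Lemma \ref{intersectionisL-a.i.} gives that $X\cap Z$ is $\mathcal{S}$--adapted and $L$--almost invariant. By corner symmetry (Lemma \ref{crossingissymmetric}) all four corners of $(X,Z)$ are $H$--infinite, hence $L$--infinite since $L\subset H$, so $X\cap Z$ is nontrivial. Thus $e(G,L:\mathcal{S})\geq 2$, and the hypothesis forces $L$ to have finite index in $H$; in particular $L\in[H]$ and both $X$ and $X^{*}$ are themselves nontrivial $\mathcal{S}$--adapted $L$--almost invariant subsets of $G$.

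The key step is to bring in the strong crossing of $X$ with $Y$. Lemma \ref{XcrossesYstronglyimpliese=2}, applied to that pair, yields $e(G,[H]:\mathcal{S})=2$; hence the $\mathbb{Z}_{2}$--vector space $Q(L\backslash G)(\mathcal{S})/F(L\backslash G)$ has dimension at most two, and its only nontrivial equivalence classes are $[X]$ and $[X^{*}]$. Since $X\cap Z$ and $X^{*}\cap Z$ are nontrivial and $L$--almost invariant, each must be equivalent modulo $L$--finiteness to $X$ or to $X^{*}$. A short direct check rules out $[X\cap Z]=[X^{*}]$ (which would make $(X\cap Z)\triangle X^{*}=(X\cap Z)\cup X^{*}$ $L$--finite, forcing $X^{*}$ to be $L$--finite) and similarly $[X^{*}\cap Z]=[X]$, leaving $[X\cap Z]=[X]$ and $[X^{*}\cap Z]=[X^{*}]$, i.e.\ both $X\cap Z^{*}$ and $X^{*}\cap Z^{*}$ are $L$--finite.

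The contradiction is then immediate: $Z^{*}=(X\cap Z^{*})\cup(X^{*}\cap Z^{*})$ is a union of two $L$--finite sets and therefore $L$--finite, yet $Z^{*}$ is $\Sigma$--infinite by nontriviality of $Z$, and $\Sigma$--infinite implies $L$--infinite because $L\subset\Sigma$. I expect the only subtle part of the argument to be the appeal to $e(G,[H]:\mathcal{S})=2$ and the dimension counting that pins down the equivalence classes of $X\cap Z$ and $X^{*}\cap Z$; every other step is a direct application of the results already established earlier in the section.
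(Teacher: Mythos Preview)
Your overall strategy matches the paper's: assume $Z$ crosses $X$ weakly, use Lemma~\ref{strongcrossingissymmetric} to get $X$ crossing $Z$ weakly, apply Lemma~\ref{intersectionisL-a.i.} to make a corner $L$--almost invariant with $L=H\cap\Sigma$, then invoke $e(G,[H]:\mathcal{S})=2$ from Lemma~\ref{XcrossesYstronglyimpliese=2} to force a small corner. Two points need correction, one minor and one a genuine gap.

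The minor point: replacing $Z$ by $Z^{*}$ lets you arrange $H\cap Z$ to be $\Sigma$--finite, but it does nothing to $\Sigma\cap X$ versus $\Sigma\cap X^{*}$. You must also allow replacing $X$ by $X^{*}$ (harmless, since both hypotheses and conclusion are symmetric in $X,X^{*}$), or else simply say, as the paper does, that \emph{some} corner of $(X,Z)$ is $L$--almost invariant.

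The genuine gap: you assert that $X^{*}\cap Z$ is $L$--almost invariant, but this is not justified. Lemma~\ref{intersectionisL-a.i.} applied to $(X^{*},Z)$ would require $\Sigma\cap X^{*}$ to be $L$--finite, which you do not know: you only have $\Sigma\cap X$ $L$--finite, and $L$ need not have finite index in $\Sigma$. Nor can you obtain $X^{*}\cap Z$ as a Boolean combination of known $L$--almost invariant sets, since $Z$ itself is only $\Sigma$--almost invariant. Fortunately this entire detour is unnecessary. Once you have $[X\cap Z]=[X]$, the corner $X\cap Z^{*}$ is $L$--finite, hence $H$--finite (as $L$ has finite index in $H$), and that alone contradicts the crossing of $X$ and $Z$. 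The paper's proof takes exactly this shorter route: it shows the single corner $X\cap Z$ is either trivial or equivalent to $X$, and in either case one corner of $(X,Z)$ is small, so $X$ and $Z$ do not cross.
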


\begin{remark}
In this statement, $\Sigma$ is not assumed to be $VPC(n+1)$, nor even finitely
generated. The case when $\Sigma$ is $VPC(n+1)$ is a special case of Theorem
\ref{crossinginCCCsisallweakorallstrong}.
\end{remark}

\begin{proof}
As $X$ crosses $Y$ strongly, Lemma \ref{XcrossesYstronglyimpliese=2} implies
that $e(G,[H]:\mathcal{S})=2$.

We will suppose that $Z$ crosses $X$ weakly, and obtain a contradiction.

Lemma \ref{strongcrossingissymmetric} shows that $X$ must also cross $Z$
weakly. Now Lemma \ref{intersectionisL-a.i.} shows that one corner, say $X\cap
Z$, of the pair $(X,Z)$ is $\mathcal{S}$--adapted and almost invariant over
$H\cap\Sigma$. Let $M$ denote $H\cap\Sigma$.

If $M\ $has infinite index in $H$, then $e(G,M:\mathcal{S})=1$, so that $X\cap
Z$ must be a trivial $M$--almost invariant subset of $G$. As the complement of
$X\cap Z$ contains $X^{\ast}$ which is $H$--infinite, this complement is
certainly $H$--infinite, and hence $M$--infinite. Thus $X\cap Z$ must be
$M$--finite, and hence $H$--finite, so that $X\ $and $Z$ do not cross,
contradicting our supposition.

If $M\ $has finite index in $H$, then $X$ is also almost invariant over $M$.
As $e(G,[H]:\mathcal{S})=2$, we must have $e(G,M:\mathcal{S})=2$, so that any
$\mathcal{S}$--adapted $M$--almost invariant subset of $G$ must be trivial or
equivalent to $X$ or to $X^{\ast}$. Hence $X\cap Z$ must be trivial or
equivalent to $X$. In either case, it follows that $Z\ $and $X$ do not cross,
again contradicting our supposition.

The above contradiction shows that $Z$ must cross $X$ strongly, as required.
\end{proof}

At this stage we have proved the basic results we will need about the crossing
of almost invariant subsets of $G$ which are over $VPC$ groups. Next we need
some more detail in the case where such sets cross strongly.

\begin{lemma}
\label{HhasinfiniteindexnormaliserimpliesGisVPCn+2}Let $(G,\mathcal{S})$ be a
group system of finite type with $e(G:\mathcal{S})=1$. Let $n\geq0$, and let
$H\ $and $K$ be $VPC(n+1)$ subgroups of $G$, and let $X\ $and $Z$ be
nontrivial $\mathcal{S}$--adapted almost invariant subsets of $G$ over $H$ and
$K$ respectively. Suppose that, for any subgroup $M$ of $H$ of infinite index,
$e(G,M:\mathcal{S})=1$. Suppose also that $X$ crosses $Z$ strongly. If $H$ has
infinite index in $Comm_{G}(H)$, then $G$ must be $VPC(n+2)$.
\end{lemma}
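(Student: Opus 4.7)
The plan is to upgrade strong crossing into a two-endedness statement for $(G,H)$, use the large commensurizer to produce a $VPC(n+2)$ subgroup, and finally show that this subgroup must in fact be all of $G$.

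First, apply Lemma \ref{XcrossesYstronglyimpliese=2} to obtain $e(G,[H]:\mathcal{S})=2$. Choose $H_{0}\in[H]$ with $e(G,H_{0}:\mathcal{S})=2$, and replace $H$ by the finite index subgroup $H\cap H_{0}$; this preserves the $VPC(n+1)$ property, the strong crossing of $X$ with $Z$ (since $X$ is $H\cap H_{0}$--almost invariant), and the hypothesis that $H$ has infinite index in $Comm_{G}(H)$. Because $e(G,[H]:\mathcal{S})=2$ is the supremum over $[H]$, for every $K'\in[H]$ the quotient $Q(K'\backslash G)(\mathcal{S})/F(K'\backslash G)$ is two-dimensional, so every nontrivial $\mathcal{S}$--adapted almost invariant subset of $G$ over a group in $[H]$ is equivalent to $X$ or to $X^{\ast}$.

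Next, observe that $\mathcal{S}$--adaptedness is preserved under left translation: if $X$ is equivalent to $X'$ strictly adapted to $S$, then $gX$ is equivalent to $gX'$, which is strictly adapted to $S$ because $(g'S)\subset gX'$ or $(g'S)\subset gX'^{\ast}$ for all $g'\in G$ (apply the strict-adaptedness of $X'$ to $g^{-1}g'$). Hence for every $g\in Comm_{G}(H)$ the translate $gX$ is a nontrivial $\mathcal{S}$--adapted almost invariant subset over $gHg^{-1}\in[H]$, and so $gX\sim X$ or $gX\sim X^{\ast}$. Set
\[
\mathcal{K}_{0}=\{g\in G:gX\sim X\},\qquad \mathcal{K}=\{g\in G:gX\sim X\text{ or }gX\sim X^{\ast}\}.
\]
Then $H\subset\mathcal{K}_{0}\subset\mathcal{K}\supseteq Comm_{G}(H)$ with $[\mathcal{K}:\mathcal{K}_{0}]\le 2$. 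By hypothesis $\mathcal{K}_{0}/H$ is infinite, so I can pick $g\in\mathcal{K}_{0}$ whose image there has infinite order. Since $g$ commensurates $H$, the subgroup $G_{0}=\langle H,g\rangle$ contains $H\cap gHg^{-1}$ as a normal subgroup of finite index in $G_{0}$ with infinite cyclic quotient; by Lemma \ref{Houghtonlemma} applied inductively, $G_{0}$ is $VPC(n+2)$, and the same argument shows $\mathcal{K}_{0}$ itself is $VPC(n+2)$.

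The final and most delicate step is to prove $G=\mathcal{K}$, which, combined with the preceding paragraph, forces $G$ itself to be $VPC(n+2)$. The plan here is to invoke Lemma \ref{refinedgoodposition} to replace $X$ by an equivalent set $W$ in good position which is $\mathcal{K}_{0}$--almost invariant, and then to exploit the strong crossing hypothesis together with $e(G:\mathcal{S})=1$ as follows. For any $h\in G$, the translate $hW$ is $\mathcal{S}$--adapted almost invariant over $hHh^{-1}$, and also crosses $hZ$ strongly; if $hHh^{-1}\notin[H]$, then $hW$ would be a further $\mathcal{S}$--adapted almost invariant subset of $G$ over a $VPC(n+1)$ subgroup not commensurable with $H$, and feeding $hW$ together with $W$ and $Z$ into the cubing construction of Section \ref{cubings}, while applying Lemma \ref{aisethasoneS-adaptedK-end} and Lemma \ref{cubingformanysetshasafixedpoint}, produces enough crossing structure to contradict $e(G:\mathcal{S})=1$. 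Hence $hHh^{-1}\in[H]$ for every $h\in G$, i.e.\ $G=Comm_{G}(H)\subset\mathcal{K}$. I expect this last step to be the principal obstacle, since it requires a careful interplay between the cubing geometry on $E(\{W,Z\})$, the one-endedness hypothesis, and the structural information on $L=H\cap K$ coming from Lemma \ref{HintersectconjugateofSisK-finite}.
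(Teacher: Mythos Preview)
Your first two paragraphs are essentially correct and mirror the paper: from strong crossing you get $e(G,[H]:\mathcal{S})=2$, and then for $g\in Comm_G(H)$ the translate $gX$ is $\mathcal{S}$--adapted over a group in $[H]$, forcing $gX\sim X$ or $gX\sim X^{\ast}$, so $Comm_G(H)\subset\mathcal{K}$.

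The gap is in the remainder. In your third paragraph, the claim that $G_0=\langle H,g\rangle$ contains $H\cap gHg^{-1}$ as a normal subgroup of finite index with infinite cyclic quotient is unjustified and generally false: commensuration of $H$ by $g$ gives finite index of $H\cap gHg^{-1}$ in $H$ and in $gHg^{-1}$, but says nothing about its index or normality in $\langle H,g\rangle$, which could a priori be a nontrivial amalgam. Even granting this for one $g$, your leap to ``the same argument shows $\mathcal{K}_0$ itself is $VPC(n+2)$'' is a non sequitur. The paper bypasses this by invoking Lemma~2.13 of \cite{SS1}, which (when $H$ has infinite index in $\mathcal{K}$) produces a finite-index subgroup $H''\le H$ that is actually \emph{normalized} by $\mathcal{K}$, with $H''\backslash\mathcal{K}$ two-ended; this is precisely the structural input you are missing.

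Your fourth paragraph compounds the problem. You propose to show $G=\mathcal{K}$ via a cubing argument whose logic is never made precise, and you even flag it as ``the principal obstacle.'' There is no mechanism by which the existence of a translate $hW$ over a non-commensurable $VPC(n+1)$ group would contradict $e(G:\mathcal{S})=1$; nothing in Lemmas \ref{aisethasoneS-adaptedK-end} or \ref{cubingformanysetshasafixedpoint} yields such a conclusion. Moreover, you appeal to Lemma~\ref{refinedgoodposition}, whose hypothesis is that $H$ has \emph{small} commensurizer---exactly the opposite of what you are assuming. The paper's route is again Lemma~2.13 of \cite{SS1}: beyond normalizing $H''$, that lemma also gives directly that $\mathcal{K}$ has finite index in $G$, so once $\mathcal{K}$ is $VPC(n+2)$, $G$ is too. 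Finally, you do not treat the case where $G$ is not finitely generated (so Lemma~2.13 of \cite{SS1} is unavailable); the paper handles this by passing to a standard $\mathcal{S}^{\infty}$--extension $\overline{G}$, running the finitely generated argument there, and deriving a contradiction.
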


\begin{proof}
The argument here is essentially the same as in the proof of Proposition
B.3.14 of \cite{SS2}.

We start by noting that as $X$ crosses $Z$ strongly, Lemma
\ref{XcrossesYstronglyimpliese=2} tells us that $e(G,[H]:\mathcal{S})=2$.

Let $g$ be an element of $Comm_{G}(H)$ and let $Y=gX$, so that $Y$ has
stabiliser $H^{g}$. Let $H^{\prime}$ denote the intersection $H\cap H^{g}$
which has finite index in both $H$ and in $H^{g}$ because $g$ lies in
$Comm_{G}(H)$. Thus $H^{\prime}\backslash X$ and $H^{\prime}\backslash Y$ are
both $\mathcal{S}$--adapted almost invariant subsets of $H^{\prime}\backslash
G$. As $H^{\prime}$ has finite index in $H$, we must have $e(G,H^{\prime
}:\mathcal{S})=2$, so that $H^{\prime}\backslash X$ and $H^{\prime}\backslash
Y$ are almost equal or almost complementary. It follows that $X$ is
$H$--almost equal to $Y$ or $Y^{\ast}$, i.e. $gX\sim X$ or $gX\sim X^{\ast}$.
As in Lemma 2.10 of \cite{SS1}, we let $\mathcal{K}$ denote $\{g\in G:gX\sim
X$ or $gX\sim X^{\ast}\}$. As $H$ has infinite index in $Comm_{G}(H)$, we have
shown that $H$ has infinite index in $\mathcal{K}$.

Now suppose that $G$ is finitely generated. We will show shortly that this
must be the case. Lemma 2.13 of \cite{SS1} tells us $\mathcal{K}$ must
normalize some subgroup $H^{\prime\prime}$ of finite index in $H$. Further, as
$H$ has infinite index in $\mathcal{K}$, the proof of this lemma tells us that
$H^{\prime\prime}\backslash\mathcal{K}$ has two ends and that $\mathcal{K}$
has finite index in $G$. As $H$ is $VPC(n+1)$, so is $H^{\prime\prime}$, and
it follows that $\mathcal{K}$ must be $VPC(n+2)$. Hence $G$ also is
$VPC(n+2)$, as required.

If $G$ is not finitely generated, we construct a standard $\mathcal{S}%
^{\infty}$--extension $\overline{G}$, as discussed in section
\ref{extensionsandcrossing}. Thus $\overline{G}$ is finitely generated, and
$X$ and $Z$ have standard extensions $\overline{X}$ and $\overline{Z}$ to
almost invariant subsets of $\overline{G}$. Recall that if $g$ lies in
$Comm_{G}(H)$, then $gX\sim X$ or $gX\sim X^{\ast}$. As $\overline{G}$ is a
standard $\mathcal{S}^{\infty}$--extension of $G$, Lemma
\ref{extensionsareequivalent} shows that $g\overline{X}\sim\overline{X}$ or
$g\overline{X}\sim\overline{X}^{\ast}$. We now let $\overline{K}$ denote
$\{g\in\overline{G}:g\overline{X}\sim\overline{X}$ or $g\overline{X}%
\sim\overline{X}^{\ast}\}$. As $H$ has infinite index in $Comm_{G}(H)$, we
have shown that $H$ has infinite index in $\overline{K}$. Now the arguments in

the preceding paragraph show that $\overline{G}$ must be $VPC(n+2)$. But this
implies that every subgroup of $\overline{G}$ must be finitely generated,
contradicting our assumption that $G$ is not finitely generated. This
contradiction completes the proof of the lemma.
\end{proof}

Recall that in Theorem \ref{crossinginCCCsisallweakorallstrong} we showed that
the crossings in a non-isolated cross-connected component (CCC) of the given
family of almost invariant subsets of $G$ must be all weak or all strong. The
next result will play a key role in understanding those CCCs in which all
crossings are strong.

\begin{lemma}
\label{intersectionsubgroupsarecommensurable}Let $(G,\mathcal{S})$ be a group
system of finite type such that $e(G:\mathcal{S})=1$. Suppose that $n\geq0$,
and that for any $VPC(\leq n)$ subgroup $M$ of $G$, we have $e(G,M:\mathcal{S}%
)=1$. Let $P$, $Q$ and $R$ be nontrivial $\mathcal{S}$--adapted almost
invariant subsets of $G$ over $VPC(n+1)$ subgroups $H$, $K$ and $L$
respectively, and suppose that no one of $P$, $Q$ and $R$ is equivalent to one
of the others or to its complement. Suppose that $P$ crosses $Q$ and $R$
strongly. Then either $H\cap K$, $H\cap L$ and $K\cap L$ are all commensurable
$VPCn$ subgroups of $G$, or $G$ is $VPC(n+2)$. In the last case, each group in
$\mathcal{S}$ must be commensurable with a subgroup of $H\cap K\cap L$, and be
$VPC(\leq n)$.
\end{lemma}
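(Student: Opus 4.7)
The plan is to extract structural information about the pairwise intersections from the hypothesis of strong crossing, then split into two cases according to whether $H\cap K$ and $H\cap L$ are commensurable in $H$, and treat the non-commensurable case by forcing one of $H$, $K$, $L$ to have infinite index in its commensuriser so that Lemma \ref{HhasinfiniteindexnormaliserimpliesGisVPCn+2} applies.

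First I would apply Lemma \ref{HintersectconjugateofSisK-finite} to both strong crossings $P$--$Q$ and $P$--$R$, and combine with Lemma \ref{strongcrossingissymmetric} (which promotes the strong crossings to be symmetric), to conclude that $H\cap K$ and $H\cap L$ are $VPCn$ subgroups of $G$ with $e(H,H\cap K)=e(H,H\cap L)=e(K,H\cap K)=e(L,H\cap L)=2$. Lemma \ref{XcrossesYstronglyimpliese=2} applied to these two strong crossings then yields $e(G,[H]:\mathcal{S})=e(G,[K]:\mathcal{S})=e(G,[L]:\mathcal{S})=2$. This last equality is the main constraint I will exploit: over any subgroup commensurable with $H$ (resp.\ $K$, $L$), there is at most one equivalence class of nontrivial $\mathcal{S}$--adapted almost invariant subsets up to complementation.

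Case~1, the commensurable case: assume $H\cap K$ and $H\cap L$ are commensurable. Then $M := H\cap K\cap L$ is $VPCn$ and has finite index in both, and $M\subseteq K\cap L\subseteq K$. Since $K$ is $VPC(n+1)$, the group $K\cap L$ is $VPCn$ or $VPC(n+1)$. If $K\cap L$ were $VPC(n+1)$, then $K$ and $L$ would be commensurable, so taking $K' = K\cap L\in[K]$, both $Q$ and $R$ would be nontrivial $\mathcal{S}$--adapted $K'$--almost invariant subsets of $G$; the hypothesis that $Q$ and $R$ are inequivalent and non-complementary would then give $e(G,K':\mathcal{S})\geq 3$, contradicting $e(G,[K]:\mathcal{S})=2$. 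Hence $K\cap L$ is $VPCn$, and as it contains the $VPCn$ group $M$ it is commensurable with $M$, yielding the first alternative.

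Case~2, the non-commensurable case: I claim $G$ must be $VPC(n+2)$. By Lemma \ref{HhasinfiniteindexnormaliserimpliesGisVPCn+2}, it suffices to show that one of $H$, $K$, $L$ has infinite index in its commensuriser. Following the template of Proposition~B.3.14 of \cite{SS2}, I would pass to a finite-index subgroup of $K$ in which $H\cap K$ is normal with infinite cyclic quotient, pick $k\in K$ projecting to a generator, and consider the conjugates $k^{n}(H\cap L)k^{-n}$ inside $\mathrm{Comm}_{G}(H\cap K)$. Using Lemma \ref{Houghtonlemma} and the fact that $H\cap K$ and $H\cap L$ are non-commensurable $VPCn$ subgroups of $H$, these conjugates give infinitely many pairwise distinct commensurability classes of $VPC(n+1)$ subgroups of $G$ containing $H\cap K$ up to commensurability; but if each of $H$, $K$, $L$ had small commensuriser, the supply of such $VPC(n+1)$ subgroups would be finite modulo $\mathrm{Comm}_G(H\cap K)$, a contradiction. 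Hence some commensuriser is large, and $G$ is $VPC(n+2)$. For the statement about $\mathcal{S}$: every $S\in\mathcal{S}$ is a subgroup of the $VPC(n+2)$ group $G$, so is $VPC(\leq n+2)$; the hypothesis $e(G:\mathcal{S})=1$ forbids $S$ from having finite index in $G$ (which would force $e(G:\mathcal{S})\geq 2$), and the hypothesis $e(G,M:\mathcal{S})=1$ for all $VPC(\leq n)$ subgroups $M$, together with the adaptedness of $P$, $Q$, $R$, pins $S$ down to be $VPC(\leq n)$ commensurable with a subgroup of $H\cap K\cap L$ (by checking that if $S$ were not commensurable with a subgroup of $H\cap K\cap L$, then one of $P$, $Q$, $R$ would fail to be $\mathcal{S}$--adapted via Lemma \ref{S-adaptedimpliesgSintersectXisH-finite}).

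The main obstacle is the combinatorial bookkeeping in Case~2 showing that non-commensurability of $H\cap K$ and $H\cap L$ together with small commensurisers for all three of $H$, $K$, $L$ is genuinely incompatible; the rest of the argument consists of packaging the tools of Section~\ref{crossingofa.i.setsoverVPCgroups} and Lemma \ref{Houghtonlemma} in the adapted setting.
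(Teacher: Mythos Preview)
Your Case~1 is fine and matches the paper. Case~2 is where the real work lies, and your sketch does not get there. Conjugating $H\cap L$ by powers of an element $k\in K$ gives $VPCn$ subgroups, not $VPC(n+1)$ subgroups; there is no construction in your outline producing ``infinitely many pairwise distinct commensurability classes of $VPC(n+1)$ subgroups containing $H\cap K$'', and no argument that small commensurisers for $H$, $K$, $L$ would bound such a family. The paper's route is quite different and more concrete. First, it uses the non-commensurability of $H\cap K$ and $H\cap L$ inside $H$ to show that $H\cap K$ has infinite image in the infinite cyclic quotient $H/(H\cap L)$; this forces $K\cap R$ and $K\cap R^{\ast}$ to be $L$--infinite, so $Q$ crosses $R$ strongly as well, giving that $K\cap L$ is also $VPCn$. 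Now all three pairwise intersections are $VPCn$ and no two are commensurable, so $H\cap K\cap L$ is $VPC(n-1)$. The key step is then to pass to finite-index subgroups $K'\subset K$ and $L'\subset L$ in which $H\cap K$ and $H\cap L$ are normal with infinite cyclic quotient, and observe that the natural maps from the $VPCn$ group $K'\cap L'$ to these cyclic quotients both have kernel $H\cap K'\cap L'$, which is $VPC(n-1)$; hence both maps have infinite image, and one can choose $x\in K'\cap L'$ of infinite order in both quotients. Such an $x$ normalises both $H\cap K'$ and $H\cap L'$, hence normalises the finite-index subgroup of $H$ they generate, and Lemma~\ref{HhasinfiniteindexnormaliserimpliesGisVPCn+2} applies directly.

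Your treatment of the final statement about $\mathcal{S}$ is also off. The claim that $e(G:\mathcal{S})=1$ rules out $S$ having finite index is backwards (finite-index $S$ forces $e(G:\mathcal{S})=1$, not $\geq 2$), and the hypothesis $e(G,M:\mathcal{S})=1$ is not what pins down $S$. The paper instead uses that, since $G$ is $VPC(n+2)$ with $e(G,H)>1$, Lemma~\ref{Houghtonlemma} gives a finite-index $J\leq G$ with $H$ normal and $J/H$ infinite cyclic; adaptedness of $P$ to $S$ then forces $S\cap J$ to have trivial image in $J/H$, so $S$ is commensurable with a subgroup of $H$. Repeating with $K$ and $L$ gives $S$ commensurable with a subgroup of $H\cap K\cap L$, hence $VPC(\leq n)$.
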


\begin{proof}
As $P$ crosses $Q$ and $R$ strongly, Theorem
\ref{crossinginCCCsisallweakorallstrong} tells us that $e(G,[H]:\mathcal{S})$,
$e(G,[K]:\mathcal{S})$ and $e(G,[L]:\mathcal{S})$ are all equal to $2$. Now
suppose that $K\cap L$ has finite index in $K$ and in $L$. Then $e(G,K\cap
L:\mathcal{S})=2$. As $Q$ and $R$ are nontrivial $\mathcal{S}$--adapted almost
invariant subsets of $G$ over $K\cap L$, it follows that $Q$ is equivalent to
$R$ or to $R^{\ast}$, contradicting our hypotheses. We conclude that $K\cap L$
cannot have finite index in $K$ and in $L$, so that $K\cap L$ must be
$VPC(\leq n)$.

As $P$ crosses $Q$ and $R$ strongly, Lemma
\ref{HintersectconjugateofSisK-finite} tells us that $H\cap K$ and $H\cap L$
are each $VPCn$. If $H\cap K$ and $H\cap L$ are commensurable, then $H\cap
K\cap L$, which equals their intersection, must also be $VPCn$, and hence must
have finite index in $K\cap L$. Hence the three groups $H\cap K$, $H\cap L$
and $K\cap L$ are all commensurable, completing the proof of the lemma in this
case. Thus in what follows we will assume that $H\cap K$ and $H\cap L$ are not commensurable.

As $P$ crosses $R$ strongly, we cannot have $e(H,H\cap L)=1$. Now Lemma
\ref{Houghtonlemma} implies that $e(H,H\cap L)=2$ and that $H$ has a finite
index subgroup $H_{1}$ which contains $H\cap L$ as a normal subgroup, and
$H_{1}/(H\cap L)$ is infinite cyclic. Similarly, as $R$ must also cross $P$
strongly, $L$ has a finite index subgroup $L_{1}$ which contains $H\cap L$ as
a normal subgroup, and $L_{1}/(H\cap L)$ is infinite cyclic. We now replace
$H$ and $L$ by these subgroups $H_{1}$ and $L_{1}$ of finite index without
changing notation. (Note that $H_{1}\cap L_{1}$ is equal to $H\cap L$.) Thus
we have arranged that $H\cap L$ is normal in both $H$ and in $L$, and each
quotient is infinite cyclic. Now consider the image of $H\cap K$ in the
infinite cyclic group $H/(H\cap L)$. As we are assuming that $H\cap K$ and
$H\cap L$ are not commensurable, this image must be infinite, and hence is of
finite index. It follows that the subgroup of $H$ generated by $H\cap K$ and
$H\cap L$ has finite index in $H$. Recall that $P$ crosses $R$ strongly, so
that $H\cap R$ and $H\cap R^{\ast}$ are both $L$--infinite. Equivalently the
image of $H/(H\cap L)$ in $L\backslash G$ has infinite intersection with each
of $L\backslash R$ and $L\backslash R^{\ast}$. As the image of $H\cap K$ in
the infinite cyclic group $H/(H\cap L)$ is infinite, it follows that $H\cap
K\cap R$ and $H\cap K\cap R^{\ast}$ are both $L$--infinite. Hence $K\cap R$
and $K\cap R^{\ast}$ are both $L$--infinite, so that $Q$ also crosses $R$
strongly. Thus $K\cap L$ must be $VPCn$.

At this stage, after replacing $H$ and $L$ by suitable subgroups of finite
index so that $H\cap L$ is normal in both $H$ and in $L$, we have found that
$H\cap K$, $H\cap L$ and $K\cap L$ are each $VPCn$, and no two are
commensurable in $G$. Thus $H\cap K\cap L$ must be $VPC(n-1)$. As above, we

can find finite index subgroups $K^{\prime}$ of $K$ and $L^{\prime}$ of $L$
which contain $H\cap K$ and $H\cap L$ respectively as normal subgroups with
infinite cyclic quotient. We consider the natural maps from the $VPCn$ group
$K^{\prime}\cap L^{\prime}$ to the infinite cyclic groups $K^{\prime}/(H\cap
K^{\prime})$ and $L^{\prime}/(H\cap L^{\prime})$. Each of these maps has
kernel the $VPC(n-1)$ group $H\cap K^{\prime}\cap L^{\prime}$, and so each has
infinite image. Hence there is $x\in K^{\prime}\cap L^{\prime}$ which maps to
an element of infinite order in each of $K^{\prime}/(H\cap K^{\prime})$ and
$L^{\prime}/(H\cap L^{\prime})$. In particular $x$ normalizes $H\cap
K^{\prime}$ and $H\cap L^{\prime}$, and so normalizes a subgroup $H^{\prime}$
of $H$ of finite index. It follows that $H^{\prime}$ has infinite index in its
normalizer $N_{G}(H^{\prime})$ in $G$. Now Lemma
\ref{HhasinfiniteindexnormaliserimpliesGisVPCn+2} implies that $G$ must be
$VPC(n+2)$, as required.

Finally if $S$ is a group in $\mathcal{S}$, we need to show that $S$ must be
commensurable with a subgroup of $H\cap K\cap L$. As $G$ is $VPC(n+2)$, and
$e(G,H)>1$, Lemma \ref{Houghtonlemma} tells us that $G$ has a finite index
subgroup $J$ which contains $H$ as a normal subgroup, such that $J/H$ is
infinite cyclic. As $P$ is adapted to $S$, one of $S\cap P$ and $S\cap
P^{\ast}$ must be $H$--finite, so the projection of $S\cap J$ into $J/H$ must
be trivial. It follows that $S$ is commensurable with a subgroup of $H$.
Similarly $S$ must be commensurable with a subgroup of $K$ and of $L$, so that
$S$ must be commensurable with a subgroup of $H\cap K\cap L$, as required. As
this intersection is $VPC(\leq n)$, the same holds for $S$.
\end{proof}

The following consequence will be useful in the next section.

\begin{lemma}
\label{allintersectionsubgroupsarecommensurable}Let $(G,\mathcal{S})$ be a
group system of finite type with $e(G:\mathcal{S})=1$. Suppose that $n\geq0$,
and that for any $VPC(\leq n)$ subgroup $M$ of $G$, we have $e(G,M:\mathcal{S}%
)=1$. Further suppose that $G\ $is not $VPC(n+2)$. For $i=1,\ldots,k$, let
$P_{i}$ be a nontrivial $\mathcal{S}$--adapted almost invariant subset of $G$
over a $VPC(n+1)$ subgroup $H_{i}$, such that $P_{i}$ crosses $P_{i+1}$
strongly, for $1\leq i\leq k-1$. Then either $H_{1}\cap H_{k}$ is
commensurable with $H_{1}\cap H_{2}$, or $P_{1}$ is equivalent to $P_{k}$ or
to $P_{k}^{\ast}$.
\end{lemma}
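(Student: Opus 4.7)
The plan is to prove this by induction on $k$. The base case $k=2$ is trivial, and for $k=3$ I apply Lemma \ref{intersectionsubgroupsarecommensurable} to the triple $(P_2,P_1,P_3)$. By Lemma \ref{strongcrossingissymmetric}, $P_2$ crosses both $P_1$ and $P_3$ strongly, and since equivalent almost invariant sets cannot cross, $P_2 \not\sim P_1^{(\ast)}$ and $P_2 \not\sim P_3^{(\ast)}$. If $P_1 \sim P_3$ or $P_1 \sim P_3^\ast$ we are in the second conclusion. Otherwise all the non-equivalence hypotheses of Lemma \ref{intersectionsubgroupsarecommensurable} hold, and since $G$ is not $VPC(n+2)$ we obtain $H_1\cap H_2 \sim H_1\cap H_3 \sim H_2\cap H_3$, all $VPCn$, which gives the first conclusion.

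For the inductive step $k \geq 4$, I apply Lemma \ref{intersectionsubgroupsarecommensurable} to the triple $(P_{k-1},P_{k-2},P_k)$. As above, strong crossing rules out $P_{k-1} \sim P_{k-2}^{(\ast)}$ and $P_{k-1} \sim P_k^{(\ast)}$. In the shortening case $P_{k-2} \sim P_k^{(\ast)}$, Lemma \ref{equivalenta.i.setshavecommensurablestabiliser} gives $H_{k-2}$ commensurable with $H_k$. The inductive hypothesis applied to $P_1,\ldots,P_{k-2}$ (length $k-2 \geq 2$) yields either $P_1 \sim P_{k-2}^{(\ast)}$, which together with $P_{k-2} \sim P_k^{(\ast)}$ gives $P_1 \sim P_k^{(\ast)}$, or $H_1 \cap H_{k-2} \sim H_1 \cap H_2$, which together with $H_{k-2} \sim H_k$ gives $H_1 \cap H_k \sim H_1 \cap H_2$. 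In the remaining case $P_{k-2} \not\sim P_k^{(\ast)}$, the full hypothesis of Lemma \ref{intersectionsubgroupsarecommensurable} is met, and since $G$ is not $VPC(n+2)$, the three intersections $H_{k-2}\cap H_{k-1}$, $H_{k-1}\cap H_k$ and $H_{k-2}\cap H_k$ are all commensurable $VPCn$ subgroups.

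To finish this last case, I apply the inductive hypothesis to the sub-chains $P_1,\ldots,P_{k-1}$ and $P_2,\ldots,P_k$ (and, in certain subcases, to $P_1,\ldots,P_{k-2}$ as well). Each application returns either an endpoint equivalence $P_i \sim P_j^{(\ast)}$ or a commensurability of the form $H_1\cap H_{k-1} \sim H_1\cap H_2$ or $H_2\cap H_k \sim H_2\cap H_3$. Most possible endpoint equivalences for non-adjacent indices, when combined with one another or with the commensurabilities already obtained, force an equivalence between some consecutive pair $P_m \sim P_{m+1}^{(\ast)}$, which is ruled out by strong crossing. In the surviving subcases, the various commensurabilities chain through intermediate intersections (for example, using $H_{k-2}\cap H_{k-1} \sim H_{k-1}\cap H_k$ from the current case together with a commensurability like $H_{k-2}\cap H_{k-1} \sim H_1\cap H_2$ inherited via an endpoint-equivalence from induction) to deduce $H_{k-1}\cap H_k \sim H_1\cap H_2$, whence $H_1\cap H_k \sim H_{k-1}\cap H_k \sim H_1\cap H_2$; or else one concludes $P_1 \sim P_k^{(\ast)}$ directly.

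The main obstacle is the bookkeeping in this last stage: no single inductive application to one sub-chain suffices, so one must juggle the outputs of several simultaneous inductive applications together with the $VPCn$ commensurabilities produced in the last case. The organising principle is that every endpoint equivalence returned by induction either collapses, after combination with the data at hand, to a forbidden equivalence between consecutive $P_m$ and $P_{m+1}$, or else enlarges the web of commensurable $VPCn$ intersections until it provably contains $H_1\cap H_k$.
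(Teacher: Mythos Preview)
Your inductive approach is workable in principle, but you have not completed it, and the paper's argument avoids the induction entirely with a much shorter direct proof. The paper first observes (by applying Lemma~\ref{intersectionsubgroupsarecommensurable} to each consecutive triple $P_i,P_{i+1},P_{i+2}$, handling the easy case $P_i\sim P_{i+2}^{(\ast)}$ separately) that all the consecutive intersections $H_1\cap H_2,\,H_2\cap H_3,\,\ldots,\,H_{k-1}\cap H_k$ are commensurable $VPCn$ subgroups. Their common intersection $L=H_1\cap H_2\cap\cdots\cap H_k$ is therefore also $VPCn$ and commensurable with $H_1\cap H_2$. Since $L\subset H_1\cap H_k\subset H_1$ and $H_1$ is $VPC(n+1)$, the group $H_1\cap H_k$ is either $VPCn$ (then it contains the $VPCn$ group $L$ with finite index, so is commensurable with $H_1\cap H_2$) or $VPC(n+1)$ (then $H_1$ and $H_k$ are commensurable, and $e(G,[H_1]:\mathcal S)=2$ forces $P_1\sim P_k^{(\ast)}$). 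That is the whole proof.

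Your final stage is where the gap lies. The assertion ``$H_1\cap H_k\sim H_{k-1}\cap H_k$'' does not follow from the data you have assembled: knowing $H_{k-1}\cap H_k\sim H_1\cap H_2$ says nothing about $H_1\cap H_k$ unless you already know $H_1\cap H_k$ contains a $VPCn$ group commensurable with $H_1\cap H_2$, which is exactly the containment argument above. More generally, your induction hypothesis returns only the single commensurability $H_1\cap H_{k-1}\sim H_1\cap H_2$ (or an endpoint equivalence), not the full chain of commensurabilities among consecutive intersections, so the ``bookkeeping'' you allude to would in effect require re-deriving the paper's first step anyway. Your own final paragraph acknowledges that this stage is not actually carried out. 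The fix is to drop the induction and argue directly as above.
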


\begin{proof}
As $G\ $is not $VPC(n+2)$, Lemma \ref{intersectionsubgroupsarecommensurable}
tells us that the groups $H_{1}\cap H_{2}$, $H_{2}\cap H_{3},\ldots
,H_{k-1}\cap H_{k}$ are all commensurable $VPCn$ subgroups of $G$. Hence the
intersection $L$ of all these groups is also commensurable with $H_{1}\cap
H_{2}$, and so is $VPCn$. As $H_{1}\cap H_{k}$ contains $L$, it must be $VPCn$
or $VPC(n+1)$. In the first case, $L$ must be of finite index in $H_{1}\cap
H_{k}$, so that $H_{1}\cap H_{k}$ is commensurable with $H_{1}\cap H_{2}$ as
required. In the second case, $H_{1}$ and $H_{k}$ must be commensurable
subgroups of $G$. As $e(G,[H_{1}]:\mathcal{S})=2$, by Lemma
\ref{XcrossesYstronglyimpliese=2}, it follows that $P_{1}$ is equivalent to
$P_{k}$ or to $P_{k}^{\ast}$, as required.
\end{proof}

\section{CCCs with strong crossing\label{CCC'swithstrongcrossing}}

As in the previous section, we consider a group system $(G,\mathcal{S})$ of
finite type. As in Theorem \ref{crossinginCCCsisallweakorallstrong}, we
further assume that $e(G:\mathcal{S})=1$. Let $n$ be a non-negative integer,
and suppose that, for any $VPC(\leq n)$ subgroup $M$ of $G$, we have
$e(G,M:\mathcal{S})=1$. Let $\mathcal{X}=\{X_{\lambda}\}_{\lambda\in\Lambda}$
be a finite family of nontrivial almost invariant subsets of $G$, such that
each $X_{\lambda}$ is over a $VPC(n+1)$ subgroup and is adapted to
$\mathcal{S}$. Let $E(\mathcal{X})$ denote the set of all translates of the
$X_{\lambda}$'s and their complements, and consider the collection of
cross-connected components (CCCs) of $E(\mathcal{X})$ as in the construction
of algebraic regular neighbourhoods in section \ref{algregnbhds}. Theorem
\ref{crossinginCCCsisallweakorallstrong} tells us that crossings in a
non-isolated CCC of $E(\mathcal{X})$ are either all strong or all weak.

In this section, we will consider a non-isolated CCC $\Phi$ of $E(\mathcal{X}%
)$ in which all crossings are strong. Theorem
\ref{crossinginCCCsisallweakorallstrong} tells us that if $H$ is the
stabiliser of an element of $E(\mathcal{X})$ which lies in $\Phi$, then
$e(G,[H]:\mathcal{S})=2$. Our aim is to describe the structure of the
corresponding $V_{0}$--vertex of the algebraic regular neighbourhood of the
$X_{\lambda}$'s in $G$. We will show that this vertex is of $VPCn$%
--by--Fuchsian type relative to $\mathcal{S}$, in a sense to be defined shortly.

The following technical result is an immediate consequence of Lemma
\ref{cangetvgpinsystems}, and will be helpful for the arguments of this section.

\begin{lemma}
\label{equivalentimpliesequal}Let $(G,\mathcal{S})$ be a group system of
finite type such that $e(G:\mathcal{S})=1$. Suppose that $n\geq0$, and that
for any $VPC(\leq n)$ subgroup $M$ of $G$, we have $e(G,M:\mathcal{S})=1$.
Further suppose that $G$ is not $VPC(n+2)$. Let $\mathcal{X}=\{X_{\lambda
}\}_{\lambda\in\Lambda}$ be a finite family of nontrivial $\mathcal{S}%
$--adapted almost invariant subsets of $G$, such that each $X_{\lambda}$ is
over a $VPC(n+1)$ subgroup. Let $E(\mathcal{X})$ denote the set of all
translates of the $X_{\lambda}$'s and their complements, and let $\Phi$ be a
non-isolated CCC of $E(\mathcal{X})$ in which all crossings are strong.

Then there is $Y_{\lambda}$ equivalent to $X_{\lambda}$, for each $\lambda$,
such that the $Y_{\lambda}$'s are in very good position. Further if
$\mathcal{Y}$ denotes the family of the $Y_{\lambda}$'s, and $\Phi
(\mathcal{Y})$ denotes the CCC of $E(\mathcal{Y})$ which corresponds to $\Phi
$, then equivalent elements of $\Phi(\mathcal{Y})$ must be equal.
\end{lemma}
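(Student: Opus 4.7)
The plan is to derive the lemma directly from Lemma \ref{cangetvgpinsystems}, using the hypothesis that $G$ is not $VPC(n+2)$ to rule out the "large commensuriser" alternative allowed by that lemma.

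First, I would apply Lemma \ref{cangetvgpinsystems} to the family $\mathcal{X}$. Each $X_\lambda$ is a nontrivial $\mathcal{S}$--adapted almost invariant subset over a finitely generated (in fact $VPC(n{+}1)$) subgroup $H_\lambda$, so the lemma yields $Y_\lambda$ equivalent to $X_\lambda$ such that the family $\mathcal{Y}$ is in very good position, and moreover such that any two equivalent elements $U,V$ of $E(\mathcal{Y})$ are either equal or related by $U = gV$ or $U = gV^{\ast}$ for some $g\in G$, with the stabilizer of $U$ having large commensuriser in $G$ (i.e. infinite index in $\mathrm{Comm}_G(\mathrm{Stab}(U))$). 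The correspondence $X_\lambda\leftrightarrow Y_\lambda$ induces a natural bijection between the CCCs of $E(\mathcal{X})$ and those of $E(\mathcal{Y})$, which lets us define $\Phi(\mathcal{Y})$ as the CCC of $E(\mathcal{Y})$ corresponding to $\Phi$; since equivalent non-isolated almost invariant sets lie in the same CCC, and strong crossing is preserved under taking equivalent representatives, $\Phi(\mathcal{Y})$ is also non-isolated with all crossings strong.

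Now suppose, for contradiction, that there exist distinct equivalent elements $U,V$ of $\Phi(\mathcal{Y})$. By the second conclusion of Lemma \ref{cangetvgpinsystems}, the stabilizer $H$ of $U$ must have large commensuriser in $G$. Since $\Phi(\mathcal{Y})$ is non-isolated with all crossings strong, $U$ crosses some element $W$ of $\Phi(\mathcal{Y})$ strongly, and the stabilizer $K$ of $W$ is again $VPC(n{+}1)$. We are therefore in the hypotheses of Lemma \ref{HhasinfiniteindexnormaliserimpliesGisVPCn+2}: the group system satisfies $e(G:\mathcal{S})=1$ and $e(G,M:\mathcal{S})=1$ for any $VPC(\leq n)$ subgroup $M$, the sets $U$ and $W$ are nontrivial $\mathcal{S}$--adapted almost invariant subsets over $VPC(n{+}1)$ subgroups $H$ and $K$, $U$ crosses $W$ strongly, and $H$ has infinite index in $\mathrm{Comm}_G(H)$. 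That lemma then forces $G$ to be $VPC(n+2)$, directly contradicting our hypothesis.

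This contradiction shows that no two distinct elements of $\Phi(\mathcal{Y})$ can be equivalent, completing the proof. I expect no real obstacle: the entire argument is a bookkeeping assembly of already proved results, with the only subtle point being the verification that the correspondence $\mathcal{X}\leftrightarrow\mathcal{Y}$ identifies $\Phi$ with a well-defined non-isolated CCC $\Phi(\mathcal{Y})$ in which all crossings remain strong, which follows because equivalence of almost invariant sets preserves both crossing and strong crossing (the latter by the final lemma of section \ref{extensionsandcrossing} and its symmetric counterpart in section \ref{crossingofa.i.setsoverVPCgroups}).
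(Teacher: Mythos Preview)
Your proposal is correct and follows essentially the same approach as the paper: both apply Lemma~\ref{cangetvgpinsystems} to obtain the $Y_\lambda$'s in very good position, and then use Lemma~\ref{HhasinfiniteindexnormaliserimpliesGisVPCn+2} together with the hypothesis that $G$ is not $VPC(n+2)$ to rule out the large-commensuriser alternative. The only cosmetic difference is that you phrase the second step as a proof by contradiction, whereas the paper argues directly that every stabiliser in $\Phi$ has small commensuriser; the underlying logic and the lemmas invoked are identical.
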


\begin{proof}
Lemma \ref{cangetvgpinsystems} tells us that there is $Y_{\lambda}$ equivalent
to $X_{\lambda}$, for each $\lambda$, such that the following conditions hold:

\begin{enumerate}
\item The $Y_{\lambda}$'s are in very good position.

\item If $U$ and $V$ are equivalent elements of $E(Y)$, and do not lie in the
same $E(Y_{\lambda})$, then $U\ $and $V$ are equal.

\item For each $\lambda$ such that $H_{\lambda}$ has small commensuriser in
$G$, if $U$ and $V$ are equivalent elements of $E(Y_{\lambda})$, then $U\ $and
$V$ are equal.
\end{enumerate}

Let $U\ $be an element of the CCC $\Phi$, which is almost invariant over $H$.
As we are assuming that $G$ is not $VPC(n+2)$, Lemma
\ref{HhasinfiniteindexnormaliserimpliesGisVPCn+2} implies that $H$ has small
commensuriser in $G$. Now conditions 2) and 3) together imply that equivalent
elements of $\Phi(\mathcal{Y})$ must be equal, as required.
\end{proof}

If we change a given family of $X_{\lambda}$'s as in Lemma
\ref{equivalentimpliesequal}, this will not alter their reduced algebraic
regular neighbourhood. But it will still be convenient, in the rest of this
section, to make the following assumption.

\begin{condition}
\label{conditionofverygoodposition}Our given family of $X_{\lambda}$'s
satisfies the conclusions of Lemma \ref{equivalentimpliesequal}.
\end{condition}

We recall the definition of $VPCn$--by--Fuchsian type given in \cite{SS2}.

\begin{definition}
Let $\Gamma$ be a minimal graph of groups decomposition of a group $G$. A
vertex $v$ of $\Gamma$ is of \textsl{$VPC$--by--Fuchsian type} if $G(v)$ is a
$VPC$--by--Fuchsian group, where the Fuchsian group is not finite nor
two-ended, and there is exactly one edge of $\Gamma$ which is incident to $v$
for each peripheral subgroup $\Sigma$ of $G(v)$ and this edge carries $\Sigma
$. If the length of the normal $VPC$ subgroup of $G(v)$ is $n$, we will say
that $G(v)$ is of \textsl{$VPCn$--by--Fuchsian type.}
\end{definition}

\begin{remark}
In this definition, a peripheral subgroup of $G(v)$ is the pre-image of a
peripheral subgroup of the Fuchsian quotient, and saying that an edge of
$\Gamma$ carries $\Sigma$ means that the image of the edge group in $G(v)$ is
equal to $\Sigma$.
\end{remark}

This needs to be modified a little when we are considering group systems.

\begin{definition}
\label{defnofVPC-by-FuchsianrelativetoS}Let $(G,\mathcal{S})$ be a group
system, and let $\Gamma$ be a minimal $\mathcal{S}$--adapted graph of groups
decomposition of $G$. A vertex $v$ of $\Gamma$ is of $VPC$%
\textsl{--by--Fuchsian type relative to }$\mathcal{S}$ if the following
conditions hold:

\begin{enumerate}
\item The vertex group $G(v)$ is $VPC$--by--Fuchsian, where the Fuchsian group
is not finite nor two-ended.

\item Each edge of $\Gamma$ which is incident to $v$ carries some peripheral
subgroup of $G(v)$, and for each peripheral subgroup $\Sigma$, there is at
most one edge of $\Gamma$ which is incident to $v$ and carries $\Sigma$.

\item For each peripheral subgroup $\Sigma$ of $G(v)$, if there is no edge of
$\Gamma$ which is incident to $v$ and carries $\Sigma$, then there is a group
$S_{i}$ in $\mathcal{S}$, such that a conjugate of $S_{i}$ is contained in
$\Sigma$, but $S_{i}$ is not conjugate commensurable with a subgroup of the
$VPC$ normal subgroup of $G(v)$.

\item If there is $S_{i}$ in $\mathcal{S}$, such that a conjugate of $S_{i}$
is contained in $G(v)$, then either $S_{i}\ $is conjugate to a subgroup of
some peripheral subgroup of $G(v)$, or $S_{i}$ is conjugate commensurable with
a subgroup of the $VPC$ normal subgroup of $G(v)$.
\end{enumerate}

If the length of the normal $VPC$ subgroup of $G(v)$ is $n$, we will say that
$G(v)$ is of $VPCn$\textsl{--by--Fuchsian type relative to }$S$\textsl{.}
\end{definition}

\begin{remark}
\label{remarkondefnofVPC-by-FuchsianrelS}Here the term Fuchsian group means a
discrete group of isometries of the hyperbolic plane.

For a $VPCn$--by--Fuchsian group, each peripheral subgroup is $VPC(n+1)$.

In the special case when $\Gamma$ consists of the single vertex $v$, so that
$G=G(v)$, there are no edges of $\Gamma$ incident to $v$. Thus condition 2) is
vacuous. Condition 3) tells us that, for each peripheral subgroup $\Sigma$ of
$G$, there is $S_{i}$ in $\mathcal{S}$ which is conjugate to a subgroup of
$\Sigma$, but is not conjugate commensurable with a subgroup of the $VPC$
normal subgroup $K$ of $G$. As $K$ is normal in $G$, we can simply say that
$S_{i}$ is not commensurable with a subgroup of $K$. As $G=G(v)$, each $S_{i}$
in $\mathcal{S}$ is contained in $G(v)$. Thus condition 4) tells us that each
$S_{i}$ in $\mathcal{S}$ is conjugate to a subgroup of some peripheral
subgroup of $G$, or $S_{i}$ is commensurable with a subgroup of $K$. Finally
if $G\ $has no peripheral subgroups, so that the orbifold quotient $G/K$ is
closed, then each $S_{i}$ in $\mathcal{S}$ must be commensurable with a
subgroup of $K$.
\end{remark}

The main result of this section is the following, which is proved at the end
of this section. This generalises Theorem 7.8 of \cite{SS2}.

\begin{theorem}
\label{strongcrossingimpliesFuchsiantype}Let $(G,\mathcal{S})$ be a group
system of finite type such that $e(G:\mathcal{S})=1$. Suppose that $n\geq0$,
and that for any $VPC(\leq n)$ subgroup $M$ of $G$, we have $e(G,M:\mathcal{S}%
)=1$. Further suppose that $G$ is not $VPC(n+2)$. Let $\{H_{\lambda
}\}_{\lambda\in\Lambda}$ be a finite family of $VPC(n+1)$ subgroups of $G$.
For each $\lambda\in\Lambda$, let $X_{\lambda}$ denote a nontrivial
$\mathcal{S}$--adapted $H_{\lambda}$--almost invariant subset of $G$. Let
$E(\mathcal{X})$ denote the set of all translates of the $X_{\lambda}$'s and
their complements, and let $\Gamma$ denote the algebraic regular neighbourhood
of the $X_{\lambda}$'s in $G$. Let $\Phi$ be a non-isolated CCC of
$E(\mathcal{X})$ in which all crossings are strong, and let $v$ denote the
corresponding vertex of $\Gamma$. Then $v$ is of $VPCn$--by--Fuchsian type
relative to $\mathcal{S}$.
\end{theorem}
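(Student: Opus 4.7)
The plan is to adapt the Dunwoody--Swenson strategy (used in Theorem 7.8 of \cite{SS2} for the absolute case) to the setting of group systems, using the tools developed earlier in this section. Without loss of generality I may invoke Lemma \ref{equivalentimpliesequal} and assume Condition \ref{conditionofverygoodposition} holds, so the $X_\lambda$'s are in very good position and equivalent elements of $\Phi$ are equal. By Theorem \ref{keypropertiesofalgregnbhdofpair}, the vertex $v$ encloses exactly the elements of $\Phi$, so $G(v)$ can be identified with $G_\Phi$, the stabiliser of $\Phi$ in $G$. By Theorem \ref{crossinginCCCsisallweakorallstrong}(3), each stabiliser $H$ of an element of $\Phi$ is a $VPC(n+1)$ subgroup with $e(G,[H]:\mathcal{S})=2$.

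The next step is to extract a common virtual $VPCn$ kernel. Since $G$ is not $VPC(n+2)$, Lemma \ref{intersectionsubgroupsarecommensurable} shows that whenever $U,V\in\Phi$ strongly cross another element $W$ and are inequivalent, their stabilisers meet $W$'s stabiliser in commensurable $VPCn$ subgroups. Iterating along chains of strong crossings via Lemma \ref{allintersectionsubgroupsarecommensurable} forces all such pairwise intersections to lie in a single commensurability class $[K]$, where $K$ is $VPCn$. Passing to a finite-index subgroup of $G_\Phi$ if necessary, we may arrange that $K$ is normal in $G_\Phi$ and is contained in each of the relevant stabilisers. In the quotient $\overline G_\Phi:=G_\Phi/K$, the images $\overline H$ of the stabilisers of elements of $\Phi$ are virtually infinite cyclic, by Lemma \ref{Houghtonlemma} applied to $e(G,[H]:\mathcal{S})=2$.

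Now I construct the convergence action on the circle. The idea (the ``putting things in a circle'' technique attributed to Guirardel in the Forward) is to use the partial order of almost inclusion on the elements of $\Phi$ to encode a cyclic betweenness relation on the set $\Phi/\!\sim$ of equivalence classes of non-isolated elements, whose strongly crossing structure is invariant under $\overline G_\Phi$. One checks, using Lemmas \ref{strongcrossingissymmetric} and \ref{aisethasoneS-adaptedK-end}, that this cyclic order extends to a topological circle $S^1$ on which $\overline G_\Phi$ acts as a discrete convergence group. Since $\Phi$ is non-isolated with all crossings strong, the induced action is not finite nor two-ended. Applying the Tukia--Gabai--Casson--Jungreis theorem yields a faithful Fuchsian structure on $\overline G_\Phi$, so $G_\Phi$ is $VPCn$--by--Fuchsian with the Fuchsian quotient neither finite nor two-ended, which gives condition 1 of Definition \ref{defnofVPC-by-FuchsianrelativetoS}.

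It remains to verify the peripheral/edge bookkeeping (conditions 2--4). The peripheral subgroups of $G_\Phi$ are precisely the pre-images in $G_\Phi$ of the stabilisers of boundary points of the circle; by the construction these are exactly the stabilisers of the ``cuts'' of $\Phi$ that fail to give rise to crossing almost invariant sets in $E(\mathcal{X})$. For each such peripheral subgroup $\Sigma$, Remark \ref{XnotcrossinganyXiisenclosedbysomeV1-vertex} and Theorem \ref{keypropertiesofalgregnbhdofpair}(3) distinguish two cases: either $\Sigma$ arises from a splitting of $G$ that is enclosed by an adjacent $V_1$--vertex of $\Gamma$ (giving the corresponding edge carrying $\Sigma$, as required in conditions 2--3), or the relevant almost invariant set is not $\mathcal{S}$--adapted, which by Lemma \ref{S-adaptedimpliesgSintersectXisH-finite} and Lemma \ref{HintersectconjugateofSisK-finite} forces some $S_i\in\mathcal{S}$ to have a conjugate inside $\Sigma$ that is not conjugate commensurable with $K$. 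Similarly, condition 4 follows because an $S_i$ conjugate into $G_\Phi$ must either preserve a vertex on the circle (sitting inside a peripheral subgroup) or preserve no such vertex, in which case the adaptedness of every $X_\lambda\in\Phi$ to $S_i$ combined with Lemma \ref{S-adaptedimpliesgSintersectXisH-finite} forces $S_i$ to be commensurable with a subgroup of $K$. The main obstacle is the circle construction itself: one must carefully define the cyclic order from the adapted strong-crossing data, prove that $\overline G_\Phi$ acts as a convergence group, and track how $\mathcal{S}$ interacts with the peripheral structure.
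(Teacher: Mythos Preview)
Your overall strategy matches the paper's: construct an action of $G(v)$ on $S^1$ with $VPCn$ kernel $K$, verify it is a convergence action, invoke Tukia--Gabai--Casson--Jungreis, and then check the peripheral bookkeeping of Definition~\ref{defnofVPC-by-FuchsianrelativetoS}. However, there are two genuine gaps.

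First, your treatment of the kernel $K$ is backwards. You write ``Passing to a finite-index subgroup of $G_\Phi$ if necessary, we may arrange that $K$ is normal in $G_\Phi$.'' This does not work: passing to finite index would only show a finite-index subgroup of $G(v)$ is $VPCn$--by--Fuchsian, which is not enough, and there is no mechanism offered to recover the full group. In the paper the normality of $K$ is not imposed a priori but is a \emph{consequence} of the circle construction: once the action of $G(v)$ on $S^1$ is built, one proves (Lemma~\ref{stabilizeroftwoaxesequalsK}) that for any two distinct non-complementary elements $X,Y\in\Phi$ the intersection $H_X\cap H_Y$ equals the kernel of the action, hence is automatically normal in $G(v)$. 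This is why Lemma~\ref{allintersectionsubgroupsarecommensurable} alone is not sufficient; commensurability of the pairwise intersections does not by itself give you a single normal $VPCn$ subgroup.

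Second, and as you yourself note, the entire circle construction is missing. The paper carries this out in substantial detail: semi-lines and half-planes are defined, the embedding $\varphi:\mathcal{E}_n\to S^1$ with its labelling is built inductively in Proposition~\ref{canembedpointsincircle} (six conditions to verify), density and the extension to an action by homeomorphisms are established, and then Swenson's four criteria (Lemma~\ref{Swensonconditions}) for a convergence action are checked one by one. Your sentence ``One checks, using Lemmas~\ref{strongcrossingissymmetric} and~\ref{aisethasoneS-adaptedK-end}, that this cyclic order extends\ldots'' is not a proof of any of this.

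Finally, your handling of conditions 2--4 diverges from the paper and is too vague to be correct as stated. For condition~2 the paper takes an edge group $G(e)$, shows its image in $G(v)/K$ cannot contain hyperbolic elements (else the edge splitting would cross elements of $\Phi$), deduces it is parabolic, and then uses a slide-and-refine argument to show the edge carries the full peripheral subgroup $\Sigma$ and that at most one edge does so. For condition~3 the paper constructs an explicit splitting of $G$ over $K$ from a geodesic into the cusp, and uses that $e(G,M:\mathcal{S})=1$ for $VPC(\le n)$ subgroups $M$ to force some $S_i$ to obstruct $\mathcal{S}$--adaptedness. Your appeals to Remark~\ref{XnotcrossinganyXiisenclosedbysomeV1-vertex} and Lemma~\ref{S-adaptedimpliesgSintersectXisH-finite} do not obviously supply these arguments.
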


As in earlier proofs of Dunwoody and Swenson \cite{D-Swenson} and Bowditch
\cite{B1}, our aim is to construct in a natural way an action of the group
$G(v)$, the stabilizer of $\Phi$, on the circle $S^{1}$. We will give a new
proof of the construction of this action which is geared to the setting of
this paper, but it is closely related to the original proof of Dunwoody and
Swenson \cite{D-Swenson}.

If $X$ and its complement $X^{\ast}$ are elements of $\Phi$, we want to
associate a pair of points of $S^{1}$. Such a pair of points will cut $S^{1}$
into two intervals, and we want further to associate one of these intervals to
$X$ and the other to $X^{\ast}$. Although all our arguments formally deal with
this situation, it seems helpful to think of associating to $X$ and $X^{\ast}$
the chord in the unit disk $D$ joining the pair of points in $S^{1}$. Further
such a chord cuts $D$ into two half disks, and we want to associate one of
these half disks with $X$ and the other with $X^{\ast}$. The aim is to do this
so that the stabilizer of the CCC $\Phi$ acts on $S^{1}$ and the entire
process is equivariant with respect to these actions. Then one shows that the
kernel of this action is $VPCn$ and that the quotient group acts on $S^{1}$ as
a convergence group. Now the work of Tukia \cite{Tukia}, Casson and Jungreis
\cite{Casson-Jungreis}\ and Gabai \cite{Gabai} implies that this action is
conjugate to a Fuchsian group action, so that the quotient group acts by
isometries on the hyperbolic plane $\mathbb{H}^{2}$, where $S^{1}$ is regarded
as the set of points at infinity of $\mathbb{H}^{2}$. It follows that $G(v)$
is $VPCn$--by--Fuchsian. Thus it is natural to think of the interior of the
disk $D$ as the hyperbolic plane, so that a chord (without its endpoints) can
be viewed as a hyperbolic geodesic. This process seems particularly natural in
the case when $n=1$, so that the elements of $\Phi$ are almost invariant over
$VPC1$ subgroups of $G$. As a $VPC1$ group is two-ended, we can think of the
boundary of an element of $\Phi$ as being two-ended, just like a hyperbolic
geodesic. If this process has been carried out equivariantly for all elements
of $\Phi$, then the family of chords so obtained will cut $D$ into subregions
which are intersections of the half disks determined by each chord. It is
natural to associate such intersections to intersections of the corresponding
elements of $\Phi$. Thus we consider the Boolean algebra generated by the
elements of $\Phi$. This motivates the terminology which we will shortly
introduce. In the next subsection we set the scene for the construction which
we will carry out in the following subsection.

\subsection{The setup}

We use the hypotheses and notation of Theorem
\ref{strongcrossingimpliesFuchsiantype}. Recall from Lemma
\ref{strongcrossingissymmetric} that if $X$ and $Y$ are elements of the CCC
$\Phi$ over $VPC(n+1)$ subgroups $H_{X}$ and $H_{Y}$ respectively, such that
$X\ $and $Y$ cross, then the intersection $H_{X}\cap H_{Y}$ is $VPCn$. Further
Lemmas \ref{strongcrossingissymmetric} and \ref{XcrossesYstronglyimpliese=2}
together imply that $e(H_{X},H_{X}\cap H_{Y})=2$ and $e(H_{Y},H_{X}\cap
H_{Y})=2$. As $G$ is assumed not to be $VPC(n+2)$, Lemmas
\ref{intersectionsubgroupsarecommensurable} and
\ref{allintersectionsubgroupsarecommensurable} tell us that for any other pair
of elements of $\Phi$, whether or not they cross, the corresponding
intersection is commensurable with $H_{X}\cap H_{Y}$. Further $H_{X}$ and
$H_{Y}$ have small commensuriser in $G$. We let $K$ denote a subgroup of $G$
commensurable with $H_{X}\cap H_{Y}$, and as usual let $[K]$ denote the
commensurability class of $K$. Note that for any element $X$ of the CCC $\Phi
$, we know that $e(H_{X},K\cap H_{X})=2$. Thus we can think of the boundary of
$(K\cap H_{X})\backslash X$ as being two-ended, just like the chord we want to
associate to $X$.

Let $\mathcal{B}$ be the Boolean algebra generated by the almost invariant
sets in the CCC $\Phi$. Note that, for any element of $\mathcal{B}$, there is
$K^{\prime}\in\lbrack K]$ which stabilizes that element. Hence, for any finite
collection of elements of $\mathcal{B}$, there is $K^{\prime}\in\lbrack K]$
which stabilizes each element in the collection. Suppose we have associated
chords and half disks to elements of $\Phi$. If two half disks have boundary
chords which cross, then the boundary of their intersection consists of the
union of two half chords. This motivates the definition of a half plane which
we will give below.

Let $X$ be an element of the CCC $\Phi$. Thus $X$ is almost invariant over
some $VPC(n+1)$ subgroup $H$ of $G$. As in the preceding section, we choose a
finite subset $A$ of $G$ which generates $G$ relative to $\mathcal{S}$. This
determines the graph $\Gamma(G:A)$, with vertex set $A$, which is in turn
contained in the relative Cayley graph $\Gamma(G,\mathcal{S})$. We will denote

$\Gamma(G:A)$ by $\Gamma$, and will denote $\Gamma(G,\mathcal{S})$ by
$\widehat{\Gamma}$. Let $\widehat{X}$ denote the standard augmentation of $X$
in $\widehat{\Gamma}$. Now Lemma \ref{AlmostinvariantsetbecomesconnectedinGL}
tells us there is $R$ such that $\partial^{R}\widehat{X}$ is connected in
$\Gamma^{R}$, and $\widehat{X}$ is connected in $\widehat{\Gamma}^{R}$. We
denote $\widehat{\Gamma}^{R}[\widehat{X}]$ by $W^{R}$, and denote $\Gamma
^{R}[\partial^{R}Z]$ by $dW^{R}$. Thus $W^{R}$ and $dW^{R}$ are connected
graphs. Now choose $K^{\prime}\in\lbrack K]$ such that $X$ is $K^{\prime}%
$--invariant. Then $dW^{R}$ is also $K^{\prime}$--invariant, and the number of
ends of the quotient graph $K^{\prime}\backslash dW^{R}$ is equal to $2$, by
Lemma \ref{aisethasoneS-adaptedK-end}. Thus there is a finite subcomplex of
this quotient whose complement has two infinite (hence one-ended) components.
Of course if we consider a different element of $\Phi$, we would need to
choose a different group in $[K]$. We now introduce some terminology which
allows us to avoid continually choosing different such groups.

\begin{definition}
\label{defnofsemi-line}Let $K^{\prime}$ be an element of $[K]$ such that $X$
is $K^{\prime}$--invariant, and let $R$ be such that $W^{R}$ and $dW^{R}$ are
connected. Let $p$ denote the projection map $\widehat{\Gamma}\rightarrow
K^{\prime}\backslash\widehat{\Gamma}$. As $dW^{R}$ is connected, $p(dW^{R})$
is two-ended. Let $F$ be a finite subgraph of $p(dW^{R})$ such that
$p(dW^{R})-F$ has two infinite components, and let $L$ denote the pre-image in
$\widehat{\Gamma}=\Gamma(G,\mathcal{S})$ of one of these infinite components.
We will say that $L$ is a \emph{semi-line}, and that two semi-lines are
equivalent if their vertex sets are within bounded Hausdorff distance of each
other in the angle metric on $G$.
\end{definition}

\begin{remark}
Note that, for all $R$ and $S$, the vertex sets of $dW^{R}$ and $dW^{S}$ are
within bounded Hausdorff distance in the angle metric on $G$.
\end{remark}

Let $\mathcal{E}$ denote the collection of equivalence classes of all
semi-lines obtained in this way from elements of $\Phi$. Thus each element of
$\Phi$ contributes two points to $\mathcal{E}$. We will call these points of
$\mathcal{E}$ the \textit{endpoints} of $\Phi$. The following result implies
that distinct elements of $\Phi$ have distinct endpoints.

\begin{lemma}
\label{semi-linemeetsa.i.set}Let $X$ and $Y$ be elements of the CCC $\Phi$,
such that $X\ $is not equal to $Y$ or to $Y^{\ast}$, and let $L$ be a
semi-line obtained from $X$. Then, for any $R$, the intersection
$L\cap\partial^{R}\widehat{Y}$ is $K$--finite.
\end{lemma}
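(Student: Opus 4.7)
The strategy is to exploit the fact that the semi-line $L$ actually sits inside the vertex set $\partial^{R_0}\widehat X$, where $R_0$ is the integer chosen in Definition \ref{defnofsemi-line}. Indeed, $L$ is defined as the preimage in $\widehat\Gamma$ of an infinite component of $p(dW^{R_0})-F$ under the projection $p:\widehat\Gamma\to K'\backslash\widehat\Gamma$, and since $dW^{R_0}$ is $K'$-invariant, this preimage is contained in $dW^{R_0}$. In particular the vertex set of $L$ lies in $\partial^{R_0}\widehat X$, so it suffices to prove the stronger statement that $\partial^{R_0}\widehat X\cap \partial^R\widehat Y$ is $K$-finite for every $R$.

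First I would show that, for every $R$, the set $\partial^R\widehat X$ is $H_X$-finite (and similarly $\partial^R\widehat Y$ is $H_Y$-finite). By Lemma \ref{propertiesofXhat}, $\partial\widehat X$ is $H_X$-finite, so write $\partial\widehat X=H_X F$ with $F$ finite. Every edge of $\widehat\Gamma^R$ that is not in $\widehat\Gamma$ has angle length at most $R$ in $\widehat\Gamma$, so each vertex of $\partial^R\widehat X$ lies within angle distance $R$ of $\partial\widehat X$. Properness of the angle metric (Lemma \ref{anglemetricisproper}) gives that $N_R(F)$ is finite, so $\partial^R\widehat X\subset H_X\cdot N_R(F)$ is $H_X$-finite, as required.

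Next I would invoke the elementary fact that any subset $V\subset G$ contained both in finitely many cosets $H_Xg_i$ and in finitely many cosets $H_Yh_j$ is contained in finitely many cosets of $H_X\cap H_Y$, since each nonempty intersection $H_Xg_i\cap H_Yh_j$ is a single coset of $H_X\cap H_Y$. Taking $V=\partial^{R_0}\widehat X\cap \partial^R\widehat Y$, this shows $V$ is $(H_X\cap H_Y)$-finite. To upgrade this to $K$-finiteness, I would appeal to the CCC structure: since $X,Y\in\Phi$ and $\Phi$ is non-isolated with all crossings strong, one can join $X$ to $Y$ by a chain $X=X_1,X_2,\dots,X_m=Y$ of elements of $\Phi$ whose consecutive terms cross strongly (absorbing complementations into the chain). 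The hypothesis $X\neq Y,Y^*$ together with Condition \ref{conditionofverygoodposition} ensures that $X$ is not equivalent to $Y$ or $Y^*$, so Lemma \ref{allintersectionsubgroupsarecommensurable} (whose standing hypothesis that $G$ is not $VPC(n+2)$ is in force throughout this section) forces $H_X\cap H_Y$ to be commensurable with $H_{X_1}\cap H_{X_2}$, which is in turn commensurable with $K$ by definition. Hence $(H_X\cap H_Y)$-finite is the same as $K$-finite, and the conclusion follows.

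The only mildly delicate point is the first step: keeping careful track of how the boundary fattens when passing from $\widehat\Gamma$ to $\widehat\Gamma^R$, to ensure that $\partial^R\widehat X$ remains $H_X$-finite in this enlarged graph. Once this is secured via the properness of the angle metric, the rest of the argument is formal and applies uniformly whether or not $X$ and $Y$ themselves happen to cross.
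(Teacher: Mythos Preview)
Your argument is correct and takes a genuinely different route from the paper's proof. The paper argues by contradiction: assuming $L\cap\partial^R\widehat Y$ is $K$--infinite, it uses the infinite cyclic quotient $H_X/K'$ to produce, via a pigeonhole in the $H_Y$--finite set $\partial^R\widehat Y$, a nonzero power of $h\in H_X$ lying in $H_Y$. This forces $H_X$ and $H_Y$ to be commensurable, so that $e(G,[H_X]:\mathcal S)=2$ makes $X$ equivalent to $Y$ or $Y^*$, which under Condition~\ref{conditionofverygoodposition} contradicts $X\neq Y,Y^*$.

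Your proof is direct rather than by contradiction: you observe that $L\subset\partial^{R_0}\widehat X$ and simply bound the larger set $\partial^{R_0}\widehat X\cap\partial^R\widehat Y$. The coset-intersection step (an $H_X$--finite set intersected with an $H_Y$--finite set is $(H_X\cap H_Y)$--finite) is elementary and clean, and then you invoke the commensurability $H_X\cap H_Y\in[K]$ already recorded in the Setup (via Lemma~\ref{allintersectionsubgroupsarecommensurable}). Two remarks: first, the $H_X$--finiteness of $\partial^R\widehat X$ is already noted in the proof of Lemma~\ref{AlmostinvariantsetbecomesconnectedinGL}, so you need not re-argue it. Second, your approach depends on the chain argument underlying Lemma~\ref{allintersectionsubgroupsarecommensurable}, whereas the paper's proof is more self-contained, extracting the needed commensurability directly from the assumed $K$--infiniteness. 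What you gain is a more transparent argument that makes no use of the cyclic structure of $H_X/K'$ and yields the slightly stronger statement that all of $\partial^{R_0}\widehat X\cap\partial^R\widehat Y$, not just its intersection with $L$, is $K$--finite.
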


\begin{remark}
\label{fourdistinctendpoints}This result implies that $L$ is not equivalent to
either of the semi-lines obtained from $Y$, so that $X$ and $Y$ together have
four distinct endpoints.
\end{remark}

\begin{proof}
As usual, we pick $K^{\prime}$ in $[K]$ which stabilizes both $X$ and $Y$,
and, by replacing $H_{X}$ by a suitable subgroup of finite index, we arrange
that $K^{\prime}$ is normal in $H_{X}$ with infinite cyclic quotient generated
by $h$ in $H_{X}$.

By Lemma \ref{AlmostinvariantsetbecomesconnectedinGL}, there is $R$ such that
each of $\widehat{X}$ and $\widehat{Y}$ is connected in $\widehat{\Gamma}^{R}%
$, and each of $\partial^{R}\widehat{X}$ and $\partial^{R}\widehat{Y}$ is
connected in $\Gamma^{R}$. As before, we will denote $\widehat{\Gamma}%
^{R}[\widehat{X}]$ by $W^{R}$,$\ $and denote $\Gamma^{R}[\partial
^{R}\widehat{X}]$ by $dW^{R}$.

Now suppose that the required result does not hold for some $R$ and hence for
all $R$ greater than some fixed value. Thus by increasing the value of $R$ if
needed, we can arrange that $L\cap\partial^{R}\widehat{Y}$ is $K$--infinite.
Suppose that $L$ was obtained from $dW^{R}$ by removing the pre-image of a
finite subgraph of $p(dW^{R})$. As $dW^{R}$ is $H_{X}$--finite, there is $h\in
H_{X}-K$, a vertex $z$ of $L$, and infinitely many distinct integers $n$ such
that $h^{n}z\in\partial^{R}\widehat{Y}$. As $\partial^{R}\widehat{Y}$ is
$H_{Y}$--finite, this implies that there is a finite collection of cosets
$(H_{Y})g$ whose union contains infinitely many distinct elements of the form
$h^{n}$. In particular there must be a coset $(H_{Y})g$ which contains two
distinct such elements, so that some non-zero power of $h$ lies in $H_{Y}$.
This implies that $H_{X}$ and $H_{Y}$ are commensurable. As $e(G,[H_{X}%
]:\mathcal{S})=2$, by Theorem \ref{crossinginCCCsisallweakorallstrong}, it
follows that $X\ $is equivalent to $Y$ or to $Y^{\ast}$. As we are assuming
Condition \ref{conditionofverygoodposition}, this implies that $X\ $is equal
to $Y$ or to $Y^{\ast}$. This contradiction completes the proof.
\end{proof}

Lemma \ref{AlmostinvariantsetbecomesconnectedinGL} implies that for any almost
invariant set $X$ in the CCC $\Phi$, there is $N$ such that $\partial
^{N}\widehat{X}$ is connected in $\Gamma^{N}$, and $\widehat{X}$ is connected
in $\widehat{\Gamma}^{N}$. We will now show that an analogous result holds for
any finite intersection of almost invariant sets in the CCC $\Phi$. If $P$ is
a finite intersection of almost invariant sets $X_{\lambda}$ in $\Phi$, we let
$\widehat{P}$ denote the corresponding intersection of their standard
augmentations $\widehat{X_{\lambda}}$. It will be convenient to refer to
$\widehat{P}$ as the standard augmentation of $P$. Note that, as $\partial
^{R}\widehat{P}$ is contained in the union of the $\partial^{R}%
\widehat{X_{\lambda}}$, for any $R$, we know that $\partial^{R}\widehat{P}%
\subset G$, for any $R$.

Here is the generalisation of Lemma
\ref{AlmostinvariantsetbecomesconnectedinGL} which we need.

\begin{lemma}
\label{R-nbhdofintersectionsisconnectedinrelativecase}Using notation as above,
if $P$ is a finite intersection of almost invariant sets in the CCC $\Phi$,
then there is $N$ such that $\partial^{N}\widehat{P}$ is connected in
$\Gamma^{N}$, and $\widehat{P}$ is connected in $\widehat{\Gamma}^{N}$.
Further, $\partial^{R}\widehat{P}$ is connected in $\Gamma^{R}$, and
$\widehat{P}$ is connected in $\widehat{\Gamma}^{R}$, for any $R\geq N$, and
$\Gamma^{R}[\partial^{R}\widehat{P}]$ is $K$--almost equal to a finite union
of semi-lines.
\end{lemma}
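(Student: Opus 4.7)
The plan is to argue by induction on the number $k$ of almost invariant sets whose standard augmentations are intersected to form $\widehat{P}$. The base case $k=1$ is precisely Lemma \ref{AlmostinvariantsetbecomesconnectedinGL}, combined with part~1 of Lemma \ref{aisethasoneS-adaptedK-end}: since $K'\backslash\Gamma^{R}[\partial^{R}\widehat{X}]$ is two-ended, the preimages of its two ends exhibit $\Gamma^{R}[\partial^{R}\widehat{X}]$ as $K$-almost equal to a union of two semi-lines. For the inductive step, write $\widehat{P}=\widehat{Q}\cap\widehat{X}$, where $\widehat{Q}$ is an intersection of $k-1$ standard augmentations to which the inductive hypothesis applies.

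Before analysing $\widehat{P}$, I would fix a single $K'\in[K]$ that has finite index in every stabiliser appearing and, using Lemma \ref{Houghtonlemma} together with the commensurability of pairwise intersections furnished by Lemmas \ref{intersectionsubgroupsarecommensurable} and \ref{allintersectionsubgroupsarecommensurable}, arrange that $K'$ is normal in each such stabiliser with infinite cyclic quotient. Next I would choose $R$ large enough that the inductive conclusion already holds for $\widehat{Q}$ and Lemma \ref{AlmostinvariantsetbecomesconnectedinGL} applies to $\widehat{X}$. Up to $K'$-finite error one has
\[
\partial^{R}\widehat{P} \;=\; (\partial^{R}\widehat{Q}\cap\widehat{X})\;\cup\;(\widehat{Q}\cap\partial^{R}\widehat{X}).
\]
Each semi-line $L$ in the inductive description of $\partial^{R}\widehat{Q}$ is $K'$-invariant with one-ended image in $K'\backslash\widehat{\Gamma}$; applying part~2 of Lemma \ref{aisethasoneS-adaptedK-end} to $\widehat{X}$ (which has a unique $\mathcal{S}$-adapted $K'$-end) shows that $L\cap\widehat{X}$ is $K'$-almost equal to $L$ itself, to a $K'$-finite initial segment of $L$, or to the empty set. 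The same analysis applied to the two semi-lines comprising $\partial^{R}\widehat{X}$ intersected with $\widehat{Q}$ then yields the asserted semi-line description of $\Gamma^{R}[\partial^{R}\widehat{P}]$.

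The main obstacle is the connectedness conclusion, since a priori the finitely many surviving semi-lines could be disjoint in $\Gamma^{R}$. To glue them, I would enlarge $R$ so that, for each of the finitely many $K'$-orbits of ``corner'' points where a semi-line of $\partial^{R}\widehat{Q}$ is truncated by $\partial^{R}\widehat{X}$ (or vice versa), the two abutting semi-lines are joined in $\Gamma^{R}[\partial^{R}\widehat{P}]$ by an edge path of bounded angle length. That such a local connecting path exists is where the strong crossing hypothesis (Theorem \ref{crossinginCCCsisallweakorallstrong}) and very good position (Condition \ref{conditionofverygoodposition}) enter: two boundary semi-lines either meet transversally in a $K'$-finite configuration of vertices in $G$, or are $K'$-almost nested so that no corner arises at all; in both cases the corner is captured by a single $K'$-finite connected subcomplex of $\widehat{\Gamma}$, and $R$ can be taken larger than the maximum angle length of edge paths across these finitely many orbits of corners. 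Connectedness of $\widehat{P}$ itself in $\widehat{\Gamma}^{R}$ then follows because $\widehat{\Gamma}^{R}$ is connected and every vertex of $\widehat{P}$ lies at bounded angle distance from the now connected $\partial^{R}\widehat{P}\subset G$.
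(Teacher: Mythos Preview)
Your inductive skeleton matches the paper's, but two steps are mis-argued.

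First, the trichotomy for $L\cap\widehat{X}$ does not come from the one $\mathcal{S}$--adapted $K$--end of $\widehat{X}$. To use part~2 of Lemma~\ref{aisethasoneS-adaptedK-end} you would need $L\cap\widehat{X}\in\mathcal{B}_{\widehat{X}}$, i.e.\ that $\delta_{\widehat{X}}^{R}(L\cap\widehat{X})$ is $K$--finite for all $R$; but $L$ is a $K$--infinite set of vertices of finite valence, and edges of $\widehat{\Gamma}^{R}$ leaving $L$ inside $\widehat{X}$ need not be $K$--finite. The paper instead uses Lemma~\ref{semi-linemeetsa.i.set}: each semi-line $L$ in the description of $\partial^{N}\widehat{Q}$ is obtained from some element of $\Phi$ distinct from $X,X^{\ast}$, so $L\cap\partial^{N}\widehat{X}$ is $K$--finite. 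Since $K'\backslash L$ has one end, this forces $L\cap\widehat{X}$ to be $K$--finite or $K$--almost equal to $L$. The one-endedness you need is that of $L$, not of $\widehat{X}$.

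Second, your connectedness step is overcomplicated and brings in hypotheses (strong crossing, very good position) that the argument does not need. Once you know $\Gamma^{N}[\partial^{N}\widehat{P}]$ is $K$--almost equal to a finite union of semi-lines, each of which is itself connected in $\Gamma^{N}$, the failure of connectedness is a $K$--finite phenomenon: finitely many $K'$--orbits of components need to be joined. Choose one connecting path in $\widehat{\Gamma}$ for each such pair and take $R$ larger than the maximum angle length occurring; then $\partial^{N}\widehat{P}$ becomes connected in $\Gamma^{R}$, and the bootstrap argument from Lemma~\ref{AlmostinvariantsetbecomesconnectedinGL} upgrades this to connectedness of $\partial^{R}\widehat{P}$ in $\Gamma^{R}$. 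Finally, connectedness of $\widehat{P}$ in $\widehat{\Gamma}^{R}$ is not a bounded-distance statement: rather, since $\widehat{\Gamma}^{R}$ is connected, any vertex of $\widehat{P}$ is joined by a path in $\widehat{\Gamma}^{R}[\widehat{P}]$ to a point of $\partial^{R}\widehat{P}$, and the latter is now connected.
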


\begin{proof}
We will prove the lemma by induction on the number $n$ of almost invariant
sets in this intersection. When $n=1$, such sets are elements of the CCC
$\Phi$. Thus the result holds in this case, by Lemma
\ref{AlmostinvariantsetbecomesconnectedinGL} and the definition of a semi-line.

For the induction step, suppose that $n\geq2$, and that $P$ is the
intersection of $n$ elements of $\Phi$. Let $X$ denote one of these elements,
and let $Q$ denote the intersection of the remaining $(n-1)$ elements. By our
induction hypothesis, there is $N$ such that $\widehat{X}$ and $\widehat{Q}$
are each connected in $\widehat{\Gamma}^{N}$, and $\partial^{N}\widehat{X}$
and $\partial^{N}\widehat{Q}$ are each connected in $\Gamma^{N}$. Further, for
any $R\geq N$, the analogous statement holds, and each of $\Gamma^{R}%
[\partial^{R}\widehat{X}]$ and $\Gamma^{R}[\partial^{R}\widehat{Q}]$ is
$K$--almost equal to a finite union of semi-lines.

We let $W$ and $dW$ denote the connected graphs $\widehat{\Gamma}%
^{N}[\widehat{X}]$ and $\Gamma^{N}[\partial^{N}\widehat{X}]$, respectively. We
also let $\Omega$ and $d\Omega$ denote the connected graphs $\widehat{\Gamma
}^{N}[\widehat{Q}]$ and $\Gamma^{N}[\partial^{N}\widehat{Q}]$, respectively.
Let $L$ denote one of the semi-lines in $d\Omega$. Then Lemma
\ref{semi-linemeetsa.i.set} tells us that $L\cap\partial^{N}\widehat{X}$ is

$K$--finite, so that $L\cap dW$ is $K$--finite. It follows that $L\cap W$ is
$K$--finite or is $K$--almost equal to $L$. Thus $d\Omega\cap W$ is
$K$--almost equal to a finite union of semi-lines. A similar argument shows
that $dW\cap\Omega$ is $K$--almost equal to a union of at most two semi-lines.
Note that $\widehat{P}=\widehat{Q}\cap\widehat{X}$, and $\partial
^{N}\widehat{P}=[\partial^{N}\widehat{Q}\cup\partial^{N}\widehat{X}%
]\cap\widehat{P}$. Thus the edges of $\Gamma^{N}[\partial^{N}\widehat{P}]$ are
of three types. Those which join points of $\partial^{N}\widehat{Q}$, and so
lie in $d\Omega\cap W$, those which join points of $\partial^{N}\widehat{X}$,
and so lie in $dW\cap\Omega$, and those which join a point of $\partial
^{N}\widehat{X}$ to a point of $\partial^{N}\widehat{Q}$. We claim that the
number of edges of the third type is $K$--finite. Let $L$ be a semi-line in
$dW$. Then Lemma \ref{semi-linemeetsa.i.set} implies that, for any $R$, the
intersection $L\cap\partial^{R}\widehat{Q}$ is $K$--finite. In particular
$L\cap\partial^{2N}\widehat{Q}$ is $K$--finite. As $\Gamma^{N}$ is a graph
whose vertices have constant finite valence, it follows that the number of
edges of $\Gamma^{N}$ which join $L$ to $\partial^{N}\widehat{Q}$ is
$K$--finite. It follows that the number of edges of $\Gamma^{N}$ which join
$\partial^{N}\widehat{X}$ to $\partial^{N}\widehat{Q}$ is $K$--finite, which
proves the above claim. Hence $\Gamma^{N}[\partial^{N}\widehat{P}]$ is
$K$--almost equal to $(d\Omega\cap W)\cup(dW\cap\Omega)$, and so is
$K$--almost equal to a finite union of semi-lines. The same argument shows
that $\Gamma^{R}[\partial^{R}\widehat{P}]$ is $K$--almost equal to a finite
union of semi-lines for any $R\geq N$.

In order to complete the proof of the lemma, we need to arrange that
$\Gamma^{R}[\partial^{R}\widehat{P}]$ is connected, when $R$ is large enough.
It will then follow immediately that $\widehat{P}$ is connected in
$\widehat{\Gamma}^{R}$. At this point, we argue in a similar way to the proof
of Lemma \ref{AlmostinvariantsetbecomesconnectedinGL}.

As $\Gamma^{N}[\partial^{N}\widehat{P}]$ is $K$--almost equal to a finite
union of semi-lines, and each semi-line is a connected subgraph of $\Gamma
^{N}$, we could obtain a connected subgraph of $\widehat{\Gamma}^{N}$ by
adding a $K$--finite number of vertices and edges to $\Gamma^{N}[\partial
^{N}\widehat{P}]$. It follows that there is $R\geq N$ such that $\partial
^{N}\widehat{P}$ is connected in $\Gamma^{R}$. Now the argument used in the
proof of Lemma \ref{AlmostinvariantsetbecomesconnectedinGL} shows that
$\partial^{R}\widehat{P}$ is also connected in $\Gamma^{R}$. This completes
the proof of the lemma.
\end{proof}

Recall that in Definition \ref{defnofnumberofS-adaptedK-ends}, we defined the
number of $\mathcal{S}$--adapted $K$--ends of any $K$--invariant subset of $G$.

Next we define a half-plane.

\begin{definition}
\label{defnofhalfplane}An element $A$ of $\mathcal{B}$ is a \emph{half-plane}
if the number of $S$--adapted $K$--ends of the standard augmentation
$\widehat{A}$ of $A$ is equal to $1$, and there is $R$ such that $\Gamma
^{R}[\partial^{R}\widehat{A}]$ is $K$--almost equal to the union of two
distinct semi-lines.
\end{definition}

The definition implies that a half-plane determines two points of
$\mathcal{E}$, the collection of equivalence classes of all semi-lines
obtained from elements of $\Phi$. We will call these points of $\mathcal{E}$
the \textit{endpoints} of the half-plane.

Lemma \ref{aisethasoneS-adaptedK-end} combined with the definition of
semi-lines immediately implies that each element of the CCC $\Phi$ is a half-plane.

Next we will show that many more elements of $\mathcal{B}$ are half-planes.

\begin{definition}
\label{defnofcrossingofhalf-planes}We will say that two half-planes $X\ $and
$Y$ in $\mathcal{B}$ \emph{cross} if $X$ and $Y$ together have four distinct
endpoints, $X\amalg X^{\ast}$ cuts $\partial Y$ into two $K$--infinite sets,
and $Y\amalg Y^{\ast}$ cuts $\partial X$ into two $K$--infinite sets.
\end{definition}

\begin{lemma}
\label{XintersectYisahalf-plane} Let $X\ $and $Y$ be two half-planes in
$\mathcal{B}$ which cross. Then $X\cap Y$ is also a half-plane.
\end{lemma}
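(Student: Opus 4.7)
The plan is to verify the two defining properties of a half-plane for $X\cap Y$, namely (a) that the standard augmentation $\widehat X\cap\widehat Y$ has exactly one $\mathcal{S}$-adapted $K$-end, and (b) that for large $R$, $\Gamma^R[\partial^R(\widehat X\cap\widehat Y)]$ is $K$-almost equal to a union of two distinct semi-lines. Let $L_X^{\pm}$ denote the two semi-lines of $\partial^R\widehat X$, and likewise $L_Y^{\pm}$ for $Y$. Since $X$ and $Y$ together have four distinct endpoints in $\mathcal{E}$, Lemma \ref{semi-linemeetsa.i.set}, together with the observation that distinct semi-lines arising from the same element of $\Phi$ are disjoint up to $K$-finite overlap, gives that any two among the four semi-lines meet in a $K$-finite set.

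First I would prove (b). Since $L_X^{\pm}\cap\partial^R\widehat Y$ is $K$-finite for each sign, each $L_X^{\pm}$ is $K$-almost contained in $\widehat Y$ or in $\widehat Y^{\ast}$. The crossing hypothesis, namely that $\partial X\cap Y$ and $\partial X\cap Y^{\ast}$ are both $K$-infinite, then forces exactly one of $L_X^{\pm}$ to be $K$-almost in $\widehat Y$, which I denote $L_X^{+}$, and the other $K$-almost in $\widehat Y^{\ast}$; symmetrically $L_Y^{+}$ is $K$-almost in $\widehat X$ and $L_Y^{-}$ is $K$-almost in $\widehat X^{\ast}$. Writing $\partial^R(\widehat X\cap\widehat Y)$ as $(\partial^R\widehat X\cap\widehat Y)\cup(\widehat X\cap\partial^R\widehat Y)$ and bounding the edges of $\Gamma^R[\partial^R(\widehat X\cap\widehat Y)]$ that connect $\partial\widehat X$ to $\partial\widehat Y$ by local finiteness of $\Gamma^R$ at vertices of $G$ combined with the $K$-finiteness of $L_X^{+}\cap\partial^{2R}\widehat Y$, I conclude that $\Gamma^R[\partial^R(\widehat X\cap\widehat Y)]$ is $K$-almost equal to $L_X^{+}\cup L_Y^{+}$, a union of two distinct semi-lines.

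For (a), set $Z:=\widehat X\cap\widehat Y$, and use a straightforward extension of Lemma \ref{R-nbhdofintersectionsisconnectedinrelativecase} to Boolean combinations in $\mathcal{B}$ to choose $N$ for which $Z$ is connected in $\widehat\Gamma^N$ and $\partial^N Z$ is connected in $\Gamma^N$. Part (b) then gives that $K'\backslash\Gamma^N[\partial^N Z]$ has two ends, for a suitable $K'\in[K]$ stabilising both $X$ and $Y$. Following the structure of the proof of Lemma \ref{aisethasoneS-adaptedK-end}: the $K$-infiniteness of $X\cap Y$, which holds since $X$ and $Y$ cross, yields at least one $\mathcal{S}$-adapted $K$-end of $Z$; and three or more ends are ruled out by the same three-piece partition argument as in that lemma, using $e(G,[K]:\mathcal{S})=1$ and Lemma \ref{characterizingdX}.

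The main obstacle is excluding exactly two $\mathcal{S}$-adapted $K$-ends. If $Z=A_1\sqcup A_2$ were such a partition, with each $A_i$ being $K'$-invariant, $K$-infinite, and having $\delta_Z^R A_i$ being $K$-finite for every $R$, then (b) forces $L_X^{+}$ to be $K$-almost in $A_1$ and $L_Y^{+}$ to be $K$-almost in $A_2$ after relabelling. Unlike Lemma \ref{aisethasoneS-adaptedK-end}, no single finitely generated subgroup of $G$ stabilises $Z$, so one cannot directly translate $Z$. I would instead work inside $\widehat X$: form the connected subgraph $E\subset\widehat\Gamma^N$ on the vertex set $A_1\cup(\widehat X\cap\widehat Y^{\ast})\cup\Delta$, where $\Delta$ is a $K'$-invariant, $K'$-finite, connected subgraph containing $\delta_Z^N A_1$; then pick $h\in H_X-K'$ whose image in the infinite cyclic group $H_X/K'$ does not interchange the two $K$-ends of $\partial\widehat X$ and satisfies $h\Delta\cap\Delta=\emptyset$. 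A Dunwoody-style nesting argument should then give $hE\subsetneq E$ and $\bigcap_{k\ge 0}h^k E=\emptyset$, forcing $A_2$ to be $H_X$-finite and contradicting the $K$-infiniteness of $L_Y^{+}\subset A_2$. The technical heart, and the main obstacle I anticipate, is verifying $hE\subset E$: since $h$ preserves $\widehat X$ but not $\widehat Y$, this requires controlling the symmetric difference of $\widehat Y$ and $h\widehat Y$ inside $\widehat X$ and absorbing it into the $K'$-finite bridge $\Delta$.
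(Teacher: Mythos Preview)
Your verification of (b) is essentially the same as the paper's, and is fine.

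For (a), there is a genuine gap in your approach to the two-end case, and the paper's argument is both simpler and avoids the obstacle you identified. The problem is your use of an element $h\in H_X\setminus K'$. A half-plane $X$ in $\mathcal{B}$ is in general a finite Boolean combination of elements of $\Phi$; its stabiliser can be commensurable with $K$ itself (for instance when $X$ is a proper corner of two elements of $\Phi$ with non-commensurable $VPC(n+1)$ stabilisers), in which case no such $h$ exists. The translation argument you are trying to import from Lemma~\ref{aisethasoneS-adaptedK-end} relied essentially on the stabiliser being $VPC(n+1)$ with $K$-quotient infinite cyclic, and that structure is not available for general half-planes.

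The paper's idea is to use the \emph{definition} of half-plane directly rather than redo the translation argument. Suppose $\widehat E=\widehat X\cap\widehat Y$ splits into $K$-infinite pieces $A$, $B\in\mathcal{B}_{\widehat E}$. One first checks that $A\cap\partial^N\widehat E$ and $B\cap\partial^N\widehat E$ are both $K$-infinite (else the offending piece would give a nontrivial $\mathcal{S}$-adapted almost invariant set over a group in $[K]$, contradicting $e(G,[K]:\mathcal{S})=1$). Since $\partial^N\widehat E$ is $K$-almost the union of a semi-line from $X$ and one from $Y$, after relabelling $B\cap\partial^R\widehat Y$ is $K$-finite for all $R$. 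Now one simply computes that $B\in\mathcal{B}_{\widehat X}$: indeed $\delta^R_{\widehat X}B\subset\delta^R_{\widehat E}B\cup(\delta^R_{\widehat X}B\cap\delta^R\widehat Y)$, and both pieces are $K$-finite (the second because $B\cap\partial^R\widehat Y$ is $K$-finite and vertices of $G$ have bounded valence in $\widehat\Gamma^R$). But then $B$ and $\widehat X\setminus B$ exhibit two $\mathcal{S}$-adapted $K$-ends of $\widehat X$, contradicting the hypothesis that $X$ is a half-plane. This handles all cases with more than one end at once, so the separate treatment of three-or-more ends is unnecessary.
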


\begin{proof}
Let $E=X\cap Y$, and choose $N$ large enough so that $\widehat{X}$,
$\widehat{Y}$ and $\widehat{E}$ are each connected in $\widehat{\Gamma}^{N}$,
and $\partial^{N}\widehat{X}$, $\partial^{N}\widehat{Y}$ and $\partial
^{N}\widehat{E}$ are each connected in $\Gamma^{N}$. Of the two semi-lines
determined by $\widehat{X}$, exactly one is within a finite neighbourhood of
$\widehat{Y}$ and hence of $\widehat{E}$. And of the two semi-lines determined
by $\widehat{Y}$, exactly one is within a finite neighbourhood of
$\widehat{X}$ and hence of $\widehat{E}$. As $\Gamma^{N}[\partial
^{N}\widehat{E}]$ is $K$--almost equal to a finite union of semi-lines, by
Lemma \ref{R-nbhdofintersectionsisconnectedinrelativecase}, it follows that
$\Gamma^{N}[\partial^{N}\widehat{E}]$ is $K$--almost equal to the union of two
semi-lines. It remains to show that the number of $S$--adapted $K$--ends of
$\widehat{E}$ is equal to $1$.

Suppose that the number of $S$--adapted $K$--ends of $\widehat{E}$ is greater
than $1$, so that we can partition $\widehat{E}$ into two $K$--infinite sets
$A$ and $B$ in $\mathcal{B}_{\widehat{E}}$. Thus there is $K^{\prime}%
\in\lbrack K]$ such $A$ and $B$ are $K^{\prime}$--invariant, and
$\delta_{\widehat{E}}^{R}A$ and $\delta_{\widehat{E}}^{R}B$ are $K$--finite,
for all $R$. By replacing $K^{\prime}$ by a different element of $[K]$ if
needed, we can assume that $A$, $B$, $X$ and $Y$ are each $K^{\prime}%
$--invariant. We denote $\Gamma^{N}[\partial^{N}\widehat{E}]$ by $dW$. Let
$p:\Gamma\rightarrow K^{\prime}\backslash\Gamma$. We know that $p(dW)$ is
two-ended, one end corresponding to a semi-line obtained from $X$ and the
other to a semi-line obtained from $Y$.

Consider the sets $A\cap dW$ and $B\cap dW$. We claim that each must be
$K$--infinite. For suppose that $A\cap dW$ is $K$--finite. As $\delta
^{N}A=\delta_{\widehat{E}}^{N}A\cup(\delta^{N}A\cap\delta^{N}\widehat{E}%
)\subset\delta_{\widehat{E}}^{N}A\cup\delta^{N}(A\cap dW)$, it follows that
$\delta^{N}A$ is also $K$--finite, so that $A\cap G$ is a $K^{\prime}$--almost
invariant subset of $G$. As $e(G,K^{\prime}:\mathcal{S})=1$, it follows that
$A\cap G$, and hence $A$, is $K$--finite, a contradiction. This completes the
proof of the claim. As $\delta_{\widehat{E}}^{R}A$ and $\delta_{\widehat{E}%
}^{R}B$ are $K$--finite, for all $R$, it is immediate that $\delta_{dW}%
^{R}(A\cap dW)$ and $\delta_{dW}^{R}(B\cap dW)$ are also $K$--finite, for all
$R$. We conclude that each of $p(A\cap dW)$ and $p(B\cap dW)$ contains the
vertex set of precisely one end of $p(dW)$, and that they do not contain the
same end. In particular, by interchanging $A$ and $B$ if needed, we can
arrange that $B\cap\partial^{R}\widehat{Y}$ is $K$--finite, for all $R$.

We claim that $B$ lies in $\mathcal{B}_{\widehat{X}}$. Assuming this claim,
the fact that $B$ and $\widehat{X}-B$ are each $K$--infinite implies that
$\widehat{X}$ has at least two $S$--adapted $K$--ends. This will be a
contradiction as $\widehat{X}$ has only one $S$--adapted $K$--end, by Lemma
\ref{aisethasoneS-adaptedK-end}.

In order to prove the above claim, we need to show that $\delta_{\widehat{X}%
}^{R}B$ is $K$--finite, for all $R$. Note that $\delta_{\widehat{X}}%
^{R}B\subset\delta_{\widehat{E}}^{R}B\cup(\delta_{\widehat{X}}^{R}B\cap
\delta_{\widehat{X}}^{R}\widehat{E})\subset\delta_{\widehat{E}}^{R}%
B\cup(\delta_{\widehat{X}}^{R}B\cap\delta^{R}\widehat{Y})$. We know that
$\delta_{\widehat{E}}^{R}B$ is $K$--finite, for all $R$. Also $B\cap
\partial^{R}\widehat{Y}$ is $K$--finite, for all $R$. As $\partial
^{R}\widehat{Y}\subset G$, and each vertex of $G$ has fixed finite valence in
$\widehat{\Gamma}^{R}$, it follows that $\delta_{\widehat{X}}^{R}B\cap
\delta^{R}\widehat{Y}$ is $K$--finite, for all $R$. Hence $\delta
_{\widehat{X}}^{R}B$ is also $K$--finite, for all $R$. This completes the
proof of the lemma.
\end{proof}

We close this section with the following technical result which will play an
important role in the next section.

\begin{lemma}
\label{intersectionofdisjointhalf-planes}Let $X$ and $Y$ be two disjoint
half-planes. Let $W^{R}$ denote $\widehat{\Gamma}^{R}[\widehat{X}]$, let
$dW^{R}$ denote $\Gamma^{R}[\partial^{R}\widehat{X}]$, let $\Omega^{R}$ denote
$\widehat{\Gamma}^{R}[\widehat{Y}]$, and let $d\Omega^{R}$ denote $\Gamma
^{R}[\partial^{R}\widehat{X}]$. Then the following statements hold:

\begin{enumerate}
\item If $X$ and $Y$ have no common endpoint, then $\widehat{X}\cap
\widehat{Y}$ is $K$--finite.

\item If $X$ and $Y$ have one common endpoint, then either $\widehat{X}%
\cap\widehat{Y}$ is empty, or there is $R$ such that $dW^{R}\cap d\Omega^{R}$
contains $\widehat{X}\cap\widehat{Y}\cap G$ and is $K$--almost equal to a
semi-line whose equivalence class is the common endpoint of $X$ and $Y$.
\end{enumerate}
\end{lemma}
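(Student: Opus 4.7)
The plan is to analyse $\widehat X\cap\widehat Y$ in two stages: first control its coboundary $\delta^R(\widehat X\cap\widehat Y)$ by understanding how the boundary semi-lines of the two augmentations interact, then promote that control to a bound on the whole set using the end-counting hypothesis $e(G,[K]:\mathcal S)=1$ via Lemma \ref{characterizingdX}. To set things up, I would choose $R$ large (via Lemma \ref{R-nbhdofintersectionsisconnectedinrelativecase}) so that both $\widehat X$, $\widehat Y$ are connected in $\widehat\Gamma^R$, and so that $\partial^R\widehat X$ is $K$-almost equal to the union of two semi-lines $L_1,L_2$ representing the endpoints of $X$, and $\partial^R\widehat Y$ is $K$-almost equal to two semi-lines $M_1,M_2$ representing the endpoints of $Y$.

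The central step is a dichotomy: if the equivalence class of a semi-line $L_i$ of $\partial^R\widehat X$ is not an endpoint of $Y$, then $L_i\cap\widehat Y$ is $K$-finite. A direct generalisation of Lemma \ref{semi-linemeetsa.i.set} to half-planes (using Condition \ref{conditionofverygoodposition} to rule out the coincidence $L_i\sim M_j$) gives $L_i\cap\partial^R\widehat Y$ $K$-finite. Because $K\backslash L_i$ is one-ended and $\widehat Y$ has exactly one $\mathcal S$-adapted $K$-end (Lemma \ref{aisethasoneS-adaptedK-end}), $L_i$ must lie eventually entirely inside $\widehat Y$ or eventually entirely inside $\widehat Y^{\ast}$; the facts $L_i\subset\widehat X$, that $\widehat X\cap G$ differs from $X$ by an $H_X$-finite set, and $X\cap Y=\emptyset$ together force the second alternative, so $L_i\cap\widehat Y$ is $K$-finite.

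For part~1, all four semi-lines satisfy the dichotomy, hence $\partial^R\widehat X\cap\widehat Y$ and $\partial^R\widehat Y\cap\widehat X$ are $K$-finite, and by bounded valence of $\Gamma^R$ the whole coboundary $\delta^R(\widehat X\cap\widehat Y)\subset\delta^R\widehat X\cup\delta^R\widehat Y$ is $K$-finite. Lemma \ref{characterizingdX} then shows $(\widehat X\cap\widehat Y)\cap G$ is a $K'$-almost invariant $\mathcal S$-adapted subset of $G$ for some $K'\in[K]$; as $e(G,[K]:\mathcal S)=1$ this set or its complement in $G$ is $K$-finite, and the disjointness $X\cap Y=\emptyset$ combined with the $H_X$- and $H_Y$-finite augmentation corrections forces the former. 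It remains to bound the cone points in $\widehat X\cap\widehat Y$: each such $gS_i$ has $gS_i\cap X^{\ast}$ that is $H_X$-finite and $gS_i\cap Y^{\ast}$ that is $H_Y$-finite, so $X\cap Y=\emptyset$ makes $gS_i$ itself a union of an $H_X$-finite and an $H_Y$-finite set; Lemma \ref{Neumannlemma} then places $gS_ig^{-1}$ in a commensurability class of a subgroup of $H_X$ or of $H_Y$, and a variant of Lemma \ref{HintersectconjugateofSisK-finite} applied to the $\Phi$-generators of the half-planes $X,Y$ cuts the allowable such cone points down to $K$-finitely many.

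For part~2, let $e$ be the common endpoint with representatives $L\subset\partial^R\widehat X$ and $M\subset\partial^R\widehat Y$, so $L$ and $M$ are equivalent. The dichotomy applies to the two non-shared semi-lines $L',M'$, giving $L'\cap\widehat Y$ and $M'\cap\widehat X$ $K$-finite. Consequently $dW^R\cap d\Omega^R$ is $K$-almost equal to $L$ (equivalently to $M$), a semi-line in the class $e$. Running the same Lemma \ref{characterizingdX} plus end-counting argument on $\widehat X\cap\widehat Y$ shows $(\widehat X\cap\widehat Y)\cap G$ is $K$-finite (after a bounded thickening it sits inside $dW^R\cap d\Omega^R$); either it is empty, giving the first alternative, or it is $K$-almost equal to the semi-line obtained above, giving the second. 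The main technical obstacle is the dichotomy step itself, which mixes the one-endedness of $K\backslash L_i$, the single $\mathcal S$-adapted $K$-end of $\widehat Y$, and connectivity arguments inside the non-locally-finite graph $\widehat\Gamma^R$; once that is pinned down, the rest of the argument is a routine application of the end-counting machinery.
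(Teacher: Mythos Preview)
Your approach is substantially more elaborate than what the paper does, and in part~2 it becomes incoherent. The paper's proof rests on a single geometric observation that you have essentially buried in a parenthetical without justification: since $X$ and $Y$ are \emph{disjoint} subsets of $G$, and since the augmentation $\widehat X$ only adds to $X$ an $H_X$--finite set of elements of $G$ (Lemma~\ref{propertiesofXhat}), there is $R$ with $\widehat X\cap G\subset N_R X$ in the angle metric, and similarly for $Y$. Hence any $g\in\widehat X\cap\widehat Y\cap G$ lies in $N_R X\cap N_R Y\subset N_R X\cap N_R X^{\ast}$, which forces $g\in\partial^R\widehat X$; symmetrically $g\in\partial^R\widehat Y$. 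Thus $\widehat X\cap\widehat Y\cap G\subset dW^R\cap d\Omega^R$ outright. From this containment, part~1 is immediate: with no common endpoint, Lemma~\ref{semi-linemeetsa.i.set} makes $dW^R\cap d\Omega^R$ $K$--finite; and cone points are handled in one line using that every edge of $\widehat\Gamma^R$ has a vertex in $G$ and vertices of $G$ have bounded valence. Your route via $\delta^R(\widehat X\cap\widehat Y)$, Lemma~\ref{characterizingdX}, $e(G,[K]:\mathcal S)=1$, and then a separate Neumann-type argument for cone points might be made to work for part~1 but is far more than is needed.

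For part~2 your argument has a genuine gap. You claim the end-counting argument shows $(\widehat X\cap\widehat Y)\cap G$ is $K$--finite, and then in the next breath say it may be ``$K$--almost equal to the semi-line obtained above''; a semi-line is $K$--infinite, so these are contradictory. The underlying problem is that when $X$ and $Y$ share an endpoint, the shared semi-line $L\subset\partial^R\widehat X$ may well meet $\widehat Y$ in a $K$--infinite set, so $\delta^R(\widehat X\cap\widehat Y)$ need not be $K$--finite and your Lemma~\ref{characterizingdX} step does not apply. The correct conclusion in part~2 is not that $\widehat X\cap\widehat Y\cap G$ is $K$--finite, but only that it is contained in $dW^R\cap d\Omega^R$, which is $K$--almost a semi-line in the common endpoint class. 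That containment is exactly the paper's direct observation above, which you assert parenthetically but never prove.
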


\begin{remark}
If $\widehat{X}\cap\widehat{Y}$ is empty, then $dW^{R}\cap d\Omega^{R}$ is
also empty, for all values of $R$. This is simply because the vertex sets of
the graphs $W^{R}$ and $\Omega^{R}$ are contained in $\widehat{X}$ and
$\widehat{Y}$ respectively.
\end{remark}

\begin{proof}
Recall that if $X$ is a $H$--almost invariant subset of $G$, then
$\widehat{X}$ is the standard augmentation of $X$, and $\widehat{X}\cap G$
contains $X$, and $(\widehat{X}\cap G)-X$ is $H$--finite. In particular,
$\widehat{X}\cap G$ is contained in $N_{R}X$, for some $R$, where $N_{R}X$
denotes the $R$--neighbourhood of $X$ in the angle metric on $G$. While $X$
and $X^{\ast}$ are disjoint, their standard augmentations $\widehat{X}$ and
$\widehat{X^{\ast}}$ need not be. But it follows that $\widehat{X}%
\cap\widehat{X^{\ast}}\cap G$ is contained in $N_{R}X\cap N_{R}X^{\ast}$.
Hence $\widehat{X}\cap\widehat{X^{\ast}}\cap G\subset dW^{R}$.

In general, a half-plane $X$ is an intersection of almost invariant subsets,
and $\widehat{X}$ denotes the intersection of the corresponding standard
augmentations. In particular, it follows that there is $R$ such that
$\widehat{X}\cap G$ is contained in $N_{R}X$. Thus if $X$ and $Y\ $are two
half-planes, there is $R$ such that $\widehat{X}\cap\widehat{Y}\cap G$ is
contained in $N_{R}X\cap N_{R}Y$. In particular, if $X\ $and $Y$ are disjoint,
it follows that $\widehat{X}\cap\widehat{Y}\cap G\subset dW^{R}\cap
d\Omega^{R}$.

Now let $N$ be such that $W^{N}$, $dW^{N}$, $\Omega^{N}$ and $d\Omega^{N}$ are
all connected graphs. As $X\ $and $Y$ are half-planes, each of $dW^{N}$ and
$d\Omega^{N}$ is $K$--almost equal to the union of two semi-lines, so that $X$
and $Y$ each have two endpoints.

1) If $X$ and $Y$ have no common endpoint, Lemma \ref{semi-linemeetsa.i.set}
implies that $dW^{R}\cap d\Omega^{R}$ is $K$--finite, for all $R$. As there is
$R$ such that this intersection contains $\widehat{X}\cap\widehat{Y}\cap G$,
it follows that $\widehat{X}\cap\widehat{Y}\cap G$ is $K$--finite. As each
point of $G$ has finite valence in $\widehat{\Gamma}^{R}$, and every edge of
$\widehat{\Gamma}^{R}$ has at least one vertex in $G$, it follows that
$\widehat{X}\cap\widehat{Y}$ is also $K$--finite, as required.

2) If $X$ and $Y$ have one common endpoint, then $dW^{N}$ and $d\Omega^{N}$
each contain a semi-line, whose equivalence class is that common endpoint. It
follows that either $dW^{R}\cap d\Omega^{R}$ is empty, for all $R$, or there
is $R$ such that the intersection $dW^{R}\cap d\Omega^{R}$ is $K$--almost
equal to such a semi-line. Hence either $\widehat{X}\cap\widehat{Y}$ is empty,
or there is $R$ such that $dW^{R}\cap d\Omega^{R}$ contains $\widehat{X}%
\cap\widehat{Y}\cap G$ and is $K$--almost equal to a semi-line whose
equivalence class is the common endpoint of $X$ and $Y$, as required.
\end{proof}

\subsection{The construction}

Now we are ready to start constructing an action of the group $G(v)$, the
stabilizer of $\Phi$, on the circle $S^{1}$. If $\{a,b\}$ and $\{c,d\}$ are
pairs of points in $S^{1}$, we will say that these pairs are \textit{unlinked}
if there is an interval of $S^{1}$ which contains both points of one pair but
neither of the other pair, and otherwise they are \textit{linked}. We will
construct an embedding of $\mathcal{E}$ into $S^{1}$, with the property that
if $X$ and $Y$ are two almost invariant sets in $\Phi$, which are neither
equal nor complementary, then $X\ $and $Y$ cross if and only if the pair of
points in $S^{1}$ coming from the endpoints of $X$ links the pair of points in
$S^{1}$ coming from the endpoints of $Y$. This property of our embedding
follows from condition 6) in Proposition \ref{canembedpointsincircle}. We need
some notation.

Let $X_{1},\dots,X_{n}$ be a cross-connected subset of the CCC $\Phi$, and let
$\mathcal{B}_{n}$ denote the Boolean algebra generated by the $X_{i}$'s.
Recall that $\widehat{X_{i}}$ denotes the standard augmentation of $X_{i}$ in
the relative Cayley graph $\Gamma(G:\mathcal{S})$, see Definition
\ref{defnofstandardaugmentation}, and that if $P$ is a finite intersection of
$X_{i}$'s or their complements, then $\widehat{P}$ denotes the corresponding
intersection of standard augmentations. Let $W_{i}^{R}$ denote the graph
$\widehat{\Gamma}^{R}[\widehat{X_{i}}]$, and let $dW_{i}^{R}$ denote the graph
$\Gamma^{R}[\partial^{R}\widehat{X_{i}}]$. We will also let $W_{i}^{\ast R}$
denote the graph $\widehat{\Gamma}^{R}[\widehat{X_{i}^{\ast}}]$, and let
$dW_{i}^{\ast R}$ denote the graph $\Gamma^{R}[\partial^{R}\widehat{X_{i}%
^{\ast}}]$. Finally let $W^{R}(P)$ denote the graph $\widehat{\Gamma}%
^{R}[\widehat{P}]$, and let $dW^{R}(P)$ denote the graph $\Gamma^{R}%
[\partial^{R}\widehat{P}]$. Recall from Lemma
\ref{AlmostinvariantsetbecomesconnectedinGL} and Lemma
\ref{R-nbhdofintersectionsisconnectedinrelativecase} that there is $N\ $such
that, for every $R\geq N$, and every $P$, the graphs $W_{i}^{R}$, $dW_{i}^{R}%
$, $W^{R}(P)$ and $dW^{R}(P)$ are connected. Further $dW^{R}(P)$ is
$K$--almost equal to a finite union of semi-lines. For each $i$, let $a_{i}$
and $a_{i}^{\prime}$ denote the two endpoints of $X_{i}$, i.e. the two
equivalence classes of semi-lines determined by $dW_{i}^{N}$, and let
$\mathcal{E}_{n}$ denote the collection of all these endpoints. Suppose we
have an embedding $\varphi:\mathcal{E}_{n}\rightarrow S^{1}$ and, that for
each $i$, we have a labelling by $X_{i}$ and $X_{i}^{\ast}$ of the two
intervals which form $S^{1}-\varphi\{a_{i},a_{i}^{\prime}\}$. Then we label
each intersection of such intervals by the intersection of their labels, which
is an element of $\mathcal{B}_{n}$. In particular, the image of $\varphi$ cuts
$S^{1}$ into $2n$ innermost intervals, each labelled by some element of
$\mathcal{B}_{n}$. For each such interval, we will call its label a
\textit{pie chart}. Thus a pie chart is an intersection of $n$ elements of the
set $\{X_{i},X_{i}^{\ast}:1\leq i\leq n\}$ such that, for each $i$, exactly
one of $X_{i}$ and $X_{i}^{\ast}$ occurs in the intersection. Note that this
description implies that any two distinct pie charts are disjoint subsets of
$G$. We will show that we can choose $\varphi$ and the labeling so that the
picture in $S^{1}$ reflects that in $G$. The precise statement follows.

\begin{proposition}
\label{canembedpointsincircle}Suppose that $n\geq1$, and $X_{1},\dots,X_{n}$
form a cross-connected subset of the CCC $\Phi$. Then there is an embedding
$\varphi:\mathcal{E}_{n}\rightarrow S^{1}$ and, for each $i$, a labeling by
$X_{i}$ and $X_{i}^{\ast}$ of the two intervals which form $S^{1}%
-\varphi\{a_{i},a_{i}^{\prime}\}$, such that the following conditions hold:

\begin{enumerate}
\item If $I$ is an innermost interval in $S^{1}$, the labeling pie chart is a
half-plane, and the two endpoints of the half-plane are mapped by $\varphi$ to
the two endpoints of $I$.

\item Let $I\ $and $J$ be innermost intervals in $S^{1}$ whose labeling pie
charts are $P\ $and $Q$ respectively.

\begin{enumerate}
\item If $I\ $and $J$ are disjoint, then the intersection $\widehat{P}%
\cap\widehat{Q}$ is $K$--finite.

\item If $I\cap J$ is the single point $x$, and if $a$ denotes the common
endpoint of $P$ and $Q$ which is mapped to $x$ by $\varphi$, then either the
intersection $\widehat{P}\cap\widehat{Q}$ is empty or there is $R$ such that
$dW^{R}(P)\cap dW^{R}(Q)$ contains $\widehat{P}\cap\widehat{Q}\cap G$ and is
$K$--almost equal to a semi-line which represents $a$.
\end{enumerate}

\item The union of all pie charts is $K$--almost equal to $G$.

\item If $A$ is a half-plane in $\mathcal{B}_{n}$ with endpoints $a,a^{\prime
}$ in $\mathcal{E}_{n}$, there is an interval $I$ joining $\varphi(a)$ to
$\varphi(a^{\prime})$ in $S^{1}$ such that $A$ is $K$--almost equal to the
union of all the pie charts labeling subintervals of $I$.

\item If $I$ is an interval in $S^{1}$ joining two points of $\varphi
(\mathcal{E}_{n})$, then the union of all the pie charts labeling subintervals
of $I$ is a half-plane.

\item Two half-planes $A\ $and $B$ in $\mathcal{B}_{n}$, with no common
endpoints, cross (Definition \ref{defnofcrossingofhalf-planes}) if and only if
the pair of points in $S^{1}$ coming from the endpoints of $A$ links the pair
of points in $S^{1}$ coming from the endpoints of $B$.
\end{enumerate}

Further, if $X_{1},\dots,X_{n-1}$ form a cross-connected subset of the CCC
$\Phi$, and we have an embedding $\varphi_{n-1}:\mathcal{E}_{n-1}\rightarrow
S^{1}$ and labelings such that the above six conditions hold, then $\varphi$
can be chosen to extend $\varphi_{n-1}$ and the labelings can also be chosen
to extend the given ones.

Finally if we have embeddings $\varphi$ and $\varphi^{\prime}$ of
$\mathcal{E}_{n}$ in $S^{1}$ and labelings satisfying all the above
conditions, then there is a homeomorphism $h:S^{1}\rightarrow S^{1}$ such that
$\varphi^{\prime}=h\circ\varphi$ and $h$ preserves the labelings, i.e. for
each interval $I$ of $S^{1}-\varphi(\mathcal{E}_{n})$, the labeling pie charts
of $I$ and $h(I)$ are the same.
\end{proposition}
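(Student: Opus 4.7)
The plan is to proceed by induction on $n$, with the core content packaged as the extension property stated at the end of the proposition; the base case and uniqueness are then short addenda.

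For the base case $n=1$, we pick any two distinct points $p,p'\in S^1$, set $\varphi(a_1)=p$, $\varphi(a_1')=p'$, and label the two arcs of $S^1-\{p,p'\}$ by $X_1$ and $X_1^\ast$ in either order. Every element of $\Phi$ is a half-plane (by Lemma \ref{aisethasoneS-adaptedK-end} and Definition \ref{defnofhalfplane}), so conditions 1 and 5 are immediate; condition 3 holds because the union of the two pie charts is $G$; condition 4 is trivial; condition 6 is vacuous since there is only one half-plane with endpoints in $\mathcal{E}_1$; and condition 2 is vacuous since the two open innermost intervals share both of their boundary points rather than ``no point'' or ``one point''.

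For the inductive (extension) step, assume we have the data $\varphi_{n-1}$ satisfying all six conditions for a cross-connected subfamily $X_1,\ldots,X_{n-1}\subset\Phi$, and that $\{X_1,\ldots,X_n\}\subset\Phi$ is cross-connected; in particular $X_n$ crosses at least one of $X_1,\ldots,X_{n-1}$. The analysis proceeds in three steps. First, for each pie chart $P$ of the $(n-1)$-configuration I examine the intersections $\widehat{P}\cap\widehat{X_n}$ and $\widehat{P}\cap\widehat{X_n^\ast}$. Since $P$ is a half-plane by the inductive hypothesis, Lemma \ref{XintersectYisahalf-plane} gives that if $X_n$ crosses $P$ (in the half-plane sense of Definition \ref{defnofcrossingofhalf-planes}), then $P\cap X_n$ and $P\cap X_n^\ast$ are both half-planes; otherwise Lemma \ref{intersectionofdisjointhalf-planes}, applied with one of $X_n$ or $X_n^\ast$ in place of $Y$ via the pie-chart ``no common endpoint'' or ``one common endpoint'' dichotomy, gives that one of the two intersections is $K$-finite. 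Second, to locate the endpoints $a_n,a_n'$: by Lemma \ref{semi-linemeetsa.i.set}, each of the two semi-lines representing $a_n,a_n'$ meets every $\partial^R\widehat{X_i}$ ($i<n$, $X_i\neq X_n,X_n^\ast$) in a $K$-finite set, so after deleting a $K$-finite subgraph it must lie entirely inside a single pie chart of the $(n-1)$-configuration; call the corresponding innermost intervals $I$ and $I'$. Third, place $\varphi(a_n)$ in the interior of $I$ and $\varphi(a_n')$ in the interior of $I'$ (coinciding is allowed), and label the two arcs of $S^1-\varphi\{a_n,a_n'\}$ by $X_n$ and $X_n^\ast$ according to which of $\widehat{Q}\cap\widehat{X_n}$, $\widehat{Q}\cap\widehat{X_n^\ast}$ is the $K$-finite one, for pie charts $Q$ that do not contain an endpoint of $X_n$.

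Verification of the six conditions now proceeds essentially mechanically. Condition 1 follows from Lemma \ref{XintersectYisahalf-plane} for the ``crossing'' transitional pie charts and from the inductive hypothesis for the rest. Condition 2 is the direct content of Lemma \ref{intersectionofdisjointhalf-planes} applied to the new pie charts. Condition 3 is preserved because $G-(X_n\cup X_n^\ast)$ is empty and pie charts partition $G$ up to $K$-finitely many points. Conditions 4 and 5 are established by noting that any half-plane $A\in\mathcal{B}_n$ corresponds, via its defining endpoints, to a union of pie charts along an arc, and conversely; one uses Lemma \ref{R-nbhdofintersectionsisconnectedinrelativecase} to ensure $\partial^R\widehat{A}$ is $K$-almost a pair of semi-lines. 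Condition 6 follows directly from the linking definition together with Lemma \ref{intersectionofdisjointhalf-planes}, since two half-planes cross exactly when both complementary corners fail to be $K$-finite. The \emph{main obstacle} is Step three's consistency: one must prove that the labeling by $X_n$ versus $X_n^\ast$ is globally coherent on each of the two arcs of $S^1-\varphi\{a_n,a_n'\}$. The subtle point is that consecutive pie charts across a boundary $\partial X_i$ must be labeled the same unless $a_n$ or $a_n'$ lies between them; this requires showing, using the semi-line structure together with strong crossing (Theorem \ref{crossinginCCCsisallweakorallstrong}) and $e(G,[K]:\mathcal{S})=1$ (Lemma \ref{aisethasoneS-adaptedK-end}), that once a semi-line of $X_n$ exits a pie chart it cannot re-enter it without contributing to $\partial X_n\cap\partial^R\widehat{X_i}$ being $K$-infinite.

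For the uniqueness at the end: given two embeddings $\varphi,\varphi'$ with labelings both satisfying all six conditions, condition 6 and the finite cross-connected combinatorial pattern of the $X_i$'s force $\varphi(\mathcal{E}_n)$ and $\varphi'(\mathcal{E}_n)$ to have the same cyclic order on $S^1$; condition 4 applied to each $X_i$ then forces the two labelings to assign the same pie chart to corresponding innermost intervals. Hence there is an orientation-preserving (or reversing) homeomorphism $h\colon S^1\to S^1$ with $\varphi'=h\circ\varphi$ that carries each labeled interval of $\varphi$ to the labeled interval of $\varphi'$ with the same label, as required.
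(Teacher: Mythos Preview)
Your outline follows the paper's approach: induction on $n$, locating each semi-line of $X_n$ in a unique pie chart via Lemma \ref{semi-linemeetsa.i.set}, inserting the two new points, and verifying the six conditions. Two places, however, need correction.

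First, the parenthetical ``(coinciding is allowed)'' is wrong. If the two semi-lines of $X_n$ were $K$--almost contained in the same pie chart $P$, then $\partial^R\widehat{X_n}$ would be $K$--almost contained in $W^R(P)$, forcing $X_n$ not to cross any $X_i$ with $i<n$; this contradicts cross-connectedness of $\{X_1,\ldots,X_n\}$. The paper makes exactly this argument to conclude $P\neq P'$, and you need it too.

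Second, your mechanism for the ``main obstacle'' is not the right one. You propose tracking how a semi-line of $X_n$ passes through pie charts and invoking strong crossing to prevent re-entry. But each semi-line of $X_n$ lies $K$--almost in a single pie chart (that is how you chose $P$ and $P'$), so there is no ``travelling through''. The paper's argument is cleaner and different in character: for any pie chart $Q$ other than $P,P'$, the intersection $dW^R(X_n)\cap W^R(Q)$ is $K$--finite, so $\widehat{X_n}\cap\widehat{Q}$ represents an $\mathcal{S}$--adapted $K$--end of $\widehat{Q}$. Since $Q$ is a half-plane (inductive hypothesis, condition 1), it has exactly one such $K$--end, so $W^R(Q)$ is $K$--almost contained in $W^R(X_n)$ or in $W^R(X_n^\ast)$. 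In particular both endpoint semi-lines of $Q$ lie on the same side of $X_n$. Now consecutive pie charts in the complement of $I\cup I'$ share an endpoint semi-line, so the labeling propagates along each of the two arcs $U,V$. The fact that at least one $X_i$ crosses $X_n$ forces the two arcs to receive opposite labels. This is the argument you should give; Lemma \ref{aisethasoneS-adaptedK-end} enters only through the definition of half-plane, not via a direct appeal to $e(G,[K]:\mathcal{S})=1$.
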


\begin{remark}
The last part of the lemma tells us that the embedding $\varphi$ together with
the associated labeling is unique up to homeomorphism of $S^{1}$.
\end{remark}

\begin{proof}
The proof will be by induction on $n$, but we discuss the cases when $n=1,2$
separately. The case when $n=1$ is trivial as one can choose any embedding
$\varphi:\mathcal{E}_{1}\rightarrow S^{1}$, and any labeling of the two
complementary intervals $I\ $and $J$. This will automatically satisfy all six
conditions of the proposition, but condition 2)\ is vacuous as $I\cap J$ is
equal to two points. Finally the uniqueness statement of the proposition is trivial.

Next we consider the case $n=2$, and will show how to extend $\varphi_{1}$ to
an embedding $\varphi:\mathcal{E}_{2}\rightarrow S^{1}$, and how to label the
four complementary intervals. Let $L$ be a semi-line which determines one of
the endpoints of $X_{2}$. Lemma \ref{semi-linemeetsa.i.set} tells us that, for
any $R$, the intersection $L\cap dW_{1}^{R}$ is $K$--finite. As $K\backslash
L$ has one end, it follows that $L\cap W_{1}^{R}$ is $K$--finite or is
$K$--almost equal to $L$. The same argument applies to $L\cap W_{1}^{\ast R}$.
As $W_{1}^{R}\cap W_{1}^{\ast R}$ is empty or lies within bounded Hausdorff
distance of $dW_{1}^{R}$, it follows that $L$ is $K$--almost contained in
exactly one of $W_{1}^{R}$ or $W_{1}^{\ast R}$. The same holds for a semi-line
$L^{\prime}$ which determines the other endpoint of $X_{2}$. If both
semi-lines are $K$--almost contained in $W_{1}^{R}$ (or $W_{1}^{\ast R}$),
this would imply that $dW_{2}^{R}$ itself is $K$--almost contained in
$W_{1}^{R}$ (or $W_{1}^{\ast R}$), so that $X_{2}$ does not cross $X_{1}$. But
this contradicts the hypothesis that the family $\{X_{1},X_{2}\}$ is
cross-connected. Thus one of $L$ and $L^{\prime}$ is $K$--almost contained in
$W_{1}^{R}$ and the other is $K$--almost contained in $W_{1}^{\ast R}$. We
will now extend the embedding of $\mathcal{E}_{1}$ into $S^{1}$ to an
embedding of $\mathcal{E}_{2}$, and will also extend the given labeling. Let
$a_{2}$ and $a_{2}^{\prime}$ denote the endpoints of $X_{2}$ represented by
$W_{1}^{R}\cap dW_{2}^{R}$ and $W_{1}^{\ast R}\cap dW_{2}^{R}$ respectively.
If the interval $I$ is labelled by $X_{1}$, and the interval $J$ is labelled
by $X_{1}^{\ast}$, we embed $a_{2}$ in $I$ and embed $a_{2}^{\prime}$ in $J$.
Now let $L_{1}$ be a semi-line whose equivalence class is $a_{1}$. Arguing as
above shows that $L_{1}$ is $K$--almost contained in exactly one of $W_{2}%
^{R}$ and $W_{2}^{\ast R}$, say $W_{2}^{R}$. We label the interval of
$S^{1}-\varphi\{a_{2},a_{2}^{\prime}\}$ which contains $a_{1}$ by $X_{2}$, and
label the other interval of $S^{1}-\varphi\{a_{2},a_{2}^{\prime}\}$ by
$X_{2}^{\ast}$. We claim that this setup satisfies all of conditions 1)-6).
The four innermost intervals are labeled by one of $X_{1}\cap X_{2}$,
$X_{1}\cap X_{2}^{\ast}$, $X_{1}^{\ast}\cap X_{2}$ and $X_{1}^{\ast}\cap
X_{2}^{\ast}$. Lemmas \ref{aisethasoneS-adaptedK-end} and
\ref{XintersectYisahalf-plane} together show that each is a half plane,
proving condition 1). Recall that distinct pie charts are disjoint subsets of
$G$. Now Lemma \ref{intersectionofdisjointhalf-planes} shows that condition 2)
holds. Now it is easy to check each of the remaining four conditions. Note
that in condition 3), the union of all pie charts is clearly equal to $G$. The
uniqueness statement of the proposition follows because the extension of
$\varphi$ to an embedding of $\mathcal{E}_{2}$ must send the endpoints of
$X_{2}$ into the intervals labeled $X_{1}$ and $X_{1}^{\ast}$ as described,
and the labelings must also be as described.

Now we come to the induction step. Let $n\geq2$, and suppose the result holds
for the cross-connected family $X_{1},\dots,X_{n}$, so that we have an
embedding $\varphi_{n}:\mathcal{E}_{n}\rightarrow S^{1}$ and a labeling of the
complementary intervals which satisfies conditions 1)-6) of the Proposition.
In particular, we have $2n$ pie charts. We consider adding a new almost
invariant set $X_{n+1}$ in the CCC $\Phi$, so that the new family is still
cross-connected. To simplify notation we denote $X_{n+1}$ by $Y$. We further
let $W^{R}(Y)$ denote the graph $\widehat{\Gamma}^{R}[\widehat{Y}]$, and let
$dW^{R}(Y)$ denote the graph $\Gamma^{R}[\partial^{R}\widehat{Y}]$. Let $L$ be
a semi-line which determines one of the endpoints of $Y$. Lemma
\ref{semi-linemeetsa.i.set} tells us that for each $i$, and for any $R$, the
intersection $L\cap dW_{i}^{R}$ is $K$--finite. Now let $P$ be a pie chart
obtained from the family $X_{1},\dots,X_{n}$, let $W^{R}(P)$ denote the graph
$\widehat{\Gamma}^{R}[\widehat{P}]$, and let $dW^{R}(P)$ denote the graph
$\Gamma^{R}[\partial^{R}\widehat{P}]$. Then it follows that the intersection
$L\cap dW^{R}(P)$ is also $K$--finite. Condition 3) tells us that the union of
all the pie charts is $K$--almost equal to $G$. As $K\backslash L$ has one
end, it follows that $L$ is $K$--almost contained in $W^{R}(P)$, for some pie
chart $P$. Part 2) tells us that the intersection of the standard
augmentations of two pie charts with $G$ is $K$--finite or is within bounded
Hausdorff distance of a semi-line which determines an endpoint of some $X_{i}%
$. It follows that there is exactly one pie chart which $K$--almost contains
$L$. Let $P$ and $P^{\prime}$ denote the two pie charts of $\mathcal{B}_{n}$
which $K$--almost contain semi-lines $L$ and $L^{\prime}$ which determine the
two endpoints of $Y$. If $P\ $and $P^{\prime}$ were equal, this would imply
that $\partial^{R}\widehat{Y}$ itself is $K$--almost contained in $P$, so that
$Y$ does not cross any $X_{i}$. But this contradicts the hypothesis that the
family $X_{1},\dots,X_{n},Y$ is cross-connected. We conclude that $P\ $and
$P^{\prime}$ are not equal.

We denote by $I\ $and $I^{\prime}$ the innermost intervals in $S^{1}$ labelled
by the pie charts $P$ and $P^{\prime}$. We will extend $\varphi_{n}$ in the
obvious way so as to map the two endpoints of $Y$, one into $I\ $and the other
into $I^{\prime}$. It remains to decide how to extend the given labeling of
the complementary intervals in $S^{1}$.

The complement of the interiors of $I$ and $I^{\prime}$ has two components $U$
and $V$ each being an interval, unless $I$ and $I^{\prime}$ are adjacent, in
which case $U$ or $V$ is a point. Note that each of $U\ $and $V$ is a union of
innermost intervals, or is a point. Now consider an innermost interval $J$,
not equal to $I$ or $I^{\prime}$, and let $Q$ denote the pie chart which
labels $J$. Thus $Q$ is not equal to $P$ or $P^{\prime}$, so that
$dW^{R}(Y)\cap W^{R}(Q)$ is $K$--finite, for all $R$. It follows that
$\widehat{Y}\cap\widehat{Q}$ represents a $\mathcal{S}$--adapted $K$--end of
$\widehat{Q}$, the standard augmentation of $Q$. As $Q$ is a half-plane, by
condition 1), the number of $\mathcal{S}$--adapted $K$--ends of $\widehat{Q}$
is equal to $1$. It follows that $W^{R}(Q)$ must be $K$--almost contained in
$W^{R}(Y)$ or $W^{R}(Y^{\ast})$. In particular, if $u$ and $v$ denote the
endpoints of the interval $J$, then two semi-lines which determine $u$ and $v$
are both $K$--almost contained in $W^{R}(Y)$ or are both $K$--almost contained
in $W^{R}(Y^{\ast})$. As this holds for any innermost interval which is not
equal to $I$ or $I^{\prime}$, we conclude that all the semi-lines which
determine points of $\varphi(\mathcal{E}_{n})\cap U$ are $K$--almost contained
in $W^{R}(Y)$ or $K$--almost contained in $W^{R}(Y^{\ast})$, and the same
applies to semi-lines which determine points of $\varphi(\mathcal{E}_{n})\cap
V$. Since $Y$ and some $X_{i}$ cross, we cannot have every semi-line
$K$--almost contained in $W^{R}(Y)$, nor can we have every semi-line
$K$--almost contained in $W^{R}(Y^{\ast})$. We conclude that, after
interchanging $U$ and $V$ if needed, semi-lines which determine points of
$\varphi(\mathcal{E}_{n})\cap U$ are $K$--almost contained in $W^{R}(Y)$, and
semi-lines which determine points of $\varphi(\mathcal{E}_{n})\cap V$ are
$K$--almost contained in $W^{R}(Y^{\ast})$. It follows that pie charts which
label innermost intervals contained in $U$ are $K$--almost contained in $Y$,
and pie charts which label innermost intervals contained in $V$ are
$K$--almost contained in $Y^{\ast}$.

Now we can extend the given labeling of complementary intervals in $S^{1}$.
Let $a_{n+1}$ and $a_{n+1}^{\prime}$ denote the endpoints of $Y$ represented
by $P\cap N_{R}\partial Y$ and $P^{\prime}\cap N_{R}\partial Y$ respectively.
We embed $a_{n+1}$ in $I$ and embed $a_{n+1}^{\prime}$ in $I^{\prime}$. Of the
two intervals which form $S^{1}-\varphi\{a_{n+1},a_{n+1}^{\prime}\}$, one
contains $U$ and the other contains $V$. We label the one which contains $U$
by $Y$, and label the one which contains $V$ by $Y^{\ast}$.

We verify that this setup satisfies all the conditions 1)-6) of the proposition.

1) This condition follows from our induction assumption and Lemma
\ref{XintersectYisahalf-plane} except for the new innermost intervals. If $Q$
is one of the original $2n$ pie charts, other than $P\ $and $P^{\prime}$, it
is replaced either by $Q\cap Y$ or by $Q\cap Y^{\ast}$, depending on whether
$Q$ is $K$--almost contained in $Y$ or in $Y^{\ast}$. The intervals $I\ $and
$I^{\prime}$ are each the union of two new innermost intervals. The innermost
intervals contained in $I$ are labeled by $P\cap Y$ and $P\cap Y^{\ast}$, and
the innermost intervals contained in $I^{\prime}$ are labeled by $P^{^{\prime
}}\cap Y$ and $P^{^{\prime}}\cap Y^{\ast}$. These labels are four new pie
charts which replace $P\ $and $P^{\prime}$. We know that $Y$ and $P$ are
half-planes which cross, and the same applies to $Y$ and $P^{\prime}$. Hence
Lemma \ref{XintersectYisahalf-plane} tells us that each of the four new pie
charts is a half-plane. Further the two endpoints of one of the new pie charts
are mapped by $\varphi$ to the two endpoints of the interval labelled by that
pie chart. This proves 1).

2) Let $I\ $and $J$ be distinct innermost intervals in $S^{1}$ whose labeling
pie charts are $P\ $and $Q$ respectively. As noted just before the statement
of this theorem, distinct pie charts are disjoint, so that $P$ and $Q$ are
disjoint half-planes. Now conditions 2a)\ and 2b) follow immediately from
Lemma \ref{intersectionofdisjointhalf-planes}.

3) Recall that, by our induction assumption, the $2n$ pie charts associated to
the given embedding $\varphi_{n}$ have union $K$--almost equal to $G$. The
innermost intervals associated to $\varphi_{n}$ are unchanged when we extend
$\varphi_{n}$ to $\varphi$ apart from the fact that $I\ $and $I^{\prime}$ are
each subdivided into two new innermost intervals. The pie charts labeling
these new intervals are $P\cap Y$, $P\cap Y^{\ast}$, $P^{^{\prime}}\cap Y$ and
$P^{^{\prime}}\cap Y^{\ast}$. If $Q$ is one of the original $2n$ pie charts,
other than $P\ $and $P^{\prime}$, it is replaced either by $Q\cap Y$ or by
$Q\cap Y^{\ast}$, depending on whether $Q$ is $K$--almost contained in $Y$ or
in $Y^{\ast}$. It follows that the union of the $2(n+1)$ new pie charts is
$K$--almost equal to $G$, as required. Note that although, this union is equal
to $G$, when $n=2,$ it is unlikely to be equal to $G$ for any $n>2$.

4) Consider a half-plane $A\in\mathcal{B}_{n+1}$, and denote by $a,a^{\prime}$
the endpoints of $A$. Let $x=\varphi(a)$ and $x^{\prime}=\varphi(a^{\prime})$,
and let $J_{1}$ and $J_{2}$ denote the two subintervals of $S^{1}$ bounded by
$x$ and $x^{\prime}$. If $P$ is any pie chart, and so a half-plane, the two
endpoints of $P$ cannot link $x$ and $x^{\prime}$. Now, arguing exactly as in
our construction of $\varphi$ from $\varphi_{n}$, It follows that $P$ is
$K$--almost contained in $A$ or in $A^{\ast}$. Further the pie charts labeling
innermost subintervals of $J_{1}$ are all $K$--almost contained in $A$ or are
all $K$--almost contained in $A^{\ast}$, and the reverse must apply to $J_{2}%
$. Now choose notation so that the pie charts labeling innermost subintervals
of $J_{1}$ are all $K$--almost contained in $A$, and the remaining pie charts
are all $K$--almost contained in $A^{\ast}$. As the union of all pie charts is
$K$--almost equal to $G$, it follows that $A$ is $K$--almost equal to the
union of the pie charts labeling innermost subintervals of $J_{1}$, as required.

5) Let $J$ be an interval in $S^{1}$ joining two points $a$ and $a^{\prime}$
of $\varphi(\mathcal{E}_{n})$. We have to prove that the union $A$ of all the
pie charts labeling subintervals of $J$ is a half-plane. If $J$ is a union of
the original $2n$ intervals, condition 5) of our induction assumption tells us
that the union of all the original pie charts labeling subintervals of $J$ is
a half-plane. The new pie charts are obtained from the old by intersection
with $Y\ $or with $Y^{\ast}$, so that the new pie charts are $K$--almost equal
to the old ones. It follows that $A$ is $K$--almost equal to a half-plane, and
hence is actually a half-plane, as required. Otherwise $J$ contains exactly
one of the two intervals into which $I$ is cut by $a_{n+1}$, or $J$ contains
exactly one of the two intervals into which $I^{\prime}$ is cut by
$a_{n+1}^{\prime}$. Thus one of the endpoints of $J$ is $a_{n+1}$ or
$a_{n+1}^{\prime}$. Suppose one endpoint of $J$ is $a_{n+1}$ and that $J\cap
I^{\prime}$ is empty. Suppose the subinterval $L$ of $I$ contained in $J$ is
labeled by $P\cap Y$. The preceding argument shows that the union $B$ of all
the pie charts labeling subintervals of $J-L$ is a half-plane, and that $B$ is
$K$--almost contained in $Y$. Also the union of all the pie charts labeling
subintervals of $J\cup I$ is a half-plane. This union is clearly equal to
$B\cup P$. It follows that $A$ is $K$--almost equal to $(B\cup P)\cap Y$. Now
Lemma \ref{XintersectYisahalf-plane} shows that this is a half-plane, as it is
the intersection of two half-planes which cross. A similar analysis applies in
the other possible cases.

6) Let $A$ and $B$ be two half-planes corresponding to segments $[a,a^{\prime
}],[b,b^{\prime}]$ of the circle (where $a,a^{\prime},b,b^{\prime}$ are all
distinct). If the segments are disjoint, so that $\{a,a^{\prime}\}$ and
$\{b,b^{\prime}\}$ are not linked, then conditions 4) and 2a) together imply
that $\widehat{A}\cap\widehat{B}$ is $K$--finite, so that $A$ and $B$ do not
cross. If the segments overlap, so that $\{a,a^{\prime}\}$ and $\{b,b^{\prime
}\}$ are linked, we may assume $b\in\lbrack a,a^{\prime}]$ and $a\in\lbrack
b,b^{\prime}]$. Now condition 4) shows that a semi-line corresponding to $b$
is $K$--almost contained in $A$, a semi-line corresponding to $b^{\prime}$ is
$K$--almost contained in $A^{\ast}$, a semi-line corresponding to $a$ is
$K$--almost contained in $B$, and a semi-line corresponding to $a^{\prime}$ is
$K$--almost contained in $B^{\ast}$. It follows that $A$ and $B$ cross.

Finally the required uniqueness statement again follows as no other way of
extending $\varphi$ and of labeling the intervals is possible.

This completes the proof of Proposition \ref{canembedpointsincircle}.
\end{proof}

The above proposition deals with a finite cross-connected collection of almost
invariant sets. The next step is to extend this embedding result to the entire
family of almost invariant sets in the CCC $\Phi$. As $\Phi$ is
cross-connected, we can re-order this countable collection of sets
$\{X_{i}\}_{i\geq1}$ so that the subset $\{X_{1},\ldots,X_{n}\}$ is
cross-connected, for each $n\geq1$. Let $\mathcal{E}$ denote the collection of
all endpoints of $\Phi$. Proposition \ref{canembedpointsincircle} tells us we
can embed $\mathcal{E}_{n+1}$ into $S^{1}$, extending a given embedding of
$\mathcal{E}_{n}$. Thus there is an embedding $\varphi$ of $\mathcal{E}$ into
$S^{1}$, and a labelling of the complementary intervals of $\varphi
(\mathcal{E}_{n})$ such that conditions 1)-6) all hold for the embedding of
$\mathcal{E}_{n}$, for each $n\geq1$. Note that the group $G(v)$ permutes the
$X_{i}$'s and so acts naturally on $\mathcal{E}$. We need to show that this
action can be extended to an action on $S^{1}$. In order to do that we need
some more results about the crossing of elements of the CCC $\Phi$. Recall
that $\mathcal{X}=\{X_{\lambda}\}_{\lambda\in\Lambda}$ is a finite family of
nontrivial almost invariant subsets of $G$, such that each $X_{\lambda}$ is
over a $VPC(n+1)$ subgroup and is adapted to $\mathcal{S}$, that
$E(\mathcal{X})$ denote the set of all translates of the $X_{\lambda}$'s and
their complements, and that $\Phi$ is a cross-connected component of
$E(\mathcal{X})$.

\begin{lemma}
\label{numbercrossingtwoaxesisfinite}Let $X$ be a $H_{X}$--almost invariant
subset of $G$, and let $Y$ be a $H_{Y}$--almost invariant subset of $G$, such
that $X$ and $Y$ lie in $\Phi$, and $X$ is not equal to $Y$ or to $Y^{\ast}$.
Then the set of all elements of $E(\mathcal{X})$ which cross both $X$ and $Y$
is finite.
\end{lemma}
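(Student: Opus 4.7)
The plan is to combine the circle embedding from Proposition \ref{canembedpointsincircle} with the double coset finiteness from Lemma \ref{finitenumberofdoublecosets} and the $VPC$ structure of intersection subgroups from Lemma \ref{HintersectconjugateofSisK-finite}. Let $a,a'$ denote the endpoints of $X$ and $b,b'$ the endpoints of $Y$ under $\varphi:\mathcal{E}\to S^{1}$; since $X$ is neither equal nor complementary to $Y$, Remark \ref{fourdistinctendpoints} gives that these four points are distinct. By condition 6 of Proposition \ref{canembedpointsincircle}, an element $Z\in E(\mathcal{X})$ that crosses $X$ lies in the CCC $\Phi$, so $Z$ has a pair of endpoints in $\mathcal{E}$, and $Z$ crosses both $X$ and $Y$ if and only if this pair links both $\{a,a'\}$ and $\{b,b'\}$ in $S^{1}$.

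First I would dispose of the easy case: if $X$ and $Y$ do not cross, then $\{a,a'\}$ and $\{b,b'\}$ are unlinked, and a direct cyclic-order check shows that no pair of points on $S^{1}$ can link both pairs; hence the set in the lemma is empty. In the remaining case, the four points appear in cyclic order $a,b,a',b'$, cutting $S^{1}$ into four open arcs $I_{1}=(a,b)$, $I_{2}=(b,a')$, $I_{3}=(a',b')$, $I_{4}=(b',a)$. Another cyclic-order check shows that the pair of endpoints of any such $Z$ must lie either in $I_{1}\times I_{3}$ or in $I_{2}\times I_{4}$ (one point in each arc of a single pair), so it suffices to bound the number of $Z$'s in each of these two configurations.

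Next, since $\mathcal{X}$ is finite, $E(\mathcal{X})$ decomposes into finitely many $G$-orbits, so it is enough to fix one $\lambda$ and show that only finitely many translates $gX_{\lambda}$ cross both $X$ and $Y$. Applying Lemma \ref{finitenumberofdoublecosets} to the pair $X, X_{\lambda}$ yields that $\{g\in G:gX_{\lambda}\text{ crosses }X\}$ is a finite union of double cosets $H_{X}\alpha_{i}H_{\lambda}$. Assume for contradiction that infinitely many distinct translates $Z_{n}=g_{n}X_{\lambda}$ cross both $X$ and $Y$; by pigeonhole we may pass to a subsequence so that all $g_{n}$ lie in a single $H_{X}\alpha H_{\lambda}$, i.e., $Z_{n}=h_{n}Z_{0}$ for some fixed $Z_{0}=\alpha X_{\lambda}$ and $h_{n}\in H_{X}$ in distinct cosets of $L:=H_{X}\cap H_{Z_{0}}$. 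By Lemma \ref{HintersectconjugateofSisK-finite}, $L$ is $VPCn$, and by passing to a finite-index subgroup $L$ is normal in $H_{X}$ with infinite cyclic quotient. Applying Lemma \ref{finitenumberofdoublecosets} again to the pair $Z_{0},Y$, the translates $h_{n}^{-1}Y$ that cross $Z_{0}$ lie in finitely many $H_{Z_{0}}$--orbits, so after a further subsequence all $h_{n}$ lie in a single double coset $H_{Y}g_{0}H_{Z_{0}}$.

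The main obstacle is to derive a contradiction from the condition that the $h_{n}\in H_{X}\cap H_{Y}g_{0}H_{Z_{0}}$ represent infinitely many distinct cosets of $L$ in $H_{X}$. The plan is to combine Neumann's lemma (Lemma \ref{Neumannlemma}) with the $VPCn$--by--$\mathbb{Z}$ structure of $H_{X}$, together with the fact that $H_{X}\cap H_{Y}$ and $L=H_{X}\cap H_{Z_{0}}$ are both commensurable with $K$ (by Lemma \ref{allintersectionsubgroupsarecommensurable}, since $G$ is not $VPC(n+2)$), to show that this intersection projects to a bounded subset of the infinite cyclic quotient $H_{X}/L$. Concretely, one shows that any $h\in H_{X}\cap H_{Y}g_{0}H_{Z_{0}}$ can be written as $h=k g_{0}z$ with $k\in H_{Y}, z\in H_{Z_{0}}$, and then uses that $H_{Y}\cap gH_{Z_{0}}g^{-1}$ is $VPCn$ for $g$ where $gZ_{0}$ crosses $Y$ (again Lemma \ref{HintersectconjugateofSisK-finite}) to conclude that the image of $h$ in $H_{X}/L$ is constrained to finitely many values --- the crucial point being that a $VPC(n+1)$ group cannot fit infinitely many cosets of a $VPCn$ subgroup into a set of the form $H\gamma H'$ where $H\cap H'$ is $VPCn$. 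This contradicts the assumption that the $h_{n}$ give infinitely many distinct cosets of $L$, and completes the proof.
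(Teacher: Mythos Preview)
Your ``easy case'' contains a false assertion. It is not true that when $\{a,a'\}$ and $\{b,b'\}$ are unlinked, no third pair can link both: take the cyclic order $a,c,a',b,c',b'$ on $S^{1}$; then $\{c,c'\}$ separates $a$ from $a'$ and also $b$ from $b'$, so it links both pairs. Hence when $X$ and $Y$ do not cross there can still be elements of $E(\mathcal{X})$ crossing both, and this case is not vacuous.

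Your main case is also incomplete. The final claim that the image of $h_{n}$ in $H_{X}/L\cong\mathbb{Z}$ is constrained to finitely many values does not follow from the facts you cite. A set of the form $H_{X}\cap H_{Y}g_{0}H_{Z_{0}}$ can have unbounded image in $H_{X}/L$; knowing that various pairwise intersections of stabilisers are $VPCn$ and mutually commensurable does not by itself control how a double coset in $G$ meets $H_{X}$. The ``crucial point'' you describe at the end is asserted rather than proved.

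The paper's argument is much shorter and does not use the circle picture at all. For each $X_{\lambda}$ in the finite family $\mathcal{X}$, finiteness of the intersection number $i(H_{X}\backslash X,\,H_{\lambda}\backslash X_{\lambda})$ shows that the translates of $X_{\lambda}$ crossing $X$ form an $H_{X}$--finite set; likewise those crossing $Y$ form an $H_{Y}$--finite set. Hence the translates crossing both form an $(H_{X}\cap H_{Y})$--finite set. Since $H_{X}\cap H_{Y}$ is commensurable with $K$, and $K$ stabilises every element of $\Phi$, an $(H_{X}\cap H_{Y})$--finite collection of elements of $\Phi$ is genuinely finite. You already had the double-coset finiteness (Lemma~\ref{finitenumberofdoublecosets}); what was missing was to combine the two finiteness statements through $H_{X}\cap H_{Y}$ directly, rather than routing through the circle embedding and a contradiction argument.
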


\begin{proof}
Consider $X_{\lambda}$ in the finite family $\mathcal{X}$. Thus $X_{\lambda}$
is a $H_{\lambda}$--almost invariant subset of $G$. As $H_{X}$ and
$H_{\lambda}$ are both finitely generated, Lemma 2.7 of \cite{Scott symmint}
tells us that the intersection number of $H_{X}\backslash X$ and $H_{\lambda
}\backslash X_{\lambda}$ is finite. This implies that the set of translates of
$X_{\lambda}$ which cross $X$ is $H_{X}$--finite. Similarly the set of
translates of $X_{\lambda}$ which cross $Y$ is $H_{Y}$--finite. It follows
that the set of translates of $X_{\lambda}$ which cross both $X$ and $Y$ is
$(H_{X}\cap H_{Y})$--finite. As this holds for each $X_{\lambda}$ in the
finite family $\mathcal{X}$, and as any element of $E(\mathcal{X})$ is a
translate of some $X_{\lambda}$, it follows that the set of all elements of
$E(\mathcal{X})$ which cross both $X$ and $Y$ is $(H_{X}\cap H_{Y})$--finite.
Next recall that $H_{X}\cap H_{Y}$ is commensurable with $K$, and so is
$K$--finite. As $K$ stabilizes every element of $\Phi$, it follows that the
set of all elements of $E(\mathcal{X})$ which cross both $X$ and $Y$ is
finite, as required.
\end{proof}

\begin{lemma}
\label{gYandYdonotcross}Let $X$ be a $H_{X}$--almost invariant subset of $G$,
and let $Y$ be a $H_{Y}$--almost invariant subset of $G$, such that $X$ and
$Y$ lie in $\Phi$. Suppose that $X$ and $Y$ cross, and let $g$ be an element
of $H_{X}$, which fixes the endpoints of $X$. Then $gY$ and $Y$ do not cross.
\end{lemma}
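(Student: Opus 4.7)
The plan is a proof by contradiction: assume $gY$ crosses $Y$. I first record some reductions. If $g\in H_Y$ then $gY=Y$ and there is nothing to prove, so I may assume $g\notin H_Y$. The setup at the start of the section provides a $K'\in[K]$ (with $K$ commensurable to $H_X\cap H_Y$) such that, after passing to a finite-index subgroup of $H_X$, $H_X/K'$ is infinite cyclic. Since $g\notin K'$, the element $g$ has infinite order modulo $K'$, so the translates $g^nY$ for $n\in\mathbb{Z}$ are pairwise distinct. All lie in $\Phi$ because $gX=X\in\Phi$; each crosses $X$, since $Y$ does and $g^n$ preserves $X$; and applying $g^{n-1}$ to the hypothesis shows that $g^nY$ crosses $g^{n-1}Y$ for every $n\in\mathbb{Z}$.

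Next I build a partial action of $\langle g\rangle$ on $S^1$. For each $N\ge1$ the family $\mathcal{F}_N=\{X,Y,gY,\ldots,g^NY\}$ is cross-connected, so Proposition \ref{canembedpointsincircle} embeds its endpoints into $S^1$. Writing $a,a'$ for the endpoints of $X$ and $b_i,b_i'$ for those of $g^iY$, we have $b_i\in I_X$ and $b_i'\in I_{X^*}$ for each $i$. Embedding $\mathcal{F}_{N+1}=\mathcal{F}_N\cup g\mathcal{F}_N$ and applying the uniqueness clause of the proposition to the two resulting embeddings of $\mathcal{F}_N$ (the restriction of the $\mathcal{F}_{N+1}$-embedding, and the $g^{-1}$-pullback of its restriction to $g\mathcal{F}_N$) produces a self-homeomorphism $h$ of $S^1$ realizing the action of $g$ on the embedded endpoints of $\mathcal{F}_N$. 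Since $h$ fixes $\{a,a'\}$ and sends $b_0\in I_X$ to $b_1\in I_X$, it is orientation-preserving and preserves each of $I_X$ and $I_{X^*}$. Coherence of these homeomorphisms as $N$ grows, again by uniqueness, yields an orientation-preserving action of $\langle g\rangle$ on $S^1$ fixing both $a$ and $a'$.

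On each of $I_X$ and $I_{X^*}$, $g$ acts as a monotone increasing self-homeomorphism fixing the endpoints. A direct inspection of the two arcs of $S^1\setminus\{b_0,b_0'\}$ shows that $\{b_0,b_0'\}$ links $\{gb_0,gb_0'\}$ precisely in the \emph{saddle} case, where the flow of $g$ on $I_X$ points toward one endpoint of $X$ while the flow on $I_{X^*}$ points toward the other; in the \emph{north--south} case, where both flows point to the same endpoint of $X$, one finds $\{gb_0,gb_0'\}$ in the same arc and no linking. By condition (6) of Proposition \ref{canembedpointsincircle}, the hypothesis that $Y$ crosses $gY$ places us in the saddle case. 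Then $g^nb_0$ marches monotonically along $I_X$ toward one endpoint of $X$ as $n\to+\infty$ and toward the other as $n\to-\infty$, with the opposite monotonicity for $g^nb_0'$ on $I_{X^*}$. Hence, for every $n\ne 0$, exactly one of $g^nb_0, g^nb_0'$ lies in each arc of $S^1\setminus\{b_0,b_0'\}$, so $\{b_0,b_0'\}$ links $\{b_n,b_n'\}$ and, by Proposition \ref{canembedpointsincircle}(6), $Y$ crosses $g^nY$. This produces infinitely many distinct elements $g^nY\in E(\mathcal{X})$ each crossing both $X$ and $Y$, contradicting Lemma \ref{numbercrossingtwoaxesisfinite}.

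The principal obstacle is the construction, in the second paragraph, of a coherent $S^1$-realization of the $\langle g\rangle$-action from the finite-family uniqueness of Proposition \ref{canembedpointsincircle}; once that is in hand, the saddle-versus-north--south dichotomy, the resulting monotonicity on each arc, and the ensuing infinite family of crossings are essentially elementary.
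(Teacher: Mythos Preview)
Your proof is correct and reaches the same contradiction as the paper—infinitely many translates $g^nY$ crossing both $X$ and $Y$, violating Lemma \ref{numbercrossingtwoaxesisfinite}—but your route is considerably more elaborate. The paper does not build any $\langle g\rangle$-action on $S^1$; instead it uses the already-constructed embedding $\varphi:\mathcal{E}\to S^1$ (set up just before Lemma \ref{numbercrossingtwoaxesisfinite}) and tracks the positions of the endpoints of $g^nY$ by a short induction. Writing $P,Q$ for the images of the endpoints of $X$ and $R,S$ for those of $Y$, the crossing assumption (after relabeling) forces the image $gR$ of the corresponding endpoint of $gY$ into the arc labeled $X\cap Y$ and $gS$ into $X^*\cap Y^*$; since $g$ fixes the endpoint class $P$ and $P$ lies in the $Y$-arc, a semi-line argument puts $P$ in the $gY$-arc, whence $g^2R\in X\cap gY\subset X\cap Y$ and $g^2S\in X^*\cap gY^*\subset X^*\cap Y^*$. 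Iterating gives $g^nR\in X\cap Y$ and $g^nS\in X^*\cap Y^*$ for all $n\ge1$, so every $g^nY$ crosses both $X$ and $Y$.

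What you assemble in your second paragraph—an $S^1$-realization of the $\langle g\rangle$-action via the uniqueness clause of Proposition \ref{canembedpointsincircle}—is precisely the construction the paper carries out \emph{after} this lemma for the full stabilizer $G(v)$; you are previewing it for a single element. Your saddle/north--south dichotomy then repackages the same case analysis in dynamical language. One small imprecision: from ``$h$ fixes $\{a,a'\}$ and sends $b_0\in I_X$ to $b_1\in I_X$'' you can only conclude that $h$ preserves each of $I_X,I_{X^*}$, not that it is orientation-preserving on $S^1$; but since monotonicity on each arc is all your subsequent argument needs, this does not affect the validity of the proof.
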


\begin{remark}
As $H_{X}$ preserves the two endpoints of $X$, either it fixes them or they
are fixed by a subgroup of $H_{X}$ of index two.
\end{remark}

\begin{proof}
Let $K$ denote the intersection $H_{X}\cap H_{Y}$.

If $g$ lies in $K$, then $gY=Y$, so the result is trivial. Thus we suppose
this is not the case. We consider the image in $S^{1}$ under $\varphi$ of the
endpoints of $X$ and $Y$, as shown in Figure 1(a). Thus $P\ $and $Q$ are the
images of the endpoints of $X$, and $R$ and $S$ are the images of the
endpoints of $Y$. Further $P$ and $Q$ divide $S^{1}$ into two intervals
labeled $X$ and $X^{\ast}$, and $R$ and $S$ divide $S^{1}$ into two intervals
labeled $Y$ and $Y^{\ast}$. Together these four points divide $S^{1}$ into
four intervals, labeled by the four corners of the pair $(X,Y)$, and each of
the endpoints of $gY$ must lie in one of these intervals. As $Y$ crosses $X$
so does $gY$, so the endpoints of $gY$ must lie one in the interval labeled
$X$ and the other in the interval labeled $X^{\ast}$. If they lie in the
intervals labeled $X\cap Y$ and $X^{\ast}\cap Y$, then $gY$ does not cross
$Y$, and the same applies if they lie in the intervals labeled $X\cap Y^{\ast
}$ and $X^{\ast}\cap Y^{\ast}$.

\begin{figure}[h]
\centering
\begin{subfigure}[b]{0.45\textwidth} 
\centering
\resizebox{\textwidth}{!}{
\begin{tikzpicture}
\newcommand{\outrad}{2cm}
\newcommand{\inrad}{0.4cm}
\draw (0,0) circle (\outrad);
\draw [thick] (0,-\outrad) --  (0,\outrad);
\draw [thick] (-\outrad,0) --  (\outrad,0);
\draw[fill] (\outrad,0) circle (1pt);
\node [right] at (\outrad,0) {\scriptsize R};
\draw[fill] (-\outrad,0) circle (1pt);
\node [left] at (-\outrad,0) {\scriptsize S};
\draw[fill] (0,\outrad) circle (1pt);
\node [above] at (0,\outrad) {\scriptsize P};
\draw[fill] (0,-\outrad) circle (1pt);
\node [below] at (0,-\outrad) {\scriptsize Q};
\node [right ] at (45:\outrad) {\scriptsize$X\cap Y$};
\node [left ] at (135:\outrad) {\scriptsize$X^{\ast}\cap Y$};
\node [right ] at (-45:\outrad) {\scriptsize$X\cap Y^{\ast}$};
\node [left ] at (-135:\outrad) {\scriptsize$X^{\ast}\cap Y^{\ast}$};
\end{tikzpicture}
} 
\caption{}
\end{subfigure}
\hfill
\begin{subfigure}[b]{0.45\textwidth} 
\centering
\resizebox{\textwidth}{!}{
\begin{tikzpicture}
\newcommand{\outrad}{2cm}
\newcommand{\inrad}{0.4cm}
\draw (0,0) circle (\outrad);
\draw [thick] (0,-\outrad) --  (0,\outrad);
\draw [thick] (-\outrad,0) --  (\outrad,0);
\draw[fill] (\outrad,0) circle (1pt);
\node [right] at (\outrad,0) {\scriptsize R};
\draw[fill] (-\outrad,0) circle (1pt);
\node [left] at (-\outrad,0) {\scriptsize S};
\draw[fill] (0,\outrad) circle (1pt);
\node [above] at (0,\outrad) {\scriptsize P};
\draw[fill] (0,-\outrad) circle (1pt);
\node [below] at (0,-\outrad) {\scriptsize Q};
\node [right ] at (45:\outrad) {\scriptsize$X\cap Y$};
\node [left ] at (135:\outrad) {\scriptsize$X^{\ast}\cap Y$};
\node [right ] at (-45:\outrad) {\scriptsize$X\cap Y^{\ast}$};
\node [left ] at (-135:\outrad) {\scriptsize$X^{\ast}\cap Y^{\ast}$};
\draw [thick] (-135:\outrad) --  (25:\outrad);
\draw[fill] (55:\outrad) circle (1pt);
\node [above] at (55:\outrad) {\scriptsize$g^{2}R$};
\draw[fill] (-115:\outrad) circle (1pt);
\node [below] at (-115:\outrad) {\scriptsize$g^{2}S$};
\draw[fill] (-135:\outrad) circle (1pt);
\node [right] at (-135:\outrad) {\scriptsize$gS$};
\draw[fill] (25:\outrad) circle (1pt);
\node [right] at (25:\outrad) {\scriptsize$gR$};
\end{tikzpicture}
} 
\caption{}
\end{subfigure}
\caption{}%
\label{fig1}%
\end{figure}

Now we will suppose that $gY$ crosses $Y$ and will obtain a contradiction. The
preceding discussion shows that the endpoints of $gY$ must either lie in the
intervals labeled $X\cap Y$ and $X^{\ast}\cap Y^{\ast}$, or in the intervals
labeled $X\cap Y^{\ast}$ and $X^{\ast}\cap Y$. By interchanging $Y$ and
$Y^{\ast}$ if needed we can arrange that they lie in the intervals labeled
$X\cap Y$ and $X^{\ast}\cap Y^{\ast}$. As $gX=X$, and $R$ lies in the interval
labeled $X$, it follows that $gR$ lies in the interval labeled $X\cap Y$, and
$gS$ lies in the interval labeled $X^{\ast}\cap Y^{\ast}$, as shown in Figure
1(b). As $P$ lies in the interval labeled $Y$, and $g$ preserves $P$, it
follows that $P$ lies in the interval labeled $gY$. Next we consider $g^{2}Y$.
As $gR$ lies in the interval labeled $X\cap Y$, we have $g^{2}R$ lies in the
interval labeled $X\cap gY$, and similarly $g^{2}S$ lies in the interval
$X^{\ast}\cap gY^{\ast}$, as shown in Figure 1(b). It follows that $g^{2}Y$
also crosses $X$ and $Y$. By repeating this argument, we will find that for
every $n\geq1$, the translate $g^{n}Y$ crosses both $X\ $and $Y$. But this
contradicts Lemma \ref{numbercrossingtwoaxesisfinite}, showing that $gY$ and
$Y\ $cannot cross.
\end{proof}

The proof of Lemma \ref{gYandYdonotcross} shows that if $g$ lies in $H_{X}-K$,
and fixes the endpoints of $X$, then either the images of the endpoints of
$gY$ lie in the intervals labeled $X\cap Y$ and $X^{\ast}\cap Y$, or they lie
in the intervals labeled $X\cap Y^{\ast}$ and $X^{\ast}\cap Y^{\ast}$. In the
first case, we will say that $g$ moves $Y$ towards $P$, and in the second case
that $g$ moves $Y$ towards $Q$. Note that if $g$ moves $Y$ towards $P$, then
$g^{-1}$ moves $Y$ towards $Q$.

\begin{lemma}
\label{anyintervalcontainsanotherpoint}Let $P\ $and $Q$ be distinct points in
$\varphi(\mathcal{E})$, and let $I$ denote one of the two intervals in $S^{1}$
with endpoints $P$ and $Q$. Then there is a point of $\varphi(\mathcal{E})$ in
the interior of $I$.
\end{lemma}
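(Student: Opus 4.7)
The plan is to produce a point of $\varphi(\mathcal{E})$ in the interior of $I$ by iteratively translating a crossing element of $\Phi$ using a carefully chosen element $g$ of $H_X$ that fixes the endpoints of $X$.

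Write $P = \varphi(a)$ where $a$ is an endpoint of some $X \in \Phi$, and let $Q' = \varphi(a')$ be the image of the other endpoint of $X$. First, I dispose of the trivial cases. If $Q'$ lies in the interior of $I$, we are done. If $Q = Q'$, then since $\Phi$ is non-isolated there exists $Y \in \Phi$ crossing $X$, and by condition~6 of Proposition~\ref{canembedpointsincircle} the endpoints of $Y$ link $\{P, Q'\} = \{P, Q\}$, placing one in each of the two arcs from $P$ to $Q$, hence one in the interior of $I$. Thus I may assume $Q \neq Q'$ and $Q'$ lies in the arc from $P$ to $Q$ that is not $I$; equivalently, $I$ is a proper sub-arc of arc~1, the arc from $P$ to $Q'$ containing $Q$.

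Next, by Lemma~\ref{Houghtonlemma} applied to $H_X$ and $K$ (recall $e(H_X, K) = 2$ by Lemma~\ref{HintersectconjugateofSisK-finite}), some finite-index subgroup of $H_X$ contains $K$ normally with infinite cyclic quotient; let $g$ lift a generator of this quotient, and replace $g$ by $g^{2}$ if necessary so that $g$ fixes each of the endpoints $a, a'$ of $X$ individually. Pick $Y \in \Phi$ crossing $X$. By Lemma~\ref{gYandYdonotcross} applied to $h = g^{n-m}$, the translates $\{g^{n}Y\}_{n \in \mathbb{Z}}$ are distinct, pairwise non-crossing elements of $\Phi$, each crossing $X$. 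Let $R_n, S_n$ denote the images under $\varphi$ of the two endpoints of $g^{n}Y$, with $R_n$ on arc~1 and $S_n$ on arc~2. Since the pairs $\{R_n, S_n\}$ pairwise do not link, the $R_n$'s inherit a consistent linear order on arc~1, and $g$ acts on this orbit by translation (the only order-preserving automorphism of $\mathbb{Z}$ of infinite order). After possibly replacing $g$ by $g^{-1}$, I arrange that $R_n$ moves monotonically toward $P$ on arc~1 as $n$ increases.

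It remains to show that some $R_n$ lies in $(P, Q)$, i.e., the interior of $I$. Monotonicity places $R_n \in [P, R_{0})$ on arc~1, converging to some limit $x$. If $R_0 \in (P, Q)$ we are done; otherwise $R_0 \in [Q, Q')$ and I must show $x < Q$ in the linear order on arc~1. Applying Lemma~\ref{cangetvgpinsystems} to the finite subfamily $\{X, g^{k}Y : |k| \leq N\}$ and replacing elements by equivalent ones, I may arrange $g^{n+1}Y \subseteq g^{n}Y$ as subsets of $G$, so that $A := \bigcap_{n \geq 0} g^{n}Y$ is a well-defined $\langle g, K \rangle$-invariant subset. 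If $x \geq Q$, then letting $y = \lim S_n$, the set $A$ is a proper non-trivial half-plane-like member of $\mathcal{B}$ with endpoints $\{x, y\} \neq \{P, Q'\}$, stabilized by the finite-index subgroup $\langle g, K \rangle$ of $H_X$. But by Lemma~\ref{HhasinfiniteindexnormaliserimpliesGisVPCn+2} and the small commensuriser property, $H_X$ (up to finite index) preserves only the half-plane $X$ itself; hence $\{x, y\} = \{P, Q'\}$, giving $x = P < Q$ and yielding $R_n \in (P, Q)$ for large $n$.

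The main obstacle is the final convergence argument: ruling out that the monotonic limit $x$ lies in $[Q, Q')$ rather than equalling $P$. The analysis via the $\langle g, K \rangle$-invariant intersection $A$ and the uniqueness of the $H_X$-invariant half-plane up to equivalence is the delicate step; informally, it reflects the fact that $g$ will later be seen to act as a hyperbolic-type translation with $\{P, Q'\}$ as its only fixed points on $S^{1}$, so that every other orbit accumulates precisely at $\{P, Q'\}$.
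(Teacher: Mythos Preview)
Your argument has a genuine gap in the final step. The set $A=\bigcap_{n\geq 0}g^{n}Y$ is an \emph{infinite} intersection, so there is no reason for it to lie in the Boolean algebra $\mathcal{B}$ (which consists of finite Boolean combinations of elements of $\Phi$), nor to be a nontrivial $\mathcal{S}$--adapted almost invariant set over any subgroup commensurable with $H_{X}$. Your appeal to ``$H_{X}$ preserves only $X$'' is really an appeal to $e(G,[H_{X}]:\mathcal{S})=2$, and that statement only constrains almost invariant sets; it says nothing about an arbitrary $\langle g,K\rangle$--invariant subset of $G$. So the conclusion $\{x,y\}=\{P,Q'\}$ is unjustified, and with it the claim that $R_{n}$ eventually enters $(P,Q)$. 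A secondary issue: invoking Lemma~\ref{cangetvgpinsystems} on finite subfamilies does not produce a single consistent choice of nested representatives for all $n$ simultaneously; you should instead use directly that Condition~\ref{conditionofverygoodposition} is already in force, so very good position together with Lemma~\ref{gYandYdonotcross} gives the nesting of the $g^{n}Y$.

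The paper's proof avoids any limiting argument by using a piece of structure you discard: the point $Q$ is itself an endpoint of some specific element $Y'\in\Phi$. The key tool is then Lemma~\ref{numbercrossingtwoaxesisfinite}, which says that only finitely many elements of $E(\mathcal{X})$ cross two given distinct non-complementary elements. If $X$ and $Y'$ cross, a single application of $g$ (or $g^{-1}$) moves the endpoint $Q$ of $Y'$ strictly toward $P$, landing in the interior of $I$. If they do not cross, take a cross-connected chain $Z_{1},\ldots,Z_{m}$ from $X$ to $Y'$; the iterates $g^{n}Z_{1}$ all cross $X$, but by Lemma~\ref{numbercrossingtwoaxesisfinite} only finitely many can cross any of $Z_{2},\ldots,Z_{m},Y'$, so for large $n$ the endpoint of $g^{n}Z_{1}$ on the relevant arc must lie in $I$. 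This is a finite combinatorial argument, and it is exactly what should replace your attempted convergence step.
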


\begin{proof}
Recall that $\mathcal{E}$ is the set of endpoints of $\Phi$. Thus $P$ is the
image of an endpoint of an almost invariant set $X$ in $\Phi$, and $Q$ is the
image of an endpoint of an almost invariant set $Y$ in $\Phi$.

If $X$ and $Y$ are equal, the result is trivial, as we are assuming that
$\Phi$ is a non-isolated CCC, so there is some almost invariant set in $\Phi$
which crosses $X$, and its endpoints will lie one in each of the two intervals
with endpoints $P$ and $Q$.

If $X$ and $Y$ cross, we let $g$ be an element of $H_{X}-K$, which fixes the
endpoints of $X$. As discussed just before the statement of this lemma, either
$g$ or $g^{-1}$ must move $Y$ towards $P$. In particular, either $g$ or
$g^{-1}$ moves $Q$ into the interior of the interval $I$, as required.

Now suppose that $X$ and $Y$ do not cross. Suppose there is an almost
invariant set $Z$ in $\Phi$ such that $Z$ crosses both $X\ $and $Y$, and
consider the translates $g^{i}Z$ of $Z$. They all cross $X$, so Lemma
\ref{numbercrossingtwoaxesisfinite} tells us that only finitely many of them
can cross $Y$. By replacing $g$ by $g^{-1}$, we can assume that $g$ moves $Z$
towards $P$. There is $n\geq1$ such that $g^{n}Z$ does not cross $Y$. As
$g^{n}$ moves $Z$ towards $P$, it follows that $g^{n}$ moves one endpoint of
$Z$ into the interval $I$, as required.

In general, there need not be $Z$ which crosses both $X$ and $Y$, but as
$\Phi$ is cross-connected, there must be a sequence $Z_{1},\ldots,Z_{m}$ such
that $Z_{1}$ crosses $X$, $Z_{m}$ crosses $Y$, and $Z_{i}$ crosses $Z_{i+1}$,
for $1\leq i\leq m-1$. Again we consider $g$ in $H_{X}-K$, which fixes the
endpoints of $X$, such that $g$ moves $Z_{1}$ towards $P$. Only finitely many
of the translates $g^{n}Z_{1}$ can cross any of $Z_{2},\ldots,Z_{m}$ or $Y$.
It follows that there is $n\geq1$ such that $g^{n}$ moves one endpoint of
$Z_{1}$ into the interval $I$, as required.
\end{proof}

\begin{theorem}
Let $(G,\mathcal{S})$ be a group system of finite type such that $G$ is
finitely generated and $e(G:\mathcal{S})=1$. Suppose that $n\geq0$, and that
for any $VPC(\leq n)$ subgroup $M$ of $G$, we have $e(G,M:\mathcal{S})=1$.
Further suppose that $G$ is not $VPC(n+2)$. Let $\{X_{\lambda}\}_{\lambda
\in\Lambda}$ be a finite family of nontrivial $\mathcal{S}$--adapted almost
invariant subsets of $G$, such that each $X_{\lambda}$ is over a $VPC(n+1)$
subgroup. Let $E(\mathcal{X})$ denote the set of all translates of the
$X_{\lambda}$'s and their complements, and let $\Phi$ be a non-isolated CCC of
$E(\mathcal{X})$ in which all crossings are strong.

Then there is an embedding $\varphi$ of $\mathcal{E}$ into $S^{1}$, and a
labelling of the complementary intervals such that conditions 1)-6) all hold
for the embedding of $\mathcal{E}_{n}$, for each $n\geq1$. Further $\varphi$
can be chosen so that $\varphi(\mathcal{E})$ is dense in $S^{1}$, and the
natural action of $G(v)$ on $\varphi(\mathcal{E})$ has a unique extension to
an action of $G(v)$ on $S^{1}$ by homeomorphisms.
\end{theorem}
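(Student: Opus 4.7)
The plan is to iterate Proposition \ref{canembedpointsincircle} along an enumeration of the countable cross--connected component $\Phi$, and then combine the uniqueness clause of that proposition with a density argument to produce the extension of the $G(v)$--action to $S^1$. Since $\Phi$ is cross--connected and countable (the family $\{X_\lambda\}$ is finite so $E(\mathcal{X})$ is countable), I would enumerate its elements as $X_1, X_2, \ldots$ in such a way that each initial segment $\{X_1, \ldots, X_n\}$ is itself cross--connected; this is done inductively by beginning with any element and, at stage $n+1$, choosing some $X_{n+1}\in \Phi$ that crosses at least one of $X_1,\ldots, X_n$ (such an element exists since $\Phi$ is connected under the crossing relation). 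Proposition \ref{canembedpointsincircle} then produces compatible embeddings $\varphi_n : \mathcal{E}_n \hookrightarrow S^1$ with labelings satisfying conditions 1)--6), each extending the previous one. Setting $\varphi = \bigcup_n \varphi_n$ on $\mathcal{E} = \bigcup_n \mathcal{E}_n$ gives an embedding, and by construction the restriction $\varphi|_{\mathcal{E}_n}$ and its labeling satisfy conditions 1)--6) for every $n$.

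To arrange density of $\varphi(\mathcal{E})$ in $S^1$, I would observe that conditions 1)--6) depend only on the cyclic order induced on $\varphi(\mathcal{E})$ by $S^1$, not on the specific placement of points. Lemma \ref{anyintervalcontainsanotherpoint} tells us that any open interval with endpoints in $\varphi(\mathcal{E})$ contains a further point of $\varphi(\mathcal{E})$, so $\varphi(\mathcal{E})$ is a countable cyclically ordered set with no adjacent pairs and no endpoints. By the back--and--forth argument of Cantor, any two such sets are order--isomorphic, so there is a bijection between $\varphi(\mathcal{E})$ and a countable dense subset of $S^1$ that preserves the cyclic order, and this bijection extends to an orientation--preserving self--homeomorphism $\psi$ of $S^1$. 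Replacing $\varphi$ by $\psi\circ\varphi$ preserves all of conditions 1)--6) and makes the image dense.

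For the extension of the action, each $g\in G(v)$ stabilizes $\Phi$ setwise and hence permutes $\mathcal{E}$ via its action on equivalence classes of semi--lines. Fix $g\in G(v)$ and $n\geq 1$, and consider the finite cross--connected family $\{gX_1,\ldots, gX_n\}$. Both $\varphi$ restricted to the endpoint set $g\mathcal{E}_n$ and the composition $\varphi\circ g^{-1}$ (thought of as an embedding of $g\mathcal{E}_n$ obtained by transporting the labeling on $\{X_1,\ldots,X_n\}$) satisfy conditions 1)--6) for the cross--connected family $\{gX_1,\ldots, gX_n\}$; this is because $g$ takes crossings to crossings and so, by condition 6), links to links, and similarly preserves all the intersection and boundary relations used in conditions 1)--5). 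The uniqueness clause of Proposition \ref{canembedpointsincircle} then furnishes a self--homeomorphism $h_{g,n}$ of $S^1$ satisfying $h_{g,n}\circ \varphi = \varphi\circ g^{-1}$ on $g\mathcal{E}_n$. Since any two $h_{g,n}$, $h_{g,m}$ agree on the dense subset $\varphi(g\mathcal{E}_{\min(n,m)})$, they agree everywhere, so the $h_{g,n}$ assemble into a single homeomorphism $h_g$ which extends the action of $g$ on $\varphi(\mathcal{E})$ to all of $S^1$. Density makes $h_g$ the unique such extension. That $g\mapsto h_g$ is a homomorphism follows because $h_{gh}$ and $h_g\circ h_h$ are both continuous extensions of the same action on the dense set $\varphi(\mathcal{E})$.

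The main technical obstacle is ensuring that the uniqueness clause of Proposition \ref{canembedpointsincircle} applies cleanly to the transported embedding $\varphi\circ g^{-1}$, i.e.\ verifying that $g$ really does preserve all the structural features (half--plane property, endpoints, pie charts, crossing) which occur in conditions 1)--6). This is where the hypothesis that all crossings in $\Phi$ are strong and the finiteness results about crossings are essential: $g$ preserves the Boolean algebra $\mathcal{B}$, preserves $K$--finiteness up to conjugation of $K$, and takes semi--lines to semi--lines, so the combinatorial picture in $S^1$ is transported rigidly. Once this is in place, everything else---density of $\varphi(\mathcal{E})$ and uniqueness of the extension---is a formality.
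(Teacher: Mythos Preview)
Your overall strategy matches the paper's: enumerate $\Phi$ so that each initial segment is cross-connected, iterate Proposition~\ref{canembedpointsincircle} to build $\varphi$, and then use the uniqueness clause of that proposition together with density to extend the $G(v)$--action. The extension argument via uniqueness is essentially what the paper does.

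There is, however, a genuine gap in your density step. You assert that the cyclic-order-preserving bijection between $\varphi(\mathcal{E})$ and a countable dense subset of $S^1$ \emph{extends to a self-homeomorphism $\psi$ of $S^1$}. This is false in general: if $\varphi(\mathcal{E})$ happened to sit inside a Cantor set in $S^1$ (which is not ruled out by Lemma~\ref{anyintervalcontainsanotherpoint} --- that lemma only gives density of $\varphi(\mathcal{E})$ \emph{in itself} as a cyclically ordered set, not density in $S^1$), then no homeomorphism of $S^1$ can carry it onto a dense subset, since homeomorphisms preserve closures. The fix is simple and in fact already present in your own remarks: since conditions 1)--6) depend only on the induced cyclic order, you do not need $\psi$ to be a homeomorphism of $S^1$ at all. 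Just take the Cantor bijection $f:\varphi(\mathcal{E})\to D$ onto a countable dense $D\subset S^1$ and set $\varphi' = f\circ\varphi$ as a new embedding $\mathcal{E}\hookrightarrow S^1$; then $\varphi'$ satisfies 1)--6) for every $\mathcal{E}_n$ and has dense image.

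For comparison, the paper avoids this subtlety entirely by a direct construction: at each step of the inductive application of Proposition~\ref{canembedpointsincircle}, it places the new endpoint at the \emph{midpoint} of the innermost interval to which it is assigned. Lemma~\ref{anyintervalcontainsanotherpoint} then forces every complementary interval to be bisected infinitely often, so the image is dense in $S^1$ automatically. Your Cantor argument (once repaired as above) is a valid alternative and is arguably more conceptual, but the paper's midpoint trick is shorter and sidesteps the need to check that conditions 1)--6) are cyclic-order invariants.
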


\begin{proof}
The natural action of $G(v)$ on $\mathcal{E}$ induces an action on
$\varphi(\mathcal{E})$ which we want to extend to an action of $G(v)$ on
$S^{1}$. We will choose $\varphi$ so that whenever we embed a new pair of
endpoints, as in the construction in the proof of Proposition
\ref{canembedpointsincircle}, we send each endpoint to the midpoint of the
innermost interval to which we map it. Now Lemma
\ref{anyintervalcontainsanotherpoint} implies that $\varphi(\mathcal{E})$ must
be dense in $S^{1}$, so that each element of $G(v)$ has at most one extension
to a homeomorphism of $S^{1}$. We will construct such an extension. We order
the elements of $\Phi$ so that the subset $\{X_{1},\ldots,X_{n}\}$ is
cross-connected, for each $n\geq1$. Consider the action of an element $g$ of
$G(v)$ on the set $\{X_{1},\ldots,X_{n}\}$. Recall the uniqueness part of
Proposition \ref{canembedpointsincircle}. This tells us that there is a
homeomorphism $h_{n}$ of the circle such that, for each endpoint $x$ of some
$X_{i}$, with $1\leq i\leq n$, we have $\varphi(gx)=h_{n}\varphi(x)$, and
$h_{n}$ preserves labels. Now we consider the limit as $n\rightarrow\infty$ of
these homeomorphisms. For every $k\geq n$, the homeomorphisms $h_{k}$ and
$h_{n}$ agree on $\varphi(\mathcal{E}_{n})$. It follows that the $h_{n}$'s
converge to a homeomorphism on the set $\varphi(\mathcal{E})$, and hence
converge to a homeomorphism of $S^{1}$. Now the uniqueness of these extensions
shows that the map from $G(v)$ to $Homeo(S^{1})$ constructed in this way, must
be a homomorphism, so that we have an action of $G(v)$ on $S^{1}$. This
completes the proof of the theorem.
\end{proof}

Next we need to show, as in Dunwoody-Swenson \cite{D-Swenson}, that the kernel
of the action of $G(v)$ on $S^{1}$ lies in $[K]$.

\begin{lemma}
\label{stabilizeroftwoaxesequalsK}Let $X$ and $Y$ be distinct and
non-complementary elements of the CCC $\Phi$. Then the following hold:

\begin{enumerate}
\item Let $Z$ be an element of the CCC $\Phi$, and let $h$ be an element of
$H_{X}\cap H_{Y}$. Then $hZ=Z$.

\item $H_{X}\cap H_{Y}$ is equal to the kernel of the action of $G(v)$ on
$S^{1}$.
\end{enumerate}
\end{lemma}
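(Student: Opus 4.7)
Fix $h \in H_X \cap H_Y$ and $Z \in \Phi$; the goal is to show $hZ = Z$. My first step would be to use Lemmas \ref{intersectionsubgroupsarecommensurable} and \ref{allintersectionsubgroupsarecommensurable} to note that $H_X \cap H_Y$ is commensurable with $K$, hence $VPCn$, and moreover $H_X \cap H_Y \cap H_Z$ has finite index in $H_X \cap H_Y$ (both $H_X \cap H_Y$ and $H_X \cap H_Z$ are $VPCn$ subgroups of the $VPC(n+1)$ group $H_X$ commensurable with $K$, hence commensurable with each other in $H_X$). So the $\langle h \rangle$--orbit of $Z$ is finite. Next I would analyse the action of $h$ on $S^1$: $h$ permutes the endpoint pairs $\{P,Q\}$ of $X$ and $\{R,S\}$ of $Y$, so the subgroup $L \le H_X \cap H_Y$ fixing all four of $P,Q,R,S$ pointwise has index dividing $4$.

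\textbf{Main step (for $h \in L$).} I would argue by induction on the length of a crossing chain in $\Phi$ from $\{X,Y\}$ to $Z$. For the base case, applying Lemma \ref{gYandYdonotcross} to $h \in H_X$ (which fixes the endpoints of $X$), for any $Z' \in \Phi$ crossing $X$, $h^iZ'$ and $h^{i+1}Z'$ do not cross; iterating, all sets in the finite orbit $\{h^iZ'\}$ are pairwise non--crossing yet all cross $X$. Writing their endpoints as $\{a_i,b_i\}$ with $a_i \in (P,Q)_+$, $b_i \in (P,Q)_-$, the pairs are pairwise unlinked, hence the $a_i$'s (and $b_i$'s) are linearly ordered along each arc. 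Since $h$ fixes $R$ or $S$ inside one of the arcs $(P,Q)_{\pm}$, it must preserve each arc, and its action there respects the linear order; a nontrivial cyclic permutation of a linearly ordered finite set cannot preserve the order, forcing the orbit size to be $1$, i.e.\ $hZ' = Z'$. Since $h$ already has the fixed points $P$ and $Q$, this argument also forces $h$ to fix the endpoints of $Z'$ pointwise. With $Z'$ now playing the role of $X$, the same argument propagates through a crossing chain to any $Z \in \Phi$, giving $hZ = Z$. For general $h \in (H_X \cap H_Y) \setminus L$, I would use that $h^2 \in L$, so $h^2Z = Z$, and rule out an orbit of size $2$ by applying the same $S^1$--linking argument to the pair $\{Z,hZ\}$ together with Lemma \ref{gYandYdonotcross}.

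\textbf{Plan for part 2.} Let $N$ denote the kernel of the action of $G(v)$ on $S^1$. For $N \subseteq H_X \cap H_Y$: any $n \in N$ fixes each point of $\varphi(\mathcal{E})$ and preserves each complementary interval with its label, so for every $Z \in \Phi$ the set $nZ$ has the same endpoints and label as $Z$ and hence (using Condition \ref{conditionofverygoodposition}) $nZ = Z$; in particular $n \in H_X \cap H_Y$. For $H_X \cap H_Y \subseteq N$: by part 1, $hZ = Z$ for every $Z \in \Phi$, so $h$ sends the interval labelled $Z$ to itself (and not to the one labelled $Z^{\ast}$), hence does not swap the two endpoints of $Z$, so $h$ fixes $\varphi(\mathcal{E})$ pointwise; by density of $\varphi(\mathcal{E})$ in $S^1$ and continuity, $h$ acts trivially on $S^1$.

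\textbf{Main obstacle.} The heart of the proof is the combinatorial/topological step inside part 1: verifying that $h \in L$ really does preserve each arc $(P,Q)_{\pm}$ (not swap them) and that the resulting order--preserving cyclic action on a linearly ordered finite orbit must be trivial. The bookkeeping of which endpoints lie in which arc, and ensuring that the inductive step along a crossing chain correctly upgrades ``$h$ preserves $Z'$'' to ``$h$ fixes the endpoints of $Z'$ pointwise'' so that Lemma \ref{gYandYdonotcross} can be reapplied, is where most of the work lies.
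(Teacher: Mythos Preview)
Your approach is essentially correct but considerably more roundabout than the paper's. The paper's proof of part~1 is direct: it asserts that any $h\in H_X\cap H_Y$ fixes each of the four endpoints $P,Q,R,S$ \emph{individually}, and then argues that since $h$ restricted to each of the four complementary arcs $J$ is an orientation--preserving self--homeomorphism of an interval (hence monotone), if $he\neq e$ for an endpoint $e$ of $Z$ lying in $J$ then $h^k e\neq e$ for every $k>0$; this contradicts the fact that some power of $h$ lies in $H_X\cap H_Z$ (by commensurability, Lemma~\ref{allintersectionsubgroupsarecommensurable}) and hence permutes the two endpoints of $Z$. No crossing--chain induction is needed, and Lemma~\ref{gYandYdonotcross} is not invoked at all.

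The reason your index--$4$ reduction and your worry about the case $h\notin L$ are unnecessary is that in fact $L=H_X\cap H_Y$. By Lemma~\ref{Houghtonlemma} (applied as in the proof of Lemma~\ref{HintersectconjugateofSisK-finite}) there is a finite--index subgroup $J\le H_X$ containing $H_X\cap H_Y$ as a normal subgroup with $J/(H_X\cap H_Y)\cong\mathbb{Z}$; then $\mathbb{Z}$ acts properly and cocompactly on the two--ended graph $(H_X\cap H_Y)\backslash\Gamma^R[\partial^R\widehat{X}]$, and any such action fixes each end. Since $h\in H_X\cap H_Y\subset J$, it fixes both endpoints of $X$, and by symmetry both endpoints of $Y$. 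So your ``main obstacle'' is vacuous. Moreover, your appeal to Lemma~\ref{gYandYdonotcross} to obtain pairwise non--crossing of the $h^iZ'$ is not actually used in your conclusion: the observation that a nontrivial cyclic permutation of a finite subset of an arc cannot preserve the linear order needs only that $h$ is orientation--preserving on the arc and that the orbit is finite --- which is exactly the paper's monotonicity argument, applied directly to any $Z$ rather than only to those crossing $X$. Your treatment of part~2 agrees with the paper's.
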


\begin{proof}
1) If $Z$ is equal to $X$ or $Y$ or one of their complements, the result is
trivial, so we will assume that $Z$ is not equal to $X$, $Y$, $X^{\ast}$ or
$Y^{\ast}$.

The given element $h$ of $H_{X}\cap H_{Y}$ fixes each of the points on $S^{1}$
corresponding to the endpoints of $X$ and $Y$, and hence preserves each of the
four intervals into which these four points divide $S^{1}$. If $e$ is an
endpoint of $Z$, then $e$ lies in one of these four intervals, say $J$. We
will show that $h$ must fix $e$. Now $h$ induces a homeomorphism of $J$ fixing
the endpoints. So if $he\neq e$, then $h^{k}e$ also cannot equal $e$, for any
value of $k>0$. But, as $\Phi$ is cross-connected, Lemma
\ref{allintersectionsubgroupsarecommensurable} implies that $H_{X}\cap H_{Z}$
and $H_{X}\cap H_{Y}$ are commensurable, so there is $k>0$ such that $h^{k}$
lies in $H_{X}\cap H_{Z}$. In particular, $h^{k}$ must preserve $Z$, so that
$h^{k}$ fixes $e$. This contradiction shows that $h$ itself must fix $e$, as
required. Similarly $h$ must fix the other endpoint of $Z$, so that $hZ=Z$.
This completes the proof of part 1) of the lemma.

2) Part 1) shows that each element of $H_{X}\cap H_{Y}$ fixes each of those
points of $S^{1}$ which correspond to endpoints of elements of the CCC $\Phi$.
As this set is dense in $S^{1}$, it follows that each element of $H_{X}\cap
H_{Y}$ fixes every point of $S^{1}$. Thus $H_{X}\cap H_{Y}$ is contained in
the kernel of the action of $G(v)$ on $S^{1}$. Conversely each element of this
kernel must preserve each element of the CCC $\Phi$, and so, in particular,
must preserve $X$ and $Y$, so that this kernel is contained in $H_{X}\cap
H_{Y}$. We conclude that $H_{X}\cap H_{Y}$ is equal to the kernel of the
action of $G(v)$ on $S^{1}$, thus completing the proof of part 2) of the lemma.
\end{proof}

In what follows, it will be convenient to denote the kernel of the action of
$G(v)$ on $S^{1}$ by $K$. If the action of $G(v)/K$ on $S^{1}$ is orientation
preserving, we will show that this action is a convergence group action using
Swenson's criteria in \cite{Swenson} as corrected in \cite{Swenson-correction}%
. If this action reverses orientation, we apply Swenson's criteria to the
orientation preserving subgroup of $G(v)/K$ of index $2$. For simplicity in
what follows, we will simply assume that $G(v)/K$ acts on $S^{1}$ preserving
orientation. We use the following standard terminology for actions of groups
on $S^{1}$. A pair of distinct points of $S^{1}$ is called an \textit{axis}.
Two axes $\{a,a^{\prime}\}$ and $\{b,b^{\prime}\}$ \textit{cross} if
$\{a,a^{\prime}\}$ and $\{b,b^{\prime}\}$ are linked. Recall that
$\mathcal{E}$ is the set of endpoints of elements of the CCC $\Phi$, and that
we have constructed an embedding of $\mathcal{E}$ into $S^{1}$. Thus to each
element $Z$ of the CCC $\Phi$, we have associated a pair of distinct points of
$S^{1}$, which are stabilised by the $VPC1$ group $H_{Z}/K$. We let
$\mathcal{A}$ denote the set of all axes obtained from elements of the CCC
$\Phi$. The action of $G(v)/K$ on $S^{1}$ induces an action on $\mathcal{A}$.

\begin{lemma}
\label{Swensonconditions}(Swenson \cite{Swenson, Swenson-correction}) Suppose
that the group $G(v)/K$ acts on $S^{1}$ preserving orientation. This action is
a convergence group action if the following hold:

\begin{enumerate}
\item The set of all points in all the axes in $\mathcal{A}$ is dense in
$S^{1}$.

\item For distinct axes $S$ and $T$ in $\mathcal{A}$, there are only finitely
many axes $R$ in $\mathcal{A}$ such that $R$ crosses both $S$ and $T$.

\item The set $\mathcal{A}$ is a discrete subset of $S^{1}\times S^{1}-\Delta
$, where $\Delta$ denotes the diagonal of $S^{1}\times S^{1}$, and is closed
in the set of all axes.

\item For each axis $A$ in $\mathcal{A}$, the stabilizer of $A$ in $G(v)/K$ is
infinite cyclic and acts as a convergence group on $S^{1}$.
\end{enumerate}
\end{lemma}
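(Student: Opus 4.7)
The statement is attributed to Swenson and appears in \cite{Swenson} with corrections in \cite{Swenson-correction}, so the proof would essentially consist of invoking these references. That said, to keep the exposition self-contained, I would sketch the structure of Swenson's argument rather than reproving everything from scratch. The plan is to verify the defining property of a convergence group action directly: given a sequence $(g_n)$ in $G(v)/K$ containing no convergent subsequence, produce a subsequence, renamed $(g_n)$, and points $a,b \in S^1$ such that $g_n \to b$ uniformly on compact subsets of $S^1 - \{a\}$, and $g_n^{-1} \to a$ uniformly on compact subsets of $S^1 - \{b\}$.

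The key step is to extract such points $a$ and $b$ from the axis system. First I would pick any axis $A_0 \in \mathcal{A}$ and consider the translates $g_n A_0$. Condition 3) says $\mathcal{A}$ is discrete and closed in the space of axes, so after passing to a subsequence either $g_n A_0$ is eventually constant (which leads, via condition 4), to a compact-coincidence argument one must rule out by our non-convergence assumption) or the pair of endpoints of $g_n A_0$ escapes every compact subset of $S^1 \times S^1 - \Delta$. In the escaping case, by passing to a further subsequence one gets endpoint pairs converging to $(a,b)$ with $a = b$ or $a \ne b$. Picking a second axis $A_1 \in \mathcal{A}$ crossing $A_0$ (using density, condition 1), and condition 2) to ensure such crossing axes exist), the same argument applied to $g_n A_1$ produces a second pair of limit points, and condition 2) forces at least one endpoint of the limit of $g_n A_1$ to coincide with one coming from $A_0$. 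Iterating over a dense family of axes pins down $a$ and $b$ as unique limit points.

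The second step is to promote this convergence on axis endpoints to uniform convergence on compact subsets of $S^1 - \{a\}$. This uses density (condition 1) plus discreteness (condition 3): any $x \in S^1 - \{a\}$ can be approximated by axis endpoints avoiding a neighborhood of $a$, and condition 2) forbids too many axes from crossing any fixed pair of ``separating'' axes near $a$, which gives the required equicontinuity away from $a$. Condition 4) is used at the end to show that the ``degenerate" sequences, those fixing an axis, are already convergence sequences since the stabilizer is virtually cyclic and already acts as a convergence group on $S^1$.

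The main obstacle is the second step, where one must carefully propagate axis-endpoint convergence to uniform convergence on compacta; this is precisely the technical content of \cite{Swenson} and the focus of the correction in \cite{Swenson-correction}. In our write-up I would give the extraction of $(a,b)$ in detail (since it uses conditions 1)--4) in a visible way), and refer to Swenson for the equicontinuity step, with the remark that the orientation-reversing case reduces to the orientation-preserving one by passing to the index-two subgroup as indicated just before the lemma statement.
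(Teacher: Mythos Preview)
The paper does not prove this lemma at all: it is stated as a result of Swenson \cite{Swenson, Swenson-correction} and used as a black box. Immediately after the statement, the paper writes ``We will verify that these conditions hold'' and then, in the \emph{next} lemma, checks that conditions (1)--(4) are satisfied in the specific situation at hand. So the paper's ``proof'' of this lemma is simply the citation.

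Your proposal correctly identifies that the result is attributed to Swenson, and then goes further by sketching how Swenson's argument runs. That is more than the paper does, and is a reasonable choice if you want the exposition to be self-contained. Your outline---extracting limit points from translates of an axis using discreteness and closedness, using a second crossing axis to pin down the limit, and then propagating to uniform convergence via density and the finiteness condition---is a plausible shape for such an argument, and you are right to flag the equicontinuity step as the delicate part addressed in \cite{Swenson-correction}. Just be aware that for the purposes of matching the paper, a bare citation is all that appears; your sketch is an addition, not a reconstruction of something the paper proves.
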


We will verify that these conditions hold, but first we need to give the
definition of a convergence group action.

\begin{definition}
\label{defnofconvergencegroupaction} If a group $G$ acts on a space $X$, this
action is a \emph{convergence group action} if for any sequence of distinct
elements of $G$, there is a subsequence $\{g_{i}\}$ and points $P$ and $N$ in
$X$ such that $g_{i}(x)$ tends to $P$ uniformly on compact subsets of
$X-\{N\}$ and $g_{i}^{-1}(x)$ tends to $N$ uniformly on compact subsets of
$X-\{P\}$.
\end{definition}

\begin{lemma}
If the group $G(v)/K$ acts on $S^{1}$ preserving orientation, this action is a
convergence group action.
\end{lemma}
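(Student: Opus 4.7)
The plan is to verify the four hypotheses of Lemma~\ref{Swensonconditions} for the action of $G(v)/K$ on $S^{1}$, where $\mathcal{A}$ is the set of axes coming from the embedding $\varphi:\mathcal{E}\to S^{1}$ constructed above. Each element $Z$ of $\Phi$ determines the axis $\{\varphi(e),\varphi(e')\}$ where $e,e'$ are its two endpoints; by condition 6) of Proposition~\ref{canembedpointsincircle}, two axes arising from elements $X,Y$ of $\Phi$ cross in $S^{1}$ if and only if $X$ and $Y$ cross in $G$, and by Lemma~\ref{stabilizeroftwoaxesequalsK} the kernel $K$ acts trivially so $\mathcal{A}$ genuinely carries the $G(v)/K$--action.

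First, condition 1) is immediate: we already observed that $\varphi(\mathcal{E})$ is dense in $S^{1}$, and the set of points in axes of $\mathcal{A}$ is exactly $\varphi(\mathcal{E})$. For condition 2), let $S,T\in\mathcal{A}$ be distinct axes coming from elements $X,Y$ of $\Phi$. Since $X$ is not equal to $Y$ or $Y^{\ast}$, Lemma~\ref{numbercrossingtwoaxesisfinite} gives that only finitely many elements of $E(\mathcal{X})$ cross both $X$ and $Y$; since the elements of $\Phi$ that contribute axes crossing both $S$ and $T$ must (by the crossing correspondence) cross both $X$ and $Y$, and since complementary elements of $\Phi$ give the same axis, we conclude there are only finitely many such axes.

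For condition 3), I will show that $\mathcal{A}$ is a locally finite family of axes, which implies both discreteness in $S^{1}\times S^{1}-\Delta$ and closedness in the set of all axes. Given any point $(a,b)\in S^{1}\times S^{1}-\Delta$, choose two axes $S,T\in\mathcal{A}$ whose endpoints separate $a$ from $b$ in $S^{1}$ (this is possible by the density of $\varphi(\mathcal{E})$ established using Lemma~\ref{anyintervalcontainsanotherpoint}). Any axis sufficiently close to $(a,b)$ must cross both $S$ and $T$, so condition 2) forces only finitely many axes of $\mathcal{A}$ to lie near $(a,b)$. Thus $\mathcal{A}$ is discrete, and the limit of any convergent sequence of axes in $\mathcal{A}$ must in fact be an element of $\mathcal{A}$ (the only alternative would be a limit with both endpoints equal, which is excluded as such a limit would violate finiteness of crossings with separating axes).

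Finally, condition 4) asks that the stabilizer in $G(v)/K$ of each axis $A\in\mathcal{A}$ be infinite cyclic and act as a convergence group on $S^{1}$. If $A$ corresponds to $X\in\Phi$ with stabilizer $H_{X}$, then an element $g\in G(v)$ stabilizes $A$ setwise iff $gX=X$ or $gX=X^{\ast}$ (using Condition~\ref{conditionofverygoodposition}), so the stabilizer is the subgroup $\mathcal{K}=\{g\in G(v):gX\sim X\text{ or }gX\sim X^{\ast}\}$. By Lemma~\ref{HhasinfiniteindexnormaliserimpliesGisVPCn+2} and the assumption that $G$ is not $VPC(n+2)$, the group $H_{X}$ has small commensuriser, so $\mathcal{K}/H_{X}$ is finite; thus $\mathcal{K}/K$ is virtually $H_{X}/K$, which is $VPC1$ since $e(H_{X},K)=2$ by Lemma~\ref{HintersectconjugateofSisK-finite}. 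In the orientation-preserving case this quotient acts by orientation-preserving homeomorphisms fixing the two points of $A$, hence is infinite cyclic, and any infinite cyclic group fixing two points of $S^{1}$ with nontrivial translation on the complementary intervals acts as a convergence group (the powers of a generator push everything to one fixed point and their inverses push to the other).

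The main obstacle will be condition 3)---specifically, proving that a limit of axes in $\mathcal{A}$ cannot collapse to a point of $\Delta$. The argument sketched above via crossing separating axes is the right idea, but care is needed: one must ensure that $\varphi(\mathcal{E})$ is dense enough that for any candidate limit $(a,a)$ one can find separating axes on both sides, and this rests on Lemma~\ref{anyintervalcontainsanotherpoint} together with the observation that axes very close to $(a,a)$ must have both endpoints in a small arc near $a$ and hence cross any axis with one endpoint on each side of $a$.
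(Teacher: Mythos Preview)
Your handling of conditions 1) and 2) is correct and matches the paper. The gap is in condition 3), and it propagates to condition 4).

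For condition 3) you assert that for any $(a,b)\in S^{1}\times S^{1}-\Delta$ one can find \emph{two} axes $S,T\in\mathcal{A}$ each crossing $\{a,b\}$, and you invoke density of $\varphi(\mathcal{E})$. Density of endpoints does not give this: it places points of $\varphi(\mathcal{E})$ in each complementary arc, but those points may belong to axes lying entirely within a single arc. In fact the paper's argument shows the opposite of what you need: if a sequence $R_{i}$ of distinct axes in $\mathcal{A}$ converges to an axis $R$, then there is \emph{exactly one} axis $P\in\mathcal{A}$ crossing $R$ (a second one would already contradict condition 2)), so your two-axis separation cannot be arranged. The paper then uses cross-connectedness to find axes $B,B'$ crossing $P$, not crossing $R$, and lying on opposite sides of $R$; infinitely many $R_{i}$ then sit strictly between $B$ and $B'$ in the partial order, contradicting the interval finiteness of Lemma~1.15 of \cite{SS1}. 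This step has no analogue in your sketch. Note also that Swenson's condition 3) does not ask you to rule out degeneration to $\Delta$; it asks for discreteness in $S^{1}\times S^{1}-\Delta$ and closedness in the set of axes, i.e.\ that no sequence of distinct axes of $\mathcal{A}$ converges to a genuine axis. Your final paragraph misidentifies the obstacle.

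For condition 4), you assert that an infinite cyclic group fixing two points and acting with ``nontrivial translation on the complementary intervals'' is a convergence group. The issue is why the generator $g$ has no further fixed points in those intervals: a priori $g$ could fix additional points, and then $g^{n}$ would not push every compact set off a fixed point toward the other. The paper's proof uses condition 3) here: for an axis $B$ crossing $A$, the sequences $g^{n}b$ and $g^{n}b'$ must converge, and if they converged to distinct points one would obtain a limit axis in $\mathcal{A}$, contradicting the discreteness just established. Hence both sequences converge to the same fixed point $a^{+}$, giving genuine north--south dynamics. Without condition 3) this step does not go through.
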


\begin{proof}
We verify the four conditions in Lemma \ref{Swensonconditions}.

1) This condition is part of our construction of the action of $G(v)$ on
$S^{1}$.

2) This result is proved in Lemma \ref{numbercrossingtwoaxesisfinite}.

3) Suppose that $R_{i}$ is a sequence of distinct axes in $\mathcal{A}$ which
converge to some axis $R$ (which may or may not be in $\mathcal{A}$). We will
obtain a contradiction. This will imply both that $\mathcal{A}$ is discrete
and that $\mathcal{A}$ is closed in the set of all axes.

If no axis in $\mathcal{A}$ crosses $R$, the fact that $\mathcal{A}$ is
cross-connected implies that all axes in $\mathcal{A}$ lie on one side of $R$.
But this contradicts the fact that our construction of the action of $G(v)$ on
$S^{1}$ makes the set of all points in all the axes in $\mathcal{A}$ dense in
$S^{1}$. We conclude there must be an axis $P$ in $\mathcal{A}$ which crosses
$R$. Hence there is $n$ such that $P$ crosses each $R_{i}$, with $i>n$. If
there were a second axis $P^{\prime}$ in $\mathcal{A}$ which crosses $R$,
there would be $n^{\prime}$ such that $P^{\prime}$ crosses each $R_{i}$, with
$i>n^{\prime}$. But this would contradict part 2) of this lemma, as $P\ $and
$P^{\prime}$ would be distinct axes in $\mathcal{A}$ which are crossed by
infinitely many axes in $\mathcal{A}$. Hence $P$ is the only axis in
$\mathcal{A}$ which crosses $R$. As $\mathcal{A}$ is cross-connected, part 1)
implies there are axes $B$ and $B^{\prime}$ in $\mathcal{A}$ which cross $P$,
do not cross $R$ and are separated by $R$. Thus there is $m$ such that each
$R_{i}$, with $i>m$, crosses $P$, does not cross $B$ or $B^{\prime}$ and
separates $B$ from $B^{\prime}$. Thus there are infinitely many $R_{i}$'s
which cross $P$ and lie between $B$ and $B^{\prime}$. Let $X$, $X^{\prime}$
and $Y_{i}$ denote elements of the CCC $\Phi$ corresponding to $B$,
$B^{\prime}$ and $R_{i}$, respectively. Thus, after replacing some of these
almost invariant sets by their complements if needed, we have infinitely many
$Y_{i}$'s such that $X\subset Y_{i}\subset X^{\prime}$. (Note that these
inclusions follow from the facts that no two of these sets can cross, and that
the elements of $\Phi$ are in very good position.) But this contradicts Lemma
1.15 of \cite{SS1}. This contradiction completes the proof of part 3).

4) Let $A$ be an axis in $\mathcal{A}$. Thus $A$ corresponds to the two
endpoints of an element $Z$ of the CCC $\Phi$, and the stabilizer of $A$ is
the $VPC1$ group $H_{Z}/K$. Let $B$ be an axis in $\mathcal{A}$ which crosses
$A$, and let $a^{+}$, $a^{-}$, $b$ and $b^{\prime}$ denote the points in
$S^{1}$ which correspond to the endpoints of $A$ and $B$. Let $I\ $and
$I^{\prime}$ denote the two closed intervals into which $a^{+}$ and $a^{-}$
cut $S^{1}$, labeled so that $b$ lies in $I$ and $b^{\prime}$ lies in
$I^{\prime}$. Let $g$ be an element of $H_{A}-H_{B}$. Thus $g$ fixes $a$ and
$a^{\prime}$, as well as preserving $I\ $and $I^{\prime}$, as we are assuming
that $G(v)/K$ acts on $S^{1}$ preserving orientation. As $g$ does not preserve
$B$, the action of $g$ on $S^{1}$ must move both $b$ and $b^{\prime}$. As $g$
induces a homeomorphism of $I$, it follows that for all $n>0$ the points
$g^{n}b$ are distinct points of $I$. This sequence must converge to some point
$b_{0}$ in $I$. Similarly, the sequence $g^{n}b^{\prime}$ must converge to
some point $b_{0}^{\prime}$ in $I^{\prime}$. If $b_{0}$ and $b_{0}^{\prime}$
are distinct points of $S^{1}$, it follows that the sequence of axes
$g^{n}\{b,b^{\prime}\}$ converges to an axis which contradicts part 3). We
conclude that $b_{0}$ and $b_{0}^{\prime}$ must be equal and so must also be
equal to $a^{+}$ or to $a^{-}$. We can assume $b_{0}=b_{0}^{\prime}=a^{+}$, by
replacing $g$ by its inverse, if needed. Similarly the sequences $g^{-n}b$ and
$g^{-n}b^{\prime}$, must converge to a single point, and this point must be
$a^{-}$.

We have shown that any element of $H_{A}-H_{B}$ acts on $S^{1}$ as an element
of infinite order. As $K=H_{A}\cap H_{B},$ by Lemma
\ref{stabilizeroftwoaxesequalsK}, and $H_{A}/K$ is $VPC1$, it follows that
$H_{A}/K$ is torsion free and hence infinite cyclic. We let $g$ denote a
generator of this stabilizer.

Now consider two points $g^{k}b$ and $g^{l}b^{\prime}$ in $S^{1}$. They cut
$S^{1}$ into two intervals $I_{k,l}^{+}$ and $I_{k,l}^{-}$ labeled so that
$a^{+}$ lies in $I_{k,l}^{+}$, and $a^{-}$ lies in $I_{k,l}^{-}$. If $C$ is a
compact subset of $S^{1}-\{a^{+}\}$, there is $N$ such that $C$ is contained
in the interval $I_{-N,N}^{-}$. It follows that as $n\rightarrow\infty$,
$g^{-n}(x)\rightarrow a^{-}$ uniformly on $C$. Similarly if $C$ is a compact
subset of $S^{1}-\{a^{-}\}$, then as $n\rightarrow\infty$, $g^{n}%
(x)\rightarrow a^{+}$ uniformly on $C$. This implies that the action of the
stabilizer of $A$ on $S^{1}$ is a convergence group action. One simply takes
$P\ $and $N$ to be $a^{+}$ and $a^{-}$ or vice versa. This completes the proof
of part 4).
\end{proof}

Now we have a convergence group action on $S^{1}$ we can give the proof of
Theorem \ref{strongcrossingimpliesFuchsiantype} which we restate for the
reader's convenience.

\begin{theorem}
Let $(G,\mathcal{S})$ be a group system of finite type such that
$e(G:\mathcal{S})=1$. Suppose that $n\geq0$, and that for any $VPC(\leq n)$
subgroup $M$ of $G$, we have $e(G,M:\mathcal{S})=1$. Further suppose that $G$
is not $VPC(n+2)$. Let $\{H_{\lambda}\}_{\lambda\in\Lambda}$ be a finite
family of $VPC(n+1)$ subgroups of $G$. For each $\lambda\in\Lambda$, let
$X_{\lambda}$ denote a nontrivial $\mathcal{S}$--adapted $H_{\lambda}$--almost
invariant subset of $G$. Let $E(\mathcal{X})$ denote the set of all translates
of the $X_{\lambda}$'s and their complements, and let $\Gamma$ denote the
algebraic regular neighbourhood of the $X_{\lambda}$'s in $G$. Let $\Phi$ be a
non-isolated CCC of $E(\mathcal{X})$ in which all crossings are strong, and
let $v$ denote the corresponding vertex of $\Gamma$ which encloses $X$. Then
$v$ is of $VPCn$--by--Fuchsian type relative to $\mathcal{S}$.
\end{theorem}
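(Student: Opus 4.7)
The plan is to assemble the convergence group structure constructed throughout this section into the required $VPCn$-by-Fuchsian description, and then verify the adapted conditions of Definition \ref{defnofVPC-by-FuchsianrelativetoS}. First I would collect the key fact established just before the theorem statement: Lemma \ref{stabilizeroftwoaxesequalsK} identifies the kernel $K$ of the action $G(v)\to\mathrm{Homeo}(S^1)$ with $H_X\cap H_Y$ for any two distinct non-complementary $X,Y\in\Phi$, which is $VPCn$ by Lemma \ref{HintersectconjugateofSisK-finite}. The verification of Swenson's four criteria in the lemma immediately preceding the theorem gives that $G(v)/K$ acts on $S^1$ as an (orientation-preserving, up to index $2$) convergence group, so by the combined theorem of Tukia, Casson--Jungreis, and Gabai $G(v)/K$ is topologically conjugate to a Fuchsian group. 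Thus $G(v)$ is $VPCn$-by-Fuchsian.

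Next I would show that the Fuchsian quotient is neither finite nor two-ended. The stabiliser $H_X$ of any element of $\Phi$ is $VPC(n+1)$ and projects to an infinite cyclic subgroup of $G(v)/K$, ruling out finiteness. For two distinct non-complementary $X,Y$ in $\Phi$, strong crossing together with Lemma \ref{XcrossesYstronglyimpliese=2} and Condition \ref{conditionofverygoodposition} forces $H_X$ and $H_Y$ to be non-commensurable; their images in $G(v)/K$ therefore generate a subgroup that is not virtually cyclic, ruling out two-endedness.

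Then I would identify peripheral subgroups of $G(v)$ with edges of $\Gamma$ incident to $v$. Because $\varphi(\mathcal{E})$ is dense in $S^1$, the Fuchsian group $G(v)/K$ is of the first kind, so its peripheral subgroups are exactly the maximal parabolic subgroups; each lifts to a $VPC(n+1)$ peripheral subgroup $\Sigma$ of $G(v)$. In the pretree construction of $\Gamma$, an edge at $v$ records an adjacency between the star of $\Phi$ and a neighbouring CCC; the subgroup carried by such an edge stabilises a pair of points of $S^1$ that is a limit of axes in $\mathcal{A}$ but is not itself axial in $\mathcal{A}$ (else it would already lie in $\mathcal{E}$ and not be adjacent), so this pair must be a parabolic fixed pair, and the edge group is the corresponding peripheral $\Sigma$. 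Uniqueness of the edge per peripheral subgroup follows from the pretree-star construction and the minimality clause of Theorem \ref{keypropertiesofalgregnbhdofpair}.

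Finally, I would verify the two conditions on $\mathcal{S}$. For any $S_i\in\mathcal{S}$ conjugate into $G(v)$, Lemma \ref{S-adaptedimpliesgSintersectXisH-finite} applied to every element of $\Phi$ forces the $S^1$-action of (a finite-index subgroup of) $S_i$ to fix every endpoint of every axis in $\mathcal{A}$ it preserves; if it fixes two distinct points of $S^1$ it centralises an axial element and by Lemma \ref{stabilizeroftwoaxesequalsK} is commensurable with a subgroup of $K$, while if it fixes only one it is parabolic and conjugate into some peripheral $\Sigma$. Conversely, a peripheral $\Sigma$ with no incident edge in $\Gamma$ corresponds to a cusp of $G(v)/K$ that is not realised by any adjacent CCC; since $\Gamma$ is $\mathcal{S}$-adapted (Theorem \ref{keypropertiesofalgregnbhdofpair}), the vertex $v$ must ``absorb'' such a $\Sigma$ through some $S_i\in\mathcal{S}$ conjugate into $\Sigma$ and not commensurable with any subgroup of $K$ — otherwise an adjacent CCC would have been forced by the enclosing and minimality properties of $\Gamma$. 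The main obstacle will be this last bookkeeping step: precisely matching edges at $v$ with parabolic cusps of $G(v)/K$, and accounting for every $S_i\in\mathcal{S}$ either as peripheral or as commensurable with $K$, requires carefully pairing the combinatorial structure of the pretree with the hyperbolic geometry of the $2$-orbifold $\mathbb{H}^2/(G(v)/K)$, and this is exactly where the relative case is more intricate than the absolute case treated in Theorem 7.8 of \cite{SS2}.
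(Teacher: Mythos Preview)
Your overall skeleton is right, and condition (1) is handled correctly. But conditions (2)--(4) have real gaps. For (2), the claim that an edge group $G(e)$ ``stabilises a pair of points of $S^1$ that is a limit of axes'' is unjustified: a priori $G(e)$ is just some subgroup of $G(v)$, with no reason to fix a pair of points. The paper's key argument, which you are missing, is that if the image of $G(e)$ in $G(v)/K$ contained a hyperbolic element $a$, its axis $\lambda$ would (by density of $\varphi(\mathcal{E})$) cross the axis of some $X\in\Phi$, and iterating $a$ would give a strictly nested chain $\ldots\subsetneq\alpha^{-1}X\subsetneq X\subsetneq\alpha X\subsetneq\ldots$; this forces $G(e)\cap X$ and $G(e)\cap X^{\ast}$ both to be $H_X$--infinite, so the edge splitting would cross $X$ strongly, contradicting the defining property of $\Gamma$. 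Hence the image of $G(e)$ is parabolic. The hypothesis that $G$ has no $\mathcal{S}$--adapted splittings over $VPC(\leq n)$ groups is then used twice: first to show the image is infinite (not just finite), and again to force $G(e)\cap K$ to have finite index in $K$, making $G(e)$ genuinely $VPC(n+1)$. Showing $G(e)=\Sigma$ (not merely finite index) and uniqueness of the edge is done by an explicit refinement-and-slide construction, not abstract minimality. The same ``no hyperbolic image'' argument, applied to a conjugate of $S_i$ inside $G(v)$ and this time contradicting the $\mathcal{S}$--adaptedness of the $X_\lambda$'s, is what gives condition (4); your version conflates ``fixes two points of $S^1$'' with ``lies in $K$'', when fixing two points means hyperbolic --- precisely the case that has to be excluded.

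For (3), ``otherwise an adjacent CCC would have been forced'' does not do the job. The paper's construction is concrete: take an infinite simple geodesic in the quotient orbifold with both ends in the cusp associated to $\Sigma$; this yields a free-product splitting of $G(v)/K$, hence a splitting of $G(v)$ over $K$, which (because there is no edge at $\Sigma$) extends to a splitting $\sigma$ of $G$ over $K$ adapted to every peripheral subgroup except $\Sigma$. Since $K$ is $VPCn$, the hypothesis $e(G,M:\mathcal{S})=1$ for $VPC(\leq n)$ subgroups forces $\sigma$ not to be $\mathcal{S}$--adapted, producing some $S_i$ to which $\sigma$ is not adapted. One then argues (again via the no-hyperbolic-image step) that this $S_i$ must be conjugate into $\Sigma$ and not commensurable with a subgroup of $K$. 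Without this splitting-over-$K$ device, there is no leverage to produce the required $S_i$.
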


\begin{proof}
We start by recalling from Definition \ref{defnofVPC-by-FuchsianrelativetoS}
the conditions which must be satisfied by $v$. These are:

\begin{enumerate}
\item The vertex group $G(v)$ is $VPCn$--by--Fuchsian, where the Fuchsian
group is not finite nor two-ended.

\item Each edge of $\Gamma$ which is incident to $v$ carries some peripheral
subgroup of $G(v)$, and for each peripheral subgroup $\Sigma$, there is at
most one edge of $\Gamma$ which is incident to $v$ and carries $\Sigma$.

\item For each peripheral subgroup $\Sigma$ of $G(v)$, if there is no edge of
$\Gamma$ which is incident to $v$ and carries $\Sigma$, then there is a group
$S_{i}$ in $\mathcal{S}$, such that a conjugate of $S_{i}$ is contained in
$\Sigma$, but $S_{i}$ is not conjugate commensurable with a subgroup of the
$VPC$ normal subgroup of $G(v)$.

\item If there is $S_{i}$ in $\mathcal{S}$, such that a conjugate of $S_{i}$
is contained in $G(v)$, then either $S_{i}\ $is conjugate to a subgroup of
some peripheral subgroup of $G(v)$, or $S_{i}$ is conjugate commensurable with
a subgroup of the $VPC$ normal subgroup of $G(v)$.
\end{enumerate}

Now we verify these conditions.

1) The conclusion of the preceding work in this section is that the group
$G(v)$ has a $VPCn$ normal subgroup $K$ such that $G(v)/K$ has an action on
the circle $S^{1}$, for which the orientation preserving subgroup acts as a
convergence group. If $G(v)$ is finitely generated we can apply the result,
due to Casson and Jungreis \cite{Casson-Jungreis}, and to Gabai \cite{Gabai}
independently, that any convergence group action on $S^{1}$ is conjugate to
the action of a Fuchsian group. If $G(v)$ is not finitely generated, we have
the same result due to Tukia \cite{Tukia}. After this conjugation, we have an
action of $G(v)/K$, or a subgroup of index $2$, by isometries on the
hyperbolic plane $\mathbb{H}^{2}$. In the second case, we can further
conjugate the action of $G(v)/K$ on $\mathbb{H}^{2}$ so that it extends to an
action of the entire group by isometries on $\mathbb{H}^{2}$. We will say that
$G(v)/K$ is a Fuchsian group in this case also, although usually this
terminology is reserved for orientation preserving groups of isometries of
$\mathbb{H}^{2}$. As $K$ is $VPCn$, we see that $G(v)$ is a $VPCn$%
--by--Fuchsian group. Further, it is clear from the construction of the action
of $G(v)$ on $S^{1}$ that, as $\Phi$ is a non-isolated CCC, $G(v)/K$ is not
finite nor two-ended. This completes the proof that condition 1) holds.

2) Let $e$ be an edge of $\Gamma$ incident to $v$, and let $G(e)$ denote the
group associated to $e$. Then $G(e)$ is a subgroup of $G(v)$ and so has an
action on $S^{1}$ with kernel $G(e)\cap K$. We let $A$ denote the quotient of
$G(e)$ by this kernel. As $K$ is $VPCn$, we know that $G(e)\cap K$ is $VPCl$,
for some $l\leq n$. Suppose that $A$ is finite. It follows that $G(e)$ itself
is $VPCl$. As $\Gamma$ is $\mathcal{S}$--adapted, so is the splitting of
$G\ $over $G(e)$ associated to $e$. As $G$ has no $\mathcal{S}$--adapted
splittings over any $VPC(\leq n)$ subgroup, this is a contradiction. This
shows that $A$ must be infinite. We let $A_{0}$ denote the orientation
preserving subgroup of $A$, so that $A_{0}$ is also infinite.

Now each element of $A_{0}$ must act on $\mathbb{H}^{2}$ by an elliptic,
hyperbolic or parabolic isometry. If each element of $A_{0}$ were elliptic,
this would imply that $A_{0}$ is finite. Thus $A_{0}$ must contain a
non-elliptic element. Suppose that $A_{0}$ contains a hyperbolic element $a$,
and let $\lambda$ denote the axis of $a$. Now recall that the almost invariant
sets in the CCC $\Phi$ are cross-connected, so that the corresponding
collection of axes in $\mathbb{H}^{2}$ is also cross-connected. In addition
the set of endpoints of these axes is dense in $S^{1}$ by our construction. It
follows that $\lambda$ must cross some axis $\mu$ associated to an $H$--almost
invariant element $X$ of the CCC $\Phi$. Let $\overline{X}$ denote the half
space in $\mathbb{H}^{2}$ determined by $\mu$ which is associated to $X$.
After replacing $a$ by $a^{-1}$ if needed, we have inclusions $\ldots
a^{-2}\overline{X}\varsubsetneq a^{-1}\overline{X}\varsubsetneq\overline
{X}\varsubsetneq a\overline{X}\varsubsetneq a^{2}\overline{X}\varsubsetneq
\ldots$, and it follows from the construction above that we have inclusions
$\ldots\alpha^{-2}X\varsubsetneq\alpha^{-1}X\varsubsetneq X\varsubsetneq\alpha
X\varsubsetneq\alpha^{2}X\varsubsetneq\ldots$, where $\alpha$ denotes an
element of $G(e)$ which maps to $a$ under the quotient map $G(e)$ to $A$. This
implies that $G(e)\cap X$ and $G(e)\cap X^{\ast}$ are both $H$--infinite, so
that any almost invariant subset of $G$ over $G(e)$ must cross $X$ strongly.
But, as $\Gamma$ is an algebraic regular neighbourhood of the $X_{i}$'s, the
splitting of $G$ determined by $e$ cannot cross any element of the CCC $\Phi$.
This contradiction shows that $A_{0}$ cannot contain any hyperbolic elements.
It follows that $A_{0}$ must consist entirely of parabolic and elliptic
elements. As $A_{0}$ is an infinite discrete group of orientation preserving
isometries of $\mathbb{H}^{2}$, it follows that every element of $A_{0}$ must
be parabolic, that $A_{0}$ fixes a point on $S^{1}$, and that $A_{0}$ is
infinite cyclic. Recall that $G(e)\cap K$ is $VPCl$, for some $l\leq n$. It
follows that $G(e)$ itself is $VPC(l+1)$. As $\Gamma$ is $\mathcal{S}%
$--adapted, so is the splitting of $G\ $over $G(e)$ associated to $e$. As $G$
has no $\mathcal{S}$--adapted splittings over any $VPC(\leq n)$ subgroup, this
implies that $l=n$, so that $G(e)$ must be of finite index in the peripheral
subgroup $\Sigma$ of $G(v)$ associated to the cusp fixed by $A_{0}$.

At this point, we have shown that each edge of $\Gamma$ which is incident to
$v$ carries a subgroup of finite index in some peripheral subgroup of $G(v)$.
It remains to show that for each peripheral subgroup $\Sigma$ of $G(v)$, there
is at most one such edge, and if such an edge exists it must carry $\Sigma$.
Suppose there are edges $e_{1},\ldots,e_{k}$ of $\Gamma$ which are incident to
$v$ and carry a subgroup of finite index in $\Sigma$. We consider a refinement
of $\Gamma$ to a new graph of groups structure $\Gamma^{\prime}$ for $G$
defined as follows. Attach a new edge $E$ to $\Gamma$ with one end at $v$, the
other vertex $w$ not being attached, and with associated groups
$G(w)=G(E)=\Sigma$. Then for each $i$, slide $e_{i}$ along $E$ to the other
vertex $w$. The splitting of $G$ associated with $E$ is over the $VPC(n+1)$
group $\Sigma$, is adapted to $\mathcal{S}$, and does not cross any translate
of any $X_{\lambda}$. Hence this splitting is enclosed by some $V_{1}$--vertex
$v^{\prime}$ of the original graph of groups $\Gamma$. This implies that there
is a path $\lambda$ in $\Gamma^{\prime}$ whose first edge is $E$, and whose
final vertex is $v^{\prime}$ such that all interior vertices of $\lambda$ have
valence $2$ in $\Gamma^{\prime}$, and such that every edge group of $\lambda$
is equal to $\Sigma$, and each interior vertex also carries $\Sigma$. As
$G(v)$ properly contains $\Sigma$, it follows that $v$ is the initial vertex
of $\lambda$, and that $w$ is an interior vertex of $\lambda$. Hence $w$ has
valence $2$ in $\Gamma^{\prime}$, so that $k$ must equal $1$. Further the edge
$e_{1}$ is the second edge of $\lambda$, so that $G(e_{1})$ must equal
$\Sigma$, proving the required result.

This completes the proof that condition 2) holds.

3) \ Let $\Sigma$ be a peripheral subgroup of $G(v)$, such that there is no
edge of $\Gamma$ which is incident to $v$ and carries $\Sigma$. In the
quotient orbifold given by the action of $G(v)/K$ on the hyperbolic plane,
there is an infinite simple geodesic with both ends in the cusp associated to
$\Sigma$. This geodesic determines a free product splitting of $G(v)/K$ and
hence a splitting of $G(v)$ over $K$. Clearly this splitting of $G(v)$ is
adapted to all peripheral subgroups of $G(v)$ other than $\Sigma$. Hence this
splitting of $G(v)$ over $K$ can be extended to a splitting $\sigma$ of $G$
itself over $K$. Again $\sigma$ is adapted to all peripheral subgroups of
$G(v)$ other then $\Sigma$. As $K$ is $VPCn$, our hypothesis on $G$ tells us
that $\sigma$ cannot be $\mathcal{S}$--adapted, so that there is $S_{i}$ in
$\mathcal{S}$ such that $\sigma$ is not adapted to $S_{i}$.

We consider how $S_{i}$ can intersect $G(v)$. As $\Gamma$ is adapted to
$\mathcal{S}$, each $S_{i}$ fixes some vertex of the $G$--tree $T$. If $S_{i}$
does not fix $v$, then $S_{i}\cap G(v)$ is a subgroup of some peripheral
subgroup of $G(v)$ other than $\Sigma$. As $\sigma$ is adapted to all such
peripheral subgroups of $G(v)$, this implies that $\sigma$ is adapted to
$S_{i}$, which is a contradiction. We conclude that $S_{i}$ must fix $v$, so
that $S_{i}$ is contained in $G(v)$. As each $X_{i}$ is adapted to
$\mathcal{S}$, it follows, as for $G(e)$ in part 2), that the image of $S_{i}$
in $G(v)/K$ cannot contain any hyperbolic elements, and so must be finite or
an infinite cyclic parabolic subgroup. In the first case, $S_{i}$ is
commensurable with a subgroup of $K$, and in the second case, $S_{i}$ is
contained in a peripheral subgroup of $G(v)$. The same discussion applies to
any conjugate of $S_{i}$ in $G$. If the intersection with $G(v)$ of each
conjugate of $S_{i}$ were commensurable with a subgroup of $K$, then $\sigma$
would be adapted to $S_{i}$, which is again a contradiction. We conclude that
$S_{i}$ is conjugate to a subgroup of some peripheral subgroup $\Sigma
^{\prime}$ of $G(v)$, but is not conjugate commensurable with a subgroup of
$K$. If $\Sigma^{\prime}$ were not equal to $\Sigma$, the fact that the
splitting $\sigma$ of $G$ is adapted to $\Sigma^{\prime}$ would imply that
$\sigma$ was adapted to $S_{i}$, which is a contradiction. It follows that
$\Sigma^{\prime}$ must equal $\Sigma$.

We have shown that if there is no edge of $\Gamma$ which is incident to $v$
and carries $\Sigma$, then there is a group $S_{i}$ in $\mathcal{S}$, such
that a conjugate of $S_{i}$ is contained in $\Sigma$, but $S_{i}$ is not
conjugate commensurable with a subgroup $K$, which completes the proof that
condition 3) holds.

4) In particular, the preceding discussion shows that if there is $S_{i}$ in
$\mathcal{S}$, such that a conjugate of $S_{i}$ is contained in $G(v)$, then
either $S_{i}\ $is conjugate to a subgroup of some peripheral subgroup of
$G(v)$, or $S_{i}$ is conjugate commensurable with a subgroup of $K$, which
completes the proof that condition 4) holds. This completes the proof of
Theorem \ref{strongcrossingimpliesFuchsiantype}.
\end{proof}

\section{Accessibility results\label{accessibilityresults}}

In the following section we will consider what can be said about almost
invariant sets which cross weakly. But first we need some accessibility results.

In \cite{SS2}, an important role was played by an accessibility result,
Theorem 7.11 of \cite{SS2}. Unfortunately the proof of Theorem 7.11 given in
\cite{SS2} is incorrect, so we will give a correct argument below in Theorem
\ref{accessibilityinabsolutecase}. We then generalise this result to the
setting of this paper.

Before proving our accessibility results, we will recall some standard
terminology. In a graph of groups decomposition, a vertex is
\textit{inessential} if it has valence two, it is not the vertex of a loop,
and each edge group includes by an isomorphism into the vertex group. (Note
that a vertex is inessential if and only if it is isolated, as defined
immediately before Definition \ref{defnofalgregnbhd}.) Also a vertex is
\textit{reducible} if it has valence two, it is not the vertex of a loop, and
one of the incident edge groups includes by an isomorphism into that vertex
group. Let $\Gamma$ be a graph of groups and write $\pi_{1}(\Gamma)$ for the
fundamental group of $\Gamma$. If $e$ is an edge of $\Gamma$, we say that a
graph of groups structure $\Gamma^{\prime}$ is \textit{obtained from }$\Gamma
$\textit{ by collapsing} $e$ if the underlying graph of $\Gamma^{\prime}$ is
obtained from $\Gamma$ by collapsing $e$. In addition, if $p:\Gamma
\mathcal{\rightarrow}\Gamma^{\prime}$ denotes the natural projection map, we
require that each vertex $v$ of $\Gamma^{\prime}$ has associated group equal
to $\pi_{1}(p^{-1}(v))$. These conditions imply that $\pi_{1}(\Gamma)$ and
$\pi_{1}(\Gamma^{\prime})$ are naturally isomorphic. We will say that $\Gamma$
is a \textit{refinement} of $\Gamma^{\prime}$ obtained by splitting at the
vertex $v=p(e)$. We will say that $\Gamma$ is a \textit{proper refinement} of
$\Gamma^{\prime}$ if the edge splitting associated to $e$ induces a splitting
of the vertex group $G(v)$.

Now we come to the proof of the accessibility result we need. This result is
equivalent to that of Theorem 7.11 of \cite{SS2} apart from the assumption
that $\Gamma_{k+1}$ is a proper refinement of $\Gamma_{k}$. The word proper
was omitted in the statement of Theorem 7.11 of \cite{SS2}, as discussed in
section \ref{accessibility} of the appendix. In the proof below, we use the
finite presentation of $G$ to tell us that there is a finite $2$--complex $K$
with $\pi_{1}(K)$ equal to $G$, and then apply the result of Dunwoody
\cite{Dunwoody} that there is an upper bound on the number of non-parallel
disjoint tracks one can have in $K$. In \cite{Dunwoody}, Dunwoody also pointed
out that if $G\ $is almost finitely presented, one effectively has the same
finiteness result. Thus this accessibility result holds if $G\ $is almost
finitely presented.

\begin{theorem}
\label{accessibilityinabsolutecase}Let $G$ be a finitely presented group such
that $e(G)=1$. Let $n$ be a non-negative integer, and suppose that, for any
$VPC(\leq n)$ subgroup $M$ of $G$, we have $e(G,M)=1$. Let $\Gamma_{k}$ be a
sequence of minimal graphs of groups decompositions of $G$ without inessential
vertices and with all edge groups being $VPC(n+1)$, and suppose that
$\Gamma_{k+1}$ is a proper refinement of $\Gamma_{k}$, for each $k$. Then the
sequence $\Gamma_{k}$ must terminate.
\end{theorem}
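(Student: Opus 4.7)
The plan is to derive a contradiction by exhibiting an a priori bound on the complexity of decompositions of $G$ over $VPC(n+1)$ subgroups, while showing that along the sequence $\Gamma_k$ the complexity strictly increases. The bound comes from the Dunwoody--Bestvina--Feighn accessibility theorem: if $G$ is finitely presented and $\mathcal{F}$ is a family of slender subgroups closed under taking subgroups (every $VPC$ group is slender), then there is an integer $N=N(G)$ such that every Bestvina--Feighn-reduced minimal graph of groups decomposition of $G$ with edge groups in $\mathcal{F}$ has at most $N$ edges. The geometric content is that a finite $2$-complex $K$ with $\pi_1(K)\cong G$ admits only boundedly many disjoint non-parallel essential tracks (or patterns), and slender splittings can be encoded by such patterns in $K$ or in a suitable cover.

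First I would convert each $\Gamma_k$ to a Bestvina--Feighn-reduced graph of groups $\Gamma_k^{\mathrm{red}}$ by repeatedly collapsing every edge $e$ along which some inclusion $G(e)\hookrightarrow G(v)$ is an isomorphism, except where the ``no inessential vertices'' hypothesis on $\Gamma_k$ prevents this. Such collapses do not change $\pi_1(G)$, preserve the $VPC(n+1)$ nature of all edge groups, and can only decrease the number of edges. Second, I would show that the proper refinement hypothesis forces $|E(\Gamma_{k+1}^{\mathrm{red}})|>|E(\Gamma_k^{\mathrm{red}})|$. Proper refinement means that for some vertex $v$ of $\Gamma_k$, the decomposition $\Gamma_{k+1}$ induces a nontrivial splitting $\sigma$ of the vertex group $G(v)$ over a $VPC(n+1)$ subgroup; the hypothesis $e(G,M)=1$ for every $VPC(\leq n)$ subgroup $M$ rules out splittings over smaller groups, so the new edge group really is $VPC(n+1)$. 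Iterating the strict monotonicity and comparing with $N$ gives the contradiction that proves termination.

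The main obstacle will be the second step: ensuring that the Bestvina--Feighn reduction applied to $\Gamma_{k+1}$ does not cancel out the gain produced by the proper refinement. One must rule out the scenario in which the newly introduced edge of $\Gamma_{k+1}$ has an edge-group inclusion onto one of the adjacent vertex groups (hence collapses under reduction), leaving $\Gamma_{k+1}^{\mathrm{red}}$ no larger than $\Gamma_k^{\mathrm{red}}$. To handle this, I would analyze the splitting $\sigma$ of $G(v)$ case by case (HNN vs.\ amalgamated free product): if the new edge were reducible, then $G(v)$ would be obtained by a trivial free product or HNN extension, contradicting the nontriviality of $\sigma$; alternatively the reduction would produce a splitting of $G(v)$, and therefore of $G$, over a proper subgroup of the $VPC(n+1)$ edge group, i.e.\ a $VPC(\leq n)$ group, contradicting $e(G,M)=1$ for such $M$. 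Once this compatibility is established, the Bestvina--Feighn bound applied to $\Gamma_k^{\mathrm{red}}$ completes the argument.
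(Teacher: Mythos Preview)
Your approach has a genuine gap, and in fact the paper explicitly addresses why it cannot work. In Remark~\ref{remarkonaccessibilityproof} the authors note that Theorem~\ref{accessibilityinabsolutecase} does \emph{not} give a uniform bound on the length of such a sequence, and Example~\ref{examplewithmanyrefinements} (with $G=BS(4,2)$) exhibits arbitrarily long sequences of proper refinements satisfying all the hypotheses. Your argument, if correct, would produce exactly such a uniform bound (the Bestvina--Feighn constant $N$), so something must fail.

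The failure is in your monotonicity claim $|E(\Gamma_{k+1}^{\mathrm{red}})|>|E(\Gamma_k^{\mathrm{red}})|$. You correctly observe that when $v$ is properly split as $G(v)=G(v_1)\ast_{G(e)}G(v_2)$, both inclusions $G(e)\hookrightarrow G(v_i)$ are proper, so the \emph{new} edge $e$ is not itself collapsible. But you overlook that the new vertices $v_1,v_2$ inherit the \emph{old} edges that were incident to $v$, and one of these old edges $f$ may now satisfy $G(f)=G(v_1)$ (this is possible precisely because $G(v_1)\subsetneq G(v)$). Then $v_1$ becomes a reducible vertex of $\Gamma_{k+1}$ that was not present in $\Gamma_k$, and Bestvina--Feighn reduction collapses it away, cancelling your gain. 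Your two proposed obstructions do not rule this out: the splitting of $G(v)$ is still nontrivial, and no splitting over a $VPC(\leq n)$ group is produced. In the $BS(4,2)$ example all the $\Gamma_k^{\mathrm{red}}$ have the same number of edges.

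The paper's proof uses Bestvina--Feighn only to reduce to the case where every further refinement introduces a reducible vertex, forcing the edge groups to form a strictly \emph{descending} chain of $VPC(n+1)$ groups. The real work is then a delicate tracks-in-a-$2$-complex argument: one builds auxiliary trees $T_k'$ dual to Dunwoody patterns, observes that eventually $T_{k+1}'$ is just a subdivision of $T_k'$, and derives a contradiction by tracking the indices of edge stabilisers of $T_k'$ inside a fixed $VPC(n+1)$ group (using Lemma~\ref{indexofconjugates}). This is substantially more involved than an edge-count argument.
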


\begin{remark}
\label{remarkonaccessibilityproof}The methods of Dunwoody in \cite{Dunwoody}
show that there must be a sequence of such refinements of $\Gamma_{1}$ which
terminates. But the above theorem asserts that any such sequence must
terminate. Note that this theorem does not assert any uniform bound for the
length of such a sequence, and in fact there is no such bound, as will be
shown in Example \ref{examplewithmanyrefinements} below. Thus this
accessibility result differs from all previous such results, as they all
produce a uniform bound on the length of sequences of refinements.
\end{remark}

\begin{proof}
The main result of \cite{B-F} gives a bound on the complexity of reduced
minimal graphs of groups decompositions of $G$ with all edge groups being
small groups. As $VPC$ groups are small, this yields an immediate proof of the
present theorem in the case when the $\Gamma_{k}$'s are all reduced, i.e. they
have no reducible vertices.

Now we will consider the general case and suppose that the sequence of
$\Gamma_{k}$'s is infinite. The facts that each $\Gamma_{k}$ is minimal with
no inessential vertices and that $\Gamma_{k+1}$ is a proper refinement of
$\Gamma_{k}$ imply that the sequence $\Gamma_{k}$ must stabilize apart from
refinements which introduce reducible vertices. As $\Gamma_{k+1}$ is a proper
refinement of $\Gamma_{k}$, the associated edge groups must be strictly
descending. By going to a subsequence and collapsing some edges of the
$\Gamma_{k}$'s to a point, we can arrange that all of the following hold.

\begin{itemize}
\item For each $k$, the graph of groups $\Gamma_{k}$ has $k$ edges
$e_{1},\ldots,e_{k}$ and is homeomorphic to an interval or the circle. For
$1\leq i\leq k-1$, let $v_{i}$ denote the vertex $e_{i}\cap e_{i+1}$. Let
$v_{0}$ denote the other vertex of $e_{1}$, and let $v_{k}$ denote the other
vertex of $e_{k}$. Thus if $\Gamma_{k}$ is a circle, we have $v_{0}=v_{k}$.

\item The graph of groups $\Gamma_{k+1}$ is obtained from $\Gamma_{k}$ by
splitting at the vertex $v_{k}$, and the extra edge is $e_{k+1}$.

\item Let $H_{i}$ denote the group associated to $e_{i}$. Then the inclusion
of $H_{i}$ into $G(v_{i})$ is an isomorphism, and $H_{i}$ strictly contains
$H_{i+1}$, for $1\leq i\leq k-1$.
\end{itemize}

As $G$ is finitely presented, there is a finite simplicial $2$--complex $K$
with $\pi_{1}(K)$ equal to $G$. Let $\widetilde{K}$ denote the universal cover
of $K$, and let $T_{k}$ denote the universal covering $G$--tree of $\Gamma
_{k}$. Let $q_{k}:T_{k}\rightarrow T_{k-1}$ denote the natural collapsing map.
We now pick $G$--equivariant linear maps $p_{k}:\widetilde{K}\rightarrow
T_{k}$ such that $p_{k}=q_{k+1}p_{k+1}$. Let $W_{k}$ denote the union of the
midpoints of the edges of $T_{k}$ and consider $p_{k}^{-1}(W_{k})$. This is a
$G$--invariant pattern in $\widetilde{K}$, which projects to a finite pattern
$L_{k}$ in $K$. By construction of the maps $p_{k}$, we have $L_{k}\subset
L_{k+1}$. As in the paper \cite{Dunwoody-Fenn} of Dunwoody and Fenn, we let
$T_{k}^{\prime}$ denote the dual graph to the pattern $p_{k}^{-1}(W_{k})$ in
$\widetilde{K}$. As $\widetilde{K}$ is simply connected, this graph is a tree,
and the covering action of $G$ on $\widetilde{K}$ induces an action of $G$ on
$T_{k}^{\prime}$ such that $G\backslash T_{k}^{\prime}$ is a finite graph of
groups structure for $G$. We also have a natural simplicial, and
$G$--equivariant, map $T_{k}^{\prime}\rightarrow T_{k}$. Now Dunwoody showed
in \cite{Dunwoody} that there is an upper bound on the number of non-parallel
disjoint tracks one can have in $K$. It follows that there is $N$ such that if
$k>N$, the tree $T_{k+1}^{\prime}$ is obtained from $T_{k}^{\prime}$ by
subdividing some edges. By collapsing the edges $e_{1},\ldots,e_{N}$ of each
of the $\Gamma_{k}$'s, we can arrange that this holds for all values of $k$.
Thus every tree $T_{k}^{\prime}$ is obtained from $T_{1}^{\prime}$ by
subdividing edges. It is possible that $T_{1}^{\prime}$ is not minimal. In
this case we will replace it by its unique minimal subtree, so we will assume
in what follows that $T_{1}^{\prime}$ is minimal. Note that as each $T_{k}$ is
minimal, the maps $T_{k}^{\prime}\rightarrow T_{k}$ must each be surjective.

Recall that $G$ acts on $T_{k}$ with quotient $\Gamma_{k}$. Thus we can find
an edge path $E_{1},\ldots,E_{k}$ in $T_{k}$, such that, for each $i$, the
edge $E_{i}$ maps to the edge $e_{i}$ of $\Gamma_{k}$ and has stabilizer equal
to $H_{i}$. If $f$ is an edge of $T_{k}^{\prime}$ which maps to $E_{i}$, the
stabilizer of $f$ is a subgroup of $H_{i}$. As $T_{1}^{\prime}$ is minimal, so
is $T_{k}^{\prime}$. In particular, any edge of $T_{k}^{\prime}$ determines a
splitting of $G$. As $G$ has no splittings over $VPC$ subgroups of length
$\leq n$, it follows that the stabilizer of $f$ must be $VPC(n+1)$, and so is
a subgroup of finite index in $H_{i}$, and hence in $H_{1}$. As the quotient
$G\backslash T_{k}^{\prime}$ is finite, it follows that there are only
finitely many edges of $T_{k}^{\prime}$ which map to $E_{i}$. We will denote
by $I(f)$ the index in $H_{1}$ of the stabilizer of $f$. We let $N_{i}$ denote
the maximum value of $I(f)$ as $f$ varies over those edges of $T_{k}^{\prime}$
which map to $E_{i}$. If $f^{\prime}$ is an edge of $T_{k}^{\prime}$ which
maps to the edge $e_{i}$ of $\Gamma_{k}$, then $f^{\prime}$ must map to a
translate $gE_{i}$ of $E_{i}$, for some $g$ in $G$. Thus the stabilizer of
$f^{\prime}$ is a subgroup of $H_{i}^{g}$, the stabilizer of $gE_{i}$. We let
$I(f^{\prime})$ denote the index in $H_{1}^{g}$ of the stabilizer of
$f^{\prime}$. Note that $I(f^{\prime})=I(g^{-1}f^{\prime})$, so is finite and
not greater than $N_{i}$.

Consider the sequence $N_{i}$, and suppose that this sequence is bounded by
some constant $M$. Then any edge of $T_{k}^{\prime}$ which maps to $E_{i}$ has
stabilizer which is a subgroup of $H_{i}$ and has index in $H_{1}$ which is at
most $M$. As the indices of the $H_{i}$'s in $H_{1}$ tend to infinity, this is
an immediate contradiction. We conclude that the sequence $N_{i}$ cannot be
bounded. By collapsing more edges of the $\Gamma_{k}$'s, we can arrange that
the sequence $N_{i}$ is strictly increasing.

Let $B_{k}$ denote the subgroup of $G$ associated to the vertex $v_{k}$ of
$\Gamma_{k}$. Thus for each $k$, the group $B_{k-1}$ is the amalgamated free
product $H_{k-1}\ast_{H_{k}}B_{k}$. Recall that if $\Gamma_{k}$ is an
interval, then $v_{k}$ is an end vertex, but if $\Gamma_{k}$ is a circle, then
$v_{k}=v_{0}$. For simplicity in our analysis, we will assume in what follows
that $k>2$.

As $N_{k-1}<N_{k}$, there must be an edge $e_{k}^{\prime}$ of $T_{k}^{\prime}$
such that $e_{k}^{\prime}$ maps to the edge $e_{k}$ of $\Gamma_{k}$, and
$I(e_{k}^{\prime})>N_{k-1}$. As $T_{k}^{\prime}$ is obtained from
$T_{k-1}^{\prime}$ by subdividing edges, there must be a path $\mu$ in
$T_{k}^{\prime}$ which has all internal vertices of valence $2$, and starts
with $e_{k}^{\prime}$ and ends with some edge $e_{k-1}^{\prime}$ of
$T_{k}^{\prime}$ which maps to the edge $e_{k-1}$ of $\Gamma_{k}$. There is a
subpath $\lambda$ of $\mu$ which starts with $e_{k}^{\prime}$ and ends with an
edge $e^{\prime}$ such that each edge of $\lambda$, apart from $e^{\prime}$,
maps to the edge $e_{k}$ in $\Gamma_{k}$, and $e^{\prime}$ does not map to
$e_{k}$. As all the internal vertices of $\lambda$ are of valence $2$ in
$T_{k}^{\prime}$, it follows that every edge of $\lambda$ has the same
stabilizer, which we denote by $J$.

As $I(e^{\prime})\leq N_{k-1}$ and $I(e_{k}^{\prime})>N_{k-1}$, there must be
consecutive edges $f$ and $f^{\prime}$ of $\lambda$ such that $I(f)>N_{k-1}$,
and $I(f)\neq I(f^{\prime})$. Let $F$ and $F^{\prime}$ denote the images of
$f$ and $f^{\prime}$ in $T_{k}$. As $I(f)>N_{k-1}$, the edge $F$ must be a
translate of $E_{k}$. As $\Gamma_{k}$ is an interval or circle, the edge
$F^{\prime}$ must be a translate of $E_{k-1}$, or of $E_{k}$, or of $E_{1}$.
Note that as $k>2$, the last case can only occur if $\Gamma_{k}$ is a circle.
We will consider these three cases separately.

\begin{case}
$F^{\prime}$ is a translate of $E_{k-1}$.
\end{case}

By replacing $f$ and $f^{\prime}$ by suitable translates, we can arrange that
$F^{\prime}$ is equal to $E_{k-1}$. Thus we have that $J$ is a subgroup of
$H_{k-1}$, and $I(f^{\prime})$ is equal to the index of $J$ in $H_{1}$. Now
$F$ is a translate $gE_{k}$ of $E_{k}$ which is incident to $E_{k-1}$, so that
$J$ is also a subgroup of $H_{k}^{g}$. As $k>2$, it follows that $E_{k-1}\cap
gE_{k}$ lies above the vertex $v_{k-1}$ of $\Gamma_{k}$, so that $E_{k-1}\cap
gE_{k}$ is equal to $E_{k-1}\cap E_{k}$. Further $g$ lies in the stabilizer of
$E_{k-1}\cap E_{k}$ which is equal to $H_{k-1}$. Thus $H_{k-1}^{g}$ is equal
to $H_{k-1}$, so that $I(f)$, which is the index of $J$ in $H_{1}^{g}$, must
equal $I(f^{\prime})$. This contradicts our assumption that $I(f)\neq
I(f^{\prime})$.

\begin{case}
$F^{\prime}$ is a translate of $E_{k}$.
\end{case}

By replacing $f$ and $f^{\prime}$ by suitable translates, we can arrange that
$F$ is equal to $E_{k}$, and that $F^{\prime}$ is equal to $gE_{k}$, for some
$g$ in $G$. If $F^{\prime}$ is equal to $F$, then $I(f)$ and $I(f^{\prime})$
must be equal, which is a contradiction. Thus $F^{\prime}$ is not equal to
$F$, and $g$ must lie in the stabilizer of the vertex $w=F^{\prime}\cap
F=F^{\prime}\cap E_{k}$. If $w$ projects to $v_{k-1}$, then $w$ equals
$E_{k-1}\cap E_{k}$, and so the stabilizer of $w$ equals $H_{k-1}$. Hence
$H_{k}^{g}$ is equal to $H_{k}$, which again implies that $I(f)$ and
$I(f^{\prime})$ must be equal. This contradiction shows that $w$ must project
to $v_{k}$, and so the stabilizer of $w$ equals $B_{k}$.

Now denote $I(f)$ by $d$. We have found that $J$ is contained in $H_{k}$, and
is contained in $H_{1}$ with index $d$, and that there is $g$ in $B_{k}$ such
that $J$ is also contained in $H_{k}^{g}$, and is contained in $H_{1}^{g}$
with index which is not equal to $d$. Recall that $B_{k}=H_{k}\ast_{H_{k+1}%
}B_{k+1}$. Lemma \ref{indexofconjugates}\ below shows that there is a
conjugate $J^{\prime}$ of $J$ and an element $g^{\prime}$ of $B_{k+1}$ such
that $J^{\prime}$ is a subgroup of $H_{k+1}$ with index $d$ in $H_{1}$, and
$J^{\prime}$ is also a subgroup of $H_{k+1}^{g^{\prime}}$ whose index in
$H_{1}^{g^{\prime}}$ is not equal to $d$. By repeatedly applying this lemma we
can find, for each $i$, a subgroup of $H_{i}$ which has index $d$ in $H_{1}$.
This is a contradiction as the indices of the $H_{i}$'s in $H_{1}$ tend to infinity.

We conclude that if either of the above two cases holds for any value of $k$,
we obtain a contradiction, so the sequence $\Gamma_{k}$ must terminate. In
particular, this holds if each $\Gamma_{k}$ is an interval. This leaves the
following case to consider.

\begin{case}
Each $\Gamma_{k}$ is a circle, and Cases 1) and 2) do not occur for any value
of $k>2$.
\end{case}

In this case, for each value of $k>2$, there must be adjacent edges $f_{k}$
and $f_{k}^{\prime}$ of $T_{k}^{\prime}$, such that $f_{k}\cap f_{k}^{\prime}$
is a vertex of valence two, $I(f_{k})>N_{k-1}$, and $I(f_{k})\neq
I(f_{k}^{\prime})$, and $f_{k}$ maps to $e_{k}$ in $\Gamma_{k}$, and
$f_{k}^{\prime}$ maps to $e_{1}$ in $\Gamma_{k}$. Thus if $F$ and $F^{\prime}$
denote the images of $f$ and $f^{\prime}$ in $T_{k}$, then $F$ is a translate
of $E_{k}$, and $F^{\prime}$ is a translate of $E_{1}$. By replacing $f$ and
$f^{\prime}$ by suitable translates, we can arrange that $F^{\prime}$ is equal
to $E_{1}$. Let $w$ denote the vertex $F\cap F^{\prime}=F\cap E_{1}$, which
must project to the vertex $v_{0}=v_{k}$ of $\Gamma_{k}$.

We consider the collection $C_{k}$ of all edges of $T_{k}^{\prime}$ whose
image in $T_{k}$ is incident to $w$. This collection is invariant under the
stabilizer $A$ of $w$ and is $A$--finite. As $T_{k+1}^{\prime}$ is a
refinement of $T_{k}^{\prime}$ obtained by subdividing some edges, the natural
collapsing map $T_{k+1}^{\prime}\rightarrow T_{k}^{\prime}$ induces an
$A$--equivariant bijection from $C_{k+1}$ to $C_{k}$. Each $f_{k}$ determines
an element of $C_{k}$. As $C_{k}$ is $A$--finite, there must be $k$, and
$l>k$, such that $f_{k}$ and $f_{l}$ determine corresponding $A$--orbits. By
replacing $f_{l}$ by a suitable translate, we can assume that $f_{k}$ and
$f_{l}$ determine corresponding elements of $C_{k}$ and $C_{l}$. Thus the
pre-image in $T_{l}^{\prime}$ of the edge $f_{k}$ in $T_{k}^{\prime}$ is a
subdivision of $f_{k}$ which contains the edge $f_{l}$. We abuse notation and
denote by $f_{k}$ the edge in this subdivision which maps onto $f_{k}$. Then
we have a path $\lambda$ in $T_{l}^{\prime}$ between $f_{l}$ and $f_{k}$ in
which all vertices have valence $2$, and no edges project to $e_{1}$ in
$\Gamma_{l}$. As $I(f_{l})>N_{l-1}$, there must be consecutive edges of
$\lambda$ which yield Case 1) or 2)\ above, which contradicts the assumption
we have made in this case.

This contradiction completes the proof that the sequence $\Gamma_{k}$ must
terminate as required.
\end{proof}

Now we give the proof of the lemma we quoted above.

\begin{lemma}
\label{indexofconjugates}Let $K$ be a group which has a splitting as
$A\ast_{C}B$, where $C$ has finite index in $A$. Let $J$ be a subgroup of $A$
of finite index $d$, and suppose there is $g$ in $K$ such that $J$ is also a
subgroup of the conjugate $A^{g}$ of $A$, and the index of $J$ in $A^{g}$ is
not equal to $d$. Then there is a conjugate $J^{\prime}$ of $J$ and an element
$k$ of $B$ such that $J^{\prime}$ is a subgroup of $C$ with index $d$ in $A$,
and $J^{\prime}$ is also a subgroup of $C^{k}$ whose index in $A^{k}$ is not
equal to $d$.
\end{lemma}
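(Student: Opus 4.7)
The plan is to use Bass--Serre theory applied to the splitting $K = A \ast_C B$. Let $T$ denote the associated Bass--Serre tree, which is bipartite with an $A$-type vertex $v_A$ stabilized by $A$ and a $B$-type vertex $v_B$ stabilized by $B$, joined by an edge $e$ with stabilizer $C$; $A$- and $B$-vertices alternate along every edge path. Since $[A:J]=d\neq[A^g:J]$, we have $A\neq A^g$, so $v_A$ and $gv_A$ are distinct $A$-vertices, both of whose stabilizers contain $J$. Hence $J$ fixes the unique path between them in $T$, which has some even length $2n$.

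Label this path $v_0=v_A, v_1,\ldots,v_{2n}=gv_A$, so that the $v_{2j}$ are the $A$-vertices and the $v_{2j-1}$ are the $B$-vertices, and consider the indices $d_j=[\stab(v_{2j}):J]$. We have $d_0=d$ and $d_n\neq d$, so there is a smallest $i\ge 1$ with $d_i\neq d$; then automatically $d_{i-1}=d$. This localises matters to two consecutive $A$-vertices $v_{2i-2}$ and $v_{2i}$ separated by the single $B$-vertex $v_{2i-1}$, whose stabilizers both contain $J$, with indices $d$ and not $d$ respectively.

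To convert this back into algebra, I would write $v_{2i-1}=zv_B$ for some $z\in K$. The two path-edges at $v_{2i-1}$ have the form $zb_1e$ and $zb_2e$ with $b_1C\neq b_2C$ in $B/C$; replacing $z$ by $zb_1$ (which does not change $v_{2i-1}$), I may assume the two edges are $ze$ and $zbe$ for some $b\in B\setminus C$, so that $v_{2i-2}=zv_A$ and $v_{2i}=zbv_A$. Setting $J'=z^{-1}Jz$, the fact that $J$ fixes the edges $ze$ and $zbe$, whose stabilizers are $zCz^{-1}$ and $(zb)C(zb)^{-1}$, yields $J'\subset C$ and $J'\subset bCb^{-1}=C^b$. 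Since conjugation preserves indices, $[A:J']=[zAz^{-1}:J]=d_{i-1}=d$ and $[A^b:J']=[(zb)A(zb)^{-1}:J]=d_i\neq d$, so taking $k=b\in B$ gives the required $J'$. The one place requiring care is the pigeonhole argument on the index sequence along the bipartite path, which reduces the general case (arbitrary $g\in K$) to a single $B$-vertex; everything else is standard Bass--Serre bookkeeping, so I do not anticipate any serious obstacle.
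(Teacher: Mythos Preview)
Your proposal is correct and follows essentially the same approach as the paper: both use the Bass--Serre tree, observe that $J$ fixes the geodesic from $v_A$ to $gv_A$, track the index of $J$ in the $A$-vertex stabilisers along that path, and locate a jump in this index at some $B$-vertex, then conjugate to bring that $B$-vertex to $v_B$. The only cosmetic difference is that the paper records the index on edges rather than on $A$-vertices and then argues \emph{a posteriori} that the jump cannot occur across an $A$-vertex, whereas you build the bipartite structure in from the start; the content is identical.
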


\begin{proof}
Let $T$ denote the $K$--tree associated with the given splitting of $K$. Thus
there is an edge $e$ of $T$ with stabilizer equal to $C$, whose vertices $v$
and $w$ have stabilizers $A$ and $B$ respectively. We note that $g$ cannot lie
in $A$, as the indices of $J$ in $A$ and in $A^{g}$ differ.

As $A$ fixes $v$, the conjugate $A^{g}$ fixes $gv$. As $J$ is a subgroup of
$A$ and of $A^{g}$, it follows that $J$ fixes both $v$ and $gv$. As $g$ does
not lie in $A$, the vertices $v$ and $gv$ are distinct, and $J$ must fix the
path $\lambda$ in $T$ which joins $v$ and $gv$. Let $f$ be an edge of
$\lambda$, so that $f$ is equal to $he$ for some $h$ in $K$. As $J$ fixes $f$,
it is a subgroup of the stabilizer $C^{h}$ of $he$. We consider the index
$I(f)$ of $J$ in $A^{h}$. Note that $I(f)$ is well defined, as $he=h^{\prime
}e$ if and only if $h^{-1}h^{\prime}$ lies in $C$. If $f$ is incident to $v$,
then $A^{h}$ is equal to $A$, so that $I(f)$ is equal to $d$. If $f$ is
incident to $gv$, then $A^{h}$ is equal to $A^{g}$, so that $I(f)$ is not
equal to $d$. It follows that there are two consecutive edges $f=he$ and
$f^{\prime}=h^{\prime}e$ of $\lambda$ such that $I(f)=d$ and $I(f^{\prime
})\neq d$. Let $J^{\prime}$ denote $h^{-1}Jh$, and let $e^{\prime}$ denote
$h^{-1}f^{\prime}$. Then $J^{\prime}$ fixes the adjacent edges $h^{-1}f=e$ and
$h^{-1}f^{\prime}=e^{\prime}$. As $e^{\prime}$ is incident to $e$, there is
$k$ in the stabilizer of $e\cap e^{\prime}$ such that $e^{\prime}=ke$. Thus
$k$ lies in $A$ or in $B$. Further $J^{\prime}$ is contained in the stabilizer
$C$ of $e$, and has index $d$ in $A$, and $J^{\prime}$ is also contained in
the stabilizer $C^{k}$ of $e^{\prime}$, and has index in $A^{k}$ which is not
equal to $d$. As these indices differ, it is not possible that $k$ lies in
$A$, so that $k$ must lie in $B$. This completes the proof of the lemma.
\end{proof}

In Remark \ref{remarkonaccessibilityproof}, we noted that Theorem
\ref{accessibilityinabsolutecase} does not provide a bound for the length of
the sequence $\Gamma_{k}$ of graphs of groups. Here is the promised example to
show there is no such bound.

\begin{example}
\label{examplewithmanyrefinements}In this example, the group $G$ will be the
Baumslag-Solitar group $BS(4,2)=\left\langle a,t:t^{-1}a^{2}t=a^{4}%
\right\rangle $. We will consider graphs of groups structures for $G$ in which
all edge and vertex groups are infinite cyclic. Thus the structure is
determined by integers associated to the end of each edge which denote the
index of the edge group in that vertex group.

The subgroup of $BS(4,2)$ generated by $a^{2}$ and $t$ is isomorphic to the
Baumslag-Solitar group $BS(2,1)$. Consider the natural graph of groups
structure for $BS(2,1)$ with one vertex $w$, which carries $\left\langle
a^{2}\right\rangle $, the cyclic group generated by $a^{2}$, and one edge
which is a loop $l$ whose ends are labelled by $1$ and $2$. Now construct a
new graph of groups by adding a new vertex $v$ which carries $\left\langle
a\right\rangle $, and an edge $e$ which joins $v$ and $w$. The edge $e$ is
labelled by $2$ at the $v$--end and by $1$ at the $w$--end. It is easy to see
that the fundamental group of this graph of groups is $BS(4,2)$.

Now we come to the key step. We slide one end of $e$ round the loop $l$. The
result is to leave the graph of groups unchanged except that the label on $e$
at the $w$-end becomes $2$, and now $w$ carries $\left\langle ta^{2}%
t^{-1}\right\rangle $. Repeat this step a total of $n$ times, to change the
label on $e$ at the $w$-end to $2^{n}$, and change the group carried by $w$ to
$\left\langle t^{n}a^{2}t^{-n}\right\rangle $. Denote this graph of groups by
$\Gamma_{1}$. Now construct a graph of groups $\Gamma_{n}$ by subdividing $e$
into $n$ sub edges, which are all labelled by $2$ and $1$, apart from the edge
incident to $v$ which is labelled by $2$ and $2$. Clearly there is a sequence,
of length $n$, of refinements of $\Gamma_{1}$, whose $n$--th step is
$\Gamma_{n}$.
\end{example}

The accessibility result of Theorem \ref{accessibilityinabsolutecase} can
easily be extended to the case of adapted splittings so long as we make
suitable restrictions on the group system involved. Recall from Definition
\ref{defnoff.p.relfamilyofsubgroups} that if $(G,\mathcal{S})$ is a group
system of finite type, then $G$ is finitely presented relative to
$\mathcal{S}$, if there is a finite relative generating set $\Omega$ such that
the kernel of the natural epimorphism $F(\Omega)\ast S_{1}\ast\ldots\ast
S_{n}\rightarrow G$ is normally generated by a finite set. The result we
obtain is the following.

\begin{theorem}
\label{accessibilityforsystemswhenGisfp}Let $(G,\mathcal{S})$ be a group
system of finite type such that $G$ is finitely presented relative to
$\mathcal{S}$, and $e(G:\mathcal{S})=1$. Let $n$ be a non-negative integer,
and suppose that, for any $VPC(\leq n)$ subgroup $M$ of $G$, we have
$e(G,M:\mathcal{S})=1$. Let $\Gamma_{k}$ be a sequence of $\mathcal{S}%
$--adapted minimal graphs of groups decompositions of $G$ without inessential
vertices and with all edge groups being $VPC(n+1)$. If $\Gamma_{k+1}$ is a
proper refinement of $\Gamma_{k}$, for each $k$, then the sequence $\Gamma
_{k}$ must terminate.
\end{theorem}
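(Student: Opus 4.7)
The strategy is to reduce to Theorem \ref{accessibilityinabsolutecase} by passing to an auxiliary standard extension $\overline{G}$ of $G$ that absorbs the peripheral structure and turns the relative problem into an absolute one. First, for each $S_i \in \mathcal{S}$ I would choose a finitely presented one-ended group $G_i$ containing $S_i$ that admits no nontrivial splitting over a subgroup of length $\leq n+1$; such $G_i$ can be built from $S_i$ by combining a Higman-style embedding of $S_i$ into a finitely presented group (using countability of $G$) with a cone construction or amalgamation with a high-dimensional lattice to kill low splittings and enforce one-endedness. Form the iterated amalgam $\overline{G} = G \ast_{S_1} G_1 \ast \cdots \ast_{S_m} G_m$. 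Since $G$ is finitely presented relative to $\mathcal{S}$ and each $G_i$ is finitely presented, $\overline{G}$ is finitely (or at least almost finitely) presented.

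Next I would verify the two end-conditions that Theorem \ref{accessibilityinabsolutecase} requires of $\overline{G}$. The condition $e(\overline{G}) = 1$ follows from $e(G : \mathcal{S}) = 1$, one-endedness of each $G_i$, and the assumption that each $S_i$ is infinite, via a Bass-Serre tree argument on the tree of the amalgam: any nontrivial end decomposition of $\overline{G}$ would descend to a $\mathcal{S}$-adapted end decomposition of $G$ or to an end decomposition of some $G_i$, both of which are excluded. Similarly, for any $VPC(\leq n)$ subgroup $M$ of $\overline{G}$, the condition $e(\overline{G}, M) = 1$ is checked by analyzing the action of $M$ on the Bass-Serre tree, with vertex-fixing cases handled respectively by the hypothesis $e(G, K : \mathcal{S}) = 1$ (when $M$ is conjugate into $G$) and by our choice of $G_i$ (when $M$ is conjugate into some $G_i$); a hyperbolic action of $M$ is ruled out by the no-low-splitting property of each $G_i$.

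Each $\mathcal{S}$-adapted graph of groups $\Gamma_k$ then extends to a graph of groups $\overline{\Gamma}_k$ for $\overline{G}$ by attaching, for each $i$, a single edge with edge group $S_i$ joining the unique vertex of $\Gamma_k$ whose group contains a conjugate of $S_i$ to a new vertex carrying $G_i$. Minimality, the absence of inessential vertices, and the $VPC(n+1)$-edge-group condition on the original edges all transfer to $\overline{\Gamma}_k$, and since $\Gamma_{k+1}$ is a proper refinement of $\Gamma_k$ occurring away from the new peripheral vertices, $\overline{\Gamma}_{k+1}$ is a proper refinement of $\overline{\Gamma}_k$. Theorem \ref{accessibilityinabsolutecase}, applied in a mild generalization that permits a fixed set of peripheral edges exempt from the $VPC(n+1)$ requirement (which is transparent since those edges are never refined and so do not interfere with the Dunwoody-track bound underlying the proof), then forces the sequence $\overline{\Gamma}_k$ to terminate, whence $\Gamma_k$ terminates.

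The main obstacle is the construction in the first step: producing $G_i$ simultaneously finitely presented, one-ended, containing $S_i$, and free of low-dimensional splittings, particularly when $S_i$ is not finitely generated or not recursively presented. The secondary issue, namely the appearance of non-$VPC(n+1)$ edge groups $S_i$ in $\overline{\Gamma}_k$, is easier to address since these peripheral edges persist unchanged through the entire sequence and the track-theoretic machinery behind Theorem \ref{accessibilityinabsolutecase} is insensitive to a fixed bounded peripheral structure; still, cleanly stating and invoking the relevant generalization of the absolute accessibility theorem requires some care.
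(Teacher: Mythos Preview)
Your approach is genuinely different from the paper's, and the gap you yourself identify is real and serious. The construction of finitely presented groups $G_i$ containing each $S_i$, one-ended, and with no splittings over $VPC(\leq n+1)$ subgroups is not a routine embedding problem. Higman's embedding theorem, which you allude to, requires the group being embedded to be recursively presented, and there is no reason the $S_i$ should be; mere countability gives you an embedding into a $2$--generator group via Higman--Neumann--Neumann, but not into a finitely presented one. Even granting finite presentation, simultaneously killing all low splittings while preserving one-endedness and the embedding of $S_i$ would need a separate argument you have not supplied. Your secondary issue is also more than cosmetic: the peripheral edges carry groups $S_i$ that are neither $VPC(n+1)$ nor in general finitely generated, so the Bestvina--Feighn step and the index arguments in the proof of Theorem~\ref{accessibilityinabsolutecase} do not automatically ignore them.

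The paper sidesteps all of this. Rather than enlarging $G$, it works directly with a $2$--complex $K$ for $G$ built from the relative presentation: first take disjoint complexes $X_i$ with $\pi_1(X_i)\cong S_i$, join them to get a complex with fundamental group $S_1\ast\cdots\ast S_n$, then attach finitely many cells to realize $G$. The equivariant maps $p_k:\widetilde{K}\to T_k$ are chosen to collapse each component of the preimage of every $X_i$ to a vertex, which is possible precisely because each $\Gamma_k$ is $\mathcal{S}$--adapted. This forces the patterns $L_k$ to live in the finite subcomplex $K'$ obtained by deleting the interiors of the cells of the $X_i$'s, so Dunwoody's bound on disjoint tracks applies. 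The remaining index argument goes through because any splitting arising from an edge of $T_k'$ is automatically $\mathcal{S}$--adapted by construction, and $G$ has no $\mathcal{S}$--adapted splittings over $VPC(\leq n)$ subgroups. This is both more elementary and more robust than your proposed reduction.
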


\begin{remark}
Recall that $e(G,M:\mathcal{S})\leq e(G,M)$, as observed just before Lemma
\ref{splittingisadaptediffa.i.setis}. Thus if $e(G)=1$, and $e(G,M)=1$ for any
$VPC(\leq n)$ subgroup $M$ of $G$, then the hypotheses of the theorem hold.
\end{remark}

\begin{proof}
We wish to apply the proof of Theorem \ref{accessibilityinabsolutecase}, but
the difficulty is that now $G$ may have splittings over some $VPC(\leq n)$
subgroups. But we do know that no such splitting can be $\mathcal{S}%
$--adapted. The first step in the proof of Theorem
\ref{accessibilityinabsolutecase} was to apply the accessibility result of
Bestvina and Feighn in \cite{B-F}. Their result applies exactly the same in
the present situation.

The next step was to choose a finite simplicial $2$--complex $K$ with $\pi
_{1}(K)$ equal to $G$, let $\widetilde{K}$ denote the universal cover of $K$,
let $T_{k}$ denote the universal covering $G$--tree of $\Gamma_{k}$, and pick
$G$--equivariant maps $p_{k}:\widetilde{K}\rightarrow T_{k}$ such that
$p_{k}=q_{k+1}p_{k+1}$.

In the present situation, $G$ need not be finitely presented, so we may be
unable to find a finite simplicial $2$--complex $K$ with $\pi_{1}(K)$ equal to
$G$. In order to apply this method to prove Theorem
\ref{accessibilityforsystemswhenGisfp}, we need to refine our choice of $K$.

As $G$ is finitely presented relative to $\mathcal{S}$, we can construct a
$2$--dimensional complex $K$ with $\pi_{1}(K)\cong G$ as follows. Start with
disjoint $2$--complexes $X_{i}$ with $\pi_{1}(X_{i})\cong S_{i}$. Next add
$(n-1)$ edges joining the $X_{i}$'s to obtain a connected $2$--complex $X$
with $\pi_{1}(X)\cong S_{1}\ast\ldots\ast S_{n}$. Now we can form $K$ by
adding a finite number of vertices, $1$--simplices and $2$--simplices.

Next we refine our choice of $G$--equivariant maps $p_{k}:\widetilde{K}%
\rightarrow T_{k}$ by insisting that $p_{k}$ map each component of the
pre-image of $X_{i}$ to a vertex of $T_{k}$, for each $i$. This can be done as
the assumption that $\Gamma_{k}$ is $\mathcal{S}$--adapted means that each
$S_{i}$ fixes some vertex of $T_{k}$. It follows that if $W_{k}$ denotes the
union of the midpoints of the edges of $T_{k}$, then $p_{k}^{-1}(W_{k})$ is
disjoint from the pre-image of each $X_{i}$. Thus $p_{k}^{-1}(W_{k})$ is a
$G$--invariant pattern in $\widetilde{K}$, which projects to a pattern $L_{k}$
in $K$, disjoint from each of the $X_{i}$'s. It follows that $L_{k}$ lies in
the subcomplex $K^{\prime}$ of $K$ obtained by removing the interiors of all
the $2$--simplices of $X$, and then removing the interiors of all the
$1$--simplices of $X$ which now have no incident $2$--simplex. Clearly
$K^{\prime}$ is a finite subcomplex of $K$, so that the pattern $L_{k}$ must
also be finite. As $K^{\prime}$ is finite, we can again apply Dunwoody's
result in \cite{Dunwoody} to tell us that there is an upper bound on the
number of non-parallel disjoint tracks in $K^{\prime}$. Now we can continue as
in the proof of Theorem \ref{accessibilityinabsolutecase}, to consider the
trees $T_{k}^{\prime}$ and the natural map $T_{k}^{\prime}\rightarrow T_{k}$.
The key result in that proof was that if $f$ is an edge of $T_{k}^{\prime}$
which maps to an edge $E$ of $T_{k}$, then the stabilizer of $f$ must have
finite index in the stabilizer of $E$. This was because $f$ had an associated
splitting of $G$ over the stabilizer of $f$, and $G$ had no splittings over
$VPC(\leq n)$ subgroups. In the present situation, our construction of $K$ and
our choice of the map $p_{k}:\widetilde{K}\rightarrow T_{k}$ implies that any
splitting of $G$ associated to an edge of $T_{k}^{\prime}$ must be
$\mathcal{S}$--adapted. As $G$ has no $\mathcal{S}$--adapted splittings over
$VPC(\leq n)$ subgroups, it again follows that the stabilizer of $f$ must have
finite index in the stabilizer of $E$. After this, the rest of the proof of
Theorem \ref{accessibilityinabsolutecase} applies unchanged to complete the
proof of Theorem \ref{accessibilityforsystemswhenGisfp}.
\end{proof}

\section{CCCs with weak crossing\label{CCC'swithweakcrossing}}

As in previous sections, we start by considering a group system
$(G,\mathcal{S})$ of finite type. As in Theorem
\ref{crossinginCCCsisallweakorallstrong}, we further assume that
$e(G:\mathcal{S})=1$. Let $n$ be a non-negative integer, and suppose that, for
any $VPC(\leq n)$ subgroup $M$ of $G$, we have $e(G,M:\mathcal{S})=1$. Let
$\{X_{\lambda}\}_{\lambda\in\Lambda}$ be a finite family of nontrivial
$\mathcal{S}$--adapted almost invariant subsets of $G$, such that each
$X_{\lambda}$ is over a $VPC(n+1)$ subgroup. Let $E(\mathcal{X})$ denote the
set of all translates of the $X_{\lambda}$'s and their complements, and
consider the collection of cross-connected components (CCCs) of $E(\mathcal{X}%
)$ as in the construction of regular neighbourhoods in section
\ref{algregnbhds}. Theorem \ref{crossinginCCCsisallweakorallstrong} tells us
that crossings in a non-isolated CCC of $E(\mathcal{X})$ are either all strong
or all weak.

In this section, we will consider a non-isolated CCC $\Phi$ of $E(\mathcal{X}%
)$ in which all crossings are weak. Theorem
\ref{crossinginCCCsisallweakorallstrong} tells us that the stabilisers of the
elements of $E$ which lie in $W$ are all commensurable, and if $H$ denotes
such a stabiliser, then $e(G,[H]:\mathcal{S})\geq4$. Our aim is to get
information about the structure of the corresponding $V_{0}$--vertex $v$ of
the algebraic regular neighbourhood of the $X_{\lambda}$'s in $G$. Clearly
$G(v)$ is contained in $Comm_{G}(H)$.

In \cite{SS2}, the authors considered the special case of this situation where
$\mathcal{S}$ is empty, and so $G$ is finitely generated. The work of Dunwoody
and Roller \cite{Dunwoody-Roller} shows that $v$ must enclose a splitting of
$G$ over some subgroup commensurable with $H$. In outline, the argument of
Dunwoody and Roller \cite{Dunwoody-Roller} proceeds as follows. (See also
\cite{D-Swenson} for a later account.) Given an almost invariant set $X$ in
the CCC $\Phi$, either $X$ is associated to a splitting of $G$, or $X$ crosses
some translate $gX$. In the second case, let $Y$ be a corner of the pair
$(X,gX)$. In the given situation, $X$ and $gX$ are over subgroups of $G$
commensurable with $H$, so that $Y$ is also an almost invariant set over a
subgroup of $G$ commensurable with $H$. Further $Y$ must be nontrivial, as $X$
crosses $gX$. Thus $Y$ is an element of the CCC $\Phi$, and so we can repeat
the argument. An elegant combinatorial argument due to Bergman \cite{Bergman},
modified slightly by Dunwoody and Roller, defines a complexity such that if
$X$ crosses $gX$, then one of the four corners of the pair $(X,gX)$ must have
strictly less complexity than $X$. Further any strictly descending sequence of
complexities obtained by this process must have finite length. It follows that
after finitely many steps, one must find an almost invariant set in the CCC
$\Phi$, which is associated to a splitting of $G$.

In the setting of this paper, Bergman's combinatorics apply in just the same
way, so long as $G$ is finitely generated. Note that, as $X$ and $gX$ are
adapted to $\mathcal{S}$, Lemma \ref{intersectionisL-a.i.}\ shows that any
corner of the pair $(X,gX)$ is also adapted to $\mathcal{S}$. The following
result summarises the above discussion.

\begin{lemma}
\label{weakcrossingCCCenclosessplittingifGfg}Let $(G,\mathcal{S})$ be a group
system of finite type such that $G$ is finitely generated, and
$e(G:\mathcal{S})=1$. Let $n$ be a non-negative integer, and suppose that, for
any $VPC(\leq n)$ subgroup $M$ of $G$, we have $e(G,M:\mathcal{S})=1$. Let
$\{X_{\lambda}\}_{\lambda\in\Lambda}$ be a finite family of nontrivial
$\mathcal{S}$--adapted almost invariant subsets of $G$, such that each
$X_{\lambda}$ is over a $VPC(n+1)$ subgroup. Let $E(\mathcal{X})$ denote the
set of all translates of the $X_{\lambda}$'s and their complements, let $W$ be
a non-isolated CCC of $E(\mathcal{X})$ in which all crossings are weak, and
let $v$ be the corresponding $V_{0}$--vertex of the algebraic regular
neighbourhood of the $X_{\lambda}$'s in $G$.

Then $G(v)$ is contained in $Comm_{G}(H)$, and $v$ encloses a $\mathcal{S}%
$--adapted splitting of $G$ over some subgroup commensurable with $H$. Further
the almost invariant set associated to this splitting lies in the Boolean
algebra generated by the $X_{\lambda}$'s.
\end{lemma}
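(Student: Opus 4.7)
The strategy follows the classical Dunwoody--Roller--Bergman approach, adapted to the relative setting of this paper. First, the assertion $G(v)\subseteq\mathrm{Comm}_G(H)$ is essentially immediate: by Theorem \ref{crossinginCCCsisallweakorallstrong}, every element of $W$ has stabiliser commensurable with $H$, and $G(v)$ permutes $W$ because $v$ is the vertex of the algebraic regular neighbourhood corresponding to the CCC $W$; hence each element of $G(v)$ conjugates some element of $[H]$ into $[H]$, i.e.\ lies in $\mathrm{Comm}_G(H)$.

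The bulk of the argument is the production of a splitting. Fix any $X\in W$, with stabiliser $H_X$ commensurable with $H$, and run the following loop. If $X$ crosses no translate of itself, then its self-intersection number is zero, and by Theorem \ref{splittingsexist} it is equivalent to an almost invariant set associated to an $\mathcal{S}$--adapted splitting $\sigma$ over a subgroup commensurable with $H$; set $Z:=X$. Otherwise pick $g$ with $X$ crossing $gX$. Since all crossings in $W$ are weak, Lemma \ref{weak-weakcrossing} gives that $H_X$ and $gH_Xg^{-1}$ are commensurable (both with $H$), and Lemma \ref{intersectionisL-a.i.} then ensures each of the four corners $X^{(\ast)}\cap gX^{(\ast)}$ is a nontrivial $\mathcal{S}$--adapted almost invariant set over $L=H_X\cap gH_Xg^{-1}$, a subgroup commensurable with $H$. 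The combinatorial argument of Bergman (as refined by Dunwoody--Roller) attaches a well-ordered complexity to each almost invariant set and shows that at least one such corner has strictly smaller complexity than $X$; replace $X$ by such a corner and iterate. Since the complexity is well-ordered, the loop terminates after finitely many steps, producing a set $Z$ that (i) lies in the Boolean algebra generated by the translates of the original $X$, and hence by $E(\mathcal{X})$, (ii) is $\mathcal{S}$--adapted (Boolean operations preserve $\mathcal{S}$--adaptedness as in Lemma \ref{intersectionisL-a.i.}), (iii) is almost invariant over a subgroup commensurable with $H$, and (iv) is associated to a splitting $\sigma$. Then Lemma \ref{splittingisadaptediffa.i.setis} promotes $\sigma$ to an $\mathcal{S}$--adapted splitting.

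It remains to check that $\sigma$ is enclosed specifically by the $V_0$--vertex $v$, not by some $V_1$--vertex or by a $V_0$--vertex attached to a different CCC. The plan is to add $Z$ to the family, form the regular neighbourhood $\Gamma'$ of $\mathcal{X}\cup\{Z\}$ (which exists by Theorem \ref{algregnbhdexists}), and invoke the uniqueness in Theorem \ref{keypropertiesofalgregnbhdofpair}. Because $Z$ is built out of elements of $W$ via Boolean operations, every translate $hZ$ ($h\in G$) either equals $Z$ (when $h$ is in the stabiliser), or else is obtained by the ``same'' Boolean recipe applied to the translates $hX_i$; one shows that the CCC of $Z$ in the enlarged family coincides with $W$ (extended to include $Z$ and its translates), and that elements of $E(\mathcal{X})$ outside $W$ continue to satisfy their old crossing/enclosure relations unchanged. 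Thus $\Gamma'$ refines $\Gamma$ only at $v$, and the edge of $\Gamma'$ carrying $\sigma$ is incident to $v$; by definition of enclosing via the regular neighbourhood tree, $v$ encloses $\sigma$.

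\textbf{Main obstacle.} The first two paragraphs are essentially bookkeeping once Bergman's descent is in place and the relative analogues of the crossing lemmas are available; the delicate step is the last paragraph. Verifying that the descent stays ``inside $W$'' --- formally, that each intermediate corner $Y$ remains cross-connected to $W$ and that $\Gamma'$ differs from $\Gamma$ only by a refinement at $v$ --- requires a careful tracking of how corners of elements of $W$ interact with the rest of $E(\mathcal{X})$. The technical heart is to show that $Z$ does not cross any element of $E(\mathcal{X})\setminus W$ (so the regular neighbourhood structure outside $v$ is preserved) while simultaneously ensuring $Z$ is subordinate to $v$ (so $\sigma$ is enclosed by $v$ rather than by a newly--created $V_1$--vertex), and for this the relative sandwiching condition of Definition \ref{defnofsandwiching} together with the characterisation in Theorem \ref{keypropertiesofalgregnbhdofpair}(3) will be indispensable.
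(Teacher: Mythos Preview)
Your approach is essentially the paper's: the Dunwoody--Roller argument with Bergman's complexity descent, together with the observation (Lemma~\ref{intersectionisL-a.i.}) that corners of $\mathcal S$--adapted sets over commensurable stabilisers remain $\mathcal S$--adapted. The paper presents this lemma precisely as a summary of that discussion, so your first two paragraphs match it closely.

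Where you diverge is in the enclosure step, and you have made it much harder than it is. You propose enlarging the family by $Z$, rebuilding the regular neighbourhood, and invoking uniqueness --- and you flag this as the main obstacle. In fact enclosure of $Z$ by $v$ is immediate from the definition. Each element of $W$ is (up to equivalence) strictly enclosed by $v$ with a fixed basepoint: for every edge $s$ incident to $v$ and directed towards $v$, we have $\varphi^{-1}(Y_s^\ast)\subset U$ or $\varphi^{-1}(Y_s^\ast)\subset U^\ast$. This property is closed under taking corners: if it holds for $U$ and for $V$, it holds for each of $U^{(\ast)}\cap V^{(\ast)}$, since $\varphi^{-1}(Y_s^\ast)$ lands in one of $U,U^\ast$ and in one of $V,V^\ast$, hence in one of the four corners or its complement. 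Since your $Z$ is obtained from elements of $W$ by finitely many corner operations, $Z$ is strictly enclosed by $v$, and therefore so is the splitting it determines. No comparison of regular neighbourhoods, no sandwiching, and no analysis of how $Z$ interacts with $E(\mathcal X)\setminus W$ is needed. The paper does not spell this out because it treats it as routine; you should replace your third paragraph and ``main obstacle'' with this two-line observation.
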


\begin{remark}
If $W$ is an isolated CCC of $E(\mathcal{X})$, the result of this lemma is immediate.
\end{remark}

Now we can use the above result to show that the conclusion holds even when
$G$ is not finitely generated.

\begin{lemma}
\label{weakcrossingCCCenclosessplitting}Let $(G,\mathcal{S})$ be a group
system of finite type such that $e(G:\mathcal{S})=1$. Let $n$ be a
non-negative integer, and suppose that, for any $VPC(\leq n)$ subgroup $M$ of
$G$, we have $e(G,M:\mathcal{S})=1$. Let $\{X_{\lambda}\}_{\lambda\in\Lambda}$
be a finite family of nontrivial $\mathcal{S}$--adapted almost invariant
subsets of $G$, such that each $X_{\lambda}$ is over a $VPC(n+1)$ subgroup.
Let $E(\mathcal{X})$ denote the set of all translates of the $X_{\lambda}$'s
and their complements, let $W$ be a non-isolated CCC of $E(\mathcal{X})$ in
which all crossings are weak, and let $v$ be the corresponding $V_{0}$--vertex
of the algebraic regular neighbourhood of the $X_{\lambda}$'s in $G$.

Then $G(v)$ is contained in $Comm_{G}(H)$, and $v$ encloses a $\mathcal{S}%
$--adapted splitting of $G$ over some subgroup commensurable with $H$. Further
the almost invariant set associated to this splitting lies in the Boolean
algebra generated by the $X_{\lambda}$'s.
\end{lemma}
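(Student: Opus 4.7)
The plan is to reduce to the finitely generated case (Lemma \ref{weakcrossingCCCenclosessplittingifGfg}) by passing to a standard $\mathcal{S}^{\infty}$-extension $\overline{G}$ of $G$. Recall that $\overline{G}$ is obtained by amalgamating $G$ with finitely generated groups $A_{i}$ over each $S_{i}\in\mathcal{S}^{\infty}$, so $\overline{G}$ is finitely generated. Since each $H_{\lambda}$ is $VPC(n+1)$ and therefore finitely generated, no $H_{\lambda}$ is conjugate commensurable with any $S_{i}\in\mathcal{S}^{\infty}$; hence by Lemma \ref{uniqueenlargement} combined with Lemma \ref{extensionsexist}, each $X_{\lambda}$ has a canonical extension $\overline{X}_{\lambda}$, which is a nontrivial $H_{\lambda}$-almost invariant subset of $\overline{G}$ satisfying $\overline{X}_{\lambda}\cap G=X_{\lambda}$. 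By Lemma \ref{extensionsareadaptedtoAunionS}, these extensions are adapted to $\overline{\mathcal{S}}:=\mathcal{A}\cup\mathcal{S}$, where $\mathcal{A}=\{A_{i}\}$.

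The next step is to transfer the cross-connected component structure from $G$ to $\overline{G}$. By Lemma \ref{crossingofextensions}, for $g\in G$, $X_{\lambda}$ crosses $gX_{\mu}$ if and only if $\overline{X}_{\lambda}$ crosses $g\overline{X}_{\mu}$, and for $g\in\overline{G}-G$ they never cross. Since strong crossing is preserved under extensions (section \ref{extensionsandcrossing}), weak crossings remain weak, so the CCC $\overline{W}$ of $E(\overline{\mathcal{X}})$ containing the $\overline{X}_{\lambda}$'s consists of canonical extensions (and their translates and complements) of elements of $W$, and all crossings in $\overline{W}$ are weak. Provided the ends hypotheses of Lemma \ref{weakcrossingCCCenclosessplittingifGfg} hold for $(\overline{G},\overline{\mathcal{S}})$, I apply that lemma to $\overline{W}$ to obtain an almost invariant subset $\overline{Y}$ of $\overline{G}$ in the Boolean algebra generated by the $\overline{X}_{\lambda}$'s, associated to a splitting of $\overline{G}$ over a subgroup $\overline{K}$ commensurable with $H$.

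Finally, I restrict to $G$. Set $Y=\overline{Y}\cap G$; since $\overline{X}_{\lambda}\cap G=X_{\lambda}$ and Boolean operations commute with intersection with $G$, the set $Y$ lies in the Boolean algebra generated by the $X_{\lambda}$'s. Let $K=\overline{K}\cap G$; since $H\subset G$, $K$ is commensurable with $H$. The set $Y$ is $K$-almost invariant, and the nestedness of $\overline{G}$-translates of $\overline{Y}$ restricts to nestedness of $G$-translates of $Y$, so $Y$ is associated to a splitting of $G$ over $K$. Being in the Boolean algebra of $\mathcal{S}$-adapted sets, $Y$ is itself $\mathcal{S}$-adapted, and hence the splitting is $\mathcal{S}$-adapted by Lemma \ref{splittingisadaptediffa.i.setis}. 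The inclusion $G(v)\subset Comm_{G}(H)$ is part of Theorem \ref{crossinginCCCsisallweakorallstrong}, and the vertex $v$ encloses the splitting because every $X_{\lambda}$ in $W$ is enclosed by $v$ and any set in their Boolean algebra inherits this property.

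The main obstacle is the verification of the ends hypotheses for $(\overline{G},\overline{\mathcal{S}})$ needed to invoke Lemma \ref{weakcrossingCCCenclosessplittingifGfg}. The difficulty is that $\overline{G}$ has extra $\overline{\mathcal{S}}$-adapted almost invariant subsets coming from edge splittings of the amalgamation, which are over the groups $S_{i}\in\mathcal{S}^{\infty}$. Since these $S_{i}$'s are not $VPC$, they should not affect the conditions on $VPC(\leq n)$ subgroups, but this requires a careful analysis through canonical extensions showing that any nontrivial $\overline{\mathcal{S}}$-adapted almost invariant subset of $\overline{G}$ over a $VPC(\leq n)$ subgroup restricts to a nontrivial $\mathcal{S}$-adapted almost invariant subset of $G$ over a commensurable subgroup; alternatively, one could bypass the literal invocation of the f.g. lemma and re-run the Bergman/Dunwoody--Roller complexity argument directly in $\overline{G}$, using only the weak crossing structure of $\overline{W}$ and the finite generation of $\overline{G}$.
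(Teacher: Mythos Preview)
Your proposal is correct and follows essentially the same route as the paper: pass to a standard $\mathcal{S}^{\infty}$-extension $\overline{G}$, transfer the weak-crossing CCC to $\overline{W}$, run the finitely generated argument there, and restrict the resulting nested set back to $G$. The paper handles your ``main obstacle'' exactly by the alternative you suggest at the end: rather than verifying the ends hypotheses for $(\overline{G},\mathcal{A}\cup\mathcal{S})$ and invoking Lemma~\ref{weakcrossingCCCenclosessplittingifGfg} as a black box, it re-runs the Bergman/Dunwoody--Roller corner-complexity descent directly in $\overline{G}$, which only requires finite generation of $\overline{G}$ and the fact that all crossings in $\overline{W}$ are weak (so that corners remain almost invariant over subgroups commensurable with $H$, by Lemma~\ref{intersectionisL-a.i.}).
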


\begin{remark}
If $W$ is an isolated CCC of $E(\mathcal{X})$, the result of this lemma is immediate.
\end{remark}

\begin{proof}
As in the proof of Lemma \ref{weakcrossingCCCenclosessplittingifGfg}, it is
clear that $G(v)$ is contained in $Comm_{G}(H)$. We will now proceed as in
section \ref{extensionsandcrossing} to reduce to the case where $G\ $is
finitely generated.

Let $\mathcal{S}^{\infty}$ denote the subfamily of $\mathcal{S}$ which
consists of all those groups in $\mathcal{S}$ which are not contained in a
finitely generated subgroup of $G$. As each $S_{j}$ in $\mathcal{S}^{\infty}$
is countable, there is a finitely generated group $A_{j}$ which contains
$S_{j}$. We form $\overline{G}$ by amalgamating $G$ with $A_{j}$ over $S_{j}$,
for each $S_{j}$ in $\mathcal{S}^{\infty}$. Thus $\overline{G}$ is the
fundamental group of a graph $\Gamma$ of groups which has one vertex $V$ with
associated group $G$, and has edges each with one end at $V$ and the other end
at a vertex $V_{j}$ with associated group $A_{j}$. The edge joining $V$ to
$V_{j}$ has associated group $S_{j}$. As $G$ is finitely generated relative to
$\mathcal{S}$, it follows that $\overline{G}$ is finitely generated.

Now suppose that $H$ is a finitely generated subgroup of $G$ and that $X$ is a
nontrivial $\mathcal{S}$--adapted $H$--almost invariant subset of $G$. Note
that as $H$ is finitely generated, no $S_{j}$ in $\mathcal{S}^{\infty}$ can be
conjugate commensurable with a subgroup of $H$. As discussed immediately after
Lemma \ref{uniqueenlargement}, it follows that $X$ has a canonical extension
to a $H$--almost invariant subset $\overline{X}$ of $\overline{G}$. Further
Lemma \ref{extensionsareadaptedtoAunionS} tells us that $\overline{X}$ is
adapted to the family $\mathcal{A}\cup\mathcal{S}$. Now the group system
$(\overline{G},\mathcal{A}\cup\mathcal{S})$ is of finite type, so we are ready
to apply Lemma \ref{weakcrossingCCCenclosessplittingifGfg}. Let $\{X_{\lambda
}\}_{\lambda\in\Lambda}$ be a finite family of nontrivial $\mathcal{S}%
$--adapted almost invariant subsets of $G$, such that each $X_{\lambda}$ is
over a $VPC(n+1)$ subgroup. Let $E(\mathcal{X})$ denote the set of all
translates of the $X_{\lambda}$'s and their complements, let $W$ be a
non-isolated CCC of $E(\mathcal{X})$ in which all crossings are weak, and let
$v$ be the corresponding $V_{0}$--vertex of the algebraic regular neighbourhood
of the $X_{\lambda}$'s in $G$. Then $\{\overline{X_{\lambda}}\}_{\lambda
\in\Lambda}$ is a finite family of nontrivial $(\mathcal{A}\cup\mathcal{S}%
)$--adapted almost invariant subsets of $\overline{G}$, such that each
$\overline{X_{\lambda}}$ is over a $VPC(n+1)$ subgroup. Let $E(\overline
{\mathcal{X}})$ denote the set of all translates of the $\overline{X_{\lambda
}}$'s and their complements, and let $\overline{W}$ denote $\{\overline{U}\in
E(\overline{\mathcal{X}}):U\in W\}$. Then $\overline{W}$ is a CCC of
$E(\overline{\mathcal{X}})$ in which all crossings are weak. Now the proof of
Lemma \ref{weakcrossingCCCenclosessplittingifGfg} shows that either
$\overline{X}$ determines a splitting of $\overline{G}$ or there is a corner
of some pair $(X,gX)$ with strictly less Bergman complexity. After repeating
this argument for finitely many steps, we obtain an almost invariant set
$\overline{Z}$ in the CCC $\overline{W}$ which determines a $(\mathcal{A}%
\cup\mathcal{S})$--adapted splitting of $\overline{G}$ over some subgroup
commensurable with $H$. Then $\overline{Z}\cap G$ is a nontrivial element $Z$
of the CCC $W$, so that $Z$ determines a $\mathcal{S}$--adapted splitting of
$G$ over some subgroup commensurable with $H$. As $Z$ is enclosed by $v$, this
splitting is also enclosed by $v$.
\end{proof}

Next we discuss an alternative approach to proving Lemma
\ref{weakcrossingCCCenclosessplittingifGfg}. In the special case when
$\mathcal{S}$ is empty, Niblo \cite{Niblo, Niblo2} gave an alternative
approach to the result of Dunwoody and Roller \cite{Dunwoody-Roller}. His
approach uses the results of \cite{NibloSageevScottSwarup} on very good
position. We describe those parts of Niblo's arguments in \cite{Niblo} and
\cite{Niblo2} which are relevant for the present application.

Consider a finitely generated group $G$, and let $H$ be a finitely generated
subgroup of $G$ such that $e(G,H)\geq2$, so there is a nontrivial $H$--almost
invariant subset $X$ of $G$. As shown in \cite{NibloSageevScottSwarup}, and
discussed in section \ref{goodposition}, we can assume that $X$ is in very
good position. Suppose that whenever $gX$ crosses $X$ then $g$ lies in
$Comm_{G}(H)$. Let $C(X)$ denote the cubing constructed by Sageev from $X$, as
discussed in section \ref{cubings}. In Proposition 7 of \cite{Niblo}, Niblo
shows that each hyperplane in $C(X)$ is compact. Next he considers the
$2$--skeleton $C(X)^{(2)}$ of $C(X)$. This $2$--complex need not be locally
finite in general, but in \cite{Niblo2}, Niblo shows that the compactness of
the hyperplanes in $C(X)$ implies that $C(X)^{(2)}$ is locally finite away
from the vertices. Thus the ideal $2$--complex obtained by removing the
vertices from $C(X)^{(2)}$ is actually locally finite, so the theory of
patterns and tracks as defined by Dunwoody \cite{Dunwoody} can be used. In
\cite{Niblo2}, Niblo subdivides $C(X)^{(2)}$ into triangles without
introducing new vertices. By identifying each triangle with an ideal
hyperbolic triangle, one can define the length of a pattern in $C(X)^{(2)}$ as
was done by Jaco and Rubinstein \cite{Jaco-Rubinstein} when considering
surfaces in $3$--manifolds, and by Scott and Swarup
\cite{Scott-SwarupAlgAnnulus} when considering patterns in locally finite $2$--complexes.

A finite pattern in $C(X)^{(2)}$ is \textit{essential} if it separates
$C(X)^{(2)}$ into two unbounded pieces. In \cite{Niblo2}, Niblo shows that if
there is a finite essential pattern in $C(X)^{(2)}$, then either there is a
shortest such pattern $t$, or there is such a pattern $s$ arbitrarily close to
a vertex. A shortest essential pattern $t$ must be connected and so is a
track. Further such $t$ is automatically $G$--equivariant, by the cut and
paste arguments of \cite{Jaco-Rubinstein} and \cite{Scott-SwarupAlgAnnulus}.
The pattern $s$ can also be chosen to be connected, and $G$--equivariant, so
that in all cases, the existence of a finite essential pattern in $C(X)^{(2)}$
implies the existence of a $G$--equivariant finite essential track.

Now the intersection with $C(X)^{(2)}$ of a hyperplane in $C(X)$ is a finite
essential track, so the discussion of previous paragraph shows that
$C(X)^{(2)}$ contains a $G$--equivariant finite essential track $t$. The
essentiality of $t$ implies that $t$ yields a splitting of $G$ over the
stabilizer of $t$. Finally, in \cite{Niblo2}, Niblo shows that the stabilizer
of any finite track in $C(X)^{(2)}$ must be commensurable with $H$, so that
$G$ has a splitting over a subgroup commensurable with $H$. Further, if $v$
denotes the $V_{0}$--vertex of the algebraic regular neighbourhood of $X$ in
$G$ which encloses $X$, then this splitting is enclosed by $v$.

In the setting of this paper, almost exactly the same argument can be used.
Consider a group system $(G,\mathcal{S})$ of finite type such that $G$ is
finitely generated, and let $H$ be a finitely generated subgroup of $G$, such
that $e(G,H:\mathcal{S})\geq2$. Let $X$ be a nontrivial $\mathcal{S}$--adapted
$H$--almost invariant subset of $G$, which is in very good position, and
suppose that if $gX$ crosses $X$ then $g$ lies in $Comm_{G}(H)$. Let $C(X)$
denote the cubing constructed by Sageev from $X$. Applying the preceding
argument to $C(X)^{(2)}$ produces a splitting of $G$ over a subgroup
commensurable with $H$. But now we need to know in addition that this
splitting is adapted to $\mathcal{S}$. Let $S$ be a group in $\mathcal{S}$. If
$S$ fixes a vertex of $C(X)$, then any splitting of $G$ obtained from a
$G$--equivariant finite essential track in $C(X)^{(2)}$ must be adapted to
$S$. Conveniently Lemma \ref{cubinghasafixedvertex} implies that if $S$ is not
conjugate commensurable with a subgroup of $H$, then $S$ must fix a vertex of
$C(X)$. This leaves the special case when $S$ is conjugate commensurable with
a subgroup of $H$. In this case, it need not be true that $S$ fixes a vertex
of $C(X)^{(2)}$ (see Example \ref{exampleofnofixedvertexinC(Y)}), but this
does not matter as any splitting of $G$ over any subgroup commensurable with a
subgroup of $H$ is automatically adapted to $S$, by Lemma
\ref{adaptedtoSimpliesadaptedtoS'commS}. Thus in all cases, any splitting of
$G$ obtained by Niblo's arguments is adapted to $\mathcal{S}$, as required.
This completes our discussion of an alternative approach to proving Lemma
\ref{weakcrossingCCCenclosessplittingifGfg}.

So far in this section we have considered the algebraic regular neighbourhood
of a finite family of almost invariant subsets of a group system
$(G:\mathcal{S})$, which is known to exist by part 2) of Theorem
\ref{algregnbhdexists}. For the rest of this section, we will be interested in
the problem of trying to form the algebraic regular neighbourhood of an
infinite family of almost invariant subsets of $(G:\mathcal{S})$. For this
discussion, we need to restrict the groups considered. We consider a group
system $(G,\mathcal{S})$ of finite type, such that $G$ is finitely presented,
and each $S_{i}$ in $\mathcal{S}$ is finitely generated. We continue to assume
that $e(G:\mathcal{S})=1$, and that there is a non-negative integer $n$ such
that, for any $VPC(\leq n)$ subgroup $M$ of $G$, we have $e(G,M:\mathcal{S}%
)=1$. Let $H$ be a $VPC(n+1)$ subgroup of $G$, such that $e(G,H:\mathcal{S}%
)\geq4$. Let $Q(H)$ denote the collection of all nontrivial $\mathcal{S}%
$--adapted almost invariant subsets of $G$ which are over a subgroup of $G$
commensurable with $H$, and let $B(H)$ denote the collection of equivalence
classes of elements of $Q(H)$.

In Theorem 8.2 of \cite{SS2}, the authors used the result of Lemma
\ref{weakcrossingCCCenclosessplitting}, in the case when $n=0$ and
$\mathcal{S}$ is empty, to show that the Boolean algebra $B(H)$ is generated
by finitely many almost invariant sets which are associated to splittings of
$G$. An important role in their proof was played by an accessibility result,
Theorem 7.11 of \cite{SS2}, which is why they needed to assume that $G\ $is
finitely presented. Unfortunately the proof of Theorem 7.11 given in
\cite{SS2} is incorrect, but we gave a corrected statement and argument in
Theorem \ref{accessibilityinabsolutecase}. We will use an extension of this
accessibility result to generalise the proof of Theorem 8.2 of \cite{SS2} to
the setting of this paper.

The next results are not needed for the pr0of of the relative torus theorem proved in the next section. But they are needed for the discreteness of pretrees considered when forming regular neighbourhoods, when infinite families of almost invariant sets are involved. An alternate approach would be to use the relative torus theorem and accessibility results to form JSJ trees as in Guiradel and Levitt \cite{Guirardel-Levitt2}  to form trees of cylinders and see that the regular neighbourhoods considered are subdivisions of those trees of cylinders. This approach will be discussed in section 16.

\begin{theorem}
\label{Booleanalgebraisf.g.}Let $(G,\mathcal{S})$ be a group system of finite
type such that $G$ is finitely presented, each $S_{i}$ in $\mathcal{S}$ is
finitely generated, and $e(G:\mathcal{S})=1$. Let $n$ be a non-negative
integer, and suppose that, for any $VPC(\leq n)$ subgroup $M$ of $G$, we have
$e(G,M:\mathcal{S})=1$. Let $H$ be a $VPC(n+1)$ subgroup of $G$, and let
$Q(H)$ denote the collection of all nontrivial $\mathcal{S}$--adapted almost
invariant subsets of $G$ which are over a subgroup of $G$ commensurable with
$H$. Let $B(H)$ denote the collection of all equivalence classes of elements
of $Q(H)$. Then the Boolean algebra $B(H)$ is generated by finitely many
almost invariant sets which are associated to $\mathcal{S}$--adapted
splittings of $G$.
\end{theorem}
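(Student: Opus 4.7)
The plan is to follow the broad strategy of Theorem 8.2 of \cite{SS2}, building up an increasing sequence of $\mathcal{S}$--adapted graph of groups decompositions of $G$ whose edge splittings live in $Q(H)$, and then applying the accessibility result Theorem \ref{accessibilityforsystemswhenGisfp} to force termination. The finite presentedness of $G$ and the finite generation of each $S_i$ ensure that $(G,\mathcal{S})$ satisfies the hypotheses of that accessibility theorem, while the hypothesis that $e(G,M:\mathcal{S})=1$ for every $VPC(\leq n)$ subgroup $M$ of $G$ rules out splittings of $G$ over such subgroups, so every $\mathcal{S}$--adapted splitting appearing in the process has edge group $VPC(n+1)$.

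First I would pick any $X\in Q(H)$ and, by applying Lemma \ref{weakcrossingCCCenclosessplitting} to the singleton family $\{X\}$ (and in the strong-crossing case using the Fuchsian structure of Theorem \ref{strongcrossingimpliesFuchsiantype}, whose peripheral subgroups give splittings in $[H]$), extract an almost invariant set $Y_1$ lying in the Boolean subalgebra generated by the $G$--translates of $X$ that is associated to a $\mathcal{S}$--adapted splitting $\sigma_1$ of $G$ over a subgroup in $[H]$. Let $\Gamma_1$ denote the associated graph of groups, arranged minimal and without inessential vertices.

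Inductively, suppose a minimal, $\mathcal{S}$--adapted graph of groups $\Gamma_k$ with all edge groups in $[H]$ has been constructed, and let $B_k\subset B(H)$ be the Boolean subalgebra generated by the equivalence classes of almost invariant sets associated to the edge splittings of $\Gamma_k$. If $B_k=B(H)$ the theorem is proved. Otherwise choose $Z\in Q(H)$ whose class is not in $B_k$. There are two cases. If $Z$ crosses some translate $gY_i$ of an edge set of $\Gamma_k$, then we form the algebraic regular neighbourhood $\Gamma(\mathcal{X}:G)$ of the finite $\mathcal{S}$--adapted family $\mathcal{X}=\{Y_1,\ldots,Y_k,Z\}$ (which exists by Theorem \ref{algregnbhdexists}), analyse the CCC containing $Z$ using Theorem \ref{crossinginCCCsisallweakorallstrong}, and extract from it (via Lemma \ref{weakcrossingCCCenclosessplitting} for weak-crossing CCCs, or via the Fuchsian peripheral structure for strong-crossing CCCs) a new $\mathcal{S}$--adapted splitting $\sigma_{k+1}$ over a group in $[H]$ properly refining $\Gamma_k$. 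If $Z$ crosses no translate of any $Y_i$, then $Z$ is enclosed by some $V_1$--vertex $v$ of $\Gamma_k$ (by Theorem \ref{keypropertiesofalgregnbhdofpair} and Remark \ref{XnotcrossinganyXiisenclosedbysomeV1-vertex}); the construction of Lemma \ref{extensionsexist} gives that $Z\cap G(v)$ is a nontrivial almost invariant subset of $G(v)$ adapted to the family of edge groups of $\Gamma_k$ incident to $v$, and hence determines a proper splitting of $G(v)$ over a subgroup in $[H]$, which lifts to a proper refinement $\Gamma_{k+1}$ of $\Gamma_k$.

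Since each $\Gamma_{k+1}$ is a proper refinement of $\Gamma_k$, and all edge groups are $VPC(n+1)$, Theorem \ref{accessibilityforsystemswhenGisfp} forces the sequence to terminate at some $\Gamma_N$. At termination $B_N=B(H)$, so $B(H)$ is generated by the finitely many classes of edge splittings of $\Gamma_N$, all of which are $\mathcal{S}$--adapted splittings of $G$. The main obstacle is the second case of the inductive step: one must verify that when $Z$ does not cross any $Y_i$ but represents a new class in $B(H)$, the enclosing vertex $v$ admits a genuine splitting of $G(v)$ over a group in $[H]$ yielding a \emph{proper} refinement of $\Gamma_k$. This requires showing that $Z\cap G(v)$ is nontrivial over its stabilizer in $G(v)$ (which follows from $Z\notin B_k$ via the uniqueness/enclosing properties in Theorem \ref{keypropertiesofalgregnbhdofpair}) and then applying Lemma \ref{weakcrossingCCCenclosessplitting} inside the smaller group system induced on $G(v)$ by the incident edge groups.
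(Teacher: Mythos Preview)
Your overall strategy of combining accessibility with Lemma \ref{weakcrossingCCCenclosessplitting} is right, but the execution diverges from the paper's and has a gap.

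First, a simplification you are missing: every crossing between elements of $Q(H)$ is weak. All such sets are over subgroups commensurable with $H$, so if two crossed strongly, Theorem \ref{crossinginCCCsisallweakorallstrong} would force $e(G,[H]:\mathcal{S})=2$, leaving only one equivalence class in $B(H)$ and no crossing at all. Thus the Fuchsian/strong-crossing machinery you invoke is never triggered and should be removed.

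More seriously, your Case~1 of the inductive step does not produce a proper refinement of $\Gamma_k$. If $Z$ crosses some translate $gY_i$, then forming the regular neighbourhood of $\{Y_1,\ldots,Y_k,Z\}$ and extracting a splitting from the CCC containing $Z$ gives you a splitting enclosed by a $V_0$--vertex of that \emph{new} regular neighbourhood, not by a vertex of $\Gamma_k$. Since $Z$ crosses the edge splitting $Y_i$ of $\Gamma_k$, the splitting you extract need not be compatible with $Y_i$ at all, so you have no refinement of $\Gamma_k$, proper or otherwise. The incremental build-up breaks here.

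The paper sidesteps this by reversing the order of the argument. It first uses Theorem \ref{accessibilityforsystemswhenGisfp} to produce, in one stroke, a \emph{maximal} $\mathcal{S}$--adapted decomposition $\mathcal{G}$ with all edge groups in $[H]$ (maximal meaning no further proper refinement over a group in $[H]$ is possible). It then lets $A(H)$ be the subalgebra generated over $Comm_G(H)$ by the edge sets $X_1,\ldots,X_n$ of $\mathcal{G}$, and shows directly that any $Y\in Q(H)$ lies in $A(H)$ by induction on the number of translates of the $X_i$ that cross $Y$. The base case ($Y$ crosses no such translate) is exactly your Case~2: $Y$ is enclosed by a vertex $v$, and if $Y\cap G(v)$ were nontrivial one would extract a splitting properly refining $\mathcal{G}$, contradicting maximality; hence $Y\cap G(v)$ is trivial and $Y$ is equivalent to a finite union of edge sets. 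The inductive step takes a corner of $Y$ with a crossing translate, which (since the crossing is weak) is again in $Q(H)$ and crosses strictly fewer translates. This is the argument of Theorem~8.2 of \cite{SS2}, transported to the adapted setting.
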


\begin{proof}
This result is proved in the same way as Theorem 8.2 of \cite{SS2}. We will
outline the arguments noting the changes from the proof of that theorem.

Note that all the crossings between elements of $B(H)$ must be weak, by
Theorem \ref{crossinginCCCsisallweakorallstrong}. The accessibility result of
Theorem \ref{accessibilityforsystemswhenGisfp} tells us that there is a finite
$\mathcal{S}$--adapted graph of groups decomposition $\mathcal{G}$ of $G$,
such that all edge groups of $\mathcal{G}$ are commensurable with $H$, and
$\mathcal{G}$ cannot be properly refined using a $\mathcal{S}$--adapted
splitting of $G$ over a subgroup which is commensurable with $H$. An
alternative way of expressing this condition is to say that if $G$ possesses a
$\mathcal{S}$--adapted splitting $\sigma$ over a subgroup commensurable with
$H$ which has intersection number zero with the edge splittings of
$\mathcal{G}$, then $\sigma$ is conjugate to one of these edge splittings. Let
$X_{1},\ldots,X_{n}$ denote almost invariant subsets of $G$ associated to the
edge splittings of $\mathcal{G}$, and let $H_{i}$ denote the stabiliser of
$X_{i}$. Let $A(H)$ denote the subalgebra of $B(H)$ generated over
$Comm_{G}(H)$ by the equivalence classes of the $X_{i}$'s. We will show that
$B(H)=A(H)$, which will complete the proof of the theorem.

Let $E$ denote the collection of translates of the $X_{i}$'s by elements of
$Comm_{G}(H)$. Let $Y$ be an element of $Q(H)$, so that $Y$ is a nontrivial
$\mathcal{S}$--adapted almost invariant subset of $G$ over a subgroup $K$
commensurable with $H$. We will show that the equivalence class of $Y$ lies in
the subalgebra $A(H)$ of $B(H)$. It will then follow that $B(H)=A(H)$, as required.

As in the proof of Theorem 8.2 of \cite{SS2}, $Y$ crosses only finitely many
elements of $E$. For the intersection number $i(H_{i}\backslash X{_{i}%
},K\backslash Y)$ is finite and is the number of double cosets $KgH_{i}$ such
that $gX_{i}$ crosses $Y$. If $gX_{i}$ crosses $Y$, it must do so weakly by
Theorem \ref{crossinginCCCsisallweakorallstrong} which also tells us that the
stabilisers of $gX_{i}$ and $Y$ are commensurable. Now, as in the proof of
Theorem 8.2 of \cite{SS2}, a simple calculation shows that $KgH_{i}$ is the
union of finitely many cosets $gH_{i}$ of $H_{i}$, so that there are only
finitely many translates of the $X_{i}$'s which cross $Y$. We conclude that
$Y$ crosses only finitely many of the translates of the $X_{i}$'s by $G$ and
that these must all lie in $E$.

If $Y$ crosses no elements of $E$, then $Y$ is enclosed by some vertex $v$ of
$\mathcal{G}$. In particular, the stabiliser $K$ of $Y$ is a subgroup of
$G(v)$. We claim that $Y\cap G(v)$ must be a trivial almost invariant subset
of $G(v)$. Assuming this claim, it follows, as in the proof of Theorem 8.2 of
\cite{SS2}, that $Y$ is equivalent to the union of a finite number of almost
invariant subsets of $G$ associated to edges of $\mathcal{G}$, so that the
equivalence class of $Y$ lies in the subalgebra $A(H)$ of $B(H)$, as required.
In order to prove our claim, we suppose that $Y\cap G(v)$ is a nontrivial
$K$--almost invariant subset of $G(v)$. Let $E(v)$ denote the family of
subgroups of $G(v)$ associated to the edges of $\mathcal{G}$ incident to $v$.
Note that $G(v)$ is finitely generated, that $E(v)$ is a finite collection of
subgroups of $G(v)$, and that $Y\cap G(v)$ is adapted to $E(v)$. Thus we can
apply Lemma \ref{weakcrossingCCCenclosessplitting} to the $K$--almost
invariant set $Y\cap G(v)$ in the group system $(G(v),E(v))$. This yields a
$E(v)$--adapted splitting $\tau$ of $G(v)$ over some subgroup commensurable
with $K$, which extends to a splitting $\sigma$ of $G$ over the same subgroup
such that $\sigma$ is enclosed by $v$. In particular, we can refine
$\mathcal{G}$ at $v$ by inserting an edge with associated splitting equal to
$\sigma$. Our choice of $\mathcal{G}$ now implies that $\sigma$ must be
conjugate to one of the edge splittings of $\mathcal{G}$, so that $\tau$ is
not a splitting of $Stab(v)$. This contradiction completes the proof of our claim.

As in the proof of Theorem 8.2 of \cite{SS2}, we can now show that $Y$ lies in
the Boolean algebra $A(H)$ by induction on the number of elements of $E$ which
cross $Y$. The induction step itself is unchanged. This completes the proof of
Theorem \ref{Booleanalgebraisf.g.}.
\end{proof}

Then as in Proposition 8.6 of \cite{SS2}, we deduce 

\begin{proposition} 
With notation as in 14.5, consider the CCCs of $\bar{E}$ which consist of elements of $Q(H)$. Then there is exactly one such CCC which is infinite, and there are finitely many (possibly zero) other CCCs all of which are isolated. The infinite CCC has stabiliser equal to $Comm_G(H)$ and encloses every element of $Q(H)$.
\end{proposition}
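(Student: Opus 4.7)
The plan is to parallel Proposition 8.6 of \cite{SS2} using Theorem \ref{Booleanalgebraisf.g.} as the starting point. That theorem yields a finite family $X_1,\dots,X_n$ of $\mathcal{S}$-adapted almost invariant sets associated to splittings of $G$ over subgroups commensurable with $H$, whose equivalence classes generate $B(H)$ as a Boolean algebra; moreover, by the construction in the proof of Theorem \ref{Booleanalgebraisf.g.}, the $X_i$ arise as the edge splittings of a graph of groups $\mathcal{G}$ for $G$ admitting no proper $\mathcal{S}$-adapted refinement by a splitting over a subgroup commensurable with $H$. Let $E = \{gX_i, gX_i^{\ast} : g \in G,\, 1 \le i \le n\}$ and let $\bar{E}$ denote the enlargement of $E$ by equivalents on non-isolated elements, as in Definition \ref{defnofE(X)andF(X)}.

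First I would verify that the CCCs of $\bar{E}$ contained in $Q(H)$ exhaust $Q(H)$ up to equivalence: finite generation ensures any $Y \in Q(H)$ is equivalent to some Boolean combination of the $X_i$, and Theorem \ref{crossinginCCCsisallweakorallstrong} ensures that any crossings among elements of $Q(H)$ are weak with stabilizers commensurable with $H$, so $Y$ itself is equivalent to an element of $\bar{E}$. Any non-isolated CCC $\Phi$ in $\bar{E}$ contained in $Q(H)$ therefore has $\stab(\Phi) \subseteq \comm_G(H)$. For uniqueness of the infinite CCC, I would suppose $\Phi_1 \neq \Phi_2$ are two such non-isolated CCCs and derive a contradiction. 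By Lemma \ref{weakcrossingCCCenclosessplitting}, each $\Phi_i$ encloses a $\mathcal{S}$-adapted splitting $\sigma_i$ of $G$ over a subgroup commensurable with $H$, whose associated almost invariant set $Z_i$ lies in $B(H)$. The maximality of $\mathcal{G}$ established in the proof of Theorem \ref{Booleanalgebraisf.g.}, together with the accessibility result Theorem \ref{accessibilityforsystemswhenGisfp}, forces each $\sigma_i$ to be conjugate to an edge splitting already in $\mathcal{G}$; thus $Z_i$ is equivalent to some $X_{j_i}$ and $X_{j_i} \in \Phi_i$. The cross-connectedness of each $\Phi_i$, combined with the observation that any two of the $X_j$'s enclosed by non-isolated $V_0$-vertices via weak crossings within $Q(H)$ must in fact lie in a common CCC, then collapses $\Phi_1$ and $\Phi_2$, a contradiction.

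For the remaining assertions: the isolated CCCs in $Q(H)$ are represented by those $G$-orbits of $X_i$'s whose translates are pairwise nested and cross no element of $\bar{E}$; since there are only finitely many $X_i$, the number of isolated CCCs in $Q(H)$ is finite. The stabilizer of the unique infinite CCC $\Phi$ equals $\comm_G(H)$: the inclusion $\stab(\Phi) \subseteq \comm_G(H)$ is immediate from the commensurability of stabilizers in $\Phi$, and conversely any $k \in \comm_G(H)$ sends $Y \in \Phi$ to $kY \in Q(H)$ which remains non-isolated (translation preserves crossings), so $kY \in \Phi$ by the uniqueness just established. That $\Phi$ encloses every $Y \in Q(H)$ follows from Theorem \ref{keypropertiesofalgregnbhdofpair}: $Y$ is equivalent to a Boolean combination of $X_i$ lying in $\Phi$, and such combinations are enclosed by the $V_0$-vertex associated to $\Phi$; isolated $Y \in Q(H)$ correspond to splittings that, by Lemma \ref{weakcrossingCCCenclosessplitting} applied inside the vertex group $G(v) = \comm_G(H)$ of $\Phi$, also factor through $v$. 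The main obstacle will be the uniqueness argument, which requires carefully combining the maximality of $\mathcal{G}$, Theorem \ref{accessibilityforsystemswhenGisfp}, and the weak-crossing combinatorics of Section \ref{CCC'swithweakcrossing} to rule out independent non-isolated Fuchsian-like structures inside $Q(H)$.
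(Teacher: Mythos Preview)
Your approach is essentially the same as the paper's: the paper simply states that the result is deduced ``as in Proposition 8.6 of \cite{SS2}'', and you explicitly set out to parallel that proposition, using Theorem \ref{Booleanalgebraisf.g.} as the adapted analogue of Theorem 8.2 of \cite{SS2}. Your outline is more detailed than what the paper provides, but the strategy---finite generation of $B(H)$ by splittings from a maximal $\mathcal{G}$, followed by the CCC analysis over $\comm_G(H)$---is identical.
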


\begin{remark} 
Let $\mathcal{F}_{n+1}$ denote the equivalence classes of all relative almost invariant sets in $(G,\mathcal{S})$ and consider the pretree $P$ of all CCCs formed from $\mathcal{F}_{n+1}$. Then it follows as in sections 10 and 12 of \cite{SS2} that $P$ is discrete and we can form the regular neighbourhood.
\end{remark}

The general principle is that accessibilty leads to the finite generation of boolean algabras and this leads to the discreteness of the preetrees considered. This approach is also used in section 17.

\section{The Relative Torus Theorem\label{section:relativetorustheorem}}

In this section we will prove relative versions of the Torus Theorem which are
far more general than that proved in \cite{SSpdnold}. The results we obtain
are the following. Note that Theorem \ref{relativetorustheorem} does not
assume that $G$ is finitely generated. And even when $G$ is finitely
generated, Theorem \ref{relativetorustheorem} is more general than that proved
in \cite{SSpdnold}. In section \ref{multi-lengthdecompositions}, we prove an
even more general result, Theorem \ref{n-canonicalrelativetorustheorem}.

For clarity, we start by stating a special case of Theorem
\ref{relativetorustheorem}.

\begin{theorem}
\label{specialcaseofrelativetorustheoremforfgG}Let $(G,\mathcal{S})$ be a
group system of finite type, such that $e(G:\mathcal{S})=1$, and $G$ and each
$S_{i}$ in $\mathcal{S}$ are finitely generated. Suppose that at least one
$S_{i}$ is not $VPC$.

Let $n$ be a non-negative integer, and suppose that, for any $VPC(\leq n)$
subgroup $M$ of $G$, we have $e(G,M:\mathcal{S})=1$. If there is a nontrivial
$\mathcal{S}$--adapted almost invariant subset $X$ of $G$ over a $VPC(n+1)$
subgroup of $G$, then there is a $\mathcal{S}$--adapted splitting of $G$ over
some $VPC(n+1)$ subgroup of $G$.
\end{theorem}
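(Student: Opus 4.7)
The plan is to form the algebraic regular neighbourhood $\Gamma := \Gamma(\{X\}:G)$, which exists by part (2) of Theorem \ref{algregnbhdexists}, and then to analyze the cross-connected component $\Phi$ of $E(\{X\}) = \{gX, gX^{\ast} : g \in G\}$ that contains $X$. Before dividing into cases, I note the preliminary observation that $G$ is not $VPC(n+2)$: subgroups of $VPC$ groups are $VPC$, and by hypothesis some $S_i \in \mathcal{S}$ fails to be $VPC$, so $G$ itself cannot be $VPC$ at all. By Theorem \ref{crossinginCCCsisallweakorallstrong}, $\Phi$ is either isolated, non-isolated with all crossings weak, or non-isolated with all crossings strong, and I will treat these three cases in turn.

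If $\Phi$ is isolated then $X$ crosses no translate of itself, so the self-intersection number $i(H\backslash X, H\backslash X) = 0$, and Theorem \ref{splittingsexist} immediately produces a $\mathcal{S}$-adapted splitting of $G$ over a subgroup commensurable with $H$, which is therefore $VPC(n+1)$. If all crossings in $\Phi$ are weak, then applying Lemma \ref{weakcrossingCCCenclosessplitting} to the $V_0$-vertex of $\Gamma$ corresponding to $\Phi$ yields a $\mathcal{S}$-adapted splitting of $G$ over a subgroup commensurable with $H$, and again we are done.

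The main obstacle is the remaining case, in which all crossings in $\Phi$ are strong. Here I would invoke Theorem \ref{strongcrossingimpliesFuchsiantype} (whose hypotheses are now all in place, since $G$ is not $VPC(n+2)$) to conclude that the $V_0$-vertex $v$ of $\Gamma$ corresponding to $\Phi$ is of $VPCn$-by-Fuchsian type relative to $\mathcal{S}$. The crucial step is then a short combinatorial argument showing that $\Gamma$ must contain at least one edge incident to $v$. Suppose for contradiction that $\Gamma$ has no edges; then $\Gamma$ consists of the single vertex $v$ and $G = G(v)$ is itself $VPCn$-by-Fuchsian with $VPC$ normal subgroup $K$. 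Since every $S_i \in \mathcal{S}$ then has a conjugate contained in $G(v) = G$, condition (4) of Definition \ref{defnofVPC-by-FuchsianrelativetoS} forces each $S_i$ to be either conjugate into a peripheral subgroup of $G(v)$ (which is $VPC(n+1)$) or conjugate commensurable with a subgroup of $K$ (which is $VPC$). In either case $S_i$ would be $VPC$, contradicting the hypothesis that some $S_i$ is not $VPC$. Therefore $\Gamma$ has an edge $e$ incident to $v$, and by condition (2) of that definition $G(e)$ is a peripheral subgroup of $G(v)$, hence $VPC(n+1)$. Finally, since $\Gamma$ is $\mathcal{S}$-adapted, Lemma \ref{adaptedgraphofgroups} ensures that the edge splitting of $\Gamma$ at $e$ is itself $\mathcal{S}$-adapted, providing the desired splitting.
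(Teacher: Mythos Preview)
Your proof is correct and follows essentially the same approach as the paper. The paper first proves the more general Theorem~\ref{relativetorustheorem}, whose conclusion has three alternatives, and then observes in the remark following it that cases~1) and~3) force every $S_i$ to be $VPC(\leq n+1)$, so the hypothesis that some $S_i$ is not $VPC$ leaves only case~2); you have simply folded that observation directly into the case analysis, ruling out $G$ being $VPC(n+2)$ at the start and eliminating the possibility $\Gamma=\{v\}$ in the strong-crossing case by the same appeal to condition~(4) of Definition~\ref{defnofVPC-by-FuchsianrelativetoS}.
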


Note that the above result is false if we remove the assumption that at least
one $S_{i}$ is not $VPC$. A simple example occurs when $G\ $is one of the
Euclidean triangle groups $\Delta(2,3,6)$, $\Delta(2,4,4)$ or $\Delta(3,3,3)$,
the family $\mathcal{S}$ is empty, and $n=0$. For $G$ has no splitting over
any subgroup, but as $G$ has a subgroup of finite index isomorphic to
$\mathbb{Z}\times\mathbb{Z}$, it has many nontrivial almost invariant sets
over infinite cyclic subgroups.

Now we state the general result.

\begin{theorem}
\label{relativetorustheorem}(The Relative Torus Theorem) Let $(G,\mathcal{S})$
be a group system of finite type, such that $e(G:\mathcal{S})=1$. Let $n$ be a
non-negative integer, and suppose that, for any $VPC(\leq n)$ subgroup $M$ of
$G$, we have $e(G,M:\mathcal{S})=1$. Suppose there is a nontrivial
$\mathcal{S}$--adapted almost invariant subset $X$ of $G$, which is over a
$VPC(n+1)$ subgroup of $G$. Then one of the following holds:

\begin{enumerate}
\item $G$ is $VPC(n+2)$.

\item There is a $\mathcal{S}$--adapted splitting of $G$ which is over some
$VPC(n+1)$ subgroup of $G$.

\item $G$ is of $VPCn$--by--Fuchsian type relative to $\mathcal{S}$.
\end{enumerate}
\end{theorem}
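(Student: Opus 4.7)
The plan is to apply the structure theory of algebraic regular neighbourhoods of the singleton family $\mathcal{X}=\{X\}$, combined with the dichotomy between weak and strong crossing. Let $H$ denote the $VPC(n+1)$ stabiliser of $X$. By part~(2) of Theorem~\ref{algregnbhdexists}, the algebraic regular neighbourhood $\Gamma=\Gamma(\mathcal{X}:G)$ exists. Set $E(\mathcal{X})=\{gX,gX^{\ast}:g\in G\}$, let $\Phi$ denote the cross-connected component of $E(\mathcal{X})$ containing $X$, and let $v$ denote the $V_{0}$--vertex of $\Gamma$ corresponding to $\Phi$.

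First I dispose of the case when $X$ is isolated in $E(\mathcal{X})$. In that situation the self-intersection number $i(H\backslash X,H\backslash X)$ vanishes, so Theorem~\ref{splittingsexist} produces a $\mathcal{S}$--adapted splitting of $G$ over a subgroup commensurable with $H$, still $VPC(n+1)$, giving conclusion~(2). Henceforth assume $\Phi$ is non-isolated. By Theorem~\ref{crossinginCCCsisallweakorallstrong}, every crossing in $\Phi$ is weak, or every crossing in $\Phi$ is strong. In the weak case, Lemma~\ref{weakcrossingCCCenclosessplitting} at once produces a $\mathcal{S}$--adapted splitting of $G$ over a subgroup commensurable with $H$, which is again $VPC(n+1)$, and conclusion~(2) holds.

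The substantive case is when every crossing in $\Phi$ is strong. If $G$ is $VPC(n+2)$, conclusion~(1) holds. Otherwise, by Theorem~\ref{strongcrossingimpliesFuchsiantype}, the vertex $v$ is of $VPCn$--by--Fuchsian type relative to $\mathcal{S}$; in particular $G(v)$ is $VPCn$--by--Fuchsian, and every edge of $\Gamma$ incident to $v$ carries a peripheral subgroup of $G(v)$, which is $VPC(n+1)$ by the remark after Definition~\ref{defnofVPC-by-FuchsianrelativetoS}. If $\Gamma$ contains at least one edge, then since $\Gamma$ is $\mathcal{S}$--adapted by Theorem~\ref{keypropertiesofalgregnbhdofpair}, Lemma~\ref{adaptedgraphofgroups} shows that the associated edge splitting is a $\mathcal{S}$--adapted splitting of $G$ over this peripheral $VPC(n+1)$ subgroup, yielding conclusion~(2).

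It remains to handle the case that $\Gamma$ has no edges. Here I would observe that the pretree of CCCs used in Definition~\ref{defnofalgregnbhdifRN-ready} contains only the single point $\Phi$, so has no stars, whence the bipartite $G$--tree produced by the star construction reduces to the one vertex $v$ and $G=G(v)$. Thus $G$ itself is of $VPCn$--by--Fuchsian type relative to $\mathcal{S}$, which is conclusion~(3). The hypotheses $e(G:\mathcal{S})=1$ and $e(G,M:\mathcal{S})=1$ for every $VPC(\leq n)$ subgroup $M$ feed directly into the invocations of Theorems~\ref{crossinginCCCsisallweakorallstrong} and~\ref{strongcrossingimpliesFuchsiantype}, and are exactly our standing assumptions. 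The main obstacle, and the reason for the shape of the trichotomy, is the alternative between strong and weak crossing: the $VPC(n+2)$ possibility in conclusion~(1) is forced by Lemma~\ref{HhasinfiniteindexnormaliserimpliesGisVPCn+2} when strong crossing occurs but $H$ fails to have small commensuriser, while conclusion~(3) appears precisely when strong crossing persists and yet $\Gamma$ contributes no splitting to reduce to conclusion~(2).
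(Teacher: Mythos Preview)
Your proof is correct and follows essentially the same route as the paper: form the algebraic regular neighbourhood of $\{X\}$, invoke the strong/weak dichotomy of Theorem~\ref{crossinginCCCsisallweakorallstrong}, and then split into the isolated, weak-crossing, and strong-crossing cases, with the last further subdivided according to whether $\Gamma$ has an edge. The only cosmetic differences are that in the isolated case you appeal directly to Theorem~\ref{splittingsexist} whereas the paper reads the splitting off the two edges incident to the isolated $V_{0}$--vertex, and in the final case your detour through the pretree is unnecessary since ``$\Gamma$ has no edges'' already forces $\Gamma=\{v\}$ by connectedness.
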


\begin{remark}
In case 3), Remark \ref{remarkondefnofVPC-by-FuchsianrelS} tells us that for
each peripheral subgroup $\Sigma$ of $G$, there is $S_{i}$ in $\mathcal{S}$
which is conjugate to a subgroup of $\Sigma$, but is not conjugate
commensurable with a subgroup of the $VPCn$ normal subgroup $K$ of $G$. As $K$
is normal in $G$, we can simply say that $S_{i}$ is not commensurable with a
subgroup of $K$. Further each $S_{i}$ in $\mathcal{S}$ is either commensurable
with a subgroup of $K$ or is conjugate to a subgroup of some peripheral
subgroup of $G$. Finally if $G\ $has no peripheral subgroups, so that the
orbifold quotient $G/K$ is closed, then each $S_{i}$ in $\mathcal{S}$ is
commensurable with a subgroup of $K$.

It follows that in cases 1) and 3), each $S_{i}$ in $\mathcal{S}$ must be
$VPC(\leq n+1)$. Thus if there is $S_{i}$ in $\mathcal{S}$ which is not
$VPC(\leq n+1)$, only case 2) can occur, which proves Theorem
\ref{specialcaseofrelativetorustheoremforfgG}. In particular, if there is
$S_{i}$ in $\mathcal{S}$ which is not finitely generated, only case 2) can
occur. Finally if $G$ is not finitely generated, the fact that $(G,\mathcal{S}%
)$ is a group system of finite type implies that there is $S_{i}$ in
$\mathcal{S}$ which is not finitely generated, so that again only case 2) can occur.

The proof below shows that part 2) of the conclusion of this theorem can be
refined to state that either there is a splitting of $G$ over a group
commensurable with $H_{X}$, or there is a graph of groups structure for $G$ in
which $X$ is enclosed by a vertex of $VPCn$--by--Fuchsian type relative to
$\mathcal{S}$.
\end{remark}

\begin{proof}
We will use the results of sections \ref{algregnbhds},
\ref{crossingofa.i.setsoverVPCgroups}, \ref{CCC'swithstrongcrossing} and
\ref{CCC'swithweakcrossing} to prove this result.

Let $\Gamma$ denote the algebraic regular neighbourhood of $X$ in $G$. This
exists by part 2) of Theorem \ref{algregnbhdexists}. Then $\Gamma$ has a
single $V_{0}$--vertex $v$. Theorem \ref{crossinginCCCsisallweakorallstrong}
shows that the crossings in a non-isolated CCC of $E(\mathcal{X})$ are either
all strong or are all weak. Theorem \ref{strongcrossingimpliesFuchsiantype}
shows that if $G$ is not $VPC(n+2)$, then each non-isolated CCC of
$E(\mathcal{X})$ in which all crossings are strong yields a $V_{0}$--vertex of
$\Gamma$ of $VPCn$--by--Fuchsian type relative to $\mathcal{S}$. Lemma
\ref{weakcrossingCCCenclosessplitting} shows that each non-isolated CCC of
$E(\mathcal{X})$ in which all crossings are weak yields a $V_{0}$--vertex of
$\Gamma$ which encloses a $\mathcal{S}$--adapted splitting of $G$ over a
$VPC(n+1)$ subgroup.

If $G$ is $VPC(n+2)$, we have case 1) of the theorem. In what follows we
assume that $G\ $is not $VPC(n+2)$.

If $v$ is isolated, then the two edges of $\Gamma$ which are incident to $v$
each determine a splitting of $G$ over the stabilizer of $X$, which is
$VPC(n+1)$. Further, as $\Gamma$ is $\mathcal{S}$--adapted, this splitting is
also $\mathcal{S}$--adapted. Thus we have case 2) of the theorem.

If $v$ arises from a CCC of $E(\mathcal{X})$ in which all crossings are weak,
then Lemma \ref{weakcrossingCCCenclosessplitting} shows that $v$ encloses a
$\mathcal{S}$--adapted splitting of $G$ over a $VPC(n+1)$ subgroup. Thus we
again have case 2) of the theorem.

Otherwise $v$ is of $VPCn$--by--Fuchsian type relative to $\mathcal{S}$. Now
there are two cases depending on whether $\Gamma$ is equal to $v$ or not. If
$\Gamma$ is not equal to $v$, there is an edge $e$ incident to $v$, and $e$
determines a splitting of $G$ over a $VPC(n+1)$ subgroup which is a peripheral
subgroup of $G(v)$. As before this splitting is $\mathcal{S}$--adapted, so
that again we have case 2) of the theorem.

If $\Gamma$ is equal to $v$, then $G=G(v)$ is of $VPCn$--by--Fuchsian type
relative to $\mathcal{S}$. Thus we have case 3) of the theorem.
\end{proof}

We remark that Houghton showed, at the end of section 3 of
\cite{Houghton-infinity ends}, that any countably infinite locally finite
group has infinitely many ends. Such a group cannot have a splitting over any
subgroup. Thus taking the product of such a group with a $VPC(n+1)$ group $H$
yields a non-finitely generated group $G$ such that $e(G,H)$ is infinite,
$e(G,M)=1$ for any $VPC(\leq n)$ subgroup $M$ of $G$, and $G$ has no splitting
over any $VPC(n+1)$ subgroup. Thus the hypothesis of Theorem
\ref{relativetorustheorem} that the system $(G,\mathcal{S})$ is of finite type
cannot be omitted.

It still seems that this construction should be a good source of
counterexamples to Theorem \ref{relativetorustheorem}, but such examples are
excluded by the hypothesis that $(G,\mathcal{S})$ is of finite type. To see
this, let $B$ denote a countably infinite locally finite group, let $H$ denote
a $VPC(n+1)$ group, and let $G=B\times H$. Then $e(G,H)=e(G/H)=e(B)$ is
infinite. Suppose that $\mathcal{S}$ is a family of subgroups of $B$ such that
the system $(B,\mathcal{S})$ is of finite type. Then the system
$(G,\mathcal{S})$ is also of finite type. For any $VPC(\leq n)$ subgroup $M$
of $G$, we have $e(G,M:\mathcal{S})\leq e(G,M)=1$, so that $e(G,M:\mathcal{S}%
)=1$. As the conclusion of Theorem \ref{relativetorustheorem} fails, it
follows that there is no nontrivial $\mathcal{S}$--adapted almost invariant
subset $X$ of $G$, which is over a $VPC(n+1)$ subgroup of $G$. In particular,
$e(G,H:\mathcal{S})=1$. Now $e(G,H:\mathcal{S})=e(B:\mathcal{S})$, so we
conclude that even though $e(B)$ is infinite, if $(B,\mathcal{S})$ is a system
of finite type, then $e(B:\mathcal{S})=1$.


\section[Adapted decompositions and the Guiradel-Levitt approach]{
Adapted JSJ decompositions  over VPC groups and the Guiradel-Levitt approach}

In part 2 of Theorem 9.12 we showed that if $(G,\cS)$ is a  group system of finite type and $\cF$ is a finite family of almost invariant sets over finitely generated subgroups, then \cF\ has an algebraic  regular neighbourhood in $G$. In Theorem 16.1 below we extend this to obtain \cS-adapted JSJ decompositions of $G$ analogues to Theorem 12.3 of \cite{SS2}. The proof is similar and the required results are in sections 11-14, in particular Theorem 11.32, Theorem 12.7, accessibility results of section 13, Theorem 14.5 and its corollary.

\begin{theorem}\label{thm16.1}
Let $(G,\cS)$ be a group system of finite type such that $G$ is finitely
presented relative to \cS, and $e(G : \cS) = 1$. Suppose that
$G$ has no \cS-adapted splitting over a $VPC(<n)$ subgroup, and let $\cF_n$ denote
the collection of equivalence classes of all nontrivial \cS-adapted almost invariant
subsets of G which are over $VPCn$ groups.

Then the family $\cF_n$ has a reduced algebraic regular neighbourhood $\Gamma_n$ in $G$.
$\Gamma_n$ is a minimal $\cS$-adapted bipartite graph of groups structure for $G$.
 For
each $V_1$-vertex $v$ of $\Gamma_n$, the associated group $G(v)$ is finitely generated.

Further, either $G$ is $VPC(n + 2)$ or each $V_0$-vertex $v$ of $\Gamma_n$ satisfies one of
the following conditions:
\begin{enumerate}
\item $v$ is isolated, so that $G(v)$ is $VPCn$.
\item $G(v)$ is of $VPC(n-1)$-by-Fuchsian type relative to \cS.
\item $G(v)$ is the full commensuriser $\comm_G(H)$ for some $VPCn$ subgroup $H$,
such that $e(G;H : \cS) \ge 2$.
\end{enumerate}
$\Gamma_n$ consists of a single vertex if and only if $\cF_n$ is empty, or G itself 
satisfies
one of the above three conditions.
\end{theorem}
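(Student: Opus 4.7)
The plan is to mirror the structure of the proof of Theorem 12.3 of \cite{SS2}, but applied throughout in the relative setting with the tools developed in sections 11--14. First I would verify that the family $\cF_n$ is RN-ready in the sense of Definition \ref{defnofARN-ready} and that the associated pretree of cross-connected components is discrete, so that the regular neighbourhood $\Gamma_n$ is well defined. Corner symmetry for crossing inside $F(\cF_n)$ is delivered by Lemma \ref{crossingissymmetric}; nestedness of isolated elements is achieved, after replacing representatives within their equivalence classes, by Theorems \ref{splittingsexist} and \ref{splittingsarecompatible} exactly as in part 2) of Theorem \ref{algregnbhdexists}. Discreteness of the pretree is the key new point: as indicated in the remark after Proposition 14.6, it follows from the finite generation of the Boolean algebras $B(H)$ provided by Theorem \ref{Booleanalgebraisf.g.}, whose proof in turn depends on the relative accessibility result Theorem \ref{accessibilityforsystemswhenGisfp} and uses the hypothesis that $G$ has no \cS-adapted splitting over any $VPC(<n)$ subgroup. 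Once discreteness is in hand, Definition \ref{defn9.8} constructs $\Gamma_n$, and then passing to the reduced bipartite form yields the \cS-adapted graph of groups asserted (adaptedness of $\Gamma_n$ is part 1) of Theorem \ref{keypropertiesofalgregnbhdofpair}).

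To classify a $V_0$-vertex $v$, let $\Phi$ be the CCC of $F(\cF_n)$ corresponding to $v$. If $\Phi$ is isolated, the stabiliser $G(v)$ is the stabiliser of a single (non-invertible) almost invariant set over a $VPCn$ subgroup, and a minor commensurability argument identifies $G(v)$ with that $VPCn$ subgroup, giving case (1). If $\Phi$ is non-isolated, Theorem \ref{crossinginCCCsisallweakorallstrong} says that either all crossings in $\Phi$ are strong or all are weak. In the strong case, assuming $G$ is not $VPC(n+2)$ (which, together with the hypothesis on $VPC(<n)$ splittings, feeds Lemma \ref{HhasinfiniteindexnormaliserimpliesGisVPCn+2}), Theorem \ref{strongcrossingimpliesFuchsiantype} gives case (2). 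In the weak case, Theorem \ref{crossinginCCCsisallweakorallstrong} tells us that all stabilisers of elements of $\Phi$ are commensurable with a fixed $VPCn$ group $H$ and that $e(G,[H]:\cS)\ge 4$; then Proposition 14.6 identifies $G(v)$ with $\comm_G(H)$ and shows that $v$ encloses every element of $Q(H)$, giving case (3). If none of the $V_0$-vertices of $\Gamma_n$ exists because $\cF_n$ is empty, or if $\Gamma_n$ is a single vertex, the corresponding dichotomy for $G$ itself drops out of the same trichotomy applied to the \textquotedblleft global\textquotedblright\ CCC.

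The finite generation of the $V_1$-vertex groups is the step I expect to be the most delicate, and the one where I would follow the Guirardel--Levitt perspective from \cite{Guirardel-Levitt2, Guirardel-Levitt4} referenced in the introduction. The idea, transported to the relative setting, is to realise $\Gamma_n$ (or an equivalent tree in its deformation space) as the tree of cylinders associated to the one-stage JSJ over $VPCn$ subgroups relative to \cS. In that model, a $V_1$-vertex stabiliser is the stabiliser of a cylinder, and the relative accessibility Theorem \ref{accessibilityforsystemswhenGisfp} combined with the hypothesis that each $S_i$ is contained in a finitely generated subgroup (which holds since $G$ is finitely presented relative to \cS) bounds the combinatorial complexity of any such cylinder, forcing its stabiliser to be finitely generated. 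The main obstacle is the compatibility between the pretree/Sageev construction used here and the tree-of-cylinders construction in \cite{Guirardel-Levitt4}; once the translation is checked, the remaining bookkeeping (minimality, the bipartite reduction, and the three-case classification at $V_0$-vertices) is routine given the machinery of sections 11--14 and Theorem \ref{keypropertiesofreducedalgregnbhdofapair}.
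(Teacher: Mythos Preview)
Your outline is largely on track and matches the paper's own treatment: the existence and \cS-adaptedness of $\Gamma_n$ via RN-readiness and discreteness (using Theorem \ref{Booleanalgebraisf.g.} and the accessibility results of section \ref{accessibilityresults}), and the trichotomy at $V_0$-vertices via Theorems \ref{crossinginCCCsisallweakorallstrong} and \ref{strongcrossingimpliesFuchsiantype} together with Proposition 14.6, are exactly what the paper invokes. The paper itself gives no more detail than this, simply pointing to sections 11--14 and the analogy with Theorem 12.3 of \cite{SS2}.

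However, your argument for finite generation of $V_1$-vertex groups contains a concrete error. You write that ``a $V_1$-vertex stabiliser is the stabiliser of a cylinder''; this is backwards. In the Guirardel--Levitt tree of cylinders $T_c$ (with the paper's convention, which swaps $V_0$ and $V_1$ relative to \cite{Guirardel-Levitt2}), the $V_0$-vertices are the cylinders and the $V_1$-vertices are those vertices of the original JSJ tree $T$ that lie in at least two cylinders. Cylinder stabilisers are commensurisers and need \emph{not} be finitely generated (indeed, these are exactly the case-(3) $V_0$-vertices). The correct reason the $V_1$-vertex groups are finitely generated is that they are already vertex stabilisers of $T$ itself: since $G$ is finitely presented relative to $\cS$ and the edge groups of $T$ are $VPCn$ (hence finitely generated), the vertex groups of $T$ are finitely presented relative to their incident edge groups, and in particular finitely generated. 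This is precisely the content of the paper's Proposition 16.2 and the sentence preceding it (``$V_1$-vertices of $T_c$ remain the same though the edge groups change''). Your appeal to accessibility ``bounding the combinatorial complexity of a cylinder'' is therefore aimed at the wrong object and would not yield the conclusion.
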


More general theorems are proved in the next section.

With the relative torus theorem available Theorem 16.1 can be more quickly deduced from the methods of Guiradel and Levitt in \cite{Guirardel-Levitt2}. We next outline their procedure in the absolute case. Note their notation of $V_0$ and $V_1$ vertices differs from ours. We follow our notation as in Theorem 16.1 above and the theorems in section 12 of \cite{SS2}.

In what follows $T$ with be the JSJ-tree for $G$ as described by Dunwoody and Swenson. We note that this includes among its vertices all quadratically hanging (QH) subgroups of $G$ up to conjugacy. The edge groups are all \vpcn and thus the vertex groups are also (almost) finitely presented.

A cylinder in $T$ is an equivalence class of edges up to commensurability and is a subtree $Y$ of $T$. The tree of cylinders $T_c$ is constructed as follows:
$V_1(T_c)$ is the set of vertices of $T$ belonging to at least two cylinders and $V_0(T_c)$ is the set of cylinders of $T$. An edge is $(v,y)$ where
$v\in V_1(T_c)$, $y\in V_0(T_c)$ where $v\in Y$.

Now the regular neighbourhood in this setup is constructed as follows:
Consider the edges $e$ in $Y$ and the almost invariant sets $Z_e$, $Z_e^*$ (a base point $v_0$ is chosen) defined by $e$. If \C\ is the commensurabilty class of edges groups of $Y$, $\mathcal{B}_\mathcal{C}$ is the Boolean algebra generated by  $Z_e$. And $\mathcal{B}=\cup_\mathcal{C} \mathcal{B}_\mathcal{C}$ as $\mathcal{C}$ varies over the commensurability classes of edge groups of cylinders $Y$ of $T$. The cross connected components $\mathcal{H}$ of $\mathcal{B}$ are defined as in the previous sections. A \df{star} is a subset $\Sigma\subset\mathcal{H}$ such that if $C, C'\in\Sigma$ there is no $C''\in\mathcal{H}$ between $C$ and $C'$. Now $RN(\mathcal{B})$ is the bipartite graph with $\mathcal{H}\cup\mathcal{S}$ ($\mathcal{S}R$ is the set of stars) as vertex set and edges $(C,\Sigma)$ where $C\in\Sigma$.

One of the main results of [20, Thm 3.3] is that $RN(\B)$ is equvariantly isomorphic to a subdivision of $T_c$.  In Theorem 3.11 they extend this to the almost invariant sets defined by the $QH$ vertices of $T$, denoted by $QH(T)$. In Theorem 4.2 they show that the equivalence class of any almost invariant set over a \vpcn\ group in $G$ is in $\B(T) \cup QH(T)$. 

We now sketch some of the ideas in the proof of Theorem 3.3. Clearly there is a map from the $V_0$ vertices of $RN(\B)$ to the $V_0$ vertices of $T_c$. The ambiguity is clarified by the following construct:

A subset $S$ of the vertices $V(T)$ of T (often considered together with the edges in $T$ which join the vertices of $S$) is called a \df{special forest} based in $Y$ if $\delta S$ is finite and contained in $Y$ ($Y$ a cylinder). Such a forest defines an almost invariant set $X_S$ over 
$H=\cap_{e\in\delta S} \stab(e)$ by $X_S=\{ g\| gv_0\in S\}$ with a base vertex $v_0$. Just as in the case of an edge $X_S$ is an almost invariant set over $H\in\mathcal{C}$. They first show that $X\to X_S$ defines an isomorphism of Boolean algebras. We also have $S$ is finite iff $X_S$ is $H$-finite. Moreover, $X_S$, $X_{S'}$ intersect iff S, S' are based in the same cylinder.

To consider further this relation they consider the number of points $\#\partial Y$ in the boundary of $Y$ in $T$. This number is $\ge2$ as $Y$ is the convex hull of $\partial Y$. The cases when  $\#\partial Y$ is two or three are easy. They show that if $\#\partial Y>3$
there is exactly one non-isolated \ccc. And there are possibly several isolated ones over H$\in \mathcal{C}$. These come from points $v$ of $\partial Y$ which have finite valance in $Y$. Such a pair $(v,Y)$ is subdivided with a vertex at the midpoint and the isolated \ccc corresponding to $v$ is mapped to this midpoint. A similar case occurs when $\#\partial Y=3$. This describes rather precisely how the subdivision occurs.

This procedure, rather than that in \cite{SS2} gives some extra information. Notice that $V_1$-vertices of $T_c$ remain the same though the edge groups change. Thus we have

\begin{proposition}
The $V_1$-vertices of RN have  finitely presented stabilisers relative to their edge groups.
\end{proposition}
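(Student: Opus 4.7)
\medskip

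\noindent\emph{Proof plan.}
The plan is to show that a $V_1$-vertex $v$ of $\operatorname{RN}$ can be identified, together with its incident edge groups, with a vertex of the tree of cylinders $T_c$, and then deduce the relative finite presentability from the corresponding property at the level of the JSJ-tree $T$. The observation that makes this work is that the subdivision producing $\operatorname{RN}$ from $T_c$, as described in Theorem 3.3 of \cite{Guirardel-Levitt2}, only inserts new $V_0$-vertices at midpoints of certain $(v,Y)$-edges (to accommodate isolated CCCs) and never creates new $V_1$-vertices. Consequently the map $\operatorname{RN} \to T_c$ restricts to an equivariant bijection on $V_1$-vertices, and each $V_1$-vertex $v$ of $\operatorname{RN}$ has the same stabiliser $G(v)$ as the corresponding vertex of $T_c$, while its incident edge groups in $\operatorname{RN}$ are either equal to those in $T_c$, or are (in the subdivided cases) equal to the $T_c$-edge groups intersected with $G(v)$, which by construction equal the $T_c$-edge groups themselves.

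Next I would recall that the $V_1$-vertices of $T_c$ are precisely those vertices of $T$ that lie in at least two cylinders, and that the stabiliser of such a vertex in $T_c$ is the same as its stabiliser in $T$. If $v$ is such a vertex and $e_1,\dots,e_k$ are the edges of $T$ incident to $v$, these edges partition into cylinders $Y_1,\dots,Y_r$ (with $r\geq 2$). The edges of $T_c$ incident to $v$ correspond bijectively to the cylinders $Y_j$ containing $v$, and the edge group of $(v,Y_j)$ in $T_c$ is $\operatorname{Stab}(Y_j)\cap \operatorname{Stab}(v)$. By definition of a cylinder, this intersection contains each $\operatorname{Stab}(e_i)$ with $e_i\in Y_j$; in particular the family of $T_c$-edge groups at $v$ collectively contains each $T$-edge group at $v$.

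Now I would invoke the fact that $G(v)$, viewed as a vertex group of the JSJ-tree $T$, is finitely presented relative to the family of its $T$-edge groups. This follows from the hypothesis that $G$ is (almost) finitely presented relative to $\mathcal{S}$, together with the accessibility result of Theorem \ref{accessibilityforsystemswhenGisfp}, exactly as in the absolute case: the vertex groups inherit relative finite presentation from the ambient group via the graph-of-groups structure, with the edge groups playing the role of the peripheral family (cf.\ the use of accessibility in Theorem \ref{thm16.1} and in section 14). The key general fact is then the elementary observation that if a group $H$ is finitely presented relative to a family $\{L_\alpha\}$ and each $L_\alpha$ is contained in some group of a larger family $\{L'_\beta\}$ lying in $H$, then $H$ is also finitely presented relative to $\{L'_\beta\}$: a finite relative generating set and finite set of relators over $\{L_\alpha\}$ can be rewritten over $\{L'_\beta\}$ without increasing cardinality.

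Applying this to $H=G(v)$, the family $\{\operatorname{Stab}(e_i)\}$ of $T$-edge groups and the family $\{\operatorname{Stab}(Y_j)\cap \operatorname{Stab}(v)\}$ of $T_c$-edge groups yields that $G(v)$ is finitely presented relative to its $T_c$-edge groups, and hence relative to its $\operatorname{RN}$-edge groups, which is the desired conclusion. The main technical point, and what I expect to be the principal obstacle to making this rigorous, is verifying that the JSJ vertex groups are in fact finitely presented relative to their edge groups in the present generality (including the case where $G$ is only almost finitely presented relative to $\mathcal{S}$, and edge groups are VPC rather than, say, two-ended); the remainder of the argument is a direct comparison of edge groups under the $T\to T_c\to\operatorname{RN}$ construction.
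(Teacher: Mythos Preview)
Your proposal is correct and follows the same route as the paper. The paper's argument is in fact extremely terse: it consists of the single sentence preceding the proposition (``Notice that $V_1$-vertices of $T_c$ remain the same though the edge groups change''), together with the earlier remark that since the edge groups of the JSJ tree $T$ are $VPCn$ (hence finitely presented) and $G$ is finitely presented, the vertex groups of $T$ are themselves (almost) finitely presented. You have essentially supplied the details the paper omits: the identification of $V_1$-vertices of $\operatorname{RN}$ with certain vertices of $T$ via the subdivision $\operatorname{RN}\to T_c$, and the comparison of edge groups along the way.

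One minor simplification: because the $T$-edge groups are $VPCn$ and hence finitely presented, the vertex groups $G(v)$ of $T$ are in fact \emph{absolutely} (almost) finitely presented, not merely relatively. This makes your step of ``enlarging the peripheral family'' unnecessary in the absolute case---an absolutely finitely presented group is trivially finitely presented relative to any finite family of subgroups. Your more careful edge-group comparison is still the right thing to do in the genuinely relative setting (where $G$ is only finitely presented relative to $\mathcal{S}$), which is the generality of Theorem~\ref{thm16.1}; there your argument that each $T$-edge group at $v$ is contained in the corresponding $T_c$-edge group $\operatorname{Stab}(v)\cap\operatorname{Stab}(Y)$, together with the persistence of relative finite presentation under enlargement of peripherals, is exactly what is needed.
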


As mentioned before, their proof extends to that of Theorem 16.1 using the methods of the relative torus theorem.


\section{Multi-length adapted JSJ
decompositions}
\label{multi-lengthdecompositions}

In \cite{SS2}, Scott and Swarup considered multi-length JSJ decompositions. In
this section we will recall their results, discuss adapted versions of these
decompositions, and also discuss how to deal with multi-length JSJ
decompositions in general.

We start by recalling some of their terminology. Let $G$ be a one-ended
finitely generated group and let $X$ be a nontrivial almost invariant subset
over a subgroup $H$ of $G$. For $n\geq1$, they defined $X$ to be
$n$\textsl{--canonical} if $X$ crosses no nontrivial $K$--almost invariant
subset of $G$, for which $K$ is $VPC$ of length at most $n$. Let $G$ be a
one-ended, finitely presented group which does not split over $VPC$ subgroups
of length $<n$, and let $\mathcal{F}_{n,n+1}$ denote the collection of
equivalence classes of all nontrivial almost invariant subsets of $G$ which
are over a $VPCn$ subgroup, together with the equivalence classes of all
$n$--canonical almost invariant subsets of $G$ which are over a $VPC(n+1)$
subgroup. In Theorem 13.12 of \cite{SS2}, they showed that the family
$\mathcal{F}_{n,n+1}$ of almost invariant subsets of $G$ had an algebraic
regular neighbourhood $\Gamma_{n,n+1}$ in $G$.

Before continuing it seems worth discussing the following natural question. We
will say that $X$ is $n$--\textsl{canonical for splittings} if $X$ crosses no
$K$--almost invariant set which is associated to a splitting of $G$, for which
$K$ is $VPC$ of length at most $n$. It is trivial that if $X$ is
$n$--canonical it is also $n$--canonical for splittings. The question is
whether or when the converse holds, so that the concepts are equivalent. The
following simple example shows that when $n=1$, these concepts are not
equivalent for almost invariant sets over a $VPC1$ group, nor for almost
invariant sets over a $VPC2$ group.

\begin{example}
Let $M$ and $N$ be closed orientable $3$--manifolds, each of which is the
product of a surface, not $S^{2}$, with the circle, and construct a space $W$
by gluing $M$ to $N$ along a fibre of each. Let $G$ denote $\pi_{1}(W)$, and
let $H$ denote the infinite cyclic fundamental group $H$ of any fibre of $M$
or $N$. Thus $G\ $splits over $H$, and we let $\sigma$ denote this splitting.
Clearly $G$ normalizes and hence commensurises $H$. It is immediate that
$\Gamma_{1}(G)$ is simply a $V_{0}$--vertex of commensuriser type labelled
$G$. It follows from Remark \ref{XnotcrossinganyXiisenclosedbysomeV1-vertex}
that any almost invariant subset over $G$ over a finitely generated subgroup
cannot be $1$--canonical. However we will give two examples of almost
invariant subsets of $G$ which are $1$--canonical for splittings, showing that
the concepts of $n$--canonical for splittings and $n$--canonical are distinct,
when $n=1$.

Let $X$ denote a $H$--almost invariant subset of $G$ associated to the
splitting $\sigma$, and let $\Gamma$ denote the associated graph of groups
structure with one edge. Also let $T\ $be an essential vertical torus in $M$,
disjoint from $M\cap N$, and denote $\pi_{1}(T)$ by $A$. Thus $A$ is free
abelian of rank $2$ and contains $H$. As $e(\pi_{1}(M),A)=2$, there is a
unique (up to equivalence and complementation) nontrivial $A$--almost
invariant subset $Y$ of $\pi_{1}(M)$. As $H$ is a subgroup of $A$, it follows
that $Y$ is adapted to $H$. Hence $Y$ extends to an $A$--almost invariant
subset $Z$ of $G$, which is enclosed by a vertex of $\Gamma$ and so does not
cross any translate of $X$.

We claim that both $X$ and $Z$ are $1$--canonical for splittings. To prove
this claim it suffices to show that the splitting $\sigma$ of $G$ over $H$
given by the construction of $G$ is the only splitting of $G$ over a $VPC1$
subgroup (up to conjugacy). For by our choices of $X$ and $Z$, neither crosses
this splitting.

Now we can prove the claim. Note that $H$ is a maximal cyclic subgroup of $G$,
as it is a maximal cyclic subgroup of $\pi_{1}(M)$ and of $\pi_{1}(N)$. Now
suppose that $G$ splits over a $VPC1$ subgroup $K$, and consider the
corresponding $G$--tree $T$. Note that as $G$ is torsion free, $K\ $must also
be infinite cyclic. As $\pi_{1}(M)$ and $\pi_{1}(N)$ do not split over any
trivial or infinite cyclic subgroup, it follows that they each fix a vertex of
$T$. As $G$ does not fix any vertex of $T$, we have distinct vertices $v$ and
$w$ of $T$ such that $\pi_{1}(M)$ fixes $v$, and $\pi_{1}(N)$ fixes $w$. Let
$\lambda$ denote the edge path in $T$ joining $v$ and $w$. As $G$ is generated
by the stabilizers of $v$ and $w$, it follows that $\lambda$ injects into the
quotient graph $G\backslash T$, and so consists of a single edge. As $H$ fixes
both $v$ and $w$, it also fixes $\lambda$. As any edge stabiliser of $T$ is
infinite cyclic, the fact that $H$ is a maximal cyclic subgroup of $G$ implies
that $H$ is equal to the stabilizer of $\lambda$, so that $H=K$. The claim follows.
\end{example}

Note that the existence of such examples allows one to define a natural
refinement of the $JSJ$ decomposition $\Gamma_{n}$ of $G$. For let $\sigma$ be
a splitting of $G$ over a $VPCn$ subgroup which is $n$--canonical for
splittings but not $n$--canonical. By the definition of $\Gamma_{n}$, the
splitting $\sigma$ is enclosed by some $V_{0}$--vertex $v$ of $\Gamma_{n}$, so
we can refine $\Gamma_{n}$ at $v$ by inserting an edge whose associated
splitting of $G$ is $\sigma$. We can repeat for other such splittings, and
this process must terminate by accessibility, as proved in Theorem
\ref{accessibilityinabsolutecase}. The result is a natural and canonical
refinement of $\Gamma_{n}$.

We define the adapted analogue of $n$--canonical as follows.

\begin{definition}
\label{defnofn-canonicalrelativetoS}Let $(G,\mathcal{S})$ be a group system of
finite type, and let $X$ be a nontrivial $\mathcal{S}$--adapted almost
invariant subset of $G$ over a subgroup $H$. If $n\geq1$, then $X$ is
$n$\textsl{--canonical relative to }$\mathcal{S}$ if $X$ crosses no nontrivial
$\mathcal{S}$--adapted $K$--almost invariant subset of $G$, for which $K$ is
$VPC(\leq n)$.
\end{definition}

Now we can obtain the following analogue for group systems of Theorem 13.12 of
\cite{SS2}. We will briefly discuss the arguments needed after the statement.

\begin{theorem}
\label{JSJfortwosuccessivelengths}\label{thm17.3}
Let $(G,\mathcal{S})$ be a group system of
finite type such that $G$ is finitely presented, each $S_{i}$ in $\mathcal{S}$
is finitely generated, and $e(G:\mathcal{S})=1$. Let $n$ be a non-negative
integer, and suppose that, for any $VPC(<n)$ subgroup $M$ of $G$, we have
$e(G,M:\mathcal{S})=1$. $L$et $\mathcal{F}_{n,n+1}$ denote the collection of
equivalence classes of all nontrivial $\mathcal{S}$--adapted almost invariant
subsets of $G$ which are over a $VPCn$ subgroup, together with the equivalence
classes of all nontrivial $\mathcal{S}$--adapted almost invariant subsets of
$G$ which are over a $VPC(n+1)$ subgroup and are $n$--canonical relative to
$\mathcal{S}$.

Then the family $\mathcal{F}_{n,n+1}$ has an algebraic regular neighbourhood
$\Gamma_{n,n+1}$ in $G$.

$\Gamma_{n,n+1}$ is a minimal $\mathcal{S}$--adapted bipartite graph of groups
structure for $G$. Further, either $G$ is $VPC(n+2)$, or each $V_{0}$--vertex
$v$ of $\Gamma_{n,n+1}$ satisfies one of the following conditions:

\begin{enumerate}
\item $v$ is isolated, so that $G(v)$ is $VPC$ of length $n$ or $n+1$.

\item $v$ is of $VPC$-by-Fuchsian type relative to $\mathcal{S}$, where the
$VPC$ group has length $n-1$ or $n$.

\item $G(v)$ is the full commensuriser $Comm_{G}(H)$ for some $VPC$ subgroup
$H$ of length $n$ or $n+1$, such that $e(G,H:\mathcal{S})\geq2$.
\end{enumerate}

$\Gamma_{n,n+1}$ consists of a single vertex if and only if $\mathcal{F}%
_{n,n+1}$ is empty, or $G$ itself satisfies one of the above three conditions.
\end{theorem}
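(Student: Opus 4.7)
The plan is to build $\Gamma_{n,n+1}$ by refining the $VPCn$-level decomposition $\Gamma_n$ from Theorem 16.1. First I would establish that $\mathcal{F}_{n,n+1}$ is RN-ready in the sense of Definition 9.7. Corner-symmetry of crossing holds by Lemma 7.8, since every element of $\mathcal{F}_{n,n+1}$ is $\mathcal{S}$-adapted. Isolated elements are nested: for VPCn isolated elements this is handled by Theorems 8.33 and 8.34 (the relative versions of the compatibility/nestedness results of SS1), while $n$-canonical $VPC(n+1)$ isolated elements determine splittings of $G$ over $VPC(n+1)$ subgroups which, being $n$-canonical, cannot cross any VPCn splitting either, so all isolated elements together form a nested family. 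The discreteness of the resulting pretree is the combinatorial heart of the argument: it is forced by the accessibility result Theorem 13.2 (applied to $\mathcal{S}$-adapted splittings over $VPC(\le n+1)$ subgroups) and the finite generation of the Boolean algebra $B(H)$ for each $VPC(n+1)$ commensurability class, as in Theorem 14.5 and the following proposition. This yields the existence of $\Gamma_{n,n+1}$, and its $\mathcal{S}$-adaptedness and minimality follow from Theorem 9.16.

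The next step is to analyse the $V_0$-vertices. Each non-isolated $V_0$-vertex $v$ corresponds to a cross-connected component $\Phi$ in $F(\mathcal{F}_{n,n+1})$; by Theorem 11.32 all crossings within $\Phi$ are either all strong or all weak. In the strong case, Theorem 12.7 applies to give that $v$ is of $VPC$-by-Fuchsian type relative to $\mathcal{S}$, where the $VPC$ normal subgroup has length $n-1$ or $n$ depending on whether the elements of $\Phi$ are over $VPCn$ or $VPC(n+1)$ subgroups; the only subtlety is that a CCC containing elements of both lengths would force incompatible commensurability classes of intersections, which is ruled out by Lemma 11.28. In the weak case, Theorem 11.32 and Lemma 14.2 together show that the stabilisers in $\Phi$ are all commensurable with a fixed $VPCn$ or $VPC(n+1)$ subgroup $H$, and $G(v) = \mathrm{Comm}_G(H)$, putting us in case (3). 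Isolated $V_0$-vertices have $VPCn$ or $VPC(n+1)$ vertex group, which is case (1). The exceptional possibility that $G$ itself is $VPC(n+2)$ appears precisely because Lemma 11.20 forces this whenever $H$ has infinite-index normaliser, which is exactly the obstruction in the proof of Theorem 12.7.

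An alternative, cleaner route I would pursue in parallel, following the Guirardel-Levitt framework outlined in Section 16, is to build the JSJ $G$-tree $T_{n,n+1}$ for $\mathcal{S}$-adapted splittings over $VPC(\le n+1)$ subgroups, take its tree of cylinders $T_c$ (where cylinders are commensurability classes of edge stabilisers), and then identify $\Gamma_{n,n+1}$ with a subdivision of $T_c$ by reproducing the argument of Theorem 3.3 of Guirardel-Levitt in the relative setting. This approach has the advantage of automatically giving finite presentability of $V_1$-vertex stabilisers relative to their edge groups, and it bypasses the direct pretree-discreteness check; however it requires the existence of $T_{n,n+1}$, which itself rests on the relative torus theorem (Theorem 15.2) and the same accessibility input.

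The main obstacle will be verifying discreteness of the pretree of CCCs of $F(\mathcal{F}_{n,n+1})$, because $\mathcal{F}_{n,n+1}$ is infinite and includes sets over subgroups of two different $VPC$ lengths. The key point is to show that between two non-isolated CCCs there are only finitely many other CCCs; this amounts to showing finite generation of the relevant Boolean subalgebras, for which one combines Theorem 14.5 (which handles a single commensurability class) with the observation that an $n$-canonical $VPC(n+1)$ set cannot cross any $VPCn$ set, so the interaction between the two levels is controlled by enclosing relations in $\Gamma_n$ rather than by crossing. Once discreteness is secured, the rest of the argument—identifying the three types of $V_0$-vertices and verifying minimality, $\mathcal{S}$-adaptedness, and the degenerate case when $\Gamma_{n,n+1}$ is a single vertex—follows by direct application of Theorems 9.16, 11.32, 12.7, and Lemma 14.2 as indicated above.
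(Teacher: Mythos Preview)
Your overall architecture matches the paper's, but there is a real gap in how you invoke Theorems~11.32 and~12.7 for the $VPC(n+1)$ part of $\mathcal{F}_{n,n+1}$. Those results carry the hypothesis that $e(G,M:\mathcal{S})=1$ for every $VPC(\le n)$ subgroup $M$, whereas Theorem~17.3 only gives you this for $VPC(<n)$ subgroups. In particular there may well exist $VPCn$ subgroups $M$ with $e(G,M:\mathcal{S})\ge 2$ --- indeed the whole point of the $VPCn$ layer of $\mathcal{F}_{n,n+1}$ is that such $M$ can occur --- so you cannot simply cite 11.32 or 12.7 for a CCC consisting of $VPC(n+1)$ sets. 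The same problem affects your appeal to Lemma~11.28 and, upstream, to Lemmas~11.3 and~11.14, all of which use the stronger end hypothesis in an essential way.

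The paper's proof consists precisely of filling this gap: it proves modified versions of these lemmas (Lemmas~17.5 and~17.6, Theorem~17.4) in which the missing hypothesis $e(G,M:\mathcal{S})=1$ for $VPCn$ subgroups $M$ is replaced by the assumption that the $VPC(n+1)$ sets are $n$--canonical relative to $\mathcal{S}$. The arguments are genuine reworkings, not formal substitutions: for instance in the weak--weak case (Lemma~17.5) one must rule out that $H\cap K$ is $VPCn$ by exhibiting an explicit $VPCn$--almost invariant set crossing $Y$, contradicting $n$--canonicity; and in Lemma~17.6 one replaces the global one-endedness of $(G,[K]:\mathcal{S})$ by a direct argument, again producing a crossing that violates $n$--canonicity. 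Once these modified lemmas are in place, the rest of your outline (accessibility via Theorem~17.8, Boolean-algebra finiteness, the Guirardel--Levitt alternative) goes through as you describe.
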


The proof of Theorem \ref{thm17.3} used the arguments in
sections \ref{crossingofa.i.setsoverVPCgroups}, \ref{CCC'swithstrongcrossing}
and \ref{CCC'swithweakcrossing}. We need to discuss how to generalise those
arguments to cover the case of $n$--canonical almost invariant subsets of $G$
over $VPC(n+1)$ subgroups.

In section \ref{crossingofa.i.setsoverVPCgroups}, the main result was Theorem
\ref{crossinginCCCsisallweakorallstrong} on the symmetry of strong crossing
and of weak crossing of $\mathcal{S}$--adapted almost invariant subsets of $G$
which are over a $VPC(n+1)$ subgroup. The analogous result we need in this
section is the following.

\begin{theorem}
\label{crossingintwostageCCCsisallweakorallstrong}Let $(G,\mathcal{S})$ be a
group system of finite type, such that $G$ and each $S$ in $\mathcal{S}$ are
finitely generated and $e(G:\mathcal{S})=1$. Let $n$ be a non-negative
integer, and suppose that, for any $VPC(<n)$ subgroup $M$ of $G$, we have
$e(G,M:\mathcal{S})=1$. Let $\{X_{\lambda}\}_{\lambda\in\Lambda}$ be a family
of nontrivial $\mathcal{S}$--adapted almost invariant subsets of $G$ which are
either over a $VPCn$ subgroup, or are over a $VPC(n+1)$ subgroup and are
$n$--canonical relative to $\mathcal{S}$.

Let $E$ denote the set of all translates of the $X_{\lambda}$'s and their
complements, and consider the collection of cross-connected components (CCCs)
of $E$ as in the construction of regular neighbourhoods in chapter 3 of
\cite{SS2}. Then the following statements hold:

\begin{enumerate}
\item The crossings in a non-isolated CCC of $E$ are either all strong or are
all weak.

\item In a non-isolated CCC $\Phi$ with all crossings weak, the stabilisers of
the elements of $E$ which lie in $W$ are all commensurable. If $H$ denotes
such a stabiliser, then $e(G,[H]:\mathcal{S})\geq4$.

\item In a non-isolated CCC $\Phi$ with all crossings strong, if $H$ is the
stabiliser of an element of $E$ which lies in $\Phi$, then
$e(G,[H]:\mathcal{S})=2$.
\end{enumerate}
\end{theorem}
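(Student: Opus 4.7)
The plan is to mirror the proof of Theorem \ref{crossinginCCCsisallweakorallstrong}, treating separately the two possible lengths of almost invariant set appearing in the family, and replacing the missing end hypothesis for $VPCn$ subgroups by the $n$--canonicity assumption. The first step is a dichotomy for CCCs: if $X$ is $n$--canonical over a $VPC(n+1)$ subgroup and $Y$ is $\mathcal{S}$--adapted over a $VPC(\le n)$ subgroup, then $X$ does not cross $Y$ by definition of $n$--canonical, and since $G$ is finitely generated crossing is corner symmetric (Lemma \ref{crossingissymmetric}), so $Y$ does not cross $X$ either. Hence any non-isolated CCC of $E$ consists entirely of $\mathcal{S}$--adapted almost invariant sets over $VPCn$ subgroups, or entirely of $n$--canonical $\mathcal{S}$--adapted almost invariant sets over $VPC(n+1)$ subgroups.

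For a CCC of the first type, Theorem \ref{crossinginCCCsisallweakorallstrong} applies directly with $n$ replaced by $n-1$: the required hypothesis $e(G, M : \mathcal{S}) = 1$ for every $VPC(\le n-1)$ subgroup $M$ is exactly our hypothesis for $VPC(<n)$ subgroups, so all three conclusions transfer. For a CCC of the second type the hypothesis of Theorem \ref{crossinginCCCsisallweakorallstrong} fails, so the plan is to re-run each of Lemmas \ref{HintersectconjugateofSisK-finite}--\ref{weak-weakcrossing} verbatim where possible and to replace each invocation of the $VPCn$ end hypothesis by an argument exploiting $n$--canonicity. Lemma \ref{HintersectconjugateofSisK-finite} goes through unchanged since its proof uses only the strong crossing assumption; Lemma \ref{weak-weakcrossing} adapts because its argument produces a nontrivial $\mathcal{S}$--adapted $L$--almost invariant corner $X \cap Y$ with $L = H_X \cap H_Y$, and if $L$ were $VPC(<n)$ then $e(G, L : \mathcal{S}) = 1$ already yields a contradiction, while the residual case $L = VPCn$ of infinite index is excluded by $n$--canonicity of $X$ (any such $L$--almost invariant corner would be a new member of $\mathcal{F}_{n,n+1}$ not crossed by $X$, and a translation argument using a generator of $H_X / L$ forces a conflict with the strong-crossing partner structure).

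The main obstacle is the analogue of Lemma \ref{aisethasoneS-adaptedK-end}, which asserts that $\widehat{X}$ has exactly one $\mathcal{S}$--adapted $K$--end for $K = H_X \cap H_Y$. The original proof invokes $e(G, [K] : \mathcal{S}) = 1$ to conclude that a candidate $K'$--almost invariant subset $Y_3 \cap G$ of $G$ must be trivial, and this is not directly available. The approach I would take is: if $\widehat{X}$ had two or more $\mathcal{S}$--adapted $K$--ends, extract a nontrivial $\mathcal{S}$--adapted $K'$--almost invariant set $W \subset X$ (up to $K'$--finite error) for some $K' \in [K]$; this $W$ is a new member of $\mathcal{F}_{n,n+1}$. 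Using an element $h$ generating the infinite cyclic quotient $H_X / K$ (existing after passing to a finite index subgroup, as supplied via Lemma \ref{HintersectconjugateofSisK-finite} and Houghton's structure theorem), propagate $W$ through $\widehat{X}$ via its translates $h^i W$; then using the strong crossing partner $Y$ in the CCC, which is itself $n$--canonical and hence does not cross $W$ or any $h^i W$, one derives that the translates $h^i W$ either accumulate so as to produce a crossing with $Y$ (contradicting $n$--canonicity of $Y$) or form a strictly nested descending chain contradicting the finiteness of intersection numbers between $\mathcal{S}$--adapted sets over finitely generated subgroups (Corollary \ref{finiteintersectionnumber}).

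Once these adaptations are secured, Lemmas \ref{strongcrossingissymmetric} and \ref{XcrossesYstronglyimpliese=2} carry through with their original proofs, and the three-part conclusion of Theorem \ref{crossinginCCCsisallweakorallstrong} follows in the second case with stabilizers of length $n+1$ and intersection stabilizers of length $n$. Combining the two cases yields Theorem \ref{crossingintwostageCCCsisallweakorallstrong}.
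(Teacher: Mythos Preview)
Your overall architecture matches the paper exactly: split CCCs by length, apply Theorem~\ref{crossinginCCCsisallweakorallstrong} with $n-1$ in the $VPCn$ case, and for the $VPC(n+1)$ case re-run the supporting lemmas with the missing end hypothesis replaced by $n$--canonicity. Lemma~\ref{HintersectconjugateofSisK-finite} is indeed unchanged, and Lemmas~\ref{strongcrossingissymmetric} and~\ref{XcrossesYstronglyimpliese=2} do carry over once the earlier lemmas are adapted.

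However, your sketches of the two critical adaptations are muddled in ways that suggest you have not yet found the actual mechanism.

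For the weak--weak lemma, you write that the $VPCn$ case is ``excluded by $n$--canonicity of $X$'' via a ``conflict with the strong-crossing partner structure''. There is no strong-crossing partner here: both $X$ and $Y$ cross each other weakly. What the paper does (Lemma~\ref{weak-weakcrossingforn-canonical}) is take the corner $Z=X\cap Y$, pass to a finite-index subgroup of $H_X$ with infinite cyclic quotient over $M\subset L$ (via Lemma~\ref{Houghtonlemma}), and use a generator $c$ of this quotient to translate the boundary of $\widehat{Z}$ until it misses $\widehat{Y}$. A connectedness argument then forces $c^k Z\subset Y^\ast$ while $Z\subset Y$, so $Z\cup c^k Z$ is a nontrivial $\mathcal{S}$--adapted $M$--almost invariant set that \emph{crosses $Y$}, contradicting the $n$--canonicity of $Y$ (not of $X$).

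For the one-end lemma (Lemma~\ref{aisethasoneadaptedK-endifn-canonical}), your proposed dichotomy (``accumulate to cross $Y$'' versus ``nested chain contradicting finite intersection numbers'') is not the argument and the second branch does not obviously lead anywhere. The paper's reduction is to show that any $\mathcal{S}$--adapted $K'$--almost invariant $Z\subset X$ is trivial. Since $X$ crosses $Y$ \emph{strongly}, powers of a generator $c$ of $H_X/K$ drift to infinity on both sides of $\partial\widehat{Y}$. One picks $k$ so that $c^k(\partial^N\widehat{Z})$ lies on the $Y$--side and $c^{-k}(\partial^N\widehat{Z})$ on the $Y^\ast$--side; connectedness forces $c^k Z\subset Y$ and $c^{-k}Z\subset Y^\ast$, so $c^k Z\cup c^{-k}Z$ crosses $Y$, again contradicting $n$--canonicity of $Y$.

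The unifying idea you are missing is that in both cases the $n$--canonicity of the \emph{other} set $Y$ is violated by constructing, via translation along $H_X/(H_X\cap H_Y)$, a union of two translates of the offending $VPCn$--almost invariant set that straddles $Y$.
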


Let $W$ denote a CCC of $E$ in the above theorem. As any $X_{\lambda}$ in $W$
which is over a $VPC(n+1)$ subgroup cannot cross any $X_{\mu}$ which is over a
$VPCn$ subgroup, it follows that $W$ consists entirely of $X_{\lambda}$'s over
$VPCn$ subgroups, or entirely of $X_{\lambda}$'s over $VPC(n+1)$ subgroups. In
the first case, the result of Theorem
\ref{crossingintwostageCCCsisallweakorallstrong} follows immediately from
Theorem \ref{crossinginCCCsisallweakorallstrong}. However, in the second case,
this is not immediate as the hypothesis of Theorem
\ref{crossinginCCCsisallweakorallstrong} that, for any $VPC(\leq n)$ subgroup
$M$ of $G$, we have $e(G,M:\mathcal{S})=1$, has been weakened in Theorem
\ref{crossingintwostageCCCsisallweakorallstrong} by replacing $VPC(\leq n)$ by
$VPC(<n)$. We need to use the additional hypothesis of Theorem
\ref{crossingintwostageCCCsisallweakorallstrong} that if $X_{\lambda}$ in $E$
is over a $VPC(n+1)$ subgroup, it is $n$--canonical relative to $\mathcal{S}$.

We need to consider the various lemmas in section
\ref{crossingofa.i.setsoverVPCgroups} which were used to prove Theorem
\ref{crossinginCCCsisallweakorallstrong} and discuss how to modify them to
provide a proof of Theorem \ref{crossingintwostageCCCsisallweakorallstrong}.
Many of the lemmas used to prove Theorem
\ref{crossinginCCCsisallweakorallstrong} do not use the hypothesis that, for
any $VPC(\leq n)$ subgroup $M$ of $G$, we have $e(G,M:\mathcal{S})=1$, and so
they do not need any modification.

The first lemma in section \ref{crossingofa.i.setsoverVPCgroups} which needs
modifying is Lemma \ref{weak-weakcrossing}. We give below the statement needed
for this section. This result is the generalization to the setting of group
systems of Proposition 13.5 of \cite{SS2}, and the proof we give here is also
a generalization of the proof of Proposition 13.5 of \cite{SS2} to our setting.

\begin{lemma}
\label{weak-weakcrossingforn-canonical}Let $(G,\mathcal{S})$ be a group system
of finite type, such that $G$ and each $S$ in $\mathcal{S}$ are finitely
generated, and $e(G:\mathcal{S})=1$. Let $H$ and $K$ be $VPC(n+1)$ subgroups
of $G$, and let $L$ denote $H\cap K$. Let $X$ and $Y$ be nontrivial
$\mathcal{S}$--adapted almost invariant subsets of $G$ over $H$ and $K$
respectively, such that $X$ and $Y$ are $n$--canonical relative to
$\mathcal{S}$. Suppose that, for any subgroup $M$ of $H$ or of $K$ which is
$VPC(<n)$, we have $e(G,M:\mathcal{S})=1$. Suppose further that $X$ crosses
$Y$ weakly and $Y$ crosses $X$ weakly. Then $H$ and $K$ must be commensurable,
and $e(G,[H]:\mathcal{S})\geq4$.
\end{lemma}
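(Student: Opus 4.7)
The plan is to adapt the proof of Lemma~\ref{weak-weakcrossing}. First I would use that $X$ crosses $Y$ weakly to conclude, after possibly replacing $Y$ by $Y^{\ast}$, that $H\cap Y$ is $K$--finite, hence $L$--finite; symmetrically, after possibly replacing $X$ by $X^{\ast}$, $X\cap K$ is $L$--finite. Lemma~\ref{intersectionisL-a.i.} then implies that $X\cap Y$ is $\mathcal{S}$--adapted and $L$--almost invariant, and the assumption that $X$ crosses $Y$ forces every corner of $(X,Y)$ to be $H$--infinite, hence $L$--infinite (since $L\subset H$ makes $L$--finite imply $H$--finite). Thus $X\cap Y$ is a nontrivial $\mathcal{S}$--adapted $L$--almost invariant subset of $G$ and $e(G,L:\mathcal{S})\geq 2$.

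The main departure from the proof of Lemma~\ref{weak-weakcrossing} is that I must now derive, from the weaker hypothesis on $VPC(<n)$ subgroups alone, that $L$ has finite index in $H$ (and, symmetrically, in $K$). Suppose $L$ has infinite index in $H$. Since $H$ is $VPC(n+1)$, $L$ is then $VPC(\leq n)$. If $L$ is $VPC(<n)$, the hypothesis gives $e(G,L:\mathcal{S})=1$, contradicting the previous paragraph. The delicate case is $L$ being $VPCn$, and this is where the $n$--canonical hypothesis comes in. I would apply the Relative Torus Theorem (Theorem~\ref{relativetorustheorem}) to the nontrivial $\mathcal{S}$--adapted $L$--almost invariant set $X\cap Y$, with the role of $n+1$ played by $n$. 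This yields three alternatives: (i) $G$ is $VPC(n+1)$, but then both $H$ and $K$ have finite index in $G$, so $L$ does too, contradicting infinite index; (ii) $G$ is of $VPC(n-1)$--by--Fuchsian type relative to $\mathcal{S}$, with $VPC(n-1)$ normal subgroup $N$; then $H/(H\cap N)$ injects into the Fuchsian quotient and has Hirsch length at least $(n+1)-(n-1)=2$, but every $VPC$ subgroup of a Fuchsian group has Hirsch length at most one, a contradiction; or (iii) $G$ admits an $\mathcal{S}$--adapted splitting $\sigma$ over some $VPCn$ subgroup.

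The hard part is case (iii). Let $Z$ be an $\mathcal{S}$--adapted almost invariant subset of $G$ associated to $\sigma$. Because $X$ is $n$--canonical relative to $\mathcal{S}$, $X$ crosses no translate of $Z$, and by Remark~\ref{XnotcrossinganyXiisenclosedbysomeV1-vertex} $X$ is enclosed by some vertex $v_X$ of the Bass--Serre tree of $\sigma$; similarly $Y$ is enclosed by some $v_Y$. If $v_X\neq v_Y$, then any edge on the path between them yields a $VPCn$ almost invariant set with respect to which $X$ and $Y$ lie almost entirely on opposite sides, forcing a corner of $(X,Y)$ to be small and contradicting that $X$ crosses $Y$. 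Hence $v_X=v_Y=v$, and by replacing $G$ with $G(v)$ equipped with its induced family (adjacent edge groups together with the subfamily of $\mathcal{S}$ in $G(v)$) one repeats the argument, producing a strictly refining sequence of $\mathcal{S}$--adapted minimal graphs of groups decompositions of $G$ with $VPCn$ edge groups. The accessibility result Theorem~\ref{accessibilityforsystemswhenGisfp}, applied with length parameter $n-1$ in place of $n$ (so that its edge--group hypothesis becomes $VPCn$, and its end hypothesis is exactly what we have assumed), shows such a sequence must terminate, yielding the desired contradiction. This iteration and its termination via the accessibility theorem is the main obstacle; once $L$ is known to have finite index in $H$ and in $K$, commensurability follows, and then, exactly as in Lemma~\ref{weak-weakcrossing}, all four corners of $(X,Y)$ are disjoint nontrivial $\mathcal{S}$--adapted $L$--almost invariant subsets of $G$, linearly independent modulo $L$--finite sets, so that $e(G,L:\mathcal{S})\geq 4$ and hence $e(G,[H]:\mathcal{S})\geq 4$.
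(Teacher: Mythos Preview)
Your overall strategy is genuinely different from the paper's, and the first two paragraphs (reducing to the case $L$ is $VPCn$, and disposing of alternatives (i) and (ii) of the Relative Torus Theorem) are fine. The problem is case (iii) and the iteration that follows.

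The decisive gap is that Theorem~\ref{accessibilityforsystemswhenGisfp} requires $G$ to be finitely \emph{presented} relative to $\mathcal{S}$, whereas the present lemma only assumes $G$ (and each $S_{i}$) finitely \emph{generated}. So you cannot invoke accessibility to terminate your refining sequence of $VPCn$--edge decompositions. Even if you were willing to add a finite presentation hypothesis, the iteration step ``replace $G$ by $G(v)$ and repeat'' is not free: you must verify that the restricted sets $X\cap G(v)$ and $Y\cap G(v)$ remain nontrivial, still cross weakly, and are $n$--canonical in the new system $(G(v),E(v)\cup\mathcal{S}|_{v})$, and that this system again satisfies $e(G(v),M:\cdot)=1$ for $VPC(<n)$ subgroups $M$. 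Some of these transfer via Lemma~\ref{extensionsexist}, but you have not argued any of it, and the accumulated bookkeeping is nontrivial. Finally, your citation of Remark~\ref{XnotcrossinganyXiisenclosedbysomeV1-vertex} is misplaced: that remark concerns $V_{1}$--vertices of an algebraic regular neighbourhood, not vertices of the Bass--Serre tree of a single splitting. The fact you need (that an almost invariant set over a finitely generated subgroup which crosses no edge set of a $G$--tree is enclosed by a vertex) is correct, but comes from the discussion around Proposition~5.7 of \cite{SS2} rather than that remark.

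By contrast, the paper's argument is direct and uses no accessibility, no torus theorem, and no iteration. Assuming $L$ is $VPCn$, Lemma~\ref{Houghtonlemma} gives a subgroup $M$ of $L$ of index at most $2$ which is normal in a finite index subgroup $J$ of $H$ with $J/M$ infinite cyclic; choose $c\in J$ projecting to a generator. With $Z=X\cap Y$ (the corner already shown to be nontrivial $L$--almost invariant), a short geometric argument in the relative Cayley graph, using that $c$ preserves $X$ while the distance of $c^{k}$ from $\partial Y$ tends to infinity, produces $k$ with $c^{k}Z\subset Y^{\ast}$ (and $Z\subset Y$). Then $Z\cup c^{k}Z$ is an $M$--almost invariant, hence $VPCn$--almost invariant, $\mathcal{S}$--adapted set that visibly crosses $Y$, contradicting the hypothesis that $Y$ is $n$--canonical relative to $\mathcal{S}$. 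This is both shorter and avoids the finite presentation hypothesis your approach would require.
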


\begin{proof}
The first part of the proof of Lemma \ref{weak-weakcrossing} applies unchanged
to show that one corner of the pair $(X,Y)$ is a nontrivial $\mathcal{S}%
$--adapted $L$--almost invariant set. Thus $e(G,L:\mathcal{S})\geq2$. Now the
hypotheses of the lemma imply that $L$ must be $VPCn$ or $VPC(n+1)$. In the
second case, $L$ has finite index in $H$ and in $K$, so that $H$ and $K$ are
commensurable, as required. It follows that each corner of the pair $(X,Y)$ is
a nontrivial $\mathcal{S}$--adapted $L$--almost invariant set, so that
$e(G,L:\mathcal{S})\geq4$. It follows that $e(G,[H]:\mathcal{S})\geq4$, as required.

It remains to show that the case where $L$ is $VPCn$ leads to a contradiction
using the hypothesis that $X$ and $Y$ are $n$--canonical relative to
$\mathcal{S}$. Note that unlike the situation in Lemma \ref{weak-weakcrossing}%
, the fact that $e(G,L:\mathcal{S})\geq2$ does not force $L$ to be $VPC(n+1)$.
We start by applying Lemma \ref{Houghtonlemma} to the $VPCn$ subgroup $L$ of
$H$. This tells us that $H$ has a finite index subgroup $J$, and $L$ has a
subgroup $M$ of index $1$ or $2$, such that $J$ contains $M$ as a normal
subgroup, and $J/M$ is infinite cyclic. Recall that we are working in a
relative Cayley graph $\Gamma(G,\mathcal{S})$ of the given group system. In
what follows we will also use the associated angle metric on $G$. See
Definition \ref{defnofanglemetric}.

As $X$ crosses $Y$ weakly and $Y$ crosses $X$ weakly, without loss of
generality, we can assume that $H\cap Y$ is $K$--finite, and that $X\cap K$ is
$H$--finite. Thus $H\cap Y$ and $X\cap K$ are both $L$--finite, and the corner
$X\cap Y$ of the pair $(X,Y)$ is a nontrivial $\mathcal{S}$--adapted
$L$--almost invariant set. We denote this corner by $Z$.

We will use the terminology in Notation \ref{notation} and Definition
\ref{defnofstandardaugmentation}, and the results of Lemmas
\ref{propertiesofXhat} and \ref{AlmostinvariantsetbecomesconnectedinGL}. Thus
if we let $\widehat{W}$ denote the standard augmentation in $\widehat{\Gamma}$
of a nontrivial $\mathcal{S}$--adapted almost invariant subset $W$ of $G$ over
a finitely generated subgroup of $G$, then $\widehat{W}\cap G$ is equivalent
to $W$, and there is $N$ such that $\partial^{N}\widehat{W}$ is connected in
$\Gamma^{N}$, and $\widehat{W}$ is connected in $\widehat{\Gamma}^{N}$, i.e.
the graphs $\Gamma^{N}[\partial^{N}\widehat{W}]$ and $\widehat{\Gamma}%
^{N}[\widehat{W}]$ are each connected. Further these graphs are connected for
all integers greater than $N$. As $J$, $K$ and $M$ are all finitely generated,
we can apply Lemma \ref{AlmostinvariantsetbecomesconnectedinGL} to $X$, $Y$
and $Z$. Further there is $N$ such that all three of $\partial^{N}\widehat{X}%
$, $\partial^{N}\widehat{Y}$ and $\partial^{N}\widehat{Z}$ are connected in
$\Gamma^{N}$, so that all three of $\widehat{X}$, $\widehat{Y}$ and
$\widehat{Z}$ are connected in $\widehat{\Gamma}^{N}$. As $Z\subset X$ and
$L\subset H$, it is easy to check that $\widehat{Z}\subset\widehat{X}$.
Similarly we also have $\widehat{Z}\subset\widehat{Y}$.

Consider the images of all these connected graphs in the quotient
$M\backslash\widehat{\Gamma}^{N}$. Let $c$ be an element of $J$ which maps to
a generator of $J/M$, and let $C$ denote the infinite cyclic subgroup of
$J\ $generated by $c$. Then $C$ acts on $M\backslash\widehat{\Gamma}^{N}$
preserving the image of $X$. Let $x$ denote a point of this image. As $H\cap
Y$ is $K$--finite, the distance of $c^{k}x$ from the image of $Y$ tends to
infinity as $k$ tends to $\pm\infty$. As the image of $\partial^{N}%
\widehat{Z}$ in $M\backslash\widehat{\Gamma}^{N}$ is finite, it follows that
there is $k$ such that $c^{k}(\partial^{N}\widehat{Z})$ does not meet the
image of $\widehat{Y}$. Now consider the action of $C$ on $\widehat{\Gamma
}^{N}$. Then $c^{k}(\partial^{N}\widehat{Z})$ does not meet $\widehat{Y}$. In
particular $c^{k}(\partial^{N}\widehat{Z})$ does not meet $\partial
^{N}\widehat{Y}$. As $\partial^{N}\widehat{Y}$ is connected in $\Gamma^{N}$
and does not meet $c^{k}(\partial^{N}\widehat{Z}),$ either $\partial
^{N}\widehat{Y}$ is contained in $c^{k}\widehat{Z}$, or $\partial
^{N}\widehat{Y}\cap c^{k}\widehat{Z}$ is empty. In the first case, we would
have $\partial^{N}\widehat{Y}\subset c^{k}\widehat{Z}\subset c^{k}%
\widehat{X}=\widehat{X}$, which would contradict the assumption that $X$ and
$Y$ cross. It follows that $\partial^{N}\widehat{Y}\cap c^{k}\widehat{Z}$ is
empty. As $c^{k}(\partial^{N}\widehat{Z})$ does not meet $\widehat{Y}$, it
follows that $\widehat{Y}\cap c^{k}\widehat{Z}$ is empty. Thus we have
$Z\subset Y$, and $c^{k}Z\subset Y^{\ast}$. As $C$ normalizes $M$, the set
$Z\cup c^{k}Z$ is a $M$--almost invariant subset of $G$ which crosses $Y$,
contradicting the hypothesis that $Y$ is $n$--canonical relative to
$\mathcal{S}$. This contradiction completes the proof of the lemma.
\end{proof}

The next lemma which needs modifying is Lemma \ref{aisethasoneS-adaptedK-end},
which contains the hypothesis that $e(G,K;\mathcal{S})=1$. Here is the
statement we need in this section.

\begin{lemma}
\label{aisethasoneadaptedK-endifn-canonical}Let $(G,\mathcal{S})$ be a group
system of finite type, such that $G$ and each $S$ in $\mathcal{S}$ are
finitely generated, and $e(G:\mathcal{S})=1$. Let $H$ be a $VPC(n+1)$ subgroup
of $G$, and let $X$ be a nontrivial $\mathcal{S}$--adapted $H$--almost
invariant subset of $G$. Suppose there is a nontrivial $\mathcal{S}$--adapted
$H^{\prime}$--almost invariant subset $Y$ of $G$ such that $X$ crosses $Y$
strongly, and let $K$ denote $H\cap H^{\prime}$. Finally suppose that $X$ and
$Y$ are $n$--canonical relative to $\mathcal{S}$.

Let $\widehat{\Gamma}=\Gamma(G,\mathcal{S})$ be a relative Cayley graph of
$(G,\mathcal{S})$, and let $\widehat{X}$ denote the standard augmentation of
$X$. Then

\begin{enumerate}
\item If $\partial^{R}\widehat{X}$ is connected in $\Gamma^{R}$, so that the
subgraph $\Gamma^{R}[\partial^{R}\widehat{X}]$ of $\Gamma^{R}$ is connected,
the quotient graph $K\backslash\Gamma^{R}[\partial^{R}\widehat{X}]$ has two ends.

\item the number of $\mathcal{S}$--adapted $K$--ends of $\widehat{X}$ is equal
to $1$.
\end{enumerate}
\end{lemma}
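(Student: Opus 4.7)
The plan is to prove part (1) exactly as in Lemma \ref{aisethasoneS-adaptedK-end}: the graph $\Gamma^R[\partial^R\widehat{X}]$ is $H$-invariant, $H$-finite, connected, and locally finite; since $H$ acts freely on it, the number of ends of the quotient by $K$ equals $e(H,K)$, which is $2$ by Lemma \ref{HintersectconjugateofSisK-finite}. This step never uses the $e(G,[K]:\mathcal{S})=1$ hypothesis of the original lemma, only that $X$ crosses $Y$ strongly, so it carries over unchanged.

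For part (2), I would argue by contradiction: assume $\widehat{X}$ has at least two $\mathcal{S}$-adapted $K$-ends, and partition $\widehat{X} = A \sqcup B$ with both sets $K'$-invariant for some $K'\in[K]$, both $K$-infinite, and both with $K$-finite coboundary in $\widehat{X}$. By part (1), $K\backslash dW^R$ has two ends, so two cases arise. In Case (i), both $A\cap dW^R$ and $B\cap dW^R$ are $K$-infinite, and the argument from the ``$\neq 2$ ends'' portion of the proof of Lemma \ref{aisethasoneS-adaptedK-end} applies verbatim, since it is purely graph-theoretic and uses only the two-ended quotient together with an element $h\in H$ whose action on $K\backslash dW^R$ does not interchange ends. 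In Case (ii), WLOG $B\cap dW^R$ is $K$-finite; then $\delta^R(B\cap G)\subset \delta_{\widehat{X}}^RB \cup $(edges exiting $\widehat{X}$ at $B\cap dW^R$) is $K$-finite, so by Lemma \ref{characterizingdX} the set $B\cap G$ is a nontrivial $\mathcal{S}$-adapted $K'$-almost invariant subset of $G$, with $K'$ commensurable with $K$ and hence $VPCn$.

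Here is where the $n$-canonicality hypothesis replaces $e(G,[K]:\mathcal{S})=1$. Since $X$ is $n$-canonical, $B\cap G$ cannot cross $X$; combined with $B\subset \widehat{X}$, this forces $B\cap G\subset X$ essentially. Since $Y$ is $n$-canonical, $B\cap G$ cannot cross $Y$; the options $Y\subset B\cap G$ or $Y^*\subset B\cap G$ essentially would give $Y\subset X$ or $Y^*\subset X$ essentially, each contradicting one of the four $K$-infinite corners of the strong crossing of $X$ and $Y$. So WLOG $B\cap G\subset X\cap Y$ essentially. Moreover, one end of $K\backslash dW^R$ lies essentially in $Y$ (by the correspondence between ends of $K\backslash dW^R$ and directions in the cyclic quotient $H/K$ whose image in $K\backslash Y$ is infinite), so $(X\cap Y)\cap dW^R$ is $K$-infinite; since $B\cap dW^R$ is $K$-finite, $B\cap G$ must be a \emph{proper} subset of $X\cap Y$ essentially, i.e.\ $(X\cap Y)-(B\cap G)$ is $K$-infinite.

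Finally, the contradiction: consider $Z = Y\triangle(B\cap G)$. Since symmetric differences of $\mathcal{S}$-adapted sets are $\mathcal{S}$-adapted, and since $Y$ and $B\cap G$ are both almost invariant over the subgroup $K'\cap H'$, which is commensurable with $K$ and hence $VPCn$, the set $Z$ is a $\mathcal{S}$-adapted $VPCn$-almost invariant subset of $G$. A direct computation using $B\cap G\subset X\cap Y$ essentially, the $K$-infiniteness of $(X\cap Y)-(B\cap G)$, and the four $K$-infinite corners of $(X,Y)$, shows that all four corners of $(X,Z)$ are $K$-infinite; thus $X$ crosses $Z$, contradicting the $n$-canonicality of $X$. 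The hardest step is this last verification of the four-corner computation together with establishing the ``properness'' clause from the interplay between $B\cap dW^R$ being $K$-finite and the end of $K\backslash dW^R$ that lies in $Y$—this is the place where the new hypothesis does real work, and where the care in identifying the ends of $K\backslash dW^R$ with the $Y$ and $Y^*$ directions is essential.
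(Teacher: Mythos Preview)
Your Part~(1) and Case~(i) of Part~(2) are essentially right: once both $A\cap dW^{R}$ and $B\cap dW^{R}$ are $K$--infinite, the translation-by-$h$ argument from the ``$=2$ ends'' paragraph of Lemma~\ref{aisethasoneS-adaptedK-end} goes through without using $e(G,[K]:\mathcal{S})=1$. (You wrote ``$\neq 2$ ends'', which I take to be a slip.)

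The genuine gap is in Case~(ii), specifically your final construction $Z=Y\triangle(B\cap G)$. You assert that $Y$ and $B\cap G$ are both almost invariant over $K'\cap H'$, and hence so is $Z$. But $K'\cap H'$ is commensurable with $K=H\cap H'$, which is $VPCn$ and therefore has \emph{infinite} index in the $VPC(n+1)$ group $H'$. A $H'$--almost invariant set need not be almost invariant over an infinite-index subgroup: the condition $Y\triangle Yg$ is $H'$--finite does not imply it is $(K'\cap H')$--finite. So $Z$ is not a $VPCn$--almost invariant set, and you cannot invoke the $n$--canonicality of $X$ against it. This breaks the contradiction.

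The paper's approach to Case~(ii) is different and avoids this obstruction entirely. It proves the stronger key fact that any $\mathcal{S}$--adapted $K'$--almost invariant $Z\subset X$ must be trivial, as follows. Pass to a finite-index subgroup of $H$ so that $K$ is normal with infinite cyclic quotient, and pick $c\in H$ projecting to a generator. Since $X$ crosses $Y$ strongly, the powers $c^{k}$ march off into $Y$ and the powers $c^{-k}$ into $Y^{\ast}$; because $\partial^{N}\widehat{Z}$ is $K$--finite, for $k$ large one gets $c^{k}Z\subset Y$ and $c^{-k}Z\subset Y^{\ast}$ (the alternative inclusions are ruled out because they would force $\partial^{N}\widehat{Y}\subset\widehat{X}$, contradicting that $Y$ crosses $X$ strongly). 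Then $c^{k}Z\cup c^{-k}Z$ is a nontrivial $\mathcal{S}$--adapted almost invariant set over a subgroup commensurable with $K$---genuinely $VPCn$, since it is built from translates of $Z$ by elements of $H$---which crosses $Y$, contradicting the $n$--canonicality of $Y$. The point is that translating $Z$ by elements of $H$ keeps you over a $VPCn$ group, whereas your symmetric-difference-with-$Y$ move drags in $H'$ and destroys the $VPCn$ almost-invariance.
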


\begin{proof}
The statement of Lemma \ref{aisethasoneadaptedK-endifn-canonical} differs from
the statement of Lemma \ref{aisethasoneS-adaptedK-end} as follows. Lemma
\ref{aisethasoneadaptedK-endifn-canonical} assumes that $G$ and each $S$ in
$\mathcal{S}$ are finitely generated, and that $X$ and $Y$ are $n$--canonical
relative to $\mathcal{S}$. Lemma \ref{aisethasoneS-adaptedK-end} does not
contain any of these assumptions. Instead Lemma
\ref{aisethasoneS-adaptedK-end} contains the assumption that
$e(G,[K]:\mathcal{S})=1$.

In the proof of Lemma \ref{aisethasoneS-adaptedK-end}, the assumption that
$e(G,[K]:\mathcal{S})=1$ is used at just one point, in the third paragraph of
the proof of part 2) of the lemma. The argument in the lemma shows that
$Y_{3}\cap G$ is a $\mathcal{S}$--adapted $K^{\prime}$--almost invariant
subset of $G$, for some $K^{\prime}\in\lbrack K]$. It follows from the
assumption that $e(G,[K]:\mathcal{S})=1$ that $Y_{3}\cap G$ is a trivial such
set, and hence $K$--finite, which contradicts the initial assumptions on the
$Y_{i}$'s. Thus the assumption that $e(G,[K]:\mathcal{S})=1$ is far more
general than needed. We just need to be able to show that $Y_{3}\cap G$ is trivial.

Thus we can reduce the proof of Lemma
\ref{aisethasoneadaptedK-endifn-canonical} to that of Lemma
\ref{aisethasoneS-adaptedK-end} by using the hypotheses of Lemma
\ref{aisethasoneadaptedK-endifn-canonical} to prove that, for any $K^{\prime
}\in\lbrack K]$, if $Z$ is a $\mathcal{S}$--adapted $K^{\prime}$--almost
invariant subset of $G$ such that $Z\subset X$, then $Z$ must be trivial.

Let $\widehat{Z}$ denote the standard augmentation of $Z$. By Lemma
\ref{AlmostinvariantsetbecomesconnectedinGL}, there is $N$ such that
$\partial^{N}\widehat{Z}$ is connected in $\Gamma^{N}$, and $\widehat{Z}$ is
connected in $\widehat{\Gamma}^{N}$, i.e. the graphs $\Gamma^{N}[\partial
^{N}\widehat{Z}]$ and $\widehat{\Gamma}^{N}[\widehat{Z}]$ are each connected.
Let $W$ denote the connected graph $\widehat{\Gamma}^{N}[\widehat{Z}]$, and
let $dW$ denote the connected graph $\Gamma^{N}[\partial^{N}\widehat{Z}]$. As
$\partial Z$ is $K^{\prime}$--finite, so are $\partial^{N}\widehat{Z}$ and
$\Gamma^{N}[\partial^{N}\widehat{Z}]=dW$. By increasing $N$ if needed, we can
assume in addition that $\partial^{N}\widehat{Y}$ is connected in $\Gamma^{N}%
$, and $\partial^{N}\widehat{Y^{\ast}}$ is connected in $\Gamma^{N}$.

By replacing $H$ by a subgroup of finite index, and replacing $K^{\prime}$ by
its intersection with this subgroup, which we denote by $K$, we can assume
that $K$ is normal in $H$ with infinite cyclic quotient. Let $c$ denote an
element of $H$ which projects to a generator of $H/K$. As $X$ crosses
$Y\ $strongly, we know that the sequences $\{d(c^{k},\partial Y):k\geq1\}$ and
$\{d(c^{-k},\partial Y):k\geq1\}$ tend to infinity, one in $Y$, the other in
$Y^{\ast}$. By replacing $c$ by $c^{-1}$ if needed, we can assume that the
sequence $\{c^{k}:k\geq1\}$ eventually lies in $Y$. As $dW$ is $K$--finite, it
follows that the sequence $c^{k}(dW)$ eventually lies in $\widehat{Y}%
-\partial^{N}\widehat{Y}$, and that the sequence $c^{-k}(dW)$ eventually lies
in $\widehat{Y^{\ast}}-\partial^{N}\widehat{Y^{\ast}}$. We choose $k$ large
enough so that $c^{k}(dW)\subset\widehat{Y}-\partial^{N}\widehat{Y}$ and
$c^{-k}(dW)\subset\widehat{Y^{\ast}}-\partial^{N}\widehat{Y^{\ast}}$. As
$c^{k}(dW)\subset\widehat{Y}-\partial^{N}\widehat{Y}$, the connectedness of
$dW$, $W$ and $\Gamma^{N}[\partial^{N}\widehat{Y}]$ implies that either
$c^{k}(W)\subset\widehat{Y}$ or that $\Gamma^{N}[\partial^{N}\widehat{Y}%
]\subset c^{k}(W)$. As $c$ preserves $X$, the second case yields the
inclusions $\partial^{N}\widehat{Y}\subset c^{k}W\subset\widehat{X}$
contradicting the fact that $Y\ $crosses $X$ strongly. It follows that
$c^{k}(W)\subset\widehat{Y}$, so that $c^{k}(Z)\subset Y$. Similarly we see
that $c^{-k}(Z)\subset Y^{\ast}$. Thus $c^{k}Z\cup c^{-k}Z$ is a nontrivial
$\mathcal{S}$--adapted $K$--almost invariant subset of $G$ which crosses $Y$,
which contradicts the hypothesis that $Y$ is $n$--canonical, as required.
\end{proof}

The next lemma which needs modifying is Lemma \ref{strongcrossingissymmetric},
as it has the assumption that $e(G,M:\mathcal{S})=1$ if $M$ is any subgroup of
$H$ of infinite index. But this assumption was only needed in order to quote
Lemma \ref{aisethasoneS-adaptedK-end}. Thus the only modification needed to
the proof of the revised version of Lemma \ref{strongcrossingissymmetric} is
to quote Lemma \ref{aisethasoneadaptedK-endifn-canonical} instead of Lemma
\ref{aisethasoneS-adaptedK-end}.

Lemma \ref{XcrossesYstronglyimpliese=2} needs a similarly trivial modification.

This completes our discussion of how to modify the lemmas in section
\ref{crossingofa.i.setsoverVPCgroups} used to prove Theorem
\ref{crossinginCCCsisallweakorallstrong}. Once we have proved Theorem
\ref{crossingintwostageCCCsisallweakorallstrong}, the arguments in section
\ref{CCC'swithstrongcrossing} apply essentially unchanged to the
$n$--canonical case.

At this point we can prove the following generalization of Theorem
\ref{relativetorustheorem}.

\begin{theorem}
\label{n-canonicalrelativetorustheorem}(The $n$--canonical Relative Torus
Theorem) Let $(G,\mathcal{S})$ be a group system of finite type, such that
$e(G:\mathcal{S})=1$. Let $n$ be a non-negative integer, and suppose that, for
any $VPC(<n)$ subgroup $M$ of $G$, we have $e(G,M:\mathcal{S})=1$. Suppose
there is a nontrivial $\mathcal{S}$--adapted almost invariant subset $X$ of
$G$, which is over a $VPC(n+1)$ subgroup of $G$ and is $n$--canonical relative
to $\mathcal{S}$. Then one of the following holds:

\begin{enumerate}
\item $G$ is $VPC(n+2)$.

\item There is a $\mathcal{S}$--adapted splitting of $G$ which is over some
$VPC(n+1)$ subgroup of $G$ and is $n$--canonical relative to $\mathcal{S}$.

\item $G$ is of $VPCn$--by--Fuchsian type relative to $\mathcal{S}$.
\end{enumerate}
\end{theorem}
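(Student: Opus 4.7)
The plan is to mimic the proof of the Relative Torus Theorem (Theorem \ref{relativetorustheorem}), replacing each application of a result from sections \ref{crossingofa.i.setsoverVPCgroups}, \ref{CCC'swithstrongcrossing} and \ref{CCC'swithweakcrossing} by its $n$--canonical analogue, most of which have already been discussed earlier in this section. First I would form the algebraic regular neighbourhood $\Gamma$ of the singleton family $\{X\}$ in $G$. Since $X$ is $\mathcal{S}$--adapted and over the $VPC(n+1)$ (hence finitely generated) subgroup $H_{X}$, part 2) of Theorem \ref{algregnbhdexists} guarantees $\Gamma$ exists; it is a minimal $\mathcal{S}$--adapted bipartite graph of groups for $G$ with a single $V_{0}$--vertex $v$ which encloses $X$.

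Next I would analyse the cross-connected component $\Phi$ of $E(\{X\})$ containing $X$. Every element of $\Phi$ lies in the Boolean algebra generated by the $G$--translates of $X$, is over a subgroup commensurable with $H_{X}$, and, crucially, is $n$--canonical relative to $\mathcal{S}$: corners of pairs $(X,gX)$ with $gX$ crossing $X$ inherit this property because the original family $\{X\}$ satisfies it. Consequently Theorem \ref{crossingintwostageCCCsisallweakorallstrong} applies (specialised to a CCC of $n$--canonical $VPC(n+1)$ sets), telling us that either $\Phi$ is isolated, or all crossings in $\Phi$ are strong, or all crossings are weak.

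I would then treat each possibility separately, exactly mirroring the case analysis in Theorem \ref{relativetorustheorem}. If $\Phi$ is isolated then the two edges of $\Gamma$ incident to $v$ give an $\mathcal{S}$--adapted splitting over $H_{X}$, which is trivially $n$--canonical because its associated almost invariant set is equivalent to $X$, yielding case (2). If all crossings in $\Phi$ are weak, I would invoke the $n$--canonical version of Lemma \ref{weakcrossingCCCenclosessplitting} (obtained by running the Dunwoody--Roller / Bergman descent inside the Boolean algebra generated by $\Phi$, and noting that the resulting corner lies in $\Phi$ and hence remains $n$--canonical) to produce an $\mathcal{S}$--adapted splitting over a subgroup commensurable with $H_{X}$; this gives case (2). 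If all crossings in $\Phi$ are strong, then the $n$--canonical version of Theorem \ref{strongcrossingimpliesFuchsiantype} (whose modifications are exactly the replacements of Lemmas \ref{weak-weakcrossing}, \ref{aisethasoneS-adaptedK-end}, \ref{strongcrossingissymmetric} and \ref{XcrossesYstronglyimpliese=2} indicated earlier in this section) shows that either $G$ is $VPC(n+2)$, giving case (1), or $v$ is of $VPCn$--by--Fuchsian type relative to $\mathcal{S}$. In the latter situation, if $\Gamma$ properly contains $v$ then any edge incident to $v$ carries a peripheral subgroup of $G(v)$, which is $VPC(n+1)$, yielding a splitting that does not cross any element of $\Phi$ and hence is $n$--canonical (case (2)); while if $\Gamma$ reduces to $\{v\}$ then $G=G(v)$ itself is of $VPCn$--by--Fuchsian type relative to $\mathcal{S}$, giving case (3).

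The main obstacle is the careful bookkeeping that the $n$--canonical hypothesis is genuinely preserved at each stage of the argument. The weakening from ``$e(G,M:\mathcal{S})=1$ for every $VPC(\leq n)$ subgroup $M$'' to ``$e(G,M:\mathcal{S})=1$ for every $VPC(<n)$ subgroup $M$'' must be compensated throughout by the $n$--canonical assumption on the generating almost invariant set $X$ — most delicately in the analogue of Lemma \ref{aisethasoneS-adaptedK-end}, where the key argument that $Y_{3}\cap G$ is $K$--finite previously used $e(G,[K]:\mathcal{S})=1$ and must now be replaced by the observation (made in the proof of Lemma \ref{aisethasoneadaptedK-endifn-canonical}) that a candidate non-trivial $K$--almost invariant subset of $X$ would, together with a suitable translate by an element $c^{k}$ of a generator of $H_{X}/K$, build a nontrivial $VPCn$ almost invariant set crossing $Y$, contradicting the $n$--canonicity of $Y$. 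Verifying that the splitting produced in the weak-crossing case is itself $n$--canonical requires the same type of argument, and is the principal technical point distinguishing Theorem \ref{n-canonicalrelativetorustheorem} from Theorem \ref{relativetorustheorem}.
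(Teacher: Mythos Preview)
Your approach is essentially the same as the paper's: form the algebraic regular neighbourhood of $\{X\}$, apply the $n$--canonical crossing dichotomy (Theorem \ref{crossingintwostageCCCsisallweakorallstrong}), and run the same isolated/weak/strong case split, invoking the modified Lemmas \ref{weak-weakcrossingforn-canonical}, \ref{aisethasoneadaptedK-endifn-canonical} and the corresponding version of Theorem \ref{strongcrossingimpliesFuchsiantype} exactly as the paper does.

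One small correction: in the strong-crossing case with $\Gamma\neq\{v\}$, your justification that the edge splitting is $n$--canonical ``because it does not cross any element of $\Phi$'' is not the right reason. Not crossing $\Phi$ says nothing directly about crossing $\mathcal{S}$--adapted almost invariant sets over $VPC(\leq n)$ subgroups, which is what $n$--canonical means. The actual argument (implicit in the paper as well) is that any such set $W$, being over a finitely generated subgroup and crossing no translate of $X$ (since $X$ is $n$--canonical), is enclosed by some $V_{1}$--vertex of $\Gamma$ by Remark \ref{XnotcrossinganyXiisenclosedbysomeV1-vertex}, and therefore cannot cross any edge splitting of $\Gamma$. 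The same enclosure argument is what makes the splitting in the weak-crossing case $n$--canonical, rather than any inheritance property of corners.
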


\begin{remark}
In case 3), Definition \ref{defnofVPC-by-FuchsianrelativetoS} tells us that
for each peripheral subgroup $\Sigma$ of $G$, there is $S_{i}$ in
$\mathcal{S}$ which is conjugate to a subgroup of $\Sigma$, but is not
conjugate commensurable with a subgroup of the $VPCn$ normal subgroup $K$ of
$G$. As $K$ is normal in $G$, we can simply say that $S_{i}$ is not
commensurable with a subgroup of $K$. As $X$ is $\mathcal{S}$--adapted, the
proof of part 2) of Theorem \ref{strongcrossingimpliesFuchsiantype} shows that
each $S_{i}$ in $\mathcal{S}$ is either commensurable with a subgroup of $K$
or is conjugate to a subgroup of some peripheral subgroup of $G$. Finally if
$G\ $has no peripheral subgroups, so that the orbifold quotient $G/K$ is
closed, then each $S_{i}\in\mathcal{S}$ is commensurable with a subgroup of
$K$.

It follows that in cases 1) and 3), each $S_{i}$ in $\mathcal{S}$ must be
$VPC(\leq n+1)$. Thus if there is $S_{i}$ in $\mathcal{S}$ which is not
$VPC(\leq n+1)$, only case 2) can occur. In particular, if there is $S_{i}$ in
$\mathcal{S}$ which is not finitely generated, only case 2) can occur. Finally
if $G$ is not finitely generated, the fact that $(G,\mathcal{S})$ is a group
system of finite type implies that there is $S_{i}$ in $\mathcal{S}$ which is
not finitely generated, so that again only case 2) can occur.

The proof below shows that part 2) of the conclusion of this theorem can be
refined to state that either there is a splitting of $G$ over a group
commensurable with $H_{X}$, or there is a graph of groups structure for $G$ in
which $X$ is enclosed by a vertex of $VPCn$--by--Fuchsian type relative to
$\mathcal{S}$.
\end{remark}

\begin{proof}
We will argue as in the proof of Theorem \ref{relativetorustheorem}, but using
versions of the results quoted in that proof modified as earlier in this section.

Let $\Gamma$ denote the algebraic regular neighbourhood of $X$ in $G$. This
exists by part 2) of Theorem \ref{algregnbhdexists}. Then $\Gamma$ has a
single $V_{0}$--vertex $v$. Theorem
\ref{crossingintwostageCCCsisallweakorallstrong} shows that the crossings in a
non-isolated CCC of $E(\mathcal{X})$ are either all strong or are all weak.
Theorem \ref{strongcrossingimpliesFuchsiantype} as generalised using Theorems
\ref{crossingintwostageCCCsisallweakorallstrong},
\ref{aisethasoneadaptedK-endifn-canonical} and applying the arguments from
section \ref{CCC'swithstrongcrossing} shows that if $G$ is not $VPC(n+2)$,
then each non-isolated CCC of $E(\mathcal{X})$ in which all crossings are
strong yields a $V_{0}$--vertex of $\Gamma$ of $VPCn$--by--Fuchsian type
relative to $\mathcal{S}$. Lemmas \ref{weakcrossingCCCenclosessplitting} and
\ref{weak-weakcrossingforn-canonical} together show that each non-isolated CCC
of $E(\mathcal{X})$ in which all crossings are weak yields a $V_{0}$--vertex
of $\Gamma$ which encloses a $\mathcal{S}$--adapted splitting of $G$ over a
$VPC(n+1)$ subgroup, which is also $n$--canonical.

If $G$ is $VPC(n+2)$, we have case 1) of the theorem. In what follows we
assume that $G\ $is not $VPC(n+2)$.

If $v$ is isolated, then the two edges of $\Gamma$ which are incident to $v$
each determine a splitting of $G$ with associated almost invariant set $X$.
Thus we have case 2) of the theorem.

If $v$ arises from a CCC of $E(\mathcal{X})$ in which all crossings are weak,
then Lemma \ref{weakcrossingCCCenclosessplitting} shows that $v$ encloses a
$\mathcal{S}$--adapted splitting of $G$ over a $VPC(n+1)$ subgroup. Thus we
again have case 2) of the theorem.

Otherwise $v$ is of $VPCn$--by--Fuchsian type relative to $\mathcal{S}$. Now
there are two cases depending on whether $\Gamma$ is equal to $v$ or not. If
$\Gamma$ is not equal to $v$, there is an edge $e$ incident to $v$, and $e$
determines a splitting of $G$ over a $VPC(n+1)$ subgroup which is a peripheral
subgroup of $G(v)$. As before this splitting is $\mathcal{S}$--adapted, so
that again we have case 2) of the theorem.

If $\Gamma$ is equal to $v$, then $G=G(v)$ is of $VPCn$--by--Fuchsian type
relative to $\mathcal{S}$. Thus we have case 3) of the theorem.
\end{proof}

In section \ref{accessibilityresults}, the accessibility results proved in
Theorem \ref{accessibilityinabsolutecase} and its extensions need to be
generalised to the $n$--canonical case. We now give the required extension.

\begin{theorem}
\label{accessibilityforn-canonicalsplittings}Let $(G,\mathcal{S})$ be a group
system of finite type such that $G$ is finitely presented, each $S_{i}$ in
$\mathcal{S}$ is finitely generated, and $e(G:\mathcal{S})=1$. Let $n$ be a
non-negative integer, and suppose that, for any $VPC(<n)$ subgroup $M$ of $G$,
we have $e(G,M:\mathcal{S})=1$. Let $\Gamma_{k}$ be a sequence of minimal
$\mathcal{S}$--adapted graphs of groups decompositions of $G$ without
inessential vertices and with all edge groups being $VPC(n+1)$, such that each
edge splitting is $n$--canonical. If $\Gamma_{k+1}$ is a proper refinement of
$\Gamma_{k}$, for each $k$, then the sequence $\Gamma_{k}$ must terminate.
\end{theorem}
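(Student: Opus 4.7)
The plan is to adapt the proofs of Theorems \ref{accessibilityinabsolutecase} and \ref{accessibilityforsystemswhenGisfp}, with the crucial new ingredient being the use of $n$-canonicity to control stabilizers of edges in the auxiliary trees dual to patterns in a $2$-complex. The weakened end hypothesis (only $VPC(<n)$ subgroups $M$ satisfy $e(G,M:\mathcal{S})=1$) now allows $G$ to possess $\mathcal{S}$-adapted splittings over $VPCn$ subgroups, so we can no longer simply appeal to the absence of such splittings as in Theorem \ref{accessibilityforsystemswhenGisfp}.

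First, I would apply the Bestvina--Feighn accessibility result for small edge groups to handle the reduced case and, as in Theorem \ref{accessibilityinabsolutecase}, reduce by subsequences and edge-collapses to the situation in which each $\Gamma_k$ is an interval or circle with edges $e_1,\ldots,e_k$, $\Gamma_{k+1}$ is obtained from $\Gamma_k$ by splitting at $v_k$, and the $VPC(n+1)$ edge groups $H_i$ strictly contain $H_{i+1}$. Next, using that $G$ is finitely presented relative to $\mathcal{S}$ and that each $S_i$ is finitely generated, I would build a $2$-complex $K$ with $\pi_1(K)\cong G$ containing disjoint finite subcomplexes $X_i$ with $\pi_1(X_i)\cong S_i$, and choose compatible $G$-equivariant linear maps $p_k:\widetilde{K}\to T_k$ that send each component of the pre-image of every $X_i$ to a vertex of $T_k$; this is possible because $\Gamma_k$ is $\mathcal{S}$-adapted. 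The pattern $L_k=p_k^{-1}(W_k)/G$ then lies in a finite subcomplex $K'$ disjoint from the $X_i$'s, and Dunwoody's bound on non-parallel disjoint tracks in $K'$ gives, after a further collapse, that $T_{k+1}'$ is obtained from $T_k'$ by subdividing edges for all $k\geq 1$.

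The core new step, and what I expect to be the main obstacle, is showing that every edge $f$ of $T_k'$ mapping to an edge $E$ of $T_k$ has stabilizer of finite index in the $VPC(n+1)$ stabilizer $H_E$ of $E$. By construction, the splitting $\sigma_f$ induced by $f$ is $\mathcal{S}$-adapted (thanks to the careful choice of $p_k$ sending $X_i$ components to vertices), and $\mathrm{Stab}(f)\subseteq H_E$ is therefore $VPC$ of length at most $n+1$. The hypothesis $e(G,M:\mathcal{S})=1$ for $VPC(<n)$ subgroups immediately rules out lengths strictly less than $n$, so $\mathrm{Stab}(f)$ is $VPCn$ or $VPC(n+1)$, and one must exclude the $VPCn$ case. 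Here is where $n$-canonicity enters: if $\mathrm{Stab}(f)$ were $VPCn$, the almost invariant set $Z_f$ would be an $\mathcal{S}$-adapted $VPC(\leq n)$ set, and since every edge splitting of $\Gamma_k$ is $n$-canonical, $Z_E$ could not cross $Z_f$. One then combines this non-crossing with the geometric fact that $f$ maps to $E$ under $T_k'\to T_k$ --- so the tracks corresponding to $f$ and $E$ in $\widetilde{K}$ separate the same vertices of $T_k$ in a compatible way --- to force $Z_f$ to be equivalent (up to complementation) to $Z_E$, which is incompatible with $\mathrm{Stab}(f)$ being $VPCn$ inside the $VPC(n+1)$ group $H_E$.

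Once this control of edge stabilizers is in place, the rest of the proof proceeds verbatim as in Theorem \ref{accessibilityinabsolutecase}: defining $I(f)$ as the index of $\mathrm{Stab}(f)$ in $H_1$ and $N_i=\max\{I(f):f\mapsto E_i\}$, one shows by collapse that $N_i$ may be taken strictly increasing, examines consecutive edges $f,f'$ of a subdivision path in $T_k'$ with $I(f)\neq I(f')$ and $I(f)>N_{k-1}$, and runs the three-case analysis (images a translate of $E_{k-1}$, $E_k$, or $E_1$) together with Lemma \ref{indexofconjugates} to produce, in each case, a conjugate subgroup of every $H_i$ of fixed index $d$ in $H_1$, contradicting that $[H_1:H_i]\to\infty$. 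The essential difficulty is thus concentrated in the geometric/canonical argument ruling out $VPCn$ stabilizers; every other step is a straightforward adaptation of the arguments already in the paper.
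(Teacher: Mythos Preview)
Your approach diverges from the paper's in a substantial way, and the key step you flag as ``the main obstacle'' does not go through as written. You want to rule out the case $\mathrm{Stab}(f)$ is $VPCn$ by arguing: the splitting $\sigma_f$ is $\mathcal{S}$--adapted over a $VPCn$ group, $n$--canonicity gives that $Z_E$ does not cross $Z_f$, and then ``the tracks corresponding to $f$ and $E$ separate the same vertices of $T_k$ in a compatible way'' forces $Z_f\sim Z_E^{(\ast)}$. But non-crossing together with the fact that $f$ projects to $E$ simply does not yield equivalence. Under the weakened hypothesis, $G$ may well possess $\mathcal{S}$--adapted splittings over $VPCn$ subgroups, and such a splitting can be compatible with (i.e.\ not cross) every $n$--canonical $VPC(n+1)$ edge splitting of $\Gamma_k$ without being equivalent to any of them; indeed, the stabilizers are not even commensurable. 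Concretely, nothing prevents a track in the pattern $p_k^{-1}(W_k)$ from having $VPCn$ stabilizer, and once that happens the $H_E$--orbit of $f$ among edges mapping to $E$ is infinite, so the quantity $N_i=\max\{I(f):f\mapsto E_i\}$ is not even defined and the entire index-comparison argument from Theorem~\ref{accessibilityinabsolutecase} collapses.

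The paper avoids this problem by a different, two-step strategy rather than by controlling track stabilizers directly. First apply Theorem~\ref{accessibilityforsystemswhenGisfp} one level down (with $n-1$ in place of $n$) to obtain a maximal $\mathcal{S}$--adapted decomposition $\Gamma_0$ of $G$ with $VPCn$ edge groups admitting no further proper refinement over such groups. Since each edge splitting of each $\Gamma_k$ is $n$--canonical, it crosses no edge splitting of $\Gamma_0$, so $\Gamma_0$ and $\Gamma_k$ have a common refinement $\Gamma_k'$. If the sequence does not terminate, pass to a vertex $V$ of $\Gamma_0$ at which an infinite sequence $\Gamma_k(V)$ of proper refinements of $G(V)$ persists; these are adapted to $\mathcal{S}\cup E(V)$, and $G(V)$ is finitely presented. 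Maximality of $\Gamma_0$ means $G(V)$ has no $(\mathcal{S}\cup E(V))$--adapted splitting over a $VPCn$ subgroup, and the Relative Torus Theorem then forces either $e(G(V),M:\mathcal{S}\cup E(V))=1$ for every $VPC(\le n)$ subgroup $M$, or $G(V)$ is of a very restricted type. In the main case one is back in the setting of Theorem~\ref{accessibilityforsystemswhenGisfp} (now with the full $VPC(\le n)$ hypothesis restored locally), and that theorem gives the contradiction. The role of $n$--canonicity is thus to guarantee compatibility with $\Gamma_0$, not to constrain track stabilizers in a direct Dunwoody-style argument.
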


\begin{proof}
Theorem \ref{accessibilityforsystemswhenGisfp} tells us that there is a
minimal graph of groups structure $\Gamma_{0}$ for $G$, without inessential
vertices, in which all edge groups are $VPCn$ and no further proper refinement
is possible. As each edge splitting of each $\Gamma_{k}$ is $n$--canonical,
there is a common refinement $\Gamma_{k}^{\prime}$ of $\Gamma_{0}$ and
$\Gamma_{k}$.

Suppose that the sequence $\Gamma_{k}$ does not terminate. As $\Gamma_{0}$ has
only finitely many vertices, by collapsing suitable edges of each $\Gamma
_{k}^{\prime}$, we can arrange that there is a vertex $V$ of $\Gamma_{0}$ such
that each $\Gamma_{k}^{\prime}$ is obtained from $\Gamma_{0}$ by refining at
$V$. This gives rise to a sequence $\Gamma_{k}(V)$ of minimal graphs of groups
structures for $G(V)$ without inessential vertices and with all edge groups
being $VPC(n+1)$. Further each $\Gamma_{k}(V)$ is adapted to the family
$\mathcal{S}$ and is also adapted to the family $E(V)$ of subgroups of $G(V)$
associated to the edges of $\Gamma_{k}$ incident to $V$. As $G$ is finitely
presented and all groups in $E(V)$ are finitely generated, it follows that
$G(V)$ is also finitely presented. Also each group in the family
$\mathcal{S}\cup E(V)$ is finitely generated. Finally the maximality of
$\Gamma_{0}$ implies that $G(V)$ has no splittings over $VPCn$ subgroups which
are adapted to $\mathcal{S}\cup E(V)$. Thus the Relative Torus Theorem
\ref{relativetorustheorem} implies that $G(V)$ must be $VPC(n+2)$ or of
$VPCn$--by--Fuchsian type relative to $\mathcal{S}$, or that for any $VPC(\leq
n)$ subgroup $M$ of $G$, we have $e(G,M:\mathcal{S})=1$. Now Theorem
\ref{accessibilityforsystemswhenGisfp} implies that the sequence $\Gamma
_{k}(V)$ must terminate, so that the original sequence $\Gamma_{k}$ must terminate.
\end{proof}

In \cite{SS2}, Scott and Swarup considered multi-length JSJ decompositions of
length greater than $2$. Example \ref{exampletoshowGamma123doesnotexist} in
the appendix of this paper shows that in general there is no analogue of
Theorem 13.12 of \cite{SS2} for such decompositions. But Scott and Swarup
showed that there is such an analogue, which they denoted by $\Gamma
_{1,2,\ldots,n}$, if one restricts to virtually abelian subgroups of $G$ in
place of virtually polycyclic subgroups. We will state the analogous result
which holds in the setting of group systems.

\begin{theorem}
\label{multilengthJSJforVAsubgroups}Let $(G,\mathcal{S})$ be a group system of
finite type such that $G$ is finitely presented, each $S_{i}$ in $\mathcal{S}$
is finitely generated, and $e(G:\mathcal{S})=1$. Let $\mathcal{F}%
_{1,2,\ldots,n}$ denote the collection of equivalence classes of all
nontrivial $\mathcal{S}$--adapted almost invariant subsets of $G$ which are
over a virtually abelian subgroup of rank $i$, for $1\leq i\leq n$, and are
$(i-1)$--canonical relative to $\mathcal{S}$ with respect to abelian groups.

Then the family $\mathcal{F}_{1,2,\ldots,n}$ has an algebraic regular
neighbourhood $\Gamma_{1,2,...,n}$ in $G$.

Each $V_{0}$--vertex $v$ of $\Gamma_{1,2,...,n}$ satisfies one of the
following conditions:

\begin{enumerate}
\item $v$ is isolated, so that $G(v)$ is $VA(\leq n)$.

\item $G(v)$ is of $VA(<n)$--by--Fuchsian type relative to $\mathcal{S}$.

\item $G(v)$ is the full commensuriser $Comm_{G}(H)$ for some $VA(\leq n)$
subgroup $H$, such that $e(G,H:\mathcal{S})\geq2$.
\end{enumerate}

$\Gamma_{1,2,...,n}$ consists of a single vertex if and only if $\mathcal{F}%
_{1,2,\ldots,n}$ is empty, or $G$ itself satisfies one of the above three conditions.
\end{theorem}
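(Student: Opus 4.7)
The plan is to argue by induction on $n$, building $\Gamma_{1,2,\ldots,n}$ as a refinement of $\Gamma_{1,2,\ldots,n-1}$ and checking at each step that the required RN-readiness and discreteness hold. The case $n=1$ is exactly Theorem \ref{thm16.1} specialised to the virtually abelian subfamily of $\mathcal{F}_1$, so we may assume $\Gamma_{1,2,\ldots,n-1}$ has been constructed and its $V_0$-vertices satisfy conditions (1)--(3) with $n$ replaced by $n-1$. The reason the VPC obstruction of the appendix example does not arise in the VA setting is that the rank of a VA group behaves monotonically under intersection, so a VA-almost invariant set $X$ over a group of rank $i$ that is $(i-1)$-canonical cannot cross any element of $\mathcal{F}_{1,\ldots,i-1}$; this gives the stratification needed to carry out the induction.

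For the inductive step, fix a $V_0$-vertex $v$ of $\Gamma_{1,2,\ldots,n-1}$, let $G(v)$ be its stabiliser, and let $\mathcal{S}(v)$ be the family obtained by adjoining to the pull-back of $\mathcal{S}$ the edge groups of $\Gamma_{1,2,\ldots,n-1}$ incident to $v$. Then $(G(v),\mathcal{S}(v))$ is a group system of finite type in which $G(v)$ is finitely presented relative to $\mathcal{S}(v)$, and the elements of $\mathcal{F}_{1,\ldots,n}$ enclosed by $v$ that are over a VA group of rank $n$ restrict to $\mathcal{S}(v)$-adapted almost invariant subsets of $G(v)$ over a VA group of rank $n$, by the extension--restriction machinery of section \ref{extensionsandcrossing} (Lemmas \ref{extensionsexist}, \ref{XbarenclosedbyvimpliesthatXisadapted}, \ref{uniqueenlargement}). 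Since these restricted sets are $(n-1)$-canonical relative to $\mathcal{S}(v)$ (any VA splitting of $G(v)$ of lower rank would give a proper refinement of $\Gamma_{1,\ldots,n-1}$ at $v$, contradicting $(n-1)$-canonicity), we are in a position to apply the $n$-canonical Relative Torus Theorem \ref{n-canonicalrelativetorustheorem} and Theorem \ref{thm16.1} to $G(v)$ to obtain, at each such $v$, a refinement whose new $V_0$-vertices are of one of the three listed types. Splicing these local refinements into $\Gamma_{1,\ldots,n-1}$ produces a candidate bipartite graph of groups $\Gamma_{1,\ldots,n}$ for $G$.

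The remaining tasks are to verify that $\mathcal{F}_{1,\ldots,n}$ is RN-ready, that the resulting pretree of cross-connected components is discrete, and that the candidate graph really is the algebraic regular neighbourhood. Corner-symmetric crossing (Definition \ref{defnofcornersymmetric}) holds by Lemma \ref{crossingissymmetric} since all elements are $\mathcal{S}$-adapted. Nestedness of isolated elements is arranged by the Scott--Swarup compatibility arguments (Theorems \ref{splittingsexist} and \ref{splittingsarecompatible}) and holds precisely because the VA rank stratification prevents an isolated element over a rank $i$ group from crossing an isolated element over a rank $j$ group after replacement by equivalent sets in good position. Discreteness of the pretree reduces, by the accessibility result Theorem \ref{accessibilityforn-canonicalsplittings} applied at each rank $i\le n$, to finite generation of the relevant Boolean algebras, which follows from the analogue of Theorem \ref{Booleanalgebraisf.g.} for $(i-1)$-canonical VA-almost invariant sets. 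Once the pretree is discrete the star construction produces a $G$-tree whose quotient bipartite graph of groups is $\Gamma_{1,\ldots,n}$, and the enclosing and minimality properties characterising it as an algebraic regular neighbourhood (Theorems \ref{keypropertiesofalgregnbhd} and \ref{keypropertiesofalgregnbhdofpair}) follow from the standard verification using the CCC analysis of sections \ref{CCC'swithstrongcrossing} and \ref{CCC'swithweakcrossing}.

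The main obstacle is the interaction between the different ranks: we must show that refining at a $V_0$-vertex $v$ of $\Gamma_{1,\ldots,n-1}$ using rank-$n$ VA splittings does not destroy any enclosing property already established for the lower-rank pieces, and that when $G(v)$ turns out to be of VA$(<n)$-by-Fuchsian type relative to $\mathcal{S}(v)$, the peripheral subgroups appearing at $v$ glue correctly with the edge groups inherited from $\Gamma_{1,\ldots,n-1}$. This is exactly the step that fails in the VPC setting of Example \ref{exampletoshowGamma123doesnotexist}; in the VA case it is saved by the fact that the commensuriser of a VA group of rank $n$ is again contained in a VA group of the same rank (up to finite index), so that the $\mathrm{Comm}_G(H)$ vertices in condition (3) cannot absorb structure from a different rank, keeping the stratification compatible across levels.
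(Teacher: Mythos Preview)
The paper itself does not give a detailed proof of this theorem; it is stated as the $\mathcal{S}$--adapted analogue of the chapter~14 result in \cite{SS2}, with the understanding that the arguments of sections \ref{crossingofa.i.setsoverVPCgroups}--\ref{CCC'swithweakcrossing} and the two-length machinery of this section adapt. So there is no fleshed-out original to compare against line by line, but your proposal still has two substantive problems.

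First, you refine at the wrong vertices. An element of $\mathcal{F}_{1,\ldots,n}$ over a VA group of rank $n$ is by definition $(n-1)$--canonical relative to $\mathcal{S}$, so it crosses no element of $\mathcal{F}_{1,\ldots,n-1}$. By Remark~\ref{XnotcrossinganyXiisenclosedbysomeV1-vertex} (the finitely generated case of property~(2) in Theorem~\ref{keypropertiesofalgregnbhd}), such a set is enclosed by a $V_1$--vertex of $\Gamma_{1,\ldots,n-1}$, not a $V_0$--vertex. Your inductive step as written, which restricts the rank-$n$ sets to $V_0$--vertices, therefore does not see them at all. The correct two-stage picture (compare the ``Further remarks'' in this section and the treatment of $\Gamma_{n,n+1}$) is to refine at the $V_1$--vertices.

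Second, your explanation of why VA succeeds where VPC fails is incorrect. The claim that ``the commensuriser of a VA group of rank $n$ is again contained in a VA group of the same rank (up to finite index)'' is false already for $\mathbb{Z}\subset\mathbb{Z}^2$. The real VA-specific input, as used in \cite{SS2} chapter~14, is a strengthening of the weak-crossing lemma (the analogue of Lemma~\ref{weak-weakcrossingforn-canonical}) that works across all ranks simultaneously: for VA groups the Houghton-type normal-form structure (Lemma~\ref{Houghtonlemma}) is available for any subgroup of any corank, so the argument producing a lower-rank almost invariant set contradicting canonicity goes through. In the VPC example of the appendix this fails precisely because $C_1\cap C_2$ drops to rank~$1$ while $e(G,A)=2$, so the hypothesis ``$e(G,M:\mathcal{S})=1$ for $VPC({<}\,n)$ subgroups $M$'' of Lemma~\ref{weak-weakcrossingforn-canonical} is violated; the VA replacement does not rely on that hypothesis in the same way. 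Your proposal does not identify or use this property, so the core obstacle is not actually addressed.
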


\begin{remark}
In case 2), where $G(v)$ is of $VA$--by--Fuchsian type relative to
$\mathcal{S}$, the groups associated to edges incident to $v$ are $VPC$, but
need not be $VA$.
\end{remark}



\subsection{Further remarks}

We observed in Section 16 that Theorem \ref{thm16.1} can be deduced more quickly by using the Guiradel-Levitt approach of
\cite{Guirardel-Levitt2} to adapted a.i.\ sets.
Now we discuss how much of Theorem 17.3 can be deduced using their approach.
Let $\cF_{n,n+1}$ and $\Gamma_{n,n+1}(G,\cS)$ be as in the hypothesis of Theorem \ref{JSJfortwosuccessivelengths}.
We now form $\Gamma_n(G,\cS)$ and consider the $V_1$-vertices of $\Gamma_n(G,\cS)$.
Let $T$ denote the tree corresponding to $\Gamma_n(G,\cS)$ and $\cF_{n+1}$ denote the equivalence classes of adapted a.i.\ sets in 
$\cF_{n,n+1}$. If $X$ represents an element of $\cF_{n+1}$, which is the subset of $\cF_{n,n+1}$ corresponding to $VPC(n+1)$ groups,
then $X$ is enclosed in a $V_1$-vertex $v$ of 
$\Gamma_n(G,\cS)$. We consider a $\bar{v}$ in $T$ above that encloses $X$. We need to consider the ccc's of such 
$X$ in $v$ as $v$ varies and the corresponding ccc's in $\Gamma_{n,n+1}(G,\cS)$.
It may happen that the $X$ which are enclosed in $v$ may be trivial in $v$.
We first consider the edge system for $v$. This consists of the edges of $v$ in $\Gamma_{n}(G,\cS)$, say $E(v)$.
We consider a.i.\ sets $X$ of $\cF_{n+1}$; these are enclosed in $v$. Thus, any such $X$ is adapted to $G(e)$, $e\in E(v)$ and those $\cS$ which are conjugate to subgroups of $\cS$. From now on it is understood that we consider only such $X$.


\begin{proposition} \label{prop17.12}
Consider an $X$ over a $VPC(n+1)$ group $H$ coming from $\cF_{n+1}$.
Suppose that $X\cap G(v)$ is $H$-finite then $H$ is commensurable with an edge group $G(e)$ of $v$.
If $X$ and $Y$ cross in $G$ but not in $G(v)$, then they are both over an $H$ commensurable with some edge group $G(e)$ of $v$. Moreover, any such $X$ with $X\cap G(e)$ $H$-finite faces only finitely many edges of $G(v)$.
\end{proposition}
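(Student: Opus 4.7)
The plan is to exploit the tree structure $T$ of $\Gamma_n(G,\cS)$ together with Lemma \ref{strictenclosingdoesnotdependonbasepoint} to put $X$ into standard form relative to $v$. Since $X$ is enclosed by $v$ and $H\subset G(v)$, I would replace $X$ by an equivalent $H'$-almost invariant set $X'$ (with $H'\in[H]$) that is strictly enclosed by $v$ with basepoint $v$ itself, as in Definition \ref{defnofstrictenclosing}. Each edge $s$ of $T$ incident to $v$ (directed toward $v$) then carries a $\pm$ label depending on whether $\varphi^{-1}(Y_s^*)\subset X'$ or $\varphi^{-1}(Y_s^*)\subset X'^*$; let $\cE^+$ and $\cE^-$ be the two $H$-invariant families. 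Modulo the $H$-finite set $X'\cap G(v)=X\cap G(v)$, one has $X'=\bigsqcup_{s\in\cE^+}\varphi^{-1}(Y_s^*)$, and the nontriviality of $X$ forces both $\cE^+$ and $\cE^-$ to be non-empty and each $H$-infinite in $G$.

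For part 1, I would examine the standard augmentation $\widehat{X'}$ of Definition \ref{defnofstandardaugmentation} inside a relative Cayley graph $\Gamma(G,\cS)$. By Lemma \ref{propertiesofXhat}, $\delta\widehat{X'}$ is $H$-finite, and since $X'\cap G(v)$ is $H$-finite, essentially all of $\delta\widehat{X'}$ sits at transitions between the pieces $\varphi^{-1}(Y_s^*)$ for $s\in\cE^+$ and $\varphi^{-1}(Y_{s'}^*)$ for $s'\in\cE^-$. These transitions are in $G(v)$-bounded neighborhoods of the edges of the star of $v$, so the $H$-finiteness of $\delta\widehat{X'}$ translates into the statement that each $H$-orbit on $\cE^+\cup\cE^-$ has its stabilizer of finite index in $H$. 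Since the stabilizer of $s$ is the edge group $G(e)$ corresponding to the image $e$ of $s$ in $\Gamma_n(G,\cS)$, this forces $H\cap G(e)$ to have finite index in $H$, giving the required commensurability (interpreted in the cylinder/commensurator sense of section 16, since the raw edge stabilizer $G(e)$ is $VPCn$ while $H$ is $VPC(n+1)$: the correct conclusion identifies $H$ up to finite index with the $\cS$-adapted stabilizer of the cylinder through $s$).

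For part 2, if $X$ and $Y$ cross in $G$ but not in $G(v)$, then by Lemma \ref{intersectionisL-a.i.} applied inside $G(v)$, some corner of the pair $(X\cap G(v),Y\cap G(v))$ is $(H\cap K)$-finite; the only way this is consistent with $X$ and $Y$ crossing in $G$ while both being enclosed by $v$ is that $X\cap G(v)$ is $H$-finite or $Y\cap G(v)$ is $K$-finite. Part 1 then applies to each in turn, and Lemma \ref{strongcrossingissymmetric} together with Lemma \ref{allintersectionsubgroupsarecommensurable} forces $H$ and $K$ to be commensurable with the same edge group. For the moreover clause, given $X\cap G(v)$ is $H$-finite, part 1 shows $H$ lies in a single commensurability class of edge stabilizers of edges incident to $v$; since $G(v)$ is finitely generated (by Theorem \ref{thm16.1}) and there are only finitely many $G(v)$-orbits of edges in the star of $v$ (one per edge of $\Gamma_n(G,\cS)$ at $v$), at most finitely many of these $G(v)$-orbits can carry the commensurability class of $H$. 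The remaining orbits contribute only $H$-finitely to $\cE^+\cup\cE^-$, so $X$ ``faces'' only finitely many edges of $G(v)$.

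The hard part will be the combinatorial localization in the second paragraph: rigorously translating the $H$-finiteness of $\delta\widehat{X'}$ into the structural statement about $H$-orbit stabilizers on the star of $v$. The subtlety is the cone-edge contribution to $\delta\widehat{X'}$, which requires Lemma \ref{HintersectconjugateofSisK-finite} to control, and the dimension reconciliation between the $VPC(n+1)$ group $H$ and the $VPCn$ edge stabilizer, which must be phrased in the Guirardel--Levitt tree-of-cylinders language of section 16 for the commensurability statement to make sense.
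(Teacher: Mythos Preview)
Your approach has genuine gaps that the paper's argument avoids.

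For part 1, the paper does not try to extract stabilizer information from the $H$--finiteness of a coboundary in a relative Cayley graph. Instead it works directly in the tree $T$: by Proposition 14.6 of \cite{SS2}, $X$ determines a nontrivial partition of the edges at $\bar v$ into two sides $\Sigma(X)$ and $\Sigma(X^*)$, with $\Sigma(X)$ the trivial side. For any edge $\bar e$ on the trivial side one has $G(\bar e)\subset H$, so $G(\bar e)$ is $VPCn$ or $VPC(n+1)$. The key step you are missing is the elimination of the $VPCn$ case: if $G(\bar e)$ were $VPCn$, the other endpoint $\bar w$ of $\bar e$ would have to be a $QH$ vertex, but then in $G(w)\ast_{G(e)}G(v)$ the group $H\subset G(v)$ commensurises $G(e)$, forcing an extra $V_0$--vertex of $\Gamma_n(G,\cS)$ between $w$ and $v$, contradicting the bipartite structure. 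This gives $G(\bar e)$ genuinely $VPC(n+1)$ and hence commensurable with $H$; finiteness of the trivial side follows at once. Your attempt to sidestep the dimension mismatch by reinterpreting the conclusion in tree--of--cylinders language is not a proof of the proposition as stated, and your coboundary-to-stabilizer translation is not justified: edges of $\delta\widehat{X'}$ in $\Gamma(G,\cS)$ have no direct relation to edges of the tree $T$.

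For part 2 your logic is wrong. From ``a corner of $(X\cap G(v),Y\cap G(v))$ is small'' you cannot conclude that $X\cap G(v)$ or $Y\cap G(v)$ is itself small. The paper instead observes that strong crossing of $X$ and $Y$ in $G$ would force strong crossing in $G(v)$ (since $H,K\subset G(v)$ and strong crossing is detected by $H\cap Y$, $K\cap X$). Hence the crossing in $G$ is weak, so $H$ and $K$ are commensurable, and then one corner $X^{(*)}\cap Y^{(*)}$ is an almost invariant set over a group commensurable with $H$ whose intersection with $G(v)$ is $H$--finite; part 1 applies to that corner.
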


We proceed to the proof of the above proposition.
In \cite{Guirardel-Levitt2} $T$ is formed from the JSJ tree $T_{J}$.

Now, Proposition 14.6 of \cite{SS2} asserts that $X$ defines a non-trivial partition of the edges $E(\bar{v})$ of $\bar{v}$.
Consider the two sides $\Sigma(X)$ and $\Sigma(X^*)$ of $X$ in $T$ with $\Sigma(X)$ being the trivial side, that is, $X\cap G(\bar{v})$ is $H$-finite.
If $\bar{e}$ is an edge on the trivial side, $G(\bar{e})$ is a subgroup of
$H$ and thus $G(\bar{e})$ is $VPC(n)$ or $VPC(n+1)$. We first observe that $G(\bar{e})$ cannot be $VPC(n)$. To see this consider the other vertex $\bar{w}$ of $\bar{e}$. 
If $G(\bar{e})$ is $VPC(n)$, $\bar{w}$ must be a $QH$-vertex. Consider $w$, $e$ in $\Gamma_n(G,\cS)$, which are the images of $\bar{w}$, $\bar{e}$ respectively .

Thus in $G(w)*_{G(e)}G(v)$, $H\subset G(v)$ commensurises $G(e)$ which implies that there is another $V_0$-vertex of $\Gamma_n(G,\cS)$ between $w$ and $v$. This contradiction shows that $G(e)$ is $VPC(n+1)$.
Since this is true for any edge $e$ on the trivial side, it follows that there are only finitely many edges on the trivial side of $X$ and all those have groups commensurable with $H$. This proves the first part of Proposition \ref{prop17.12}. For the second part, we observe that if $X$, $Y$ are enclosed in $v$ and cross strongly in $G$, they also cross strongly in $G(v)$. We may assume that the crossing is weak and both $X$, $Y$ are over a $VPC$ group $H$.
Thus one $X^{(*)}\cap Y^{(*)}$ is trivial and the first part of Proposition \ref{prop17.12} shows that $H$ is commensurable with an edge group $G(e)$ of $v$.

We also conclude, 

\begin{remark}
If $X$, $Y\in\cF_{n+1}$ cross in $G$, they also cross in $v$ unless $X$, $Y$ are both over an edge group of $v$ with $VPC(n+1)$ stabiliser. Note that for any $X$ enclosed in $v$ any extension of $X\cap G(v)$ is equivalent to $X$.
For trivial $X\cap G(v)$ we use the partition of the boundary. Also, the edge splittings of $v$ give a.i.\ sets which do not cross any element of $\cF_{n+1}$.
\end{remark}

In general, it is not clear that the edge groups of $\Gamma(G,\cS)$ represent a.i.\ sets which are adapted to $\cS$.
In the above case, the edge groups of $v$ which are $VPC(n+1)$ come from the boundary points of cylinders with infinite valence (from \cite{Guirardel-Levitt2}).
Since the original JSJ tree comes from a finite graph, it follows that

\begin{remark}
The $VPC(n+1)$ edge groups of a $V_1$-vertex $v$ in $\Gamma_n(G,\cS)$ define almost invariant sets which are adapted to $\cS$.
\end{remark}

All the a.i.\ sets which have non-trivial commensurisers and adapted to $\cS$ give new $V_0$-vertices of commensuriser type if $X$ is non-trivial in $G(v)$. The same is true for $G(e)$, if there is  such  a $X$ with group $G(e)$ enclosed in $G(v)$. 
For edge groups without such a $X$ the situation is unclear from $\cite{Guirardel-Levitt2}$, though Theorem \ref{thm17.3} asserts that there is a vertex
of commensuriser type which encloses the a.i.\ sets given by $G(e)$. Perhaps there is some way of splitting the commensuriser of $G(e)$ in these cases, but it is not clear.
One way around  seems to be add along the above $VPC (n+1)$ edges, finitely presented groups which do not split along 
$VPC (i)$ groups for $i$ less than equal to $n+1$. This discussion is meant to suggest further work on these topics.

So the missing cases are the commensuriser vertices of $\Gamma_{n,n+1}(G,\cS)$ which correspond to edge subgroups $G(e)$ of $v$ in $\Gamma_n(G,\cS)$ for which there is no non-trivial a.i.\ set enclosed in $v$.
These have group $\comm_{G(e)}G$ by Proposition 13.9 of \cite{SS2}.
It is not clear how to split off  these from $G(v)$ from what we know about $\Gamma_n(G,\cS)$.
From what we know about $\Gamma_{n,n+1}(G,\cS)$, such a group also gives another $V_0$-vertex in $v$. 
Thus, we can recover most of $\Gamma_{n,n+1}(G,\cS)$ using \cite{Guirardel-Levitt2} twice.

Also, we have examples in \cite{SS2} and the Appendix here that the approach of the main part of section 17 does not extend to the case of three a.i.\ sets over three successive VPC groups.
However, the procedure of \cite{Guirardel-Levitt2} can be continued indefinitely as in \cite{Guirardel-Levitt4}.
Using a combination of Theorem \ref{thm17.3} as above with \cite{Guirardel-Levitt2} approach we can continue and obtain complete analogues of JSJ decomposition over all $VPC(i)$ groups analogous to the one in \cite{Guirardel-Levitt4}.

\section{Comparisons with topology\label{comparisonswithtopology}}

Let $M$ be an orientable Haken $3$--manifold with fundamental group $G$. Note
that if $G\ $is infinite, then $M$ is aspherical, so that $G$ has
cohomological dimension at most $3$. In particular $G$ is torsion free. If we
want to apply the preceding algebraic results to $G$, we need to consider
$VPCk$ subgroups of $G$. Such subgroups can only occur for $k\leq3$, and if we
assume that $G$ is not $VPC3$, then such subgroups can only occur for $k\leq
2$. As $G\ $is torsion free, the only nontrivial such subgroups are isomorphic
to $\mathbb{Z}$, $\mathbb{Z}\times\mathbb{Z}$ or the fundamental group of the
Klein bottle, so the only $\pi_{1}$--injective surfaces in $M$ we need
consider are annuli and tori. As discussed in the introduction, there is a
relative version of the JSJ decomposition of $M$ which partly motivated the
work in this paper. We will compare the results obtained using the topological
point of view and the algebraic point of view.

First we will give a brief summary of the topological theory, emphasising
those points which are closely related to the algebraic theory in this paper.

Let $S$ be a compact subsurface (not necessarily connected) of $\partial M$
such that both $S$ and $\partial M-S$ are incompressible in $M$. Let $\Sigma$
denote the closure of $\partial M-S$. One considers $\pi_{1}$--injective maps
of tori and annuli into $M$. For annuli, one assumes in addition that the
boundary is mapped into $S$. Such a map of a torus or annulus into $M$ is said
to be \textit{essential in }$(M,S)$ if it is not homotopic, as a map of pairs,
into $S$ or into $\Sigma$. Jaco and Shalen \cite{JS}, and Johannson \cite{JO},
defined what can reasonably be called the JSJ decomposition of the pair
$(M,S)$. The standard JSJ decomposition is the special case where $S$ equals
$\partial M$ so that $\Sigma$ is empty.

A compact codimension--$0$ submanifold $W$ of $M$ whose frontier consists of
annuli and tori is \textit{essential in }$(M,S)$ if each frontier component of
$W$ is an essential surface in $(M,S)$. In addition $W$ is \textit{simple} in
$(M,S)$ if any essential map of the annulus or torus into $M$ which has image
in $W$ can be homotoped, as a map of pairs, into the frontier of $W$.

Jaco and Shalen \cite{JS} and Johannson \cite{JO} proved that there is a
family $\mathcal{T}$ of disjoint essential annuli and tori embedded in
$(M,S)$, unique up to isotopy, and with the following properties. The
manifolds obtained by cutting $M$ along $\mathcal{T}$ are simple in $(M,S)$ or
are Seifert fibre spaces or $I$--bundles over surfaces. In fact, $\mathcal{T}$
can be characterised as the minimal family of annuli and tori with this
property. The Seifert and $I$--bundle pieces of $M$ are said to be
\textit{characteristic}. The characteristic submanifold $V(M,S)$ of $(M,S)$
consists essentially of the union of the characteristic pieces of the manifold
obtained from $M$ by cutting along $\mathcal{T}$. However, if two
characteristic pieces of $M$ have a component $F$ of $\mathcal{T}$ in common,
we add a second copy of $F$ to the family $\mathcal{T}$, thus separating the
two characteristic pieces by a copy of $F\times I$, which we regard as a
non-characteristic piece of $M$. Similarly, if two non-characteristic pieces
of $M$ have a component $F$ of $\mathcal{T}$ in common, we add a second copy
of $F$ to the family $\mathcal{T}$, thus separating the two non-characteristic
pieces by a copy of $F\times I$, which we regard as a characteristic piece of
$(M,S)$. Any essential map of the annulus or torus into $M$ can be homotoped,
as a map of pairs, to lie in $V(M,S)$, which is called the Enclosing Property
of $V(M,S)$. Note that the frontier of $V(M,S)$ is usually not equal to
$\mathcal{T}$. Some annuli or tori in $\mathcal{T}$ may appear twice in this
frontier. This discussion brings out a somewhat confusing fact about the
characteristic submanifold, which is that both $V(M,S)$ and its complement can
have components which are homeomorphic to $F\times I$, where $F$ is an annulus
or torus. One other basic point to note is that it is quite possible that
$\mathcal{T}$ is empty. In this case, either $V(M,S)$ is equal to $M$ or it is
empty, so that either $M$ is a Seifert fibre space or an $I$--bundle, or
$(M,S)$ admits no essential annuli and tori.

In order to complete this description of $V(M,S)$, we need to say a little
more about its frontier. If $W$ is a component of $V(M,S)$ which is an
$I$--bundle over a compact surface $L$, then the frontier $fr(W)$ of $W$ in
$M$ is the restriction of the bundle to $\partial L$, so that $fr(W)$ is
homeomorphic to $\partial L\times I$. If $W$ is a Seifert fibre space
component of $V(M,S)$, then there is a Seifert fibration on $W$ such that the
frontier of $W$ in $M$ consists of vertical annuli and tori.

As we stated in the introduction, in most cases, an embedded annulus or torus
in $M$ which is essential in $(M,S)$ determines a splitting of $\pi_{1}(M)$
over $\mathbb{Z}$ or $\mathbb{Z}\times\mathbb{Z}$, as appropriate, which is
clearly adapted to the family of subgroups of $\pi_{1}(M)$ which consists of
the fundamental groups of the components of $\Sigma$. For notational
simplicity, we will denote this family of subgroups by $\pi_{1}(\Sigma)$.
Further, in most cases, any annulus or torus in $M$ which is essential in
$(M,S)$ determines a nontrivial almost invariant subset of $\pi_{1}(M)$ over
$\mathbb{Z}$ or $\mathbb{Z}\times\mathbb{Z}$, as appropriate, which is adapted
to $\pi_{1}(\Sigma)$. Obviously, those annuli and tori in $M$ which are
essential in $(M,S)$ but do not determine a nontrivial almost invariant subset
of $\pi_{1}(M)$ do not have any analogue in the algebraic theory of this
paper. In order to have a topological theory which is more analogous to the
algebraic theory, we will modify our idea of essentiality. We will say that an
annulus or torus in $(M,S)$ is \textit{strongly essential in} $(M,S)$ if it is
essential in $M$, i.e. it is $\pi_{1}$--injective and not properly homotopic
into $\partial M$. If a component $F$ of $\mathcal{T}$ is inessential in $M$,
it cuts $M$ into two pieces one of which, $W_{F}$, is homeomorphic to $F\times
I$. It is possible that $W_{F}$ contains other components of $\mathcal{T}$,
but any $\pi_{1}$--injective annulus or torus in $(M,S)$ which is contained in
$W_{F}$ must also be inessential in $M$. Now remove from $V(M,S)$ any
component which is contained in some $W_{F}$, for some component $F$ of
$\mathcal{T}$. The resulting submanifold $V^{\prime}(M,S)$ of $M$ encloses all
annuli and tori in $(M,S)$ which are strongly essential in $(M,S)$. We let
$\mathcal{T}^{\prime}$ denote the family of annuli and tori in $\mathcal{T}$
which are strongly essential in $(M,S)$. As before the frontier of $V^{\prime
}(M,S)$ is usually not equal to $\mathcal{T}^{\prime}$. Some annuli or tori in
$\mathcal{T}^{\prime}$ may appear twice in the frontier of $V^{\prime}(M,S)$.

We recall the idea of a canonical surface in $M$, which was defined in
\cite{SS4}. An embedded essential annulus or torus $F$ in $M$ is called
\textit{canonical} if any essential map of the annulus or torus into $M$ can
be properly homotoped to be disjoint from $F$. We extend this to the relative
case in the natural way. If $F$ is an embedded strongly essential annulus or
torus in $(M,S)$, we will define it to be \textit{canonical in }$(M,S)$ if any
strongly essential map of the annulus or torus into $(M,S)$ can be homotoped,
as a map of pairs, to be disjoint from $F$. The Enclosing Property for
$V^{\prime}(M,S)$ clearly implies that any annulus or torus in $\mathcal{T}%
^{\prime}$ is canonical.

As discussed in the introduction, the guiding idea behind this paper is that
$V^{\prime}(M,S)$ should be thought of as a regular neighbourhood of the
family of all strongly essential annuli and tori in $(M,S)$. By this we mean
that every such map is properly homotopic into $V^{\prime}(M,S)$ and that
$V^{\prime}(M,S)$ is minimal, up to isotopy, among all essential submanifolds
of $M$ with this property. It will be convenient to say that the collection of
all such maps \textit{fills} $V^{\prime}(M,S)$, when $V^{\prime}(M,S)$ has
this minimality property. The word \textquotedblleft fill\textquotedblright%
\ is used in the same way to describe certain curves on a surface. A subtle
point which arises here is that there are exceptional cases where $V^{\prime
}(M,S)$, as defined by Jaco-Shalen and Johannson, is not filled by the
collection of all essential annuli and tori in $M$. In these cases, the
regular neighbourhood idea does not quite correspond to the topological JSJ-decomposition.

Let $V^{\prime\prime}(M,S)$ denote the strongly essential submanifold of
$(M,S)$ which encloses every strongly essential annulus and torus in $(M,S)$
and is filled by them. As in the absolute case when $S$ equals $\partial M$,
it is easy to see that $V^{\prime\prime}(M,S)$ is only slightly different from
$V^{\prime}(M,S)$. Clearly $V^{\prime\prime}(M,S)$ is a submanifold of
$V^{\prime}(M,S)$, up to isotopy. Further, it follows that, as for $V^{\prime
}(M,S)$, the isotopy classes of the frontier components of $V^{\prime\prime
}(M,S)$ are precisely those of the canonical annuli and tori in $(M,S)$. Hence
the difference between $V^{\prime\prime}(M,S)$ and $V^{\prime}(M,S)$ is that
certain exceptional components of $V^{\prime}(M,S)$ are discarded. Here is a
description of the exceptional components, most of which are solid tori. A
solid torus component $W$ of $V^{\prime}(M,S)$ will fail to be filled by
annuli strongly essential in $(M,S)$ when its frontier consists of $3$ annuli
each of degree $1$ in $W$, or when its frontier consists of $1$ annulus of
degree $2$ in $W$, or when its frontier consists of $1$ annulus of degree $3$
in $W$. Another exceptional case occurs when a component $W$ of $V^{\prime
}(M,S)$ lies in the interior of $M$ and is homeomorphic to the twisted
$I$--bundle over the Klein bottle. Then any incompressible torus in $W$ is
homotopic into the boundary of $W$, so that $W$ is not filled by tori. In all
these cases, one obtains $V^{\prime\prime}(M,S)$ from $V^{\prime}(M,S)$ by
first replacing $W$ by a regular neighbourhood of its frontier in $M$, and
then removing any redundant product components from the resulting submanifold.

In the classical case, when $S=\partial M$, we will omit $S$ from our
notation. If $M$ is an orientable Haken $3$--manifold with fundamental group
$G$, and if $M$ has incompressible boundary, then $\Gamma_{1,2}^{c}(G)$
corresponds precisely to the JSJ decomposition of $M$ with characteristic
submanifold $V(M)$, and $\Gamma_{1,2}(G)$ corresponds to the decomposition of
$M$ determined by $V^{\prime}(M)$. This is discussed in the introduction of
\cite{SS2} and relies on the results of \cite{SS4}. 
We believe that the
relative version of this asserts that $\Gamma_{1,2}^{c}(G,\pi_{1}(\Sigma))$
corresponds to $V^{\prime}(M,S)$, and that $\Gamma_{1,2}(G,\pi_{1}(\Sigma))$
corresponds to $V^{\prime\prime}(M,S)$ and that this can be proved by working
through a relative version of \cite{SS4}, but we do not give any of the details.

\pagebreak

\appendix

\section{Corrections and additions to \cite{SS2}\label{correctionstomonster}}

In this appendix, written by the authors of \cite{SS2}, we discuss and correct
some errors in that paper. Three of these errors affect the general theory and
thus the entire paper, and one of these is quite serious. Correcting these
errors requires changes to the paper in several places. The remaining errors
do not have a global impact on the paper, so are simpler to correct. We will
list each error and explain its significance, and state how they are dealt
with. We will leave all the details to later in this appendix.

\subsection{The three major errors}

\begin{itemize}
\item (Accessibility) The most serious error is that the accessibility result,
Theorem 7.11 on page 103 of \cite{SS2}, is incorrect as stated. The same
applies to Theorems 7.13 and 16.2. This affects the proofs of the two main
results of \cite{SS2}, Theorems 10.1 and 12.3, which assert the existence of
JSJ decompositions. We give a corrected statement in Theorem
\ref{accessibilityinabsolutecase} of this paper. This requires substantial
modifications to the proof, which we also give in this paper. The applications
of this result in \cite{SS2} need minor modifications in light of the revised
statement. We will discuss the details in subsection \ref{accessibility}.

\item (Splittings over non-finitely generated groups) A second important error
is that in Theorem 5.16 of \cite{SS2}, we asserted that two splittings of a
finitely generated group are compatible if and only if they have intersection
number zero. Unfortunately this is incorrect. We will give a counterexample,
due to Guirardel, later in this appendix. However, it is fairly easy to
correct our development of the theory of algebraic regular neighbourhoods in
\cite{SS2}. We discuss the details in subsection
\ref{splittingsovernonfggroups}.

\item (Isolated invertible almost invariant sets) The last of these errors is
rather technical in nature and is not often a problem. Lemma 3.14 of
\cite{SS2} is incorrect. Even in the special case when $n=k=1$, this lemma
implies the following incorrect assertion.

\textit{Let }$G$\textit{ denote a finitely generated group, and let }%
$H$\textit{ be a finitely generated subgroup of }$G$\textit{. Let }$X$\textit{
be a nontrivial }$H$\textit{--almost invariant subset of }$G$\textit{, such
that }$X$\textit{ is isolated. Then there is an almost invariant set }%
$Z$\textit{ equivalent to }$X$\textit{, such that }$Z$\textit{ is not
invertible, and is in good enough position.}

In subsection \ref{invertibleaisets} we will discuss a counterexample to this
statement and the solution to the problem this error creates.
\end{itemize}

\subsection{The remaining errors}

\textbf{Page 28:} There is an error in the second sentence of Example 2.34 of
\cite{SS2}. In this example, $G$ denotes the free group of rank $2$. The
sentence in question asserts that if $C$ is a subgroup of $G$ which is not
finitely generated, then $e(G,C)=\infty$. This is incorrect. For example, if
$C$ is the kernel of the abelianisation map $G\rightarrow\mathbb{Z}^{2}$, then
$e(G,C)=e(\mathbb{Z}^{2})=1$. However this does not invalidate Example 2.34.
It is easy to see that there are many subgroups $C$ of $G$ which are not
finitely generated for which $e(G,C)=\infty$. For a specific example, let $H$
be a subgroup of $G$ of index $2$, so that $H$ is free of rank $3$, and let
$C$ denote the kernel of some surjection from $H$ to the free group $F_{2}$ of
rank $2$. Then $e(G,C)=e(H,C)=e(F_{2})=\infty$.

Note that for the argument in Example 2.34 it is irrelevant that
$e(G,C)=\infty$. All that is needed is that $e(G,C)>1$ in order that there be
a nontrivial $C$--almost invariant subset of $G$.

\bigskip

\textbf{Page 63:} The proof of Lemma 4.14 of \cite{SS2} is incorrect, though
the result is correct. In the last paragraph of page 63, the application of
Lemma 2.31 needs the group $K$ to be finitely generated, which need not be the
case. To correct this proof, we proceed in two stages. In the first stage we
assume that the base vertex $w$ of the tree $T$ equals the vertex $v$ which
encloses the given $H$--almost invariant set $A$. As $H\ $is finitely
generated, the proof of Lemma 4.14 is correct in this case. In terms of the
terminology introduced in section \ref{enclosing} of this paper, we have shown
that $A$ is equivalent to a $H$--almost invariant subset $B(A)$ of $G$ which
is strictly enclosed by $v$ with basepoint $v$. For the second stage of the
proof of Lemma 4.14, we simply apply Lemma
\ref{strictenclosingdoesnotdependonbasepoint} of this paper which implies that
$A$ is equivalent to a $H$--almost invariant subset of $G$ which is strictly
enclosed by $v$ with basepoint $w$.

\bigskip\ 

\textbf{Page 132:} Theorem 10.8 of \cite{SS2} is false. This clarifies why it
is so important for our theory to consider \textit{all} almost invariant
subsets not just those which correspond to splittings. The generalisations of
Theorem 10.8 in Theorems 12.6, 13.15 and 14.11 are also false. In subsection
\ref{detailsonremainingerrors} we will describe a counterexample, and discuss
in detail why the listed results are false.

\bigskip

\textbf{Page 137:} There is a bad misprint in line -4. The formula
$G=A\ast_{H}(D\ast H)$ should read $G=A\ast_{H}(D\times H)$. Also we neglected
to point out that $G$ is one-ended, but this is easily verified.

\bigskip

\textbf{Page 138:} Example 11.2 of \cite{SS2} is incorrect. It is asserted
there that $\Gamma(G)$ is the graph of groups given by $G=A\ast_{K}\left[
(K\ast L)\times H\right]  \ast_{L}B$, i.e. that $\Gamma(G)$ has a $V_{0}%
$--vertex with associated group $(K\ast L)\times H$, and has two $V_{1}%
$--vertices with associated groups $A$ and $B$.

In fact the methods of Guirardel and Levitt in \cite{Guirardel-Levitt2} show
that if one has a $V_{0}$--vertex $v$ of $\Gamma(G)$ of commensuriser type, so
that $G(v)=Comm_{G}(H)$, where $e(G,H)\geq2$, then for each edge $e$ incident
to $v$, the edge group $G(e)$ contains a subgroup commensurable with $H$.

\bigskip

\textbf{Page 144:} The statement of Theorem 12.3 of \cite{SS2} is not quite
correct. There is a special case when $\Gamma_{n}$ consists of a single
$V_{0}$--vertex. In this case there is a possibility which is not mentioned in
the statement. Namely $G$ may be $VPC(n+1)$. This possibility is contained in
the paper by Dunwoody and Swenson but we omitted it in error when we applied
their results.

The same omission occurs in the statements of Theorems 12.5, 12.6, 13.12,
13.13, 14.5, and 14.6.

\bigskip

\textbf{Page 161:} Example 14.1 of \cite{SS2} is wrong. Recall that in chapter
14 of \cite{SS2}, we gave a construction of the regular neighbourhood
$\Gamma_{1,2,\ldots,n}$ in which we restrict attention to almost invariant
sets over virtually abelian subgroups. The point of Example 14.1 was to show
that this construction does not work if one considers almost invariant sets
over virtually polycyclic subgroups. In subsection
\ref{detailsonremainingerrors}, after discussing the error in Example 14.1, we
will give a new example which demonstrates the phenomenon which Example 14.1
was supposed to demonstrate. Thus it is still correct to say that the
construction of $\Gamma_{1,2,\ldots,n}$ does not work if one considers almost
invariant sets over virtually polycyclic subgroups.

\subsection{Details on the three major errors}

\subsubsection{Accessibility\label{accessibility}}

Let $\Gamma_{1}$ and $\Gamma_{2}$ be graphs of groups decompositions of a
group $G$. We will say that $\Gamma_{2}$ is a \textit{proper} refinement of
$\Gamma_{1}$ if it is obtained from $\Gamma_{1}$ by splitting at a vertex so
that the induced splitting of the vertex group is nontrivial. A sequence of
proper refinements will also be called a proper refinement. The statement of
Theorem 7.11 on page 103 of \cite{SS2} needs to be modified as in the
statement of Theorem \ref{accessibilityinabsolutecase} of this paper. The only
important difference is that the refinements of Theorem 7.11 are assumed to be
proper in Theorem \ref{accessibilityinabsolutecase}. The proof of Theorem 7.11
in \cite{SS2} implicitly assumes this in the second sentence of the proof,
where we assert that we have only to bound the length of chains of splittings
of $G$ over descending subgroups. But the argument given in \cite{SS2} is
still incorrect.

All of the above comments apply also to Theorems 7.13 and 16.2. Here is a
detailed list of consequential changes needed to \cite{SS2}.

The first paragraph of the proof of Theorem 8.2 uses Theorem 7.11, and this
paragraph needs changing as follows. Lines 1-6 of this paragraph are fine. But
the next sentence is incorrect. It should assert that if $G$ possesses a
splitting $\sigma^{\prime}$ over a two-ended subgroup $C^{\prime}$
commensurable with $H$ which has intersection number zero with the edge
splittings of $\mathcal{G}$, then this splitting is enclosed by some $V_{0}%
$--vertex $v$ of $\mathcal{G}$, and determines a trivial splitting of the
vertex group $G(v)$. This does not imply that $\sigma^{\prime}$ is conjugate
to one of the edge splittings of $\mathcal{G}$. This requires changing the
proof of Theorem 8.2 on lines 17-18 of page 111. Our choice of $\mathcal{G}$
does not imply that $\sigma$ must be conjugate to one of the edge splittings
of $\mathcal{G}$, as claimed, but it does imply the result of the next
sentence which is all we need.

At the end of the proof of Proposition 8.4, we quote Theorem 7.11 to show that
one cannot have an unbounded chain $\sigma_{i}$ of compatible splittings of
$G$ over strictly descending subgroups, and this does follow from the
corrected version of Theorem 7.11. The use of Theorem 7.11 in the proof of
Theorem 9.2 is correct as $\Gamma_{i+1}$ is a proper refinement of $\Gamma
_{i}$ for each $i$.

Theorems 7.11 and 7.13 are used in several other places. The key point to note
is that the only problem which might occur is the existence of an unbounded
chain $\sigma_{i}$ of compatible splittings of $G$ over strictly
\emph{ascending} subgroups $H_{i}$. If this occurs and all the $H_{i}$'s are
$VPC$ of the same length, then they are all commensurable, and hence each
$H_{i}$ has large commensuriser. In particular, when constructing an algebraic
regular neighbourhood, the splittings $\sigma_{i}$ must all be enclosed by a
single $V_{0}$--vertex of large commensuriser type.

\subsubsection{Splittings over non-finitely generated
groups\label{splittingsovernonfggroups}}

Let $G$ be a finitely generated group. A HNN extension $\sigma$ of $G$ is said
to be \textit{ascending} if $G=A\ast_{C}$ where at least one of the two
injections of $C$ into $A$ is an isomorphism. Note that if both injections are
isomorphisms, then $A$ is normal in $G$ with infinite cyclic quotient. The
difficulty arises when one considers ascending HNN extensions $G=A\ast_{C}$
for which $A$ is not finitely generated. Such extensions are not at all
unusual, as all that is needed is a surjection from $G$ to $\mathbb{Z}$ whose
kernel $A$ is not finitely generated. Whether or not $A$ is finitely
generated, ascending HNN\ extensions have the property that they can only be
compatible with ascending HNN\ extensions. (Recall that two splittings
$\sigma$ and $\tau$ of a group $G$ are compatible if $G$ is the fundamental
group of a graph of groups with two edges such that the associated edge
splittings are $\sigma$ and $\tau$.) The precise result is the following.

\begin{lemma}
Let $\sigma$ be an ascending HNN extension of a group $G$. If $\sigma$ is
compatible with a splitting $\tau$ of $G$, then $\tau$ is also an ascending
HNN extension.
\end{lemma}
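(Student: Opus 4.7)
The plan is to invoke Bass--Serre theory: compatibility of $\sigma$ with $\tau$ yields a graph of groups decomposition $\Gamma$ of $G$ with two edges, say $e_\sigma$ and $e_\tau$, such that collapsing $e_\tau$ recovers $\sigma$ (as a one-edge graph of groups) and collapsing $e_\sigma$ recovers $\tau$. Since $\sigma$ is an HNN extension, its underlying graph is a loop, and this constrains the possible shape of the underlying graph of $\Gamma$. I would enumerate the four connected graphs with two edges: (I) a single vertex with two loops, (II) two vertices joined by two parallel edges, (III) a loop at one vertex plus a non-loop edge to a second vertex, and (IV) a linear path with three vertices and two edges. Only those cases in which collapsing one edge yields a loop need be considered.

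Case (IV) can be discarded at once: collapsing either edge of a three-vertex path produces a two-vertex interval, i.e., an amalgamated free product, so $\sigma$ would not be an HNN extension. For the remaining three cases I would introduce notation (vertex groups $B$ and, where applicable, $D$; edge groups $C_1, C_2$; and the corresponding edge-group inclusions into the vertex groups) and translate the ascending hypothesis into the assertion that one of the two inclusions of the loop edge group into the collapsed vertex group is an isomorphism. Because these inclusions factor through one of the original vertex groups of $\Gamma$, this surjectivity is very restrictive.

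The key computations are then straightforward. In case (I), both HNN inclusions of the remaining loop edge group $C_1$ factor through $B$, which is a proper subgroup of the collapsed vertex group $B \ast_{C_2}$ (the stable letter of the collapsed HNN lies outside $B$), so no such inclusion can be an isomorphism; case (I) is thus excluded. In case (III), the ascending hypothesis forces $D = C_2$, making the non-loop vertex inessential; a direct computation then shows that $\tau$ degenerates to the HNN extension $B \ast_{C_1}$, which coincides with $\sigma$, so $\tau$ is ascending. In case (II), the ascending hypothesis similarly forces one of the vertex groups, say $D$, to equal $C_2$ and one of the inclusions $\alpha_1 : C_1 \to B$ to be an isomorphism. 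Collapsing $e_1$ to obtain $\tau$ then makes the amalgam $B \ast_{C_1} D$ (which is the vertex group of $\tau$) collapse to $D$, while the inclusion $\beta_2 : C_2 \to D$ is simply the identity; so $\tau$ is an ascending HNN extension.

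The hard part will be the bookkeeping in case (II): one has to track which of the four edge inclusions ($\alpha_1, \beta_1, \alpha_2, \beta_2$) is made into an isomorphism by the ascending hypothesis on $\sigma$, and then verify that this iso transfers to a corresponding inclusion for $\tau$. The argument is essentially symmetric in $\sigma$ and $\tau$ for the two-parallel-edge graph, but degenerate sub-cases (where $\tau$ reduces to a trivial splitting, or where both edges simultaneously impose identifications collapsing the graph of groups) must be separately verified so that no possibility is overlooked.
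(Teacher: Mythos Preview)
Your approach is essentially the paper's: enumerate the connected two-edge graphs of groups, and exploit the fact that each inclusion of an edge group into the collapsed vertex group factors through the original vertex group at that end. The paper dispatches your cases (I) and (III) in a single stroke (both are the situation where $e_\sigma$ is already a loop in $\Gamma$) and then says the circle case is ``easy to see''; you supply the bookkeeping for case (II), which is correct.

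Your treatment of case (III), however, has a real gap. You correctly deduce $D = C_2$ from the ascending hypothesis, but then write that ``$\tau$ degenerates to the HNN extension $B \ast_{C_1}$, which coincides with $\sigma$, so $\tau$ is ascending.'' This conflates the group $G$ with the splitting $\tau$. The splitting $\tau$ is by definition the edge splitting associated to the non-loop edge $e_\tau$: after collapsing the loop $e_\sigma$, the edge $e_\tau$ is still a non-loop edge, so $\tau$ is the amalgamated free product $(B \ast_{C_1}) \ast_{C_2} D$. With $D = C_2$ this amalgam is trivial, so $\tau$ is not a splitting at all, contradicting the hypothesis. Case (III) is therefore \emph{ruled out}, not resolved by identifying $\tau$ with $\sigma$. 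This is exactly how the paper argues: since $\tau$ is assumed to be a splitting, the collapsed vertex group (your $B \ast_{C_2} D$) properly contains $B$, and then the inclusion $C_1 \to B \hookrightarrow B \ast_{C_2} D$ cannot be an isomorphism. The fix to your argument is a one-line replacement.
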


\begin{proof}
If $\sigma$ is compatible with $\tau$, then $G$ is the fundamental group of a
graph $\Gamma$ of groups such that $\Gamma$ has two edges $s$ and $t$ and the
associated edge splittings are $\sigma$ and $\tau$ respectively. As $\sigma$
is HNN, it follows that $\Gamma$ has at most two vertices. We claim that
$\Gamma$ has two vertices each of valence $2$, so that $\Gamma$ is a circle.
For otherwise, the edge $s$ would be a loop, and the subgraph given by $t$
would carry the vertex group $A$ of $\Gamma$. As $\tau$ is a splitting of $G$,
collapsing $t$ yields the loop $s$ with vertex group $B$ properly containing
$A$, and $\sigma$ is the splitting of $G$ associated to this loop. As $\sigma$
is ascending, one of the inclusions of the edge group of $s$ into $B$ is an
isomorphism. But this is impossible as each of these inclusions factors
through $A$. Now we know that $\Gamma$ is a circle, it is easy to see that
$\tau$ is also an ascending HNN extension.
\end{proof}

Note that compatible ascending HNN extensions need not be conjugate. To
construct examples, let $K$ be a subgroup of a group $H$ such that $K$ itself
has a subgroup $L$ isomorphic to $H$. Let $G$ be the fundamental group of a
graph of groups with underlying graph a circle, with two vertices labeled by
$H$ and $K$ and two edges labeled by $K\ $and $L$, so that the inclusion of
the edge group $L$ into the vertex group $H$ is an isomorphism and the other
inclusions are clear. If $K\ $and $L$ are not isomorphic, the two edge
splittings cannot be conjugate. A simple such example can be found with $H$
(and hence $L$) free of rank $2$, and $K$ free of rank $3$.

In Theorem 5.16 of \cite{SS2}, we asserted that two splittings of a finitely
generated group are compatible if and only if they have intersection number
zero, but Guirardel gave us a counterexample. Both splittings in his example
are ascending HNN extensions over non-finitely generated groups. However such
splittings are the only source of problems, and it is fairly easy to correct
our development of the theory of algebraic regular neighbourhoods. This
requires many changes to the paper including a modification of the definition
of an algebraic regular neighbourhood (Definition 6.1 of \cite{SS2}).

Here is Guirardel's example.

\begin{example}
\label{incompatiblebutintersectionnozero}Let $G$ denote the free group on two
generators $a$ and $b$, and let $f:G\rightarrow\mathbb{Z}$ be given by
$f(a)=0$, and $f(b)=1$. Let $K$ denote the kernel of $f$, so that $K$ is
freely generated by the elements $b^{k}ab^{-k}$, $k\in\mathbb{Z}$. Let $\tau$
denote the splitting of $G$ as $K\ast_{K}$, where both injections of the edge
group into the vertex group are isomorphisms. Let $H$ denote the subgroup of
$G$ generated by the elements $b^{k}ab^{-k}$, $k\geq0$, and let $\sigma$
denote the splitting of $G$ as $H\ast_{H}$, in which one inclusion is the
identity and the other is conjugation by $b$. We will show that the splittings
$\sigma$ and $\tau$ have intersection number zero but cannot be compatible.

Let $S$ and $T$ denote the $G$--trees corresponding to $\sigma$ and $\tau$
respectively. Thus $T$ is a copy of the real line, with a vertex at each
integer point. Think of $T$ as the $x$--axis in the plane with $S$ lying in
the plane above $T$. We can describe $S$ pictorially by saying that each
vertex of $S$ has integer $x$--coordinate, no edge of $S$ is vertical, and at
each vertex $v$ of $S$, there is exactly one edge incident to $v$ from the
right, and infinitely many edges incident to $v$ from the left. Now the
projection of the plane onto the $x$--axis induces a $G$--equivariant map
$p:S\rightarrow T$. Let $s$ be an edge of $S$ and let $t$ denote the edge
$p(s)$ of $T$, and orient $s$ and $t$ to point to the left. Choose a base
vertex $\ast$ for $S$ and let $p(\ast)$ be the base vertex of $T$. Thus we
have $\varphi:G\rightarrow V(S)$ given by $\varphi(g)=g(\ast)$, for all $g\in
G$. If we remove the interior of $s$ from $S$, we are left with two subtrees
of $S$. The one which contains the terminal vertex of $s$ is denoted by
$Y_{s}$, and we let $Z_{s}$ denote $\varphi^{-1}(Y_{s})$. Similarly removing
the interior of $t$ from $T$ yields two half lines in $T$, and the one which
contains the terminal vertex of $t$ is denoted by $Y_{t}$, and we let $Z_{t}$
denote $\varphi^{-1}(p^{-1}(Y_{t}))$. The sets $Z_{s}$ and $Z_{t}$ are almost
invariant subsets of $G$ over $H$ and $K$ respectively which are associated to
the splittings $\sigma$ and $\tau$ of $G$. Clearly $Z_{s}\subset Z_{t}$. As
$gZ_{t}$ is equivalent to $Z_{t}$ for every $g$ in $G$, we have $Z_{s}\leq
gZ_{t}$, for every $g$ in $G$. Thus $\sigma$ and $\tau$ have intersection
number zero. Now consider the set $E$ of all translates of $Z_{s}$ and $Z_{t}$
and their complements in $G$. If $\sigma$ and $\tau$ were compatible, $E$
would correspond to the edges of a $G$--tree. In turn this would imply that
there must some translate of $Z_{t}$ (or of $Z_{t}^{\ast}$) which is $\leq
Z_{s}$. As $Z_{s}\leq gZ_{t}$, for every $g$ in $G$, this is clearly impossible.

It is interesting to see how our construction of an algebraic regular
neighbourhood in \cite{SS2} fails in this case. As no two elements of $E$
cross, each element of $E$ is isolated in $E$ and so forms a CCC by itself. It
follows immediately that the CCCs of $E$ form a pretree. The problem is that
this pretree is not discrete. For if $b$ acts on $T$ by translating one unit
to the left, then we have the inclusions $\ldots\subset b^{2}Z_{t}\subset
bZ_{t}\subset Z_{t}$ and $Z_{s}\leq b^{k}Z_{t}$ for each $k\geq0$. Thus there
are infinitely many CCCs of $E$ between $Z_{s}$ and $Z_{t}$.
\end{example}

\bigskip

The error which causes all the problems occurs in the proof of Proposition 5.7
of \cite{SS2}. On page 72, lines -13 to -12, of \cite{SS2}, we assert that
\textquotedblleft there must be an element $g$ of $G$ such that $gX\subset
U$." This is not correct in general. In order to appreciate the problem, we
need to discuss the proof of Proposition 5.7. Recall that we have a family
$\{X_{\lambda}\}$ of almost invariant subsets of $G$ in good position such
that the regular neighbourhoods of this family can be constructed as in
chapter 3 of \cite{SS2}. Recall also that $E$ denotes the collection of all
translates of the $X_{\lambda}$'s and $X_{\lambda}^{\ast}$'s, and that $X$ is
a nontrivial $H$--almost invariant subset of $G$ which crosses no element of
$E$.\ Proposition 5.7 asserts that $X$ is enclosed by a $V_{1}$--vertex of
$\Gamma$ so long as either $H\ $is finitely generated or $X$ is associated to
a splitting of $G$. Our proof starts by showing that $X$ is sandwiched between
two elements of $E$, i.e. there are elements $U$ and $V$\ of $E\ $such that
$U\leq X\leq V$. If this condition holds then the remainder of our proof is
correct. Further our proof that $X$ must be sandwiched between two elements of
$E$ is correct, so long as $H$ is finitely generated. In the case when $H$ is
not finitely generated, so that $X$ is associated to a splitting $\sigma$ of
$G$, our proof is also correct so long as $\sigma$ is not an ascending
HNN\ extension. This was the case which we discussed incorrectly on page 72,
line -12. For convenience in what follows we will say that a splitting of a
group which is an ascending HNN extension over a non-finitely generated group
is \textit{special}.

Given a collection $E$ of almost invariant subsets of $G$, we will say that an
almost invariant subset $X$ of $G$ is \textit{sandwiched by} $E$ if there are
elements $U$ and $V$ of $E$ such that $U\leq X\leq V$. We can now correct the
statement of part 2) of Proposition 5.7 of \cite{SS2} by simply adding the
hypothesis that $X$ be sandwiched by $E$. The published proof of Proposition
5.7 shows that this is automatic except possibly when $X$ is associated to a
special splitting of $G$. This weakens Proposition 5.7, but almost all of the
applications will still follow from this weakened version. Note that the proof
of this weakened form of Proposition 5.7 is substantially shorter.

\bigskip

Here is a list of consequential changes needed in \cite{SS2}:

Lemmas 5.10 and 5.15 need an extra sandwiching assumption. In the last
paragraph of the proof of Lemma 5.10, on page 75 of \cite{SS2}, our arguments
use the assumptions that $A$ is sandwiched between two elements of $E$ and
that any element of $E$ is sandwiched between two translates of $A$ and
$A^{\ast}$. The first assumption follows from the hypotheses of the lemma, but
the second does not and needs to be added to the list of hypotheses. Of
course, the second assumption is correct unless some edge splitting of
$\Gamma$ is special, and in this case, we know that $\Gamma$ must be a circle.
The same point arises on page 77 in the proof of Lemma 5.15 of \cite{SS2}.

The existence part of Theorem 5.16 is correct so long as we exclude special
splittings. Otherwise the two splittings $\sigma$ and $\tau$ described above
form a counterexample. But the uniqueness part of Theorem 5.16 is correct, so
that Theorem 5.17 is still correct.

Lemma 5.19 is correct.

Lemma 5.21 is also correct. We note that the assumption in this lemma and
several later results that the regular neighbourhood has been constructed as in
chapter 3 avoids many of the above worries about sandwiching. In particular,
it implies that if $P$ denotes the pretree of CCCs of $E$, the set of all
translates of all the $X_{\lambda}$'s and $X_{\lambda}^{\ast}$'s, then $P$ is discrete.

On pages 80-83, in the discussion of algebraic regular neighbourhoods of
almost invariant sets over possibly non-finitely generated groups, we need to
add a sandwiching assumption in order to prove existence for finite families.
For example, we could assume that each $X_{i}$ is sandwiched by $E(X_{j})$,
for each $i$ and $j$.

Proposition 5.23 also needs an additional sandwiching assumption.

In part 2) of the definition of an algebraic regular neighbourhood (Definition
6.1 of \cite{SS2}), we need to add the assumption that the almost invariant
set associated to the splitting $\sigma$ is sandwiched by $E$.

Our main existence result (Theorem 6.6) is fine if the $H_{i}$'s are all
finitely generated. In general, we need to add a sandwiching assumption. For
example, we could assume that each $X_{i}$ is sandwiched by $E(X_{j})$, for
each $i$ and $j$.

Our main uniqueness result (Theorem 6.7 of \cite{SS2}) is correct as stated.
The proof uses Lemma 5.10 crucially, but the sandwiching assumption which
needs to be added to the statement of Lemma 5.10 holds automatically in the
present context.

The change in the definition of an algebraic regular neighbourhood means that
we need to change the statements of many later theorems which describe the
properties of the regular neighbourhoods we construct. For example, part (5)
of Theorem 9.4 of \cite{SS2} needs the addition of a sandwiching assumption.
Note that we also used Proposition 5.7 to prove part (9) of Theorem 9.4, but
this needs no change as the splittings considered are over finitely generated
subgroups of $G$. The same comments apply to all the similar later results.

\subsubsection{Isolated invertible almost invariant
sets\label{invertibleaisets}}

Lemma 3.14 of \cite{SS2} is incorrect even in the special case when $n=k=1$.
In that case, the lemma implies the following incorrect statement.

\medskip

\textit{Let }$G$\textit{ denote a finitely generated group, and let }%
$H$\textit{ be a finitely generated subgroup of }$G$\textit{. Let }$X$\textit{
be a nontrivial }$H$\textit{--almost invariant subset of }$G$\textit{, such
that }$X$\textit{ is isolated. Then there is an almost invariant set }%
$Z$\textit{ equivalent to }$X$\textit{, such that }$Z$\textit{ is not
invertible, and is in good enough position.}

\bigskip

Here is a counterexample to this statement.

\bigskip

\begin{example}
\label{examplewheregoodpositionimpliesinvertible}Let $G$ denote the group
$\mathbb{Z}_{2}\ast\mathbb{Z}_{2}$, let $H$ denote the trivial subgroup and
let $\sigma$ denote the given splitting of $G$ over $H$. Let $X$ be a
$H$--almost invariant subset of $G$ associated to $\sigma$. As $X$ is
associated to a splitting, it crosses none of its translates and so is
isolated in the set $E(\mathcal{X})$ which consists of all translates of $X$
and $X^{\ast}$. Let $Z$ be an almost invariant set which is equivalent to $X$.
If $Z$ is in good enough position, we claim that $Z$ must be invertible. Thus
we have the required counterexample to the above statement.

Here is the proof of the claim that $Z$ must be invertible. The key point is
that every translate of $Z$ is equivalent to $Z$ or $Z^{\ast}$, so that two
translates of $Z$ never cross. Thus the fact that $Z$ is in good enough
position implies that $Z$ is in good position.

Let $\Gamma$ denote the Cayley graph of $G$ with respect to the two natural
generators, the generators of the $\mathbb{Z}_{2}$ factors. Thus $\Gamma$
consists of two disjoint copies $l$ and $l^{\prime}$ of the real line with
vertices at each integer point, together with pairs of edges joining
corresponding vertices of $l$ and $l^{\prime}$, so that $\Gamma$ is "an
infinite ladder with double steps". Let $\alpha$ and $\alpha^{\prime}$ be rays
in $l$ and $l^{\prime}$ respectively each containing the positive end of its
line. Then the set $W$ of vertices of $\alpha\cup\alpha^{\prime}$ forms a
nontrivial almost invariant subset of $G$. As $G$ has $2$ ends, we know that
any nontrivial almost invariant subset of $G$ is almost equal to $W$ or to
$W^{\ast}$. Next observe that $W$ is invertible, as the two generators of
$G\ $act on $\Gamma$by reflecting and interchanging $l$ and $l^{\prime}$, and
the kernel of the natural map from $G$ to $\mathbb{Z}_{2}$ is an infinite
cyclic group acting by translations on each of $l$ and $l^{\prime}$.

At this point, we return to the given almost invariant subset $Z$ of $G$ such
that $E(Z)$ is nested. Without loss of generality, we can assume that $Z$ is
almost equal to $W$. Now let $b$ denote the least element of $Z\cap l$, let
$b^{\prime}$ denote the least element of $Z\cap l^{\prime}$, and let $\beta$
and $\beta^{\prime}$ denote the rays starting at $b$ and $b^{\prime}$ which
are almost equal to $\alpha$ and $\alpha^{\prime}$ respectively. Thus $Z$ is
equal to $\beta\cup\beta^{\prime}$, which we denote by $U$, with some finite
subset $F$ removed. Assume that $F$ is non-empty. Let $g$ denote translation
of $\Gamma$ by one step such that $gU \subset U$, and consider the corners of
the pair $(Z,gZ)$. Then $Z \cap gZ^{\ast}$ is finite and contains $b$ and
$b^{\prime}$, and $gZ \cap Z^{\ast}$ is also finite and contains the least
element of $F$. Thus the pair $(Z,gZ)$ has two non-empty small corners, which
contradicts the fact that $Z$ is in good position. Hence $Z$ must be equal to
$U=\beta\cup\beta^{\prime}$ and so be invertible, as required.
\end{example}

We resolve this problem as in Definition
\ref{defn9.8} of this paper. This was also discussed
in Examples 3.10 and 3.11 on pages 44 and 45 of \cite{SS2}.

\medskip

This involves corresponding changes in the statements and proofs at several
points in chapters 3, 4, 5 and 6 of \cite{SS2}:

Pages 50-51: the statement of Summary 3.16.

Pages 51-52: the proof of Lemma 3.17.

Pages 53-54: the discussion of the construction of an algebraic regular
neighbourhood of an infinite family.

Page 62: the proof of part 2) of Lemma 4.10.

Page 67: Lemma 5.1 and its proof are correct, but the comment just before
Lemma 5.1 that "it suffices to prove that the $V_{0}$--vertices of
$\Gamma(\{X_{\lambda}\}_{\lambda\in\Lambda}:G)$ enclose the given $X_{\lambda
}$'s in the case when the $X_{\lambda}$'s are in good position and isolated
$X_{\lambda}$'s are not invertible" is not correct. Thus Lemma 5.1 should be
stated and proved without this last assumption.

Page 75: the proof of Lemma 5.10.

Page 78: the proof of Theorem 5.16.

Pages 80-83: the discussion of the construction of an algebraic regular
neighbourhood of a family of almost invariant sets over groups which need not
be finitely generated.

Page 89: the proof of Theorem 6.6.

\bigskip

\subsection{Details on the remaining errors\label{detailsonremainingerrors}}

\bigskip

\textbf{Page 132:} Here is a counterexample to Theorems 10.8, 12.6, 13.15 and
14.11 of \cite{SS2}.

\begin{example}
Let $K$ denote the Baumslag-Solitar group $BS(1,2)=\left\langle a,t:tat^{-1}%
=a^{2}\right\rangle $. Thus $K$ has a natural expression as a HNN extension
with infinite cyclic vertex group generated by $a$. It follows that $K$ is
finitely presented and torsion free. The map from $K$ to the integers
$\mathbb{Z}$ given by killing $a$ has kernel $C$ which is isomorphic to the
additive group of the dyadic rationals $\mathbb{Z}[\frac{1}{2}]$. If we choose
the isomorphism so that $a$ corresponds to $1$, then $t^{-1}at$ corresponds to
$\frac{1}{2}$ and in general $t^{-n}at^{n}$ corresponds to $\frac{1}{2^{n}}$.
We let $A$ denote the cyclic subgroup of $C$ generated by $a$ and let $A_{n}$
denote the cyclic subgroup of $C$ generated by $t^{-n}at^{n}$. Thus $A=A_{0}$
and the union of all the $A_{n}$'s equals $C$. It is easy to see that $K$
admits no free product splitting. As $K$ is torsion free, it follows that $K$
admits no splitting over any finite subgroup and so is one-ended. Next we let
$G$ denote the group $K\times\mathbb{Z}$. Then $G$ admits no splitting over
any $VPC\left(  \leq1\right)  $ subgroup. Let $H$ be an isomorphic copy of
$G$, let $D$, $B$ and $B_{n}$ denote the subgroups of $H$ which correspond to
$C$, $A$ and $A_{n}$ respectively, and define $\overline{G}$ to be
$G\ast_{A=B}H$, the double of $G$ along $A$. Note that $\overline{G}$ is also
finitely presented, and it is easy to see that $\overline{G}$ is also
one-ended. Thus the regular neighbourhood $\Gamma_{1}(\overline{G})$ exists.
Note that $\overline{G}$ commensurises $A$ so that $A$ has large commensuriser
in $\overline{G}$.

Now $\overline{G}$ admits splittings $\sigma_{i}$ over $A_{i}$, and $\tau_{i}$
over $B_{i}$, which can be described in the following simple way. We define
$\sigma_{i}$ by writing%
\[
\overline{G}=G\ast_{A}H=(G\ast_{A_{i}}A_{i})\ast_{A}H=G\ast_{A_{i}}(A_{i}%
\ast_{A}H),
\]
and define $\tau_{i}$ by writing%
\[
\overline{G}=G\ast_{B}H=G\ast_{B}(B_{i}\ast_{B_{i}}H)=(G\ast_{B}B_{i}%
)\ast_{B_{i}}H.
\]
Note that $\sigma_{0}=\tau_{0}$. It is easy to see that all these splittings
are compatible. In fact the definitions already show that $\sigma_{i}$ and
$\sigma_{0}$ are compatible and that $\tau_{i}$ and $\tau_{0}$ are compatible.
The lemma below states that these are the only splittings of $\overline{G}$
over a two-ended subgroup, up to conjugacy. It follows that the family
$\mathcal{S}_{1}$ of all splittings of $\overline{G}$ over two-ended subgroups
contains an infinite collection of isolated splittings of $\overline{G}$,
which shows that $\mathcal{S}_{1}$ cannot have a regular neighbourhood.
\end{example}

Thus Theorems 10.8, 12.6, 13.15 and 14.11 are all false.

\begin{lemma}
Any splitting of $\overline{G}$ over a two-ended subgroup is conjugate to some
$\sigma_{i}$ or $\tau_{j}$.
\end{lemma}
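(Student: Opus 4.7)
The plan is to analyze an arbitrary splitting $\rho$ of $\overline{G}$ over a two-ended subgroup by studying its Bass-Serre tree $T$ in conjunction with the Bass-Serre tree $T'$ of the defining amalgam $\overline{G}=G\ast_{A}H$. The overall strategy is to use the rigidity of $G$ and $H$ to locate canonical fixed vertices in $T$, classify the possible edge groups of $\rho$, and finally match $\rho$ to one of the $\sigma_{i}$ or $\tau_{j}$ by comparing Bass-Serre trees.

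I would first verify that neither $G=K\times\mathbb{Z}$ nor $H$ admits a splitting over a $VPC(\leq 1)$ subgroup: any free product decomposition is ruled out because $G$ is one-ended (it contains the free abelian subgroup $A\times\mathbb{Z}$), and any splitting over an infinite cyclic $C$ would force the central $\mathbb{Z}$-factor of $G$ to act elliptically on the corresponding tree and hence to lie in every vertex group and in $C$ up to finite index, so that passing to the quotient gives a splitting of $K=BS(1,2)$ over a finite group, impossible as $K$ is one-ended. Consequently $G$ fixes a vertex $v\in T$, and this $v$ is unique (if $G$ fixed two vertices it would fix the edge between them, embedding $G$ into a two-ended edge stabilizer, contrary to $G$ not being two-ended); symmetrically $H$ fixes a unique vertex $w$, and $v\neq w$ because otherwise $\overline{G}=\langle G,H\rangle$ would fix $v$, violating the non-triviality of $\rho$. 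The subgroup $A=G\cap H$ then fixes the geodesic $\lambda$ from $v$ to $w$.

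Next I would classify the two-ended subgroups $P$ of $\overline{G}$ containing $A$ with finite index, showing each is conjugate in $\overline{G}$ to some $A_{i}$ or $B_{j}$. Since $\overline{G}$ is torsion-free, $P$ is infinite cyclic, generated by a $k$-th root $p$ of the generator $a$ of $A$. As $p^{k}=a$ fixes the central edge of $T'$ and no hyperbolic tree automorphism has a power with a fixed edge, $p$ must fix a vertex of $T'$ and so lies in a conjugate of $G$ or $H$. A direct computation using $K\cong\mathbb{Z}[1/2]\rtimes\mathbb{Z}$ shows that $k$-th roots of $a$ in $G$ exist only for $k=2^{i}$ and are conjugate in $G$ to the generator $t^{-i}at^{i}$ of $A_{i}$, and analogously in $H$. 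Applied to $\stab(e_{1})$, where $e_{1}$ is the first edge of $\lambda$ at $v$, this shows that after conjugating $\rho$ I may assume $\stab(e_{1})=A_{i}$ for some $i\geq 0$; the $B_{j}$ case is symmetric and produces $\tau_{j}$.

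Finally I would identify $\rho$ with $\sigma_{i}$ by a tree comparison. Using the Bass-Serre relations $V_{1}\cap V_{2}=A_{i}$ (where $V_{1}=\stab(v)$ and $V_{2}$ is the stabilizer of the other endpoint $u$ of $e_{1}$), together with $A_{i}\cap H=A$, I would derive $V_{1}\cap H=A$ and $V_{2}\cap G=A_{i}$; this and the fact that $H$ cannot be contained in any vertex group meeting $H$ only in $A$ would force $\lambda$ to have length one, placing $w=u$ and hence $H\subseteq V_{2}$, so that $A_{i}\ast_{A}H\subseteq V_{2}$. The inclusions $G\hookrightarrow V_{1}$ and $A_{i}\ast_{A}H\hookrightarrow V_{2}$ then induce a $\overline{G}$-equivariant morphism $T_{\sigma_{i}}\to T_{\rho}$, and since $T_{\sigma_{i}}$ has a single edge orbit whose collapse would contradict minimality of $T_{\rho}$, this morphism is a tree isomorphism, forcing $V_{1}=G$ and $V_{2}=A_{i}\ast_{A}H$, i.e.\ $\rho=\sigma_{i}$. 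The main obstacle will be the rigorous proof that $\lambda$ has length one: the sketch above is slightly circular, and in practice it may be necessary to perform a careful normal-form analysis in $G\ast_{A}H$, or to appeal to the Karrass--Solitar/Bass theorems controlling subgroups of amalgamated products, in order to rule out exotic configurations where $w$ lies at positive even distance from $v$.
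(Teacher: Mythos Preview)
Your strategy is the paper's: exploit the rigidity of $G$ and $H$ (no splittings over $VPC(\le 1)$ subgroups) to get unique fixed vertices $v\ne w$ in the Bass--Serre tree of $\rho$, note that $A=G\cap H$ lies in every edge stabiliser along $[v,w]$ with finite index, and then identify $\rho$. The one structural difference is that the paper settles $d(v,w)=1$ \emph{before} touching the edge group, which is exactly what removes the circularity you flag. Its justification is brief (``by considering canonical forms of elements''): since $G$ and $H$ are elliptic and generate $\overline G$, the splitting is an amalgam $P\ast_R Q$ (an HNN extension would give a surjection $\overline G\to\mathbb Z$ killing both $G$ and $H$); then $\overline G=\langle G,H\rangle\le\langle\stab(v),\stab(w)\rangle$, and a normal-form computation in the amalgam shows this forces $d(v,w)=1$. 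So your diagnosis that this is the crux is right, and your proposed cure (normal-form analysis) is the correct one---but carry it out at the outset, and the rest of your argument becomes linear.

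With $d(v,w)=1$ in hand the paper's endgame is more direct than your tree-morphism. One has $G\le P$, $H\le Q$, $A\le R$ with $[R:A]<\infty$. Since every element of $R$ has a power in $A$, the cyclic group $R$ is elliptic on the Bass--Serre tree of $G\ast_A H$ and hence conjugate into $G$ or $H$. If $R\subset G$ then $R=A_n$ (your root computation in $K\times\mathbb Z$), and the natural map
\[
G\ast_{A_n}(A_n\ast_A H)=\langle G,R\rangle\ast_R\langle H,R\rangle\longrightarrow P\ast_R Q
\]
is injective by normal forms and surjective since $G$ and $H$ generate, so $P=G$, $Q=A_n\ast_A H$ and $\rho=\sigma_n$. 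The symmetric case gives $\tau_n$. The remaining case ($R$ conjugate into but contained in neither factor) yields by the same token an isomorphism $G\ast_A R\ast_A H\cong\overline G$, which is impossible because on the left $R$ is a vertex group not conjugate into $G$ or $H$. One incidental correction in your preliminaries: containing a copy of $\mathbb Z^2$ is not by itself a reason for one-endedness; use instead that $K=BS(1,2)$ is one-ended, hence so is $K\times\mathbb Z$.
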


\begin{proof}
Suppose that $\overline{G}$ has a splitting $\sigma$ over a two-ended
subgroup. Let $T$ be the corresponding $\overline{G}$--tree, so that
$\overline{G}\backslash T$ has a single edge. Recall that $G$ admits no
splitting over any $VPC(\leq1)$ subgroup. It follows that the subgroups
$G\ $and $H$ of $\overline{G}$ must each fix a vertex of $T$. These vertices
must be distinct as otherwise $\overline{G}$ itself would fix a vertex of $T$.
Also note that $G$ and $H$ can each fix only a single vertex of $T$, as
otherwise $G$ or $H$ would fix some edge of $T$ and hence itself be
$VPC(\leq1)$, which is not the case. Let $v$ and $w$ denote the vertices fixed
by $G\ $and $H$ respectively, and let $\lambda$ denote the edge path in $T$
which joins them. As $G$ and $H$ together generate $\overline{G}$, it follows
that $\sigma$ is an amalgamated free product. Further, by considering
canonical forms of elements, it is easy to see that $\lambda$ must consist of
a single edge. Thus $\overline{G}=P\ast_{R}Q$, where $P$, $Q$ and $R$ denote
the stabilisers of $v$, $w$ and $\lambda$ respectively. Note that this
splitting of $\overline{G}$ is conjugate to $\sigma$. As $G\subset P$ and
$H\subset Q$, it follows that $G\cap H=A\subset R$. As $R$ and $A$ are each
two-ended, it follows that $R$ contains $A$ with finite index. In particular
we have the well known fact that $R$ must be conjugate into $G$ or into $H$.
Now consider the subgroup $GR$ of $P$ generated by $G$ and $R$. If $R$ is
contained in $G$, this subgroup equals $G$. Otherwise the fact that $R$ lies
in some conjugate of $G$ or $H$ implies that $GR=G\ast_{A}R$. Similarly the
subgroup $HR$ of $Q$ generated by $H$ and $R$ is equal either to $H$ or to
$R\ast_{A}H$. Now the inclusions $GR\subset P$ and $HR\subset Q$ induce a
natural injection of $GR\ast_{R}HR$ into $\overline{G}=P\ast_{R}Q$. As $G$ and
$H$ together generate $\overline{G}$, this injection must also be surjective
and hence an isomorphism. It follows that $GR=P$ and $HR=Q$. If $R$ lies in
$G$ and in $H$, the fact that $R$ contains $A$ implies that $R=A$. In this
case $GR=G$ and $HR=H$, and the splitting $\sigma$ is conjugate to $\sigma
_{0}$. If $R$ lies in $G$, the facts that $G$ is isomorphic to $K\times
\mathbb{Z}$ and $R$ contains $A$ with finite index imply that $R$ must equal
$A_{n}$, for some $n\geq0$. In this case $GR=G$ and $HR=A_{n}\ast_{A}H$, and
the splitting $\sigma$ is conjugate to $\sigma_{n}$. Similarly if $R$ lies in
$H$, then $R$ must equal $B_{n}$, for some $n\geq0$, and in this case the
splitting $\sigma$ is conjugate to $\tau_{n}$. This leaves the case when $R$
is not contained in $G$ or in $H$. In this case we have an isomorphism between
$G\ast_{A}H$ and $(G\ast_{A}R)\ast_{R}(R\ast_{A}H)=G\ast_{A}R\ast_{A}H$. But
this is impossible as we know that $R$ is conjugate into $G$ or $H$. The
result follows.
\end{proof}

\bigskip

\textbf{Page 161:} We will start by showing that Example 14.1 of \cite{SS2} is
wrong. This was supposed to be an example of a one-ended group $G$ with
incommensurable polycyclic subgroups $H$ and $K$ of length $3$, and
$2$--canonical almost invariant sets $X$ and $Y$ over $H$ and $K$ respectively
which cross weakly. The sets $X$ and $Y$ described in the example do cross
weakly but they are not $2$--canonical, as we will now show.

Recall that $G$ is constructed by amalgamating $H$, $K$ and a third group $L$
along a certain infinite cyclic subgroup $C$. Thus $G$ has subgroups
$H\ast_{C}K$, $H\ast_{C}L$, $K\ast_{C}L$. Further $G$ can be expressed as the
amalgamated free product of the first two groups over $H$, and $X$ is an
$H$--almost invariant subset of $G$ associated to this splitting. Similarly,
the first and third groups give an amalgamated free product decomposition of
$G$ over $K$, and $Y$ is associated to this splitting. Recall that $\Gamma$
denotes the graph of groups structure for $G$ which is a tree with four
vertices carrying the subgroups $C$, $H$, $K$ and $L$ such that each edge
group is $C$. We will label the vertices $c$, $h$, $k$ and $l$
correspondingly. Note that all three edges are incident to $c$.

The error in our argument occurs where we claim that if $W$ is a nontrivial
almost invariant subset of $G$ over a two-ended subgroup $A$, then $W$ is
enclosed by the vertex $l$ of $\Gamma$ which carries $L$. The two edge
splittings given by the edges $ch$ and $ck$ of $\Gamma$ are clearly not
enclosed by $l$, and so the associated $C$--almost invariant subsets of $G$
are also not enclosed by $l$.

Now we will show that $X\ $and $Y$ are not $2$--canonical. In fact, they are
not $1$--canonical. Recall that $X$ is associated to the splitting $\sigma$ of
$G$ as $(H\ast_{C}K)\ast_{H}(H\ast_{C}L)$. We claim that this splitting is not
compatible with the splitting $\tau$ of $G$ over $C$ as $H\ast_{C}(K\ast
_{C}L)$. This means that $X$ crosses the $C$--almost invariant subset of $G$
associated to $\tau$, so that $X$ is not $1$--canonical. A similar argument
shows that $Y$ is also not $1$--canonical. To prove our claim suppose that
$\sigma$ and $\tau$ are compatible. This implies that $G$ is the fundamental
group of a graph $\Gamma^{\prime}$ of groups which has two edges such that the
associated edge splittings are $\sigma$ and $\tau$. As neither splitting is
HNN, $\Gamma^{\prime}$ must be homeomorphic to an interval with vertices $P$
and $R$ at the endpoints and one interior vertex $Q$. Choose notation so that
$\sigma$ is the edge splitting associated to $PQ$, and that $\tau$ is the edge
splitting associated to $QR$. Thus the edge group associated to $QR$ is $C$.
Also the group $G_{P}$ must be $H\ast_{C}K$ or $H\ast_{C}L$, and the group
$G_{R}$ must be $H$ or $K\ast_{C}L$. If $G_{R}$ is $H$, the fact that in
either case $G_{P}$ also contains $H$ implies that $H$ is contained in each
edge group of $\Gamma^{\prime}$. But this is impossible as $C$ cannot contain
$H$, as $C$ is cyclic and $H$ is not. It follows that $G_{R}$ must be
$K\ast_{C}L$. But $G_{P}$ contains one of $K$ or $L$, which implies that $K$
or $L$ is contained in $C$ which is again impossible. This completes the proof
of the claim.

Now we give a new example which demonstrates the phenomenon which Example 14.1
was supposed to demonstrate. Thus it is still correct to say that the
construction of $\Gamma_{1,2,\ldots,n}$ does not work if one considers almost
invariant sets over virtually polycyclic subgroups.

\begin{example}
\label{exampletoshowGamma123doesnotexist}Here is an example of a one-ended,
finitely presented group $G$ which has $2$--canonical splittings $\sigma_{1}$
and $\sigma_{2}$ over $VPC3$ subgroups $C_{1}$ and $C_{2}$ respectively, such
that $\sigma_{1}$ and $\sigma_{2}$ cross weakly and $C_{1}$ and $C_{2}$ are
not commensurable.

Let $B$ denote the free abelian group of rank $2$, let $\theta$ be a
hyperbolic automorphism of $B$ (i.e. $\theta$ has two real eigenvalues with
absolute value not equal to $1$), and let $C$ be the extension of $B$ by
${\mathbb{Z}}$ determined by $\theta$. Thus $B$ is normal in $C$ with quotient
isomorphic to ${\mathbb{Z}}$. Fix an element $\alpha$ of $C$ such that
conjugation of $B$ by $\alpha$ induces the automorphism $\theta$, and let $A$
denote the infinite cyclic subgroup of $C$ generated by $\alpha$. Thus
$A\ $projects onto the infinite cyclic quotient of $C$ by $B$. A crucial
property of $C$, which follows from the hyperbolicity of $\theta$, is that any
$VPC2$ subgroup of $C$ must be a subgroup of $B$. Let $nA$ denote the subgroup
of $A$ of index $n$. As $A$ is a maximal cyclic subgroup of $C$, it follows
that $Z_{C}(nA)=A$, where $Z_{C}(X)$ denotes the centraliser in $C$ of $X$.
Note that $C$ is the fundamental group of a closed $3$--manifold $M$ which is
a torus bundle over the circle.

For any integer $i$, let $B_{i}$ denote a copy of $B$, let $\theta_{i}$ denote
the automorphism of $B_{i}$ which corresponds to $\theta$, let $C_{i}$ denote
the corresponding copy of $C$, and let $\alpha_{i}$ denote the corresponding
element of $C_{i}$. For any pair $i,j$ of distinct integers, let $C_{ij}$
denote the extension of $B_{i}\oplus B_{j}$ by ${\mathbb{Z}}$, given by the
automorphism $\theta_{i}\oplus\theta_{j}$. Fix an element $\alpha_{ij}$ of
$C_{ij}$ such that conjugation of $B_{i}\oplus B_{j}$ by $\alpha_{ij}$ induces
the automorphism $\theta_{i}\oplus\theta_{j}$, and let $A_{ij}$ denote the
infinite cyclic subgroup of $C_{ij}$ generated by $\alpha_{ij}$. Thus there is
a natural inclusion of $C_{i}$ into $C_{ij}$ which sends $\alpha_{i}$ to
$\alpha_{ij}$, and hence sends $A_{i}$ to $A_{ij}$.

We note the following facts about the $C_{ij}$'s. Each $C_{ij}$ is one-ended
and torsion free, and any $VPC2$ subgroup of $C_{ij}$ must be a subgroup of
$B_{i}\oplus B_{j}$. It follows that each $C_{ij}$ has no nontrivial almost
invariant subsets over any $VPC$ subgroup of length $\leq2$. It is easy to see
that $Z_{C_{ij}}(nA_{ij})=A_{ij}$.

We construct a group $H=C_{13}\ast_{C_{1}}C_{01}\ast_{C_{0}}C_{02}\ast_{C_{2}%
}C_{24}$, where the inclusions of the edge groups into the vertex groups are
the natural ones. Thus we have in $H$ the equations $\alpha_{13}=\alpha
_{1}=\alpha_{01}=\alpha_{0}=\alpha_{02}=\alpha_{2}=\alpha_{24}$, and we abuse
notation and denote this element by $\alpha$ and the cyclic subgroup of $H$ it
generates by $A$. The natural homomorphisms $C_{ij}\rightarrow\mathbb{Z}$ all
fit together to yield a homomorphism $H\rightarrow\mathbb{Z}$ which maps $A$
onto $\mathbb{Z}$. We also choose a group $D$ which is the fundamental group
of a closed hyperbolic $3$--manifold and has $H_{1}(D)\cong\mathbb{Z}$. Then
$D$ is one-ended and torsion free. It has no nontrivial almost invariant
subsets over any $VPC1$ subgroup and contains no $VPC2$ subgroups. Thus $D$
has no nontrivial almost invariant subsets over any $VPC$ subgroup of length
$\leq2$. We define $G=H\ast_{A}D$, where $A$ is identified with a cyclic
subgroup of $D$ which maps onto $H_{1}(D)$. Thus the homomorphism
$H\rightarrow\mathbb{Z}$ extends to one from $G$ to $\mathbb{Z}$ which maps
$A$ onto $\mathbb{Z}$. As $H$ and $D$ are both torsion free, so is $G$. Note
that as $A$ is a maximal cyclic subgroup of $D$, it follows that $Z_{D}(A)=A$.

The group $H$ is the fundamental group of a graph $\Delta$ of groups with
underlying graph an interval divided into three edges. The edge groups are
$C_{1}$, $C_{0}$ and $C_{2}$, and we denote the corresponding edges by $e_{1}%
$, $e_{0}$ and $e_{2}$ respectively. The vertex groups are $C_{13}$, $C_{01}$,
$C_{02}$ and $C_{24}$, and we denote the corresponding vertices by $v_{13}$,
$v_{01}$, $v_{02}$ and $v_{24}$ respectively. The group $G$ is the fundamental
group of a graph $\Gamma$ of groups obtained from $\Delta$ by adding one edge
$e$ with associated group $A$. One end of $e$ is at $v_{13}$ and the other end
has associated group $D$.

We take the splitting $\sigma_{1}$ of $G$ to be the edge splitting of $\Gamma$
associated to the edge $e_{1}$. Thus $\sigma_{1}$ is a splitting of $G$ of the
form%
\[
G=\left\langle C_{13},D\right\rangle \ast_{C_{1}}\left\langle C_{01}%
,C_{02},C_{24}\right\rangle .
\]
To describe the splitting $\sigma_{2}$ of $G$, we first slide the edge $e$ so
as to be attached to $v_{24}$ instead of $v_{13}$. Now $\sigma_{2}$ is the
edge splitting of this new graph of groups which is associated to the edge
$e_{2}$. Thus $\sigma_{2}$ is a splitting of $G$ of the form%
\[
G=\left\langle C_{13},C_{01},C_{02}\right\rangle \ast_{C_{2}}\left\langle
C_{24},D\right\rangle .
\]
It is easy to check that $\sigma_{1}$ and $\sigma_{2}$ are not compatible. If
$\sigma_{1}$ and $\sigma_{2}$ cross strongly, then some conjugate of $C_{1}$
must intersect $C_{2}$ in a $VPC2$ subgroup. But our construction of $G$ shows
that conjugates of $C_{1}$ and $C_{2}$ intersect trivially or in a conjugate
of $A$. It follows that $\sigma_{1}$ and $\sigma_{2}$ must cross weakly. Note
also that $C_{1}$ and $C_{2}$ are not commensurable subgroups of $G$.

It remains to prove that the splittings $\sigma_{1}$ and $\sigma_{2}$ are
$2$--canonical. We will show that $G$ has no nontrivial almost invariant
subsets over any $VPC2$ subgroup, and that it has only one (up to equivalence,
complementation and translation) nontrivial almost invariant subset over a
$VPC1$ subgroup, which arises from the splitting $\tau$ of $G$ over $A$
associated to the edge $e$ of $\Gamma$. Thus $\tau$ is the splitting of $G$ as
$G=H\ast_{A}D$. As each of $\sigma_{1}$ and $\sigma_{2}$ is clearly compatible
with $\tau$, it follows that they do not cross the associated almost invariant
subset over $A$, and so are $2$--canonical, as required.
\end{example}

\label{exampleofsplittings}Before getting down to the details of the proof, it
is worth discussing how to make sense of this phenomenon, as it is actually a
very common one. As $G$ has only one nontrivial almost invariant subset over a
$VPC1$ subgroup, which arises from the splitting $\tau$ of $G$ over $A$, it
follows that $\Gamma_{1}(G)$ has a single $V_{0}$--vertex carrying $A$ and two
$V_{1}$--vertices carrying $H$ and $D$, respectively. The construction of the
splittings $\sigma_{1}$ and $\sigma_{2}$ of $G$ shows that they induce
compatible splittings of $H$. If one starts with these compatible splittings
of $H$, one can ask how to extend them to splittings of $G$. The answer is
that each has at least two natural extensions determined simply by the choice
of vertex $v_{13}$, $v_{01}$, $v_{02}$ or $v_{24}$ to which the edge $e$ is
attached. Each choice of vertex yields extensions which are compatible, so the
problem with this example is that in some sense, we have chosen the "wrong"
extensions. This is an example of a general phenomenon.

First we prove the following technical result.

\begin{lemma}
\label{technicalresult}Let $K$ be a subgroup of $G$, and suppose that, for any
conjugate $L$ of $K$ in $G$,

\begin{enumerate}
\item the number $e(C_{ij},L\cap C_{ij})=1$, for $ij=13$, $01$, $02$ or $24$, and

\item the number $e(D,L\cap D)=1$, and

\item $L\cap C_{i}$ has infinite index in $C_{i}$, for $i=0$, $1$ or $2$, and

\item $L\cap A$ has infinite index in $A$.
\end{enumerate}

Then $e(G,K)=1$.
\end{lemma}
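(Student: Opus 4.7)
The strategy is to apply Bass--Serre theory to the graph of groups $\Gamma$ whose fundamental group is $G$. Let $T$ be the associated Bass--Serre tree, on which $G$ acts without inversions, with vertex stabilisers conjugate to the five vertex groups $C_{13}, C_{01}, C_{02}, C_{24}, D$ and edge stabilisers conjugate to $C_1, C_0, C_2, A$. Fix a basepoint $w_0 \in V(T)$, giving the $G$-equivariant map $\varphi : G \to V(T)$ by $\varphi(g) = g w_0$.

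Suppose for contradiction that $X$ is a nontrivial $K$-almost invariant subset of $G$. The first step is a fibre analysis. For each vertex $w = gw_0 \in V(T)$ with stabiliser $G_w = g G_v g^{-1}$, where $G_v$ is one of the five vertex groups, the fibre $\varphi^{-1}(w) = g G_v$ is a coset. Writing $L = g^{-1} K g$, the $K$-almost invariance of $X$ translates, after conjugating by $g^{-1}$, into the statement that $g^{-1}(X \cap gG_v)$ is an $(L \cap G_v)$-almost invariant subset of $G_v$. By hypotheses (1) and (2), $e(G_v, L \cap G_v) = 1$, so this restriction is either $(L \cap G_v)$-finite or $(L \cap G_v)$-cofinite within the fibre. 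This yields a map $c : V(T) \to \{0,1\}$, where $c(w) = 0$ means $X \cap \varphi^{-1}(w)$ is $K$-finite, and $c(w) = 1$ means its complement in the fibre is $K$-finite.

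The second step is to study the edges at which $c$ changes value. Using the $K$-almost invariance of $X$ together with the fibre analysis, one shows that the set of such boundary edges is $K$-finite. On the other hand, hypotheses (3) and (4) tell us that for every edge $\tilde e$ of $T$ and every conjugate $L$ of $K$, the intersection $L \cap G_{\tilde e}$ has infinite index in $G_{\tilde e}$, so no conjugate of $K$ commensurises any edge stabiliser. The final step is to show that the colouring $c$ must be almost everywhere constant, contradicting the assumption that $X$ is nontrivial. This is carried out inductively along $\Gamma$: first establish the analogous result for $H = C_{13} \ast_{C_1} C_{01} \ast_{C_0} C_{02} \ast_{C_2} C_{24}$, obtaining $e(H, L \cap H) = 1$ for all conjugates $L$ of $K$ meeting the conditions of the lemma; then exploit the amalgamation $G = H \ast_A D$ together with $e(D, L \cap D) = 1$ and $L \cap A$ of infinite index in $A$ to conclude $e(G, K) = 1$. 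The induction uses the classical principle that if $G' = A' \ast_{C'} B'$, with $e(A', L \cap A') = e(B', L \cap B') = 1$ for all conjugates $L$, and with $L \cap C'$ of infinite index in $C'$ for all conjugates, then $e(G', K') = 1$.

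The main obstacle is exactly this last inductive principle: making rigorous that the vertex-group end conditions together with infinite-index edge-group intersections suffice to kill nontrivial almost invariant sets over $K$. Even when the fibre restrictions of $X$ are trivial, the global colouring $c$ could in principle be nontrivial and correspond to a splitting of $G$ obtained from a $K$-invariant cut of $T$. Ruling this out requires using hypotheses (3) and (4) to show that any $K$-almost invariant partition of $V(T)$ with $K$-finite coboundary must come from a $K$-finite subgraph of $T$; equivalently, $K\backslash T$ has only one end once the edge-stabiliser conditions are imposed. Once this is in place, the two-step reduction $G = H \ast_A D$ completes the proof.
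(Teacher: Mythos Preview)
Your approach is substantially more elaborate than necessary, and you correctly identify that it is incomplete: the ``classical principle'' you invoke in the last step is essentially the entire content of the lemma, so appealing to it is circular unless you prove it, and you explicitly leave this as the ``main obstacle''.

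The paper's proof is a four-line topological argument. Build a graph of spaces $X$ realizing $\Gamma$, with $\pi_1(X)=G$. The cover $X_K$ with fundamental group $K$ is again a graph of spaces: its vertex spaces are covers of the vertex spaces of $X$ with fundamental groups $K\cap gC_{ij}g^{-1}$ or $K\cap gDg^{-1}$, and its edge spaces are covers of the edge spaces with fundamental groups $K\cap gC_ig^{-1}$ or $K\cap gAg^{-1}$. Hypotheses (1) and (2) say every vertex space of $X_K$ has one end; hypotheses (3) and (4) say every edge space is non-compact. A graph of spaces whose vertex spaces are all one-ended and whose edge spaces are all non-compact is itself one-ended --- any finite subcomplex is contained in a finite union of vertex and edge spaces, and the non-compactness of the edge spaces lets one connect the complements of this finite set across adjacent vertex spaces. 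Hence $e(G,K)=e(X_K)=1$.

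Your Bass--Serre tree and fibre colouring are a combinatorial shadow of exactly this picture (the colouring $c$ tracks whether $X$ is ``mostly present'' in each vertex space), but you are working much harder than needed. The topological formulation makes the gluing step --- the one you flag as the obstacle --- transparent, with no induction on the shape of $\Gamma$ required.
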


\begin{proof}
Corresponding to the graph of groups decomposition $\Gamma$ of $G$, there is a
graph of spaces decomposition of a space $X$ with fundamental group $G$. The
covering space $X_{K}$ of $X$ with fundamental group $K$ is a graph of spaces,
where each vertex space has fundamental group equal to the intersection of $K$
with a conjugate of some $C_{ij}$ or of $D$, and each edge space has
fundamental group equal to the intersection of $K$ with a conjugate of $C_{0}%
$, $C_{1}$, $C_{2}$ or $A$. The hypotheses imply that each vertex space has
$1$ end and that each edge space is non-compact. It follows easily that
$X_{K}$ has $1$ end, so that $e(G,K)=1$ as required.
\end{proof}

Now we summarise the main properties of the group $G$ in Example
\ref{exampletoshowGamma123doesnotexist}.

\begin{lemma}
\label{propertiesofG}The group $G$ has the following properties.

\begin{enumerate}
\item $G$ has $1$ end.

\item If $K$ is a $VPC1$ subgroup of $G$, then either $e(G,K)=1$ or $K$ is
conjugate commensurable with $A$.

\item $Comm_{G}(A)=A$.

\item If $K$ is a $VPC2$ subgroup of $G$, then $e(G,K)=1$.
\end{enumerate}
\end{lemma}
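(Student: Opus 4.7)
The plan is to verify the four assertions in turn, relying heavily on the graph of groups structure of $G$ and on Lemma~\ref{technicalresult} as the main engine for parts (2) and (4). Part (3) will be the main obstacle and must be proved before part (4) can be finished.

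For part (1), I would apply the standard Bass--Serre criterion: the fundamental group of a graph of groups with one-ended vertex groups and infinite edge groups is one-ended. Each $C_{ij}$ is polycyclic of Hirsch length $3$, hence one-ended; $D$ is one-ended as the fundamental group of a closed hyperbolic $3$-manifold; and the edge groups $A, C_0, C_1, C_2$ are all infinite. This immediately yields $e(G)=1$.

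For part (2), let $K$ be a $VPC1$ subgroup not conjugate commensurable with $A$, and let $L$ be any conjugate of $K$. The first three hypotheses of Lemma~\ref{technicalresult} follow from the construction: $L\cap C_{ij}$ and $L\cap D$ are each $VPC(\le 1)$, and by construction neither $C_{ij}$ nor $D$ has any nontrivial almost invariant subset over a $VPC(\le 2)$ subgroup; and $L\cap C_i$ has infinite index in $C_i$ because $C_i$ is $VPC3$ while $L\cap C_i$ is $VPC(\le 1)$. Condition~(4) holds because failure would mean $L\cap A$ has finite index in $A$; as $L$ is itself $VPC1$, this would force $L$ to be commensurable with $A$, contradicting the hypothesis on $K$. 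Thus Lemma~\ref{technicalresult} gives $e(G,K)=1$.

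Part (3) is the crux of the argument. The plan is to identify the fixed subtree $T^A$ of $A$ in the Bass--Serre tree $T$ of $\Gamma$ with the canonical lift $S$ of $\Gamma$ itself, and then show that any commensurator element must preserve $S$ and hence lie in $A$. The subtree $S$ is certainly contained in $T^A$, because $A$ is a subgroup of every vertex and edge group of $\Gamma$. Conversely, an additional vertex $gv$ in $T^A$ would require $g^{-1}Ag$ to lie in the appropriate vertex group, and the hypotheses $Z_D(A)=A$ and $Z_{C_{ij}}(nA_{ij})=A_{ij}$ together with the maximality of $A$ in each $C_i$ force $g\in A$, so $gv\in S$. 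Now if $g\in\comm_G(A)$, choose $n$ with $gA^n g^{-1}\subseteq A$; then $gA^n g^{-1}$ is still of finite index in $A$, its fixed subtree coincides with $T^A=S$ (by the same maximality considerations applied to $A^n$), and so $g$ carries $S$ to itself. Stabilizer analysis of $S$ then forces $g\in A$. The main obstacle here is pinning down $T^A=S$ rigorously; the nontrivial input is the maximality conditions on $A$ inside the vertex groups, which prevent any ``extra'' fixed points from appearing.

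For part (4), let $K$ be a $VPC2$ subgroup and $L$ any conjugate. Conditions (1)--(3) of Lemma~\ref{technicalresult} hold as in part (2), using in addition that $D$ has no $VPC2$ subgroup and that $C_{ij}$ has no nontrivial almost invariant set over a $VPC(\le 2)$ subgroup. For condition (4), assume for contradiction that $L\cap A$ has finite index in $A$, so $nA\subseteq L$ for some $n$. By part (3), the fixed subtree of $nA$ in $T$ is exactly $S$. If $L$ has a hyperbolic element $\ell$, then $\ell$ normalizes $nA$ (up to finite index) because $L$ is virtually abelian of rank~$2$, so $\ell$ preserves $S$; but then $\ell\in N_G(nA)\subseteq\comm_G(A)=A$, contradicting that $\ell$ is hyperbolic. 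Hence $L$ is elliptic, fixing some vertex $v$ of $T$, so $L$ embeds as a $VPC2$ subgroup of a conjugate of some $C_{ij}$ or of $D$ that contains a finite-index subgroup of $A$. No such subgroup exists in $D$ (since $D$ has no $VPC2$ subgroups), and no such subgroup exists in any $C_{ij}$ because any $VPC2$ subgroup of $C_{ij}$ containing $nA_{ij}$ would have to centralize a finite-index subgroup of $nA_{ij}$, forcing it into $Z_{C_{ij}}(nA_{ij})=A_{ij}$, which is only $VPC1$. This contradiction establishes condition (4), and Lemma~\ref{technicalresult} gives $e(G,K)=1$.
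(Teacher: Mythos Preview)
Your proofs of parts (1) and (2) match the paper's exactly, both being direct applications of Lemma~\ref{technicalresult}.

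For part (3) you and the paper diverge significantly. The paper exploits the homomorphism $\phi:G\to\mathbb{Z}$ that sends $\alpha$ to a generator: if $g\in\comm_G(A)$ then $g\alpha^n g^{-1}=\alpha^m$ for some $n,m$, and applying $\phi$ forces $n=m$, so $g$ actually \emph{centralises} $\alpha^n$. It then remains only to observe that $Z_G(kA)=A$, which follows because $A$ lies in every vertex and edge group of $\Gamma$ and $Z_{G_v}(kA)=A$ in each vertex group. This two-line reduction bypasses the entire fixed-subtree analysis. Your tree-based route is valid, but your justification of $T^A=S$ is too sketchy: from $g^{-1}Ag\subseteq G_v$ alone you cannot conclude $g\in A$, and the phrase ``maximality of $A$ in each $C_i$'' does not by itself handle the case where an edge of $T^A$ adjacent to $S$ lies over an edge with group $C_i$. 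What is really needed there is that $(\theta_j-I)$ is injective on $B_j$ (equivalent to $Z_{C_{ij}}(A)=A$), applied via an explicit computation in $C_{ij}$; you gesture at the right hypotheses but do not show how they are used. A cleaner formulation argues edge-by-edge from $S$ rather than vertex-by-vertex.

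For part (4) the paper's argument is a single observation you overlooked: any $VPC2$ group is virtually $\mathbb{Z}\times\mathbb{Z}$, so every infinite cyclic subgroup of it has large commensuriser. Hence if $L\cap A$ has finite index in $A$ then $L\subseteq\comm_G(A)=A$, contradicting $L$ being $VPC2$. Your elliptic/hyperbolic case split on the Bass--Serre tree reaches the same conclusion but is considerably longer; note also that in your elliptic case you implicitly need $\mathrm{Fix}(nA)=S$ (from part (3)) to ensure the vertex fixed by $L$ lies in $S$, so that $L$ embeds in a vertex group containing $nA$ itself rather than merely a conjugate of it.
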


\begin{proof}
1) Each $C_{ij}$ is one-ended, as is $D$, and each of $C_{1}$, $C_{2}$ and $A$
is infinite. Now we apply Lemma \ref{technicalresult}, with $K$ equal to the
trivial group, to see that $e(G)=1$, as required.

2) Let $K$ be a $VPC1$ subgroup of $G$. Thus the intersection of $K$ with any
subgroup of $G$ is $VPC(\leq1)$. Now we apply Lemma \ref{technicalresult}. It
follows that conditions 1)-3) of that lemma are satisfied. Hence $e(G,K)=1$
unless condition 4) fails. This would mean that there is a conjugate $L$ of
$K$ such that $L\cap A$ has finite index in $A$. As $K$ and $A$ are both
$VPC1$, it follows that $e(G,K)=1$ unless $K$ is conjugate commensurable with
$A$.

3) Let $g$ denote an element of $Comm_{G}(A)$. As there is a homomorphism of
$G$ to $\mathbb{Z}$ which maps $A$ onto $\mathbb{Z}$, it follows that $g$ must
lie in $Z_{G}(kA)$, the centraliser of $kA$ in $G$, for some non-zero $k$. Now
recall that $G$ is the fundamental group of the graph $\Gamma$ of groups. As
$A$ is contained in each vertex group, and the centraliser in each vertex
group of $kA$ is equal to $A$, it follows that $Z_{G}(kA)=A$. Hence
$Comm_{G}(A)=A$, as required.

4) Let $K$ be a $VPC2$ subgroup of $G$. Thus the intersection of $K$ with any
subgroup of $G$ is $VPC(\leq2)$. Now we apply Lemma \ref{technicalresult}. It
follows that condition 1) and 3) of that lemma hold. As the intersection of
any conjugate of $K$ with $D$ must be $VPC(\leq1)$, it also follows that
condition 2) holds. Hence $e(G,K)=1$ unless condition 4) fails. This would
mean that there is a conjugate $L$ of $K$ such that $L\cap A$ has finite index
in $A$. As any $VPC2$ group$\ $has a subgroup of finite index isomorphic to
${\mathbb{Z}}\times{\mathbb{Z}}$, it follows that any $VPC1$ subgroup of $K$
has large commensuriser in $K$. Thus this would imply that $A$ has large
commensuriser in $G$. Now part 3) of this lemma shows that this also is
impossible. It follows that if $K$ is a $VPC2$ subgroup of $G$, then
$e(G,K)=1$, as required.
\end{proof}

Next we consider almost invariant subsets of $G$ which are over a $VPC1$ subgroup.

\begin{lemma}
$G$ has only one (up to equivalence, complementation and translation)
nontrivial almost invariant subset over a $VPC1$ subgroup, which arises from
the splitting $\tau$ of $G$ over $A$ as $G=H*_{A}D$.
\end{lemma}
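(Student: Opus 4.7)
Plan. The proof has two steps: reduce to the case of an $A$--almost invariant set, then compute $e(G,A) = 2$.

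\textbf{Step 1 (Reduction to subgroups commensurable with $A$).} Let $X$ be a nontrivial $K$--almost invariant subset of $G$ with $K$ a VPC1 subgroup. By part 2 of Lemma \ref{propertiesofG}, the nontriviality of $X$ forces $K$ to be conjugate commensurable with $A$, so after replacing $X$ by a translate $gX$ we may assume $K$ is commensurable with $A$. Part 3 of Lemma \ref{propertiesofG} gives $Comm_G(A) = A$, so $K$ lies in $A$ with finite index. A standard averaging argument (using that $K$ has finite index in the cyclic group $A$) produces an $A$--almost invariant set equivalent to $X$. Thus it suffices to classify the nontrivial $A$--almost invariant subsets of $G$ up to equivalence and complementation.

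\textbf{Step 2 ($e(G,A) = 2$).} The lower bound $e(G,A) \geq 2$ is immediate from the splitting $\tau$: the associated $A$--almost invariant set $Y_\tau$ is nontrivial. For the upper bound, realize $G = H \ast_A D$ by a graph of spaces $X_\tau = X_H \cup_{X_A} X_D$, and analyze the cover $\widetilde{X_\tau}/A$. We compute the two relevant end invariants:
\begin{itemize}
\item $e(D,A) = 1$: $D$ is the fundamental group of a closed hyperbolic $3$--manifold $M_D$, and the cover $M_D\langle A\rangle$ is modeled by $\mathbb{H}^3/A$, where $A$ acts by hyperbolic translation along its axis; this is a plane bundle over $S^1$, which has exactly one end.
\item $e(H,A) = 1$: $H$ is a finite graph of groups over the tree $\Delta$ with vertex groups $C_{ij}$ (each VPC of length $5$) and edge groups $C_i$ (each VPC of length $3$), all containing $A$ with infinite index. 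Houghton's theorem (Lemma \ref{Houghtonlemma}) gives $e(C_{ij}, A) = 1$ and $e(C_i, A) = 1$. Applying an argument analogous to the proof of Lemma \ref{technicalresult} (with $L = A$, noting that condition 4 is vacuous here since we are working inside $H$ and $A$ fixes the fundamental subtree of $T_H$) yields $e(H, A) = 1$.
\end{itemize}
Now $A$ fixes the edge $f$ of the Bass--Serre tree $T_\tau$ corresponding to the amalgamation, together with its two endpoints $v_H$ and $v_D$. Any other edge $hf$ at $v_H$ (with $h \in H \setminus A$) has trivial $A$--stabilizer, since $A \cap hAh^{-1}$ finite-index in $A$ would give $h \in Comm_G(A) = A$. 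The cover $\widetilde{X_\tau}/A$ therefore assembles as two one--ended vertex spaces $(X_H)_A$ and $(X_D)_A$ glued along the compact edge space $X_A$, with all additional edges (from free $A$--orbits) attaching compactly to $(X_H)_A$ or $(X_D)_A$ without adding ends. Hence $e(G,A) = 2$, and the unique (up to equivalence and complementation) nontrivial $A$--almost invariant subset is $Y_\tau$.

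\textbf{Main obstacle.} The technical heart of the argument is the upper bound $e(G,A) \leq 2$, in particular showing $e(H,A) = 1$ and verifying that the infinitely many extra branches in $A\backslash T_\tau$ (with trivial $A$--stabilizer) contribute no additional ends to the cover. This requires carefully tracking how free $A$--orbits on edges of $T_\tau$ attach subtrees to the fixed fundamental domain without creating new ends, an analysis possible because on those branches $A$ acts trivially and the attaching loci are compact edge spaces.
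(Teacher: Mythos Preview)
Your overall strategy matches the paper's: reduce to subgroups commensurable with $A$ via parts 2 and 3 of Lemma~\ref{propertiesofG}, then use a graph-of-spaces ends argument to get $e(G,K)=2$. The paper works directly with the full graph $\Gamma$ rather than first collapsing to $\tau=H\ast_A D$, but that is only an organizational difference. Two points need correction.

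First, the ``averaging argument'' in Step~1 is both unnecessary and not justified as stated. Replacing a $K$--almost invariant set by $\bigcap_i \alpha^i X$ need not produce an equivalent set; you would need to know $e(G,K)=2$ already to conclude that every nontrivial $K$--a.i.\ set comes from an $A$--a.i.\ set. The paper avoids this entirely: it simply observes that the Step~2 computation works verbatim for any finite-index $K\subset A$ (since $K$ still has infinite index in every $C_i$, $C_{ij}$, and $D$), and concludes $e(G,K)=2$ directly. Your Step~2 already proves this stronger statement, so drop the averaging.

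Second, and more substantively, your explanation of why the free $A$--orbit branches contribute no new ends is backwards. You write that these branches ``attach compactly'' with ``compact edge spaces'', but the edge spaces over edges $hf$ with $h\notin A$ have trivial $A$--stabilizer (by $Comm_G(A)=A$) and are therefore \emph{non-compact}---they are copies of $\mathbb{R}\times[0,1]$. It is precisely this non-compactness that makes the argument of Lemma~\ref{technicalresult} go through: with all vertex spaces one-ended and all remaining edge spaces non-compact, each complementary component of the single compact edge space has one end. If those edge spaces were compact, they could very well create additional ends. The paper's phrasing---``exactly one edge space of $X_K$ is compact\ldots each of the two complementary components has one end''---uses exactly this mechanism.
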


\begin{proof}
Suppose that $G$ has a nontrivial almost invariant subset over a $VPC1$
subgroup $K$, so that $e(G,K)>1$. Recall from part 2) of Lemma
\ref{propertiesofG} that if $K$ is a $VPC1$ subgroup of $G$, then either
$e(G,K)=1$ or $K$ is conjugate commensurable with $A$. As $Comm_{G}(A)=A$, by
part 3) of Lemma \ref{propertiesofG}, this implies that $K$ is conjugate to a
subgroup of $A$.

Now suppose that $K$ is a subgroup of $A$ and, as in the proof of Lemma
\ref{technicalresult}, consider the cover $X_{K}$ of $X$ with fundamental
group $K$. Part 3) of Lemma \ref{propertiesofG} tells us that $Comm_{G}(A)=A$,
so that exactly one edge space of $X_{K}$ is compact. Thus, as in the proof of
Lemma \ref{technicalresult}, each of the two complementary components of this
edge space has one end. It follows that $e(G,K)=2$, for any subgroup $K$ of
finite index in $A$. Hence $G$ has only one (up to equivalence,
complementation and translation) nontrivial almost invariant subset over a
$VPC1$ subgroup. As the splitting $\tau$ of $G$ over $A$ has such an almost
invariant subset of $G$ associated, the result follows.
\end{proof}

This completes our discussion of errors in \cite{SS2}.

\end{document}